\numberwithin{equation}{subsection}
\theoremstyle{plain}
\newtheorem{PropSub}[subsection]{Proposition}
\newtheorem{LemSub}[subsection]{Lemma}
\newtheorem{CorSub}[subsection]{Corollary}
\newtheorem{ThmSub}[subsection]{Theorem}
\newtheorem{PropSubSub}[subsubsection]{Proposition}
\newtheorem{ThmSubSub}[subsubsection]{Theorem}
\theoremstyle{definition}
\newtheorem{DefSub}[subsection]{Definition}
\newtheorem{ExaSub}[subsection]{Example}
\newtheorem{RemSub}[subsection]{Remark}
\newtheorem{ParSub}[subsection]{}
\newtheorem{DefSubSub}[subsubsection]{Definition}
\newtheorem{RemSubSub}[subsubsection]{Remark}
\newtheorem{ExaSubSub}[subsubsection]{Example}
\newtheorem{ParSubSub}[subsubsection]{}
\newcommand*{\emptybox}{\leavevmode\hbox{}}
\newcommand{\dint}[1]{\ensuremath{\aint{\text{\rlap{$*$}\hspace{0.6ex}}}{#1}}}
\newcommand{\pint}[1]{\ensuremath{\aint{\text{\rlap{\tiny P}\hspace{0.8ex}}}{#1}}}
\newcommand{\aint}[2]{\ensuremath{\tensor*[_{#1}]{\int}{_{#2}}}}
\DeclareMathAlphabet{\mathpzc}{OT1}{pzc}{m}{it}
\DeclareMathAlphabet{\mathcalligra}{T1}{calligra}{m}{n}
\newcommand{\bref}[1]{\textnormal{\ref{#1}}}
\newcommand{\pbref}[1]{\textnormal{(\ref{#1})}}
\newcommand{\bD}{\ensuremath{D}}
\newcommand{\bH}{\ensuremath{H}}
\newcommand{\bJ}{\ensuremath{J}}
\newcommand{\bK}{\ensuremath{K}}
\newcommand{\bT}{\ensuremath{T}}
\newcommand{\A}{\ensuremath{\mathscr{A}}}
\newcommand{\B}{\ensuremath{\mathscr{B}}}
\newcommand{\C}{\ensuremath{\mathscr{C}}}
\newcommand{\E}{\ensuremath{\mathscr{E}}}
\newcommand{\F}{\ensuremath{\mathscr{F}}}
\newcommand{\sH}{\ensuremath{\mathscr{H}}}
\newcommand{\J}{\ensuremath{\mathscr{J}}}
\newcommand{\K}{\ensuremath{\mathscr{K}}}
\newcommand{\sL}{\ensuremath{\mathscr{L}}}
\newcommand{\M}{\ensuremath{\mathscr{M}}}
\newcommand{\N}{\ensuremath{\mathscr{N}}}
\newcommand{\sO}{\ensuremath{\mathscr{O}}}
\newcommand{\sS}{\ensuremath{\mathscr{S}}}
\newcommand{\T}{\ensuremath{\mathscr{T}}}
\newcommand{\U}{\ensuremath{\mathscr{U}}}
\newcommand{\V}{\ensuremath{\mathscr{V}}}
\newcommand{\W}{\ensuremath{\mathscr{W}}}
\newcommand{\X}{\ensuremath{\mathscr{X}}}
\newcommand{\Y}{\ensuremath{\mathscr{Y}}}
\newcommand{\tL}{\ensuremath{\widetilde{\mathscr{L}}}}
\newcommand{\Pett}[2]{\ensuremath{#1_{\textnormal{P}(#2)}}}
\newcommand{\pL}[1]{\ensuremath{\Pett{\sL}{#1}}}
\newcommand{\eA}{\ensuremath{\mathscr{A}}}
\newcommand{\eB}{\ensuremath{\mathscr{B}}}
\newcommand{\eL}{\ensuremath{\mathscr{L}}}
\newcommand{\teL}{\ensuremath{\widetilde{\mathscr{L}}}}
\newcommand{\aeX}{\ensuremath{\underline{\mathscr{X}}}}
\newcommand{\aeL}{\ensuremath{\underline{\mathscr{L}}}}
\newcommand{\aeU}{\ensuremath{\underline{\mathscr{U}}}}
\newcommand{\aeV}{\ensuremath{\underline{\mathscr{V}}}}
\newcommand{\aeW}{\ensuremath{\underline{\mathscr{W}}}}
\newcommand{\aetL}{\ensuremath{\widetilde{\underline{\mathscr{L}}}}}
\newcommand{\uX}{\ensuremath{\underline{\mathscr{X}}}}
\newcommand{\uL}{\ensuremath{\underline{\mathscr{L}}}}
\newcommand{\uU}{\ensuremath{\underline{\mathscr{U}}}}
\newcommand{\uV}{\ensuremath{\underline{\mathscr{V}}}}
\newcommand{\uW}{\ensuremath{\underline{\mathscr{W}}}}
\newcommand{\tdelta}{\ensuremath{\widetilde{\delta}}}
\newcommand{\tkappa}{\ensuremath{\widetilde{\kappa}}}
\newcommand{\tDelta}{\ensuremath{\widetilde{\Delta}}}
\newcommand{\txi}{\ensuremath{\widetilde{\xi}}}
\newcommand{\CAT}{\ensuremath{\operatorname{\textnormal{\textbf{CAT}}}}}
\newcommand{\eCAT}[1]{\ensuremath{#1\textnormal{-\textbf{CAT}}}}
\newcommand{\VCAT}{\ensuremath{\V\textnormal{-\textbf{CAT}}}}
\newcommand{\WCAT}{\ensuremath{\W\textnormal{-\textbf{CAT}}}}
\newcommand{\XCAT}{\ensuremath{\X\textnormal{-\textbf{CAT}}}}
\newcommand{\LCAT}{\ensuremath{\sL\textnormal{-\textbf{CAT}}}}
\newcommand{\MMCCAATT}{\ensuremath{\operatorname{\textnormal{\textbf{MCAT}}}}}
\newcommand{\SSMMCCAATT}{\ensuremath{\operatorname{\textnormal{\textbf{SMCAT}}}}}
\newcommand{\CCllSSMMCCAATT}{\ensuremath{\operatorname{\textnormal{\textbf{CSMCAT}}}}}
\newcommand{\CCllCCAATT}{\ensuremath{\operatorname{\textnormal{\textbf{CCAT}}}}}
\newcommand{\TWOCCAATT}{\ensuremath{\operatorname{\textnormal{\textbf{2CAT}}}}}
\newcommand{\CC}{\ensuremath{\mathbb{C}}}
\newcommand{\DD}{\ensuremath{\mathbb{D}}}
\newcommand{\tDD}{\ensuremath{\widetilde{\mathbb{D}}}}
\newcommand{\dDD}{\ensuremath{\wideparen{\mathbb{D}}}}
\newcommand{\HH}{\ensuremath{\mathbb{H}}}
\newcommand{\tHH}{\ensuremath{\widetilde{\mathbb{H}}}}
\newcommand{\dHH}{\ensuremath{\wideparen{\mathbb{H}}}}
\newcommand{\LL}{\ensuremath{\mathbb{L}}}
\newcommand{\MM}{\ensuremath{\mathbb{M}}}
\newcommand{\NN}{\ensuremath{\mathbb{N}}}
\newcommand{\RR}{\ensuremath{\mathbb{R}}}
\newcommand{\SSS}{\ensuremath{\mathbb{S}}}
\newcommand{\TT}{\ensuremath{\mathbb{T}}}
\newcommand{\tTT}{\ensuremath{\widetilde{\mathbb{T}}}}
\newcommand{\ONEONE}{\ensuremath{\mathbbm{1}}}
\newcommand{\tpartial}{\ensuremath{\widetilde{\fd}}}
\newcommand{\tgamma}{\ensuremath{\widetilde{\fk}}}
\newcommand{\fd}{\ensuremath{\mathfrak{d}}}
\newcommand{\fk}{\ensuremath{\mathfrak{k}}}
\newcommand{\tfd}{\ensuremath{\widetilde{\fd}}}
\newcommand{\dfd}{\ensuremath{\wideparen{\fd}}}
\newcommand{\dfk}{\ensuremath{\wideparen{\fk}}}
\newcommand{\tD}{\ensuremath{\widetilde{D}}}
\newcommand{\tH}{\ensuremath{\widetilde{H}}}
\newcommand{\tT}{\ensuremath{\widetilde{T}}}
\newcommand{\dD}{\ensuremath{\wideparen{D}}}
\newcommand{\dH}{\ensuremath{\wideparen{H}}}
\newcommand{\dJ}{\ensuremath{\wideparen{J}}}
\newcommand{\dK}{\ensuremath{\wideparen{K}}}
\newcommand{\ob}{\ensuremath{\operatorname{\textnormal{\textsf{Ob}}}}}
\newcommand{\mor}{\ensuremath{\operatorname{\textnormal{\textsf{Mor}}}}}
\newcommand{\Ob}{\ob}
\newcommand{\Mor}{\mor}
\newcommand{\Iso}{\ensuremath{\operatorname{\textnormal{\textsf{Iso}}}}}
\newcommand{\Epi}{\ensuremath{\operatorname{\textnormal{\textsf{Epi}}}}}
\newcommand{\Mono}{\ensuremath{\operatorname{\textnormal{\textsf{Mono}}}}}
\newcommand{\StrMono}{\ensuremath{\operatorname{\textnormal{\textsf{StrMono}}}}}
\newcommand{\Sub}{\ensuremath{\operatorname{\textnormal{\textsf{Sub}}}}}
\newcommand{\SigmaClSub}{\ensuremath{\Sigma\textnormal{-\textsf{ClSub}}}}
\newcommand{\Refl}{\ensuremath{\operatorname{\textnormal{\textsf{Refl}}}}}
\newcommand{\IdmMnd}{\ensuremath{\operatorname{\textnormal{\textsf{IdmMnd}}}}}
\newcommand{\Lan}{\ensuremath{\operatorname{\textnormal{\textsf{Lan}}}}}
\newcommand{\Dense}[1]{\ensuremath{#1\textnormal{-\textsf{Dense}}}}
\newcommand{\ClEmb}[1]{\ensuremath{#1\textnormal{-\textsf{ClEmb}}}}
\newcommand{\SigmaDense}{\Dense{\Sigma}}
\newcommand{\SigmaClEmb}{\ClEmb{\Sigma}}
\newcommand{\Ev}{\ensuremath{\textnormal{\textsf{Ev}}}}
\newcommand{\y}{\ensuremath{\textnormal{\textsf{y}}}}
\newcommand{\Set}{\ensuremath{\operatorname{\textnormal{\textbf{Set}}}}}
\newcommand{\Two}{\ensuremath{\operatorname{\textnormal{\textbf{2}}}}}
\newcommand{\SET}{\ensuremath{\operatorname{\textnormal{\textbf{SET}}}}}
\newcommand{\RMod}{\ensuremath{R\textnormal{-\textbf{Mod}}}}
\newcommand{\Top}{\ensuremath{\operatorname{\textnormal{\textbf{Top}}}}}
\newcommand{\Conv}{\ensuremath{\operatorname{\textnormal{\textbf{Conv}}}}}
\newcommand{\Smooth}{\ensuremath{\operatorname{\textnormal{\textbf{Smooth}}}}}
\newcommand{\Adj}{\ensuremath{\operatorname{\textnormal{\textbf{Adj}}}}}
\newcommand{\Mnd}{\ensuremath{\operatorname{\textnormal{\textbf{Mnd}}}}}
\newcommand{\Alg}[1]{\ensuremath{#1\operatorname{\textnormal{-\textbf{Alg}}}}}
\newcommand{\subs}{\ensuremath{\subseteq}}
\newcommand{\sups}{\ensuremath{\supseteq}}
\newcommand{\lt}{\leqslant}
\newcommand{\op}{\ensuremath{\textnormal{op}}}
\newcommand{\btimes}{\ensuremath{\boxtimes}}
\newcommand{\pushoutcorner}{\ar@{}[dr]|(.3)\ulcorner}
\newcommand{\pullbackcorner}{\ar@{}[dr]|(.3)\lrcorner}
\newcommand{\widegrave}[1]{\grave{\wideparen{#1}}}
\newcommand{\wideacute}[1]{\acute{\wideparen{#1}}}
\begin{document}

\pagenumbering{roman}
\pagestyle{empty} 
\onehalfspacing
\begin{center}
\noindent
\begin{tabular}[textwidth]{@{}c@{}}
\\
\\
\\
\\
\\
RIESZ-SCHWARTZ EXTENSIVE QUANTITIES AND\\
VECTOR-VALUED INTEGRATION IN CLOSED CATEGORIES\\
\\
\\
\\
\\
\\
RORY B. B. LUCYSHYN-WRIGHT\\
\\
\\
\\
\\
A DISSERTATION SUBMITTED TO\\
THE FACULTY OF GRADUATE STUDIES\\
IN PARTIAL FULFILMENT OF THE REQUIREMENTS\\
FOR THE DEGREE OF\\
DOCTOR OF PHILOSOPHY\\
\\
\\
GRADUATE PROGRAM IN MATHEMATICS AND STATISTICS\\
YORK UNIVERSITY,\\
TORONTO, ONTARIO\\
\\
JUNE 2013\\
\\
\\
\copyright\ Rory B. B. Lucyshyn-Wright, 2013
\end{tabular}
\end{center}
\singlespacing
\newpage
\pagestyle{plain}
\section*{Abstract}
We develop aspects of functional analysis in an abstract axiomatic setting, through monoidal and enriched category theory.  We work in a given closed category, whose objects we call \textit{spaces}, and we study \hbox{$R$-module} objects therein (or algebras of a commutative monad), which we call \textit{linear spaces}.  Building on ideas of Lawvere and Kock, we study functionals on the space of \hbox{scalar-valued} maps, including \hbox{compactly-supported} Radon measures and Schwartz distributions.  We develop an abstract theory of \hbox{vector-valued} integration with respect to these scalar functionals and their relatives.  We study three axiomatic approaches to vector integration, including an abstract \hbox{Pettis-type} integral, showing that all are encompassed by an axiomatization via monads and that all coincide in suitable contexts.  We study the relation of this vector integration to relative notions of completeness in linear spaces.  One such notion of completeness, defined via enriched orthogonality, determines a symmetric monoidal closed reflective subcategory consisting of exactly those separated linear spaces that support the vector integral.  We prove \hbox{Fubini-type} theorems for the vector integral.  Further, we develop aspects of several supporting topics in category theory, including enriched orthogonality and factorization systems, enriched associated idempotent monads and adjoint factorization, symmetric monoidal adjunctions and commutative monads, and enriched commutative algebraic theories.
\newpage
\onehalfspacing
\emptybox
\begin{center}
\noindent
\begin{tabular}[textwidth]{@{}c@{}}
\\
\\
\\
\\
\\
\\
\\
\\
\\
\\
\textit{To Anna}
\end{tabular}
\end{center}
\singlespacing
\newpage
\section*{Acknowledgments}
I would like to thank Alexander Nenashev for his investment of time in reading this dissertation as member of the supervisory committee, and also for his contributions to my education in several subjects in mathematics.  I am grateful to Peter Gibson both for his enthusiasm and for his expenditure of time in reading this dissertation as member of both the supervisory committee and the examining committee.  I would like to thank Nantel Bergeron and Jeff Edmonds for their expenditure of time in reading this dissertation as members of the examining committee.  I am grateful to F. William Lawvere not only for his contribution of time in serving as external examiner and reading this dissertation, but also for having in his own work laid much of the conceptual foundation for this work.  I am grateful to Walter Tholen, my supervisor, for his support and encouragement, for his enthusiasm, for his contribution of time in reading this dissertation, and for providing me with the freedom to study and explore according to my fancy.  I also thank Prof. Tholen for suggesting at an early point in this research that I read the Cassidy-H\'ebert-Kelly paper, which proved to provide a crucial component of the methodology of this dissertation when taken in conjunction with related work of Day.  Most importantly, I am grateful to Anna Frey for her steadfast support and encouragement.
\newpage
\tableofcontents

\pagenumbering{arabic}
\chapter{Introduction}
\setcounter{subsection}{0}

The present text develops certain aspects of functional analysis in an abstract setting through \textit{monoidal} and \textit{enriched} category theory.  In such an approach, similar concepts and results in different contexts can be treated as instances of the same abstract phenomenon and studied by the same methods.  For example, compactly-supported Radon measures and Schwartz distributions are both instances of the notion of \textit{natural distribution} studied herein.  Further, a structural approach to the subject yields definitions and results of a generality and canonicity that would seem unavailable in the familiar approach through Banach spaces and topological vector spaces.  

In Chapters \bref{ch:nat_acc_dist} and \bref{ch:dcompl_vint} we study natural distributions and their kin, as well as vector-valued integration with respect to them, examining in particular the relation between this vector integration and notions of completeness in linear spaces.  In the preceding chapters, we first develop several topics in category theory that provide the foundation for our abstract functional analysis, which is inspired in part by ideas of Lawvere and Kock \cite{Law:CatsSpQu,Law:VoFu,Law:AxEd,Kock:ProResSynthFuncAn,Kock:Dist}.  Certain of our results in Chapters \bref{ch:nat_acc_dist} and \bref{ch:dcompl_vint} shed light on long-standing problems posed by these authors \cite{Kock:ProResSynthFuncAn}.  Each chapter includes an introduction that summarizes its content.

In the terminology of Lawvere \cite{Law:CatsSpQu,Law:AxEd}, natural distributions are certain \textit{extensive quantities}, namely those formed according to the Riesz-Schwartz dualization paradigm from \textit{representable} intensive quantities.

\section{Linear spaces}

In the category $\RMod$ of $R$-modules for a commutative ring $R$, and in particular for vector spaces, one can form a canonical $R$-module of linear maps that is in adjoint relation to the usual tensor product (see \ref{eqn:tens_hom}), and as a result $\RMod$ is \textit{symmetric monoidal closed}.  In the category of Banach spaces and bounded linear maps, one has a canonical Banach space of bounded linear maps that is again in adjoint relation to the projective tensor (which represents bounded bilinear maps).  But upon moving to the category of topological vector spaces, this canonical symmetric monoidal closed structure is lost.  This owes to a deficiency in the category $\Top$ of topological spaces itself, as one cannot in general form in $\Top$ a canonical space of continuous maps in adjoint relation to the \textit{cartesian} product, so that unlike the category $\Set$ of sets, $\Top$ fails to be \textit{cartesian closed}.  When one instead considers $R$-module objects in any cartesian closed category $\X$ with enough limits and colimits, the desired tensor and `hom' familiar from the $\Set$-based case are regained and $\RMod(\X)$ is symmetric monoidal closed \pbref{exa:rmod_smclosed_monadic}.  Several authors have developed aspects of functional analysis in a variety of specific cartesian closed categories, including \textit{compactly generated spaces}, \textit{bornological spaces}, \textit{smooth spaces}, and several flavours of \textit{convergence spaces} (e.g., in \cite{Fro:CpctGenSpDu,HN,FroBu,BeBu,Fro:SmthStr}).

In the present text, we instead work with an arbitrary given cartesian closed category $\X$, whose objects and morphisms we call simply \textit{spaces} and \textit{maps}, and study on an axiomatic basis the category $\sL = \RMod(\X)$ of $R$-module objects for a commutative ring $R$ in $\X$.  We call the objects and morphisms of $\sL$ \textit{linear spaces} and \textit{linear maps}.  More generally, we allow $\X$ to be any suitable \textit{symmetric monoidal} closed category and $\sL$ to be the category of algebras of any suitable \textit{symmetric monoidal monad} or \textit{commutative monad} $\LL$ on $\X$; $\sL$ is then symmetric monoidal closed, and we let $R$ be the unit object in $\sL$.  A similar general setting was employed in recent work of Kock \cite{Kock:Dist}, who had much earlier introduced the fundamental notion of commutative monad \cite{Kock:BilCartClMnds, Kock:ClsdCatsGenCommMnds,Kock:Comm,Kock:SmComm}.

In order to illustrate and apply our work, we make repeated reference to two specific examples of cartesian closed categories $\X$, namely the category $\Conv$ of Fischer-Kowalsky \textit{Limesr\"aume} or \textit{convergence spaces} (see \cite{BeBu}) and the category $\Smooth$ of Fr\"olicher's \textit{smooth spaces} \cite{Fro:SmthStr,Fro:CccAnSmthMaps,FroKr}.  The former contains $\Top$ (as a full subcategory), and the latter contains the category of paracompact smooth manifolds.

\section{Completeness}

In developing functional analysis in the above abstract setting, we no longer have at our disposal the usual notions \textit{completeness} of normed or topological vector spaces.  Instead, we define suitable notions of completeness and completion of linear spaces with reference to only the given abstract categorical data.  Whereas the endofunctor on $\sL$ sending each linear space $E$ to its \textit{double-dual} $E^{**}$ is part of an $\sL$-enriched monad on $\sL$, we form the associated $\sL$-enriched \textit{idempotent monad}, which determines a symmetric monoidal closed reflective subcategory of $\sL$ whose objects we call \textit{functionally complete} linear spaces.  A study of \textit{enriched factorization systems} and \textit{orthogonality} \pbref{ch:enr_orth_fact} forms the basis for our use of the induced idempotent monad \pbref{sec:assoc_idm_mnd}, and the same techniques also allow us to define further notions of completeness adapted to specific purposes \pbref{ch:compl_enr_orth}.  Each comes with accompanying notions of \textit{completion}, \textit{closure}, and \textit{density}.

\section{Natural and accessible distributions}

For each space $X \in \X$ there is an associated linear space $[X,R]$ whose elements are the maps $f:X \rightarrow R$.  We call linear maps $\mu:[X,R] \rightarrow R$ \textit{natural distributions} on $X$.  For each map $f:X \rightarrow R$ we define $\int f\;d\mu := \mu(f)$.  For example, when $\X = \Conv$ and $X$ is a locally compact Hausdorff space, natural distributions on $X$ are $R$-valued Radon measures of compact support, where $R = \RR$ or $\CC$.  When $\X = \Smooth$ and $X$ is a separable paracompact smooth manifold, natural distributions on $X$ are Schwartz distributions of compact support on $X$.  We can form the linear space $DX = \sL([X,R],R) = [X,R]^*$ of natural distributions on any given space $X$, and the resulting endofunctor $D$ on $\X$ is part of an $\X$-enriched monad $\DD$ on $\X$.  With categories of smooth spaces in mind, Kock has called $\DD$ the \textit{Schwartz double-dualization monad} \cite{Kock:ProResSynthFuncAn,Kock:Dist}.

Whereas to each space $X$ there is associated a \textit{free} linear space $FX$ on $X$, we observe that
$$DX = [X,R]^* \cong (FX)^{**}$$
so that the space of natural distributions is the double-dual of the free span of $X$.  We define the linear space of \textit{accessible distributions} $\tD X$ as instead the \textit{functional completion} of $FX$, and it follows that $\tD X$ embeds as a subspace of $DX$.  We obtain a submonad $\tDD$ of $\DD$.

We prove that if each linear space $[X,R]$ is \textit{reflexive} (i.e., $[X,R] \cong [X,R]^{**}$ via the canonical map) then $\tDD \cong \DD$, so that accessible distributions are the same as natural distributions.  By a result of Butzmann \cite{Bu}, this is applicable in particular in the example with $\X = \Conv$.

We show that $\tDD$ is always \textit{commutative}, and it follows that integration with respect to accessible distributions satisfies a Fubini-type theorem \pbref{thm:fub_for_acc_distns}.  In the case of convergence spaces, where $\tDD \cong \DD$, this yields a Fubini theorem for natural distributions that generalizes a classical Fubini theorem for compactly-supported Radon measures.

\section{Vector-valued integration}

We develop an abstract theory of vector-valued integration with respect to natural distributions and their relatives, reconciling three distinct axiomatic approaches to vector integration and studying their relation to the above notions of completeness of linear spaces.  We first formulate a notion of \textit{abstract distribution monad} $(\MM,\Delta,\xi)$ that encompasses the natural and accessible distribution monads $\DD$ and $\tDD$.  

Relative to a given abstract distribution monad $\MM$, we define a notion of \textit{Pettis-type integral} that accords with Bourbaki's adaptation of the 1938 vector integral of Pettis \cite{Bou,Pett}.  We define a property of linear spaces $E$ tantamount to the existence of \textit{all} Pettis-type integrals $\pint{} f\;d\mu$ in $E$, saying that $E$ is \textit{$\MM$-Pettis} if it has this property.

We consider also an approach inspired by work of Schwartz \cite{Schw} and Waelbroeck \cite{Wae} that was formulated for $\DD$ in the setting of ringed toposes by Lawvere and Kock \cite{Kock:ProResSynthFuncAn}.  Given a linear space $E$, one may ask whether for each space $X$ the Dirac map $\delta_X:X \rightarrow MX$ induces an isomorphism $\X(X,E) \cong \sL(MX,E)$ in $\X$.  If so then we say that $E$ is \textit{$\MM$-distributionally complete}, and we define the integral $\int f\;d\mu$ of each map $f:X \rightarrow E$ with respect to $\mu \in MX$ as the result of applying the unique linear extension $f^\sharp:MX \rightarrow E$ of $f$ to $\mu$.  

We prove that the separated $\tDD$-distributionally complete linear spaces are exactly the \hbox{$\tDD$-Pettis} spaces, and further that they form a symmetric monoidal closed reflective subcategory of $\sL$.

We show that both $\MM$-Pettis and also $\MM$-distributionally complete linear spaces are \hbox{\textit{$\MM$-algebras}}, and further that the associated vector integrals can be expressed in terms of the $\MM$-algebra structure.  For an \textit{arbitrary} $\MM$-algebra $(Z,a)$, we define
\begin{equation}\label{eqn:int_in_malg_intro}\int f\;d\mu := (a \cdot Mf)(\mu)\end{equation}
for each map $f:X \rightarrow Z$ and each $\mu \in MX$, thus generalizing both the Pettis- and Schwartz-Waelbroeck-type integrals above.  Via a monad morphism $\Delta:\LL \rightarrow \MM$, each $\MM$-algebra carries the structure of a linear space, so this too is a \textit{vector-valued} integral.  The integral notation \eqref{eqn:int_in_malg_intro} for algebras of a monad was defined independently by Kock \cite{Kock:Dist} and the author \cite{Lu:AlgThVectInt}.  In the latter paper, we showed in the measure-theoretic context that the resulting notion of vector integration includes the classical Pettis integral for bounded functions.

We prove that the \textit{linearity} of the vector integral in $\MM$-algebras is tantamount to the equality of two canonical transformations $\otimes^\Delta,\widetilde{\otimes}^\Delta$, generalizing the notion of commutative monad.  Calling $\MM$ a \textit{linear} abstract distribution monad if it has this property, we show that both the natural and accessible distribution monads $\DD$ and $\tDD$ are linear.

We show that if $\MM$ is commutative then the associated vector integral satisfies a Fubini-type theorem.  Since $\tDD$ is always commutative, this yields a vector Fubini theorem for accessible distributions.  In particular, when the cotensors $[X,R]$ are reflexive, as in the example of convergence spaces, we obtain a vector Fubini theorem for natural distributions.

We show that for accessible distributions, the general vector integral in $\tDD$-algebras only \textit{slightly} generalizes the Pettis- and Schwartz-Waelbroeck-type integrals (which for separated linear spaces coincide in this case).  In particular, whereas the category of $\tDD$-Pettis spaces embeds as a full reflective subcategory of the category of $\tDD$-algebras, an arbitrary $\tDD$-algebra arises from a $\tDD$-Pettis space as soon as its underlying linear space is \textit{separated}.  One thus obtains a coincidence of all three notions of vector integral in separated linear spaces, and a resulting symmetric monoidal closed reflective subcategory of $\sL$ consisting of exactly those separated linear spaces that support the integral.  For the example of convergence spaces, where $\tDD \cong \DD$, these results apply to arbitrary natural distributions, and compactly-supported Radon measures in particular. 

\chapter{Preliminaries} \label{ch:prelims}
\setcounter{subsection}{0}

In the present chapter, we survey certain well-known basic notions and results in category theory that shall be required for the sequel.

\section{Stability and cancellation for cartesian arrows and squares} \label{sec:stab_canc_cart_arr_sq}

In the present section, we record some well-known closure properties of the class of \textit{cartesian arrows} of a functor, and so in particular, of pullback squares.  The omitted proofs are easy and elementary exercises.  We also note in passing that the cartesian arrows are part of a generalized prefactorization system of the sort considered in \cite{Th:FactConesFunc}, which we might call a \textit{prefactorization system for cones along a functor}, and the given closure properties may alternatively be proved on this basis.

\begin{DefSub}
Let $P:\A \rightarrow \Y$ be a functor.
\begin{enumerate}
\item Given morphisms $f':A' \rightarrow B$ and $f:A \rightarrow B$ in $\A$ and a morphism $v:PA' \rightarrow PA$ in $\Y$ with $Pf \cdot v = Pf'$, we call the triple $(f',f,v)$ a \textit{$P$-lifting problem}, and we say that a morphism $u:A' \rightarrow A$ is a is a \textit{P-lift} for $(f',f,v)$ if $Pu = v$ and $f \cdot u = f'$.
\item We say that a morphism $f:A \rightarrow B$ in $\A$ is \textit{$P$-cartesian} if there exists a unique $P$-lift for every $P$-lifting problem of the form $(f',f,v)$.
\end{enumerate}
\end{DefSub}

\begin{ExaSub} \label{exa:pb_cart}
Given a category $\B$, functors from the two-element chain $\Two$ to $\B$ are simply arrows in $\B$, whereas morphisms $(f,f'):a \rightarrow b$ in the functor category $[\Two,\B]$ are commutative squares
\begin{equation}\label{eq:comm_sq}
\xymatrix{
A' \ar[r]^{f'} \ar[d]_a & B' \ar[d]_b \\
A \ar[r]^{f}           & B 
}
\end{equation}
in $\B$.  There is an obvious functor $P:[\Two,\B] \rightarrow \B$ sending an arrow in $\B$ to its codomain, and the $P$-cartesian morphisms in $[\Two,\B]$ are exactly the pullback squares of $\B$.
\end{ExaSub}

\begin{PropSub} \label{thm:cart_iso_comp_canc}
Let $P:\A \rightarrow \Y$ be a functor, and let $f:A \rightarrow B$, $g:B \rightarrow C$ in $\A$.  
\begin{enumerate}
\item Isomorphisms in $\A$ are $P$-cartesian.
\item If $f$ and $g$ are $P$-cartesian, then $g \cdot f$ is $P$-cartesian.
\item If both $g \cdot f$ and $g$ are $P$-cartesian, then $f$ is $P$-cartesian.
\end{enumerate}
\end{PropSub}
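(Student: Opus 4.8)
The plan is to verify the three claims directly from the definition of $P$-cartesian morphism, since each is a purely formal manipulation with lifting problems. For part (1), let $f:A \rightarrow B$ be an isomorphism and let $(f',f,v)$ be a $P$-lifting problem, so $f':A' \rightarrow B$ and $v:PA' \rightarrow PA$ satisfy $Pf \cdot v = Pf'$. The obvious candidate for a $P$-lift is $u := f^{-1} \cdot f':A' \rightarrow A$; one checks $f \cdot u = f'$ immediately, and $Pu = P(f^{-1}) \cdot Pf' = P(f^{-1}) \cdot Pf \cdot v = v$ since $P$ preserves the isomorphism $f$ and hence $P(f^{-1}) = (Pf)^{-1}$. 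For uniqueness, if $u'$ is any $P$-lift then $f \cdot u' = f' = f \cdot u$, and cancelling the iso $f$ gives $u' = u$.

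For part (2), suppose $f:A \rightarrow B$ and $g:B \rightarrow C$ are both $P$-cartesian, and let $(h, g \cdot f, v)$ be a $P$-lifting problem, with $h:A' \rightarrow C$ and $v:PA' \rightarrow PA$ satisfying $P(g \cdot f) \cdot v = Ph$. First I would solve the lifting problem $(h, g, Pf \cdot v)$ for $g$: since $Pg \cdot (Pf \cdot v) = P(g\cdot f)\cdot v = Ph$, this is a valid $g$-lifting problem, yielding a unique $w:A' \rightarrow B$ with $g \cdot w = h$ and $Pw = Pf \cdot v$. Then I would solve the lifting problem $(w, f, v)$ for $f$: since $Pf \cdot v = Pw$, this is valid, yielding a unique $u:A' \rightarrow A$ with $f \cdot u = w$ and $Pu = v$. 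Then $(g \cdot f) \cdot u = g \cdot w = h$ and $Pu = v$, so $u$ is a $P$-lift. For uniqueness, given any $P$-lift $u'$ for $(h, g\cdot f, v)$, the morphism $f \cdot u'$ is a $g$-lift for $(h,g,Pf\cdot v)$, hence equals $w$ by uniqueness for $g$; then $u'$ is an $f$-lift for $(w,f,v)$, hence equals $u$ by uniqueness for $f$.

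For part (3), suppose $g \cdot f$ and $g$ are $P$-cartesian; I must show $f$ is. Let $(f', f, v)$ be a $P$-lifting problem with $f':A' \rightarrow B$, $v:PA' \rightarrow PA$, and $Pf \cdot v = Pf'$. The idea is to compose with $g$ to feed the hypothesis on $g \cdot f$: the triple $(g \cdot f', g \cdot f, v)$ is a $P$-lifting problem, since $P(g\cdot f)\cdot v = Pg \cdot Pf \cdot v = Pg \cdot Pf' = P(g \cdot f')$, so there is a unique $u:A' \rightarrow A$ with $(g\cdot f)\cdot u = g\cdot f'$ and $Pu = v$. It remains to check $f \cdot u = f'$; this does not follow from the $g\cdot f$ lifting property alone, so here I would invoke cartesianness of $g$: both $f \cdot u$ and $f'$ are morphisms $A' \rightarrow B$ with $g \cdot (f\cdot u) = g \cdot f' $ and $Pg \cdot P(f \cdot u) = Pg \cdot Pf \cdot v = Pg \cdot Pf'$, i.e. they are both $g$-lifts of the lifting problem $(g \cdot f', g, Pf \cdot v)$ with the same $P$-image $P(f\cdot u) = Pf \cdot v = Pf' = P f'$; wait — more carefully, I note $P(f\cdot u) = Pf\cdot Pu = Pf \cdot v = Pf'$, so $f\cdot u$ and $f'$ are both $g$-lifts for $(g\cdot f', g, Pf')$, whence $f \cdot u = f'$ by uniqueness. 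For uniqueness of $u$: any $P$-lift $u'$ for $(f',f,v)$ satisfies $(g\cdot f)\cdot u' = g \cdot f'$ and $Pu' = v$, so $u' = u$ by uniqueness for $g \cdot f$.

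The only mild subtlety — and the step I expect to need the most care — is in part (3), where solving the lifting problem for $g \cdot f$ produces a morphism $u$ that a priori satisfies only $(g\cdot f)\cdot u = g\cdot f'$, and one must separately extract the stronger equation $f \cdot u = f'$; the trick is to recognise $f\cdot u$ and $f'$ as two $g$-lifts of a common $g$-lifting problem and apply uniqueness for $g$. Everything else is a routine unwinding of definitions.
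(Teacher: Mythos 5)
Your proof is correct; the paper itself omits the argument, remarking only that these are ``easy and elementary exercises,'' and your direct verification from the definition of $P$-lifting problem is exactly the intended routine argument. In particular you correctly identify the one non-trivial point in part (3) --- that $f\cdot u = f'$ must be extracted by viewing $f\cdot u$ and $f'$ as two $P$-lifts of the common lifting problem $(g\cdot f', g, Pf')$ and invoking uniqueness for the cartesian morphism $g$.
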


\begin{CorSub}[Pullback Cancellation and Composition] \label{prop:pb_canc}
Suppose given a commutative diagram
$$
\xymatrix{
A_1 \ar[r]^{f_1} \ar[d]_a &  B_1 \ar[r]^{g_1} \ar[d]_b &  C_1 \ar[d]_c\\
A_2 \ar[r]^{f_2}          &  B_2 \ar[r]^{g_2}          &  C_2
}
$$
in a category $\C$.
\begin{enumerate}
\item If both the left and right squares are pullbacks, then the outer rectangle is a pullback.
\item If both the outer rectangle and the rightmost square are both pullbacks, then the leftmost square is a pullback.
\end{enumerate}
Further, if a commutative square has a pair of opposites sides each of which is an isomorphism, then the square is a pullback.
\end{CorSub}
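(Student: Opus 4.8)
The plan is to obtain all three assertions from Proposition \bref{thm:cart_iso_comp_canc}, applied to the codomain functor $P : [\Two,\C] \rightarrow \C$ of Example \bref{exa:pb_cart}, whose $P$-cartesian morphisms are exactly the pullback squares of $\C$. First I would fix the dictionary: the vertical arrows $a, b, c$ of the given $3 \times 2$ diagram are objects of $[\Two,\C]$; the leftmost square, the rightmost square, and the outer rectangle are, respectively, the morphisms $\sigma = (f_2, f_1) : a \rightarrow b$, $\tau = (g_2, g_1) : b \rightarrow c$, and their composite $\tau \cdot \sigma = (g_2 f_2,\, g_1 f_1) : a \rightarrow c$ in $[\Two,\C]$, and under the identification of Example \bref{exa:pb_cart} the property of being a pullback corresponds throughout to the property of being $P$-cartesian.

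With this in hand, part (1) is immediate from Proposition \bref{thm:cart_iso_comp_canc}(2) (a composite of $P$-cartesian morphisms is $P$-cartesian), and part (2) is immediate from Proposition \bref{thm:cart_iso_comp_canc}(3) (if $\tau \cdot \sigma$ and $\tau$ are $P$-cartesian then so is $\sigma$).

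For the final sentence I would distinguish two cases. If the two horizontal sides of the square \eqref{eq:comm_sq} are isomorphisms, then, componentwise, the square is an isomorphism in $[\Two,\C]$, hence $P$-cartesian by Proposition \bref{thm:cart_iso_comp_canc}(1), hence a pullback. If instead the two vertical sides $a$ and $b$ are isomorphisms, then, composing with the induced isomorphisms $a \cong \id_A$ and $b \cong \id_B$ in $[\Two,\C]$, the square factors as a composite of isomorphisms of $[\Two,\C]$ with the degenerate square whose vertical sides are identities and whose two horizontal sides both equal the bottom arrow $f$; the latter is $P$-cartesian, being a pullback along an identity, so Proposition \bref{thm:cart_iso_comp_canc}(1)--(2) again yields that the original square is $P$-cartesian, i.e. a pullback. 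The only computation anywhere in the proof is this last bookkeeping (or, alternatively, a direct verification of the pullback universal property in the two cases), and it is entirely routine; I anticipate no real obstacle, the substance of the corollary being already packaged in Proposition \bref{thm:cart_iso_comp_canc}.
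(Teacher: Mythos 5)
Your proposal is correct and is exactly the derivation the paper intends (the paper omits the proof as an "easy exercise", but the placement of the Corollary after Proposition \bref{thm:cart_iso_comp_canc} and Example \bref{exa:pb_cart} makes clear that parts (1) and (2) are meant to follow from composition and cancellation of $P$-cartesian morphisms for the codomain functor $P:[\Two,\C]\rightarrow\C$, and the final assertion from the fact that isomorphisms are $P$-cartesian). Your handling of the case where the two vertical sides are isomorphisms — conjugating by the componentwise isomorphisms to reduce to the degenerate square with identity verticals, which is a pullback along an identity — is sound bookkeeping and closes the one case that is not literally an isomorphism in $[\Two,\C]$.
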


\begin{CorSub} \label{prop:pb_cube}
Suppose given a commutative cube
$$
\xymatrix{
                                                         & \cdot \ar[rr] \ar@{.>}[dd] \ar[dl] &                              & \cdot \ar[dd] \ar[dl] \\ 
\cdot \ar[rr] \ar[dd] &                                                         & \cdot \ar[dd] &                                  \\
                                                         & \cdot \ar@{.>}[rr] \ar@{.>}[dl]          &                                  & \cdot \ar[dl]                  \\
\cdot \ar[rr]                                      &                                                 & \cdot                           &                                  
}
$$
in a category, and suppose that the back, front, and right faces are pullbacks.  Then the left face is a pullback.
\end{CorSub}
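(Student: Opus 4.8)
The plan is to exhibit one commutative square inside the cube that admits two different horizontal decompositions into composable pairs of squares, and then to apply the two halves of the pullback pasting lemma, Corollary \ref{prop:pb_canc}, in succession. First I would fix notation: write $P,Q,R,S$ for the vertices of the top face, with $P$ mapping to $Q$ and to $R$ and with $Q$ and $R$ both mapping to $S$; write $T,U,V,W$ for the corresponding vertices of the bottom face; and let $P \to T$, $Q \to U$, $R \to V$, $S \to W$ be the four connecting edges. With this labelling the back face is the square on $(P,Q,U,T)$, the front face the square on $(R,S,W,V)$, the right face the square on $(Q,S,W,U)$, and the left face the square on $(P,R,V,T)$, each oriented so that the two top-face vertices and the two bottom-face vertices lie on opposite sides.

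Next I would consider the square on $(P,S,W,T)$ whose parallel pairs of edges are the composites $P \to S$ and $T \to W$ on one hand and the connecting edges $P \to T$ and $S \to W$ on the other. Commutativity of the cube --- in particular of its top and bottom faces --- makes this square well defined and exhibits it horizontally both as the back face composed with the right face (glued along the edge $Q \to U$) and as the left face composed with the front face (glued along the edge $R \to V$). The argument is then immediate: since the back and right faces are pullbacks, Corollary \ref{prop:pb_canc}(1) applied to the first decomposition shows that the square on $(P,S,W,T)$ is a pullback; and since the front face is a pullback and the square on $(P,S,W,T)$ is now known to be one, Corollary \ref{prop:pb_canc}(2) applied to the second decomposition shows that the left face, the square on $(P,R,V,T)$, is a pullback.

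The only step that requires any real care is verifying that the two horizontal decompositions produce literally the same outer square on $(P,S,W,T)$ --- that is, that the two competing descriptions of each of the edges $P \to S$ and $T \to W$ agree --- which is a short diagram chase using commutativity of the top and bottom faces of the cube. Once that is settled, everything else is a mechanical application of Corollary \ref{prop:pb_canc}, and I anticipate no genuine obstacle.
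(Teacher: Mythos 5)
Your proposal is correct and follows essentially the same route as the paper's own proof: paste the back and right faces to get a pullback rectangle, observe that this rectangle is equally the composite of the left and front faces, and then cancel against the front face using Corollary \ref{prop:pb_canc}. The only difference is that you spell out the vertex labels and the agreement of the two decompositions explicitly, which the paper leaves implicit.
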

\begin{proof}
The commutative rectangle obtained by composing the back face and the right face is a pullback.  But this composite rectangle is equally the composite of the left face and the front face, so the latter composite is a pullback.  But the front face is also a pullback, so by pullback cancellation \pbref{prop:pb_canc}, the left face is a pullback.
\end{proof}

\begin{PropSub} \label{prop:cart_fp}
Let $P:\A \rightarrow \Y$ be a functor.  Suppose that $f:A \rightarrow B$ is a fiber product in $\A$ of a family $(f_i:A_i \rightarrow B)_{i \in I}$ of $P$-cartesian morphisms, indexed by a class $I$, and suppose that this fiber product is preserved by $P$.  Then $f$ is $P$-cartesian.
\end{PropSub}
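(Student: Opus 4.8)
The plan is to check the defining universal property of a $P$-cartesian morphism for $f$ by reducing each half of it --- existence of a $P$-lift, and its uniqueness --- to the corresponding property of the individual $f_i$, using the universal property of the fiber product together with the hypothesis that $P$ preserves it. Write $(p_i : A \rightarrow A_i)_{i \in I}$ for the projections, so that $f = f_i \cdot p_i$ for every $i$ and $(A;(p_i)_{i},f)$ is a limit cone in $\A$; preservation by $P$ then means $(PA;(Pp_i)_{i},Pf)$ is a limit cone in $\Y$, and in particular its legs are jointly monic. (If $I = \emptyset$ the fiber product forces $f$ to be an isomorphism, hence $P$-cartesian by \pbref{thm:cart_iso_comp_canc}; the argument below in fact works uniformly provided one retains the leg $f$ alongside the $p_i$.)

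Given a $P$-lifting problem $(f',f,v)$ with $f':A' \rightarrow B$ and $Pf \cdot v = Pf'$, I would first push it forward along each projection: for $i \in I$ the triple $(f', f_i, Pp_i \cdot v)$ is again a $P$-lifting problem, since $Pf_i \cdot Pp_i \cdot v = P(f_i \cdot p_i)\cdot v = Pf\cdot v = Pf'$. As $f_i$ is $P$-cartesian, there is a unique $u_i : A' \rightarrow A_i$ with $Pu_i = Pp_i\cdot v$ and $f_i \cdot u_i = f'$. The composites $f_i \cdot u_i$ all coincide (they equal $f'$), so the universal property of the fiber product supplies a unique $u : A' \rightarrow A$ with $p_i \cdot u = u_i$ for all $i$, and then $f \cdot u = f_i \cdot p_i \cdot u = f_i \cdot u_i = f'$ automatically.

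It remains to verify $Pu = v$ and the uniqueness of $u$. For the former, $Pp_i \cdot Pu = Pu_i = Pp_i\cdot v$ for every $i$ and $Pf\cdot Pu = Pf' = Pf\cdot v$, so joint monicity of the legs of the limit cone $PA$ gives $Pu = v$. For uniqueness, suppose $u'$ is any $P$-lift of $(f',f,v)$; then each $p_i \cdot u'$ is a $P$-lift of $(f',f_i,Pp_i\cdot v)$, since $f_i \cdot p_i \cdot u' = f\cdot u' = f'$ and $P(p_i\cdot u') = Pp_i\cdot Pu' = Pp_i\cdot v$, hence $p_i\cdot u' = u_i = p_i\cdot u$ by the uniqueness clause for $f_i$, and also $f\cdot u' = f' = f\cdot u$; joint monicity of the legs of the limit cone $A$ in $\A$ then yields $u' = u$.

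The calculations are entirely routine; the one point to handle with care is the systematic use of the identity $f = f_i\cdot p_i$ together with joint monicity of the limit legs --- in $\A$ for the uniqueness of $u$, and in $\Y$, by preservation along $P$, for the equality $Pu = v$ --- being sure to include the leg $f$ itself so that the degenerate case $I = \emptyset$ is not left out. No input beyond the definition of $P$-cartesian and \pbref{thm:cart_iso_comp_canc} is required.
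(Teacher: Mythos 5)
Your proof is correct: pushing the lifting problem forward along the projections, assembling the resulting lifts via the universal property of the fiber product, and using joint monicity of the limit legs (in $\Y$ for $Pu = v$, in $\A$ for uniqueness) is exactly the direct verification intended here, and you handle the only delicate points --- retaining the leg $f$ in the cone and the degenerate case $I = \emptyset$ --- correctly. The paper omits this proof as an elementary exercise, and your argument is the standard one it has in mind.
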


\begin{CorSub} \label{cor:fp_pb}
Suppose we are given a commutative square in a category $\B$ as in \eqref{eq:comm_sq} and, for each element $i$ of some class $I$, a commutative triangular prism
$$
\xymatrix{
A' \ar[rr]^{f'} \ar[dr]_{\pi'_i} \ar[dd]_a &                                   & B' \ar[dd]^b \\
                                           & A'_i \ar[ur]_{f'_i} \ar[dd]_(.3){a_i} &             \\
A \ar@{.>}[rr]^(0.7)f \ar[dr]_{\pi_i}           &                                   & B           \\
                                           & A_i \ar[ur]_{f_i}                 &
}
$$
whose right face is a pullback.  Suppose that the top and bottom faces ($i \in I$) present $f'$ and $f$, respectively, as fiber products.  Then the given square is a pullback.
\end{CorSub}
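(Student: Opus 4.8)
The plan is to obtain this as an instance of Proposition~\pbref{prop:cart_fp}, applied to the codomain functor $P : [\Two, \B] \to \B$; recall from Example~\pbref{exa:pb_cart} that the $P$-cartesian morphisms of $[\Two, \B]$ are exactly the pullback squares of $\B$. Viewing $a$, $b$ and the $a_i$ as objects of $[\Two, \B]$, the given square~\eqref{eq:comm_sq} is a morphism $(f', f) : a \to b$, each right face is a morphism $(f'_i, f_i) : a_i \to b$, and each left face is a morphism $(\pi'_i, \pi_i) : a \to a_i$; the commutativity of the $i$-th prism supplies the equation $(f'_i, f_i) \cdot (\pi'_i, \pi_i) = (f', f)$, its two components being the commutativity of the $i$-th top and bottom triangles. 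Since $P$ carries $(f', f)$ to $f$ and $(f'_i, f_i)$ to $f_i$, it will be enough to show: (i) $(f', f) : a \to b$ exhibits $a$ as the fiber product in $[\Two, \B]$ of the family $\bigl((f'_i, f_i) : a_i \to b\bigr)_{i \in I}$, with legs the $(\pi'_i, \pi_i)$; and (ii) this fiber product is preserved by $P$. Proposition~\pbref{prop:cart_fp} will then yield that $(f', f)$ is $P$-cartesian, and Example~\pbref{exa:pb_cart} reinterprets this as the assertion that the given square is a pullback.

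That the family consists of $P$-cartesian morphisms, as Proposition~\pbref{prop:cart_fp} requires, is immediate from the hypothesis that each right face is a pullback together with Example~\pbref{exa:pb_cart}. For (i), I would use that limits in the functor category $[\Two, \B]$ are computed pointwise: the fiber product of $\bigl((f'_i, f_i) : a_i \to b\bigr)_i$ is the arrow of $\B$ whose domain is the fiber product of $(f'_i : A'_i \to B')_i$, whose codomain is the fiber product of $(f_i : A_i \to B)_i$, and which is the canonically induced map between them. The top-faces hypothesis presents the first of these as $A'$ with projections $\pi'_i$ and canonical map $f' : A' \to B'$, the bottom-faces hypothesis presents the second as $A$ with projections $\pi_i$ and canonical map $f : A \to B$, and the induced arrow $A' \to A$ is then pinned down, using the remaining commutativities of the prisms (notably $\pi_i \cdot a = a_i \cdot \pi'_i$ and $b \cdot f' = f \cdot a$) and the universal property of $A$, to be $a$ itself. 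Thus $a$, together with the left faces $(\pi'_i, \pi_i) : a \to a_i$ and with the given square $(f', f) : a \to b$, is the fiber product in $[\Two, \B]$. For (ii), applying $P$ to this limiting cone yields precisely the cone exhibiting $f : A \to B$ as the fiber product of $(f_i)_i$ in $\B$, which is a limit by the bottom-faces hypothesis; so $P$ preserves the fiber product, and Proposition~\pbref{prop:cart_fp} applies.

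The substantive step is the bookkeeping inside (i): one must check carefully that the pointwise limit in $[\Two, \B]$ has the left faces of the prisms as its legs and the given square~\eqref{eq:comm_sq} as its canonical map to $b$, rather than some a priori different cone. This is a routine verification of universal properties from the data of the prisms, and once it is in place the conclusion follows formally from Proposition~\pbref{prop:cart_fp} and Example~\pbref{exa:pb_cart}.
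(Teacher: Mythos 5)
Your proposal is correct and follows essentially the same route as the paper: both view the square and the prism faces as morphisms in $[\Two,\B]$, observe that the left faces present $(f,f'):a\rightarrow b$ as a fiber product of the $P$-cartesian morphisms $(f_i,f'_i):a_i\rightarrow b$ preserved by the codomain functor $P$, and conclude via Proposition \bref{prop:cart_fp} and Example \bref{exa:pb_cart}. Your extra bookkeeping in step (i), via pointwise limits in $[\Two,\B]$, just makes explicit what the paper leaves as an assertion.
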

\begin{proof}
The morphisms $(\pi_i,\pi'_i):a \rightarrow a_i$ in $[\Two,\B]$ $(i \in I)$ present $(f,f'):a \rightarrow b$ as a fiber product of the morphisms $(f_i,f'_i):a_i \rightarrow b$, and this fiber product is preserved by the codomain functor $P:[\Two,\B] \rightarrow \B$.  Hence, since each morphism $(f_i,f'_i):a_i \rightarrow b$ is $P$-cartesian, we deduce by \bref{prop:cart_fp} that the fiber product $(f,f'):a \rightarrow b$ is $P$-cartesian.
\end{proof}

\begin{PropSub}\label{prop:pb_canc_mono}
Suppose given a commutative diagram
$$
\xymatrix{
A_1 \ar[r]^{f_1} \ar[d]_a &  B_1 \ar[r]^{g_1} \ar[d]_b &  C_1 \ar[d]_c\\
A_2 \ar[r]^{f_2}          &  B_2 \ar[r]^{g_2}          &  C_2
}
$$
in a category $\C$, and suppose that the outer rectangle is a pullback and $b$ is mono.  Then the leftmost square is a pullback.
\end{PropSub}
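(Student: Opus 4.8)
The plan is to verify the universal property of the leftmost square directly, exploiting the outer rectangle as the pullback we already control. Since the square is drawn as part of a commutative diagram we may use the relations $b \cdot f_1 = f_2 \cdot a$ (left square) and $c \cdot g_1 = g_2 \cdot b$ (right square), and the hypothesis says precisely that $A_1$, equipped with $a \colon A_1 \to A_2$ and $g_1 \cdot f_1 \colon A_1 \to C_1$, is the pullback of the cospan $A_2 \xrightarrow{g_2 f_2} C_2 \xleftarrow{c} C_1$.

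First I would fix a test object $T$ together with morphisms $u \colon T \to A_2$ and $v \colon T \to B_1$ satisfying $f_2 \cdot u = b \cdot v$, and seek a unique $w \colon T \to A_1$ with $a \cdot w = u$ and $f_1 \cdot w = v$. The key observation is that $(u, g_1 \cdot v)$ is a cone over the outer cospan: indeed $c \cdot (g_1 \cdot v) = (c \cdot g_1) \cdot v = (g_2 \cdot b) \cdot v = g_2 \cdot (f_2 \cdot u) = (g_2 \cdot f_2) \cdot u$, using commutativity of the right square and the relation $f_2 \cdot u = b \cdot v$. Applying the universal property of the outer pullback to this cone produces a unique $w \colon T \to A_1$ with $a \cdot w = u$ and $g_1 \cdot f_1 \cdot w = g_1 \cdot v$.

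It then remains to upgrade $g_1 \cdot f_1 \cdot w = g_1 \cdot v$ to $f_1 \cdot w = v$, and this is exactly where the mono hypothesis enters: from the left square, $b \cdot (f_1 \cdot w) = (b \cdot f_1) \cdot w = f_2 \cdot a \cdot w = f_2 \cdot u = b \cdot v$, so cancelling the monomorphism $b$ yields $f_1 \cdot w = v$. For uniqueness, if $w' \colon T \to A_1$ also satisfies $a \cdot w' = u$ and $f_1 \cdot w' = v$, then $a \cdot w' = u$ and $g_1 \cdot f_1 \cdot w' = g_1 \cdot v$, so $w'$ induces over the outer cospan the same cone as $w$, whence $w' = w$ by the uniqueness clause of the outer pullback. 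This shows the leftmost square is a pullback.

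The argument is entirely formal once the cone $(u, g_1 \cdot v)$ is identified, so there is no genuine obstacle; the only points requiring care are bookkeeping which square's commutativity is invoked where, and cancelling the mono $b$ on the correct side (as $b \cdot (f_1 \cdot w) = b \cdot v$, via the identity $b \cdot f_1 = f_2 \cdot a$). One could instead phrase the proof through the canonical comparison map from $A_1$ to the pullback of $f_2$ along $b$ and construct an explicit inverse out of the outer pullback, but the test-object formulation above is the most economical.
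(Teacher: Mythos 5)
Your proof is correct and follows essentially the same route as the paper's: form the cone $(u, g_1\cdot v)$ over the outer cospan, obtain $w$ from the outer pullback, and use the mono $b$ via $b\cdot f_1\cdot w = f_2\cdot a\cdot w = f_2\cdot u = b\cdot v$ to recover $f_1\cdot w = v$, with uniqueness inherited from the outer pullback. No issues.
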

\begin{proof}
Suppose $x:D \rightarrow A_2$, $y:D \rightarrow B_1$ have $f_2 \cdot x = b \cdot y$.  Then $(g_2 \cdot  f_2) \cdot x = g_2 \cdot b \cdot y = c \cdot (g_1 \cdot y)$, so since the outer rectangle is a pullback, there is a unique $w:D \rightarrow A_1$ such that $a \cdot w = x$ and $(g_1 \cdot f_1) \cdot w = g_1 \cdot y$.  But for any morphism $w:D \rightarrow A_1$, if $a \cdot w = x$ then it automatically follows that $f_1 \cdot w = y$, since $b \cdot f_1 \cdot w = f_2 \cdot a \cdot w = f_2 \cdot x = b \cdot y$.  The result follows. 
\end{proof}

\begin{PropSub}
Let $P:\A \rightarrow \Y$ be a functor, and let $f:A \rightarrow B$ in $\A$ be such that $Pf$ is iso.  Then $f$ is $P$-cartesian iff $f$ is iso.
\end{PropSub}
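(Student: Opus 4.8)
The plan is to dispatch the two implications separately. The implication ``$f$ iso $\Rightarrow$ $f$ is $P$-cartesian'' is immediate from \bref{thm:cart_iso_comp_canc}(1), and does not even use the hypothesis that $Pf$ is iso. So the substance is the converse: assuming $f$ is $P$-cartesian and $Pf$ is iso, I would produce a two-sided inverse for $f$.

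First I would construct a section of $f$. Consider the $P$-lifting problem $(\id_B, f, (Pf)^{-1})$, i.e.\ take $f' = \id_B : B \rightarrow B$, the given $f : A \rightarrow B$, and $v = (Pf)^{-1} : PB \rightarrow PA$; this is a legitimate $P$-lifting problem since $Pf \cdot (Pf)^{-1} = \id_{PB} = P(\id_B)$. As $f$ is $P$-cartesian, there is a (unique) $P$-lift $u : B \rightarrow A$, so that $Pu = (Pf)^{-1}$ and $f \cdot u = \id_B$; thus $u$ is a section of $f$.

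Next I would show that $u$ is also a retraction, i.e.\ $u \cdot f = \id_A$, by appealing to the \emph{uniqueness} clause of $P$-cartesianness. The morphisms $u \cdot f$ and $\id_A$ are both $P$-lifts for the $P$-lifting problem $(f, f, \id_{PA})$: indeed $f \cdot (u \cdot f) = (f \cdot u) \cdot f = f = f \cdot \id_A$, while $P(u \cdot f) = Pu \cdot Pf = (Pf)^{-1} \cdot Pf = \id_{PA} = P(\id_A)$. Since $f$ is $P$-cartesian, these two lifts must coincide, so $u \cdot f = \id_A$, and hence $f$ is an isomorphism with inverse $u$.

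There is no genuine obstacle here; the only point requiring care is the bookkeeping, namely checking that the triples written down really do satisfy the compatibility condition $Pf \cdot v = Pf'$ demanded of a $P$-lifting problem, and that in the final step both $u \cdot f$ and $\id_A$ genuinely qualify as $P$-lifts, so that the uniqueness clause of $P$-cartesianness applies to force them equal.
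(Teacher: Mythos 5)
Your proof is correct and is exactly the standard argument; the paper omits the proof of this proposition (listing it among the ``easy and elementary exercises''), and your construction of the section $u$ from the lifting problem $(\id_B,f,(Pf)^{-1})$ followed by the uniqueness argument on $(f,f,\id_{PA})$ is the intended route. Both compatibility checks and both lift verifications are carried out correctly, so there is nothing to add.
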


\begin{CorSub} \label{prop:pb_iso}
Suppose given a commutative square
$$
\xymatrix{
A \ar[r]^{f'} \ar[d]_a & B' \ar[d]_b \\
A \ar[r]^{f}           & B 
}
$$
in which $f$ is iso.  Then the square is a pullback if and only if $f'$ is iso.
\end{CorSub}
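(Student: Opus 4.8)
The plan is to obtain this as a direct application of the immediately preceding Proposition to the codomain functor $P\colon[\Two,\B]\rightarrow\B$ of Example~\bref{exa:pb_cart}. First I would note that the given commutative square is nothing but a morphism of $[\Two,\B]$: written in the form of Example~\bref{exa:pb_cart}, it is a morphism $(f,f')\colon a\rightarrow b$ from the object $a\colon A\rightarrow A$ to the object $b\colon B'\rightarrow B$, with components $f'$ at the source of $\Two$ and $f$ at the target. Under the codomain functor $P$ this morphism is carried to $f$, which is an isomorphism by hypothesis.

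Since $P(f,f')=f$ is iso, the preceding Proposition applies and shows that $(f,f')$ is $P$-cartesian if and only if $(f,f')$ is an isomorphism in $[\Two,\B]$. By Example~\bref{exa:pb_cart}, being $P$-cartesian is precisely being a pullback square, which is the property we wish to characterize. On the other side of the equivalence, a morphism of a functor category is invertible exactly when each of its components is; hence $(f,f')$ is iso in $[\Two,\B]$ iff both $f$ and $f'$ are iso in $\B$, and as $f$ is already assumed iso this is simply the condition that $f'$ be iso. Chaining the two equivalences gives the corollary.

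I do not expect any genuine obstacle here; the only points needing a moment's care are the bookkeeping that identifies the square with a morphism $(f,f')\colon a\rightarrow b$ whose image under the codomain functor is $f$ (and not $f'$), and the remark that isomorphisms in $[\Two,\B]$ are detected componentwise. If one preferred to avoid the cartesian-arrow machinery, one could instead argue directly: the pullback of $f$ along $b$ may be realized as $B'$ itself, with projections $\id_{B'}$ and $f^{-1}\cdot b$, whereupon the comparison map determined by the square works out to be $f'$ itself, so that the square is a pullback precisely when $f'$ is iso. The argument via the preceding Proposition is shorter, though, and keeps to the spirit of the present section.
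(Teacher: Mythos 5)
Your derivation is correct and is precisely the route the paper intends: the corollary is stated immediately after the proposition on $P$-cartesian morphisms with $Pf$ iso, to be applied to the codomain functor $P\colon[\Two,\B]\rightarrow\B$ of Example~\bref{exa:pb_cart}, exactly as you do (the paper omits the proof as an elementary exercise). The bookkeeping that $P(f,f')=f$ and that isomorphisms in $[\Two,\B]$ are componentwise is handled correctly, so there is nothing to add.
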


\section{2-categories} \label{sec:two_cats}

We shall require the following basic notions and results in the 2-categorical context.

\begin{DefSub}
Let $\K$ be a 2-category.
\begin{enumerate}
\item An \textit{adjunction} $f \nsststile{\varepsilon}{\eta} g:B \rightarrow A$ in $\K$ consists of 1-cells $g:B \rightarrow A$, $f:A \rightarrow B$ together with 2-cells $\eta:1_A \rightarrow gf$, $\varepsilon:fg \rightarrow 1_B$ such that $g\varepsilon \cdot \eta g = 1_g$ and $\varepsilon f \cdot f \eta = 1_f$.
\item Given an object $A$ in $\K$, a \textit{monad} $\TT = (t,\eta,\mu)$ on $A$ in $\K$ consists of a 1-cell $t:A \rightarrow A$ together with 2-cells $\eta:1_A \rightarrow t$, $\mu:tt \rightarrow t$ such that $\mu \cdot t\eta = 1_t = \mu \cdot \eta t$ and $\mu \cdot \mu t = \mu \cdot t \mu$.
\end{enumerate}
\end{DefSub}

\begin{ExaSub}\label{exa:enr_adj_mnd}
There is a 2-category $\CAT$ with objects all categories, 1-cells all functors, and 2-cells all natural transformations.  Adjunctions and monads in $\CAT$ are simply adjunctions and monads in the usual sense.

Given a monoidal category $\V$ \pbref{sec:enr_cats}, there is a 2-category $\VCAT$ with objects all $\V$-enriched categories, 1-cells all $\V$-functors, and 2-cells all $\V$-natural transformations.  Adjunctions and monads in $\VCAT$ are called \textit{$\V$-adjunctions} and \textit{$\V$-monads}, respectively.

There are 2-categories as follows; see \cite{EiKe}.
\begin{center}
     \begin{tabular}{ | p{10ex} | p{20ex} | p{20ex} | p{26ex} |}
     \hline
     Name  & Objects & 1-cells & 2-cells \\ \hline
     \MMCCAATT & monoidal categories & monoidal functors & monoidal transformations\\ \hline
     \SSMMCCAATT & symmetric monoidal categories & symmetric monoidal functors & monoidal transformations \\ \hline
     \CCllCCAATT & closed categories & closed functors & closed transformations \\
     \hline
     \end{tabular}
\end{center}
Thus we obtain the notions of \textit{monoidal} (resp. \textit{symmetric monoidal, closed}) \textit{adjunction} (resp. \textit{monad}).
\end{ExaSub}

\begin{ParSub}
As in the familiar case of $\K = \CAT$, every adjunction $f \dashv g:B \rightarrow A$ in a 2-category $\K$ determines an associated monad on $A$ in $\K$.
\end{ParSub}

\begin{PropSub} \label{thm:mon_func_detd_by_adj}
Let $f \nsststile{\varepsilon}{\eta} g : B \rightarrow A$ be an adjunction in a 2-category $\K$.  Then there is an associated monoidal functor $[f,g] := \K(f,g) : \K(B,B) \rightarrow \K(A,A)$ with the following property:  For any adjunction $f' \nsststile{\varepsilon'}{\eta'} g':C \rightarrow B$ with induced monad $\TT'$ on $B$, the monad $[f,g](\TT')$ on $A$ is equal to the monad induced by the composite adjunction 
$$\xymatrix{A \ar@/_0.5pc/[rr]_f^(0.4){\eta}^(0.6){\varepsilon}^{\top} & & B \ar@/_0.5pc/[ll]_g \ar@/_0.5pc/[rr]_{f'}^(0.4){\eta'}^(0.6){\varepsilon'}^{\top} & & {C\;.} \ar@/_0.5pc/[ll]_{g'}}$$
\end{PropSub}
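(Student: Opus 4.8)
The plan is to take $[f,g] := \K(f,g)$ to be the functor $\K(B,B) \to \K(A,A)$ sending a $1$-cell $t : B \to B$ to $g\,t\,f$ and a $2$-cell $\alpha : t \to s$ to $g\,\alpha\,f$, which is functorial by the interchange law, and to endow it with the structure of a (lax) monoidal functor between the strict monoidal categories $(\K(B,B),\circ,1_B)$ and $(\K(A,A),\circ,1_A)$ given by horizontal composition. The comparison cells are the evident ones built from the adjunction data: the unit constraint is $\eta : 1_A \to gf = [f,g](1_B)$, and for $1$-cells $t,s : B \to B$ the multiplication constraint is
\[
[f,g]_{t,s} \;:=\; g\,t\,\varepsilon\,s\,f \;:\; (g\,t\,f)(g\,s\,f) \longrightarrow g\,(ts)\,f,
\]
obtained by whiskering the counit $\varepsilon : fg \to 1_B$. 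First I would check that $[f,g]_{t,s}$ is $2$-natural in $t$ and $s$, which is immediate from the interchange law. Then I would verify the three coherence axioms of a monoidal functor: since both monoidal structures are strict, the associativity axiom reduces, after cancelling the outer whiskerings by $g$ and $f$, to the interchange law for the two disjoint occurrences of $\varepsilon$, while the two unit axioms reduce, respectively, to the triangle identities $g\varepsilon \cdot \eta g = 1_g$ and $\varepsilon f \cdot f\eta = 1_f$, suitably whiskered. This produces the monoidal functor $[f,g]$.

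For the stated property I would invoke the standard fact that a lax monoidal functor carries monoids to monoids. Given the monad $\TT' = (t',\eta',\mu')$ on $B$ induced by $f' \dashv g'$, so that $t' = g'f'$ and $\mu' = g'\,\varepsilon'\,f'$, its image $[f,g](\TT')$ is the monad on $A$ whose underlying $1$-cell is $[f,g](t') = g\,t'\,f = (gg')(f'f)$, whose unit is the composite
\[
1_A \xrightarrow{\;\eta\;} gf \xrightarrow{\;g\eta' f\;} g\,t'\,f,
\]
and whose multiplication is the composite
\[
(g\,t'\,f)(g\,t'\,f) \xrightarrow{\;g\,t'\,\varepsilon\,t'\,f\;} g\,(t't')\,f \xrightarrow{\;g\,\mu'\,f\;} g\,t'\,f .
\]
On the other hand, the composite adjunction $f'f \dashv gg' : C \to A$ has unit $1_A \xrightarrow{\eta} gf \xrightarrow{g\eta' f} gg'f'f$ and counit $\varepsilon'' := \varepsilon' \cdot (f'\,\varepsilon\,g') : f'fgg' \to 1_C$, and hence induces the monad on $A$ with the same underlying $1$-cell $gg'f'f$, the same unit, and multiplication $(gg')\,\varepsilon''\,(f'f)$. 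Substituting $t' = g'f'$ and $\mu' = g'\varepsilon'f'$ turns the multiplication of $[f,g](\TT')$ into $(gg'\,\varepsilon'\,f'f) \cdot (gg'f'\,\varepsilon\,g'f'f)$, which is exactly $(gg')\,\varepsilon''\,(f'f)$ since whiskering distributes over vertical composition; hence the two monads coincide on the nose, as the statement asserts.

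I expect the only genuine work to be the verification of the monoidal-functor coherence axioms — keeping the whiskering bookkeeping straight in an arbitrary $2$-category, and recognising that the two unit axioms are precisely the two triangle identities (whiskered) while associativity needs only interchange. Once that is in place the identification of $[f,g](\TT')$ with the monad of the composite adjunction is a routine substitution. I would also note in passing that $[f,g]$ is in general only lax, not strong, monoidal, since the constraints $g\,t\,\varepsilon\,s\,f$ need not be invertible; this is harmless, as lax monoidal functors already preserve monoids, which is all the statement requires.
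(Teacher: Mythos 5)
Your proposal is correct and takes essentially the same approach as the paper: the paper equips $[f,g]=\K(f,g)$ with exactly the unit constraint $\eta:1_A\rightarrow gf$ and multiplication constraints $gt\varepsilon sf$ (written there as $gh\varepsilon kf$), declaring the verification straightforward, and you have simply carried out that verification together with the routine substitution identifying $[f,g](\TT')$ with the monad of the composite adjunction.
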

\begin{proof}
The monoidal structure on the functor $[f,g]$ consists of the morphisms $gh\varepsilon hf:ghfgkf \rightarrow ghkf$ in $\K(A,A)$ (for all objects $h$, $k$ in $\K(B,B)$) and the morphism $\eta:1_A \rightarrow gf$ in $\K(A,A)$.  The verification is straightforward.
\end{proof}

\begin{ParSub} \label{par:adj_and_mnd}
Given objects $A$, $B$ in a 2-category $\K$, there is a category $\Adj_\K(A,B)$ whose objects are adjunctions $f \nsststile{\varepsilon}{\eta} g : B \rightarrow A$ in $\K$ and whose morphisms $(\phi,\psi):(f \nsststile{\varepsilon}{\eta} g) \rightarrow (f' \nsststile{\varepsilon'}{\eta'} g')$ consist of 2-cells $\phi:f \rightarrow f'$ and $\psi:g \rightarrow g'$ such that $(\psi \circ \phi) \cdot \eta = \eta'$ and $\varepsilon' \cdot (\phi \circ \psi) = \varepsilon$.

There is a category $\Mnd_\K(A)$ whose objects are monads on $A$ and whose morphisms $\theta:(t,\eta,\mu) \rightarrow (t',\eta',\mu')$ consist of a 2-cell $\theta:t \rightarrow t'$ such that $\theta \cdot \eta = \eta'$ and $\mu' \cdot (\theta \circ \theta) = \theta \cdot \mu$.  The identity monad $\ONEONE_A$ is an initial object in $\Mnd_\K(A)$, since for each monad $\TT = (t,\eta,\mu)$ on $A$, the 2-cell $\eta$ is the unique monad morphism $\eta:\ONEONE_A \rightarrow \TT$.

There is a functor $\Adj_{\K}(A,B) \rightarrow \Mnd_\K(A)$ sending an adjunction to its induced monad and a morphism $(\phi,\psi):(f \nsststile{\varepsilon}{\eta} g) \rightarrow (f' \nsststile{\varepsilon'}{\eta'} g')$ to the morphism $\psi \circ \phi:\TT \rightarrow \TT'$ between the induced monads.  Hence, in particular, isomorphic adjunctions induce isomorphic monads.
\end{ParSub}

\begin{ParSub}\label{par:general_mnd_mor}
Further to \bref{par:adj_and_mnd}, we shall also make use of a more general notion of morphism of monads $\TT \rightarrow \SSS$ in a 2-category $\K$, where $\TT = (t,\eta,\mu)$ is a monad on $A$ and $\SSS = (s,\phi,\nu)$ is a monad on (a possibly different object) $B$.  First let us define the following weaker notion, which we shall also require:  Given endomorphisms $t$ of $A$ and $s$ of $B$ in $\K$, a \textit{morphism of endo-1-cells} $(\lambda,g):t \rightarrow s$ in $\K$ consists of a 1-cell $g:B \rightarrow A$ and a 2-cell $\lambda:tg \rightarrow gs$.  We then define a \textit{morphism of monads} $(\lambda,g):\TT \rightarrow \SSS$ to be a morphism of the underlying endo-1-cells $(\lambda,g):t \rightarrow s$ such that the following diagrams commute:
\begin{eqnarray*}
\xymatrix{
tg \ar[rr]^\lambda &                                    & gs \\
                   & g \ar[ul]^{\eta g} \ar[ur]_{g\phi} &
} & \text{(Unit Law)}
\end{eqnarray*}
\begin{eqnarray*}
\xymatrix{
ttg \ar[r]^{t\lambda} \ar[d]_{\mu g} & tgs \ar[r]^{\lambda s} & gss \ar[d]^{g\nu} \\
tg \ar[rr]^\lambda                   &                        & gs
} & \text{(Associativity Law)}
\end{eqnarray*}
The narrower notion of morphism given in \bref{par:adj_and_mnd} is recovered in the case that $A = B$ by taking $g = 1_A$ to be the identity 1-cell.  With these more general morphisms, the monads in $\K$ form a category.  Morphisms of this sort were considered in \cite{St:FormalThMnds} but there were called \textit{monad functors} and, accordingly, were construed as morphisms in the opposite direction --- i.e., in the direction of the component 1-cell rather than that of the 2-cell.
\end{ParSub}

\begin{PropSub} \label{thm:uniq_adj}
Let $f \nsststile{\varepsilon}{\eta} g$ and $f' \nsststile{\varepsilon'}{\eta'} g$ be adjunctions, having the same right adjoint $g:B \rightarrow A$, in a 2-category $\K$.  Then these adjunctions are isomorphic and hence induce isomorphic monads on $A$.
\end{PropSub}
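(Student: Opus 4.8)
The plan is to promote the two adjunctions to \emph{isomorphic} objects of the category $\Adj_\K(A,B)$ of \bref{par:adj_and_mnd} and then to apply the functor $\Adj_\K(A,B) \rightarrow \Mnd_\K(A)$ described there, which carries isomorphic adjunctions to isomorphic monads. Since $f \nsststile{\varepsilon}{\eta} g$ and $f' \nsststile{\varepsilon'}{\eta'} g$ induce the monads $\TT$ and $\TT'$ on $A$ with underlying $1$-cells $gf$ and $gf'$ respectively, this will give both conclusions at once.

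To construct the isomorphism of adjunctions I would take its right-hand component to be $1_g$ and its left-hand component to be the mate
$$\phi \;:=\; (\varepsilon f')\cdot(f\eta')\;:\; f \longrightarrow fgf' \longrightarrow f',$$
with candidate inverse $\bar\phi := (\varepsilon' f)\cdot(f'\eta) : f' \rightarrow f'gf \rightarrow f$. The verification splits into two parts. First, one checks that $(\phi,1_g)$ is a morphism $(f \nsststile{\varepsilon}{\eta} g) \rightarrow (f' \nsststile{\varepsilon'}{\eta'} g)$ in $\Adj_\K(A,B)$, i.e.\ that $(g\phi)\cdot\eta = \eta'$ and $\varepsilon'\cdot(\phi g) = \varepsilon$; expanding $g\phi$ and $\phi g$ by the interchange law, the first equation collapses under the triangle identity $g\varepsilon\cdot\eta g = 1_g$ of the first adjunction and the second under the triangle identity $g\varepsilon'\cdot\eta' g = 1_g$ of the second. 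Running the same computation with the roles of the two adjunctions interchanged shows that $(\bar\phi,1_g)$ is a morphism $(f' \nsststile{\varepsilon'}{\eta'} g) \rightarrow (f \nsststile{\varepsilon}{\eta} g)$. Second, one checks that $(\phi,1_g)$ and $(\bar\phi,1_g)$ are mutually inverse, i.e.\ $\bar\phi\cdot\phi = 1_f$ and $\phi\cdot\bar\phi = 1_{f'}$. This can be established by a direct diagram chase through the triangle identities of both adjunctions, or more economically as follows: the assignment $\chi \mapsto (g\chi)\cdot\eta$ is a bijection $\K(A,B)(f,f) \rightarrow \K(A,A)(1_A, gf)$ coming from the adjunction $f \nsststile{\varepsilon}{\eta} g$, and under it $\bar\phi\cdot\phi$ has image $g\bar\phi\cdot g\phi\cdot\eta = g\bar\phi\cdot\eta' = \eta$, which is also the image of $1_f$, whence $\bar\phi\cdot\phi = 1_f$; the identity $\phi\cdot\bar\phi = 1_{f'}$ follows symmetrically using the bijection coming from $f' \nsststile{\varepsilon'}{\eta'} g$.

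Combining the two parts, $(\phi,1_g)$ is an isomorphism in $\Adj_\K(A,B)$ with inverse $(\bar\phi,1_g)$, so the two adjunctions are isomorphic, and applying the functor to $\Mnd_\K(A)$ yields the monad isomorphism $g\phi : \TT \rightarrow \TT'$. I do not expect a genuine obstacle: the only real content is the interchange-law bookkeeping in the first of the two checks, which is entirely routine, and the whole argument is the standard proof that a left adjoint to a fixed $1$-cell is unique up to canonical isomorphism.
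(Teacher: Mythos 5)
Your proposal is correct and follows essentially the same route as the paper: the paper also takes $\phi := \varepsilon f' \cdot f\eta'$ with inverse $\varepsilon' f \cdot f'\eta$ (citing Gray for the invertibility) and checks that $(\phi,1_g)$ is an isomorphism in $\Adj_\K(A,B)$, then applies the functor to $\Mnd_\K(A)$ from \bref{par:adj_and_mnd}. You have merely written out the interchange-law and triangle-identity verifications that the paper leaves to the reader, and these check out.
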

\begin{proof}
The 2-cell $\phi := \varepsilon f' \cdot f \eta':f \rightarrow f'$ has inverse $\varepsilon' f \cdot f' \eta$ (\cite{Gray}, I,6.3), and one checks that $(\phi,1_g)$ serves as the needed isomorphism of adjunctions.
\end{proof}

\begin{PropSub} \label{thm:mnd_morph_from_adj_factn}
Let $\xymatrix{A \ar@/_0.5pc/[rr]_f^(0.4){\eta}^(0.6){\varepsilon}^{\top} & & B \ar@/_0.5pc/[ll]_g \ar@/_0.5pc/[rr]_{f'}^(0.4){\eta'}^(0.6){\varepsilon'}^{\top} & & C \ar@/_0.5pc/[ll]_{g'}}$ and $\xymatrix{A \ar@/_0.5pc/[rr]_{f''}^(0.4){\eta''}^(0.6){\varepsilon''}^{\top} & & C \ar@/_0.5pc/[ll]_{g''}}$ be adjunctions in a 2-category $\K$, with respective induced monads $\TT$, $\TT'$, $\TT''$, and suppose that $gg' = g''$.  Then there is an associated monad morphism $\TT \rightarrow \TT''$.
\end{PropSub}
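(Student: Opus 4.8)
The plan is to produce the monad morphism by transporting the unique morphism out of the initial monad $\ONEONE_B$ along the monoidal functor associated in \bref{thm:mon_func_detd_by_adj} to the adjunction $f \nsststile{\varepsilon}{\eta} g$, and then identifying the resulting source and target with $\TT$ and $\TT''$. I would first recall the standard fact that, for any object $X$ of $\K$, the monads on $X$ in $\K$ are precisely the monoids in the strict monoidal category $(\K(X,X),\circ,1_X)$, and the monad morphisms of \bref{par:adj_and_mnd} are precisely the monoid morphisms; hence any lax monoidal functor between two such categories restricts to a functor between the corresponding categories of monads. Applying this to $[f,g] := \K(f,g):\K(B,B) \rightarrow \K(A,A)$ from \bref{thm:mon_func_detd_by_adj} gives a functor $\Mnd_\K(B) \rightarrow \Mnd_\K(A)$, which I also denote $[f,g]$.

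Next I would check that $[f,g](\ONEONE_B) = \TT$. The underlying $1$-cell of $[f,g](\ONEONE_B)$ is $g\,1_B\,f = gf$, and, reading off the lax monoidal structure of $[f,g]$ recorded in the proof of \bref{thm:mon_func_detd_by_adj}, its unit $1_A \rightarrow gf$ is $\eta$ and its multiplication $gfgf \rightarrow gf$ is $g\varepsilon f$; these are exactly the unit and multiplication of the monad $\TT$ induced by $f \nsststile{\varepsilon}{\eta} g$. Since $\ONEONE_B$ is initial in $\Mnd_\K(B)$ by \bref{par:adj_and_mnd}, there is a unique monad morphism $\eta':\ONEONE_B \rightarrow \TT'$, and applying $[f,g]$ to it yields a monad morphism $[f,g](\eta'):\TT \rightarrow [f,g](\TT')$ in $\Mnd_\K(A)$.

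Finally I would identify $[f,g](\TT')$ with $\TT''$ up to isomorphism. By \bref{thm:mon_func_detd_by_adj}, $[f,g](\TT')$ equals the monad induced by the composite adjunction with left adjoint $f'f$ and right adjoint $gg'$; since $gg' = g''$ by hypothesis, this composite adjunction and $f'' \nsststile{\varepsilon''}{\eta''} g''$ share the right adjoint $g''$, so by \bref{thm:uniq_adj} they are isomorphic, and via the functor $\Adj_\K(A,C) \rightarrow \Mnd_\K(A)$ of \bref{par:adj_and_mnd} they induce isomorphic monads, giving an isomorphism $[f,g](\TT') \cong \TT''$ in $\Mnd_\K(A)$. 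Composing this with $[f,g](\eta')$ produces the desired monad morphism $\TT \rightarrow \TT''$ (the proposition asks only for existence, not for any uniqueness or canonicity). I expect the only delicate point to be the bookkeeping in the two preceding paragraphs: confirming that the lax monoidal structure of $[f,g]$ reproduces the induced-monad structure of $\TT$ verbatim, and keeping the handedness straight so that the composite adjunction indeed has right adjoint $gg' = g''$ rather than, say, $g'g$; everything else is formal.
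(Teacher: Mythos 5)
Your proposal is correct and follows essentially the same route as the paper's own proof: apply the monoidal functor $[f,g]$ of \bref{thm:mon_func_detd_by_adj} to the unique monad morphism $\ONEONEit{}$... more precisely to $\eta':\ONEONE_B \rightarrow \TT'$, identify $[f,g](\ONEONE_B)$ with $\TT$ and $[f,g](\TT')$ with the monad of the composite adjunction, then use \bref{thm:uniq_adj} to compare with $\TT''$. The only difference is that you spell out the monoid-preservation bookkeeping that the paper leaves implicit (and you correctly write $\ONEONE_B$ where the paper's proof has a typo reading $\ONEONE_A$).
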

\begin{proof}
Let $f_c \nsststile{\varepsilon_c}{\eta_c} g''$ be the composite adjunction, and let $\TT_c$ be its induced monad.  By \bref{thm:mon_func_detd_by_adj}, we have that $\TT_c = [f,g](\TT')$, whereas $\TT = [f,g](\ONEONE_A)$.  By applying $[f,g]$ to the monad morphism $\eta':\ONEONE_A \rightarrow \TT'$, we obtain a monad morphism
$$g \eta' f = [f,g](\eta') : \TT = [f,g](\ONEONE_A) \rightarrow [f,g](\TT') = \TT_c\;.$$
Also, by \bref{thm:uniq_adj}, there is an isomorphism of monads $\xi:\TT_c \rightarrow \TT''$, and we obtain a composite morphism of monads
\begin{equation}\label{eqn:comp_morph_mnds}\TT \xrightarrow{g \eta' f} \TT_c \xrightarrow{\xi} \TT''\;.\end{equation}
\end{proof}

\begin{ParSub}\label{par:idm_mnd}
A monad $(t,\eta,\mu)$ on an object $A$ in a 2-category $\K$ is said to be \textit{idempotent} if any of the following equivalent conditions holds (see, e.g., \cite{Sto}):  (i) $t\eta = \eta t$, (ii) $t\mu = \mu t$, (iii) $\mu$ is iso.
\end{ParSub}

\section{Monoidal, closed, and enriched categories} \label{sec:enr_cats}

We shall employ the theory of monoidal categories, closed categories, and categories enriched in a monoidal category $\V$, as documented in the classic \cite{EiKe} and the comprehensive references \cite{Ke:Ba}, \cite{Dub}.  In the absence of any indication to the contrary, $\V$ is assumed to be a \textit{closed} symmetric monoidal category.  Since we shall at times avail ourselves of enrichment with respect to each of multiple given categories, we shall include an explicit indication of $\V$ when employing notions such as $\V$-category, $\V$-functor, and so on, omitting the prefix $\V$ only when concerned with the corresponding notions for non-enriched or \textit{ordinary} categories.  When such ordinary notions and terminology are applied to a given $\V$-category $\A$, they should be interpreted relative to the underlying ordinary category of $\A$.

\begin{ParSubSub}\label{par:cat_classes}
The ordinary categories considered in this text are \textit{not} assumed \textit{locally small} --- that is, they are not necessarily $\Set$-enriched categories.  Rather, we assume that for any finite family of categories under consideration, there is a category $\SET$ of \textit{classes} in which lie the hom-classes of each of the given categories, so that each is $\SET$-enriched.  The category $\SET$ is assumed to have finite limits, and the terminal object $1$ is assumed to be a dense generator, but $\SET$ is not assumed cartesian closed.
\end{ParSubSub}

\begin{ParSubSub}\label{par:clsmcat_notn}
Given a closed symmetric monoidal category $\V$, we denote by $\uV$ the canonically associated $\V$-category whose underlying ordinary category is isomorphic to $\V$ (and shall be identified with $\V$ whenever convenient); in particular, the internal homs in $\V$ will therefore be denoted by $\uV(V_1,V_2)$.  The canonical `evaluation' morphisms $V_1 \otimes \aeV(V_1,V_2) \rightarrow V_2$ are denoted by $\Ev_{V_1 V_2}$, or simply $\Ev$.  We sometimes omit subscripts and names of morphisms when they are clear from the context.
\end{ParSubSub}

\begin{ParSubSub}\label{par:well_pointed_ccc}
While this text is written in terms of abstract symmetric monoidal closed categories, we shall have occasion to consider examples of locally small cartesian closed categories $\X$ that are \textit{well-pointed}, in the sense that the associated functor $\X(1,-):\X \rightarrow \Set$ is faithful.  In this case, each object $X$ is equipped with an \textit{underlying set} $\X(1,X)$ and the morphisms $f$ in $\X$ may be identified with the functions $\X(1,f)$ that they induce between the underlying sets; further, the underlying set of an internal hom $\uX(X,Y)$ may be identified with $\X(X,Y)$, and the evaluation morphisms are then identified with the restrictions of those in $\Set$.  In practice, a cartesian closed category $\X$ may be equipped with a faithful functor $U:\X \rightarrow \Set$ that does not coincide with $\X(1,-)$, but as long as the terminal object $1$ of $\X$ is preserved by $U$ and is \textit{$U$-discrete} (meaning that $U_{1 X}:\X(1,X) \rightarrow \Set(U1,UX)$ is bijective for all $X \in \X$), then it follows that $U \cong \X(1,-)$.  In the terminology of \cite{AHS}, $(\X,U)$ is said to be a \textit{construct with function spaces}.
\end{ParSubSub}

\begin{ExaSubSub}\label{exa:conv_sm_sp}
The category $\Conv$ of \textit{convergence spaces} \cite{BeBu} and the category $\Smooth$ of Fr\"olicher's \textit{smooth spaces} \cite{Fro:SmthStr,Fro:CccAnSmthMaps,FroKr} are locally small well-pointed cartesian closed categories.

A \textit{convergence space} is a set $X$ together with an assignment to each point $x \in X$ a set $c(x)$ of (proper) filters on $X$ such that the following conditions hold for all filters $\mathfrak{x},\mathfrak{y}$ on $X$ and all points $x \in X$, where we say that $\mathfrak{x}$ \textit{converges to} $x$ if $\mathfrak{x} \in c(x)$:
\begin{enumerate}
\item The filter $\{A \subs X | x \in A\}$ converges to $x$.
\item If $\mathfrak{x}$ converges to $x$ and $\mathfrak{x} \subs \mathfrak{y}$, then $\mathfrak{y}$ converges to $x$.
\item If each of $\mathfrak{x}$ and $\mathfrak{y}$ converges to $x$, then their intersection $\mathfrak{x} \cap \mathfrak{y}$ converges to $x$.
\end{enumerate}
Given convergence spaces $X$, $Y$, a \textit{continuous map} is a function $f:X \rightarrow Y$ such that if $\mathfrak{x}$ converges to $x$ in $X$ then the \textit{image filter} $f(\mathfrak{x}) := \{B \subs Y \mid f^{-1}(B) \in \mathfrak{x}\}$ converges to $f(x)$ in $Y$.  Convergence spaces and continuous maps constitute a locally small well-pointed cartesian closed category $\Conv$ into which the category $\Top$ of topological spaces embeds as a full subcategory.

A \textit{smooth space} (in the sense of Fr\"olicher) is a set $X$ equipped with a set $C_X$ of mappings from $\RR$ to $X$ and a set $F_X$ of mappings from $X$ to $\RR$, such that the following conditions hold:
\begin{enumerate}
\item An arbitrary mapping $c:\RR \rightarrow X$ lies in $C_X$ if and only if the composite $\RR \xrightarrow{c} X \xrightarrow{f} \RR$ is smooth for each $f \in F_X$.
\item An arbitrary mapping $f:X \rightarrow \RR$ lies in $F_X$ if and only if the composite $\RR \xrightarrow{c} X \xrightarrow{f} \RR$ is smooth for each $c \in C_X$.
\end{enumerate}
We call the elements of $C_X$ \textit{smooth curves} in $X$ and the elements of $F_X$ \textit{smooth functions} on $X$.  Given smooth spaces $X$ and $Y$, a mapping $g:X \rightarrow Y$ is said to be a \textit{smooth map} if for each $c \in C_X$, the composite $\RR \xrightarrow{c} X \xrightarrow{g} Y$ lies in $C_Y$.  It is equivalent to require that $f \cdot g \in F_X$ for each $f \in F_Y$.  Smooth spaces and smooth maps constitute a locally small well-pointed cartesian closed category into which the category of paracompact smooth manifolds and smooth maps embeds as a full subcategory \cite{Fro:CccAnSmthMaps}.
\end{ExaSubSub}

\begin{ExaSubSub}\label{exa:rmodx_smc_and_xenr}
Given a commutative ring object $R$ in a countably-complete and \linebreak[4] -cocomplete cartesian closed category $\X$, the category $\sL := \RMod(\X)$ of $R$-module objects in $\X$ is symmetric monoidal closed \pbref{exa:rmod_smclosed_monadic}.  Also, $\sL$ is (the underlying ordinary category of) an $\X$-enriched category \pbref{exa:rmod_smclosed_monadic}.

For example, for $\X = \Conv$ and $R = \RR$ or $\CC$, $\sL = \RMod(\X)$ is the category of \textit{convergence vector spaces}; see \cite{BeBu}.  For $\X = \Smooth$ and $R = \RR$, $\sL = \RMod(\X)$ is the category of \textit{smooth vector spaces} \cite{Fro:SmthStr}.
\end{ExaSubSub}

\subsection{Enriched monomorphisms and conical limits} \label{sec:enr_monos_lims}

The following notions are defined in \cite{Dub}.

\begin{DefSubSub} \label{def:enr_mono_limit}
Let $\B$ be a $\V$-category.
\begin{enumerate}
\item A morphism $m:B_1 \rightarrow B_2$ in $\B$ (i.e., in the underlying ordinary category of $\B$) is a \textit{$\V$-mono} if $\B(A,m):\B(A,B_1) \rightarrow \B(A,B_2)$ is a monomorphism in $\V$ for every object $A$ of $\B$.  A morphism $e$ in $\B$ is a \textit{$\V$-epi} if $e$ is a $\V$-mono in $\B^\op$.
\item $\Mono_\V\B$ and $\Epi_\V\B$ are the classes of all $\V$-monos and $\V$-epis, respectively, in $\B$.
\item A \textit{$\V$-limit} of an ordinary functor $D:\J \rightarrow \B$ consists of a cone for $D$ that is sent by each functor $\B(A,-):\B \rightarrow \V$ to a limit cone for $\B(A,D-)$.  Equivalently, a $\V$-limit is a limit of $D$ that is preserved by each functor $\B(A,-)$.  As special cases of $\V$-limits we define \textit{$\V$-products, $\V$-pullbacks, $\V$-fiber-products}, etc.  \textit{$\V$-colimits} are defined as $\V$-limits in $\B^\op$.
\end{enumerate}
\end{DefSubSub}
\begin{RemSubSub}
$\V$-limits coincide with the \textit{conical limits} of \cite{Ke:Ba}.  Note that every $\V$-mono (resp. $\V$-epi, $\V$-limit, $\V$-colimit) in $\B$ is a mono (resp. epi, limit, colimit) in (the underlying ordinary category of) $\B$.
\end{RemSubSub}

\begin{PropSubSub} \label{thm:lim_mono_epi_in_base_are_enr}
Any ordinary mono (resp. epi, limit, colimit) in $\V$ is a $\V$-mono (resp. $\V$-epi, $\V$-limit, $\V$-colimit) in $\uV$.  Hence $\Mono_{\V}\uV = \Mono\V$ and $\Epi_{\V}\uV = \Epi\V$.
\end{PropSubSub}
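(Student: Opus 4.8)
The plan is to reduce everything to the fact that right adjoints preserve limits, using the internal\nobreakdash-hom adjunctions furnished by closedness of $\V$. For each object $A$ of $\V$ the functor $\uV(A,-):\V\to\V$ is right adjoint to $A\otimes-:\V\to\V$, hence preserves all ordinary limits; in particular it preserves monomorphisms, since $m$ is mono exactly when the square with sides $1,1,m,m$ is a pullback (equivalently, $1_A$ equalizes the pair $m,m$). Now the underlying ordinary functor of the representable $\V$\nobreakdash-functor $\uV(A,-):\uV\to\uV$ is precisely this $\uV(A,-):\V\to\V$. So for an ordinary mono $m$ in $\V$, every $\uV(A,m)$ is a mono in $\V$, i.e.\ $m$ is a $\V$\nobreakdash-mono in $\uV$; and for an ordinary limit cone on $D:\J\to\V$, every $\uV(A,-)$ carries it to a limit cone, so it is a $\V$\nobreakdash-limit of $D$ in $\uV$. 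This disposes of the mono and limit cases.

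For the epi and colimit cases I would pass to the opposite $\V$\nobreakdash-category $\uV^{\op}$ (available since $\V$ is symmetric), whose underlying ordinary category is $\V^{\op}$ and whose hom\nobreakdash-objects are $\uV^{\op}(A,B)=\uV(B,A)$; by definition a $\V$\nobreakdash-epi (resp.\ $\V$\nobreakdash-colimit) in $\uV$ is just a $\V$\nobreakdash-mono (resp.\ $\V$\nobreakdash-limit) in $\uV^{\op}$. The underlying ordinary functor of the representable $\V$\nobreakdash-functor $\uV^{\op}(A,-)$ is the contravariant internal hom $\uV(-,A):\V^{\op}\to\V$. The one point requiring a small computation is that $\uV(-,A):\V^{\op}\to\V$ is again a right adjoint: using the symmetry isomorphism $V\otimes W\cong W\otimes V$ one gets natural bijections $\V(W,\uV(V,A))\cong\V(W\otimes V,A)\cong\V(V\otimes W,A)\cong\V(V,\uV(W,A))=\V^{\op}(\uV(W,A),V)$, exhibiting $\uV(-,A)$ as right adjoint to $W\mapsto\uV(W,A):\V\to\V^{\op}$. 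Hence $\uV(-,A)$ preserves limits of $\V^{\op}$, i.e.\ carries ordinary epis in $\V$ to monos in $\V$ and ordinary colimit cones in $\V$ to limit cones in $\V$. Therefore an ordinary epi in $\V$ is a $\V$\nobreakdash-mono in $\uV^{\op}$, hence a $\V$\nobreakdash-epi in $\uV$, and an ordinary colimit cone in $\V$ is a $\V$\nobreakdash-limit in $\uV^{\op}$, hence a $\V$\nobreakdash-colimit in $\uV$.

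Finally, for the two displayed equalities: the inclusions $\Mono\V\subseteq\Mono_\V\uV$ and $\Epi\V\subseteq\Epi_\V\uV$ are what has just been shown, while the reverse inclusions are immediate from the preceding Remark, which records that every $\V$\nobreakdash-mono (resp.\ $\V$\nobreakdash-epi) in a $\V$\nobreakdash-category is an ordinary mono (resp.\ epi) in its underlying ordinary category --- here $\V$ for $\uV$, and $\V^{\op}$ for $\uV^{\op}$, so that $\Epi_\V\uV=\Mono_\V\uV^{\op}=\Mono\V^{\op}=\Epi\V$. I expect the only genuine friction in the whole argument to be the epi/colimit bookkeeping: correctly identifying the underlying functors of the representables of $\uV^{\op}$ with the contravariant internal homs and verifying those are right adjoints; every other step is a direct invocation of ``right adjoints preserve limits.''
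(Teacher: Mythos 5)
Your proof is correct and follows essentially the same route as the paper's: for monos and limits, the representables $\uV(A,-)$ are right adjoints and hence preserve them, while for epis and colimits one observes that $\uV^{\op}(A,-)=\uV(-,A):\V^{\op}\to\V$ is again a right adjoint (the paper states this without the symmetry computation you supply, which is a welcome bit of extra detail). The converse inclusions via the preceding Remark are likewise exactly as intended.
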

\begin{proof}
Regarding limits and monos, each ordinary functor $\uV(V,-):\V \rightarrow \V$ is right adjoint and hence preserves limits and monos.  Regarding epis and colimits, each functor $\uV^\op(V,-) = \uV(-,V) : \V^\op \rightarrow \V$ is right adjoint (to $\uV(-,V)$) and hence sends monos (resp. limits) in $\V^\op$ (i.e. epis, resp. colimits, in $\V$) to monos (resp. limits).
\end{proof}

\begin{PropSubSub} \label{prop:enr_mono_kp}
A morphism $m:B \rightarrow C$ in a $\V$-category $\B$ is a $\V$-mono if and only if $m$ has a $\V$-kernel-pair $\pi_1,\pi_2:P \rightarrow B$ with $\pi_1 = \pi_2$.
\end{PropSubSub}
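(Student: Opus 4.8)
The plan is to unwind the definition of $\V$-mono through the defining adjunction of the internal hom and then recognize the stated condition on the kernel pair as precisely the characterization of a monomorphism in $\V$ applied uniformly. First I would recall that for a morphism $m:B \rightarrow C$ in the $\V$-category $\B$, being a $\V$-mono means $\B(A,m):\B(A,B) \rightarrow \B(A,C)$ is a monomorphism in $\V$ for every object $A$. The standard fact I would invoke is that a morphism $f:X \rightarrow Y$ in any category with the relevant pullback is mono if and only if its kernel pair exists and has equal projections, i.e.\ $\pi_1 = \pi_2:X \times_Y X \rightarrow X$; so a $\V$-mono is exactly a morphism all of whose hom-object images have this property.

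For the forward direction, suppose $m$ is a $\V$-mono and that $m$ admits a $\V$-kernel-pair $\pi_1,\pi_2:P \rightarrow B$ (this being a hypothesis of the statement, or else following from ambient completeness assumptions — I would handle whichever reading is intended, but the cleanest version assumes the $\V$-kernel-pair is given). Since a $\V$-limit is in particular preserved by each $\B(A,-)$, the pair $\B(A,\pi_1),\B(A,\pi_2)$ is the kernel pair of $\B(A,m)$ in $\V$. As $\B(A,m)$ is mono, its kernel pair has equal legs, so $\B(A,\pi_1) = \B(A,\pi_2)$ for every $A$. By the Yoneda-type faithfulness of the collection of representables $\B(A,-)$ on $\B$ — more precisely, since $\pi_1 = \pi_2$ iff $\B(A,\pi_1) = \B(A,\pi_2)$ for all $A$, which is just applying the ordinary-category statement in the underlying category of $\B$ — we conclude $\pi_1 = \pi_2$.

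For the converse, suppose $m$ has a $\V$-kernel-pair $\pi_1,\pi_2:P \rightarrow B$ with $\pi_1 = \pi_2$. Fix an object $A$. Since the $\V$-kernel-pair is a $\V$-limit, applying $\B(A,-)$ yields the kernel pair of $\B(A,m)$ in $\V$, with legs $\B(A,\pi_1) = \B(A,\pi_2)$ equal. A morphism in $\V$ whose kernel pair has equal projections is a monomorphism, so $\B(A,m)$ is mono in $\V$; as $A$ was arbitrary, $m$ is a $\V$-mono.

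The only genuine subtlety — the step I would watch most carefully — is the interplay between ``$m$ is a $\V$-mono'' and ``$m$ \emph{has} a $\V$-kernel-pair'': the proposition as phrased seems to presuppose the existence of the $\V$-kernel-pair on both sides, so I expect the proof really only needs the equivalence ``given the $\V$-kernel-pair, $\pi_1 = \pi_2$ iff $\B(A,m)$ mono for all $A$,'' with the existence carried along as a standing hypothesis rather than proved. Everything else is a routine transport of the classical ``mono $\Leftrightarrow$ kernel pair is diagonal'' fact across the functors $\B(A,-)$, using only that these preserve the $\V$-limit in question and that they are jointly faithful on morphisms of $\B$.
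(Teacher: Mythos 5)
Your converse direction is exactly the paper's argument: apply each $\B(A,-)$, which preserves the given $\V$-kernel-pair, and invoke the ordinary-category fact that a morphism whose kernel pair has equal legs is mono. That half is fine.

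The forward direction, however, has a genuine gap. The statement is an unconditional biconditional: the right-hand side reads ``$m$ \emph{has} a $\V$-kernel-pair $\pi_1,\pi_2$ with $\pi_1=\pi_2$,'' so existence of the $\V$-kernel-pair is part of what must be \emph{proved}, not a standing hypothesis. You flag this ambiguity and then resolve it the wrong way, electing to ``carry the existence along as a hypothesis.'' Under that reading your argument (preservation of the kernel pair by each $\B(A,-)$, equality of the legs of the kernel pair of the mono $\B(A,m)$, joint faithfulness of the representables) is correct, but it proves a weaker proposition than the one stated. The paper closes the gap with a one-line construction that your proposal is missing: if $m$ is a $\V$-mono, then the pair $1_B,1_B:B\rightarrow B$ \emph{is} a $\V$-kernel-pair of $m$. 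Indeed, for each $A$ the functor $\B(A,-)$ sends this cone to the pair $1,1:\B(A,B)\rightarrow\B(A,B)$, which is a kernel pair of the monomorphism $\B(A,m)$ in $\V$; hence the identity cone is sent to a limit cone by every $\B(A,-)$, which is precisely the definition of a $\V$-kernel-pair, and its legs are trivially equal. This exhibits the required $\V$-kernel-pair with no completeness assumption on $\B$, which matters because the proposition is stated for an arbitrary $\V$-category.
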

\begin{proof}
If $m$ is a $\V$-mono then $1_B,1_B:B \rightarrow B$ is a $\V$-kernel-pair in $\B$, as one readily checks.  Conversely, if $m$ has a $\V$-kernel-pair with $\pi_1 = \pi_2$, then since each functor $\B(A,-):\B \rightarrow \V$ ($A \in \B$) preserves the given kernel pair, the needed conclusion follows from the analogous result for ordinary categories, which is immediate.
\end{proof}

\begin{PropSubSub} \label{prop:intersection}
Let $(m_i:B_i \rightarrow C)_{i \in I}$ be a family of $\V$-monos in $\V$-category $\B$, where $I$ is a class, and let $m:B \rightarrow C$ be a $\V$-fiber-product of this family, with associated projections $\pi_i:B \rightarrow B_i$ ($i \in I$).  Then $m$ is a $\V$-mono.
\end{PropSubSub}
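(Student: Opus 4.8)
The plan is to reduce to the corresponding elementary fact about ordinary categories by testing with the representable functors $\B(A,-)$, exactly as in the proofs of \bref{thm:lim_mono_epi_in_base_are_enr} and \bref{prop:enr_mono_kp}. Fix an object $A$ of $\B$. Since $m$ is a $\V$-fiber-product of the family $(m_i)_{i\in I}$, the functor $\B(A,-):\B \rightarrow \V$ preserves it by \bref{def:enr_mono_limit}; hence $\B(A,m):\B(A,B) \rightarrow \B(A,C)$ is a fiber product in $\V$ of the family $\bigl(\B(A,m_i):\B(A,B_i) \rightarrow \B(A,C)\bigr)_{i\in I}$, with associated projections $\B(A,\pi_i)$ and with leg $\B(A,m)$ to $\B(A,C)$. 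Moreover, each $\B(A,m_i)$ is a monomorphism in $\V$, since $m_i$ is a $\V$-mono. It therefore suffices to establish the following purely ordinary-categorical fact: in any category, a fiber product of a class-indexed family of monomorphisms with a common codomain is again a monomorphism. Granting this, $\B(A,m)$ is a monomorphism in $\V$ for every object $A$, so $m$ is a $\V$-mono.

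To prove this ordinary-categorical fact, write $(m_i:B_i \rightarrow C)_{i\in I}$ for the monos and let $m:B \rightarrow C$ be the fiber product, with projections $\pi_i:B \rightarrow B_i$. If $I = \emptyset$, then the fiber product is (isomorphic to) $C$ with $m$ an isomorphism, and isomorphisms are monos. If $I \neq \emptyset$, fix $i_0 \in I$; then $m = m_{i_0} \cdot \pi_{i_0}$, since the leg of the limit cone to $C$ agrees with $m_{i_0}$ post-composed with the leg $\pi_{i_0}$. Now let $f,g:A \rightarrow B$ satisfy $m \cdot f = m \cdot g$. For each $i \in I$ we get $m_i \cdot (\pi_i \cdot f) = m \cdot f = m \cdot g = m_i \cdot (\pi_i \cdot g)$, whence $\pi_i \cdot f = \pi_i \cdot g$ because $m_i$ is mono. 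Thus $f$ and $g$ induce the same cone over the defining diagram --- they agree after composition with each projection $\pi_i$ and with the leg $m$ --- so $f = g$ by the universal property of the limit. Hence $m$ is mono.

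I do not anticipate a substantial obstacle; the only points that require care are the preservation clause built into the definition of $\V$-fiber-product (which is what legitimises testing with $\B(A,-)$) and the structure of a wide pullback, in particular the identity $m = m_{i_0} \cdot \pi_{i_0}$ for nonempty $I$ and the degenerate behaviour at $I = \emptyset$. An alternative route would be via the $\V$-kernel-pair characterization \bref{prop:enr_mono_kp}, but it leads back to the same ordinary-categorical computation and is not shorter.
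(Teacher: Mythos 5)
Your proposal is correct and follows the same route as the paper: test with each representable $\B(A,-)$, use the preservation clause in the definition of $\V$-fiber-product, and reduce to the elementary fact that a class-indexed fiber product of monomorphisms is a monomorphism in an ordinary category. The paper leaves that last fact as a "straightforward and elementary" verification, which you have simply written out (including the degenerate case $I=\emptyset$); there is no substantive difference.
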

\begin{proof}
For each $A \in \B$, we must show that $\B(A,m):\B(A,B) \rightarrow \B(A,C)$ is mono, but $\B(A,m)$ is a fiber product in $\V$ of the monomorphisms $\B(A,m_i)$.  Hence it suffices to show that the analogous proposition holds for ordinary categories, and in this case the verification is straightforward and elementary.
\end{proof}

\begin{DefSubSub} \label{def:intersection}
In the situation of \ref{prop:intersection}, we say that the $\V$-mono $m$ is a \textit{$\V$-intersection} of the $m_i$.  
\end{DefSubSub}
\begin{RemSubSub}
Definition \ref{def:intersection} is an enriched analogue of the notion of intersection defined for ordinary categories in \cite{Ke:MonoEpiPb}.
\end{RemSubSub}

\begin{DefSubSub} \label{def:closed_under_tensors}
Given a class of morphisms $\E$ in a $\V$-category $\B$, we say that $\E$ is \textit{closed under tensors} in $\B$ if for any morphism $e:A_1 \rightarrow A_2$ in $\E$ and any object $V \in \V$ for which tensors $V \otimes A_1$ and $V \otimes A_2$ exist in $\B$, the induced morphism $V \otimes f:V \otimes A_1 \rightarrow V \otimes A_2$ lies in $\E$.  Dually, one defines the property of being \textit{closed under cotensors} in $\B$.
\end{DefSubSub}

\begin{PropSubSub} \label{thm:closure_props_of_monos}
For any $\V$-category $\B$, the following hold:
\begin{enumerate}
\item If $g \cdot f \in \Mono_\V\B$, then $f \in \Mono_\V\B$.
\item $\Mono_\V\B$ is closed under composition, cotensors, arbitrary $\V$-fiber-products, and $\V$-pullbacks along arbitrary morphisms in $\B$.
\end{enumerate}
\end{PropSubSub}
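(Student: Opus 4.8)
The plan is to reduce every clause to an elementary fact about monomorphisms in the ordinary category $\V$, by applying the ordinary representable functors $\B(A,-):\B \rightarrow \V$ for each object $A$ of $\B$. Recall that, by definition, $m$ is a $\V$-mono precisely when every $\B(A,m)$ is a monomorphism in $\V$, and that (Definition \bref{def:enr_mono_limit}) each ordinary functor $\B(A,-)$ preserves $\V$-limits, hence in particular $\V$-fiber-products and $\V$-pullbacks. So apart from one clause, the proof is a routine translation along the $\B(A,-)$ of well-known closure properties of monomorphisms.

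First I would dispatch (1): if $g \cdot f \in \Mono_\V\B$, then for each $A$ the composite $\B(A,g) \cdot \B(A,f) = \B(A,g \cdot f)$ is a monomorphism in $\V$, whence $\B(A,f)$ is a monomorphism by the left-cancellation property of monos; thus $f \in \Mono_\V\B$. Closure of $\Mono_\V\B$ under composition is immediate, since a composite of monomorphisms in $\V$ is a monomorphism, applied to the functors $\B(A,-)$. Closure under arbitrary $\V$-fiber-products is precisely Proposition \bref{prop:intersection}. For closure under $\V$-pullbacks along an arbitrary morphism $h$ in $\B$: given a $\V$-mono $m$ and a $\V$-pullback square exhibiting $m'$ as a pullback of $m$ along $h$, each $\B(A,-)$ carries this to a pullback square in $\V$ exhibiting $\B(A,m')$ as a pullback of the monomorphism $\B(A,m)$ along $\B(A,h)$; since monomorphisms are stable under pullback in any category, $\B(A,m')$ is a monomorphism, and so $m' \in \Mono_\V\B$.

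The one clause requiring slightly more care, and the only point I would flag as an obstacle, is closure under cotensors: cotensors are weighted rather than conical limits, so they are not literally $\V$-limits in the sense of \bref{def:enr_mono_limit}, and the preservation argument used above does not apply verbatim. Here, given $e:A_1 \rightarrow A_2$ in $\Mono_\V\B$ and $V \in \V$ such that the cotensors, which I write $\langle V, A_1\rangle$ and $\langle V, A_2\rangle$, exist in $\B$, I would instead invoke the defining $\V$-natural isomorphisms $\B(A,\langle V, A_i\rangle) \cong \uV(V,\B(A,A_i))$, under which $\B(A,\langle V, e\rangle)$ corresponds to $\uV(V,\B(A,e))$. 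Since $\B(A,e)$ is a monomorphism in $\V$ and the internal-hom functor $\uV(V,-):\V \rightarrow \V$ is a right adjoint (to $V \otimes -$) and hence preserves monomorphisms — exactly the fact used in the proof of \bref{thm:lim_mono_epi_in_base_are_enr} — it follows that $\uV(V,\B(A,e))$, and therefore $\B(A,\langle V, e\rangle)$, is a monomorphism. As $A$ is arbitrary, $\langle V, e\rangle \in \Mono_\V\B$, completing the argument.
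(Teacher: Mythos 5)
Your proof is correct and follows essentially the same route as the paper's: parts (1), composition, and pullback-stability by applying the functors $\B(A,-)$ and invoking the corresponding ordinary facts, fiber-products by citing \bref{prop:intersection}, and cotensors via the isomorphism $\B(A,[V,B_i]) \cong \uV(V,\B(A,B_i))$ together with the fact that the right adjoint $\uV(V,-)$ preserves monomorphisms. The one step you flag as needing care (cotensors) is exactly the step the paper also singles out for an explicit argument.
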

\begin{proof}
Both 1 and the needed closure under composition follow immediately from the analogous statements for ordinary categories, upon applying each functor $\B(A,-):\B \rightarrow \V$ ($A \in \B$).  We have already established closure under $\V$-fiber-products in \bref{prop:intersection}, and closure under $\V$-pullbacks is proved by an analogous method, since the corresponding statement for ordinary categories holds.  Lastly, given an object $V$ of $\V$ and a $\V$-mono $m:B_1 \rightarrow B_2$ in $\B$ for which cotensors $[V,B_1], [V,B_2]$ exist in $\B$, the induced morphism $[V,m]:[V,B_1] \rightarrow [V,B_2]$ is $\V$-mono, as follows.  Indeed, for each $A \in \B$, $\B(A,[V,m]) \cong \uV(V,\B(A,m))$ in the arrow category $[\Two,\V]$, and since $\B(A,m)$ is mono and the right-adjoint functor $\V(V,-):\V \rightarrow \V$ preserves monos, $\uV(V,\B(A,m))$ is mono.
\end{proof}

\subsection{Faithful enriched functors}\label{sec:faithful_enr_funcs}

\begin{DefSubSub}
A $\V$-functor $G:\B \rightarrow \A$ is said to be \textit{$\V$-faithful} if each structure morphism
$$G_{B B'}:\B(B,B') \rightarrow \A(GB,GB')\;\;\;\;(B,B' \in \B)$$
is a monomorphism in $\V$.  $G$ is \textit{$\V$-fully-faithful} if each $G_{B B'}$ is an isomorphism in $\V$.
\end{DefSubSub}

\begin{PropSubSub}\label{thm:factn_through_faithful_vfunctor}
Let $Q:\C \rightarrow \A$, $G:\B \rightarrow \A$ be $\V$-functors, and suppose that $G$ is $\V$-faithful.  Suppose we are given an assignment to each object $C$ of $\C$ an object $PC$ of $\B$ such that $GPC = QC$, and suppose that for all $C,C' \in \C$ the structure morphism $Q_{C C'}$ factors through the monomorphism $G_{PC PC'}$ in $\V$.  Then there is a unique $\V$-functor $P:\C \rightarrow \B$ such that $\Ob P$ is the given assignment and $GP = Q$.
\end{PropSubSub}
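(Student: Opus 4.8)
The plan is to construct $P$ directly: on objects $P$ is the given assignment $C \mapsto PC$, and on hom-objects I would use the $\V$-faithfulness of $G$ to extract the structure morphisms. By hypothesis, for each pair $C,C'$ of objects of $\C$ the morphism $Q_{CC'}:\C(C,C') \rightarrow \A(QC,QC') = \A(GPC,GPC')$ factors through the monomorphism $G_{PC\,PC'}$ in $\V$, and since a monomorphism admits at most one such factorization of a given morphism, there is exactly one morphism
$$P_{CC'}:\C(C,C') \longrightarrow \B(PC,PC') \quad\text{with}\quad G_{PC\,PC'} \cdot P_{CC'} = Q_{CC'}$$
in $\V$. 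I would take these $P_{CC'}$ as the candidate structure morphisms of $P$.

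It then remains to verify the two $\V$-functoriality axioms for $P$, and the only tool needed is: two morphisms into $\B(PC,PC')$ agree as soon as their composites with the monomorphism $G_{PC\,PC'}$ agree. For the unit axiom — writing $j$ for the unit morphisms of the various $\V$-categories — composing $P_{CC}\cdot j_C$ and $j_{PC}$ with $G_{PC\,PC}$ and using that $G$ and $Q$ are $\V$-functors yields $Q_{CC}\cdot j_C = j_{QC} = j_{GPC}$ on both sides, so the two coincide. For the composition axiom, I would post-compose the two legs of the relevant square (involving the composition morphisms $\m$ of $\C$ and of $\B$) with $G_{PC\,PC''}$; using $G_{PC\,PC'}\cdot P_{CC'} = Q_{CC'}$ in each slot, together with the functoriality of $G$ and $Q$, both legs reduce to $Q_{CC''}\cdot \m^{\C}_{CC'C''}$, and cancelling the monomorphism $G_{PC\,PC''}$ gives the desired identity. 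Hence $P$ is a $\V$-functor, and by construction $\Ob P$ is the given assignment and $GP = Q$.

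Uniqueness follows from the same cancellation principle: if $P'$ is any $\V$-functor with $\Ob P'$ the prescribed assignment and $GP' = Q$, then $P'C = PC$ for every $C$ and $G_{PC\,PC'}\cdot P'_{CC'} = (GP')_{CC'} = Q_{CC'} = G_{PC\,PC'}\cdot P_{CC'}$, whence $P'_{CC'} = P_{CC'}$ since $G_{PC\,PC'}$ is mono; thus $P' = P$.

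I do not anticipate a genuine obstacle: the whole argument is the uniqueness of factorizations through monomorphisms, invoked once to define $P$ on hom-objects and then mechanically to check each axiom and uniqueness. The only point requiring a little care is that these cancellations must be performed in $\V$ itself, which is precisely what $\V$-faithfulness (monomorphy of the $G_{BB'}$ in $\V$), as opposed to faithfulness of the underlying ordinary functor, supplies.
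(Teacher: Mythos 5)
Your proposal is correct and follows essentially the same route as the paper: take the unique factorization $G_{PC\,PC'}\cdot P_{CC'}=Q_{CC'}$ through the monomorphism as the structure morphism, and deduce the $\V$-functoriality axioms and uniqueness by cancelling the monos $G_{PC\,PC'}$, using the functoriality of $Q$ and $G$. The paper leaves these verifications as "straightforward"; you have simply written them out.
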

\begin{proof}
Letting $G_{PC PC'} \cdot P_{C C'} = Q_{C C'}$ be the factorization in question for each pair $C,C' \in \C$, it is straightforward to check that the diagrammatic axioms for the $\V$-functoriality of $P$ follow from those of $Q,G$ and the fact that the $G_{PC PC'}$ are mono.
\end{proof}

\begin{PropSubSub}\label{thm:crit_for_equal_faithful_vfunctors}
Let $P^{(1)}:\B_1 \rightarrow \A$ and $P^{(2)}:\B_2 \rightarrow \A$ be $\V$-faithful $\V$-functors with $\ob\B_1 = \ob\B_2$ and $\ob P^{(1)} = \ob P^{(2)}$, and suppose that for all $B,B' \in \ob\B_1 = \ob\B_2$, $\B_1(B,B') = \B_2(B,B')$ and $P^{(1)}_{B B'} = P^{(2)}_{B B'}$.  Then $\B_1 = \B_2$ and $P^{(1)} = P^{(2)}$.
\end{PropSubSub}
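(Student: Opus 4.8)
The plan is to reduce the whole statement to the left-cancellability of monomorphisms in $\V$. Recall that two $\V$-categories with the same objects and the same hom-objects are equal as soon as their composition morphisms and their identity-element morphisms agree, and that two $\V$-functors with the same domain, codomain, object assignment, and structure morphisms $P_{B B'}$ are equal. Thus, granting the hypotheses, the only thing left to check is that the composition morphisms $M^{\B_1}_{B B' B''}$, $M^{\B_2}_{B B' B''}$ coincide and that the identity-element morphisms $j^{\B_1}_B$, $j^{\B_2}_B$ coincide; the asserted equality $P^{(1)} = P^{(2)}$ is then immediate, since by then the two functors share domain, codomain, object assignment, and all structure morphisms.

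First I would fix $B, B', B'' \in \ob\B_1 = \ob\B_2$ and write out the $\V$-functoriality axiom expressing that $P^{(1)}$ preserves composition:
\[
P^{(1)}_{B B''} \cdot M^{\B_1}_{B B' B''} \;=\; M^{\A}_{P^{(1)}B,\,P^{(1)}B',\,P^{(1)}B''} \cdot \bigl(P^{(1)}_{B' B''} \otimes P^{(1)}_{B B'}\bigr),
\]
and similarly for $P^{(2)}$. The key point is that the right-hand side is assembled entirely from data the two hypotheses share: since $\ob P^{(1)} = \ob P^{(2)}$ the objects $P^{(1)}B = P^{(2)}B$, etc., so $M^\A$ is taken at the same triple; and $P^{(1)}_{B' B''} = P^{(2)}_{B' B''}$, $P^{(1)}_{B B'} = P^{(2)}_{B B'}$ by hypothesis. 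Hence the two axioms have equal right-hand sides, giving $P^{(1)}_{B B''} \cdot M^{\B_1}_{B B' B''} = P^{(2)}_{B B''} \cdot M^{\B_2}_{B B' B''}$. But $\B_1(B,B'') = \B_2(B,B'')$, and $P^{(1)}_{B B''} = P^{(2)}_{B B''}$ is a monomorphism in $\V$ because $P^{(1)}$ is $\V$-faithful; cancelling it on the left yields $M^{\B_1}_{B B' B''} = M^{\B_2}_{B B' B''}$. Running the same argument for the unit axiom $P^{(1)}_{B B} \cdot j^{\B_1}_B = j^{\A}_{P^{(1)}B}$ (and its analogue for $P^{(2)}$) and again cancelling the mono $P^{(1)}_{B B} = P^{(2)}_{B B}$ on the left gives $j^{\B_1}_B = j^{\B_2}_B$. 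So $\B_1$ and $\B_2$ agree on objects, hom-objects, composition, and identities, whence $\B_1 = \B_2$, and then $P^{(1)} = P^{(2)}$ as noted above.

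I do not anticipate a genuine obstacle here; the argument is a bookkeeping exercise whose only delicate point is to verify that every ingredient on the ``$\A$-side'' of each $\V$-functor axiom depends only on $\A$ together with the shared data $\ob P^{(1)} = \ob P^{(2)}$ and $P^{(1)}_{\bullet\bullet} = P^{(2)}_{\bullet\bullet}$, so that the corresponding axioms for $P^{(1)}$ and $P^{(2)}$ may legitimately be equated before cancelling. Alternatively, one could invoke \bref{thm:factn_through_faithful_vfunctor} with $G := P^{(1)}$, $Q := P^{(2)}$, and the identity object-assignment to produce a $\V$-functor $\B_2 \to \B_1$ over $\A$; its structure morphisms are forced by $\V$-faithfulness to be identities, so it exhibits $\B_1 = \B_2$ and $P^{(1)} = P^{(2)}$ at once. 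The direct diagram chase above seems cleanest.
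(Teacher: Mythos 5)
Your argument is correct and is exactly the paper's proof, merely written out in full: the paper likewise observes that the diagrammatic unit and composition laws for the $P^{(i)}$, combined with the fact that the structure morphisms are mono, force the unit and composition morphisms of $\B_1$ and $\B_2$ to coincide. Your explicit cancellation of $P^{(1)}_{B B''} = P^{(2)}_{B B''}$ on the left is precisely the ``one readily checks'' step.
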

\begin{proof}
Under the given assumptions, the diagrammatic unit and composition laws for the $\V$-functors $P^{(i)}$, together with the fact that the structure morphisms of the $P^{(i)}$ are mono, entail immediately that the unit and composition morphisms for $\B_1$ and $\B_2$ are identical, as one readily checks.
\end{proof}

\subsection{Enriched adjunctions and monads} \label{sec:enr_adj}

We shall require the following basic results on $\V$-adjunctions and $\V$-monads \pbref{exa:enr_adj_mnd}.

\begin{PropSubSub} \label{prop:adjn_via_radj_and_unit}
Let $G:\C \rightarrow \B$ be a $\V$-functor, and suppose that for each $B \in \B$ we are given an object $FB$ in $C$ and a morphism $\eta_B:B \rightarrow GFB$ in $\B$ such that for each $C \in \C$, the composite
$$\phi_{B C} := \left(\C(FB,C) \xrightarrow{G_{FB C}} \B(GFB,GC) \xrightarrow{\B(\eta_B,GC)} \B(B,GC)\right)$$
is an isomorphism in $\V$.  Then we obtain a $\V$-functor $F:\B \rightarrow \C$ with structure morphisms the composites
\begin{equation}\label{eqn:ladj_formula}F_{B_1 B_2} := \left(\B(B_1,B_2) \xrightarrow{\B(B_1,\eta_{B_2})} \B(B_1,GFB_2) \xrightarrow{\phi_{B_1 FB_2}^{-1}} \C(FB_1,FB_2) \right)\;,\;\;B_1,B_2 \in \B\;,\end{equation}
and the given morphisms determine a $\V$-natural transformation $\eta$ and a $\V$-adjunction \mbox{$F \nsststile{}{\eta} G$}.
\end{PropSubSub}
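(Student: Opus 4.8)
The statement is the enriched counterpart of the classical construction of a left adjoint from a family of universal arrows, and the plan is to follow the familiar pattern (as in \cite{Ke:Ba}): use the isomorphisms $\phi_{BC}$ to transport the required structure from $\B$ and $\C$ and to verify the axioms across them. First I would record two preliminary facts. (1) Unwinding the definition, $\phi_{B,-}$ is the composite of $G_{FB,-}$ — the hom-action of $G$, which is $\V$-natural in the $\C$-variable because $G$ is a $\V$-functor — with the components $\B(\eta_B,GC)$, which are $\V$-natural in $C$ because $\B(B,G-)$ and $\B(GFB,G-)$ are $\V$-functors $\C \to \uV$ and $\B(\eta_B,-)$ is $\V$-natural; hence the family $\phi_{B,-}$ is $\V$-natural in the $\C$-variable. (2) Evaluated at the identity, $\phi_{B,FB}$ sends $1_{FB}$ to $\eta_B$; equivalently, $\phi_{B,FB}$ composed with the $\V$-identity morphism of $FB$ equals the morphism $I \to \B(B,GFB)$ picking out $\eta_B$.

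Next I would define $F$ on homs by the displayed formula, which amounts to saying that $F_{B_1 B_2}$ is the unique morphism with $\phi_{B_1,FB_2} \cdot F_{B_1 B_2} = \B(B_1,\eta_{B_2})$, and verify the two $\V$-functor axioms for $F$. Since each $\phi_{B_1,FB_2}$ is an isomorphism, it suffices to check the axioms after composing with it; the unit axiom then reduces, via fact (2), to a triviality about the $\V$-identities of $\B$, and the composition axiom reduces, via fact (1) together with the $\V$-functoriality of $G$ and of the hom of $\B$, to the composition axiom of $\B$ and the relevant naturality squares. In the same spirit I would then check that $\eta \colon 1_\B \to GF$ is $\V$-natural: substituting the definition of $F_{B_1 B_2}$ into the $\V$-naturality square for $\eta$ and using $GF_{B_1 B_2}$, the square collapses by facts (1), (2) and the $\V$-functor axioms already in hand.

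Finally I would produce the $\V$-adjunction. Define the counit component $\varepsilon_C \colon FGC \to C$ to be the unique morphism with $\phi_{GC,C}(\varepsilon_C) = 1_{GC}$. The underlying ordinary functors and natural transformation constitute the classical adjunction obtained from the universal arrows $\eta_B$ — indeed the defining property $G(Fb)\cdot\eta_{B_1} = \eta_{B_2}\cdot b$ of the classical left adjoint's action on $b \colon B_1 \to B_2$ is exactly $\phi_{B_1,FB_2}(Fb) = \eta_{B_2}\cdot b$, which holds by construction — so the triangle identities already hold at the underlying level. Promoting this to a $\V$-adjunction then comes down to checking that $\varepsilon$ is $\V$-natural, equivalently that $\phi$ is $\V$-natural in the $\B$-variable as well; since $F$ is now known to be a $\V$-functor, this is a diagram chase of the same kind as before, completing the verification that $\phi$ exhibits a $\V$-natural isomorphism $\C(F-,-) \cong \B(-,G-)$ of $\V$-functors $\Op{\B} \otimes \C \to \uV$, and hence a $\V$-adjunction $F \nsststile{}{\eta} G$.

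The main obstacle is this last $\V$-naturality in the $\B$-variable (equivalently, the enriched triangle identities, or the $\V$-naturality of $\varepsilon$): unlike naturality in $C$, it is not a formal consequence of how $\phi$ was assembled, because it must incorporate the functorial action of $F$, whose definition was itself forced by $\phi$ — so one must feed the formula for $F_{B_1 B_2}$ back through the square and close it using facts (1)–(2). Everything else, namely the $\V$-functoriality of $F$ and the $\V$-naturality of $\eta$, is routine bookkeeping once one systematically transports across the isomorphisms $\phi_{BC}$.
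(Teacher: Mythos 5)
Your proposal is correct, but it takes a genuinely more hands-on route than the paper. The paper's proof is a two-step reduction: fixing $B$, the weak Yoneda lemma (\cite{Ke:Ba} 1.9) gives a bijection between morphisms $B \rightarrow GFB$ and $\V$-natural transformations $\C(FB,-) \rightarrow \B(B,G-)$, under which $\eta_B$ corresponds precisely to the family $\phi_{B -}$; hence $\phi_{B -}$ is \emph{automatically} a $\V$-natural isomorphism, each $\B(B,G-)$ is representable, and the whole conclusion is then delegated to the representability theorem of \cite{Ke:Ba} \S 1.11. What you do instead is essentially inline the proof of that cited theorem: you establish the $\V$-naturality of $\phi_{B -}$ in the $\C$-variable directly as a composite of $\V$-natural transformations (which yields the same information the paper extracts from weak Yoneda), then construct $F$ from the displayed formula, verify its $\V$-functoriality, the $\V$-naturality of $\eta$ and $\varepsilon$, and the triangle identities (correctly observing that these, being equations between components, need only be checked at the underlying ordinary level, where they follow from the classical universal-arrow construction). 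Your diagnosis of the one non-formal step --- the $\V$-naturality of $\phi$ in the $\B$-variable, which must absorb the definition of $F$ that $\phi$ itself forced --- is exactly where the substance of Kelly's \S 1.11 lies. The paper's route buys brevity by outsourcing that computation; yours buys self-containedness at the cost of several diagram chases that you sketch plausibly but do not carry out.
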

\begin{proof}
Fixing an object $B \in \B$, the weak Yoneda lemma (\cite{Ke:Ba}, 1.9) yields a bijection between morphisms $B \rightarrow GFB$ and $\V$-natural transformations $\C(FB,-) \rightarrow \B(B,G-)$, under which $\eta_B$ corresponds to $\phi_{B -} := (\phi_{B C})_{C \in \C}$.  In particular, $\phi_{B -}:\C(FB,-) \rightarrow \B(B,G-)$ is thus a $\V$-natural isomorphism, showing that $\B(B,G-):\C \rightarrow \uV$ is a representable $\V$-functor.  Since this holds for all $B \in \B$, the result follows from \cite{Ke:Ba}, \S 1.11.
\end{proof}

\begin{PropSubSub} \label{thm:enradj_detd_by_radj_and_univ_arr}
\emptybox
\begin{enumerate}
\item Given a $\V$-adjunction $F \nsststile{\varepsilon}{\eta} G:\C \rightarrow \B$, the left $\V$-adjoint $F$ is necessarily given as in \bref{prop:adjn_via_radj_and_unit}.
\item A $\V$-adjunction $F \nsststile{\varepsilon}{\eta} G:\C \rightarrow \B$ is uniquely determined by the following data: (1) the right $\V$-adjoint $G$, (2) the family of objects $FB$ $(B \in \B)$, and (3) the family of morphisms $\eta_B:B \rightarrow GFB$ $(B \in \B)$.
\end{enumerate}
\end{PropSubSub}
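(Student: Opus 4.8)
The plan is to obtain part 2 as an easy consequence of part 1, and to prove part 1 by identifying the composite $\phi_{BC}$ defined in \bref{prop:adjn_via_radj_and_unit} with the components of the $\V$-natural isomorphism carried by the given $\V$-adjunction; once that identification is in hand, the formula \eqref{eqn:ladj_formula} for the structure morphisms of the left adjoint will fall out of the $\V$-naturality of the unit.

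For part 1, I would begin with a $\V$-adjunction $F \nsststile{\varepsilon}{\eta} G : \C \rightarrow \B$ and its associated $\V$-natural isomorphism $\pi_{BC} : \C(FB,C) \xrightarrow{\sim} \B(B,GC)$. The key claim is that $\pi_{BC} = \B(\eta_B,GC) \cdot G_{FB C}$, i.e. that $\pi_{BC}$ coincides with $\phi_{BC}$. To see this, fix $B$ and invoke the weak Yoneda lemma (\cite{Ke:Ba}, 1.9), which identifies $\V$-natural transformations $\C(FB,-) \rightarrow \B(B,G-)$ with morphisms $B \rightarrow GFB$ in $\B$, the transformation attached to $x:B \rightarrow GFB$ having component $\B(x,GC) \cdot G_{FB C}$ at $C$; since by construction the unit of a $\V$-adjunction is exactly the morphism corresponding to $\pi_{B-}$ under this identification (\cite{Ke:Ba}, \S 1.11), we get $\pi_{B-} = \phi_{B-}$. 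In particular the $\phi_{BC}$ are isomorphisms, so the data $\bigl(G,(FB)_{B \in \B},(\eta_B)_{B \in \B}\bigr)$ satisfies the hypotheses of \bref{prop:adjn_via_radj_and_unit}, which therefore yields a $\V$-functor $F'$ with $\Ob F' = \Ob F$ and $F'_{B_1 B_2} = \phi_{B_1 FB_2}^{-1} \cdot \B(B_1,\eta_{B_2})$. The remaining task is to check $F' = F$ as $\V$-functors; the object-assignments already agree, so it comes down to $\B(B_1,\eta_{B_2}) = \pi_{B_1 FB_2} \cdot F_{B_1 B_2}$, and expanding the right-hand side by means of $\pi = \phi$ and then the $\V$-functoriality of the composite $GF$ turns it into $\B(\eta_{B_1},GFB_2) \cdot (GF)_{B_1 B_2}$, which equals $\B(B_1,\eta_{B_2})$ by the $\V$-naturality axiom for $\eta : 1_\B \rightarrow GF$.

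For part 2, I would observe that if $F \nsststile{\varepsilon}{\eta} G$ and $F' \nsststile{\varepsilon'}{\eta} G$ are two $\V$-adjunctions sharing the right adjoint $G$, the family of objects ($F'B = FB$ for all $B$) and the unit $\eta$, then part 1 forces $F$ and $F'$ to have the same object-assignment and the same structure morphisms — both given by the single recipe \eqref{eqn:ladj_formula} in terms of the shared data — so that $F = F'$; and then the counit, being a morphism in the underlying ordinary functor category and hence part of the underlying ordinary adjunction, is uniquely determined by the left adjoint, the right adjoint and the unit, so $\varepsilon = \varepsilon'$ as well, and the $\V$-adjunctions coincide.

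I expect the one genuinely non-routine step to be the identification $\pi_{BC} = \phi_{BC}$, that is, carefully unwinding the weak Yoneda correspondence (the very one already used in proving \bref{prop:adjn_via_radj_and_unit}) to confirm that the $\V$-natural transformation attached to the unit component $\eta_B$ is precisely $\B(\eta_B,G-) \cdot G_{FB -}$. Everything after that is formal bookkeeping with the $\V$-functor and $\V$-naturality axioms.
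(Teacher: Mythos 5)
Your argument is correct and is essentially the standard unwinding of \cite{Ke:Ba} \S 1.11, which is all the paper itself offers as proof: identifying the adjunction isomorphism $\pi_{BC}$ with $\phi_{BC} = \B(\eta_B,GC)\cdot G_{FB\,C}$ and then deducing $\phi_{B_1 FB_2}\cdot F_{B_1 B_2} = \B(B_1,\eta_{B_2})$ from the $\V$-naturality of $\eta$ is exactly the intended route, and your reduction of part 2 to part 1 plus the uniqueness of the counit is fine. The one point worth making explicit is that, since the paper defines $\V$-adjunctions $2$-categorically via $(\eta,\varepsilon)$ and the triangle identities rather than via a given hom-isomorphism, you should first check that $\phi_{BC}$ is invertible (with inverse $\C(FB,\varepsilon_C)\cdot F_{B\,GC}$, using the triangle identities) before the hypotheses of \bref{prop:adjn_via_radj_and_unit} apply.
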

\begin{proof}
\cite{Ke:Ba}, \S 1.11.
\end{proof}

\begin{PropSubSub}\label{prop:enr_of_ord_adj}
Given a $\V$-functor $G:\C \rightarrow \B$ and an ordinary adjunction  \linebreak[4]\mbox{$F \nsststile{}{\eta} G:\C \rightarrow \B$} (in which the right adjoint is the underlying ordinary functor of the given $\V$-functor $G$), suppose that each associated morphism $\phi_{B C}$ \pbref{prop:adjn_via_radj_and_unit} is an isomorphism in $\V$.  Then the given adjunction is the underlying ordinary adjunction of a $\V$-adjunction gotten via \bref{prop:adjn_via_radj_and_unit}.
\end{PropSubSub}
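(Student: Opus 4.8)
The plan is to build the required $\V$-adjunction directly via \bref{prop:adjn_via_radj_and_unit} and then identify its underlying ordinary adjunction with the given one by a uniqueness argument. Since by hypothesis each associated morphism $\phi_{B C}$ is an isomorphism in $\V$, \bref{prop:adjn_via_radj_and_unit} applies verbatim to the data consisting of the $\V$-functor $G$, the family of objects $FB$ $(B \in \B)$, and the family of morphisms $\eta_B:B \rightarrow GFB$ $(B \in \B)$. It thereby produces a $\V$-functor $F':\B \rightarrow \C$ with $F'B = FB$ on objects and structure morphisms given by \eqref{eqn:ladj_formula}, a $\V$-natural transformation with components the prescribed $\eta_B$, and a $\V$-adjunction $F' \nsststile{}{\eta} G$.

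It then remains only to check that the underlying ordinary adjunction of $F' \nsststile{}{\eta} G$ coincides with the given ordinary adjunction $F \nsststile{}{\eta} G$. I would deduce this from the ordinary ($\V = \SET$) case of \bref{thm:enradj_detd_by_radj_and_univ_arr}, namely the classical fact that an ordinary adjunction is uniquely determined by its right adjoint together with the family of objects $FB$ and the unit morphisms $\eta_B:B \rightarrow GFB$. On the one hand, the underlying ordinary adjunction of $F' \nsststile{}{\eta} G$ has right adjoint the underlying ordinary functor of $G$, which by hypothesis is the right adjoint $G$ of the given adjunction; its left-adjoint object-assignment is $B \mapsto FB$; and its unit is the underlying ordinary natural transformation of the $\eta$ furnished by \bref{prop:adjn_via_radj_and_unit}, whose components are exactly the given $\eta_B$. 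On the other hand, these are precisely the data $(G,(FB)_{B},(\eta_B)_{B})$ determining the given adjunction $F \nsststile{}{\eta} G$. Hence the two ordinary adjunctions are equal, and so the given adjunction is the underlying ordinary adjunction of the $\V$-adjunction $F' \nsststile{}{\eta} G$ obtained via \bref{prop:adjn_via_radj_and_unit}.

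I expect essentially no obstacle. The only points meriting a line of attention are: confirming that the $\V$-natural transformation supplied by \bref{prop:adjn_via_radj_and_unit} has components \emph{literally} the prescribed $\eta_B$ rather than merely components isomorphic to them — which is immediate from the manner in which $\eta$ enters that construction; and noting that the uniqueness of an ordinary adjunction may legitimately be invoked with the objects $FB$ and the universal arrows $\eta_B$ as the given data, which is just the standard characterization of adjunctions through universal arrows.
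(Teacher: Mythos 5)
Your proposal is correct and follows essentially the same route as the paper: construct the $\V$-adjunction via \bref{prop:adjn_via_radj_and_unit} and then observe that its underlying ordinary adjunction and the given one share the same right adjoint and the same family of universal arrows $(FB,\eta_B)_{B\in\B}$, hence coincide. The extra care you take in checking that the unit components are literally the prescribed $\eta_B$ is sound and consistent with the paper's (more terse) argument.
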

\begin{proof}
Our task is simply to verify that the two ordinary adjunctions in question are identical, but both share the same right adjoint functor and the same family of universal arrows $(FB,\eta_B:B \rightarrow GFB)_{B \in \B}$ for the right adjoint and hence are identical.
\end{proof}

\begin{ParSubSub}\label{par:em_adj}
Let $\TT$ be a $\V$-monad on a $\V$-category $\B$.  By \cite{Dub} II.1, if $\V$ has equalizers, then there is an associated $\V$-adjunction
$$F^\TT \dashv G^\TT : \B^\TT \rightarrow \B$$
which we call \textit{the Eilenberg-Moore $\V$-adjunction} and whose underlying ordinary adjunction is the usual Eilenberg-Moore adjunction for the underlying ordinary monad of $\TT$.  For each pair of $\TT$-algebras $(A,a),(B,b)$, the associated structure morphism of the $\V$-functor $G^\TT$ is defined as the equalizer
$$\B^\TT((A,a),(B,b)) \rightarrow \B(A,B)$$
in $\V$ of the pair of morphisms
\begin{equation}\label{eq:pair_defining_em_hom}\B(A,B) \xrightarrow{T_{A B}} \B(TA,TB) \xrightarrow{\B(TA,b)} \B(TA,B)\end{equation}
$$\B(A,B) \xrightarrow{\B(a,B)} \B(TA,B)\;.$$
Obviously even if $\V$ does not have all equalizers, we can just as well define the Eilenberg-Moore $\V$-adjunction as long as we have for each pair of $\TT$-algebras an associated equalizer of the given morphisms, and in this case we say that \textit{the Eilenberg-Moore $\V$-category for $\TT$ exists}.
\end{ParSubSub}

\begin{ExaSubSub}
Given a commutative ring object $R$ in a countably-complete and \linebreak[4] -cocomplete cartesian closed category $\X$, the category $\sL := \RMod(\X)$ of $R$-module objects in $\X$ is isomorphic to the underlying ordinary category of $\uX^\TT$ for an $\X$-monad $\TT$ on $\uX$ \pbref{thm:rmod_commutative_monadic}.
\end{ExaSubSub}

\begin{DefSubSub}\label{def:monadic}
Let $F \nsststile{\varepsilon}{\eta} G:\C \rightarrow \B$ be a $\V$-adjunction with induced $\X$-monad $\TT$, and suppose that the Eilenberg-Moore $\V$-category for $\TT$ exists.  
\begin{enumerate}
\item We say that the given $\V$-adjunction is \textit{$\V$-monadic} if the comparison $\V$-functor $\C \rightarrow \B^\TT$ (\cite{Dub} II.1) is an equivalence of $\V$-categories.
\item We say that the given $\V$-adjunction is \textit{strictly $\V$-monadic} if the comparison $\V$-functor is an isomorphism.
\end{enumerate}
\end{DefSubSub}

\begin{RemSubSub}
By \cite{Dub} II.1.6, the comparison $\V$-functor \pbref{def:monadic} commutes with both the `forgetful' $\V$-functors $G,G^\TT$ and the `free' $\V$-functors $F,F^\TT$.
\end{RemSubSub}

\begin{ThmSubSub}[Crude Monadicity Theorem]\label{thm:crude_mndcity}
Let $F \nsststile{\varepsilon}{\eta} G:\C \rightarrow \B$ be a $\V$-adjunction.
\begin{enumerate}
\item If $G$ detects, preserves, and reflects $\V$-coequalizers of reflexive pairs, then the given $\V$-adjunction is $\V$-monadic.
\item If $G$ creates $\V$-coequalizers of reflexive pairs, then the given $\V$-adjunction is strictly $\V$-monadic.
\end{enumerate}
\end{ThmSubSub}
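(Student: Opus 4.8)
The plan is to transcribe the classical Beck-style monadicity argument into the $\V$-enriched setting, leaning throughout on two facts: that each $\V$-representable functor $\B(A,-):\B\to\uV$ carries $\V$-colimits to limits \pbref{def:enr_mono_limit}, and that a split (indeed, any absolute) coequalizer in the ordinary category underlying a $\V$-category is automatically a $\V$-coequalizer and is preserved by every $\V$-functor. Write $\TT=(T,\eta,\mu)$ for the $\V$-monad of the given $\V$-adjunction $F\nsststile{\varepsilon}{\eta}G$, write $F^\TT\dashv G^\TT:\B^\TT\to\B$ for the Eilenberg--Moore $\V$-adjunction \pbref{par:em_adj} (which, as in \bref{def:monadic}, we assume to exist), and let $K:\C\to\B^\TT$ be the comparison $\V$-functor, so that $G^\TT K=G$ and $KF=F^\TT$. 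By \bref{def:monadic} it suffices to prove, for (1), that $K$ is an equivalence of $\V$-categories, and, for (2), that $K$ is an isomorphism of $\V$-categories.

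The backbone of the argument is the \emph{canonical presentation}. For each $C\in\C$ the parallel pair $FG\varepsilon_C,\varepsilon_{FGC}:FGFGC\rightrightarrows FGC$ is reflexive, with common section $F\eta_{GC}$, and $\varepsilon_C:FGC\to C$ coequalizes it by naturality of $\varepsilon$; moreover $G$ sends this data to the split-coequalizer presentation of the $\TT$-algebra $(GC,G\varepsilon_C)$, whose coequalizer $G\varepsilon_C$ is absolute, hence a $\V$-coequalizer. Since under hypothesis (1) $G$ reflects $\V$-coequalizers of reflexive pairs — and under (2) $G$ creates them — it follows that $\varepsilon_C$ is a $\V$-coequalizer of $FG\varepsilon_C,\varepsilon_{FGC}$ in $\C$. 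Dually, every $\TT$-algebra $(B,b)$ is the $\V$-coequalizer of its own canonical presentation $F^\TT TB\rightrightarrows F^\TT B$ — a reflexive pair with absolute underlying coequalizer, which $G^\TT$ accordingly creates — and since $K$ carries canonical presentations to canonical presentations, $K$ preserves these particular $\V$-coequalizers.

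Now I would argue, first, that $K$ is $\V$-fully faithful: fixing $C,C'\in\C$ and applying $\C(-,C')$ to the canonical presentation of $C$ and $\B^\TT(-,KC')$ to that of $KC$ produces two equalizer diagrams in $\V$ (as $\V$-homs convert $\V$-colimits to limits), and $K$ maps the first onto the second; on the two right-hand terms, which are the $\V$-homs out of the free objects $FGC$ and $FGFGC$, the comparison map is identified via the $\V$-adjunction isomorphisms $\C(F-,-)\cong\B(-,G-)$ and $\B^\TT(F^\TT-,-)\cong\B(-,G-)$ — compatible since $KF=F^\TT$ and $G^\TT K=G$ — with an isomorphism, so $K_{C C'}:\C(C,C')\to\B^\TT(KC,KC')$, being a morphism of equalizers invertible on both legs, is itself invertible in $\V$. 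Second, given $(B,b)\in\B^\TT$ one forms in $\C$ the reflexive pair $Fb,\varepsilon_{FB}:FTB\rightrightarrows FB$ (common section $F\eta_B$), which $G$ carries to the split-coequalizer presentation of $(B,b)$; under (1) $G$ detects and preserves $\V$-coequalizers of reflexive pairs, so this pair has a $\V$-coequalizer $q:FB\to Q$ with $Gq\cong b$, hence $GQ\cong B$, and since $K$ preserves it, $Kq$ exhibits $KQ$ as the $\V$-coequalizer of the canonical presentation of $(B,b)$, so $KQ\cong(B,b)$ and $K$ is essentially surjective — hence an equivalence of $\V$-categories, proving (1). Under (2), creation supplies a \emph{unique} $L(B,b):=Q$ with $GL(B,b)=B$ and a unique $\kappa_{(B,b)}:FB\to L(B,b)$ realizing this $\V$-coequalizer with $G\kappa_{(B,b)}=b$; naturality of $\varepsilon$ at $\kappa_{(B,b)}$ gives $\varepsilon_{L(B,b)}\cdot Fb=\kappa_{(B,b)}\cdot\varepsilon_{FB}=\kappa_{(B,b)}\cdot Fb$, and since $F$ carries the section $\eta_B$ of $b$ to a section of $Fb$, the map $Fb$ is a split epimorphism, whence $\varepsilon_{L(B,b)}=\kappa_{(B,b)}$ and $KL(B,b)=(GL(B,b),G\varepsilon_{L(B,b)})=(B,b)$; extending $L$ to morphisms by the universal property of the $\kappa_{(B,b)}$, and using that each $b$ is epic and that $\varepsilon_C$ is the creation-lift of the canonical presentation of $(GC,G\varepsilon_C)$, one checks $KL=1_{\B^\TT}$ and $LK=1_\C$, so that $K$ — already known to be $\V$-fully faithful, since creation entails $G$ detects, preserves and reflects — is a $\V$-fully-faithful $\V$-functor bijective on objects, hence an isomorphism of $\V$-categories, proving (2).

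The genuinely delicate point is not the diagram-chasing but the enriched bookkeeping: at each step one must certify that the coequalizers in play are $\V$-coequalizers rather than merely ordinary ones, and that the relevant $\V$-functors ($G$, $G^\TT$, $K$, $F$) preserve or reflect precisely those. This rests on the two facts isolated at the outset, namely that the canonical split presentations are \emph{absolute} — hence $\V$-coequalizers, preserved by all $\V$-functors and created by $G^\TT$ — and that $\V$-hom functors convert $\V$-colimits to limits \pbref{def:enr_mono_limit}. A secondary point, for part (2), is the passage from mutually inverse \emph{ordinary} functors $K,L$ to an isomorphism of $\V$-\emph{categories}; this is exactly where $\V$-full-faithfulness of $K$ is indispensable, the bijection on objects alone being insufficient.
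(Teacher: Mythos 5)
Your proof is correct and follows essentially the same route as the paper, which simply defers to Dubuc's Theorem II.2.1 after observing that the two pairs whose pointwise $\V$-coequalizers are taken there --- namely $\varepsilon FG^\TT, FG^\TT\varepsilon^\TT$ with common section $F\eta G^\TT$, and $\varepsilon FG, FG\varepsilon$ with common section $F\eta G$ --- are reflexive and not merely $G$-contractible; these are exactly the pairs and sections at the heart of your argument. You have in effect written out in full the enriched Beck-style proof that the paper cites, with the same key ingredients (split presentations are absolute hence $\V$-coequalizers, and $\V$-homs convert $\V$-colimits to limits).
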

\begin{proof}
A proof is obtained by adapting the proof of Theorem II.2.1 of \cite{Dub}.  Indeed, whereas the latter theorem considers $\V$-coequalizers of \textit{$G$-contractible} pairs rather than reflexive pairs, the same proof can be used, since the pairs of $\V$-functors whose pointwise $\V$-coequalizers are taken therein are not only $G$-contractible (as noted there) but also reflexive.  Indeed, the first pair considered therein, namely $\varepsilon F G^\TT, FG^\TT\varepsilon^\TT$ has a common section $F\eta G^\TT$, and the second pair considered, namely $\varepsilon FG, FG\varepsilon$ has common section $F\eta G$.
\end{proof}

\begin{PropSubSub}\label{prop:enr_func_induced_by_mnd_mor}
Let $(\lambda,Q):\TT \rightarrow \SSS$ be a morphism of $\V$-monads \pbref{par:general_mnd_mor}, where $\TT$ and $\SSS$ are $\V$-monads on $\A$ and $\B$, respectively, and $Q:\B \rightarrow \A$.  Then, assuming that the Eilenberg-Moore $\V$-categories for $\TT$, $\SSS$ exist \pbref{par:em_adj}, there is an associated $\V$-functor $Q^\lambda$ yielding a commutative diagram
\begin{equation}\label{eqn:comm_diag_func_ind_by_mnd_mor}
\xymatrix{
\B^\SSS \ar[d] \ar[r]^{Q^\lambda} & \A^\TT \ar[d] \\
\B \ar[r]^Q & \A
}
\end{equation}
in which the vertical arrows are the forgetful $\V$-functors.  On objects,
\begin{equation}\label{eqn:action_of_mndmor_on_algs}Q^\lambda(B,b:SB \rightarrow B) = (QB,TQB \xrightarrow{\lambda_B} QSB \xrightarrow{Qb} QB)\;.\end{equation}
Further, this process defines a contravariant functor from the category of $\V$-monads to $\VCAT$.
\end{PropSubSub}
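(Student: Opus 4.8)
The plan is to construct $Q^\lambda$ by first fixing its action on objects and then appealing to the factorization principle \bref{thm:factn_through_faithful_vfunctor} relative to the forgetful $\V$-functor $G^\TT : \A^\TT \rightarrow \A$. This $G^\TT$ is $\V$-faithful, because each of its structure morphisms is, by definition \pbref{par:em_adj}, an equalizer in $\V$ and hence a monomorphism. Concretely, I take the object assignment dictated by \eqref{eqn:action_of_mndmor_on_algs}, sending an $\SSS$-algebra $(B,b)$ to $(QB, a)$ with $a := Qb \cdot \lambda_B$, and I intend to apply \bref{thm:factn_through_faithful_vfunctor} with ``$\C$'' $:= \B^\SSS$, ``$G$'' $:= G^\TT$, and ``$Q$'' $:= Q \circ G^\SSS : \B^\SSS \rightarrow \A$ (the composite $\V$-functor), noting that $G^\TT(QB,a) = QB = Q(G^\SSS(B,b))$ on objects, as required by that proposition.

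First I would check that $(QB,a)$ is genuinely a $\TT$-algebra. The unit law $a \cdot \eta_{QB} = 1_{QB}$ follows from the Unit Law of the monad morphism $(\lambda,Q)$ \pbref{par:general_mnd_mor}, which gives $\lambda_B \cdot \eta_{QB} = Q\phi_B$, combined with the $\SSS$-algebra unit law $b \cdot \phi_B = 1_B$. The associativity law $a \cdot Ta = a \cdot \mu_{QB}$ follows by combining the (ordinary) naturality of $\lambda : TQ \Rightarrow QS$ at the morphism $b$, the $\SSS$-algebra associativity law $b \cdot Sb = b \cdot \nu_B$, and the Associativity Law of $(\lambda,Q)$, which gives $\lambda_B \cdot \mu_{QB} = Q\nu_B \cdot \lambda_{SB} \cdot T\lambda_B$. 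The same naturality of $\lambda$ shows that for any morphism $h : (B,b) \rightarrow (B',b')$ of $\SSS$-algebras the map $Qh$ is a morphism of the corresponding $\TT$-algebras, so the object assignment is already compatible, at the level of underlying ordinary categories, with a commuting square as in \eqref{eqn:comm_diag_func_ind_by_mnd_mor}; the point of the argument that follows is to obtain the $\V$-enriched refinement.

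The substantive step is the hypothesis of \bref{thm:factn_through_faithful_vfunctor}: for each pair of $\SSS$-algebras, the structure morphism $Q_{B,B'} \cdot (G^\SSS)_{(B,b),(B',b')} : \B^\SSS((B,b),(B',b')) \rightarrow \A(QB,QB')$ must factor through the equalizer $(G^\TT)_{(QB,a),(QB',a')}$. As the latter is the equalizer in $\V$ of the pair $\bigl(\A(TQB,a') \cdot T_{QB,QB'},\ \A(a,QB')\bigr)$ built as in \eqref{eq:pair_defining_em_hom}, it suffices to show that $Q_{B,B'} \cdot (G^\SSS)_{(B,b),(B',b')}$ equalizes this pair. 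I expect this enriched diagram chase to be the main obstacle. The key is that $\lambda$, being a $2$-cell in $\VCAT$, is $\V$-natural, so that its $\V$-naturality square for $TQ \Rightarrow QS$, together with the bifunctoriality of the hom-functor $\A(-,-)$ and the $\V$-functoriality of $Q$ (which permits sliding $Q$ past composition with the fixed morphisms $b$ and $b'$), rewrites both composites into the common form $\A(\lambda_B, QB') \cdot Q_{SB,B'} \cdot \theta \cdot (G^\SSS)_{(B,b),(B',b')}$, where $\theta = \B(SB,b') \cdot S_{B,B'}$ for one composite and $\theta = \B(b,B')$ for the other; these two then coincide because $(G^\SSS)_{(B,b),(B',b')}$ is, by construction, the equalizer of precisely this pair of morphisms defining $\B^\SSS((B,b),(B',b'))$. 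With the hypothesis verified, \bref{thm:factn_through_faithful_vfunctor} yields a unique $\V$-functor $Q^\lambda : \B^\SSS \rightarrow \A^\TT$ with object assignment \eqref{eqn:action_of_mndmor_on_algs} and with $G^\TT \circ Q^\lambda = Q \circ G^\SSS$, i.e.\ making \eqref{eqn:comm_diag_func_ind_by_mnd_mor} commute.

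For the contravariant functoriality, I would observe that the identity monad morphism on $\TT$ is $(1_t, 1_\A)$, whose object assignment is $(A,a) \mapsto (A, a \cdot 1_{TA}) = (A,a)$, and that the composite of $(\lambda,Q) : \TT \rightarrow \SSS$ with $(\lambda',Q') : \SSS \rightarrow \UU$ (a $\V$-monad on a third $\V$-category, with component $\V$-functor $Q'$) is the monad morphism $\bigl(Q\lambda' \cdot \lambda Q',\ QQ'\bigr) : \TT \rightarrow \UU$, whose object assignment sends $(C,c)$ to $\bigl(QQ'C,\ QQ'c \cdot Q\lambda'_C \cdot \lambda_{Q'C}\bigr)$ --- exactly the value at $(C,c)$ of $Q^\lambda \circ (Q')^{\lambda'}$. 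In each case the two $\V$-functors being compared have the same object assignment and, upon postcomposition with the $\V$-faithful $G^\TT$, become the same $\V$-functor ($1_\A \circ G^\TT$, respectively $QQ' \circ G^\UU$), so the uniqueness clause of \bref{thm:factn_through_faithful_vfunctor} forces $(1_\A)^{1_t} = 1_{\A^\TT}$ and $(QQ')^{Q\lambda' \cdot \lambda Q'} = Q^\lambda \circ (Q')^{\lambda'}$. Since the latter reverses the order of composition, this is exactly the asserted contravariant functoriality from the category of $\V$-monads to $\VCAT$.
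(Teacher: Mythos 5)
The paper states this proposition without proof, so there is nothing to compare against; judged on its own, your argument is correct and complete. Reducing the enriched construction to the factorization lemma \bref{thm:factn_through_faithful_vfunctor} applied to the $\V$-faithful forgetful $\V$-functor $G^\TT$ is exactly what that lemma (and its companion \bref{thm:crit_for_equal_faithful_vfunctors}) was set up to do, and your verification of the key hypothesis --- that $Q_{B B'}\cdot(G^\SSS)_{(B,b)(B',b')}$ equalizes the pair \eqref{eq:pair_defining_em_hom} defining $\A^\TT((QB,a),(QB',a'))$, via the $\V$-naturality of $\lambda$ and the $\V$-functoriality of $Q$ --- goes through as you describe, as does the uniqueness argument for contravariant functoriality.
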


\begin{ParSubSub}\label{par:alg_endofunc}
Given an endofunctor $T$ on a category $\A$, a \textit{$T$-algebra} $(A,a)$ consists of an object $A$ of $\A$ and a morphism $a:TA \rightarrow A$.  We then obtain also the notion of \textit{$T$-homomorphism} between $T$-algebras, defined in the same way as one defines the notion of $\TT$-homomorphism for a monad $\TT$.  One thus defines the \textit{category of $T$-algebras} $\A^T$.  Given endofunctors $T,S$ on categories $\A,\B$, respectively, one has a notion of \textit{morphism of endofunctors} $(\lambda,Q):T \rightarrow S$, per \bref{par:general_mnd_mor}.  As in \bref{prop:enr_func_induced_by_mnd_mor}, such a morphism induces a functor $Q^\lambda:\B^S \rightarrow \A^T$, given on objects by the formula \eqref{eqn:action_of_mndmor_on_algs} and inducing an a commutative diagram analogous to \eqref{eqn:comm_diag_func_ind_by_mnd_mor}.  Again this process defines a contravariant functor from an evident \hbox{(meta-)category} of endofunctors to the \hbox{(meta-)category} of categories.
\end{ParSubSub}

\begin{PropSubSub}\label{thm:mnd_mor_as_homom}
Assume given $\V$-monads $\TT = (T,\eta^\TT,\mu^\TT)$ on $\A$ and $\SSS = (S,\eta^\SSS,\mu^\SSS)$ on $\B$, and a morphism of $\V$-endofunctors $(\lambda,Q):T \rightarrow S$ \pbref{par:general_mnd_mor}.  Then $\lambda:\TT \rightarrow \SSS$ is a morphism of $\V$-monads if and only if each component $\lambda_B:TQB \rightarrow QSB$ $(B \in \B)$ is a $T$-homomorphism 
$$\lambda_B:(TQB,\mu^\TT_{QB}) \rightarrow Q^\lambda(SB,\mu^\SSS_B)$$
whose restriction along $\eta^\TT_{QB}:QB \rightarrow TQB$ is $Q\eta^\SSS_B:QB \rightarrow QSB$.
\end{PropSubSub}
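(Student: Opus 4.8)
The plan is to show that the two conditions on the right-hand side are, componentwise, precisely the two diagrams of \bref{par:general_mnd_mor} that define a morphism of $\V$-monads in the $2$-category $\VCAT$. Since the hypothesis already furnishes a morphism of $\V$-endofunctors $(\lambda,Q):T \to S$ (in particular, $\lambda$ is $\V$-natural), the content of \bref{par:general_mnd_mor} is that $(\lambda,Q):\TT \to \SSS$ is a morphism of $\V$-monads if and only if the Unit Law $\lambda \cdot \eta^\TT Q = Q\eta^\SSS$ and the Associativity Law $Q\mu^\SSS \cdot \lambda S \cdot T\lambda = \lambda \cdot \mu^\TT Q$ both hold, these being equations of $\V$-natural transformations $Q \to QS$ and $TTQ \to QS$, respectively. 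So the first step is simply to record this reduction.

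Next I would invoke the elementary fact that an equation of $\V$-natural transformations holds exactly when it holds in each component, a $\V$-natural transformation being by definition a family of morphisms of the underlying ordinary categories subject to a $\V$-naturality condition. Evaluating the pasted (whiskered) composites at an object $B \in \B$ — using $(\lambda S)_B = \lambda_{SB}$, $(T\lambda)_B = T(\lambda_B)$, $(\mu^\TT Q)_B = \mu^\TT_{QB}$, $(Q\mu^\SSS)_B = Q\mu^\SSS_B$, and $(\eta^\TT Q)_B = \eta^\TT_{QB}$, $(Q\eta^\SSS)_B = Q\eta^\SSS_B$ — the Unit Law becomes the family of equations $\lambda_B \cdot \eta^\TT_{QB} = Q\eta^\SSS_B$ ($B \in \B$), and the Associativity Law becomes the family $\lambda_B \cdot \mu^\TT_{QB} = Q\mu^\SSS_B \cdot \lambda_{SB} \cdot T(\lambda_B)$ ($B \in \B$).

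It then remains to recognize these pointwise conditions as the two conditions in the statement. Here $Q^\lambda$ is to be read in the endofunctor sense of \bref{par:alg_endofunc} (not \bref{prop:enr_func_induced_by_mnd_mor}, which would be circular), so by the formula \eqref{eqn:action_of_mndmor_on_algs} we have $Q^\lambda(SB,\mu^\SSS_B) = (QSB,\, Q\mu^\SSS_B \cdot \lambda_{SB})$. Unwinding the definition of a $T$-homomorphism, $\lambda_B$ is a $T$-homomorphism $(TQB,\mu^\TT_{QB}) \to Q^\lambda(SB,\mu^\SSS_B)$ precisely when $\lambda_B \cdot \mu^\TT_{QB} = (Q\mu^\SSS_B \cdot \lambda_{SB}) \cdot T(\lambda_B)$, i.e.\ precisely the $B$-component of the Associativity Law; and the restriction of $\lambda_B$ along $\eta^\TT_{QB}:QB \to TQB$ is the composite $\lambda_B \cdot \eta^\TT_{QB}$, which equals $Q\eta^\SSS_B$ exactly when the $B$-component of the Unit Law holds. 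Quantifying over all $B \in \B$ and combining with the previous step yields the asserted equivalence.

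This is a definitional unwinding, so there is no genuine obstacle; the only point meriting care is the correct bookkeeping of the pasted composites in the two diagrams of \bref{par:general_mnd_mor} and the matching of the codomain $T$-algebra $Q^\lambda(SB,\mu^\SSS_B)$, as computed via \eqref{eqn:action_of_mndmor_on_algs}, against the composite $Q\mu^\SSS \cdot \lambda S \cdot T\lambda$ occurring in the Associativity Law.
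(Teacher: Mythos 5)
Your proposal is correct and follows essentially the same route as the paper's proof, which simply observes that the $T$-homomorphism condition for $\lambda_B$ (with codomain $Q^\lambda(SB,\mu^\SSS_B)=(QSB,\,Q\mu^\SSS_B\cdot\lambda_{SB})$) is the componentwise Associativity Law and the restriction condition is the componentwise Unit Law; you have merely spelled out the whiskering bookkeeping in detail. Your remark that $Q^\lambda$ must be read in the endofunctor sense of \bref{par:alg_endofunc} to avoid circularity is a correct and worthwhile clarification, though the paper leaves it implicit.
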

\begin{proof}
The requirement that $\lambda_B$ be a $T$-homomorphism amounts to exactly the associativity law for the $\V$-monad morphism $\lambda$, and the remaining condition is the unit law.
\end{proof}

\begin{PropSubSub}\label{thm:equality_of_mnd_morphs_via_free_algs}
Morphisms of $\V$-monads $(\lambda^{(1)},Q),(\lambda^{(2)},Q'):\TT \rightarrow \SSS$ are equal if and only if the underlying $\V$-functors $Q,Q':\B \rightarrow \A$ are equal and the induced $\V$-functors $Q^{\lambda^{(1)}},Q^{\lambda^{(2)}}:\B^\SSS \rightarrow \A^\TT$ agree on free $\SSS$-algebras.
\end{PropSubSub}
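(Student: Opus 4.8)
The plan is to prove both implications. The "only if" direction is immediate: if $(\lambda^{(1)},Q)=(\lambda^{(2)},Q')$ as morphisms of $\V$-monads, then in particular $Q=Q'$ as $\V$-functors and $\lambda^{(1)}=\lambda^{(2)}$ as $\V$-natural transformations, so by functoriality of the construction $(-)^{(-)}$ recalled in \bref{prop:enr_func_induced_by_mnd_mor} we get $Q^{\lambda^{(1)}}=Q^{\lambda^{(2)}}$ on the nose, a fortiori on free algebras. So the content is entirely in the "if" direction.

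For the "if" direction, assume $Q=Q'$ and that $Q^{\lambda^{(1)}}$ and $Q^{\lambda^{(2)}}$ agree on all free $\SSS$-algebras $F^\SSS B = (SB,\mu^\SSS_B)$, $B \in \B$. By the formula \eqref{eqn:action_of_mndmor_on_algs}, applying $Q^{\lambda^{(i)}}$ to the free algebra $(SB,\mu^\SSS_B)$ yields the $\TT$-algebra with carrier $QSB$ and structure map
$$TQSB \xrightarrow{\lambda^{(i)}_{SB}} QSSB \xrightarrow{Q\mu^\SSS_B} QSB\;.$$
So the hypothesis says precisely that $Q\mu^\SSS_B \cdot \lambda^{(1)}_{SB} = Q\mu^\SSS_B \cdot \lambda^{(2)}_{SB}$ for every $B \in \B$ (the carriers automatically agree since $Q=Q'$). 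To recover $\lambda^{(1)}_B = \lambda^{(2)}_B$ from this, I would precompose the above equality with $TQ\eta^\SSS_B : TQB \rightarrow TQSB$ and use $\V$-naturality of $\lambda^{(i)}$ applied to the morphism $\eta^\SSS_B : B \rightarrow SB$, which gives $\lambda^{(i)}_{SB} \cdot TQ\eta^\SSS_B = QS\eta^\SSS_B \cdot \lambda^{(i)}_B$. Combining, both $Q\mu^\SSS_B \cdot QS\eta^\SSS_B \cdot \lambda^{(i)}_B = Q(\mu^\SSS_B \cdot S\eta^\SSS_B) \cdot \lambda^{(i)}_B$, and the monad unit law $\mu^\SSS_B \cdot S\eta^\SSS_B = 1_{SB}$ collapses the left factor to the identity. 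Hence $\lambda^{(1)}_B = \lambda^{(2)}_B$ for all $B$, i.e. $\lambda^{(1)} = \lambda^{(2)}$ as $\V$-natural transformations, and together with $Q=Q'$ this gives equality of the two monad morphisms.

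One point to be careful about: the agreement of $Q^{\lambda^{(1)}}$ and $Q^{\lambda^{(2)}}$ "on free $\SSS$-algebras" should be read as agreement on objects of the form $F^\SSS B$, and since both $\V$-functors already sit over $Q=Q'$ via the forgetful $\V$-functor, equality of their values on these objects is exactly equality of the associated $\TT$-algebra structure maps displayed above — there is no further hom-object condition to check, because a $\V$-functor is determined on a given object-assignment together with its hom-morphisms, and here we are only comparing object-values. The only genuine step is the naturality-plus-unit-law manipulation of the previous paragraph, which I expect to be entirely routine; the mild subtlety, rather than an obstacle, is simply to phrase the hypothesis correctly so that the displayed structure-map equality is literally what is assumed. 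I would present the argument as a short diagram chase, perhaps with one commuting square for $\V$-naturality and the unit triangle for $\SSS$.
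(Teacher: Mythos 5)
Your proof is correct, but the ``if'' direction takes a genuinely different route from the paper's. The paper appeals to Proposition \bref{thm:mnd_mor_as_homom}: the component $\lambda^{(i)}_B$ is a $\TT$-homomorphism out of the \emph{free} $\TT$-algebra $(TQB,\mu^\TT_{QB})$ into $Q^{\lambda^{(i)}}(SB,\mu^\SSS_B)$ whose restriction along $\eta^\TT_{QB}$ is $Q\eta^\SSS_B$ (this restriction condition is exactly the unit law of the monad morphism), and then invokes the universal property of the free algebra to conclude that the two components coincide once their codomain algebras do. You instead run a direct diagram chase: precompose the equality of structure maps $Q\mu^\SSS_B \cdot \lambda^{(i)}_{SB}$ with $TQ\eta^\SSS_B$, use naturality of $\lambda^{(i)}$ at $\eta^\SSS_B$, and collapse $Q(\mu^\SSS_B \cdot S\eta^\SSS_B)$ via the right unit law of $\SSS$. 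Both arguments are sound; the trade-off is that the paper's proof reuses an already-established lemma and is conceptually a one-liner about free algebras, while yours is more self-contained and elementary --- it never uses the monad-morphism axioms for $\lambda^{(i)}$ at all, only naturality of the underlying 2-cells $TQ \rightarrow QS$ and the unit law of $\SSS$, so it in fact proves the slightly stronger statement for arbitrary morphisms of endo-$1$-cells inducing functors that agree on free algebras. Your remarks about how to read ``agree on free $\SSS$-algebras'' and about the absence of any further hom-object condition are accurate and match the paper's usage.
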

\begin{proof}
Let $B \in \B$.  One implication is trivial.  For the other, suppose that $Q = Q'$ and that the $Q^{\lambda^{(i)}}$ agree on free $\SSS$-algebras.  For each $i = 1,2$, the component $\lambda^{(i)}_B:TQB \rightarrow QSB$ is characterized in \bref{thm:mnd_mor_as_homom} as the unique $\TT$-homomorphism
$$\lambda^{(i)}_B:(TQB,\mu^\TT_{QB}) \rightarrow Q^{\lambda^{(i)}}(SB,\mu^\SSS_B)$$
whose restriction along $\eta^\TT_{QB}:QB \rightarrow TQB$ is $Q\eta^\SSS_B:QB \rightarrow QSB$.  But by assumption $Q^{\lambda^{(1)}}(SB,\mu^\SSS_B) = Q^{\lambda^{(2)}}(SB,\mu^\SSS_B)$, so $\lambda^{(1)}_B = \lambda^{(2)}_B$.
\end{proof}

\begin{DefSubSub}\label{def:refl_subcats_and_idm_mnds}
Let $\B$ be a $\V$-category.
\begin{enumerate}
\item A \textit{$\V$-reflective-subcategory} of $\B$ is a full replete sub-$\V$-category $\B'$ of $\B$ for which the inclusion $\V$-functor $J:\B' \hookrightarrow \B$ has a $\V$-left-adjoint $K$.
\item A $\V$-adjunction $K \nsststile{}{\rho} J : \B' \hookrightarrow \B$ for which $J$ is the inclusion of a $\V$-reflective-subcategory is called a \textit{$\V$-reflection} (on $\B$).
\item Given an idempotent $\V$-monad $\SSS = (S,\rho,\lambda)$ \pbref{par:idm_mnd} on $\B$, we let $\B^{(\SSS)}$ denote the full sub-$\V$-category of $\B$ consisting of those objects $B$ for which $\rho_B$ is iso.
\end{enumerate}
\end{DefSubSub}

\begin{PropSubSub} \label{thm:refl_idemp_mnd}
There is a bijection $\Refl_\V(\B) \cong \IdmMnd_\V(\B)$ between the class $\Refl_\V(\B)$ of all $\V$-reflections on $\B$ and the class $\IdmMnd_\V(\B)$ of all idempotent $\V$-monads on $\B$, which associates to each $\V$-reflection on $\B$ its induced $\V$-monad.  The $\V$-reflective-subcategory associated to a given idempotent $\V$-monad $\SSS$ via this bijection is $\B^{(\SSS)}$.
\end{PropSubSub}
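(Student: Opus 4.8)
The plan is to exhibit mutually inverse assignments between $\V$-reflections on $\B$ and idempotent $\V$-monads on $\B$, the reflection-to-monad direction being the stated bijection. First I would start from a $\V$-reflection $K \nsststile{\varepsilon}{\rho} J:\B' \hookrightarrow \B$ and check that its induced $\V$-monad $\SSS = (JK,\rho,J\varepsilon K)$ is idempotent with $\B' = \B^{(\SSS)}$. Since $J$ is the inclusion of a full replete sub-$\V$-category it is $\V$-fully-faithful, so its counit $\varepsilon:KJ \to 1_{\B'}$ is an isomorphism (read off from the triangle identities after passing to underlying ordinary categories, $\V$-naturality of $\varepsilon$ then upgrading this to a $\V$-natural iso). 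Hence the multiplication $J\varepsilon K$ of $\SSS$ is iso, so $\SSS$ is idempotent by \bref{par:idm_mnd}. For the subcategory: if $B \in \B'$ then $J\varepsilon_B \cdot \rho_B = 1_B$ and $\varepsilon_B$ iso force $\rho_B$ iso, while if $\rho_B$ is iso then $B \cong JKB \in \B'$ and repleteness gives $B \in \B'$; thus $\B' = \B^{(\SSS)}$, and by \bref{thm:enradj_detd_by_radj_and_univ_arr} the whole $\V$-reflection (with objects $KB$ and units $\rho_B$) is determined by $\SSS$, so reflection $\mapsto \SSS$ is injective.

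Next I would start from an idempotent $\V$-monad $\SSS = (S,\rho,\lambda)$ and build a $\V$-reflection onto $\B^{(\SSS)}$. The full sub-$\V$-category $\B^{(\SSS)}$ is replete by $\V$-naturality of $\rho$, and idempotency gives $S\rho = \rho S = \lambda^{-1}$, so $\rho_{SB} = S\rho_B$ is iso and $SB \in \B^{(\SSS)}$ for every $B$. I would then apply \bref{prop:adjn_via_radj_and_unit} with $FB := SB$ and $\eta_B := \rho_B : B \to J(SB)$; since $\B^{(\SSS)}$ has the same hom-objects as $\B$, the morphism to be inverted is $\phi_{BC} = \B(\rho_B,C):\B(SB,C) \to \B(B,C)$ for $C \in \B^{(\SSS)}$, and I claim its inverse is $\psi_{BC} := \B(SB,\rho_C^{-1}) \circ S_{B,C}$. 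One composite reduces to $\B(B,\rho_C^{-1})\circ\B(B,\rho_C) = 1$ by $\V$-naturality of $\rho$ (which gives $\B(\rho_B,SC)\circ S_{B,C} = \B(B,\rho_C)$); the other uses compatibility of the $\V$-functor $S$ with composition (giving $S_{B,C}\circ\B(\rho_B,C) = \B(S\rho_B,SC)\circ S_{SB,C}$) together with $S\rho_B = \rho_{SB}$ to turn $\psi_{BC}\circ\B(\rho_B,C)$ into the first composite with $B$ replaced by $SB$. Thus \bref{prop:adjn_via_radj_and_unit} yields a $\V$-left-adjoint $K$ to the inclusion with $KB = SB$ and unit $\rho$, exhibiting $\B^{(\SSS)}$ as a $\V$-reflective subcategory.

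It then remains to verify that the $\V$-monad induced by this reflection is $\SSS$ itself, which gives surjectivity. Its unit is $\rho$ by construction, and on objects its underlying $\V$-functor $JK$ sends $B \mapsto SB$; on hom-objects, formula \eqref{eqn:ladj_formula} together with the explicit inverse $\psi_{B_1,SB_2}$, the identity $S_{B_1,SB_2}\circ\B(B_1,\rho_{B_2}) = \B(SB_1,S\rho_{B_2})\circ S_{B_1,B_2}$, and $\rho_{SB_2}^{-1}\circ S\rho_{B_2} = 1$ (idempotency) gives $K_{B_1 B_2} = S_{B_1 B_2}$, so $JK = S$ as $\V$-functors. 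Finally, by the first part this induced monad is idempotent, so its multiplication is forced to equal $(\rho S)^{-1} = \lambda$. Hence the induced monad is exactly $\SSS$; combined with the injectivity and the identification $\B' = \B^{(\SSS)}$ above, this establishes the bijection and the concluding sentence of the statement.

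The main obstacle is precisely the enriched invertibility in the second step: that $\B(\rho_B,C)$ is an isomorphism in $\V$ for $C \in \B^{(\SSS)}$, together with the parallel computation that $K$ has hom-morphisms $S_{B_1 B_2}$. These cannot be checked on global elements and must be pushed through diagrammatically from $\V$-naturality of $\rho$, compatibility of $S$ with composition, and idempotency ($S\rho = \rho S$); everything else reduces to the ordinary theory of idempotent monads and reflective subcategories or is a direct application of the cited enriched results. As a streamlining one could instead obtain the $\V$-reflection by enriching, via \bref{prop:enr_of_ord_adj}, the ordinary reflection attached to the underlying ordinary idempotent monad — but this still requires exactly the same verification that $\phi_{BC}$ is invertible in $\V$.
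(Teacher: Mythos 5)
The paper states Proposition \bref{thm:refl_idemp_mnd} without proof, treating it as the standard correspondence between reflections and idempotent monads transported to the enriched setting, so there is no argument of the author's to compare yours against. Your proof is correct and supplies exactly the content that needs checking in the enriched case: that the two assignments land where claimed and are mutually inverse, with the only genuinely $\V$-enriched obstacle being the invertibility \emph{in $\V$} of $\B(\rho_B,C)$ for $C \in \B^{(\SSS)}$, which you handle correctly by exhibiting the explicit inverse $\B(SB,\rho_C^{-1})\circ S_{B,C}$ and verifying both composites via $\V$-naturality of $\rho$, $\V$-functoriality of $S$, and $S\rho = \rho S$; the computation that $K_{B_1B_2} = S_{B_1B_2}$ from \eqref{eqn:ladj_formula} is likewise the right way to see that the induced monad is $\SSS$ on the nose rather than merely up to isomorphism. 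The one slightly compressed step is the claim that the counit of a $\V$-reflection is invertible ``read off from the triangle identities'': strictly one needs the universal property of the unit (or full faithfulness of $J$ plus Yoneda) in addition to the triangle identities, but this is a standard fact and does not constitute a gap. Your closing remark that one could instead enrich the underlying ordinary reflection via \bref{prop:enr_of_ord_adj} is also apt, and as you note it requires the same invertibility check, so nothing is saved.
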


\begin{PropSubSub}\label{thm:em_adj_idm_mnd}
Let $\SSS$ be an idempotent $\V$-monad on a $\V$-category $\B$, with associated $\V$-reflection $K \nsststile{}{\rho} J : \B^{(\SSS)} \hookrightarrow \B$.
\begin{enumerate}
\item The Eilenberg-Moore $\V$-category for $\SSS$ exists \pbref{par:em_adj}.
\item The comparison $\V$-functor $\B^{(\SSS)} \rightarrow \B^\SSS$ is an isomorphism, and its structure morphisms are the identity morphisms; corresponding hom-objects of these $\V$-categories are identical.
\end{enumerate}
\end{PropSubSub}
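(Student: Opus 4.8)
The plan is to show that for an idempotent $\V$-monad the Eilenberg-Moore construction degenerates: the equalizers defining the hom-objects of $\B^\SSS$ are equalizers of \emph{identical} parallel pairs, so they can be taken to be the hom-objects of $\B$ equipped with identity structure morphisms, after which the comparison $\V$-functor is manifestly an isomorphism.

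First I would record the standard description of the algebras of an idempotent $\V$-monad $\SSS = (S,\rho,\mu)$: an object $A$ of $\B$ underlies an $\SSS$-algebra if and only if $\rho_A$ is invertible, in which case the structure map is necessarily $\rho_A^{-1}$. Indeed, if $\rho_A$ is iso then $a := \rho_A^{-1}$ satisfies the unit law and, using $S\rho = \rho S$ \pbref{par:idm_mnd} together with the monad unit law, $Sa = (S\rho_A)^{-1} = (\rho_{SA})^{-1} = \mu_A$, so the associativity law holds too; conversely, for any algebra $(A,a)$, naturality of $\rho$ and $S\rho = \rho S$ give $\rho_A \cdot a = Sa \cdot \rho_{SA} = Sa \cdot S\rho_A = S(a \cdot \rho_A) = 1_{SA}$, so $\rho_A$ is iso with inverse $a$. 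Thus $A \mapsto (A,\rho_A^{-1})$ is a bijection $\ob \B^{(\SSS)} \to \ob \B^\SSS$.

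The crux is the following enriched computation. Fix $\SSS$-algebras $(A,a) = (A,\rho_A^{-1})$ and $(B,b) = (B,\rho_B^{-1})$, and consider the parallel pair of \eqref{eq:pair_defining_em_hom} with $T = S$, namely
$$\alpha := \B(SA,b)\cdot S_{AB}, \qquad \beta := \B(a,B)\ :\ \B(A,B) \longrightarrow \B(SA,B).$$
I claim $\alpha = \beta$. Since $\rho_A$ is iso, $\B(\rho_A,B)$ is iso with inverse $\B(\rho_A^{-1},B) = \B(a,B) = \beta$, so it suffices to check $\B(\rho_A,B)\cdot\alpha = 1_{\B(A,B)}$. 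Using bifunctoriality of the hom and then the $\V$-naturality of $\rho\colon 1_\B \to S$ in the form $\B(\rho_A,SB)\cdot S_{AB} = \B(A,\rho_B)$, one computes
$$\B(\rho_A,B)\cdot\alpha \;=\; \B(A,b)\cdot\B(\rho_A,SB)\cdot S_{AB} \;=\; \B(A,b)\cdot\B(A,\rho_B) \;=\; \B(A,\,b\cdot\rho_B) \;=\; \B(A,1_B) \;=\; 1.$$
Hence $\alpha = \beta$, and the equalizer defining $\B^\SSS((A,a),(B,b))$ \pbref{par:em_adj} may be taken to be $\B(A,B)$ with identity structure morphism. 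As this applies to every pair of $\SSS$-algebras, the Eilenberg-Moore $\V$-category for $\SSS$ exists, giving part 1; and with these choices $\B^\SSS((A,a),(B,b)) = \B(A,B)$ and the forgetful $\V$-functor $G^\SSS$ has identity structure morphisms.

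For part 2, recall that the comparison $\V$-functor $P\colon \B^{(\SSS)} \to \B^\SSS$ commutes with the forgetful $\V$-functors, so $G^\SSS P = J$ with $J$ the full inclusion. On objects $P$ sends $B'$ to its canonical algebra $(B',\rho_{B'}^{-1})$ (by a triangle identity for the reflection $K \dashv J$), so $P$ is a bijection on objects by the first step, and the corresponding hom-objects of $\B^{(\SSS)}$ and $\B^\SSS$ are literally the same object of $\V$, namely $\B(B',B'')$. Comparing structure morphisms in $G^\SSS P = J$, and using that $J$ and $G^\SSS$ both have identity structure morphisms, forces $P_{B'B''} = \id$; hence $P$ is an isomorphism of $\V$-categories with identity structure morphisms, as required. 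I expect the computation of the third paragraph to be the only genuine obstacle: the naive move is to note that $b \cdot Sf = f \cdot a$ for every morphism $f$ (which is just naturality of $\rho$), but since $\V$ is not assumed well-pointed this elementwise observation is insufficient, and one must manipulate the hom-morphisms directly as above.
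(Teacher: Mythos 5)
Your proof is correct and takes essentially the same route as the paper: the paper's entire argument is the one-line observation that the two morphisms in \eqref{eq:pair_defining_em_hom} coincide for algebras of an idempotent $\V$-monad, so the equalizer can be taken to be the identity on $\B(A,B)$. You simply supply the enriched computation (via $\V$-naturality of $\rho$ and invertibility of $\B(\rho_A,B)$) that the paper leaves implicit, together with the routine bookkeeping for the comparison $\V$-functor.
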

\begin{proof}
For each pair of $\SSS$-algebras $(A,\rho_A^{-1}), (B,\rho_B^{-1})$ the associated morphisms in \eqref{eq:pair_defining_em_hom} are equal, so their equalizer may be taken as simply the identity morphism on $\B(A,B)$.
\end{proof}

\begin{RemSubSub}\label{rem:em_adj_idm_mnd}
In view of \bref{thm:em_adj_idm_mnd}, we shall often identify a $\V$-reflection with the Eilenberg-Moore $\V$-adjunction of its corresponding idempotent $\V$-monad.
\end{RemSubSub}

\chapter{Enriched orthogonality and factorization} \label{ch:enr_orth_fact}
\setcounter{subsection}{0}

The notion of \textit{factorization system} and the requisite concept of \textit{orthogonality} provide an abstraction for the phenomenon whereby morphisms in many categories may be canonically decomposed into a suitable `surjective' morphism followed by an `embedding' of the appropriate sort.  Yet the notion of factorization system is much more general, and in Chapter \bref{ch:compl_enr_orth} we shall use it in defining abstract notions of \textit{dense morphism}, \textit{closed embedding}, and \textit{closure} \pbref{sec:cmpl_cl_dens}.  The notion of orthogonality of morphisms in a category (see \cite{FrKe}, \S 2) has a natural counterpart for enriched categories \pbref{def:enr_orth_fact} that was employed by Day \cite{Day:AdjFactn}.  Since all categorical notions employed in Day's paper are ``assumed to be relative to a suitable symmetric monoidal closed category'', Day's use of the notion of factorization system must be interpreted in the enriched context as well.  Yet to date there has been no substantial account of the basic notions and theory of the enriched analogues of the notions of factorization system and prefactorization system of \cite{FrKe}, notwithstanding the brief treatment of a particular special case in \cite{Ke:Ba} 6.1, the substantial work on related notions at a greater level of generality in \cite{Ang:thesis,Ang:SemiInitFin}, and the brief account of \textit{functorial weak factorizations} for tensored enriched categories in \cite{Rie}.  The objective of the present chapter is to fill this gap to the extent needed for the sequel.

\section{Enriched factorization systems} \label{sec:enr_factn_sys}

\begin{DefSub} \label{def:enr_orth_fact}
Let $\B$ be a $\V$-category.
\begin{enumerate}
\item For morphisms $e:A_1 \rightarrow A_2$, $m:B_1 \rightarrow B_2$ in $\B$ we say that $e$ is \textit{$\V$-orthogonal} to $m$, written $e \downarrow_\V m$, if the commutative square
\begin{equation}\label{eqn:orth_pb}
\xymatrix {
\B(A_2,B_1) \ar[rr]^{\B(A_2,m)} \ar[d]_{\B(e,B_1)}  & & \B(A_2,B_2) \ar[d]^{\B(e,B_2)} \\
\B(A_1,B_1)  \ar[rr]^{\B(A_1,m)}                     & & \B(A_1,B_2)
}
\end{equation}
is a pullback in $\V$.

\item Given classes $\E$, $\M$ of morphisms in $\B$, we define
$$\E^{\downarrow_\V} := \{m \in \mor\B \;|\; \forall e \in \E \;:\; e \downarrow_\V m\}\;,$$
$$\M^{\uparrow_\V} := \{e \in \mor\B \;|\; \forall m \in \M \;:\; e \downarrow_\V m\}\;.$$

\item A \textit{$\V$-prefactorization-system} on $\B$ is a pair $(\E,\M)$ of classes of morphisms in $\B$ such that $\E^{\downarrow_\V} = \M$ and $\M^{\uparrow_\V} = \E$.

\item For a pair $(\E,\M)$ of classes of morphisms in $\B$, we say that \textit{(\E,\M)-factorizations exist} if every morphism in $\B$ factors as a morphism in $\E$ followed by a morphism in $\M$.  More precisely, we require an assignment to each morphism $f$ of $\B$ an associated pair $(e,m) \in \E \times \M$ with $f = m \cdot e$.

\item A \textit{$\V$-factorization-system} is a $\V$-prefactorization-system such that (\E,\M)-factorizations exist.
\end{enumerate}
\end{DefSub}

\begin{RemSub}
The given definition of orthogonality relative to $\V$ appears in \cite{Day:AdjFactn}, where the notion of enriched factorization system is also implicitly used to a limited extent.
\end{RemSub}

\begin{RemSub}
Analogous notions for ordinary categories were given in \cite{FrKe}, and for these we omit the indication of $\V$ from the notation; for $\V$-enriched categories with $\V = \Set$ (i.e. locally small ordinary categories) the $\V$-enriched notions coincide with the ordinary.  Given morphisms $e$ and $m$ in an ordinary category $\B$, $e \downarrow m$ if and only if for any commutative square as in the periphery of the following diagram
$$
\xymatrix{
\cdot \ar[r] \ar[d]_e & \cdot \ar[d]^m \\
\cdot \ar[r] \ar@{-->}[ur]|w    & \cdot
}
$$
there exists a unique morphism $w$ making the diagram commute.

For general $\V$, we note that $\V$-orthogonality implies ordinary orthogonality.
\end{RemSub}

\begin{ExaSub}
Let us return to the example of \bref{exa:rmodx_smc_and_xenr}, in which $\sL = \RMod(\X)$ is the symmetric monoidal closed category of $R$-module objects for a commutative ring object $R$ in a suitable cartesian closed category $\X$; suppose also that $\X$ is \textit{finitely well-complete} \pbref{def:enr_fwc}.  Particular examples include the categories $\sL$ of convergence vector spaces and smooth vector spaces.  We have the following $\sL$-factorization-systems on $\uL$; see \pbref{exa:rmod_factn_systems}.
\begin{enumerate}
\item $(\Epi\sL,\StrMono\sL)$, where $\Epi\sL$ consists of all epis in $\sL$ (equivalently, $\sL$-epis in $\uL$) and $\StrMono\sL$ consists of all strong monos in $\sL$ (equivalently, $\sL$-strong-monos in $\uL$, which we call \textit{embeddings}). 
\item $(\Dense{\Sigma_H},\ClEmb{\Sigma_H})$, consisting of the classes of \textit{functionally dense} morphisms and \textit{functionally closed embeddings}, respectively.
\end{enumerate}
\end{ExaSub}

\begin{RemSub}
Given a class of morphisms $\sH$ in a $\V$-category $\B$, both $(\sH^{\downarrow_\V\uparrow_\V},\sH^{\downarrow_\V})$ and $(\sH^{\uparrow_\V},\sH^{\uparrow_\V\downarrow_\V})$ are $\V$-prefactorization-systems on $\B$.
\end{RemSub}

\begin{RemSub} \label{rem:factn_dualization}
The above notions \pbref{def:enr_orth_fact} obviously depend on the choice of $\V$-category $\B$. Given morphisms $e:A_1 \rightarrow A_2$, $m:B_1 \rightarrow B_2$ in $\B$, one immediately finds that $e \downarrow_\V m$ in $\B$ iff $m \downarrow_\V e$ in $\B^\op$.  Given classes of morphisms $\E$, $\M$ in $\B$, we shall write $\E^\op$, $\M^\op$ for these classes when they are considered as classes of morphisms in $\B^\op$.  One then has that ${\E^\op}^{\uparrow_\V} = \E^{\downarrow_\V}$ and ${\M^\op}^{\downarrow_\V} = \M^{\uparrow_\V}$.  In particular, each $\V$-(pre)factorization-system $(\E,\M)$ determines a $\V$-(pre)factorization-system $(\M^\op,\E^\op)$ on $\B^\op$.
\end{RemSub}

\begin{PropSub} \label{thm:orth_yielding_retr_and_iso}
Let $g,f$ be morphisms in a $\V$-category $\B$ such that the composite $g \cdot f$ is defined.
\begin{enumerate}
\item If $g \cdot f \downarrow_\V g$ then $g$ is a retraction in $\B$.
\item If $f \downarrow_\V g \cdot f$ then $f$ is a section in $\B$.
\item If $f \downarrow_\V f$ then $f$ is an isomorphism.
\end{enumerate}
\end{PropSub}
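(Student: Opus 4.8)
The plan is to reduce all three items to ordinary orthogonality in the underlying ordinary category of $\B$. As observed immediately after \bref{def:enr_orth_fact}, $\V$-orthogonality implies ordinary orthogonality, so from each hypothesis $p \downarrow_\V q$ one obtains a unique diagonal filler for every commutative square in $\B$ having $p$ as left leg and $q$ as right leg. Since ``retraction'', ``section'' and ``isomorphism'' are to be interpreted in the underlying ordinary category, it then suffices to exhibit in each case a single commutative square and to read off the desired one-sided (or two-sided) inverse from its diagonal.

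For part 1 I would write $f:X \to Y$ and $g:Y \to Z$ and apply $g \cdot f \downarrow_\V g$ to the square
$$\xymatrix{ X \ar[r]^{f} \ar[d]_{g \cdot f} & Y \ar[d]^{g} \\ Z \ar[r]^{1_Z} & Z }$$
which commutes since $g \cdot f = 1_Z \cdot (g \cdot f)$; its diagonal $w:Z \to Y$ satisfies $g \cdot w = 1_Z$, so $g$ is a retraction. For part 2 I would apply $f \downarrow_\V g \cdot f$ to the square
$$\xymatrix{ X \ar[r]^{1_X} \ar[d]_{f} & X \ar[d]^{g \cdot f} \\ Y \ar[r]^{g} & Z }$$
which commutes since $(g \cdot f) \cdot 1_X = g \cdot f$; its diagonal $w:Y \to X$ satisfies $w \cdot f = 1_X$, so $f$ is a section. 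For part 3, with $f:X \to Y$, I would apply $f \downarrow_\V f$ to the square whose two horizontal edges are $1_X$ and $1_Y$ and whose two vertical edges are both $f$; the resulting diagonal $w:Y \to X$ satisfies $w \cdot f = 1_X$ and $f \cdot w = 1_Y$, so $f$ is an isomorphism. Alternatively, part 3 follows from part 2 applied with $g = 1_Y$ (making $f$ a section) together with the same statement read in $\B^\op$ via \bref{rem:factn_dualization} (making $f$ a retraction), since a morphism that is simultaneously a split monomorphism and a split epimorphism is invertible.

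I expect no genuine obstacle here; the only thing demanding attention is keeping track, in each item, of which morphism in the hypothesis plays the role of the left leg $e$ and which the role of the right leg $m$, so that the square chosen is indeed of the form to which the given orthogonality relation applies.
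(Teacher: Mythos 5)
Your proof is correct and follows essentially the same route as the paper: reduce $\V$-orthogonality to ordinary orthogonality and read off the one-sided inverses from diagonal fillers of the evident squares. The only cosmetic difference is in part 3, where the paper deduces the result from parts 1 and 2 (via $f \cdot 1 \downarrow_\V f$ and $f \downarrow_\V 1 \cdot f$) rather than using a single square with identity horizontal edges, but your direct square also works and you note the paper's route as an alternative.
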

\begin{proof}
If $g \cdot f \downarrow_\V g$ then there exists a unique $s$ such that the following diagram commutes
$$
\xymatrix{
\cdot \ar[r]^f \ar[d]_{g \cdot f} & \cdot \ar[d]^g \\
\cdot \ar@{=}[r] \ar@{-->}[ur]|s    & \cdot
}
$$
and hence $g$ is a retraction of $s$.  Statement 2 follows dually.  Lastly, if $f \downarrow_\V f$ then $f \cdot 1 \downarrow_\V f$ and hence $f$ is a retraction, but also $f \downarrow_\V 1 \cdot f$ and hence $f$ is a section, so $f$ is an isomorphism.
\end{proof}

The following is an enriched analogue of \cite{FrKe}, 2.2.1.

\begin{PropSub} \label{thm:crit_factn_sys}
Let $\E$, $\M$ be classes of morphisms in a $\V$-category $\B$.  Suppose that (i) each of $\E$ and $\M$ is closed under composition with isomorphisms, (ii) $\E \subs \M^{\uparrow_\V}$, and (iii) $(\E,\M)$-factorizations exist.  Then $(\E,\M)$ is a $\V$-factorization-system on $\B$.
\end{PropSub}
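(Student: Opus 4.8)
The plan is to mimic the classical argument from \cite{FrKe}, 2.2.1, adapted to the enriched setting. The goal is to show $(\E,\M)$ is a $\V$-prefactorization-system, i.e. $\E^{\downarrow_\V} = \M$ and $\M^{\uparrow_\V} = \E$; together with hypothesis (iii) this gives a $\V$-factorization-system. Hypothesis (ii) says $\E \subs \M^{\uparrow_\V}$, equivalently $\M \subs \E^{\downarrow_\V}$, so in each of the two claimed equalities only one inclusion requires work: I must show $\E^{\downarrow_\V} \subs \M$ and $\M^{\uparrow_\V} \subs \E$.

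First I would establish a \emph{diagonal-fill-in lemma}: if $e \in \E$ and $m \in \M$, then $e \downarrow_\V m$, which is just a restatement of (ii), but I want the underlying ordinary consequence as well — namely that for any ordinary commutative square with left edge $e$ and right edge $m$ there is a unique ordinary diagonal filler (this follows since $\V$-orthogonality implies ordinary orthogonality, as noted in the excerpt). Next, the key step: take $m \in \E^{\downarrow_\V}$ and factor it, using (iii), as $m = m' \cdot e'$ with $e' \in \E$, $m' \in \M$. Since $e' \in \E$ and $m \in \E^{\downarrow_\V}$ we have $e' \downarrow_\V m$, and so also $e' \downarrow m$ in the ordinary sense; applying this to the square whose top edge is $\id$, left edge $e'$, right edge $m$, bottom edge $m'$ yields an ordinary section $s$ of $e'$ (a filler $s$ with $e' \cdot s = \id$ on the codomain of $e'$... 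I must be careful with the orientation) — more precisely, one gets a morphism exhibiting $e'$ as a split mono, and combined with $e' \in \E$ and an application of \bref{thm:orth_yielding_retr_and_iso}(3) (after checking $e' \downarrow_\V e'$, which holds because $e' \in \E \subs \M^{\uparrow_\V}$ and $e' \in \M$... no — one needs $e' \in \M$, which is not given).

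So the correct route, following \cite{FrKe} faithfully: from the splitting one deduces $e'$ is an isomorphism by a direct diagram chase (not via \bref{thm:orth_yielding_retr_and_iso}), using uniqueness of fillers to show the section is also a retraction; then $m = m' \cdot e'$ with $e'$ iso and $m' \in \M$, so $m \in \M$ by (i), closure under composition with isomorphisms. This proves $\E^{\downarrow_\V} \subs \M$, hence $\E^{\downarrow_\V} = \M$. The dual argument — factor $e \in \M^{\uparrow_\V}$ as $m' \cdot e'$, observe $e' \in \E \subs \M^{\uparrow_\V}$ wait, I need $m' \downarrow_\V e$; since $m' \in \M$ and $e \in \M^{\uparrow_\V}$ this holds, giving a filler that exhibits $m'$ as split epi, then as iso by the uniqueness chase, then $e \in \E$ by (i) — proves $\M^{\uparrow_\V} = \E$. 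I expect the orientation bookkeeping in the diagram chase showing "split mono + orthogonal to itself-flavored-argument $\Rightarrow$ iso" to be the only delicate point; the enrichment itself contributes nothing extra here, since every use of orthogonality is funneled through its ordinary shadow, and (i) and (iii) are purely about composition and existence of factorizations. Essentially no calculation is needed beyond one or two small commuting-square diagrams.

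Concretely, the write-up would be: (1) recall $\E \subs \M^{\uparrow_\V}$ gives $\M \subs \E^{\downarrow_\V}$; (2) prove the "iso lemma": if $e' \in \E$ and $e'$ admits an ordinary filler against itself in the sense of being orthogonal-on-one-side to something it factors, then $e'$ is iso — or more simply, cite \bref{thm:orth_yielding_retr_and_iso} after arranging $e' \downarrow_\V e'$, which I can get as soon as I know $e'$ lies in $\M^{\uparrow_\V}$ and the thing on the right, $e'$ again, lies in $\M$; but since in the $\E^{\downarrow_\V} \subs \M$ step the relevant fact is $e' \in \E$ together with $e'$ being a split mono whose cokernel-pair-style uniqueness forces a two-sided inverse, I would just do the three-line chase directly; (3) the two symmetric paragraphs concluding $\E^{\downarrow_\V} = \M$ and $\M^{\uparrow_\V} = \E$; (4) invoke Definition \bref{def:enr_orth_fact}(5) to conclude. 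The main obstacle, such as it is, is getting the direction of every arrow right in step (2)–(3); there is no real mathematical difficulty once the ordinary-orthogonality reduction is in hand.
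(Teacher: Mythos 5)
Your proposal is correct and follows essentially the same route as the paper: both reduce the enriched statement to the ordinary one via the observation $\E \subs \M^{\uparrow_\V} \subs \M^{\uparrow}$ and $\M \subs \E^{\downarrow_\V} \subs \E^{\downarrow}$, and then use the sandwich to upgrade the ordinary equalities back to enriched ones. The only difference is that the paper simply cites the ordinary criterion (\cite{AHS} 14.6) where you re-derive the classical Freyd--Kelly diagram chase by hand, and your chase (split mono/epi plus uniqueness of fillers against $\M$, rather than the dead end via \bref{thm:orth_yielding_retr_and_iso}) is the right one.
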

\begin{proof}
We have
\begin{equation}\label{eqn:orth_incls}\E \subs \M^{\uparrow_\V} \subs \M^\uparrow\;,\;\;\;\;\M \subs \E^{\downarrow_\V}\subs \E^\downarrow\;.\end{equation}  
Hence by \cite{AHS} 14.6, $(\E,\M)$ is an ordinary factorization system on $\B$, so $\E = \M^\uparrow$ and $\M = \E^\downarrow$ and hence (by \eqref{eqn:orth_incls}) $\E = \M^{\uparrow_\V}$ and $\M = \E^{\downarrow_\V}$.
\end{proof}

\begin{CorSub} \label{thm:vfactn_sys_is_factn_sys}
Every $\V$-factorization-system $(\E,\M)$ on $\B$ is an ordinary factorization system on $\B$.
\end{CorSub}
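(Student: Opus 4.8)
The plan is to reduce to \bref{thm:crit_factn_sys}: a $\V$-factorization-system $(\E,\M)$ on $\B$ satisfies the three hypotheses of that proposition, and the proof of \bref{thm:crit_factn_sys} already establishes — via the inclusions $\E \subs \M^\uparrow$, $\M \subs \E^\downarrow$ (valid because $\V$-orthogonality implies ordinary orthogonality) together with \cite{AHS}~14.6 — that such an $(\E,\M)$ is an ordinary factorization system on $\B$. So it suffices to verify hypotheses (i)--(iii). Since $(\E,\M)$ is in particular a $\V$-prefactorization-system, $\E = \M^{\uparrow_\V}$ and $\M = \E^{\downarrow_\V}$, so hypothesis (ii), $\E \subs \M^{\uparrow_\V}$, is immediate; and since $(\E,\M)$-factorizations exist by the definition of $\V$-factorization-system, hypothesis (iii) holds.

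The only point requiring an argument is hypothesis (i), that each of $\E$ and $\M$ is closed under composition with isomorphisms. Since $\E = \M^{\uparrow_\V}$, it suffices to show that $\M^{\uparrow_\V}$ is closed under composition with isomorphisms for an arbitrary class $\M$ of morphisms; the dual assertion for $\E^{\downarrow_\V} = \M$ then follows via \bref{rem:factn_dualization}. So suppose $e:A_1 \rightarrow A_2$ lies in $\M^{\uparrow_\V}$, let $i:A_0 \xrightarrow{\sim} A_1$ and $j:A_2 \xrightarrow{\sim} A_3$ be isomorphisms, and fix $m:B_1 \rightarrow B_2$ in $\M$. By contravariant functoriality of the hom-$\V$-functors $\B(-,B_1)$ and $\B(-,B_2)$, the square \eqref{eqn:orth_pb} associated to $j \cdot e \cdot i$ and $m$ decomposes by pullback pasting as a composite of three commutative squares: the middle one is exactly the square \eqref{eqn:orth_pb} for $e$ and $m$, while the outer two have their pair of opposite edges equal to the isomorphisms $\B(j,B_k)$ and $\B(i,B_k)$ ($k = 1,2$). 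The middle square is a pullback because $e \in \M^{\uparrow_\V}$, and the outer two are pullbacks by the last clause of \bref{prop:pb_canc}; hence, by pullback composition \pbref{prop:pb_canc}, the square for $j \cdot e \cdot i$ is a pullback. Therefore $j \cdot e \cdot i \in \M^{\uparrow_\V}$, and taking $i$ or $j$ to be an identity gives closure under composition with isomorphisms on either side.

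With (i)--(iii) verified, \bref{thm:crit_factn_sys} applies, and (as recorded in its proof) \cite{AHS}~14.6 together with the inclusions $\E = \M^{\uparrow_\V} \subs \M^\uparrow$ and $\M = \E^{\downarrow_\V} \subs \E^\downarrow$ exhibit $(\E,\M)$ as an ordinary factorization system on $\B$; this completes the proof. I do not expect any real obstacle here: the entire content is the routine check of closure under composition with isomorphisms above, which relies only on the elementary pullback lemmas of \bref{sec:stab_canc_cart_arr_sq}.
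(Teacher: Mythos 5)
Your proposal is correct and follows essentially the same route as the paper, which likewise reduces the corollary to \bref{thm:crit_factn_sys} (or rather directly to \cite{AHS} 14.6) via the inclusions $\E = \M^{\uparrow_\V} \subs \M^\uparrow$ and $\M = \E^{\downarrow_\V} \subs \E^\downarrow$. The only difference is that you explicitly verify hypothesis (i), closure of $\E$ and $\M$ under composition with isomorphisms, by the pullback-pasting argument; the paper leaves this implicit here (it is recorded later as \bref{thm:pref_iso_comp_canc}~1--2 for arbitrary $\V$-prefactorization-systems, by essentially your argument), so your filling in of that detail is sound but not a departure.
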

\begin{proof}
Since $\E \subs \M^{\uparrow_\V} \subs \M^\uparrow$, we may invoke \bref{thm:crit_factn_sys} (or rather \cite{AHS} 14.6, which applies to categories that are not locally small) with respect to the underlying ordinary category $\A$.
\end{proof}

\section{Stability and cancellation for enriched prefactorization systems} \label{sec:stab_canc_enr_prefactn}

In the present section we establish several stability properties of the left and right classes of an enriched prefactorization system.  Most are proved on the basis of analogous properties given in \bref{sec:stab_canc_cart_arr_sq} for cartesian arrows and hence pullback squares.

\begin{PropSub} \label{thm:pref_iso_comp_canc}
Let $(\E,\M) = (\sH^{\downarrow_\V\uparrow_\V},\sH^{\downarrow_\V})$ be a $\V$-prefactorization-system on a $\V$-category $\B$, and let $f:A \rightarrow B$, $g:B \rightarrow C$ in $\B$.
\begin{enumerate}
\item Each of $\E$ and $\M$ contains all the isomorphisms of $\B$.
\item Both $\E$ and $\M$ are closed under composition.
\item If $g \cdot f \in \M$ and $g \in \M$, then $f \in \M$.
\item Suppose $\sH \subs \Epi_\V\B$.  Then if $g \cdot f \in \M$, it follows that $f \in \M$.
\end{enumerate}
\end{PropSub}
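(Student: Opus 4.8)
The plan is to prove the four statements of \bref{thm:pref_iso_comp_canc} by transporting the corresponding facts about $P$-cartesian morphisms from \bref{sec:stab_canc_cart_arr_sq} through the hom-functors $\B(A,-):\B \rightarrow \uV$. The key observation is that, by \bref{exa:pb_cart}, the square \eqref{eqn:orth_pb} being a pullback in $\V$ is exactly the assertion that a certain morphism in $[\Two,\V]$ is $P$-cartesian for the codomain functor $P:[\Two,\V] \rightarrow \V$; but rather than work with that functor directly, the cleaner route is to fix a class $\sH$, fix $h:X_1 \rightarrow X_2$ in $\sH$, and consider for each morphism $f:A \rightarrow B$ in $\B$ the commutative square $S_f$ obtained by applying $\B(-,f)$-type naturality, i.e.\ the square \eqref{eqn:orth_pb} with $e = h$, $m = f$. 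Then $h \downarrow_\V f$ says precisely that $S_f$ is a pullback in $\V$.

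For part (1): isomorphisms are $\V$-orthogonal to everything, since for $f$ iso the two vertical arrows of \eqref{eqn:orth_pb} (namely $\B(A_1,f)$ and $\B(A_2,f)$ — wait, with $e$ iso the two \emph{horizontal} arrows become isomorphisms) — more carefully, if $e$ is iso then $\B(e,B_1)$ and $\B(e,B_2)$ are isomorphisms, and a square with a parallel pair of iso sides is a pullback by the last sentence of \bref{prop:pb_canc}; dually if $m$ is iso then $\B(A_1,m)$, $\B(A_2,m)$ are isos and again the square is a pullback. Hence every iso lies in $\sH^{\uparrow_\V} \cap \sH^{\downarrow_\V}$, so in $\E \cap \M$. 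For part (2), closure under composition: given $f,g \in \M = \sH^{\downarrow_\V}$ and $h \in \sH$, I must show the square \eqref{eqn:orth_pb} for $m = g\cdot f$ is a pullback; but that square decomposes as a horizontal pasting of the square for $f$ (known pullback) and the square for $g$ (known pullback), applying $\B(X_i,-)$ to $X_1 \xrightarrow{h} X_2$ and to $A \xrightarrow{f} B \xrightarrow{g} C$, so by the composition half of Pullback Cancellation \bref{prop:pb_canc}(1) the pasted rectangle is a pullback. The case of $\E$ is dual via \bref{rem:factn_dualization} (composition in $\sH^{\downarrow_\V\uparrow_\V}$ reduces to composition in a right class in $\B^\op$, or one argues symmetrically using that orthogonality is a pullback condition stable under the relevant pasting).

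For part (3): suppose $g\cdot f \in \M$ and $g \in \M$, and let $h \in \sH$; I want the square for $m = f$ to be a pullback. Again paste it horizontally with the square for $g$: the rectangle is the square for $g \cdot f$, which is a pullback, and the right-hand square (for $g$) is a pullback, so by the cancellation half \bref{prop:pb_canc}(2) the left square (for $f$) is a pullback. Hence $h \downarrow_\V f$ for all $h \in \sH$, i.e.\ $f \in \sH^{\downarrow_\V} = \M$. For part (4): now we only assume $g \cdot f \in \M$, but $\sH \subs \Epi_\V\B$. The point is that with $\sH \subs \Epi_\V\B$, for $h:X_1 \rightarrow X_2$ in $\sH$ the map $\B(h,B_2)$ (the left vertical arrow in \eqref{eqn:orth_pb}, with codomain $\B(X_1,B_2)$) is a monomorphism in $\V$ — that is exactly what $h$ being a $\V$-epi gives. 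So in the pasted rectangle (square for $g\cdot f$, a pullback) the relevant middle vertical arrow is mono, and \bref{prop:pb_canc_mono} yields that the left square (for $f$) is a pullback; thus $f \in \sH^{\downarrow_\V} = \M$, with no hypothesis on $g$.

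The main obstacle — really the only thing requiring care — is bookkeeping: making sure that the "horizontal pasting" of the orthogonality squares for $f$ and for $g$ genuinely reproduces the orthogonality square for $g \cdot f$, with the middle column being $\B(X_1,B) \to$ etc.\ identified correctly via functoriality of $\B(X_i,-)$ and bifunctoriality of $\B(-,-)$, so that \bref{prop:pb_canc}/\bref{prop:pb_canc_mono} apply verbatim; and, for the $\E$-side of (1) and (2), being careful that $\sH^{\downarrow_\V\uparrow_\V}$ is a right class relative to $\B^\op$ so the same pasting lemmas apply after dualizing as in \bref{rem:factn_dualization}. None of this involves nontrivial computation, only diagram chasing already packaged in \bref{sec:stab_canc_cart_arr_sq}.
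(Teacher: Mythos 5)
Your proof is correct and follows essentially the same route as the paper's: isomorphisms give orthogonality squares with a pair of opposite iso sides, and parts (2)--(4) follow by horizontally pasting the orthogonality squares for $f$ and $g$ and invoking \ref{prop:pb_canc} and \ref{prop:pb_canc_mono} exactly as you describe. The only slip is cosmetic: in part (1) the reason an iso lands in $\E$ is that it is left-orthogonal to every morphism, hence lies in $(\mor\B)^{\uparrow_\V}\subseteq\M^{\uparrow_\V}=\E$, rather than via the class $\sH^{\uparrow_\V}$ you cite.
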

\begin{proof}
(1).  Given an isomorphism $m:B_1 \rightarrow B_2$ in $\B$, we have that for each morphism $e:A_1 \rightarrow A_2$ in $\E$, the square \eqref{eqn:orth_pb} has a pair of opposite sides that are isos and hence is a pullback square \pbref{prop:pb_canc}, whence $e \downarrow_\V m$.  The statement regarding $\E$ follows by dualizing.

(2),(3),(4).  For any morphism $h:H_1 \rightarrow H_2$ in $\sH$ we have a commutative diagram
$$
\xymatrix{
\B(H_2,A) \ar[r]^{\B(H_2,f)} \ar[d]_{\B(h,A)} & \B(H_2,B) \ar[r]^{\B(H_2,g)} \ar[d]|{\B(h,B)} & {\C(H_2,C)} \ar[d]^{\C(h,C)} \\
\B(H_1,A) \ar[r]^{\B(H_1,f)}                  & \B(H_1,B) \ar[r]^{\B(H_1,g)}                  & {\C(H_1,C)\;.}
}
$$
Regarding (2), observe that if $f,g \in \M$ then the left and right squares are pullbacks and hence the outer rectangle is a pullback \pbref{prop:pb_canc}, whence $h \downarrow_\V g \cdot f$.  Regarding (3) and (4), suppose instead that $g \cdot f \in \M$.  Then the outer rectangle is a pullback.  If further $g \in \M$, then the rightmost square is a pullback, so by pullback cancellation \pbref{prop:pb_canc}, the leftmost square is a pullback --- i.e. $h \downarrow_\V f$.  If instead we assume that $\sH \subs \Epi_\V\B$, then $h$ is $\V$-epi and hence $\B(h,B)$ is mono, so that by \bref{prop:pb_canc_mono}, the leftmost square is again a pullback.
\end{proof}

\begin{CorSub}\label{thm:int_of_upper_and_lower_classes_is_iso}
For a $\V$-prefactorization-system $(\E,\M)$ on a $\V$-category $\B$, $\E \cap \M = \Iso\B$.
\end{CorSub}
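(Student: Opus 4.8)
The plan is to establish the two inclusions $\Iso\B \subs \E \cap \M$ and $\E \cap \M \subs \Iso\B$ separately; each is a quick consequence of results already available in the excerpt.

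For the inclusion $\Iso\B \subs \E \cap \M$, I would first note that the given $\V$-prefactorization-system is automatically of the form $(\sH^{\downarrow_\V\uparrow_\V},\sH^{\downarrow_\V})$ — for instance with $\sH := \E$, since $\E^{\downarrow_\V} = \M$ by hypothesis and hence $\E^{\downarrow_\V\uparrow_\V} = \M^{\uparrow_\V} = \E$ — so that Proposition \bref{thm:pref_iso_comp_canc}(1) applies verbatim and yields $\Iso\B \subs \E$ and $\Iso\B \subs \M$. Alternatively one can argue in one line: if $m$ is an isomorphism and $e$ is any morphism of $\B$, the square \eqref{eqn:orth_pb} has a pair of opposite sides that are isomorphisms, hence is a pullback by \bref{prop:pb_canc}, so $e \downarrow_\V m$; therefore $m \in \E^{\downarrow_\V} = \M$, and dually $m \in \M^{\uparrow_\V} = \E$.

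For the inclusion $\E \cap \M \subs \Iso\B$, I would take $f \in \E \cap \M$. Since $\E = \M^{\uparrow_\V}$, the condition $f \in \E$ means $f \downarrow_\V m$ for every $m \in \M$; specializing to $m = f$, which lies in $\M$ by assumption, gives $f \downarrow_\V f$. Proposition \bref{thm:orth_yielding_retr_and_iso}(3) then tells us at once that $f$ is an isomorphism, completing the proof.

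I do not anticipate any substantial obstacle. The only two points worth isolating are the remark that a $\V$-prefactorization-system is recovered from its left class as $(\E^{\downarrow_\V\uparrow_\V},\E^{\downarrow_\V})$, which is what licenses the direct appeal to \bref{thm:pref_iso_comp_canc} rather than re-deriving the closure under isomorphisms, and the self-orthogonality observation $f \downarrow_\V f$ for $f \in \E \cap \M$, which is exactly the hypothesis of \bref{thm:orth_yielding_retr_and_iso}(3).
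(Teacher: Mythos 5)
Your proof is correct and follows essentially the same route as the paper, which obtains $\E \cap \M \subs \Iso\B$ from \bref{thm:orth_yielding_retr_and_iso}(3) via the self-orthogonality $f \downarrow_\V f$ and the reverse inclusion from \bref{thm:pref_iso_comp_canc}(1). Your added remark that a $\V$-prefactorization-system is automatically of the form $(\sH^{\downarrow_\V\uparrow_\V},\sH^{\downarrow_\V})$ with $\sH = \E$ merely makes explicit a point the paper leaves tacit when citing that proposition.
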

\begin{proof}
By \bref{thm:orth_yielding_retr_and_iso} 3, $\E \cap \M \subs \Iso\B$, and the opposite inclusion holds by \bref{thm:pref_iso_comp_canc} 1.
\end{proof}

\begin{PropSub} \label{prop:upperclass_closed_under_tensors}
Let $(\E,\M)$ be a $\V$-prefactorization-system on $\V$-category $\B$.  Then $\E$ is closed under tensors in $\B$ \pbref{def:closed_under_tensors}.  Dually, $\M$ is closed under cotensors in $\B$.
\end{PropSub}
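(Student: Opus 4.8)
The plan is to reduce the claim to two facts: the defining universal property of tensors (copowers), and the fact that each internal-hom functor $\uV(V,-):\V\to\V$, being right adjoint to $V\otimes-$, preserves pullbacks. This is the enriched analogue of the cotensor case already treated in the proof of \bref{thm:closure_props_of_monos}.

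First I would fix a $\V$-prefactorization-system $(\E,\M)$ on $\B$, a morphism $e:A_1\to A_2$ in $\E$, and an object $V\in\V$ for which the tensors $V\otimes A_1$ and $V\otimes A_2$ exist in $\B$; the task is to show $V\otimes e\in\E$. Since $\E=\M^{\uparrow_\V}$, it suffices to prove $(V\otimes e)\downarrow_\V m$ for every $m:B_1\to B_2$ in $\M$. Recall that $V\otimes A_i$ is characterized by a $\V$-natural isomorphism $\B(V\otimes A_i,B)\cong\uV(V,\B(A_i,B))$ (natural in $B$), and that the induced morphism $V\otimes e$ is, by functoriality of $V\otimes-$, the unique morphism for which the squares
$$
\xymatrix{
\B(V\otimes A_2,B) \ar[r]^{\cong} \ar[d]_{\B(V\otimes e,B)} & \uV(V,\B(A_2,B)) \ar[d]^{\uV(V,\B(e,B))} \\
\B(V\otimes A_1,B) \ar[r]^{\cong}                           & \uV(V,\B(A_1,B))
}
$$
commute, naturally in $B$.

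The key step is to combine the instances $B=B_1$ and $B=B_2$ of this square with the maps induced by $m$, thereby identifying — as a commutative square in $\V$ — the orthogonality square \eqref{eqn:orth_pb} for the pair $(V\otimes e,m)$ with the image under $\uV(V,-)$ of the orthogonality square for the pair $(e,m)$. Since $e\in\E=\M^{\uparrow_\V}$ and $m\in\M$ we have $e\downarrow_\V m$, so the latter square is a pullback in $\V$; as $\uV(V,-)$ is right adjoint to $V\otimes-$ it preserves pullbacks, so its image is again a pullback. Hence the orthogonality square for $(V\otimes e,m)$ is a pullback, i.e. $(V\otimes e)\downarrow_\V m$; as $m\in\M$ was arbitrary, $V\otimes e\in\M^{\uparrow_\V}=\E$. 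The dual assertion that $\M$ is closed under cotensors in $\B$ then follows by applying the foregoing to $\B^\op$: by \bref{rem:factn_dualization}, $(\M^\op,\E^\op)$ is a $\V$-prefactorization-system on $\B^\op$, whose left class $\M^\op$ is therefore closed under tensors in $\B^\op$; since a cotensor $[V,A]$ in $\B$ is exactly the tensor of $A$ by $V$ formed in $\B^\op$, this says precisely that $\M$ is closed under cotensors in $\B$.

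I do not anticipate a genuine obstacle; the one point demanding care is the bookkeeping that identifies the orthogonality square for $(V\otimes e,m)$ with $\uV(V,-)$ applied to that for $(e,m)$, using $\V$-naturality of the tensor isomorphism in both the $A$-variable and the $B$-variable, but this is routine and mirrors the argument already given for \bref{thm:closure_props_of_monos}.
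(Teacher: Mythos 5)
Your proposal is correct and follows essentially the same route as the paper: for $e\in\E$ and $m\in\M$, identify the orthogonality square for $(V\otimes e,m)$ with the image under the right-adjoint (hence pullback-preserving) functor $\uV(V,-)$ of the orthogonality square for $(e,m)$, and conclude; the dual statement for cotensors is handled by passing to $\B^\op$ exactly as in \bref{rem:factn_dualization}.
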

\begin{proof}
Letting $e:A_1 \rightarrow A_2$ lie in $\E$, we must show that $V \otimes e:V \otimes A_1 \rightarrow V \otimes A_2$ lies in $\E = \M^{\uparrow_\V}$ for each $V \in \V$ for which the given tensors exist in $\B$.  Letting $m:B_1 \rightarrow B_2$ lie in $\M$, the square
$$
\xymatrix {
\B(V \otimes A_2,B_1) \ar[rr]^{\B(V \otimes A_2,m)} \ar[d]_{\B(V \otimes e,B_1)}  && \B(V \otimes A_2,B_2) \ar[d]^{\B(V \otimes e,B_2)} \\
\B(V \otimes A_1,B_1)  \ar[rr]^{\B(V \otimes A_1,m)}                     && {\B(V \otimes A_1,B_2)\;,}
}
$$
is isomorphic to the square
$$
\xymatrix {
\uV(V,\B(A_2,B_1)) \ar[rr]^{\uV(V,\B(A_2,m))} \ar[d]_{\uV(V,\B(e,B_1))}  && \uV(V,\B(A_2,B_2)) \ar[d]^{\uV(V,\B(e,B_2))} \\
\uV(V,\B(A_1,B_1)) \ar[rr]^{\uV(V,\B(A_1,m))}                     && {\uV(V,\B(A_1,B_2))\;.}
}
$$
But the latter square is gotten by applying the limit-preserving ordinary functor \linebreak[4] \mbox{$\uV(V,-):\V \rightarrow \V$} to the pullback square \eqref{eqn:orth_pb} and hence is a pullback.
\end{proof}

\begin{PropSub} \label{prop:orth_stab_pb}
Let $e:A_1 \rightarrow A_2$ in a $\V$-category $\B$, let
$$
\xymatrix {
B'_1 \ar[r]^{f_1} \ar[d]_{m'} \pullbackcorner & B_1 \ar[d]^m \\
B'_2 \ar[r]_{f_2}                             & B_2
}
$$
be a $\V$-pullback in $\B$, and suppose that $e \downarrow_\V m$.  Then $e \downarrow_\V m'$.
\end{PropSub}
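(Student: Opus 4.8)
The plan is to rewrite the orthogonality relations $e \downarrow_\V m$ and $e \downarrow_\V m'$ as pullback conditions on hom-objects in $\V$, and then obtain the latter from the former together with the $\V$-pullback square by a single application of the cube-cancellation lemma \bref{prop:pb_cube} inside $\V$.

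Unwinding \bref{def:enr_orth_fact}, the hypothesis $e \downarrow_\V m$ says exactly that the square \eqref{eqn:orth_pb} for $(e,m)$ --- with vertices $\B(A_2,B_1)$, $\B(A_2,B_2)$, $\B(A_1,B_1)$, $\B(A_1,B_2)$, horizontal legs $\B(A_i,m)$ and vertical legs $\B(e,B_j)$ --- is a pullback in $\V$, and the desired conclusion $e \downarrow_\V m'$ is the same assertion with $m',B'_1,B'_2$ in place of $m,B_1,B_2$. Meanwhile, since the given square in $\B$ is a $\V$-pullback, it is preserved by each functor $\B(A,-):\B \rightarrow \V$ \pbref{def:enr_mono_limit}; applying this with $A = A_1$ and $A = A_2$ yields pullback squares in $\V$
$$
\xymatrix{
\B(A_i,B'_1) \ar[r]^{\B(A_i,f_1)} \ar[d]_{\B(A_i,m')} & \B(A_i,B_1) \ar[d]^{\B(A_i,m)} \\
\B(A_i,B'_2) \ar[r]_{\B(A_i,f_2)}                     & \B(A_i,B_2)
}
\qquad (i = 1, 2)\;.
$$
Now the eight objects $\B(A_i,B_j)$ and $\B(A_i,B'_j)$ ($i,j \in \{1,2\}$), together with the evident hom-maps between them, form a commutative cube in $\V$ --- commutativity of each face being an instance of the bifunctoriality of $\B(-,-):\B^\op \times \B \rightarrow \V$ --- among whose faces are the two displayed $\B(A_i,-)$-images and the two orthogonality squares, for $(e,m)$ and for $(e,m')$, the latter two being opposite faces. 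Since the $\B(A_i,-)$-images are pullbacks and, by hypothesis, so is the orthogonality square for $(e,m)$, the cube lemma \bref{prop:pb_cube} gives that the orthogonality square for $(e,m')$ is a pullback in $\V$ as well; that is, $e \downarrow_\V m'$.

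The one point that wants care is the bookkeeping in the appeal to \bref{prop:pb_cube}: one must match the three known pullback faces to the ``back'', ``front'', and ``right'' faces of the cube --- with the orthogonality square for $(e,m)$ as the face opposite the target face $(e,m')$, and the two $\B(A_i,-)$-images as the remaining opposite pair --- and check that the composite rectangle ``back then right'' coincides with ``left then front''. That coincidence reduces to the naturality identity $\B(e,B_j) \cdot \B(A_2,f_j) = \B(A_1,f_j) \cdot \B(e,B'_j)$, both sides being $\B(e,f_j)$, together with $m \cdot f_1 = f_2 \cdot m'$, so no real computation is involved. One could equally bypass \bref{prop:pb_cube} and argue directly: paste the target square for $(e,m')$ onto one of the $\B(A_i,-)$-image squares, re-decompose the resulting rectangle through the orthogonality square for $(e,m)$, and apply ordinary pullback pasting and cancellation \bref{prop:pb_canc}.
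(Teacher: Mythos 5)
Your proof is correct and is essentially the paper's own argument: the paper likewise assembles the eight hom-objects $\B(A_i,B_j)$, $\B(A_i,B'_j)$ into a commutative cube, notes that the $\B(A_1,-)$- and $\B(A_2,-)$-images of the given $\V$-pullback form two pullback faces and that $e \downarrow_\V m$ supplies a third, and then invokes \bref{prop:pb_cube} to conclude that the remaining face, the orthogonality square for $(e,m')$, is a pullback. The bookkeeping you flag (matching faces and checking the two composite rectangles agree via $\B(e,f_j)$) is exactly what makes \bref{prop:pb_cube} applicable, so nothing is missing.
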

\begin{proof}
We have a commutative cube
$$
\xymatrix{
                                                         & {\B(A_2,B'_1)} \ar[rr]^{\B(A_2,f_1)} \ar@{.>}[dd]|(0.7){\B(A_2,m')} \ar[dl]|{\B(e,B'_1)} &                              &  {\B(A_2,B_1)} \ar[dd]|{\B(A_2,m)} \ar[dl]|{\B(e,B_1)}\\ 
 {\B(A_1,B'_1)} \ar[rr]^(0.7){\B(A_1,f_1)} \ar[dd]_{\B(A_1,m')} &                                                         & {\B(A_1,B_1)} \ar[dd]|(0.7){\B(A_1,m)} &                                  \\
                                                         & {\B(A_2,B'_2)} \ar@{.>}[rr]^(0.7){\B(A_2,f_2)} \ar@{.>}[dl]|{\B(e,B'_2)}          &                                  &  {\B(A_2,B_2)} \ar[dl]|{\B(e,B_2)}                  \\
{\B(A_1,B'_2)} \ar[rr]_{\B(A_1,f_2)}                      &                                                         & {\B(A_1,B_2)}                                  
}
$$
whose back and front faces are pullbacks, since they are obtained by applying the functors $\B(A_2,-),\B(A_1,-):\B \rightarrow \V$ to the given $\V$-pullback square.  The right face is also a pullback, since $e \downarrow_\V m$, so by \bref{prop:pb_cube}, the left face is a pullback --- i.e. $e \downarrow_\V m'$.
\end{proof}

\begin{CorSub} \label{thm:lowerclass_stab_pb}
Let $(\E,\M)$ be a $\V$-prefactorization-system on a $\V$-category $\B$.  Then $\M$ is closed under $\V$-pullbacks along arbitrary morphisms in $\B$.
\end{CorSub}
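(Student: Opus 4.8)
The plan is to deduce this immediately from the pointwise stability statement \bref{prop:orth_stab_pb}, using only the defining property of a $\V$-prefactorization-system, namely that $\M = \E^{\downarrow_\V}$.

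Concretely, I would argue as follows. Let $m:B_1 \rightarrow B_2$ lie in $\M$, and suppose given a $\V$-pullback square with vertical sides $m':B'_1 \rightarrow B'_2$ and $m:B_1 \rightarrow B_2$ and horizontal sides $f_1,f_2$, as in the statement of \bref{prop:orth_stab_pb}. To show $m' \in \M$, it suffices, since $\M = \E^{\downarrow_\V}$, to show that $e \downarrow_\V m'$ for every $e \in \E$. Fix such an $e$. Since $m \in \M = \E^{\downarrow_\V}$, we have $e \downarrow_\V m$. Now \bref{prop:orth_stab_pb} applies verbatim to the given $\V$-pullback and yields $e \downarrow_\V m'$. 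As $e \in \E$ was arbitrary, $m' \in \E^{\downarrow_\V} = \M$, as required.

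There is essentially no obstacle here: the content is entirely in \bref{prop:orth_stab_pb} (whose proof in turn rests on the cube-cancellation lemma \bref{prop:pb_cube}), and this corollary is just the observation that orthogonality against a fixed left class transports the pointwise result to the class level. The only point to be careful about is that the square in question is a genuine \emph{$\V$-pullback} (not merely an ordinary pullback in the underlying category), since that is exactly the hypothesis under which \bref{prop:orth_stab_pb} guarantees that the front and back faces of the relevant cube are pullbacks in $\V$; this matches the statement being proved, which concerns closure under $\V$-pullbacks.
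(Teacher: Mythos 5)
Your proof is correct and is exactly the paper's argument: the paper's proof reads simply ``Since $\M = \E^{\downarrow_\V}$, this follows from \bref{prop:orth_stab_pb}'', and your write-up is just that one-line deduction spelled out. No issues.
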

\begin{proof}
Since $\M = \E^{\downarrow_\V}$, this follows from \bref{prop:orth_stab_pb}.
\end{proof}

\begin{PropSub} \label{prop:orth_stab_fp}
Let $e:A_1 \rightarrow A_2$ in a $\V$-category $\B$, let $f:B \rightarrow C$ be a $\V$-fiber-product in $\B$ of the morphisms $f_i:B_i \rightarrow C$ (for $i$ in some class $I$), and suppose that $e \downarrow_\V f_i$ for all $i \in I$.  Then $e \downarrow_\V f$.
\end{PropSub}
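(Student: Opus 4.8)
The plan is to reduce this to \bref{cor:fp_pb}, in exact parallel with the way \bref{prop:orth_stab_pb} was reduced to the cube lemma \bref{prop:pb_cube}. Write $p_i : B \rightarrow B_i$ $(i \in I)$ for the cone exhibiting $f$ as a $\V$-fiber-product of the $f_i$, so that $f_i \cdot p_i = f$ for every $i$. What must be shown is that the square
$$
\xymatrix{
\B(A_2,B) \ar[rr]^{\B(A_2,f)} \ar[d]_{\B(e,B)}  & & \B(A_2,C) \ar[d]^{\B(e,C)} \\
\B(A_1,B)  \ar[rr]^{\B(A_1,f)}                     & & \B(A_1,C)
}
$$
is a pullback in $\V$.

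I would apply \bref{cor:fp_pb} inside the category $\V$, taking the square just displayed to play the role of the square \eqref{eq:comm_sq} there. For each $i \in I$ the required triangular prism is built from the hom-objects $\B(A_2,B_i)$ and $\B(A_1,B_i)$: its top triangle is $\B(A_2,B) \xrightarrow{\B(A_2,p_i)} \B(A_2,B_i) \xrightarrow{\B(A_2,f_i)} \B(A_2,C)$, its bottom triangle is $\B(A_1,B) \xrightarrow{\B(A_1,p_i)} \B(A_1,B_i) \xrightarrow{\B(A_1,f_i)} \B(A_1,C)$, and the three vertical edges are $\B(e,B)$, $\B(e,B_i)$, $\B(e,C)$. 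Commutativity of the prism is immediate from the bifunctoriality of $\B(-,-)$ together with the identity $f_i \cdot p_i = f$. The right face of this prism is precisely the orthogonality square \eqref{eqn:orth_pb} for $e$ and $f_i$, which is a pullback in $\V$ because $e \downarrow_\V f_i$ by hypothesis. Finally, since $f$ is a $\V$-fiber-product, each representable $\B(A_j,-) : \B \rightarrow \V$ $(j=1,2)$ preserves it, so the images $\bigl(\B(A_j,p_i)\bigr)_{i \in I}$ of the defining cone exhibit $\B(A_j,f)$ as a fiber product of the family $\bigl(\B(A_j,f_i)\bigr)_{i \in I}$ in $\V$; these supply exactly the top and bottom faces demanded by \bref{cor:fp_pb}. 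With all its hypotheses met, \bref{cor:fp_pb} yields that the displayed square is a pullback, i.e. $e \downarrow_\V f$.

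The argument is essentially formal, and the only point requiring a little care --- the nearest thing to an obstacle --- is the bookkeeping in matching the data above to the precise shape of the prism in \bref{cor:fp_pb}, together with the observation that a $\V$-fiber-product is \emph{by definition} a limit preserved by every hom-functor, which is exactly what turns the top and bottom faces of the prism into genuine fiber-product cones in $\V$.
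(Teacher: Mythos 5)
Your proof is correct and follows exactly the same route as the paper's: build, for each $i \in I$, the triangular prism of hom-objects whose right face is the orthogonality square for $e$ and $f_i$ and whose top and bottom faces are the fiber-product cones obtained by applying $\B(A_2,-)$ and $\B(A_1,-)$, then invoke \bref{cor:fp_pb}. Nothing is missing.
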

\begin{proof}
Let the morphisms $\pi_i:B \rightarrow B_i$ ($i \in I$) present $f$ as a $\V$-fiber-product of the $f_i$.  For each $i \in I$ we have a commutative triangular prism
$$
\xymatrix{
{\B(A_2,B)} \ar[rr]^{\B(A_2,f)} \ar[dr]_{\B(A_2,\pi_i)} \ar[dd]|{\B(e,B)} &                                   & {\B(A_2,C)} \ar[dd]|{\B(e,C)} \\
                                           & \B(A_2,B_i) \ar[ur]_{\B(A_2,f_i)} \ar[dd]|(.3){\B(e,B_i)} &             \\
{\B(A_1,B)} \ar@{.>}[rr]^(0.7){\B(A_1,f)} \ar[dr]_{\B(A_1,\pi_i)}           &                                   & \B(A_1,C)           \\
                                           & \B(A_1,B_i) \ar[ur]_{\B(A_1,f_i)}                 &
}
$$
whose right face is a pullback since $e \downarrow_\V f_i$.  Further, since the given fiber product is preserved by the functors $\B(A_2,-),\B(A_1,-):\B \rightarrow \V$, the top and bottom faces ($i \in I$) present $\B(A_2,f)$ and $\B(A_1,f)$ respectively as fiber products.  Hence by \bref{cor:fp_pb}, the back face is a pullback.
\end{proof}

\begin{CorSub} \label{prop:lowerclass_stab_fp}
Let $(\E,\M)$ be a $\V$-prefactorization-system on a $\V$-category $\B$.  Then $\M$ is closed under arbitrary $\V$-fiber-products in $\B$.
\end{CorSub}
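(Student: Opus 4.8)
The plan is to mirror exactly the argument used for Corollary~\bref{thm:lowerclass_stab_pb}, exploiting the fact that the right class $\M$ of a $\V$-prefactorization-system is characterized by $\V$-orthogonality, namely $\M = \E^{\downarrow_\V}$. Thus I would reduce the desired closure property of $\M$ to the orthogonality statement already established in Proposition~\bref{prop:orth_stab_fp}.

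In detail: first I would fix a class $I$, a family $(m_i : B_i \to C)_{i \in I}$ of morphisms in $\M$, and a $\V$-fiber-product $m : B \to C$ of this family in $\B$, and I would aim to show $m \in \M$. Since $\M = \E^{\downarrow_\V}$, it suffices to show $e \downarrow_\V m$ for an arbitrary $e \in \E$. Now for each $i \in I$ we have $m_i \in \M = \E^{\downarrow_\V}$, so $e \downarrow_\V m_i$ holds for every $i$. Applying Proposition~\bref{prop:orth_stab_fp} to $e$ and the $\V$-fiber-product $m$ of the family $(m_i)_{i\in I}$ then yields $e \downarrow_\V m$. As $e \in \E$ was arbitrary, $m \in \E^{\downarrow_\V} = \M$, which is what we wanted.

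I do not anticipate any real obstacle here, since all of the substantive work — the passage through the triangular-prism/fiber-product diagram and the cancellation lemma \bref{cor:fp_pb} — has already been carried out in the proof of Proposition~\bref{prop:orth_stab_fp}. The only point to be mildly careful about is that ``arbitrary $\V$-fiber-products'' is understood to allow the index class $I$ to be a proper class rather than a set, but this causes no difficulty because Proposition~\bref{prop:orth_stab_fp} (and, underlying it, Proposition~\bref{prop:cart_fp}) is already stated for a family indexed by an arbitrary class.
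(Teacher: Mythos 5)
Your proposal is correct and is exactly the paper's argument: the paper also deduces the corollary from Proposition \bref{prop:orth_stab_fp} via the identity $\M = \E^{\downarrow_\V}$, merely stating the reduction in one line where you spell it out. Your remark about class-indexed families is also consistent with how \bref{prop:orth_stab_fp} and \bref{prop:cart_fp} are stated.
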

\begin{proof}
Since $\M = \E^{\downarrow_\V}$, this follows from \bref{prop:orth_stab_fp}.
\end{proof}

\section{A certain coincidence of enriched and ordinary orthogonality} \label{sec:coinc_enr_ord_orth}

A statement of the following proposition appears in an entry on the collaborative web site \textit{nLab} \cite{Nlab:enr_factn_sys}:
\begin{PropSub} \label{thm:enr_orth_from_ord}
Let $\E$ be a class of morphisms in a $\V$-category $\B$.  Suppose that $\B$ is tensored, and suppose that $\E$ is closed under tensors in $\B$ \pbref{def:closed_under_tensors}.  Then $\E^{\downarrow_\V} = \E^{\downarrow}$.
\end{PropSub}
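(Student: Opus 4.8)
The plan is to prove the inclusion $\E^{\downarrow} \subs \E^{\downarrow_\V}$; the reverse inclusion $\E^{\downarrow_\V} \subs \E^{\downarrow}$ needs no hypotheses, since $\V$-orthogonality implies ordinary orthogonality, as noted following \bref{def:enr_orth_fact}. So I would fix a morphism $m:B_1 \rightarrow B_2$ in $\E^{\downarrow}$ and an arbitrary $e:A_1 \rightarrow A_2$ in $\E$, and show that the commutative square \eqref{eqn:orth_pb} is a pullback in $\V$; as $e$ is arbitrary, this will give $m \in \E^{\downarrow_\V}$. To test whether \eqref{eqn:orth_pb} is a pullback in $\V$, I would use that a commutative square in $\V$ is a pullback if and only if every ordinary representable functor $\V(V,-):\V \rightarrow \SET$ ($V \in \V$) carries it to a pullback in $\SET$ --- this is merely a reformulation of the universal property of the pullback. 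Hence it suffices to fix $V \in \V$ and show that $\V(V,-)$ sends \eqref{eqn:orth_pb} to a pullback in $\SET$.

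This is where the tensoredness of $\B$ enters. For each $V \in \V$ the tensors $V \otimes A$ ($A \in \B$) assemble into a functor $V \otimes -:\B \rightarrow \B$, and there is an isomorphism $\B(V \otimes A,B) \cong \V(V,\B(A,B))$ natural in both $A$ and $B$. Transporting the $\V(V,-)$-image of \eqref{eqn:orth_pb} across this isomorphism, one obtains an isomorphic commutative square in $\SET$, namely
\[
\xymatrix{
\B(V \otimes A_2,B_1) \ar[rr]^{\B(V \otimes A_2,m)} \ar[d]_{\B(V \otimes e,B_1)} & & \B(V \otimes A_2,B_2) \ar[d]^{\B(V \otimes e,B_2)} \\
\B(V \otimes A_1,B_1) \ar[rr]^{\B(V \otimes A_1,m)} & & \B(V \otimes A_1,B_2)
}
\]
which is precisely the square whose being a pullback expresses the ordinary orthogonality $(V \otimes e) \downarrow m$ in (the underlying ordinary category of) $\B$. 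Since $\E$ is closed under tensors \pbref{def:closed_under_tensors} and the relevant tensors exist ($\B$ being tensored), $V \otimes e \in \E$; and $m \in \E^{\downarrow}$; hence $(V \otimes e) \downarrow m$, so the displayed square, and therefore the $\V(V,-)$-image of \eqref{eqn:orth_pb}, is a pullback in $\SET$. As $V$ was arbitrary, \eqref{eqn:orth_pb} is a pullback in $\V$, as required.

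The one step demanding care is the identification just made: under the isomorphism $\B(V \otimes A,B) \cong \V(V,\B(A,B))$, the map $\V(V,\B(e,B))$ must correspond to $\B(V \otimes e,B)$ and the map $\V(V,\B(A,m))$ to $\B(V \otimes A,m)$. This amounts to the naturality of the tensoring isomorphism in the variable $A$ (contravariantly) and in $B$, not merely in $V$ --- a standard property of tensored $\V$-categories, and precisely the feature that makes ``closed under tensors'' the correct hypothesis. Granting it, the rest is the routine diagram-chase above.
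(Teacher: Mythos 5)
Your proof is correct and follows essentially the same route as the paper's: reduce to showing $\E^{\downarrow}\subseteq\E^{\downarrow_\V}$, test the square \eqref{eqn:orth_pb} against each representable $\V(V,-):\V\rightarrow\SET$, and use the tensoring isomorphism $\V(V,\B(A,B))\cong\B(V\otimes A,B)$ (natural in $A$ and $B$) to identify the image square with the ordinary-orthogonality square for $V\otimes e\downarrow m$. Your explicit remark about needing naturality in $A$ and $B$, not merely in $V$, is exactly the point the paper's proof relies on implicitly.
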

\begin{proof}
Let $\SET$ be a category of classes \pbref{par:cat_classes} in which lie the hom-classes of $\V$.  Enriched orthogonality implies ordinary, so it suffices to show that $\E^\downarrow \subs \E^{\downarrow_\V}$.  Letting $m:B_1 \rightarrow B_2$ lie in $\E^\downarrow$ and $e:A_1 \rightarrow A_2$ lie in $\E$, we must show that $e \downarrow_\V m$.  It suffices to show that each functor $\V(V,-):\V \rightarrow \SET$ ($V \in \V$) sends the square \eqref{eqn:orth_pb} to a pullback square in $\SET$.  Since $\eA$ is tensored, we have in particular that
$$\V(V,\eA(A,B)) \cong \B(V \otimes A,B)$$
naturally in $A,B \in \B$.  The diagram of sets obtained by applying $\V(V,-)$ to the square \eqref{eqn:orth_pb} is therefore isomorphic to the following diagram
$$
\xymatrix@R=2ex {
\B(V \otimes A_2,B_1) \ar[rr]^{\B(V \otimes A_2,m)} \ar[d]_{\B(V \otimes e,B_1)}  && \B(V \otimes A_2,B_2) \ar[d]^{\B(V \otimes e,B_2)} \\
\B(V \otimes A_1,B_1)  \ar[rr]^{\B(V \otimes A_1,m)}                     && {\A(V \otimes A_1,B_2)\;,}
}
$$
which is a pullback in $\SET$ since $V \otimes e \in \E$ and hence $V \otimes e \downarrow m$.
\end{proof}

\begin{PropSub} \label{prop:enr_vs_ord_orth_for_pref_sys}
Let $(\E,\M)$ be a $\V$-prefactorization-system on a $\V$-category $\B$.
\begin{enumerate}
\item If $\B$ is tensored, then $\M = \E^{\downarrow_\V} = \E^\downarrow$.
\item If $\B$ is cotensored, then $\E = \M^{\uparrow_\V} = \M^\uparrow$.
\end{enumerate}
\end{PropSub}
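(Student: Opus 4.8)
The plan is to deduce the first statement directly from the two facts already in hand, and then to obtain the second statement from the first by dualization, so no new argument beyond bookkeeping is really needed.

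For the first statement, suppose $\B$ is tensored. Since $(\E,\M)$ is a $\V$-prefactorization-system, Proposition \bref{prop:upperclass_closed_under_tensors} tells us that $\E$ is closed under tensors in $\B$. Hence Proposition \bref{thm:enr_orth_from_ord} applies and gives $\E^{\downarrow_\V} = \E^\downarrow$. On the other hand, the definition of $\V$-prefactorization-system gives $\E^{\downarrow_\V} = \M$. Combining these, $\M = \E^{\downarrow_\V} = \E^\downarrow$.

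For the second statement, suppose $\B$ is cotensored; then $\B^\op$ is a tensored $\V$-category, since cotensors in $\B$ are tensors in $\B^\op$. By Remark \bref{rem:factn_dualization}, $(\M^\op,\E^\op)$ is a $\V$-prefactorization-system on $\B^\op$, so the first statement applied to it yields that the classes ${\M^\op}^{\downarrow_\V}$ and ${\M^\op}^\downarrow$ of morphisms of $\B^\op$ both coincide with $\E^\op$. Now Remark \bref{rem:factn_dualization} records that ${\M^\op}^{\downarrow_\V} = \M^{\uparrow_\V}$ (the orthogonality on the left computed in $\B^\op$, that on the right in $\B$), and the same reasoning with ordinary orthogonality in place of $\V$-orthogonality gives ${\M^\op}^\downarrow = \M^\uparrow$; here one uses that $e \downarrow m$ in $\B$ if and only if $m \downarrow e$ in $\B^\op$, which holds for ordinary orthogonality just as it does in the enriched setting (a commutative lifting square in $\B$ becomes the transposed lifting square in $\B^\op$). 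Translating the equality $\E^\op = {\M^\op}^{\downarrow_\V} = {\M^\op}^\downarrow$ back to $\B$ therefore yields $\E = \M^{\uparrow_\V} = \M^\uparrow$.

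There is essentially no hard step: the content is carried entirely by Propositions \bref{prop:upperclass_closed_under_tensors} and \bref{thm:enr_orth_from_ord}. The only point requiring a moment's care is keeping straight, in the dualization, which orthogonality operator is being computed in $\B$ and which in $\B^\op$, together with the routine verification that the duality identities of Remark \bref{rem:factn_dualization} hold equally for ordinary (non-enriched) orthogonality.
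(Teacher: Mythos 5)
Your proof is correct and follows the paper's own argument exactly: part 1 combines \bref{prop:upperclass_closed_under_tensors} with \bref{thm:enr_orth_from_ord}, and part 2 is obtained by dualization, which the paper dispatches with the phrase ``the second follows dually'' and which you have merely spelled out in more detail.
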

\begin{proof}
Since $\E$ is closed under tensors in $\B$ (by \bref{prop:upperclass_closed_under_tensors}), the first statement follows from \bref{thm:enr_orth_from_ord}, and the second follows dually.
\end{proof}

\begin{CorSub}
Every $\V$-prefactorization-system on a tensored and cotensored $\V$-category $\B$ is an ordinary prefactorization system on $\B$.
\end{CorSub}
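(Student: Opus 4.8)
The plan is to derive the statement as an immediate consequence of \bref{prop:enr_vs_ord_orth_for_pref_sys}. Suppose $(\E,\M)$ is a $\V$-prefactorization-system on a tensored and cotensored $\V$-category $\B$, so that by \bref{def:enr_orth_fact} we have $\E^{\downarrow_\V} = \M$ and $\M^{\uparrow_\V} = \E$. To show that $(\E,\M)$ is an ordinary prefactorization system on (the underlying ordinary category of) $\B$, I must upgrade these to the ordinary identities $\E^{\downarrow} = \M$ and $\M^{\uparrow} = \E$.

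For the first, I would invoke part 1 of \bref{prop:enr_vs_ord_orth_for_pref_sys}, which applies since $\B$ is tensored and yields $\M = \E^{\downarrow_\V} = \E^{\downarrow}$. For the second, I would invoke part 2, which applies since $\B$ is cotensored and yields $\E = \M^{\uparrow_\V} = \M^{\uparrow}$. These are precisely the two conditions defining an ordinary prefactorization system on $\B$, so the proof is complete.

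There is essentially no obstacle here, as the substantive content has been front-loaded into \bref{prop:enr_vs_ord_orth_for_pref_sys} --- which in turn rests on \bref{thm:enr_orth_from_ord} (enriched and ordinary orthogonality agree for tensor-closed classes in a tensored category), on \bref{prop:upperclass_closed_under_tensors} (the left class of a $\V$-prefactorization-system is closed under tensors, and dually the right class under cotensors), and on the dualization recorded in \bref{rem:factn_dualization}. The only point to keep in mind while writing this up is to phrase everything with respect to the underlying ordinary category of $\B$, in accordance with the conventions of \bref{sec:enr_cats}.
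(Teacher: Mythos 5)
Your proof is correct and is exactly the argument the paper intends (the paper leaves this corollary without an explicit proof precisely because it is an immediate consequence of \bref{prop:enr_vs_ord_orth_for_pref_sys}): tensoredness gives $\M = \E^{\downarrow}$ by part 1, cotensoredness gives $\E = \M^{\uparrow}$ by part 2, and these two identities constitute the definition of an ordinary prefactorization system.
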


\section{Enriched strong monomorphisms} \label{sec:enr_str_monos}

The notion of strong monomorphism was introduced in \cite{Ke:MonoEpiPb}, and the following enriched generalization of this notion was given in \cite{Day:AdjFactn}.

\begin{DefSub} \label{def:enr_str_mon}
Let $\B$ be a $\V$-category.
\begin{enumerate}
\item A \textit{$\V$-strong-mono} in $\B$ is a $\V$-mono $m:B_1 \rightarrow B_2$ such that $e \downarrow_\V m$ for every $\V$-epi $e$ in $\B$.
\item We denote the class of all $\V$-strong-monos in $\B$ by 
$$\StrMono_\V\B := (\Epi_\V\B)^{\downarrow_\V} \cap \Mono_\V\B\;.$$
\end{enumerate}
\end{DefSub}

\begin{DefSub} \label{def:emb}
In this text we shall often use the following streamlined terminology and notation, provided the choice of $\V$ is clear from the context.
\begin{enumerate}
\item  We call $\V$-strong-monos in a $\V$-category $\B$ \textit{embeddings (in $\B$)}.
\item We write $m:B_1 \rightarrowtail B_2$ to indicate that $m$ is an embedding in a given $\V$-category.
\item Given an object $B \in \B$, we denote by $\Sub_\V(B)$, or simply $\Sub(B)$, the category whose objects are embeddings with codomain $B$ in $\B$ and whose morphisms $f:m \rightarrow n$ consist of a morphism $f$ in $\B$ such that $n \cdot f = m$.  Note that $\Sub(B)$ is a preordered class.
\item Given embeddings $m:M \rightarrowtail B$ and $n:N \rightarrowtail B$, we say that \textit{$n$ is an embedding onto $m$}, or equivalently, \textit{$n$ presents $N$ as $m$}, if $m \cong n$ in $\Sub(B)$.
\end{enumerate}
\end{DefSub}

\begin{PropSub} \label{thm:stability_and_canc_for_strmonos}
For any $\V$-category $\B$, the following hold:
\begin{enumerate}
\item If $g \cdot f \in \StrMono_\V\B$, then $f \in \StrMono_\V\B$.
\item $\StrMono_\V\B$ is closed under composition, cotensors, arbitrary $\V$-fiber-products, and $\V$-pullbacks along arbitrary morphisms in $\B$.
\end{enumerate}
\end{PropSub}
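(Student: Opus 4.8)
The plan is to deduce both statements from the already-established closure properties of $\Mono_\V\B$ \pbref{thm:closure_props_of_monos} together with those of the right class of a $\V$-prefactorization-system applied to the canonical system $((\Epi_\V\B)^{\downarrow_\V\uparrow_\V},(\Epi_\V\B)^{\downarrow_\V})$. Write $\M_0 := (\Epi_\V\B)^{\downarrow_\V}$, so that by definition $\StrMono_\V\B = \M_0 \cap \Mono_\V\B$, and recall from \bref{thm:pref_iso_comp_canc} and \bref{thm:lowerclass_stab_pb}, \bref{prop:lowerclass_stab_fp} that $\M_0$ contains all isomorphisms and is closed under composition, cotensors (via \bref{prop:upperclass_closed_under_tensors}), $\V$-pullbacks along arbitrary morphisms, and arbitrary $\V$-fiber-products. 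The strategy is simply to intersect these closure properties of $\M_0$ with the matching ones for $\Mono_\V\B$.

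For statement 1, suppose $g \cdot f \in \StrMono_\V\B = \M_0 \cap \Mono_\V\B$. Then $g \cdot f \in \Mono_\V\B$, so by \bref{thm:closure_props_of_monos}(1), $f \in \Mono_\V\B$. Also $g \cdot f \in \M_0$, so by \bref{thm:pref_iso_comp_canc}(3) — which requires also $g \in \M_0$ — we do not immediately conclude; instead I would invoke the fact that $\M_0$ has the cancellation property only when $g$ is in the right class, which is not assumed here. The correct route is: since $g\cdot f \in \M_0 = (\Epi_\V\B)^{\downarrow_\V}$, and every $\V$-epi is in particular a $\V$-epi, we use the stronger cancellation \bref{thm:pref_iso_comp_canc}(4): because $\sH = \Epi_\V\B \subs \Epi_\V\B$, the hypothesis $\sH \subs \Epi_\V\B$ is trivially satisfied, and therefore $g \cdot f \in \M_0$ already forces $f \in \M_0$. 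Combining, $f \in \M_0 \cap \Mono_\V\B = \StrMono_\V\B$, as required.

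For statement 2, each closure property is obtained by pairing the corresponding closure of $\M_0$ with that of $\Mono_\V\B$. Closure under composition: if $f, g \in \StrMono_\V\B$ then $f, g \in \M_0$ and $f, g \in \Mono_\V\B$, so $g\cdot f$ lies in $\M_0$ by \bref{thm:pref_iso_comp_canc}(2) and in $\Mono_\V\B$ by \bref{thm:closure_props_of_monos}(2), hence in the intersection. Closure under cotensors: if $m \in \StrMono_\V\B$ and the cotensor $[V,m]$ exists in $\B$, then $[V,m] \in \M_0$ by \bref{prop:upperclass_closed_under_tensors} (which says $\M_0$ is closed under cotensors) and $[V,m] \in \Mono_\V\B$ by \bref{thm:closure_props_of_monos}(2), so $[V,m] \in \StrMono_\V\B$. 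Closure under arbitrary $\V$-fiber-products: a $\V$-fiber-product of a family in $\StrMono_\V\B \subs \M_0$ lies in $\M_0$ by \bref{prop:lowerclass_stab_fp}, and lies in $\Mono_\V\B$ by \bref{thm:closure_props_of_monos}(2) (closure of $\V$-monos under $\V$-fiber-products, i.e.\ the $\V$-intersection), hence in the intersection. Closure under $\V$-pullback along an arbitrary morphism: if $m \in \StrMono_\V\B$ and $m'$ is a $\V$-pullback of $m$, then $m' \in \M_0$ by \bref{thm:lowerclass_stab_pb} and $m' \in \Mono_\V\B$ by \bref{thm:closure_props_of_monos}(2), so $m' \in \StrMono_\V\B$.

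I do not expect any genuine obstacle here; the statement is a bookkeeping corollary, and the only point requiring care is item 1, where one must use the \emph{strengthened} cancellation \bref{thm:pref_iso_comp_canc}(4) for $\sH = \Epi_\V\B$ (rather than the weaker form (3), which would spuriously require $g$ itself to be an embedding) in order to cancel $g$ without hypotheses on it. Once that subtlety is noted, every clause follows by intersecting a property of $\M_0$ with the parallel property of $\Mono_\V\B$, both of which are already in hand.
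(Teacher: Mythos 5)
Your proposal is correct and follows essentially the same route as the paper, which likewise observes that $\StrMono_\V\B = (\Epi_\V\B)^{\downarrow_\V} \cap \Mono_\V\B$ and intersects the closure properties of the two classes established in \bref{sec:stab_canc_enr_prefactn} and \bref{thm:closure_props_of_monos}. Your explicit remark that item 1 needs the strengthened cancellation \bref{thm:pref_iso_comp_canc}(4) with $\sH = \Epi_\V\B$ (rather than part (3)) is exactly the right subtlety, and the paper's one-line proof implicitly relies on the same fact.
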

\begin{proof}
By \bref{sec:stab_canc_enr_prefactn} and \bref{thm:closure_props_of_monos}, each of the classes $(\Epi_\V\B)^{\downarrow_\V}$ and $\Mono_\V\B$ possesses the needed closure properties, so the intersection $(\Epi_\V\B)^{\downarrow_\V} \cap \Mono_\V\B = \StrMono_\V\B$ does as well.
\end{proof}

\begin{LemSub} \label{prop:sect_str_mono}
Every section in a $\V$-category $\B$ is a $\V$-strong-mono in $\B$.
\end{LemSub}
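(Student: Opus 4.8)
Let $s : A \to B$ be a section in $\B$, so there is a morphism $r : B \to A$ with $r \cdot s = 1_A$. The goal is to show $s \in \StrMono_\V\B = (\Epi_\V\B)^{\downarrow_\V} \cap \Mono_\V\B$, i.e.\ that $s$ is a $\V$-mono and that $e \downarrow_\V s$ for every $\V$-epi $e$ in $\B$.

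First I would dispatch the $\V$-mono claim: for each $A' \in \B$, the map $\B(A',s) : \B(A',A) \to \B(A',B)$ has a retraction $\B(A',r)$, since $\B(A',r) \cdot \B(A',s) = \B(A', r \cdot s) = \B(A', 1_A) = 1_{\B(A',A)}$; a split monomorphism in $\V$ is a monomorphism, so $\B(A',s)$ is mono for every $A'$, whence $s \in \Mono_\V\B$. Second, and this is the crux, I would show $e \downarrow_\V s$ for \emph{any} morphism $e : A_1 \to A_2$ that is a $\V$-epi — in fact for any $e$ at all, since sections are orthogonal to everything from the left is false, but orthogonality to sections holds for all $e$. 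Concretely, applying $\B(-,A)$, $\B(-,B)$ to $e$ and using $r$, one checks directly that the commutative square
$$
\xymatrix{
\B(A_2,A) \ar[r]^{\B(A_2,s)} \ar[d]_{\B(e,A)} & \B(A_2,B) \ar[d]^{\B(e,B)} \\
\B(A_1,A) \ar[r]_{\B(A_1,s)}                  & \B(A_1,B)
}
$$
is a pullback in $\V$: given $u : V \to \B(A_2,B)$ and $v : V \to \B(A_1,A)$ with $\B(e,A) $ applied appropriately agreeing, the pair $\big(\B(A_2,r)\cdot u,\ v\big)$ would need to be shown to be the unique filler — but the clean way is to use that $\B(-,s)$ is a \emph{split mono of arrows}: the square above is a retract, in the arrow category $[\Two,\V]$, of the trivial square on the identity, via the maps induced by $r$ and $s$, and a retract of a pullback square (the identity square is trivially a pullback) is a pullback, since pullbacks are limits and hence retract-stable. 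Actually the most economical route is: the square in question has the morphism $(\B(A_2,s),\B(A_1,s))$ of arrows split by $(\B(A_2,r),\B(A_1,r))$, so it exhibits the top-left square as a retract in $[\Two,\V]$ of the square obtained by replacing $s$ with $1_B$; but that latter square is $\B(A_2,1_B)$ over $\B(A_1,1_B)$, manifestly a pullback (its vertical legs are equal), and a retract of a pullback square is a pullback.

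Alternatively — and perhaps this is what the author intends, given the placement — one can avoid the arrow-category retract argument entirely by invoking \bref{thm:orth_yielding_retr_and_iso}(2): if one can produce $f$ with $s = s' \cdot f$ for which $f \downarrow_\V s' \cdot f$ forces $f$ to be a section, that is the wrong direction. The direct approach is cleanest. Once $e \downarrow_\V s$ is established for all $\V$-epis $e$, combined with $s \in \Mono_\V\B$ we conclude $s \in (\Epi_\V\B)^{\downarrow_\V} \cap \Mono_\V\B = \StrMono_\V\B$.

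**Main obstacle.** The only real work is verifying that the pullback square \eqref{eqn:orth_pb} holds for a section, and the cleanest formulation of \emph{why} is the retract-of-a-pullback principle in the $\V$-arrow category $[\Two,\V]$: one must set up the split idempotent/retraction data $(\B(A_i,s),\B(A_i,r))$ carefully and check it commutes with the vertical legs $\B(e,-)$, which it does by functoriality of $\B(-,-)$. I expect no genuine difficulty here, only bookkeeping; the one subtlety is making sure the ambient category $[\Two,\V]$ has the relevant limits computed pointwise so that "retract of a pullback is a pullback" applies, which is immediate since limits in functor categories are pointwise.
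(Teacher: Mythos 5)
Your first step is fine and matches the paper: each $\B(A',s)$ is split mono in $\V$ with retraction $\B(A',r)$, so $s$ is a $\V$-mono. The problem is the orthogonality half.

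The claim that $e \downarrow_\V s$ holds for \emph{every} morphism $e$ is false. By \bref{thm:orth_to_all_is_iso}, $(\mor\B)^{\downarrow_\V} = \Iso\B$, so a section orthogonal to everything on the left would be an isomorphism. Concretely, already in $\Set$ the split mono $s:\{0\}\hookrightarrow\{0,1\}$ is not orthogonal to itself: uniqueness of a diagonal filler is automatic because $s$ is mono, but existence fails (the candidate $w = r\cdot v$ satisfies $w\cdot e = u$ but $s\cdot w = s\cdot r\cdot v$ need not equal $v$, since $s\cdot r \neq 1$). The hypothesis that $e$ is a ($\V$-)epi is exactly what rescues existence, and it cannot be dispensed with.

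Correspondingly, the retract argument does not work, and the maps you write down go the wrong way. With the morphisms of squares induced by $s$ and $r$, what you actually exhibit is the identity square on $\B(e,A)$ as a retract of the orthogonality square \eqref{eqn:orth_pb} — the composite in the direction you need involves $\B(-,s\cdot r)$, which is not the identity unless $s$ is invertible. Put differently, your underlying principle is that a split monomorphism in the arrow category $[\Two,\V]$ is a cartesian morphism for the codomain functor (i.e.\ a pullback square), and that is false: in $\Set$, the square with horizontal arrows $\{0\}\hookrightarrow\{0,1\}$ and $\{*\}\xrightarrow{1}\{*\}$ and vertical arrows the unique maps is a split mono of arrows but not a pullback. (``Retract of a pullback square is a pullback'' is a true statement, but your square is not presented as such a retract.)

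The repair is the paper's argument: paste the orthogonality square for $s$ with the corresponding square for $r$. The horizontal composites of the outer rectangle are $\B(-,r\cdot s) = \B(-,1_A)$, hence isomorphisms, so the outer rectangle is a pullback by \bref{prop:pb_iso}; the middle vertical arrow $\B(e,B)$ is a monomorphism precisely because $e$ is a $\V$-epi; and then \bref{prop:pb_canc_mono} (pullback cancellation with a mono in the middle) yields that the left square is a pullback, i.e.\ $e \downarrow_\V s$. This is the only place the $\V$-epi hypothesis is used, and it is essential.
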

\begin{proof}
Suppose the composite $A \xrightarrow{s} B \xrightarrow{r} A$ in $\B$ is the identity.  Then $s$ is a $\V$-mono since $s$ is an absolute mono and hence is sent by each functor $\B(A',-):\B \rightarrow \V$ ($A' \in \B)$ to a mono in $\V$.  Further, for each $\V$-epi $e:A' \rightarrow B'$ in $\V$, we have a commutative diagram
$$
\xymatrix{
\B(B',A) \ar[r]^{\B(B',s)} \ar[d]|{\B(e,A)} & \B(B',B) \ar[r]^{\B(B',r)} \ar[d]|{\B(e,B)} & \B(B',A) \ar[d]|{\B(e,A)}\\
\B(A',A) \ar[r]^{\B(A',s)}     & \B(A',B) \ar[r]^{\C(A',r)}       & \B(A',A)
}
$$
whose outer rectangle is a pullback since its top and bottom sides are iso \pbref{prop:pb_iso}, and the middle arrow $\B(e,B)$ is mono since $e$ is $\V$-epi, so by \pbref{prop:pb_canc_mono}, the left square is a pullback.
\end{proof}

The following is an enriched generalization of part of \cite{FrKe}, 2.1.4.
\begin{PropSub} \label{prop:pref_mono_epi}
Let $(\E,\M)$ be a $\V$-prefactorization-system on a $\V$-category $\B$.
\begin{enumerate}
\item If $\B$ has $\V$-kernel-pairs and every retraction in $\B$ lies in $\E$, then $\M \subs \Mono_\V\B$.
\item Dually, if $\B$ has $\V$-cokernel-pairs and every section in $\B$ lies in $\M$, then $\E \subs \Epi_\V\B$.
\end{enumerate}
\end{PropSub}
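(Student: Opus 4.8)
The plan is to prove part~1 directly and to deduce part~2 by dualizing with \bref{rem:factn_dualization}. For part~1 I would begin with an arbitrary $m:B_1 \to B_2$ in $\M$ and reduce, via the kernel-pair characterization of $\V$-monos \pbref{prop:enr_mono_kp}, to showing that $m$ admits a $\V$-kernel-pair $\pi_1,\pi_2:P \to B_1$ with $\pi_1 = \pi_2$. Since $\B$ has $\V$-kernel-pairs by hypothesis, fix such a pair; being a $\V$-limit it is in particular an ordinary pullback of $m$ along itself, so the pair $1_{B_1},1_{B_1}:B_1 \to B_1$ (which satisfies $m\cdot 1_{B_1} = m\cdot 1_{B_1}$) induces a unique diagonal $\Delta:B_1 \to P$ with $\pi_1\Delta = \pi_2\Delta = 1_{B_1}$. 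In particular $\pi_1$ is a retraction, so $\pi_1 \in \E$ by hypothesis, and since $\E = \M^{\uparrow_\V} \subs \M^\uparrow$ we obtain the ordinary orthogonality $\pi_1 \downarrow m$.

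Next I would feed the commuting square with left leg $\pi_1$, right leg $m$, top edge $\pi_2$ and bottom edge $m$ (it commutes because $m\pi_1 = m\pi_2$) into $\pi_1 \downarrow m$, obtaining a unique $w:B_1 \to B_1$ with $w\pi_1 = \pi_2$ and $mw = m$. Composing $w\pi_1 = \pi_2$ on the right with $\Delta$ forces $w = w\pi_1\Delta = \pi_2\Delta = 1_{B_1}$, hence $\pi_2 = w\pi_1 = \pi_1$; then \bref{prop:enr_mono_kp} yields $m \in \Mono_\V\B$, completing part~1. For part~2, by \bref{rem:factn_dualization} the pair $(\M^\op,\E^\op)$ is a $\V$-prefactorization-system on $\B^\op$, and the hypotheses become exactly ``$\B^\op$ has $\V$-kernel-pairs'' and ``every retraction of $\B^\op$ lies in $\M^\op$'', so part~1 applied to $\B^\op$ gives $\E^\op \subs \Mono_\V(\B^\op)$, which is precisely $\E \subs \Epi_\V\B$.

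I expect the only genuine obstacle to be the step that collapses $w$ to the identity: this is where the \emph{reflexivity} of the kernel pair, i.e.\ the existence of $\Delta$, is essential, and it is what actually forces the two legs to coincide. Everything else amounts to unwinding the definitions \bref{def:enr_mono_limit} and \bref{def:enr_orth_fact} together with the remark that $\V$-orthogonality (and $\V$-limits) descend to the underlying ordinary category, and then invoking \bref{prop:enr_mono_kp}.
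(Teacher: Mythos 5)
Your proof is correct and follows essentially the same route as the paper: form the kernel pair, use its reflexivity (the diagonal $s=\Delta$) to see $\pi_1$ is a retraction and hence in $\E$, apply orthogonality against $m$ to produce $w$, collapse $w$ to the identity via the diagonal, and conclude $\pi_1=\pi_2$ by \bref{prop:enr_mono_kp}. The explicit dualization for part~2 via \bref{rem:factn_dualization} is exactly what the paper's ``dually'' leaves implicit.
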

\begin{proof}
1.  Let $m:B \rightarrow C$ lie in $\M$.  We have a $\V$-pullback as in the following diagram
$$
\xymatrix{
B \ar@/^/@{=}[drr] \ar@/_/@{=}[ddr] \ar@{-->}[dr]|s &   &  \\
                                      & P \ar[d]|{\pi_1} \ar[r]|{\pi_2} \pullbackcorner & B \ar[d]^m \\
                                      & B \ar[r]_m \ar@{-->}[ur]_w                       & C
}
$$
and an induced $s$ such that $\pi_1 \cdot s = 1_B = \pi_2 \cdot s$.  In particular, $\pi_1$ is a retraction and hence lies in $\E$, so there is a unique $w$ making the diagram commute.  Hence $1_B = \pi_2 \cdot s = w \cdot \pi_1 \cdot s = w \cdot 1_B = w$, so $\pi_2 = w \cdot \pi_1 = 1_B \cdot \pi_1 = \pi_1$ and the result follows by \bref{prop:enr_mono_kp}.
\end{proof}

\begin{PropSub} \label{thm:strmono_lowerclass_epi_upperclass}
Let $\B$ be a $\V$-category with $\V$-kernel-pairs.  Then we have the following:
\begin{enumerate}
\item $\StrMono_\V\B = (\Epi_\V\B)^{\downarrow_\V}$.
\item If $\B$ also has $\V$-cokernel-pairs, then $\Epi_\V\B = (\StrMono_\V\B)^{\uparrow_\V}$.
\end{enumerate}
\end{PropSub}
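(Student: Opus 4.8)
The plan is to deduce both statements from Proposition \bref{prop:pref_mono_epi} by applying it to suitably \emph{generated} $\V$-prefactorization-systems. Recall from \bref{sec:enr_factn_sys} that for any class $\sH$ of morphisms of $\B$ both $(\sH^{\downarrow_\V\uparrow_\V},\sH^{\downarrow_\V})$ and $(\sH^{\uparrow_\V},\sH^{\uparrow_\V\downarrow_\V})$ are $\V$-prefactorization-systems on $\B$, and note that $\sH$ is always contained in the left class of the former and in the right class of the latter (immediate from the definition of $\downarrow_\V$).

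For part 1: since $\StrMono_\V\B = (\Epi_\V\B)^{\downarrow_\V} \cap \Mono_\V\B$ by definition, the inclusion $\StrMono_\V\B \subseteq (\Epi_\V\B)^{\downarrow_\V}$ is trivial, so it suffices to prove $(\Epi_\V\B)^{\downarrow_\V} \subseteq \Mono_\V\B$. First I would note that every retraction in $\B$ is a $\V$-epi: a retraction in $\B$ is a section in $\B^\op$, hence a $\V$-strong-mono there by \bref{prop:sect_str_mono}, hence a $\V$-mono in $\B^\op$, which is precisely membership in $\Epi_\V\B$. Now take the $\V$-prefactorization-system $(\E,\M) := ((\Epi_\V\B)^{\downarrow_\V\uparrow_\V},(\Epi_\V\B)^{\downarrow_\V})$. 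Then $\E \supseteq \Epi_\V\B$ contains every retraction, and $\B$ has $\V$-kernel-pairs, so \bref{prop:pref_mono_epi}(1) gives $\M = (\Epi_\V\B)^{\downarrow_\V} \subseteq \Mono_\V\B$, as required.

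For part 2: the inclusion $\Epi_\V\B \subseteq (\StrMono_\V\B)^{\uparrow_\V}$ is formal, since every $\V$-strong-mono lies in $(\Epi_\V\B)^{\downarrow_\V}$ and hence is $\V$-orthogonal to every $\V$-epi. For the reverse inclusion I would take the $\V$-prefactorization-system $(\E,\M) := ((\StrMono_\V\B)^{\uparrow_\V},(\StrMono_\V\B)^{\uparrow_\V\downarrow_\V})$. Here $\M \supseteq \StrMono_\V\B$ contains every section of $\B$ by \bref{prop:sect_str_mono}, and by hypothesis $\B$ now also has $\V$-cokernel-pairs, so \bref{prop:pref_mono_epi}(2) gives $\E = (\StrMono_\V\B)^{\uparrow_\V} \subseteq \Epi_\V\B$; combined with the formal inclusion this yields the claimed equality.

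There is no real obstacle once \bref{prop:pref_mono_epi} is available; the only points requiring care are (i) choosing the correct half of the generated-prefactorization-system construction so that the relevant class of $(\E,\M)$ is exactly $(\Epi_\V\B)^{\downarrow_\V}$ in part 1 and exactly $(\StrMono_\V\B)^{\uparrow_\V}$ in part 2, and (ii) verifying that retractions are $\V$-epis and sections are $\V$-strong-monos so that the hypotheses of \bref{prop:pref_mono_epi} are satisfied.
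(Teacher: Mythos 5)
Your proof is correct and follows essentially the same route as the paper: both parts reduce to Proposition \bref{prop:pref_mono_epi} applied to a generated $\V$-prefactorization-system, after observing that retractions are $\V$-epis (so that $\Epi_\V\B$, and hence the relevant left class, contains them) and that sections are $\V$-strong-monos via \bref{prop:sect_str_mono}. The only immaterial difference is that for part 2 you invoke the system $((\StrMono_\V\B)^{\uparrow_\V},(\StrMono_\V\B)^{\uparrow_\V\downarrow_\V})$ afresh rather than reusing the system from part 1, whose right class part 1 has just identified with $\StrMono_\V\B$; the two systems coincide, so the arguments are the same.
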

\begin{proof}
Let $(\E,\M) := ((\Epi_\V\B)^{\downarrow_\V\uparrow_\V},(\Epi_\V\B)^{\downarrow_\V})$.  Every retraction in $\B$ is an absolute epi and hence is a $\V$-epi and so lies in $\E$, so by \bref{prop:pref_mono_epi} 1, $\M \subs \Mono_\V\B$ and hence $\StrMono_\V\B = \M \cap \Mono_\V\B = \M$.  If $\B$ also has $\V$-cokernel-pairs, then since every section lies in $\M$ \pbref{prop:sect_str_mono} we deduce by \bref{prop:pref_mono_epi} 2 that $\E \subs \Epi_\V\B$, so $\Epi_\V\B = \E = \M^{\uparrow_\V} = (\StrMono_\V\B)^{\uparrow_\V}$.
\end{proof}

\begin{RemSub} \label{rem:ord_epi_strm_prefactn}
For ordinary categories, it was recognized in \cite{CHK} that 2.1.4 of \cite{FrKe} entails that the epimorphisms and strong monomorphisms of a finitely-complete or -cocomplete ordinary category together constitute a prefactorization system.  \bref{thm:strmono_lowerclass_epi_upperclass} is a partial analogue of this result in the enriched context, but note the additional assumption of $\V$-cokernel-pairs in 2.
\end{RemSub}

\begin{PropSub} \label{thm:ord_str_mono_in_base_is_enriched}
$\StrMono_{\V}\uV = \StrMono\V$, and $\Epi_\V\uV = \Epi\V$.
\end{PropSub}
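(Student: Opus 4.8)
The plan is to derive this from the two earlier results \bref{thm:lim_mono_epi_in_base_are_enr} and \bref{thm:enr_orth_from_ord}. The equality $\Epi_\V\uV = \Epi\V$ is nothing new: it is part of the assertion of \bref{thm:lim_mono_epi_in_base_are_enr}, and likewise $\Mono_\V\uV = \Mono\V$. So the real content is the statement about $\V$-strong-monos, and I would reduce it as follows. By definition,
$$\StrMono_\V\uV = (\Epi_\V\uV)^{\downarrow_\V} \cap \Mono_\V\uV = (\Epi\V)^{\downarrow_\V} \cap \Mono\V,$$
the $\V$-orthogonality being computed in $\uV$, whereas $\StrMono\V = (\Epi\V)^{\downarrow} \cap \Mono\V$ with ordinary orthogonality in $\V$. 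Hence it suffices to show that $(\Epi\V)^{\downarrow_\V} = (\Epi\V)^{\downarrow}$ as classes of morphisms of $\uV$.

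For that I would apply \bref{thm:enr_orth_from_ord} with $\B = \uV$ and $\E = \Epi\V = \Epi_\V\uV$. Its hypotheses are that $\uV$ is tensored and that $\E$ is closed under tensors in $\uV$. The category $\uV$ is indeed tensored, the tensor of $V \in \V$ with $A \in \uV$ being the monoidal product $V \otimes A$, since the closed structure supplies the required natural isomorphism $\V(V, \uV(A,B)) \cong \uV(V \otimes A, B)$. And $\Epi\V$ is closed under tensors in $\uV$: for each $V \in \V$ the functor $V \otimes - : \uV \rightarrow \uV$ is left adjoint to the internal hom $\uV(V,-)$, hence preserves colimits and in particular epimorphisms, so that $e \in \Epi\V$ implies $V \otimes e \in \Epi\V$.

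Thus \bref{thm:enr_orth_from_ord} yields $(\Epi\V)^{\downarrow_\V} = (\Epi\V)^{\downarrow}$, and intersecting with $\Mono\V$ gives $\StrMono_\V\uV = (\Epi\V)^{\downarrow} \cap \Mono\V = \StrMono\V$, which together with the already-noted equality $\Epi_\V\uV = \Epi\V$ completes the argument. I do not expect any real obstacle here: the proof is a direct assembly of the cited propositions, and the only things needing attention are the bookkeeping of which ambient category each of the classes $\Epi$, $\Mono$ and the orthogonality operator is taken in, together with the elementary observation that tensoring in $\uV$ is a left adjoint and so preserves epis.
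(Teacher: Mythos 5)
Your proposal is correct and follows essentially the same route as the paper: the second equality is quoted from \bref{thm:lim_mono_epi_in_base_are_enr}, and the first is reduced to $(\Epi\V)^{\downarrow_\V} = (\Epi\V)^{\downarrow}$ via \bref{thm:enr_orth_from_ord}, whose hypotheses are verified exactly as you do, by noting that $V \otimes (-)$ is a left adjoint and hence preserves epimorphisms. Your additional bookkeeping about which category each class is computed in is sound and matches the paper's computation.
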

\begin{proof}
The second equation was established in \bref{thm:lim_mono_epi_in_base_are_enr}.  Since epis in $\V$ are preserved by each left-adjoint functor $V \otimes (-):\V \rightarrow \V$, the class $\E := \Epi\V$ satisfies the hypotheses of \bref{thm:enr_orth_from_ord}, and we compute that
$$(\Epi_\V\uV)^{\downarrow_\V} = (\Epi\V)^{\downarrow_\V} = (\Epi\V)^{\downarrow}$$
and hence by \bref{thm:lim_mono_epi_in_base_are_enr}
$$\StrMono_{\V}\uV = (\Epi_\V\uV)^{\downarrow_\V} \cap \Mono_\V\uV = (\Epi\V)^{\downarrow} \cap \Mono\V = \StrMono\V\;.$$
\end{proof}

The following notion is a $\V$-enriched analogue of the notion of \textit{finitely well-complete (ordinary) category} of \cite{CHK}:
\begin{DefSub} \label{def:enr_fwc}
A $\V$-category $\B$ is \textit{$\V$-finitely-well-complete ($\V$-f.w.c.)} if
\begin{enumerate}
\item $\B$ has all finite $\V$-limits, and
\item $\B$ has $\V$-intersections of arbitrary (class-indexed) families of $\V$-strong-monos.
\end{enumerate}
\end{DefSub}
\begin{RemSub} \label{rem:fwc}
By \bref{thm:stability_and_canc_for_strmonos}, the $\V$-intersections of $\V$-strong-monos required in \bref{def:enr_fwc} are necessarily $\V$-strong-monos.
\end{RemSub}

\begin{PropSub} \label{thm:base_fwc}
$\uV$ is $\V$-finitely-well-complete if and only if $\V$ is finitely well-complete.
\end{PropSub}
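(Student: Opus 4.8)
The plan is to verify the two conditions in Definition \ref{def:enr_fwc} for $\uV$ separately, translating between the enriched notions for $\uV$ and the ordinary notions for $\V$ by means of the coincidences already recorded: on the one hand, by \ref{thm:lim_mono_epi_in_base_are_enr} every ordinary limit (resp. mono) in $\V$ is a $\V$-limit (resp. $\V$-mono) in $\uV$; on the other hand, every $\V$-limit (resp. $\V$-mono, $\V$-strong-mono) in $\uV$ is in particular the corresponding ordinary notion in $\V$ (the Remark following \ref{def:enr_mono_limit}), and $\StrMono_\V\uV = \StrMono\V$ by \ref{thm:ord_str_mono_in_base_is_enriched}.

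First I would treat condition (1). Since each internal-hom functor $\uV(V,-):\V \rightarrow \V$ is a right adjoint, it preserves all limits, so every finite limit in $\V$ is automatically a finite $\V$-limit in $\uV$; conversely every finite $\V$-limit in $\uV$ is an ordinary finite limit in $\V$. Hence $\uV$ has all finite $\V$-limits if and only if $\V$ has all finite limits.

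Next I would treat condition (2). By \ref{thm:ord_str_mono_in_base_is_enriched} a family of morphisms of $\V$ consists of $\V$-strong-monos in $\uV$ exactly when it consists of ordinary strong monos in $\V$, and in both the ordinary and the enriched setting the intersection of such a family $(m_i)_{i \in I}$ is computed as a (wide) fiber product of the $m_i$. If $\V$ is finitely well-complete, then the ordinary intersection $m$ of a given class-indexed family of strong monos exists in $\V$; being an ordinary limit it is, again by the right-adjointness argument, a $\V$-fiber-product in $\uV$, and by \ref{prop:intersection} it is a $\V$-mono, hence a $\V$-intersection of the $m_i$ in the sense of Definition \ref{def:intersection}. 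Conversely, if $\uV$ has $\V$-intersections of arbitrary families of $\V$-strong-monos, then given a class-indexed family of ordinary strong monos in $\V$ — which is a family of $\V$-strong-monos in $\uV$ — its $\V$-intersection is a $\V$-limit, hence an ordinary fiber product of the $m_i$ in $\V$, i.e. their ordinary intersection. Combining this with the analysis of condition (1) yields the stated equivalence.

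The only point requiring care is the bookkeeping in the last step: one must confirm that the notion of ``intersection of an arbitrary family of strong monos'' entering the definition of ordinary finite well-completeness in \cite{CHK} corresponds, under the dictionary above, precisely to the notion of ``$\V$-intersection of $\V$-strong-monos'' of Definition \ref{def:intersection} — that is, that both are the appropriate wide pullback and that the $\V$-limits of $\uV$ are exactly the ordinary limits of $\V$ (here automatic, since all $\uV(V,-)$ preserve limits). I expect no genuine obstacle; the argument is essentially an assembly of \ref{thm:lim_mono_epi_in_base_are_enr}, \ref{prop:intersection}, and \ref{thm:ord_str_mono_in_base_is_enriched}.
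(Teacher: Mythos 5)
Your argument is correct and is essentially the paper's own proof, which simply observes (citing \bref{thm:lim_mono_epi_in_base_are_enr} and \bref{thm:ord_str_mono_in_base_is_enriched}) that finite $\V$-limits, $\V$-intersections, and $\V$-strong-monos in $\uV$ coincide with the corresponding ordinary notions in $\V$; you have merely spelled out the right-adjointness of $\uV(V,-)$ and the fiber-product bookkeeping that the paper leaves implicit.
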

\begin{proof}
Finite $\V$-limits, $\V$-intersections, and $\V$-strong-monos in $\uV$ are the same as the corresponding ordinary notions in $\V$ (by \bref{thm:lim_mono_epi_in_base_are_enr}, \bref{thm:ord_str_mono_in_base_is_enriched}).
\end{proof}

\begin{ExaSub}\label{exa:fwc}
Any small-complete category $\B$ whose objects each have but a set of strong subobjects is f.w.c.  Hence, in particular, every category $\B$ topological over $\Set$ is f.w.c., since the strong monomorphisms in $\B$ are exactly the initial injections (e.g. by \cite{Wy:QuTo} 11.9), so that strong subobjects correspond bijectively to subset inclusions.

In particular, then, since each of the cartesian closed categories $\Conv$ and $\Smooth$ \pbref{exa:conv_sm_sp} is topological over $\Set$, each is therefore f.w.c.
\end{ExaSub}

\begin{ExaSub}\label{exa:rmod_fwc}
Given a commutative ring object $R$ in a countably-complete and \linebreak[4] -cocomplete cartesian closed category $\X$, the symmetric monoidal closed category $\sL := \RMod(\X)$ of $R$-module objects in $\X$ \pbref{exa:rmodx_smc_and_xenr} is finitely well-complete as soon as $\X$ is so (\bref{exa:smc_adj_with_l_fwc} 2).  In particular, the categories of convergence vector spaces and smooth vector spaces \pbref{exa:rmodx_smc_and_xenr} are finitely well-complete, in view of \bref{exa:fwc}.
\end{ExaSub}

\section{Enriched factorization systems determined by classes of monos} \label{sec:enr_factn_det_cls_monos}

In view of the stability properties of enriched prefactorization systems \pbref{sec:stab_canc_enr_prefactn}, the following is essentially an enriched variant of Lemma 3.1 of \cite{CHK}.  The assumption of a cotensored $\V$-category allows us to employ \bref{prop:enr_vs_ord_orth_for_pref_sys} within the proof in order to obtain a certain reduction of enriched orthogonality to ordinary and then apply a method of proof analogous to that sketched in \cite{CHK}.

\begin{PropSub} \label{prop:enr_factn_sys_det_lowercls_monos}
Let $(\F,\N)$ be a $\V$-prefactorization-system on a cotensored $\V$-category $\B$ with $\N \subs \Mono_\V\B$, and suppose that $\B$ has arbitrary $\V$-intersections of $\N$-morphisms and $\V$-pullbacks of $\N$-morphisms along arbitrary morphisms.  Then $(\F,\N)$ is a \linebreak[4] \hbox{$\V$-factorization-system} on $\B$.
\end{PropSub}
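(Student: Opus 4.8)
The plan is to follow the strategy of Lemma 3.1 of \cite{CHK}, adapted to the enriched setting. Since $(\F,\N)$ is assumed to be a $\V$-prefactorization-system, the only thing to prove is that $(\F,\N)$-factorizations exist. The key device is that, $\B$ being cotensored, \bref{prop:enr_vs_ord_orth_for_pref_sys}(2) gives $\F = \N^{\uparrow_\V} = \N^\uparrow$, so to place the left factor in $\F$ it will suffice to check ordinary orthogonality against $\N$-morphisms; and since $\N \subs \Mono_\V\B$, every $n \in \N$ is mono, so uniqueness of diagonal fill-ins will be automatic and only existence will need checking.

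First I would fix $f:A \rightarrow B$ and form $m:M \rightarrowtail B$ as the $\V$-intersection of all $\N$-morphisms $n:N \rightarrowtail B$ through which $f$ factors. This family is nonempty since $1_B$ lies in $\N$ by \bref{thm:pref_iso_comp_canc}(1), and the $\V$-intersection exists by hypothesis; it is a $\V$-fiber-product, so $m$ lies in $\N$ by \bref{prop:lowerclass_stab_fp}. Assembling the witnesses $g_i:A \rightarrow N_i$ with $n_i \cdot g_i = f$ into a cone and using the universal property of the $\V$-fiber-product yields $e:A \rightarrow M$ with $m \cdot e = f$. It then remains to show $e \in \N^\uparrow$.

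To that end I would take $n:X \rightarrowtail Y$ in $\N$ and a commutative square $n \cdot u = v \cdot e$ with $u:A \rightarrow X$, $v:M \rightarrow Y$, and form the $\V$-pullback $n':P \rightarrowtail M$ of $n$ along $v$ (it exists by hypothesis and lies in $\N$ by \bref{thm:lowerclass_stab_pb}), with projection $p:P \rightarrow X$; the square induces $w:A \rightarrow P$ with $n' \cdot w = e$ and $p \cdot w = u$. Now $m \cdot n'$ is a composite of $\N$-morphisms, hence lies in $\N$ by \bref{thm:pref_iso_comp_canc}(2), and $f = m \cdot e = (m \cdot n') \cdot w$ factors through it, so the defining minimality of $m$ forces a $j:M \rightarrow P$ with $(m \cdot n') \cdot j = m$. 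Since $m$ is mono this gives $n' \cdot j = 1_M$, whence the mono $n'$ is split epi and therefore an isomorphism with inverse $j$; the diagonal $p \cdot j:M \rightarrow X$ then satisfies $n \cdot (p \cdot j) = v \cdot n' \cdot j = v$ and $(p \cdot j) \cdot e = p \cdot (j \cdot n') \cdot w = p \cdot w = u$. Thus $e \downarrow n$ for all $n \in \N$, so $e \in \N^\uparrow = \F$ and $f = m \cdot e$ is the required factorization.

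I expect most of the work to be bookkeeping — checking that the (possibly class-indexed) intersection is legitimate, that $\N$ is closed under composition and under $\V$-pullback, and that $\N^{\uparrow_\V} = \N^\uparrow$ — all of which is delivered by \bref{sec:stab_canc_enr_prefactn} and \bref{prop:enr_vs_ord_orth_for_pref_sys}. The one genuinely substantive step, and the place I would be most careful, is the argument that the pullback $n':P \rightarrowtail M$ is an isomorphism: it is precisely the \emph{smallness} of $m$ among $\N$-subobjects of $B$ through which $f$ factors that makes $n'$ split, and hence iso, which is what produces the diagonal fill-in.
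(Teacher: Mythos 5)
Your proposal is correct and follows essentially the same route as the paper's proof: form $m$ as the $\V$-intersection of the $\N$-morphisms through which $f$ factors, reduce $\F = \N^{\uparrow_\V}$ to ordinary orthogonality $\N^{\uparrow}$ via \bref{prop:enr_vs_ord_orth_for_pref_sys}, and solve a lifting problem by pulling back along the bottom leg and using minimality of the intersection to show the pullback of $n$ is split (hence iso). The only cosmetic difference is that the paper exhibits the splitting explicitly as the projection $\pi_{m\cdot n'}$ of the intersection, which is exactly the morphism your minimality argument produces.
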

\begin{proof}
Let $g:C \rightarrow B$ in $\B$.  Let $m:M \rightarrow B$ be the $\V$-intersection of all $n:N_n \rightarrow B$ in $\N$ through which $g$ factors --- i.e. for which there exists a (necessarily unique) $g_n:C \rightarrow N_n$ with $n \cdot g_n = g$.  Then $m \in \N$ by \bref{prop:lowerclass_stab_fp}.  The family of all $g_n:C \rightarrow N_n$ induces a unique $f:C \rightarrow M$ such that $m \cdot f = g$ and $\pi_n \cdot f = g_n$ for all $n$, where the morphisms $\pi_n:M \rightarrow N_n$ present $m$ as a $\V$-intersection of the morphisms $n$.

Hence we have a factorization $m \cdot f$ of $g$ with $m \in \N$, and it remains to show that $f \in \F$.  But since $\B$ is cotensored, we have by \bref{prop:enr_vs_ord_orth_for_pref_sys} that $\F = \N^{\uparrow_\V} = \N^{\uparrow}$, so it suffices to show that $f \in \N^\uparrow$.  Let $i:B_1 \rightarrow B_2$ lie in $\N$.  For any $h,k$ such that $i \cdot k = h \cdot f$ we obtain a commutative diagram
$$
\xymatrix{
  &                       & C \ar@/_2ex/[dddll]_g \ar@/_/[ddl]^{f} \ar@{-->}[d]|w \ar@/^/[ddr]^k & \\
  &                       & P \ar[dl]^{j} \ar[dr]_{h'}                                           & \\
  & M \ar[dl]^m \ar[dr]_h &                                                                      & B_1 \ar[dl]^i \\
B &                       & B_2                                                                  &  
}
$$
in which the diamond with top vertex $P$ is a $\V$-pullback and $w$ is the unique morphism into this pullback making the diagram commute.  Since $i \in \N$, the $\V$-pullback $j$ of $i$ lies in $\N$ by \bref{thm:lowerclass_stab_pb}.  Hence $m \cdot j \in \N$ since $\N$ is closed under composition \pbref{thm:pref_iso_comp_canc}, so $m \cdot j$ is a morphism in $\N$ through which $g$ factors, so from the definition of $m$ as a $\V$-intersection it follows that $j$ is iso (since $m \cdot j \cdot \pi_{m \cdot j} = m$ and hence the mono $j$ is a retraction of $\pi_{m \cdot j}$ and so is iso).  Letting $u := h' \cdot j^{-1}$, we claim that $u$ is the unique morphism with $u \cdot f = k$ and $i \cdot u = h$.  Indeed, $u \cdot f = h' \cdot j^{-1} \cdot f = h' \cdot w = k$, and $i \cdot u = i \cdot h' \cdot j^{-1} = h \cdot j \cdot j^{-1} = h$, and the uniqueness of $u$ is immediate since $i$ is mono.
\end{proof}

\begin{CorSub} \label{thm:enr_factn_sys_det_lowerclass_strmonos}
Let $\B$ be a $\V$-finitely-well-complete cotensored $\V$-category, and let $\Sigma$ be a class of morphisms in $\B$.  Let $\N := \Sigma^{\downarrow_\V} \cap \StrMono_\V\B$.  Then $(\N^{\uparrow_\V},\N)$ is a \mbox{$\V$-factorization-system} on $\B$.
\end{CorSub}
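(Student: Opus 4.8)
The plan is to obtain this as an application of Proposition \bref{prop:enr_factn_sys_det_lowercls_monos}, after first recognizing $\N$ as the right class of a single $\V$-prefactorization-system. Since $\B$ is $\V$-finitely-well-complete it has all finite $\V$-limits, in particular $\V$-kernel-pairs, so Proposition \bref{thm:strmono_lowerclass_epi_upperclass} gives $\StrMono_\V\B = (\Epi_\V\B)^{\downarrow_\V}$. Hence
$$\N \;=\; \Sigma^{\downarrow_\V}\cap\StrMono_\V\B \;=\; \Sigma^{\downarrow_\V}\cap(\Epi_\V\B)^{\downarrow_\V} \;=\; (\Sigma\cup\Epi_\V\B)^{\downarrow_\V}\;,$$
the last equality being immediate from the definition of $(-)^{\downarrow_\V}$. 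Writing $\sH := \Sigma\cup\Epi_\V\B$ we then have $\N = \sH^{\downarrow_\V}$, hence $\N^{\uparrow_\V} = \sH^{\downarrow_\V\uparrow_\V}$, and therefore $(\N^{\uparrow_\V},\N) = (\sH^{\downarrow_\V\uparrow_\V},\sH^{\downarrow_\V})$ is a $\V$-prefactorization-system on $\B$, every pair of this shape being one.

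It then remains to verify the remaining hypotheses of Proposition \bref{prop:enr_factn_sys_det_lowercls_monos} with $(\F,\N) := (\N^{\uparrow_\V},\N)$. First, $\B$ is cotensored by assumption. Second, $\N\subs\StrMono_\V\B\subs\Mono_\V\B$. Third, since $\B$ is $\V$-finitely-well-complete it has $\V$-intersections of arbitrary class-indexed families of $\V$-strong-monos, and every $\N$-morphism is a $\V$-strong-mono, so $\B$ has $\V$-intersections of $\N$-morphisms. Fourth, $\B$ has all finite $\V$-limits and so in particular $\V$-pullbacks of $\N$-morphisms along arbitrary morphisms exist. Proposition \bref{prop:enr_factn_sys_det_lowercls_monos} now applies and shows that $(\N^{\uparrow_\V},\N)$ is a $\V$-factorization-system on $\B$.

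I do not expect a genuine obstacle: the single non-formal ingredient is the identification $\StrMono_\V\B = (\Epi_\V\B)^{\downarrow_\V}$ from Proposition \bref{thm:strmono_lowerclass_epi_upperclass}, which (together with the trivial fact that $(-)^{\downarrow_\V}$ converts unions to intersections) lets us present $\N$ as $\sH^{\downarrow_\V}$; everything else is bookkeeping against the definition of $\V$-finitely-well-complete category and the hypotheses of Proposition \bref{prop:enr_factn_sys_det_lowercls_monos}. I would nonetheless note in passing that the proof of Proposition \bref{prop:enr_factn_sys_det_lowercls_monos} also quietly uses that $\N$ is closed under composition and under $\V$-pullbacks along arbitrary morphisms, and that $\V$-fiber-products of $\N$-morphisms again lie in $\N$; all three are automatic once $(\N^{\uparrow_\V},\N)$ is known to be a $\V$-prefactorization-system, by \bref{thm:pref_iso_comp_canc}, \bref{thm:lowerclass_stab_pb}, and \bref{prop:lowerclass_stab_fp} respectively.
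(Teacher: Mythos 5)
Your proof is correct and follows essentially the same route as the paper's: identify $\N$ as $(\Sigma\cup\Epi_\V\B)^{\downarrow_\V}$ via Proposition \bref{thm:strmono_lowerclass_epi_upperclass}, conclude that $(\N^{\uparrow_\V},\N)$ is a $\V$-prefactorization-system, and invoke Proposition \bref{prop:enr_factn_sys_det_lowercls_monos}. Your explicit verification of that proposition's hypotheses (which the paper leaves implicit) is accurate.
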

\begin{proof}
Let $\M := \StrMono_\V\B$.  By \bref{thm:strmono_lowerclass_epi_upperclass}, $\M = (\Epi_\V\B)^{\downarrow_\V}$, so
$$\N = \Sigma^{\downarrow_\V} \cap \M = \Sigma^{\downarrow_\V} \cap (\Epi_\V\B)^{\downarrow_\V} = (\Sigma \cup \Epi_\V\B)^{\downarrow_\V}\;.$$
Hence, letting $\sH := \Sigma \cup \Epi_\V\B$, $(\N^{\uparrow_\V},\N) = (\sH^{\downarrow_\V\uparrow_\V},\sH^{\downarrow_\V})$ is a $\V$-prefactorization-system on $\B$.  Invoking \bref{prop:enr_factn_sys_det_lowercls_monos} w.r.t $(\N^{\uparrow_\V},\N)$, the needed conclusion is obtained.
\end{proof}

\begin{CorSub} \label{thm:fwc_plus_cokerpairs_implies_epistrmono_factn_sys}
Let $\B$ be a $\V$-finitely-well-complete cotensored $\V$-category with $\V$-cokernel-pairs.  Then $(\Epi_\V\B,\StrMono_\V\B)$ is a $\V$-factorization-system on $\B$.
\end{CorSub}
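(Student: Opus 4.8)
The plan is to deduce this immediately from \bref{thm:enr_factn_sys_det_lowerclass_strmonos} together with \bref{thm:strmono_lowerclass_epi_upperclass}, since the real content — existence of factorizations, and the fact that the epi/strong-mono classes are orthogonality-complementary — is already isolated in those two results. First I would observe that, being $\V$-finitely-well-complete, $\B$ has all finite $\V$-limits and in particular $\V$-kernel-pairs; by hypothesis it also has $\V$-cokernel-pairs. Hence \bref{thm:strmono_lowerclass_epi_upperclass} applies and yields both identities $\StrMono_\V\B = (\Epi_\V\B)^{\downarrow_\V}$ and $\Epi_\V\B = (\StrMono_\V\B)^{\uparrow_\V}$.

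Next I would apply \bref{thm:enr_factn_sys_det_lowerclass_strmonos} with the class $\Sigma := \Epi_\V\B$ (the empty class would serve equally well, since any $\Sigma$ with $\Sigma^{\downarrow_\V} \sups \StrMono_\V\B$ suffices). With this choice, $\N := \Sigma^{\downarrow_\V} \cap \StrMono_\V\B = (\Epi_\V\B)^{\downarrow_\V} \cap \StrMono_\V\B = \StrMono_\V\B$ by the first identity from the preceding paragraph. Thus \bref{thm:enr_factn_sys_det_lowerclass_strmonos} tells us that $\bigl((\StrMono_\V\B)^{\uparrow_\V},\StrMono_\V\B\bigr)$ is a $\V$-factorization-system on $\B$; this is where the hypotheses of $\V$-f.w.c. (finite $\V$-limits plus $\V$-intersections of $\V$-strong-monos) and of cotensoredness are being consumed.

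Finally, invoking the second identity $\Epi_\V\B = (\StrMono_\V\B)^{\uparrow_\V}$, this $\V$-factorization-system is precisely $(\Epi_\V\B,\StrMono_\V\B)$, which is the assertion.

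There is essentially no obstacle beyond this bookkeeping: all the substantive work lives in \bref{thm:enr_factn_sys_det_lowerclass_strmonos} and \bref{thm:strmono_lowerclass_epi_upperclass}. The one point meriting a sentence of care is checking that each ambient hypothesis those results require — finite $\V$-limits, arbitrary $\V$-intersections of $\V$-strong-monos, cotensoredness, and $\V$-cokernel-pairs — is in force here, which it is by the standing assumptions on $\B$ (the first two packaged into $\V$-finite-well-completeness via \bref{def:enr_fwc}).
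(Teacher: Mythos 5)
Your proof is correct and follows essentially the same route as the paper: both deduce the result from \bref{thm:enr_factn_sys_det_lowerclass_strmonos} together with \bref{thm:strmono_lowerclass_epi_upperclass}, the only (immaterial) difference being that the paper takes $\Sigma = \Iso\B$ where you take $\Sigma = \Epi_\V\B$.
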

\begin{proof}
Taking $\Sigma = \Iso \B$ in \bref{thm:enr_factn_sys_det_lowerclass_strmonos}, we find that $((\StrMono_\V\B)^{\uparrow_\V},\StrMono_\V\B)$ is a \linebreak[4] \hbox{$\V$-factorization-system}, but by \bref{thm:strmono_lowerclass_epi_upperclass} we have that $(\StrMono_\V\B)^{\uparrow_\V} = \Epi_\V\B$.
\end{proof}

\begin{CorSub} \label{thm:str_img_factns_for_fwc_base}
Suppose that $\V$ is finitely well-complete.  Then $(\Epi_\V\uV,\StrMono_\V\uV) = (\Epi\V,\StrMono\V)$ is a $\V$-factorization-system on $\uV$.
\end{CorSub}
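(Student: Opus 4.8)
The plan is to deduce the statement from \bref{thm:enr_factn_sys_det_lowerclass_strmonos}, proceeding much as in the proof of \bref{thm:fwc_plus_cokerpairs_implies_epistrmono_factn_sys} but rerouting the final identification of the left class so as not to require $\V$-cokernel-pairs, which finite well-completeness of $\V$ does not supply. First I would assemble the relevant facts about $\uV$: since $\V$ is finitely well-complete, $\uV$ is $\V$-finitely-well-complete by \bref{thm:base_fwc}; since $\V$ is closed, $\uV$ is cotensored, with cotensor given by the internal hom $\uV(-,-)$; and by \bref{thm:ord_str_mono_in_base_is_enriched} we have $\Epi_\V\uV = \Epi\V$ and $\StrMono_\V\uV = \StrMono\V$.

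Next I would apply \bref{thm:enr_factn_sys_det_lowerclass_strmonos} to the $\V$-finitely-well-complete cotensored $\V$-category $\uV$ with $\Sigma := \Iso\uV$. Since every morphism of $\uV$ is $\V$-orthogonal to every isomorphism of $\uV$ --- the square \eqref{eqn:orth_pb} then having a pair of opposite sides that are isomorphisms, hence being a pullback by \bref{prop:pb_canc} --- we have $\Sigma^{\downarrow_\V} = \mor\uV$, so that $\N := \Sigma^{\downarrow_\V} \cap \StrMono_\V\uV = \StrMono_\V\uV$. Thus \bref{thm:enr_factn_sys_det_lowerclass_strmonos} yields that $\bigl((\StrMono_\V\uV)^{\uparrow_\V},\,\StrMono_\V\uV\bigr)$ is a $\V$-factorization-system on $\uV$, whose right class is $\StrMono\V$.

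It remains to identify the left class $(\StrMono_\V\uV)^{\uparrow_\V}$ with $\Epi_\V\uV = \Epi\V$, and here I would pass from enriched to ordinary orthogonality. Since the pair just obtained is a $\V$-prefactorization-system on the cotensored $\V$-category $\uV$, \bref{prop:enr_vs_ord_orth_for_pref_sys}(2) gives $(\StrMono_\V\uV)^{\uparrow_\V} = (\StrMono_\V\uV)^{\uparrow}$, the ordinary orthogonal complement computed in the underlying ordinary category of $\uV$, which is $\V$; and $(\StrMono_\V\uV)^{\uparrow} = (\StrMono\V)^{\uparrow}$. Finally, since $\V$ is in particular finitely complete, the classical fact that the epimorphisms and strong monomorphisms of a finitely complete category constitute a prefactorization system (\cite{FrKe} 2.1.4; cf.\ \bref{rem:ord_epi_strm_prefactn}) yields $(\StrMono\V)^{\uparrow} = \Epi\V$. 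Hence the $\V$-factorization-system obtained above is $(\Epi_\V\uV,\StrMono_\V\uV) = (\Epi\V,\StrMono\V)$, as desired.

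The only delicate point is the passage from $\V$-orthogonality to ordinary orthogonality in the last step: cotensoredness of $\uV$ (through \bref{thm:enr_orth_from_ord}, which underlies \bref{prop:enr_vs_ord_orth_for_pref_sys}) here takes over the role played by $\V$-cokernel-pairs in \bref{thm:strmono_lowerclass_epi_upperclass}(2), and this is precisely why the statement is established separately rather than cited as an immediate instance of \bref{thm:fwc_plus_cokerpairs_implies_epistrmono_factn_sys}. The remaining verifications --- that $\Sigma^{\downarrow_\V} = \mor\uV$ and that $\uV$ satisfies the hypotheses of \bref{thm:enr_factn_sys_det_lowerclass_strmonos} --- are routine consequences of results already established.
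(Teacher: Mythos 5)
Your proof is correct and follows essentially the same route as the paper's: both obtain the $\V$-factorization-system $((\StrMono_\V\uV)^{\uparrow_\V},\StrMono_\V\uV)$ by applying \bref{thm:enr_factn_sys_det_lowerclass_strmonos} with $\Sigma = \Iso$ to the cotensored, $\V$-finitely-well-complete category $\uV$ \pbref{thm:base_fwc}, and both then identify the left class as $\Epi\V = \Epi_\V\uV$ via \bref{thm:ord_str_mono_in_base_is_enriched} and the ordinary prefactorization system of \bref{rem:ord_epi_strm_prefactn}. The only immaterial difference is that you invoke \bref{prop:enr_vs_ord_orth_for_pref_sys} to get the equality $(\StrMono_\V\uV)^{\uparrow_\V} = (\StrMono\V)^{\uparrow}$ outright, whereas the paper uses the inclusion $(\StrMono\V)^{\uparrow_\V} \subs (\StrMono\V)^{\uparrow}$ together with the trivial reverse containment coming from the definition of $\V$-strong-mono.
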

\begin{proof}
$\uV$ is $\V$-finitely-well-complete by \bref{thm:base_fwc} and is also cotensored, so as in the proof of \bref{thm:fwc_plus_cokerpairs_implies_epistrmono_factn_sys} we find that $((\StrMono_\V\uV)^{\uparrow_\V},\StrMono_\V\uV)$ is a $\V$-factorization-system on $\uV$.  By \bref{thm:ord_str_mono_in_base_is_enriched}, we have that $(\Epi_\V\uV,\StrMono_\V\uV) = (\Epi\V,\StrMono\V)$, and by \bref{rem:ord_epi_strm_prefactn}, \linebreak[4] $(\Epi\V,\StrMono\V)$ is an (ordinary) prefactorization system on $\V$.  Hence $(\StrMono_\V\uV)^{\uparrow_\V} = (\StrMono\V)^{\uparrow_\V} \subs (\StrMono\V)^\uparrow = \Epi\V = \Epi_\V\uV$, so in fact $(\StrMono_\V\uV)^{\uparrow_\V} = \Epi_\V\uV$.
\end{proof}

\begin{DefSub} \label{def:image}
Let $\B$ be a $\V$-category.
\begin{enumerate}
\item We say that $\B$ \textit{has $\V$-strong image factorizations} if $(\Epi_\V\B,\StrMono_\V\B)$ is a $\V$-factorization-system on $\B$.
\item In the present text, if $\B$ has $\V$-strong image factorizations, then we shall call the second factor of the $(\Epi_\V\B,\StrMono_\V\B)$-factorization of a given morphism $f$ the \textit{image} of $f$ (provided the choice of $\V$ is clear from the context).
\end{enumerate}
\end{DefSub}

\begin{ExaSub}\label{exa:rmod_factn_systems}
Let $R$ be a commutative ring object in a countably-complete and \linebreak[4] -cocomplete, finitely well-complete cartesian closed category $\X$, and let $\sL$ be the symmetric monoidal closed category $\RMod(\X)$ of $R$-module objects in $\X$ \pbref{exa:rmodx_smc_and_xenr}.  For suitable choices $\X, R$, we obtain as $\sL$ the categories of convergence vector spaces and smooth vector spaces \pbref{exa:rmodx_smc_and_xenr}.  As noted in \pbref{exa:rmod_fwc}, $\sL$ is finitely well-complete; hence the cotensored $\sL$-category $\uL$ is $\sL$-finitely-well-complete \pbref{thm:base_fwc}.
\begin{enumerate}
\item By \bref{thm:str_img_factns_for_fwc_base}, $\uL$ has $\sL$-strong image factorizations, so $(\Epi\sL,\StrMono\sL)$ is an \linebreak[4] $\sL$-factorization-system on $\uL$.
\item Letting $\Sigma = \Sigma_H$ be the class of all morphisms $h:E_1 \rightarrow E_2$ in $\sL$ such that $\uL(h,R):\uL(E_2,R) \rightarrow \uL(E_1,R)$ is an isomorphism in $\sL$, we obtain by \bref{thm:enr_factn_sys_det_lowerclass_strmonos} an $\sL$-factorization-system on $\uL$ written as $(\Dense{\Sigma_H},\ClEmb{\Sigma_H})$, which we employ in Chapters \bref{ch:nat_acc_dist}, \bref{ch:dcompl_vint}.  We call the elements of $\Dense{\Sigma_H}$ \textit{functionally dense} morphisms and those of $\ClEmb{\Sigma_H}$ \textit{functionally closed embeddings}; see \bref{def:func_cl_dens}.
\end{enumerate}
\end{ExaSub}

\section{Orthogonality of morphisms and objects in the presence of an adjunction} \label{sec:orth_pres_adj}

The following notion of orthogonality in the enriched context was employed in \cite{Day:AdjFactn}.

\begin{DefSub} \label{def:orth_subcat_sigma}
\emptybox
\begin{enumerate}
\item For a morphism $f:A_1 \rightarrow A_2$ in a $\V$-category $\B$ and an object $B$ in $\B$, we say that $f$ is \textit{$\V$-orthogonal} to $B$, written $f \bot_\V B$, if \mbox{$\B(f,B):\B(A_2,B) \rightarrow \B(A_1,B)$} is an isomorphism in $\V$.
\item Given a class of morphisms $\Sigma$ in a $\V$-category $\B$, we let $\Sigma^{\bot_\V} := \{B \in \ob\B \:|\; \forall f \in \Sigma \::\: f \bot_\V B\}$.  We let $\B_\Sigma$ be the full sub-$\V$-category of $\B$  whose objects are those in $\Sigma^{\bot_\V}$.
\item Given a class $\sO$ of objects in a $\V$-category $\B$, we let $\sO^{\top_\V} := \{f \in \mor\B \:|\: \forall B \in \sO \::\: f \bot_\V B\}$.
\item Given a functor $P:\B \rightarrow \C$, we denote by $\Sigma_P$ the class of all morphisms in $\B$ inverted by $P$ (i.e. sent to an isomorphism in $\C$).
\end{enumerate}
\end{DefSub}

\begin{RemSub}
If $\B$ has a $\V$-terminal object $1$, then it is easy to show that $f \bot_\V B$ iff $f \downarrow_\V !_B$, where $!_B:B \rightarrow 1$.  One can more generally consider orthogonality of a morphism to a source or cone, thus generalizing both `morphism-morphism' and `morphism-object' orthogonality even in the absence of a terminal object \cite{He:TopFunc,Th:FactConesFunc}.
\end{RemSub}

\begin{PropSub}\label{thm:orth_to_all_is_iso}
For a $\V$-category $\B$, $(\ob\B)^{\top_\V} = \Iso \B = (\mor\B)^{\uparrow_\V}$.
\end{PropSub}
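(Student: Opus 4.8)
The plan is to prove the two equalities simultaneously by establishing the cycle of inclusions
$$(\ob\B)^{\top_\V} \subs \Iso\B \subs (\mor\B)^{\uparrow_\V} \subs \Iso\B \subs (\ob\B)^{\top_\V},$$
from which all four classes coincide. Two of these inclusions come for free from the fact that enriched hom-functors preserve isomorphisms. Indeed, if $f$ is an isomorphism in $\B$ then $\B(f,B)$ is an isomorphism in $\V$ for every object $B$, so $f \bot_\V B$, giving $\Iso\B \subs (\ob\B)^{\top_\V}$; moreover, for any morphism $m : B_1 \rightarrow B_2$ the two vertical legs $\B(f,B_1)$ and $\B(f,B_2)$ of the square \eqref{eqn:orth_pb} are then isomorphisms, so that square has a pair of opposite sides that are isos and hence is a pullback \pbref{prop:pb_canc}, whence $f \downarrow_\V m$ and therefore $\Iso\B \subs (\mor\B)^{\uparrow_\V}$.

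The inclusion $(\mor\B)^{\uparrow_\V} \subs \Iso\B$ is also quick: a morphism $f$ that is $\V$-orthogonal to \emph{every} morphism of $\B$ is in particular $\V$-orthogonal to itself, so $f$ is an isomorphism by \bref{thm:orth_yielding_retr_and_iso} 3.

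The remaining inclusion $(\ob\B)^{\top_\V} \subs \Iso\B$ is the one step requiring any care. Given $f : A_1 \rightarrow A_2$ with $f \bot_\V B$ for all $B$, I would test against the choices $B = A_1$ and $B = A_2$. Passing to ordinary hom-classes (equivalently, using that $\V$-orthogonality implies ordinary orthogonality), the isomorphism $\B(f,A_1)$ in $\V$ yields a bijection $\B(A_2,A_1) \rightarrow \B(A_1,A_1)$ sending $k$ to $k \cdot f$; the preimage of $1_{A_1}$ is a morphism $g : A_2 \rightarrow A_1$ with $g \cdot f = 1_{A_1}$. Likewise $\B(f,A_2)$ being an isomorphism makes the map $k \mapsto k \cdot f$ injective from $\B(A_2,A_2)$ to $\B(A_1,A_2)$, and since $(f \cdot g) \cdot f = f = 1_{A_2} \cdot f$ we obtain $f \cdot g = 1_{A_2}$; hence $f$ is an isomorphism. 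Chaining the four inclusions then yields $(\ob\B)^{\top_\V} = \Iso\B = (\mor\B)^{\uparrow_\V}$. I expect no real obstacle here: the only point of substance is the elementary passage from ``$\B(f,B)$ is an isomorphism in $\V$'' to the corresponding statement about ordinary hom-classes.
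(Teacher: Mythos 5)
Your proof is correct. Two of the three nontrivial inclusions match the paper exactly: $\Iso\B \subs (\ob\B)^{\top_\V} \cap (\mor\B)^{\uparrow_\V}$ is immediate (the paper says ``clearly''; your appeal to \pbref{prop:pb_canc} for the orthogonality half is the right justification), and $(\mor\B)^{\uparrow_\V} \subs \Iso\B$ via self-orthogonality and \bref{thm:orth_yielding_retr_and_iso}~3 is precisely the paper's argument. Where you diverge is the remaining inclusion $(\ob\B)^{\top_\V} \subs \Iso\B$. The paper observes that the components $\B(h,B)$ assemble into an isomorphism of $\V$-functors $\B(h,-):\B(B_2,-) \rightarrow \B(B_1,-)$ and then invokes the weak Yoneda lemma (\cite{Ke:Ba}~1.9), by which the functor $Y:\B^\op \rightarrow \VCAT(\B,\uV)$, $YB = \B(B,-)$, is fully faithful and hence reflects isomorphisms. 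You instead descend to underlying hom-classes (legitimate, since applying $\V(I,-)$ to the isomorphism $\B(f,B)$ yields a bijection which is precomposition by $f$) and run the elementary representability argument: test at $B = A_1$ to produce a left inverse $g$, then use injectivity at $B = A_2$ together with $(f \cdot g)\cdot f = 1_{A_2}\cdot f$ to conclude $f \cdot g = 1_{A_2}$. This is really the ordinary Yoneda argument unwound by hand; it buys self-containedness and avoids citing the enriched Yoneda machinery, at the cost of a slightly longer verification. Both are sound, and they prove the same statement with the same logical skeleton.
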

\begin{proof}
$\Iso\B$ is clearly included in both the rightmost and leftmost classes.  Also, the inclusion $(\mor\B)^{\uparrow_\V} \subs \Iso\B$ follows from \bref{thm:orth_yielding_retr_and_iso}.  Lastly, if $h:B_1 \rightarrow B_2$ lies in $(\ob\B)^{\top_\V}$, then the $\V$-natural transformation $\B(h,-):\B(B_2,-) \rightarrow \B(B_1,-)$ is an isomorphism; but by the weak Yoneda lemma (\cite{Ke:Ba} 1.9), the (ordinary) functor \hbox{$Y:\B^\op \rightarrow \VCAT(\B,\uV)$} given by $YB = \B(B,-)$ is fully-faithful, so $h$ is iso.
\end{proof}

\begin{PropSub} \label{thm:orth_of_morphs_in_orth_subcat}
Let $\B$ be a $\V$-category and $\Sigma$ a class of morphisms in $\B$.  Then for any morphisms $e:A_1 \rightarrow A_2$ in $\Sigma$ and $m:B_1 \rightarrow B_2$ in $\B_\Sigma$, we have that $e \downarrow_\V m$ in $\B$.
\end{PropSub}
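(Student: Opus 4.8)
The plan is to unwind the two definitions involved and then invoke the elementary fact that a commutative square with a parallel pair of isomorphic sides is automatically a pullback.

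First I would recall that, by \bref{def:enr_orth_fact}, proving $e \downarrow_\V m$ for the morphisms $e : A_1 \to A_2$ in $\Sigma$ and $m : B_1 \to B_2$ in $\B_\Sigma$ amounts precisely to proving that the commutative square \eqref{eqn:orth_pb} is a pullback in $\V$. In that square the two horizontal arrows are $\B(A_2,m)$ and $\B(A_1,m)$, while the two vertical arrows are $\B(e,B_1)$ and $\B(e,B_2)$.

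Next I would observe that, since $e \in \Sigma$ and both $B_1$ and $B_2$ are objects of $\B_\Sigma = \Sigma^{\bot_\V}$ (\bref{def:orth_subcat_sigma}), we have $e \bot_\V B_1$ and $e \bot_\V B_2$. By the definition of $\bot_\V$ this says exactly that the two morphisms $\B(e,B_1):\B(A_2,B_1) \to \B(A_1,B_1)$ and $\B(e,B_2):\B(A_2,B_2) \to \B(A_1,B_2)$ are isomorphisms in $\V$.

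Finally, the square \eqref{eqn:orth_pb} is thus a commutative square in $\V$ whose vertical pair of opposite sides consists of isomorphisms, and such a square is a pullback by the last assertion of \bref{prop:pb_canc}. Hence $e \downarrow_\V m$, as required. I do not anticipate any genuine obstacle: the whole argument is a direct consequence of the definitions of $\downarrow_\V$ and $\bot_\V$ together with the stability property of pullbacks already recorded in \bref{sec:stab_canc_cart_arr_sq}; the only point requiring a modicum of care is matching the orientation of the square, so that it is the \emph{vertical} rather than the horizontal pair of sides that is seen to be invertible.
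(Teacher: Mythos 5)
Your proof is correct and is essentially identical to the paper's: both observe that $e \bot_\V B_1$ and $e \bot_\V B_2$ make the two vertical sides $\B(e,B_1)$ and $\B(e,B_2)$ of the square \eqref{eqn:orth_pb} isomorphisms, and then invoke the fact from \bref{prop:pb_canc} that a commutative square with a pair of opposite sides invertible is a pullback.
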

\begin{proof}
Since $e \bot_\V B_1$ and $e \bot_\V B_2$, the left and right sides of the commutative square \eqref{eqn:orth_pb} are isomorphisms, so the square is a pullback.
\end{proof}

In the non-enriched context, the first of the following equivalences appears in the proof of Lemma 4.2.1 of \cite{FrKe}, and variants of both equivalences are given in \cite{Pum}.

\begin{PropSub}\label{prop:orth_and_adj}
Let $P \dashv Q:\C \rightarrow \B$ be a $\V$-adjunction, and let $f:B_1 \rightarrow B_2$ be a morphism in $\B$.
\begin{enumerate}
\item For any morphism $g:C_1 \rightarrow C_2$ in $\C$,
$$Pf \downarrow_\V g \;\;\Longleftrightarrow\;\; f \downarrow_\V Qg\;.$$
\item For any object $C$ of $\C$,
$$Pf \bot_\V C \;\;\Longleftrightarrow\;\; f \bot_\V QC\;.$$
\end{enumerate}
\end{PropSub}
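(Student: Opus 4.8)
The plan is to reduce both statements to the defining adjunction isomorphism $\C(PB,C) \cong \B(B,QC)$, applied naturally in all relevant variables, and then observe that the requisite square (or arrow) is sent by this isomorphism to the corresponding square (or arrow) on the other side. For part 1, I would consider the commutative square \eqref{eqn:orth_pb} whose being a pullback expresses $f \downarrow_\V Qg$, namely the square with corners $\B(B_2,QC_1)$, $\B(B_2,QC_2)$, $\B(B_1,QC_1)$, $\B(B_1,QC_2)$ and maps induced by $f$ and $Qg$. The $\V$-natural adjunction isomorphism $\B(B,Q-) \cong \C(PB,-)$, applied in the object variable and also naturally in $B$, identifies this square with the square with corners $\C(PB_2,C_1)$, $\C(PB_2,C_2)$, $\C(PB_1,C_1)$, $\C(PB_1,C_2)$ and maps induced by $Pf$ and $g$ — which is exactly the square expressing $Pf \downarrow_\V g$. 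Since a square is a pullback if and only if any square isomorphic to it is a pullback, the two orthogonality conditions are equivalent.

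For part 2, I would argue similarly but more simply: $Pf \bot_\V C$ asserts that $\C(Pf,C):\C(PB_2,C) \rightarrow \C(PB_1,C)$ is an isomorphism in $\V$, while $f \bot_\V QC$ asserts that $\B(f,QC):\B(B_2,QC) \rightarrow \B(B_1,QC)$ is an isomorphism. The adjunction isomorphism, natural in $B$, supplies a commutative square in $\V$ whose two horizontal sides are $\C(Pf,C)$ and $\B(f,QC)$ and whose vertical sides are isomorphisms; hence one horizontal side is iso if and only if the other is. Alternatively, one can deduce part 2 from part 1 using the remark preceding this proposition together with the fact that $P$ preserves the $\V$-terminal object when it exists; but the direct argument avoids any hypothesis on terminal objects and is cleaner, so I would present that.

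The only genuinely delicate point is bookkeeping: making sure the adjunction isomorphism is invoked $\V$-naturally in the hom-variable \emph{and} in the base object $B$ simultaneously, so that the comparison of squares is a comparison in the arrow $\V$-category $[\Two,\V]$ (as in \bref{exa:pb_cart}) rather than merely an objectwise bijection. This is exactly the kind of naturality packaged in \bref{prop:adjn_via_radj_and_unit} and the discussion of $\V$-adjunctions in \bref{sec:enr_adj}: the composite $\C(P-,-) \cong \B(-,Q-)$ is a $\V$-natural isomorphism of $\V$-functors $\B^{\op} \otimes \C \rightarrow \uV$, so applying it to the morphisms $f$ in $\B^{\op}$ and $g$ (resp. $C$) in $\C$ yields an isomorphism of squares (resp. of arrows) in $\V$. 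Once this naturality is in hand, the rest is the elementary observation that pullback-hood and iso-hood are invariant under isomorphism of diagrams, so I do not expect any further obstacle.
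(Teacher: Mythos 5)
Your proposal is correct and follows essentially the same route as the paper: the paper likewise observes that the $\V$-natural adjunction isomorphism identifies the orthogonality square for $(Pf,g)$ with that for $(f,Qg)$, and for part 2 simply notes that $\C(Pf,C) \cong \B(f,QC)$ in the arrow category $[\Two,\V]$. Your extra care about naturality in both variables is exactly the bookkeeping the paper leaves implicit.
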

\begin{proof}
1.  Via the given $\V$-adjunction, the commutative diagram
$$
\xymatrix {
\C(PB_2,C_1) \ar[rr]^{\C(PB_2,g)} \ar[d]_{\C(Pf,C_1)} & & \C(PB_2,C_2) \ar[d]^{\C(Pf,C_2)} \\
\C(PB_1,C_1) \ar[rr]^{\C(PB_1,g)}                     & & \C(PB_1,C_2)
}
$$
is isomorphic to the commutative diagram
$$
\xymatrix {
\B(B_2,QC_1) \ar[rr]^{\B(B_2,Qg)} \ar[d]_{\B(f,QC_1)} & & \B(B_2,QC_2) \ar[d]^{\B(f,QC_2)} \\
\B(B_1,QC_1) \ar[rr]^{\B(B_1,Qg)}                     & & {\B(B_1,QC_2)\;.}
}
$$
2.  $\C(Pf,C) \cong \B(f,QC)$ in the arrow category $[\Two,\V]$.
\end{proof}

In the non-enriched setting, the first of the following equations is noted in \cite{CHK} 3.3 and specializes \cite{FrKe} 4.2.1.

\begin{CorSub} \label{thm:charns_of_sigma_p}
Let $P \dashv Q:\C \rightarrow \B$ be a $\V$-adjunction.  Then 
$$(Q(\mor\C))^{\uparrow_\V} = \Sigma_P = (Q(\ob\C))^{\top_\V}\;.$$
Hence, in particular, $(\Sigma_P,(\Sigma_P)^{\downarrow_\V})$ is a $\V$-prefactorization-system on $\B$.
\end{CorSub}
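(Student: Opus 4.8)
The plan is to deduce both characterizations of $\Sigma_P$ from the transfer of orthogonality across the $\V$-adjunction \pbref{prop:orth_and_adj}, together with the fact that a morphism orthogonal to every morphism (resp.\ to every object) is an isomorphism \pbref{thm:orth_to_all_is_iso}.

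First I would establish $(Q(\mor\C))^{\uparrow_\V} = \Sigma_P$. For a morphism $f$ of $\B$, having $f \in (Q(\mor\C))^{\uparrow_\V}$ means precisely that $f \downarrow_\V Qg$ for every morphism $g$ of $\C$; by \bref{prop:orth_and_adj}(1) this is equivalent to $Pf \downarrow_\V g$ for every morphism $g$ of $\C$, that is, to $Pf \in (\mor\C)^{\uparrow_\V}$. By \bref{thm:orth_to_all_is_iso} we have $(\mor\C)^{\uparrow_\V} = \Iso\C$, so the condition holds iff $Pf$ is an isomorphism, i.e.\ iff $f \in \Sigma_P$ by the definition of that class \pbref{def:orth_subcat_sigma}. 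The identity $(Q(\ob\C))^{\top_\V} = \Sigma_P$ follows by the same chain of equivalences with objects in place of morphisms: $f \in (Q(\ob\C))^{\top_\V}$ iff $f \bot_\V QC$ for all $C \in \ob\C$ iff, by \bref{prop:orth_and_adj}(2), $Pf \bot_\V C$ for all $C \in \ob\C$ iff $Pf \in (\ob\C)^{\top_\V} = \Iso\C$ (again by \bref{thm:orth_to_all_is_iso}) iff $f \in \Sigma_P$.

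For the final assertion, I would invoke the general observation that for any class $\sH$ of morphisms of a $\V$-category $\B$ the pair $(\sH^{\uparrow_\V},\sH^{\uparrow_\V\downarrow_\V})$ is a $\V$-prefactorization-system on $\B$, applied with $\sH := Q(\mor\C)$. Since $\sH^{\uparrow_\V} = (Q(\mor\C))^{\uparrow_\V} = \Sigma_P$ by the first part, this yields that $(\Sigma_P,(\Sigma_P)^{\downarrow_\V})$ is a $\V$-prefactorization-system on $\B$.

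There is no genuine obstacle here, the statement being a direct combination of \bref{prop:orth_and_adj} and \bref{thm:orth_to_all_is_iso}; the only points requiring a little care are that the equivalences of \bref{prop:orth_and_adj} must be applied uniformly over all $g \in \mor\C$ (resp.\ all $C \in \ob\C$) before passing to \bref{thm:orth_to_all_is_iso}, and that the condition ``$Pf$ is an isomorphism'' is, by definition, exactly membership of $f$ in $\Sigma_P$.
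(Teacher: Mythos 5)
Your proof is correct and follows essentially the same route as the paper: both characterizations are obtained by pulling orthogonality back across the adjunction via \bref{prop:orth_and_adj} and then identifying $(\mor\C)^{\uparrow_\V} = \Iso\C = (\ob\C)^{\top_\V}$ using \bref{thm:orth_to_all_is_iso}, and the prefactorization claim is the standard observation about classes of the form $\sH^{\uparrow_\V}$. No gaps.
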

\begin{proof}
By \bref{prop:orth_and_adj} and \bref{thm:orth_to_all_is_iso}, we may compute as follows:
\begin{description}
\item[] $(Q(\mor\C))^{\uparrow_\V} = P^{-1}((\mor\C)^{\uparrow_\V}) = P^{-1}(\Iso\C) = \Sigma_P$;  
\item[] $(Q(\ob\C))^{\top_\V} = P^{-1}((\ob\C)^{\top_\V}) = P^{-1}(\Iso\C) = \Sigma_P$.
\end{description}
\end{proof}

Clearly any sub-$\V$-category of $\B$ of the form $\B_\Sigma$ is replete.  The following proposition shows that every $\V$-reflective-subcategory of $\B$ is of the form $\B_\Sigma$ for each of two canonical choices of $\Sigma$:

\begin{PropSub} \label{thm:refl_subcats_are_orth_subcats}
Let $K \nsststile{}{\rho} J : \C \hookrightarrow \B$ be a $\V$-reflection.
\begin{enumerate}
\item $\C = \B_{\Sigma_K} = \B_{\Sigma}$, where $\Sigma := \{\rho_B\;|\;B \in \B\}$.
\item $(\mor\C)^{\uparrow_\V} = \Sigma_K = (\ob\C)^{\top_\V}$, where the right- and leftmost expressions are evaluated with respect to $\B$.
\item $(\Sigma_K,(\Sigma_K)^{\downarrow_\V})$ is a $\V$-prefactorization-system on $\B$.
\end{enumerate}
\end{PropSub}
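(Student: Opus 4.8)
The plan is to derive parts (2) and (3) at once from \bref{thm:charns_of_sigma_p}, and then to prove (1) by the standard orthogonality characterization of a reflective subcategory, using the stability machinery already in place.

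For (2) and (3): we apply \bref{thm:charns_of_sigma_p} to the $\V$-adjunction $K \nsststile{}{\rho} J : \C \hookrightarrow \B$, taking $P := K$ and $Q := J$. Since $J$ is the inclusion of a full replete sub-$\V$-category, $J(\mor\C)$ and $J(\ob\C)$ are just $\mor\C$ and $\ob\C$ regarded as subclasses of $\mor\B$ and $\ob\B$, and the class $\Sigma_P$ of that corollary is exactly $\Sigma_K$ \pbref{def:orth_subcat_sigma}. Thus \bref{thm:charns_of_sigma_p} yields $(\mor\C)^{\uparrow_\V} = \Sigma_K = (\ob\C)^{\top_\V}$, which is (2), and that $(\Sigma_K,(\Sigma_K)^{\downarrow_\V})$ is a $\V$-prefactorization-system on $\B$, which is (3).

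For (1) I would begin with the observation that each unit component $\rho_B$ lies in $\Sigma_K$. Since $\C$ is a full sub-$\V$-category, the inclusion $J$ is $\V$-fully-faithful, so the counit $\varepsilon:KJ \rightarrow 1_\C$ of the $\V$-reflection is an isomorphism; the triangle identity $\varepsilon K \cdot K\rho = 1_K$ then forces $K\rho_B = \varepsilon_{KB}^{-1}$ to be iso, i.e. $\rho_B \in \Sigma_K$. Hence $\Sigma := \{\rho_B \mid B \in \B\} \subs \Sigma_K$, so $\B_{\Sigma_K} = (\Sigma_K)^{\bot_\V} \subs \Sigma^{\bot_\V} = \B_\Sigma$. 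On the other hand, part (2) says $\Sigma_K = (\ob\C)^{\top_\V}$, which is precisely the assertion that every morphism of $\Sigma_K$ is $\V$-orthogonal to every object of $\C$; that is, $\C \subs \B_{\Sigma_K}$. Combining, $\C \subs \B_{\Sigma_K} \subs \B_\Sigma$, and it remains only to prove $\B_\Sigma \subs \C$.

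So let $B \in \B_\Sigma = \Sigma^{\bot_\V}$. From $\rho_B \bot_\V B$, and the fact that $\V$-orthogonality entails ordinary orthogonality, we obtain a unique $r:JKB \rightarrow B$ with $r \cdot \rho_B = 1_B$. Since $JKB$ is an object of $\C$ and we have just shown $\C \subs \B_\Sigma$, also $\rho_B \bot_\V JKB$; as both $1_{JKB}$ and $\rho_B \cdot r$ become $\rho_B$ after precomposition with $\rho_B$ (using $\rho_B \cdot r \cdot \rho_B = \rho_B$), this orthogonality forces $\rho_B \cdot r = 1_{JKB}$. Hence $\rho_B$ is an isomorphism, so $B \cong JKB \in \C$, and $B \in \C$ by repleteness. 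This yields $\B_\Sigma \subs \C$, and therefore $\C = \B_{\Sigma_K} = \B_\Sigma$, completing (1). The only genuinely non-formal ingredient is the invertibility of $K\rho_B$ --- equivalently, that the counit of a full reflective $\V$-subcategory is iso --- while the rest is a faithful transcription of the ordinary argument of \cite{FrKe}, legitimate because enriched orthogonality subsumes the ordinary notion; I would expect no real obstacle beyond this bookkeeping.
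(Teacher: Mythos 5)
Your proposal is correct and follows essentially the same route as the paper: parts (2) and (3) are read off from \bref{thm:charns_of_sigma_p} applied to $K \dashv J$, and part (1) is obtained from $\Sigma \subs \Sigma_K$ together with the chain $\C \subs \B_{\Sigma_K} \subs \B_\Sigma$ and a final argument that $\rho_B$ is invertible for $B \in \B_\Sigma$. Your only departures are cosmetic: you supply the (omitted) verification that $K\rho_B$ is iso via the counit, and you carry out by hand the retraction/section computation that the paper delegates to \bref{thm:orth_of_morphs_in_orth_subcat} and \bref{thm:orth_yielding_retr_and_iso}.
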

\begin{proof}
2 and 3 follow from \bref{thm:charns_of_sigma_p}.  Regarding 1, first observe that $\Sigma \subs \Sigma_K$, so that $\B_{\Sigma_K} \subs \B_\Sigma$.  Also, by 2, $\Ob\B_{\Sigma_K} = (\Sigma_K)^{\bot_\V} = (\Ob\C)^{\top_\V\bot_\V} \sups \Ob\C$.  Hence it now suffices to show $\B_\Sigma \subs \C$.  Suppose $B \in \B_\Sigma$.  Since we also have that $KB \in \C \subs \B_\Sigma$, the morphism $\rho_B:B \rightarrow KB$ lies in $\B_\Sigma$, so since $\rho_B \in \Sigma$ we deduce by \bref{thm:orth_of_morphs_in_orth_subcat} that $\rho_B \downarrow_\V \rho_B$.  Hence by \bref{thm:orth_yielding_retr_and_iso}, $\rho_B$ is iso, so $B \in \C$.
\end{proof}

\chapter{Notions of completeness via enriched orthogonality}\label{ch:compl_enr_orth}
\setcounter{subsection}{0}

In functional analysis, an important role is played by \textit{completeness} properties of normed or topological vector spaces, including the usual Cauchy-completeness and variations thereupon.  But if an abstract functional analysis is to be developed in any suitable cartesian closed category $\X$, through the study of the symmetric monoidal closed category $\sL$ of $R$-module objects in $\X$, or \textit{linear spaces}, then the usual manner of defining completeness in terms of Cauchy nets or filters is not available.  However, the endofunctor $\sL \rightarrow \sL$ sending each linear space $E$ to its \textit{double-dual} $E^{**}$ is part of an $\sL$-enriched monad $\HH$ \pbref{def:dualization}, and our approach in \bref{sec:compl} is to form the \textit{idempotent} $\sL$-monad $\tHH$ induced by $\HH$ \pbref{sec:assoc_idm_mnd}, calling the objects of the associated reflective subcategory \textit{functionally complete}.

In the non-enriched context, the idempotent monad $\tTT$ associated to a given monad $\TT$ on a suitable category $\B$ was constructed by Fakir \cite{Fak}, and in the enriched context one can form it by means of a result of Day \cite{Day:AdjFactn}.  $\tTT$ is terminal among idempotent $\V$-monads $\SSS$ for which a (necessarily unique) morphism $\SSS \rightarrow \TT$ exists \pbref{thm:ind_idmmnd_and_refl}.  The associated reflective subcategory consists of those objects of $\B$ to which the class $\Sigma_T$ of morphisms inverted by $T$ is orthogonal, and we call these objects \textit{$\TT$-complete} \pbref{def:tcomplete}.  Notably, every $\TT$-complete object $B$ embeds into $TB$, as the unit component $B \rightarrow TB$ is necessarily a $\V$-strong-monomorphism; we say that $B$ is \textit{$\TT$-separated}.  Day's results entail also that for \textit{any} class of morphisms $\Sigma \subs \Sigma_T$, the $\TT$-separated objects to which $\Sigma$ is orthogonal form a reflective subcategory of $\B$, whose objects we call the \textit{$\Sigma$-complete} $\TT$-separated objects \pbref{def:compl_sep_closed_dense}.

By varying the choice of $\Sigma$ we thus have a means of defining notions of completeness that are weaker than $\TT$-completeness and tailored to a specific application.  This approach is applied in Chapter \bref{ch:dcompl_vint} in defining a notion of completeness in linear spaces that is weaker than functional completeness and tantamount to the existence of certain Pettis-type vector-valued integrals \pbref{thm:dist_compl_sep_vs_pett}.

In the present chapter, we develop the theory of $\Sigma$-complete $\TT$-separated objects and associated idempotent monads by means of enriched factorization systems \pbref{ch:enr_orth_fact}.  Each class $\Sigma \subs \Sigma_T$ determines an associated enriched factorization system $(\Dense{\Sigma},\ClEmb{\Sigma})$ whose component classes determine notions of \textit{density} and \textit{closure} relative to $\Sigma$ and compatible with $\Sigma$-completeness \pbref{def:compl_sep_closed_dense}.

Any enriched adjunction $P \dashv Q$ inducing $\TT$ factors through the reflective subcategory of $\Sigma$-complete $\TT$-separated objects.  Further, the $\TT$-complete objects form the \textit{smallest} reflective subcategory through which the given adjunction factors \pbref{thm:ind_idmmnd_and_refl}, so that in this case one obtains a factorization of the left adjoint $P$ as a reflection followed by a \textit{conservative} left adjoint \pbref{thm:adj_factn}.  For a given adjunction $P \dashv Q$, an approach to proving in the enriched context an \textit{adjoint factorization} result of this type, analogous to a result of Applegate-Tierney in the non-enriched context, was the explicit objective of Day's paper \cite{Day:AdjFactn}, yet a full proof of the result is not provided therein.  Rather, such a result can be proved through a somewhat involved two-step process on the basis of key lemmas that Day does provide, as was done by the author in a recent paper \cite{Lu:Fu}.  

In the present chapter, we avoid this two-step process, preferring to study directly the interplay of the notions we have termed $\TT$-separatedness and $\Sigma$-completeness in the given category $\B$, developing along the way the related notions of $\Sigma$-density and $\Sigma$-closure in $\B$ whose functional-analytic relevance will be evident in \bref{sec:compl}, \bref{sec:acc_distn}, and \bref{sec:distnl_compl_pett_for_acc_distns}.  We work relative to a given monad $\TT$ rather than an adjunction, since most of the theory is independent of the choice of inducing adjunction.  We make full use of the theory of enriched factorization systems \pbref{ch:enr_orth_fact}, bringing into the enriched context certain techniques used by Cassidy-H\'ebert-Kelly \cite{CHK}.

\section{Relative notions of completion, closure, and \hbox{density}} \label{sec:cmpl_cl_dens}

As a preface to the present section, we note the following basic fact.

\begin{PropSub}\label{thm:all_ladjs_ind_given_monad_inv_same_morphs}
Given an adjunction $P \nsststile{}{\eta} Q:\C \rightarrow \B$ with induced endofunctor $T$ on $\B$, $\Sigma_P = \Sigma_T$.  Hence all left adjoints inducing a given monad $\TT$ invert the same morphisms.
\end{PropSub}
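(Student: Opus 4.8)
The plan is to prove the identity $\Sigma_P = \Sigma_T$ of classes of morphisms of $\B$ by two inclusions, and then to read off the second sentence formally. Write $T = QP$ for the induced endofunctor and $\varepsilon : PQ \rightarrow 1_\C$ for the counit of the adjunction.

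The inclusion $\Sigma_P \subs \Sigma_T$ is immediate, since $Q$ preserves isomorphisms: if $Pf$ is iso then $Tf = QPf$ is iso. For the reverse inclusion, I would take $f : B_1 \rightarrow B_2$ with $Tf = QPf$ iso, with inverse $h : TB_2 \rightarrow TB_1$ in $\B$, and propose as a candidate inverse of $Pf$ the morphism $g : PB_2 \rightarrow PB_1$ obtained as the adjoint transpose across $P \dashv Q$ of the composite $h \cdot \eta_{B_2} : B_2 \rightarrow TB_1 = QPB_1$; explicitly, $g = \varepsilon_{PB_1} \cdot Ph \cdot P\eta_{B_2}$. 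It then remains to verify that $g \cdot Pf = 1_{PB_1}$ and $Pf \cdot g = 1_{PB_2}$. For the first, I would rewrite $g \cdot Pf = \varepsilon_{PB_1} \cdot Ph \cdot P(\eta_{B_2} \cdot f)$, use naturality of $\eta$ to replace $\eta_{B_2} \cdot f$ by $Tf \cdot \eta_{B_1}$, cancel $h \cdot Tf = 1$, and invoke the triangle identity $\varepsilon P \cdot P\eta = 1_P$. For the second, I would use naturality of $\varepsilon$ at the morphism $Pf : PB_1 \rightarrow PB_2$ to rewrite $Pf \cdot \varepsilon_{PB_1} = \varepsilon_{PB_2} \cdot P(Tf)$, cancel $Tf \cdot h = 1$, and again apply the triangle identity. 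Hence $Pf$ is iso, so $f \in \Sigma_P$.

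Granting $\Sigma_P = \Sigma_T$, the final sentence is immediate: two adjunctions $P \dashv Q$ and $P' \dashv Q'$ inducing the same monad $\TT$ share the underlying endofunctor $T$ of $\TT$, so $\Sigma_P = \Sigma_T = \Sigma_{P'}$. I do not anticipate any real difficulty; the only thing needing care is the bookkeeping in the two verifications — using naturality of $\eta$ for the composite $g \cdot Pf$ and naturality of $\varepsilon$ for $Pf \cdot g$, and invoking the appropriate triangle identity at the end of each — together with confirming that the displayed formula for $g$ really is the adjoint transpose of $h \cdot \eta_{B_2}$.
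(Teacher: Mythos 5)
Your proposal is correct and follows the paper's proof exactly: one inclusion is immediate, and for the other you take the adjoint transpose of $(Tf)^{-1}\cdot\eta_{B_2}$ as the candidate inverse of $Pf$, which is precisely the morphism the paper uses. The only difference is that you carry out the two verifications (via naturality of $\eta$, naturality of $\varepsilon$, and the triangle identities) that the paper dismisses as ``one easily shows,'' and your computations are right.
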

\begin{proof}
One inclusion is immediate.  For the other, suppose $f:B \rightarrow B'$ in $\B$ is inverted by $T$.  Then $Tf$ has an inverse $(Tf)^{-1}:TB' \rightarrow TB$, and one easily shows that the transpose $PB' \rightarrow PB$ of the composite $B' \xrightarrow{\eta_{B'}} QPB' \xrightarrow{(Tf)^{-1}} QPB$ under the given adjunction serves as inverse for $Pf$.
\end{proof}

\begin{ParSub} \label{par:data_enr_orth_subcat_subord_adj}
In the present section, we work with given data as follows:
\begin{enumerate}
\item Let $\TT = (T,\eta,\mu)$ be a $\V$-monad on a $\V$-finitely-well-complete and cotensored $\V$-category $\B$.
\item Let $\Sigma \subs \Sigma_T$ be a class of morphisms inverted by $T$.
\end{enumerate}
Choose any $\V$-adjunction $P \nsststile{\varepsilon}{\eta} Q : \C \rightarrow \B$ inducing $\TT$.  By \bref{thm:all_ladjs_ind_given_monad_inv_same_morphs} we know that $\Sigma \subs \Sigma_T = \Sigma_P$.
\end{ParSub}

\begin{RemSub}
For the present section, one could equivalently (by \bref{thm:all_ladjs_ind_given_monad_inv_same_morphs}) work with a given $\V$-adjunction on $\B$ and a class of morphisms inverted by the left adjoint.  But most of the definitions and theorems in the present section are, by their form, clearly independent of the choice of $\V$-adjunction.  Further, our notation shall not permit any implicit dependence on this choice.  As a definite choice, one may employ the Kleisli $\V$-adjunction or the Eilenberg-Moore $\V$-adjunction for $\TT$.
\end{RemSub}

As per \bref{def:enr_str_mon}, we shall call the $\V$-strong-monos of $\B$ \textit{embeddings} in $\B$.  We let $\M := \StrMono_\V\B$.

\begin{DefSub} \label{def:compl_sep_closed_dense}
Given data as in \bref{par:data_enr_orth_subcat_subord_adj}, we make the following definitions:
\begin{enumerate}
\item An object $B$ of $\B$ is \textit{$\Sigma$-complete} if $B \in \B_\Sigma$.
\item An object $B$ of $\B$ is \textit{$\TT$-separated} if $\eta_B:B \rightarrow TB$ is an embedding.
\item We denote the full sub-$\V$-category of $\B$ consisting of the $\Sigma$-complete $\TT$-separated objects by $\B_{(\TT,\Sigma)}$.
\item We say that an embedding $m:B_1 \rightarrowtail B_2$ in $\B$ is \textit{$\Sigma$-closed} if $m \in \Sigma^{\downarrow_\V}$.  We denote by $\SigmaClEmb := \Sigma^{\downarrow_\V} \cap \M$ the class of all $\Sigma$-closed embeddings in $\B$.
\item We say that a morphism in $\B$ is \textit{$\Sigma$-dense} if it lies in $\SigmaDense := \SigmaClEmb^{\uparrow_\V}$.
\end{enumerate}
\end{DefSub}

\begin{PropSub} \label{thm:dense_closed_factn_sys}
$(\SigmaDense,\SigmaClEmb)$ is a $\V$-factorization-system on $\B$.
\end{PropSub}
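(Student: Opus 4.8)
The plan is to recognize this as a direct instance of Corollary~\bref{thm:enr_factn_sys_det_lowerclass_strmonos}. By the standing hypotheses of \bref{par:data_enr_orth_subcat_subord_adj}, the $\V$-category $\B$ is $\V$-finitely-well-complete and cotensored, so the hypotheses of that corollary are satisfied with the class $\Sigma$ of \bref{par:data_enr_orth_subcat_subord_adj} in the role of the class denoted $\Sigma$ there. Writing $\N := \Sigma^{\downarrow_\V} \cap \StrMono_\V\B$, the corollary gives that $(\N^{\uparrow_\V},\N)$ is a $\V$-factorization-system on $\B$. By Definition~\bref{def:compl_sep_closed_dense} we have $\N = \SigmaClEmb$ and hence $\N^{\uparrow_\V} = \SigmaClEmb^{\uparrow_\V} = \SigmaDense$, so $(\SigmaDense,\SigmaClEmb)$ is a $\V$-factorization-system, as claimed. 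The only real content of the argument is this identification of the classes named in \bref{def:compl_sep_closed_dense} with those output by the cited corollary, which is immediate from the definitions.

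For orientation I would recall why Corollary~\bref{thm:enr_factn_sys_det_lowerclass_strmonos} goes through: it rewrites $\N$ as $(\Sigma \cup \Epi_\V\B)^{\downarrow_\V}$ using \bref{thm:strmono_lowerclass_epi_upperclass}, so that $(\N^{\uparrow_\V},\N)$ has the form $(\sH^{\downarrow_\V\uparrow_\V},\sH^{\downarrow_\V})$ for $\sH := \Sigma \cup \Epi_\V\B$ and is therefore automatically a $\V$-prefactorization-system; existence of factorizations is then supplied by Proposition~\bref{prop:enr_factn_sys_det_lowercls_monos}, whose proof uses the cotensoredness of $\B$ to reduce $\V$-orthogonality to ordinary orthogonality (via \bref{prop:enr_vs_ord_orth_for_pref_sys}) and the $\V$-finite-well-completeness of $\B$ to form the requisite $\V$-intersections of embeddings and $\V$-pullbacks of embeddings along arbitrary morphisms in the Cassidy-H\'ebert-Kelly-style construction \cite{CHK}. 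Since all of this machinery is already in place from Chapter~\bref{ch:enr_orth_fact}, I do not anticipate any substantive obstacle; the proof is essentially a one-line invocation of \bref{thm:enr_factn_sys_det_lowerclass_strmonos} together with the routine unwinding of the notation in \bref{def:compl_sep_closed_dense}.
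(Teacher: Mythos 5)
Your proposal is correct and is exactly the paper's proof: the paper's argument is the one-line invocation ``Apply \bref{thm:enr_factn_sys_det_lowerclass_strmonos},'' relying on the same identification $\SigmaClEmb = \Sigma^{\downarrow_\V}\cap\StrMono_\V\B$ and $\SigmaDense = \SigmaClEmb^{\uparrow_\V}$ built into Definition \bref{def:compl_sep_closed_dense}, under the standing hypotheses of \bref{par:data_enr_orth_subcat_subord_adj}. Your supplementary recollection of why \bref{thm:enr_factn_sys_det_lowerclass_strmonos} holds is accurate but not needed.
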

\begin{proof}
Apply \bref{thm:enr_factn_sys_det_lowerclass_strmonos}.
\end{proof}

\begin{ExaSub}\label{exa:func_distnl_compl}
Let $R$ be a commutative ring object in a finitely well-complete, countably-complete and -cocomplete cartesian closed category $\X$, and let $\sL := \RMod(\X)$ be the symmetric monoidal closed category of $R$-module objects in $\X$ \pbref{exa:rmodx_smc_and_xenr}.  Particular examples include the categories $\sL$ of convergence vector spaces and smooth vector spaces \pbref{exa:rmodx_smc_and_xenr}.  There is an $\sL$-monad $\HH$ \pbref{def:dualization} whose component $\sL$-functor $H:\uL \rightarrow \uL$ sends each $R$-module object $E$ to its \textit{double-dual} $E^{**} = \uL(\uL(E,R),R)$.  
\begin{enumerate}
\item Taking $\Sigma := \Sigma_H$, the data $\uL$, $\HH$, $\Sigma$ satisfy the assumptions of \bref{par:data_enr_orth_subcat_subord_adj}, by \bref{exa:rmod_fwc}.  The resulting notions obtained via \bref{def:compl_sep_closed_dense} are employed in Chapters \bref{ch:nat_acc_dist} and \bref{ch:dcompl_vint} under the names of \textit{functionally complete} and \textit{functionally separated} object, \textit{functionally closed embedding}, and \textit{functionally dense} morphism; see \bref{sec:compl}.
\item By instead taking $\Sigma$ to be a certain subclass $\tDelta$ of $\Sigma_H$, in \bref{def:distnl_compl_cl_dens} we obtain the notions of \textit{$\tDD$-distributionally complete} object, \textit{$\tDD$-distributionally closed} embedding, and \textit{$\tDD$-distributionally dense} morphism.
\end{enumerate}
\end{ExaSub}

\begin{PropSub}\label{thm:props_sigma_cl_dens}\emptybox
\begin{enumerate}
\item Every $\V$-epi $e$ in $\B$ is $\Sigma$-dense.
\item Every morphism in $\Sigma$ is $\Sigma$-dense.
\item If a composite $B_1 \xrightarrow{f} B_2 \xrightarrow{g} B_3$ is $\Sigma$-dense, then its second factor $g$ is $\Sigma$-dense.
\item If a composite $B_1 \xrightarrow{f} B_2 \xrightarrow{g} B_3$ and its second factor $g$ are both $\Sigma$-closed embeddings, then the first factor $f$ is a $\Sigma$-closed embedding.
\item Every $\Sigma$-dense $\Sigma$-closed embedding is an isomorphism.
\end{enumerate}
\end{PropSub}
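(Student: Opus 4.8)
The plan is to read off all five statements from the structural results on enriched prefactorization systems established above, using two descriptions of the pair $(\SigmaDense,\SigmaClEmb)$: by \bref{thm:dense_closed_factn_sys} it is a $\V$-factorization-system on $\B$, and by \bref{def:compl_sep_closed_dense} its right class is $\SigmaClEmb = \Sigma^{\downarrow_\V}\cap\StrMono_\V\B$, so in particular $\SigmaClEmb \subs \StrMono_\V\B \subs \Mono_\V\B$ while $\SigmaDense = \SigmaClEmb^{\uparrow_\V}$.

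Statements 1, 2, 4 and 5 I would settle straight from the definitions and cited results. For 1: any $m \in \SigmaClEmb$ is a $\V$-strong-mono, so the defining property of $\V$-strong-monos \pbref{def:enr_str_mon} gives $e \downarrow_\V m$ for every $\V$-epi $e$ in $\B$; as $m \in \SigmaClEmb$ was arbitrary this says $e \in \SigmaClEmb^{\uparrow_\V} = \SigmaDense$. For 2: if $e \in \Sigma$ and $m \in \SigmaClEmb \subs \Sigma^{\downarrow_\V}$, then $e \downarrow_\V m$ by the meaning of $\Sigma^{\downarrow_\V}$, so again $e \in \SigmaClEmb^{\uparrow_\V} = \SigmaDense$. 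For 4: since $(\SigmaDense,\SigmaClEmb)$ is a $\V$-prefactorization-system, the claim is precisely the instance of \bref{thm:pref_iso_comp_canc} 3 for its right class $\SigmaClEmb$ (equally, $\StrMono_\V\B$ has the required cancellation by \bref{thm:stability_and_canc_for_strmonos} 1 and $\Sigma^{\downarrow_\V}$ has it by \bref{thm:pref_iso_comp_canc} 3 applied to the $\V$-prefactorization-system $(\Sigma^{\downarrow_\V\uparrow_\V},\Sigma^{\downarrow_\V})$). For 5: a morphism lying in both $\SigmaDense$ and $\SigmaClEmb$ lies in $\SigmaDense \cap \SigmaClEmb = \Iso\B$ by \bref{thm:int_of_upper_and_lower_classes_is_iso}, hence is an isomorphism.

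The only statement needing real work is 3, the left-class counterpart of \bref{thm:pref_iso_comp_canc} 4: one wants $g$ to be $\Sigma$-dense whenever the composite $B_1 \xrightarrow{f} B_2 \xrightarrow{g} B_3$ is, with no hypothesis on $f$, and the feature that makes this go through is that $\SigmaClEmb$ consists of $\V$-monos. I would argue directly. Fix $m:M_1 \rightarrow M_2$ in $\SigmaClEmb$. Applying the representables $\B(B_i,-)$ ($i = 1,2,3$) to $m$, the orthogonality square of \bref{def:enr_orth_fact} for the pair $(g\cdot f,m)$ decomposes as the square for $(g,m)$ stacked over the square for $(f,m)$; the outer rectangle is a pullback in $\V$ because $g \cdot f \downarrow_\V m$, and the shared middle edge $\B(B_2,m)$ is a monomorphism because $m$ is a $\V$-mono, so \bref{prop:pb_canc_mono}, read in the appropriate orientation, shows the upper square is a pullback, i.e.\ $g \downarrow_\V m$. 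Letting $m$ range over $\SigmaClEmb$ gives $g \in \SigmaClEmb^{\uparrow_\V} = \SigmaDense$. Alternatively, 3 can be deduced by dualizing \bref{thm:pref_iso_comp_canc} 4 over $\Op{\B}$: by \bref{rem:factn_dualization} the opposite $\V$-prefactorization-system has left class contained in $\Epi_\V(\Op{\B})$ exactly because $\SigmaClEmb \subs \Mono_\V\B$, and translating its conclusion back along $\B \leftrightarrow \Op{\B}$ turns ``first factor'' into ``second factor''. The main obstacle is merely bookkeeping --- getting the orientation of \bref{prop:pb_canc_mono} (or the $\Op{\B}$-translation) exactly right; no genuinely new idea is required.
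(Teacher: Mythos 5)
Your proposal is correct and follows essentially the same route as the paper: parts 1, 2, 4, and 5 are read off from the definition of $\V$-strong-mono, the inclusion $\ClEmb{\Sigma} \subs \Sigma^{\downarrow_\V}$, statement 3 of \bref{thm:pref_iso_comp_canc}, and \bref{thm:int_of_upper_and_lower_classes_is_iso}, respectively, exactly as in the paper. For part 3 the paper simply cites the dual of statement 4 of \bref{thm:pref_iso_comp_canc} (using that $\Sigma$-closed embeddings are $\V$-monos), which is your alternative route; your direct argument via \bref{prop:pb_canc_mono} just unfolds that same proof and is equally valid.
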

\begin{proof}
1.  $e$ is $\V$-orthogonal to every embedding and hence to every $\Sigma$-closed embedding.
2.  $\ClEmb{\Sigma} \subs \Sigma^{\downarrow_\V}$, so $\Sigma \subs \Sigma^{\downarrow_\V\uparrow_\V} \subs \ClEmb{\Sigma}^{\uparrow_\V} = \Dense{\Sigma}$.
3.  Since every $\Sigma$-closed embedding is $\V$-mono, this follows from (the dual of) statement 4 of \bref{thm:pref_iso_comp_canc}.
4.  This follows from statement 3 of \bref{thm:pref_iso_comp_canc}.
5.  This follows from \bref{thm:int_of_upper_and_lower_classes_is_iso}.
\end{proof}

\begin{DefSub} \label{def:sigma_closure}
Given an embedding $m:M \rightarrowtail B$ in $\B$, the \textit{$\Sigma$-closure} of $m$ is the $\Sigma$-closed embedding $\overline{m}$ gotten by taking the $(\SigmaDense,\SigmaClEmb)$-factorization
$$
\xymatrix{
M \ar@{ >->}[rr]^{m} \ar@{ >->}[dr] &                                 & B\\
                                    & \overline{M} \ar@{ >->}[ur]_{\overline{m}} &
}
$$
of $m$.  (The first factor of this factorization is also an embedding by \bref{thm:stability_and_canc_for_strmonos}.)
\end{DefSub}

\begin{RemSub}
Given embeddings $m:M \rightarrowtail B$ and $c:C \rightarrowtail B$ in a $\V$-category, one finds that following are equivalent:
\begin{enumerate}
\item $c$ is a $\Sigma$-closed embedding through which $m$ factors via a $\Sigma$-dense morphism.
\item $c \cong \overline{m}$ in $\Sub(B)$.
\item In the terminology of \bref{def:emb}, $c$ presents $C$ as the $\Sigma$-closure of $m$.
\end{enumerate}
\end{RemSub}

\begin{PropSub}
Let $B$ be an object of $\B$.  Let $\SigmaClSub(B)$ be the full subcategory of the preorder $\Sub(B)$ \pbref{def:emb} with objects all $\Sigma$-closed embeddings.  Then the inclusion functor $\SigmaClSub(B) \hookrightarrow \Sub(B)$ has a left adjoint given by taking the $\Sigma$-closure.  Hence the assignment $m \mapsto \overline{m}$ defines a monad (i.e. a closure operator) on $\Sub(B)$.
\end{PropSub}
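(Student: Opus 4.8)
The plan is to verify the universal property of the $\Sigma$-closure directly, taking advantage of the fact that $\Sub(B)$ and $\SigmaClSub(B)$ are preordered classes. So, to exhibit the claimed left adjoint it suffices to show, for each embedding $m:M \rightarrowtail B$ and each $\Sigma$-closed embedding $c:C \rightarrowtail B$, that \emph{$m$ factors through $c$} if and only if \emph{$\overline{m}$ factors through $c$}, together with the existence of a canonical morphism $m \to \overline{m}$ in $\Sub(B)$ serving as unit. For the latter I would simply take the $(\SigmaDense,\SigmaClEmb)$-factorization $m = \overline{m} \cdot d$ of \bref{def:sigma_closure}, with $d:M \rightarrowtail \overline{M}$ in $\SigmaDense$ and $\overline{m}:\overline{M} \rightarrowtail B$ in $\SigmaClEmb \subs \M$: then $\overline{m}$ is an object of $\SigmaClSub(B)$ and $d$ is a morphism $m \to \overline{m}$ in $\Sub(B)$ (since $\overline{m} \cdot d = m$).

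For the equivalence, the ``only if'' direction I would handle via the lifting property: given a factorization $m = c \cdot g$ (with $g$ unique, $c$ being mono), the square with top edge $g$, left edge $d$, right edge $c$ and bottom edge $\overline{m}$ commutes, because $c \cdot g = m = \overline{m} \cdot d$; since $d \in \SigmaDense = \SigmaClEmb^{\uparrow_\V}$ and $c \in \SigmaClEmb$, we have $d \downarrow_\V c$ and hence the ordinary orthogonality $d \downarrow c$, which supplies a diagonal $w:\overline{M}\to C$ with $c \cdot w = \overline{m}$, so $\overline{m}$ factors through $c$. The ``if'' direction is immediate: if $\overline{m} = c \cdot w$ then $m = \overline{m} \cdot d = c \cdot (w \cdot d)$. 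This establishes $L \dashv J$, where $J:\SigmaClSub(B) \hookrightarrow \Sub(B)$ is the inclusion and $L$ is the assignment $m \mapsto \overline{m}$.

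For the final clause I would argue that, since the adjunction just produced exhibits $\SigmaClSub(B)$ as a reflective subcategory of $\Sub(B)$ with fully faithful inclusion, the composite endofunctor $m \mapsto \overline{m}$ on $\Sub(B)$ is an idempotent monad --- which in the preordered setting is precisely a closure operator. Concretely: the unit $d$ gives extensiveness $m \leq \overline{m}$; functoriality of $L$ gives monotonicity; and idempotency $\overline{\overline{m}} \cong \overline{m}$ follows because $\overline{m}$ already lies in $\SigmaClSub(B)$, so the unit at it is an isomorphism (equivalently, $\overline{m} = \overline{m} \cdot \id$ is itself a $(\SigmaDense,\SigmaClEmb)$-factorization, and such factorizations are unique up to isomorphism).

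The argument is essentially routine; the one point that needs a moment's care --- and the only place where anything could go wrong --- is keeping straight the direction of orthogonality: $\Sigma$-dense morphisms form the \emph{left} class $\SigmaClEmb^{\uparrow_\V}$ and $\Sigma$-closed embeddings the \emph{right} class, so a $\Sigma$-dense $d$ lifts against a $\Sigma$-closed embedding $c$, and it is exactly this lift that factors $\overline{m}$ through $c$.
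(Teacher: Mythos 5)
Your proof is correct and follows essentially the same route as the paper's: the unit is the $\Sigma$-dense first factor $d$ of the $(\SigmaDense,\SigmaClEmb)$-factorization of $m$, the nontrivial direction of the adjunction inequality $\overline{m} \lt c \Leftrightarrow m \lt c$ is obtained from the diagonal supplied by $d \downarrow_\V c$, and the trivial direction from $m \lt \overline{m}$. The closing remarks on idempotency are a harmless elaboration of what the paper leaves implicit.
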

\begin{proof}
We must show that for any embedding $m:M \rightarrowtail B$ and any $\Sigma$-closed embedding $c:C \rightarrowtail B$,
$$\overline{m} \lt c \;\;\Leftrightarrow\;\; m \lt c\;.$$
The implication $\Rightarrow$ holds since $m \lt \overline{m}$.  For the converse, if $m \lt c$ then $m = c \cdot b$ for some morphism $b$, so that since $m = \overline{m} \cdot d$ for a $\Sigma$-dense morphism $d$ we obtain a commutative square as in
$$
\xymatrix{
M \ar@{ >->}[d]_d \ar@{ >->}[r]^b            & C \ar@{ >->}[d]^c  \\
\overline{M} \ar@{ >->}[r]_{\overline{m}} \ar@{-->}[ur]^w & B
}
$$
with $d \downarrow c$, so there is a unique diagonal $w$ making the diagram commute, whence $\overline{m} \lt c$.
\end{proof}

\begin{PropSub} \label{thm:closure_of_image}
Suppose that $\B$ has $\V$-strong image factorizations \pbref{def:image}.  Let $f:B_1 \rightarrow B_2$ be a morphism in $\B$ with $(\SigmaDense,\SigmaClEmb)$-factorization $B_1 \xrightarrow{d} C \overset{c}{\rightarrowtail} B_2$.  Then, in the terminology of \bref{def:emb}, $c$ presents $C$ as the $\Sigma$-closure of the image of $f$.
\end{PropSub}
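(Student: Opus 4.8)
Let me recall the setup: we have a morphism $f:B_1 \to B_2$ in $\B$, and we factor it two ways. First, using the $\V$-strong image factorization $(\Epi_\V\B,\StrMono_\V\B)$, write $f = (B_1 \xrightarrow{e} I \overset{i}{\rightarrowtail} B_2)$ with $e$ a $\V$-epi and $i$ an embedding; this embedding $i$ is by definition the image of $f$. Second, we are given the $(\SigmaDense,\SigmaClEmb)$-factorization $f = (B_1 \xrightarrow{d} C \overset{c}{\rightarrowtail} B_2)$. We must show $c \cong \overline{i}$ in $\Sub(B_2)$, i.e. that $c$ presents $C$ as the $\Sigma$-closure of $i$. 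By the remark following \bref{def:sigma_closure}, it suffices to produce a factorization $i = c \cdot (\text{$\Sigma$-dense morphism})$, since then $c$ is a $\Sigma$-closed embedding through which $i$ factors via a $\Sigma$-dense morphism, which is exactly condition (1) characterizing the $\Sigma$-closure.

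\textbf{Key steps.} First I would refine the $(\SigmaDense,\SigmaClEmb)$-factorization $f = c \cdot d$ by further factoring its first factor $d:B_1 \to C$ through its image: since $\B$ has $\V$-strong image factorizations, write $d = (B_1 \xrightarrow{e'} I' \overset{i'}{\rightarrowtail} C)$ with $e' \in \Epi_\V\B$ and $i' \in \M$. Then $f = c \cdot i' \cdot e'$, where $e'$ is a $\V$-epi and $c \cdot i'$ is an embedding (by closure of $\M = \StrMono_\V\B$ under composition, \bref{thm:stability_and_canc_for_strmonos}). By uniqueness of $\V$-strong image factorizations (up to the canonical isomorphism of factorization systems), the embedding $c \cdot i'$ presents $I'$ as the image $I$ of $f$, so we may identify $i' $ with (an embedding onto) the image $i$ of $f$ viewed inside $C$; concretely, $c \cdot i' \cong i$ in $\Sub(B_2)$. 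Next I would argue that $i'$ is $\Sigma$-dense. For this, observe $d = i' \cdot e'$ is $\Sigma$-dense (it is the $\SigmaDense$-factor of $f$), and $e'$, being a $\V$-epi, is $\Sigma$-dense by \bref{thm:props_sigma_cl_dens}(1); hence by \bref{thm:props_sigma_cl_dens}(3) (the second factor of a $\Sigma$-dense composite is $\Sigma$-dense) the morphism $i'$ is $\Sigma$-dense. Finally, transporting along the isomorphism $c \cdot i' \cong i$, we obtain that $i$ factors as $c$ followed by — wait, the direction is $i = (c\cdot i')\cdot(\text{iso})^{-1}$ composed appropriately — more precisely, the image $i:I \rightarrowtail B_2$ factors as a $\Sigma$-dense morphism $I \xrightarrow{\cong} I' \xrightarrow{i'} C$ followed by the $\Sigma$-closed embedding $c:C \rightarrowtail B_2$. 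This is exactly condition (1) of the remark after \bref{def:sigma_closure}, so $c$ presents $C$ as the $\Sigma$-closure of the image of $f$.

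\textbf{Main obstacle.} The one point requiring care is the identification $c \cdot i' \cong i$ in $\Sub(B_2)$, i.e. that refining the first $\SigmaDense$-factor through its image and then composing with $c$ recovers the image of the \emph{whole} morphism $f$. This is essentially the standard fact that in the presence of a factorization system, if $f = m \cdot g$ with $m$ in the right class and $g$ arbitrary, then factoring $g = m' \cdot e$ yields $f = (m \cdot m') \cdot e$ with $m \cdot m'$ in the right class, so by uniqueness this \emph{is} the factorization of $f$; here the right class is $\M = \StrMono_\V\B$ and the left class is $\Epi_\V\B$, and we are using that $c \in \SigmaClEmb \subs \M$ together with closure of $\M$ under composition. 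So the obstacle is not deep — it is just the bookkeeping of composing the two factorization systems correctly — but it is the step where one must be precise about which factorization system is being invoked at each stage (the $\V$-strong image system to split $d$, and the uniqueness within that same system to reidentify the result as the image of $f$).
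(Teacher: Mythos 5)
Your proof is correct, but it runs the argument in the opposite order from the paper, and it is worth noting what each direction costs. The paper starts from the strong image factorization $f = m \cdot e$, then takes the $(\SigmaDense,\SigmaClEmb)$-factorization $m = \overline{m}\cdot j$ of the image $m$ itself; since $e$ is $\Sigma$-dense (being a $\V$-epi) and $\SigmaDense$ is closed under composition, $f = \overline{m}\cdot(j\cdot e)$ is a $(\SigmaDense,\SigmaClEmb)$-factorization of $f$, and essential uniqueness of that factorization gives $c \cong \overline{m}$ directly. You instead start from the given factorization $f = c\cdot d$, refine $d$ through its image $d = i'\cdot e'$, and must then do two extra pieces of work: (i) invoke essential uniqueness of the $(\Epi_\V\B,\StrMono_\V\B)$-factorization to identify $c\cdot i'$ with the image of $f$ (legitimate, since $\SigmaClEmb\subs\StrMono_\V\B$ and embeddings compose, \bref{thm:stability_and_canc_for_strmonos}), and (ii) apply the cancellation property \bref{thm:props_sigma_cl_dens}(3) to conclude that $i'$ is $\Sigma$-dense from the $\Sigma$-density of $d = i'\cdot e'$ — note that this step needs no hypothesis on $e'$ at all, so your remark that $e'$ is $\Sigma$-dense is superfluous there. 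Both routes are sound and use only results established in the paper; the paper's is marginally more economical because it needs only composition-closure of $\SigmaDense$ and uniqueness of one factorization system rather than cancellation plus uniqueness of both. Your concluding appeal to the remark after \bref{def:sigma_closure} (an iso followed by the $\Sigma$-dense $i'$ is $\Sigma$-dense, so $i$ factors through the $\Sigma$-closed embedding $c$ via a $\Sigma$-dense morphism) correctly lands the claim $c \cong \overline{i}$ in $\Sub(B_2)$.
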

\begin{proof}
Letting $B_1 \xrightarrow{e} M \overset{m}{\rightarrowtail} B_2$ be the $(\Epi_\V\B,\StrMono_\V\B)$-factorization of $f$, the $\Sigma$-closure $\overline{m}$ of the image $m$ of $f$ is the second factor of the $(\SigmaDense,\SigmaClEmb)$-factorization $M \xrightarrow{j} \overline{M} \overset{\overline{m}}{\rightarrowtail} B_2$ of $m$.  But by \bref{thm:props_sigma_cl_dens} 1, $e$ is $\Sigma$-dense, so $j \cdot e$ is $\Sigma$-dense and hence $\overline{m}$ is the second factor of a $(\SigmaDense,\SigmaClEmb)$-factorization $f = \overline{m} \cdot (j \cdot e)$ of $f$.  The result follows.
\end{proof}

\begin{PropSub} \label{thm:compl_vs_clemb}
Let $m:B' \rightarrowtail B$ be an embedding in $\B$, and suppose $B$ is $\Sigma$-complete.
Then
$$\text{$B'$ is $\Sigma$-complete}\;\;\Leftrightarrow\;\;\text{$m$ is $\Sigma$-closed}\;.$$
\end{PropSub}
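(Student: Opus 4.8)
The plan is to reduce both halves of the equivalence to assertions about the single commutative square \eqref{eqn:orth_pb} attached to a morphism $e$ of $\Sigma$ and to $m:B' \rightarrow B$, exploiting the fact that the $\Sigma$-completeness of $B$ already makes the right-hand vertical of that square invertible. Recall the relevant unpackings: ``$B'$ is $\Sigma$-complete'' means $B' \in \B_\Sigma$, i.e.\ for every $e:A_1 \rightarrow A_2$ in $\Sigma$ the morphism $\B(e,B'):\B(A_2,B') \rightarrow \B(A_1,B')$ is an isomorphism in $\V$; and ``$m$ is $\Sigma$-closed'' means $m \in \SigmaClEmb = \Sigma^{\downarrow_\V} \cap \StrMono_\V\B$, which, since $m$ is an embedding by hypothesis, amounts to asking that for every $e \in \Sigma$ the square \eqref{eqn:orth_pb} for the pair $(e,m)$ be a pullback in $\V$. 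The left and right verticals of that square are precisely $\B(e,B')$ and $\B(e,B)$, and the latter is an isomorphism for each $e \in \Sigma$ because $B$ is $\Sigma$-complete; this common observation drives both implications.

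For the direction ``$B'$ $\Sigma$-complete $\Rightarrow$ $m$ $\Sigma$-closed'', I fix $e \in \Sigma$. Then both verticals of \eqref{eqn:orth_pb} are isomorphisms --- $\B(e,B')$ by the hypothesis on $B'$ and $\B(e,B)$ by that on $B$ --- so the square is a pullback by the last assertion of \bref{prop:pb_canc} (a commutative square with a pair of opposite sides each an isomorphism is a pullback). Since $e$ was arbitrary in $\Sigma$ and $m$ is an embedding, this gives $m \in \Sigma^{\downarrow_\V} \cap \StrMono_\V\B = \SigmaClEmb$, i.e.\ $m$ is a $\Sigma$-closed embedding.

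For the converse, I again fix $e \in \Sigma$. The square \eqref{eqn:orth_pb} is a pullback since $m$ is $\Sigma$-closed, and its right vertical $\B(e,B)$ is an isomorphism since $B$ is $\Sigma$-complete; hence its opposite (left) vertical $\B(e,B')$ is an isomorphism by \bref{prop:pb_iso} (applied after transposing the square so that the invertible leg becomes a horizontal side --- equivalently: in a pullback square one of whose two opposite sides is invertible, the other is invertible too). Thus $e \bot_\V B'$ for every $e \in \Sigma$, i.e.\ $B' \in \B_\Sigma$, so $B'$ is $\Sigma$-complete. The whole argument is a short diagram chase; the only point demanding care is bookkeeping about which morphisms in \eqref{eqn:orth_pb} constitute the relevant pair of ``opposite sides'' --- namely $\B(e,B')$ and $\B(e,B)$ --- so that \bref{prop:pb_canc} and \bref{prop:pb_iso} are invoked in the correct orientation. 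Note that $\TT$-separatedness and the finite-well-completeness and cotensoredness of $\B$ play no role here; they are merely the standing hypotheses of \bref{par:data_enr_orth_subcat_subord_adj}.
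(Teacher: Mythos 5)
Your proof is correct and is essentially the paper's own argument: both reduce each $e \in \Sigma$ to the single orthogonality square whose two "$e$-legs" are $\B(e,B')$ and $\B(e,B)$, use $\Sigma$-completeness of $B$ to invert $\B(e,B)$, and conclude via \bref{prop:pb_iso} that the square is a pullback precisely when $\B(e,B')$ is invertible. The only cosmetic difference is that the paper writes the square in transposed orientation and extracts both implications from the single biconditional in \bref{prop:pb_iso}, whereas you treat the two directions separately, citing the last assertion of \bref{prop:pb_canc} for one of them.
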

\begin{proof}
For each $h:B_1 \rightarrow B_2$ in $\Sigma$, we have a commutative square
$$
\xymatrix{
\B(B_2,B') \ar[r]^{\B(h,B')} \ar[d]_{\B(B_2,m)} & \B(B_1,B') \ar[d]^{\B(B_1,m)} \\
\B(B_2,B) \ar[r]_{\B(h,B)}                      & \B(B_1,B) 
}
$$
in which $\B(h,B)$ is iso, so by \bref{prop:pb_iso}, the square is a pullback if and only if $\B(h,B')$ is iso. 
\end{proof}

\begin{PropSub} \label{thm:tc_complete}
Let $C$ be an object of $\C$.  Then $QC$ is $\Sigma$-complete.
\end{PropSub}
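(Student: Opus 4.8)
The plan is to show that $QC$ lies in $\B_\Sigma$, i.e. that every morphism $h \in \Sigma$ is $\V$-orthogonal to $QC$. Since $\Sigma \subs \Sigma_T = \Sigma_P$ by the choice of data in \bref{par:data_enr_orth_subcat_subord_adj} (using \bref{thm:all_ladjs_ind_given_monad_inv_same_morphs}), each $h \in \Sigma$ is inverted by the left $\V$-adjoint $P$. The key observation is then that for the $\V$-adjunction $P \dashv Q : \C \rightarrow \B$, orthogonality of a morphism to an object of the form $QC$ can be transferred across the adjunction via \bref{prop:orth_and_adj}(2): one has $h \bot_\V QC$ if and only if $Ph \bot_\V C$. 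But $Ph$ is an isomorphism, so $Ph \bot_\V C$ holds trivially (for any isomorphism $g$, $\C(g,C)$ is an isomorphism in $\V$, hence certainly $g \bot_\V C$). Therefore $h \bot_\V QC$ for every $h \in \Sigma$, which is exactly the statement that $QC \in \Sigma^{\bot_\V}$, i.e. $QC \in \B_\Sigma$, i.e. $QC$ is $\Sigma$-complete.

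Concretely, I would write: Let $h:B_1 \rightarrow B_2$ lie in $\Sigma$. Then $h \in \Sigma_P$, so $Ph$ is an isomorphism in $\C$; consequently $\C(Ph,C):\C(PB_2,C) \rightarrow \C(PB_1,C)$ is an isomorphism in $\V$, i.e. $Ph \bot_\V C$. By \bref{prop:orth_and_adj}(2), $h \bot_\V QC$. Since $h \in \Sigma$ was arbitrary, $QC \in \Sigma^{\bot_\V}$, and hence $QC$ is $\Sigma$-complete by \bref{def:compl_sep_closed_dense}(1). Alternatively, one could phrase this via \bref{thm:charns_of_sigma_p}, which gives $\Sigma_P = (Q(\ob\C))^{\top_\V}$: since $\Sigma \subs \Sigma_P = (Q(\ob\C))^{\top_\V}$, every $h \in \Sigma$ is $\V$-orthogonal to every object $QC$, so $QC \in \Sigma^{\bot_\V}$.

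This is essentially a one-line corollary of material already developed, so there is no real obstacle — the only point requiring a moment's care is making sure the chain $\Sigma \subs \Sigma_T = \Sigma_P$ is in place, which is guaranteed by \bref{par:data_enr_orth_subcat_subord_adj} together with \bref{thm:all_ladjs_ind_given_monad_inv_same_morphs}, and recalling that $\V$-orthogonality to an object is preserved and reflected along the adjunction by \bref{prop:orth_and_adj}. I expect the author's proof to be a single sentence citing \bref{thm:charns_of_sigma_p} (or \bref{prop:orth_and_adj}) in precisely this way.
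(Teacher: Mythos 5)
Your proof is correct and follows essentially the same route as the paper: the paper simply inlines the content of \bref{prop:orth_and_adj}(2) by exhibiting the commutative square whose vertical sides are the adjunction isomorphisms $\B(B_i,QC) \cong \C(PB_i,C)$ and whose bottom side $\C(Ph,C)$ is iso because $h \in \Sigma \subs \Sigma_P$. Citing \bref{prop:orth_and_adj} (or \bref{thm:charns_of_sigma_p}) instead, as you do, is just a packaged form of the same argument.
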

\begin{proof}
For each $h:B_1 \rightarrow B_2$ in $\Sigma$, we have a commutative square
$$
\xymatrix{
\B(B_2,QC) \ar[r]^{\B(h,QC)} \ar[d]_\wr & \B(B_1,QC) \ar[d]^{\wr} \\
\C(PB_2,C) \ar[r]^{\sim}_{\C(Ph,C)}      & \C(PB_1,C) 
}
$$
whose left and right sides are isomorphisms, and since $h \in \Sigma \subs \Sigma_P$, the bottom side is iso, so the top side is iso.
\end{proof}

\begin{PropSub} \label{thm:sepobj_compl_iff_eta_closed}
Let $B \in \B$ be $\TT$-separated.  Then
$$\text{$B$ is $\Sigma$-complete}\;\;\Leftrightarrow\;\;\text{$\eta_B:B \rightarrowtail TB$ is $\Sigma$-closed}\;.$$
\end{PropSub}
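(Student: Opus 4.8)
The plan is to derive both implications at once by applying \bref{thm:compl_vs_clemb} to the embedding $\eta_B : B \rightarrowtail TB$, whose codomain I will first show to be $\Sigma$-complete.

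First I would record that the chosen $\V$-adjunction $P \nsststile{\varepsilon}{\eta} Q : \C \rightarrow \B$ of \bref{par:data_enr_orth_subcat_subord_adj} induces $\TT$, so that the underlying $\V$-endofunctor of $\TT$ is $T = QP$ and $\eta$ is the unit of the adjunction; in particular $TB = QPB = Q(PB)$. Hence \bref{thm:tc_complete}, applied to the object $PB$ of $\C$, shows that $TB$ is $\Sigma$-complete.

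Now, since $B$ is assumed $\TT$-separated, the unit component $\eta_B : B \rightarrow TB$ is an embedding. We may therefore invoke \bref{thm:compl_vs_clemb} with the role of its ``$B$'' played by $TB$ (which is $\Sigma$-complete, as just shown), that of its ``$B'$'' played by $B$, and that of its embedding ``$m$'' played by $\eta_B$. The conclusion of that proposition is precisely
$$\text{$B$ is $\Sigma$-complete}\;\;\Leftrightarrow\;\;\text{$\eta_B : B \rightarrowtail TB$ is $\Sigma$-closed}\;,$$
as required.

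I do not anticipate any serious obstacle here: the only point demanding attention is the routine bookkeeping identification of $T$ with $QP$ and of $\eta$ with the adjunction unit, which is part of the standing hypothesis that $P \nsststile{\varepsilon}{\eta} Q$ induces $\TT$ (\bref{par:data_enr_orth_subcat_subord_adj}). All of the genuine content is already packaged in \bref{thm:tc_complete} and \bref{thm:compl_vs_clemb}, so the argument is essentially a two-line reduction.
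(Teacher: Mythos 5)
Your proposal is correct and follows exactly the paper's own argument: identify $TB = QPB$ and apply \bref{thm:tc_complete} to conclude $TB$ is $\Sigma$-complete, then invoke \bref{thm:compl_vs_clemb} with the embedding $\eta_B$. Nothing further is needed.
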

\begin{proof}
Since $TB = QPB$ is $\Sigma$-complete by \bref{thm:tc_complete} and $\eta_B$ is an embedding, this follows from \bref{thm:compl_vs_clemb}.
\end{proof}

\begin{PropSub} \label{thm:sep_iff_exists_emb}
An object $B \in \B$ is $\TT$-separated iff there exists an embedding \mbox{$m:B \rightarrowtail QC$} for some $C \in \C$.
\end{PropSub}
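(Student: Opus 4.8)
The plan is to prove the two implications separately. The forward direction is immediate from the definitions, since $\TT$ is induced by the $\V$-adjunction $P \nsststile{\varepsilon}{\eta} Q : \C \rightarrow \B$ \pbref{par:data_enr_orth_subcat_subord_adj}, so that $T = QP$: if $B$ is $\TT$-separated then by definition $\eta_B:B \to TB$ is an embedding, and $TB = QPB$, so $\eta_B : B \rightarrowtail QC$ with $C := PB$ is the required embedding.

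For the converse, suppose $m : B \rightarrowtail QC$ is an embedding for some $C \in \C$. Naturality of $\eta : 1_\B \to T$ at $m$ yields the commutative square
$$
\xymatrix{
B \ar[r]^{\eta_B} \ar[d]_m & TB \ar[d]^{Tm} \\
QC \ar[r]_{\eta_{QC}}      & TQC
}
$$
that is, $Tm \cdot \eta_B = \eta_{QC} \cdot m$. One of the triangle identities for the $\V$-adjunction reads $Q\varepsilon_C \cdot \eta_{QC} = 1_{QC}$, so $\eta_{QC}$ is a section and hence an embedding by \bref{prop:sect_str_mono}. Being a composite of two embeddings, $\eta_{QC} \cdot m = Tm \cdot \eta_B$ is again an embedding, by the closure of $\StrMono_\V\B$ under composition \pbref{thm:stability_and_canc_for_strmonos}. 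Finally, statement 1 of \bref{thm:stability_and_canc_for_strmonos}, applied to the composite $Tm \cdot \eta_B$, shows that its first factor $\eta_B$ is an embedding, i.e.\ $B$ is $\TT$-separated.

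There is no genuine obstacle here: the argument simply combines the naturality of the unit with the fact that $\eta_{QC}$ is split by $Q\varepsilon_C$ and with the composition and left-cancellation properties of $\V$-strong-monos recorded in \bref{thm:stability_and_canc_for_strmonos}. The one point worth a moment's care is to route the comparison of $\eta_B$ and $m$ through the object $QC$ by means of the naturality square, rather than attempting to relate them directly.
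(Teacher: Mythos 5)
Your proof is correct. It differs from the paper's in the choice of composite to which the left-cancellation property of $\V$-strong-monos is applied. The paper uses the universal property of the unit directly: the embedding $m:B \rightarrowtail QC$ transposes to a morphism $m^\sharp:PB \rightarrow C$ with $Qm^\sharp \cdot \eta_B = m$, so $\eta_B$ is the first factor of a composite that is \emph{already} known to be an embedding, and a single appeal to statement~1 of \bref{thm:stability_and_canc_for_strmonos} finishes the argument. You instead route through the naturality square $Tm \cdot \eta_B = \eta_{QC} \cdot m$, which obliges you to first show that the right-hand side is an embedding; this costs you two extra ingredients (that the split mono $\eta_{QC}$ is a $\V$-strong-mono via \bref{prop:sect_str_mono}, and closure of $\StrMono_\V\B$ under composition) before the same cancellation step can be invoked. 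Both arguments are sound and rest on the same cancellation lemma; the paper's factorization is the more economical one, while yours has the mild virtue of not needing to name the transpose $m^\sharp$ explicitly. Your forward direction agrees with the paper's (which dismisses it as immediate).
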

\begin{proof}
One implication is immediate.  For the other, observe that if $m:B \rightarrowtail QC$ is an embedding then we have a commutative triangle
$$
\xymatrix{
B \ar[r]^{\eta_B} \ar@{ >->}[rd]_m & QPB \ar@{-->}[d]^{Qm^\sharp} \\
                                  & QC                
}
$$
for a unique morphism $m^\sharp$ in $\C$, so $\eta_B \in \M$ by \bref{thm:stability_and_canc_for_strmonos}.
\end{proof}

\begin{CorSub}\label{thm:carriers_of_free_talgs_sep_compl}
For each $B \in \B$, $TB$ is $\Sigma$-complete and $\TT$-separated. 
\end{CorSub}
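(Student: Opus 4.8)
The plan is to dispatch the two assertions separately, each by a direct appeal to a result already established. For $\Sigma$-completeness, I would simply observe that $TB = QPB$ is of the form $QC$ with $C = PB \in \C$; Proposition \bref{thm:tc_complete} then immediately yields that $TB$ is $\Sigma$-complete. The only point requiring a word of care is to record that $PB$ genuinely is an object of $\C$, so that \bref{thm:tc_complete} does apply — but this is automatic since $P:\B \rightarrow \C$.

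For $\TT$-separatedness, I would show that $\eta_{TB}:TB \rightarrow TTB$ is an embedding. The cleanest route is to note that the monad unit law $\mu_B \cdot \eta_{TB} = 1_{TB}$ exhibits $\eta_{TB}$ as a section in $\B$; by Lemma \bref{prop:sect_str_mono} every section in a $\V$-category is a $\V$-strong-mono, i.e. an embedding in the sense of \bref{def:emb}, so $TB$ is $\TT$-separated by Definition \bref{def:compl_sep_closed_dense}. Equivalently, one could instead invoke Proposition \bref{thm:sep_iff_exists_emb} using the identity morphism $1_{QPB}:TB \rightarrowtail QPB$ as the required embedding of $TB$ into an object of the form $QC$ (here again $C = PB$), since identities are isomorphisms and hence embeddings.

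There is essentially no obstacle here: the corollary is a formal consequence of \bref{thm:tc_complete}, \bref{prop:sect_str_mono}, and the monad identities for $\TT$, and amounts to little more than combining the observation $TB = QPB$ with the fact that $\eta_{TB}$ is split by $\mu_B$. The main thing to get right is bookkeeping — that the object fed to \bref{thm:tc_complete} lies in $\C$, and that $\eta_{TB}$ is split on the correct side so that the section-argument applies.
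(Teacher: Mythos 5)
Your proposal is correct and matches the paper's proof, which is exactly the two-line appeal to \bref{thm:tc_complete} (with $C = PB$) and \bref{thm:sep_if{}f\_exists\_emb} (via the identity embedding $TB = QPB$) that you give as your second route. Your primary route for separatedness --- exhibiting $\eta_{TB}$ as a section of $\mu_B$ and invoking \bref{prop:sect_str_mono} --- is an equally valid, marginally more direct variant of the same idea.
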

\begin{proof}
Apply \bref{thm:tc_complete} and \bref{thm:sep_iff_exists_emb}.
\end{proof}

\begin{DefSub} \label{def:completion}
For each $B \in \B$, let 
$$
\xymatrix{
B \ar[rr]^{\eta_B} \ar[dr]_{\rho_B} &                        & TB \\
                                    & KB \ar@{ >->}[ur]_{i_B} & 
}
$$
be the $(\SigmaDense,\SigmaClEmb)$-factorization of $\eta_B$.
\end{DefSub}

\begin{PropSub} \label{thm:kb_sep_compl}
For each $B \in \B$, $KB$ is $\Sigma$-complete and $\TT$-separated.
\end{PropSub}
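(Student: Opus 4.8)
The plan is to read off both conclusions directly from the structural results just established, using the data of the $(\SigmaDense,\SigmaClEmb)$-factorization $B \xrightarrow{\rho_B} KB \xrightarrow{i_B} TB$ fixed in \bref{def:completion}. The key observation is that $i_B$ is by construction a $\Sigma$-closed embedding, hence in particular an embedding (a $\V$-strong-mono, since $\SigmaClEmb = \Sigma^{\downarrow_\V} \cap \StrMono_\V\B$), and that its codomain is $TB = QPB$, a value of the right $\V$-adjoint $Q$.

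First I would dispatch $\TT$-separatedness. Writing $C := PB \in \C$, the morphism $i_B:KB \rightarrowtail TB = QC$ is an embedding into a value of $Q$, so \bref{thm:sep_iff_exists_emb} immediately yields that $KB$ is $\TT$-separated.

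Next I would handle $\Sigma$-completeness. By \bref{thm:tc_complete}, the object $TB = QPB$ is $\Sigma$-complete. Applying \bref{thm:compl_vs_clemb} to the embedding $i_B:KB \rightarrowtail TB$ with $TB$ $\Sigma$-complete: since $i_B$ is $\Sigma$-closed, the criterion there forces $KB$ to be $\Sigma$-complete as well. Combining the two paragraphs gives the claim. There is no real obstacle here — the two invoked propositions do all the work — so the only point requiring care is to note explicitly that the factorization fixed in \bref{def:completion} furnishes precisely an embedding of $KB$ into a value of $Q$, which is what makes both \bref{thm:sep_iff_exists_emb} and \bref{thm:tc_complete}/\bref{thm:compl_vs_clemb} applicable.
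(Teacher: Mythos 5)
Your proof is correct and follows exactly the paper's own argument: both use the $(\SigmaDense,\SigmaClEmb)$-factorization of $\eta_B$ from \bref{def:completion}, derive $\Sigma$-completeness from \bref{thm:tc_complete} together with \bref{thm:compl_vs_clemb} applied to the $\Sigma$-closed embedding $i_B:KB \rightarrowtail TB$, and derive $\TT$-separatedness from \bref{thm:sep_iff_exists_emb}. Nothing to add.
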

\begin{proof}
Since $i_B:KB \rightarrowtail TB$ is a $\Sigma$-closed embedding and $TB$ is $\Sigma$-complete, $KB$ is $\Sigma$-complete by \bref{thm:compl_vs_clemb}, and $KB$ is $\TT$-separated by \bref{thm:sep_iff_exists_emb}. 
\end{proof}

The following lemma and its proof stem from an idea employed in the proof of 3.3 of \cite{CHK} in the non-enriched context.  However, in proving the consequent enriched reflection and adjoint-factorization theorems \bref{thm:sigma_s_complete_are_sep_and_refl}, \bref{thm:adj_factn}, we do not employ the overall approach used in \cite{CHK} in proving the analogous non-enriched results (3.3 and 3.5 there), since we rather must also prove the more general results \bref{thm:refl_orth} and \bref{thm:adj_factn_for_class_of_morphs}, for which no analogue exists in \cite{CHK}.  Rather, the first of these \pbref{thm:refl_orth} is an augmented variant of 2.3 of \cite{Day:AdjFactn}.

\begin{LemSub} \label{thm:dense_and_sent_to_section_implies_iso}
Let $f:B_1 \rightarrow B_2$ be a $\Sigma$-dense morphism for which $Pf:PB_1 \rightarrow PB_2$ is a section.  Then $Pf$ is iso.
\end{LemSub}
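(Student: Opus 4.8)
The plan is to reduce everything to showing that $Pf$ is $\V$-orthogonal to itself, and then invoke \bref{thm:orth_yielding_retr_and_iso}(3) to conclude that $Pf$ is an isomorphism. Using the chosen $\V$-adjunction $P \nsststile{\varepsilon}{\eta} Q : \C \rightarrow \B$ of \bref{par:data_enr_orth_subcat_subord_adj} (whose induced endofunctor is literally $T = QP$) together with \bref{prop:orth_and_adj}(1) applied with $g := Pf$, we have $Pf \downarrow_\V Pf$ if and only if $f \downarrow_\V Q(Pf) = Tf$. Since $f$ is $\Sigma$-dense, i.e. $f \in \SigmaClEmb^{\uparrow_\V}$, it therefore suffices to prove that $Tf : TB_1 \rightarrow TB_2$ is a $\Sigma$-closed embedding, i.e. $Tf \in \Sigma^{\downarrow_\V} \cap \StrMono_\V\B$.

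First I would verify that $Tf$ is an embedding. Since $Pf$ is a section, choose $r : PB_2 \rightarrow PB_1$ in $\C$ with $r \cdot Pf = 1_{PB_1}$; applying the $\V$-functor $Q$ and using $T = QP$ gives $Qr \cdot Tf = Q(r \cdot Pf) = 1_{TB_1}$, so $Tf$ is a section in $\B$, hence a $\V$-strong-mono (embedding) by \bref{prop:sect_str_mono}.

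Next I would verify that $Tf \in \Sigma^{\downarrow_\V}$. Let $h : B_1' \rightarrow B_2'$ be an arbitrary morphism in $\Sigma$. By \bref{par:data_enr_orth_subcat_subord_adj} we have $\Sigma \subs \Sigma_P$, so $Ph$ is an isomorphism in $\C$. Hence the two vertical legs of the orthogonality square \eqref{eqn:orth_pb} for the pair $(Ph, Pf)$ in $\C$ are isomorphisms, so that square is a pullback by \bref{prop:pb_canc}; that is, $Ph \downarrow_\V Pf$ in $\C$. Now applying \bref{prop:orth_and_adj}(1) again, this time with $h$ in the role of the left morphism and still $g = Pf$, yields $h \downarrow_\V Q(Pf) = Tf$ in $\B$. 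As $h \in \Sigma$ was arbitrary, $Tf \in \Sigma^{\downarrow_\V}$.

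Combining the last two paragraphs, $Tf \in \Sigma^{\downarrow_\V} \cap \StrMono_\V\B = \SigmaClEmb$, so $Tf$ is a $\Sigma$-closed embedding; by $\Sigma$-density of $f$ we get $f \downarrow_\V Tf$, hence $Pf \downarrow_\V Pf$, hence $Pf$ is an isomorphism by \bref{thm:orth_yielding_retr_and_iso}(3). There is no genuine obstacle here; the one point to handle with care is matching the two uses of the biconditional in \bref{prop:orth_and_adj}(1) to the correct morphisms and recalling that $Q(Pf) = QPf = Tf$.
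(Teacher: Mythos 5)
Your proof is correct, and it takes a genuinely different route from the paper's. The paper forms the $\V$-pullback of $QPf$ along $\eta_{B_2}$, observes that $QPf$ is a $\Sigma$-closed embedding (a section, hence an embedding by \bref{prop:sect_str_mono}, and in $\Sigma^{\downarrow_\V}$ via \bref{thm:charns_of_sigma_p}), deduces that the pullback leg $v$ through which $f$ factors is a $\Sigma$-closed embedding and hence, by $\Sigma$-density of $f$, an isomorphism; it then reduces to the case $v = 1_{B_2}$ and transposes the resulting equation $QPf \cdot u = \eta_{B_2}$ across the adjunction to exhibit $Pf$ as a retraction, finally combining this with the section hypothesis. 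You instead establish the same key fact --- that $Tf = QPf$ is a $\Sigma$-closed embedding --- and then bypass the pullback and the transposition entirely by transferring orthogonality across the adjunction via \bref{prop:orth_and_adj}: $\Sigma$-density gives $f \downarrow_\V Tf$, hence $Pf \downarrow_\V Pf$, hence $Pf$ is iso by \bref{thm:orth_yielding_retr_and_iso}. (Your hands-on derivation that $h \downarrow_\V Tf$ for $h \in \Sigma$ is a correct unwinding of what \bref{thm:charns_of_sigma_p} packages.) Your argument is shorter, avoids the ``without loss of generality'' normalization of the pullback leg, and uses the section hypothesis only once (to make $QPf$ an embedding), whereas the paper uses it a second time at the very end to upgrade ``retraction'' to ``iso''; the paper's version, on the other hand, makes explicit the geometric picture of $f$ factoring through a closed subobject pulled back from $TB_2$, which is reused conceptually elsewhere in the chapter.
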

\begin{proof}
Form the following commutative diagram
$$
\xymatrix{
B_1 \ar@/^/[drr]^{\eta_{B_1}} \ar@{-->}[dr]|w \ar@/_/[ddr]_f & & \\
                                                         & B \ar[d]_v \ar[r]^u \pullbackcorner & QPB_1 \ar[d]^{QPf} \\
                                                         & B_2 \ar[r]^{\eta_{B_2}}               & {QPB_2}
}
$$
where the square is a $\V$-pullback.  $QPf$ is a section and hence is an embedding (by \bref{prop:sect_str_mono}).  Also, by \bref{thm:charns_of_sigma_p}, we have that $QPf \in \Sigma_P^{\downarrow_\V} \subs \Sigma^{\downarrow_\V}$ (using the fact that $\Sigma \subs \Sigma_P$), so $QPf$ is a $\Sigma$-closed embedding.  Hence since $(\SigmaDense,\SigmaClEmb)$ is a $\V$-factorization-system (by \bref{thm:dense_closed_factn_sys}), we deduce by \bref{thm:lowerclass_stab_pb} that the $\V$-pullback $v$ of $QPf$ is also a $\Sigma$-closed embedding.  But $f = v \cdot w$ is $\Sigma$-dense and hence $v \cdot w \downarrow_\V v$, and we deduce by \bref{thm:orth_yielding_retr_and_iso} that the monomorphism $v$ is a retraction and hence an isomorphism.

Hence we may assume without loss of generality that $v = 1_{B_2}$ (and $B = B_2$).  Then
$$\left(B_2 \xrightarrow{u} QPB_1 \xrightarrow{QPf} QPB_2\right) = \eta_{B_2}\;,$$
and, taking the transposes of each side of this equation under the adjunction $P \nsststile{\varepsilon}{\eta} Q$, we obtain an equation
$$\left(PB_2 \xrightarrow{u^\sharp} PB_1 \xrightarrow{Pf} PB_2\right) = 1_{PB_2}$$
where $u^\sharp$ is the transpose of $u$, so $Pf$ is a retraction.  But by assumption $Pf$ is also a section, so $Pf$ is iso.
\end{proof}

\begin{PropSub} \label{thm:rho_inverted_by_s}
For each $B \in \B$, $\rho_B:B \rightarrow KB$ is inverted by $P$.
\end{PropSub}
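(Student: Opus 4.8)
The plan is to reduce this immediately to Lemma~\bref{thm:dense_and_sent_to_section_implies_iso}, which states that a $\Sigma$-dense morphism sent by $P$ to a section is inverted by $P$. So I need to verify two things about $\rho_B$: that it is $\Sigma$-dense, and that $P\rho_B$ is a section in $\C$.

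The first is immediate from the construction: by \bref{def:completion}, the morphism $\rho_B$ is by definition the first factor of the $(\SigmaDense,\SigmaClEmb)$-factorization of $\eta_B$, hence lies in $\SigmaDense$, i.e.\ is $\Sigma$-dense.

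For the second, I would use the triangle identity for the chosen $\V$-adjunction $P \nsststile{\varepsilon}{\eta} Q$, which gives $\varepsilon_{PB} \cdot P\eta_B = 1_{PB}$, so that $P\eta_B$ is a section in $\C$. Since $\eta_B = i_B \cdot \rho_B$ by \bref{def:completion}, we have $P\eta_B = Pi_B \cdot P\rho_B$, whence $(\varepsilon_{PB} \cdot Pi_B) \cdot P\rho_B = 1_{PB}$, exhibiting $P\rho_B$ as a section. Now \bref{thm:dense_and_sent_to_section_implies_iso} applies to $f := \rho_B$ and yields that $P\rho_B$ is an isomorphism.

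There is essentially no obstacle here — the substantive work has already been done in Lemma~\bref{thm:dense_and_sent_to_section_implies_iso} (and in the verification there that $QPf$ is a $\Sigma$-closed embedding, using $\Sigma \subs \Sigma_P$); the present statement is just the application of that lemma to the canonical $\Sigma$-dense morphism $\rho_B$ arising from the $\Sigma$-closure of $\eta_B$.
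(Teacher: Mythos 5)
Your proof is correct and is essentially the paper's own argument: the paper obtains $\varepsilon_{PB}\cdot Pi_B\cdot P\rho_B = 1_{PB}$ by transposing the equation $i_B\cdot\rho_B=\eta_B$ under the adjunction, which is exactly your triangle-identity computation, and then invokes Lemma \bref{thm:dense_and_sent_to_section_implies_iso} in the same way.
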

\begin{proof}
Taking the transposes of each side of the equation
$$\left(B \xrightarrow{\rho_B} KB \xrightarrow{i_B} QPB\right) = \eta_B$$
under the adjunction $P \nsststile{\varepsilon}{\eta} Q$, we obtain
$$\left(PB \xrightarrow{P\rho_B} PKB \xrightarrow{Pi_B} PQPB \xrightarrow{\varepsilon_{PB}} PB\right) = 1_{PB}\;,$$
so $P\rho_B$ is a section, so since $\rho_B$ is $\Sigma$-dense, \bref{thm:dense_and_sent_to_section_implies_iso} applies, and we deduce that $P\rho_B$ is iso.
\end{proof}

\begin{PropSub} \label{thm:rho_orth_to_each_sep_compl_ob}
Let $B,B' \in \B$ and suppose $B'$ is $\TT$-separated and $\Sigma$-complete.  Then $\rho_B \bot_\V B'$.
\end{PropSub}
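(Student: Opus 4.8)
The plan is to deduce $\rho_B \bot_\V B'$ from the single instance of orthogonality $\rho_B \downarrow_\V \eta_{B'}$ together with the fact that $\rho_B$ is already orthogonal to $TB'$. First, since $B'$ is $\TT$-separated, the unit $\eta_{B'}:B' \rightarrowtail TB'$ is an embedding, and since $B'$ is additionally $\Sigma$-complete, \bref{thm:sepobj_compl_iff_eta_closed} tells us that $\eta_{B'}$ is in fact a $\Sigma$-closed embedding. On the other hand $\rho_B$ is $\Sigma$-dense by its definition \pbref{def:completion}, and $\SigmaDense = \SigmaClEmb^{\uparrow_\V}$, so $\rho_B \downarrow_\V \eta_{B'}$. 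Spelling out the orthogonality square \eqref{eqn:orth_pb} for the pair $(\rho_B, \eta_{B'})$, this says precisely that the commutative square with vertical legs $\B(\rho_B, B'):\B(KB,B') \rightarrow \B(B,B')$ and $\B(\rho_B, TB'):\B(KB,TB') \rightarrow \B(B,TB')$ and horizontal legs induced by $\eta_{B'}$ is a pullback in $\V$.

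Second, I would show $\B(\rho_B, TB')$ is an isomorphism, i.e. $\rho_B \bot_\V TB'$. Here I use that $T = QP$, so $TB' = QPB'$; by \bref{thm:rho_inverted_by_s}, $P\rho_B$ is an isomorphism, so $\rho_B \in \Sigma_P$, and then \bref{thm:charns_of_sigma_p} (or directly \bref{prop:orth_and_adj}(2)) gives $\rho_B \bot_\V QC$ for every $C \in \C$; take $C = PB'$.

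Finally, in the pullback square above the leg $\B(\rho_B, TB')$ parallel to $\B(\rho_B, B')$ is an isomorphism, so $\B(\rho_B, B')$ is one too, since a pullback of an isomorphism is an isomorphism. That is $\rho_B \bot_\V B'$, as required. I do not expect a genuine obstacle here; the one point that needs care is that both hypotheses on $B'$ are used in an essential but complementary way --- $\TT$-separatedness only to produce the embedding $\eta_{B'}$, and $\Sigma$-completeness precisely to promote that embedding to a $\Sigma$-closed one, which is what allows the $\Sigma$-density of $\rho_B$ to be brought to bear.
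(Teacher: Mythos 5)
Your proof is correct and follows essentially the same route as the paper's: both establish that $\eta_{B'}$ is a $\Sigma$-closed embedding via \bref{thm:sepobj_compl_iff_eta_closed} and hence that $\rho_B \downarrow_\V \eta_{B'}$, both show $\B(\rho_B,TB')$ is iso from the fact that $P\rho_B$ is iso (your appeal to \bref{thm:charns_of_sigma_p} just packages the adjunction-square computation the paper writes out explicitly), and both conclude with \bref{prop:pb_iso}.
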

\begin{proof}
We have a commutative diagram as follows.
$$
\xymatrix{
\B(KB,B') \ar[rr]^{\B(\rho_B,B')} \ar[d]_{\B(KB,\eta_{B'})} & & \B(B,B') \ar[d]^{\B(B,\eta_{B'})} \\
\B(KB,QPB') \ar[rr]^{\B(\rho_B,QPB')} \ar[d]^\wr           & & \B(B,QPB') \ar[d]^\wr \\
\C(PKB,PB') \ar[rr]^{\C(P\rho_B,PB')}                       & & \C(PB,PB')
}
$$
Since $B'$ is $\TT$-separated and $\Sigma$-complete, we have by \bref{thm:sepobj_compl_iff_eta_closed} that $\eta_{B'}$ is a $\Sigma$-closed embedding, so since $\rho_B$ is $\Sigma$-dense, $\rho_B \downarrow_\V \eta_{B'}$, so the upper square is a pullback.  Also, $P\rho_B$ is iso by \bref{thm:rho_inverted_by_s}, so the left, bottom, and right sides of the lower square are iso.  Therefore $\B(\rho_B,QPB')$ is iso and hence its pullback $\B(\rho_B,B')$ is iso (by \bref{prop:pb_iso}).
\end{proof}

The following is a variant of Day's 2.3 \cite{Day:AdjFactn}.  Note however that Day's result applies to a given adjunction whose unit is assumed to lie componentwise within the right-class of a fixed proper $\V$-factorization-system, so that via Day's approach one may obtain an analogue of the following through a two-step process by first invoking the statements \bref{thm:sep_refl} and \bref{thm:adj_factn_lemma}.  We avoid this two-step approach, choosing instead to study directly the interplay in $\B$ of the two notions we have termed \textit{$\TT$-separatedness} and \textit{$\Sigma$-completeness}.

\begin{ThmSub} \label{thm:refl_orth}
Let $\TT$ be a $\V$-monad on a $\V$-finitely-well-complete cotensored $\V$-category $\B$, and let $\Sigma \subs \Sigma_T$.  Then the morphisms $\rho_B:B \rightarrow KB$ ($B \in \B$) of \bref{def:completion} exhibit the $\V$-category $\B_{(\TT,\Sigma)}$ of $\Sigma$-complete $\TT$-separated objects \pbref{def:compl_sep_closed_dense} as a $\V$-reflective-subcategory of $\B$.  
\end{ThmSub}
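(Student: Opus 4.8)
The plan is to verify that the full sub-$\V$-category $\B_{(\TT,\Sigma)}$ is replete and that for each $B \in \B$ the morphism $\rho_B : B \rightarrow KB$ is a $\V$-reflection arrow, i.e. that for every $\Sigma$-complete $\TT$-separated object $B'$ the induced morphism
$$\B(\rho_B, B') : \B(KB, B') \rightarrow \B(B, B')$$
is an isomorphism in $\V$. Repleteness is routine: being $\TT$-separated means $\eta_B$ is an embedding, and being $\Sigma$-complete means $B \in \B_\Sigma$; both conditions are manifestly stable under isomorphism (the latter since $\B_\Sigma$ is always replete, as remarked before \bref{thm:refl_subcats_are_orth_subcats}). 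The substantive content is the universal property of $\rho_B$.

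**Key steps.** First I would recall from \bref{thm:kb_sep_compl} that $KB$ itself lies in $\B_{(\TT,\Sigma)}$, so that $\rho_B$ is at least a morphism \emph{into} the subcategory. Next, the heart of the argument: the orthogonality statement $\rho_B \bot_\V B'$ for every $\Sigma$-complete $\TT$-separated $B'$ is precisely \bref{thm:rho_orth_to_each_sep_compl_ob}, which says exactly that $\B(\rho_B, B')$ is an isomorphism in $\V$. Combining these, the family $(\rho_B : B \rightarrow KB)_{B \in \B}$ provides, for each $B$, a universal arrow from $B$ to the inclusion $\V$-functor $J : \B_{(\TT,\Sigma)} \hookrightarrow \B$, in the strong enriched sense that $\B(KB, -) \cong \B(B, J-)$ $\V$-naturally on $\B_{(\TT,\Sigma)}$. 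By \bref{prop:adjn_via_radj_and_unit} (with $G := J$, $FB := KB$, $\eta_B := \rho_B$), this data assembles into a $\V$-functor $K : \B \rightarrow \B_{(\TT,\Sigma)}$ together with a $\V$-adjunction $K \dashv J$ with unit $\rho$; since $J$ is the inclusion of a full replete sub-$\V$-category, this is by definition a $\V$-reflection, exhibiting $\B_{(\TT,\Sigma)}$ as a $\V$-reflective-subcategory.

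**Main obstacle.** Essentially all the real work has been front-loaded into the preceding lemmas --- in particular \bref{thm:dense_and_sent_to_section_implies_iso}, \bref{thm:rho_inverted_by_s}, and \bref{thm:rho_orth_to_each_sep_compl_ob} --- so the proof of the theorem itself is chiefly a matter of assembling them and invoking the enriched representability machinery of \bref{prop:adjn_via_radj_and_unit}. The only point requiring a moment's care is confirming that the hypothesis of \bref{prop:adjn_via_radj_and_unit} is met in the \emph{enriched} sense: one needs the composite $\phi_{B B'} := \B(\rho_B, JB') \cdot J_{KB, B'}$ to be an isomorphism in $\V$ for all $B \in \B$, $B' \in \B_{(\TT,\Sigma)}$. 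But $J$ is a full inclusion, so $J_{KB,B'}$ is the identity on $\B(KB,B')$, and then $\phi_{B B'} = \B(\rho_B, B')$, which is iso by \bref{thm:rho_orth_to_each_sep_compl_ob}. Thus the hypothesis holds and the conclusion follows immediately; no delicate calculation remains.
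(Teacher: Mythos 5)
Your proof is correct and follows essentially the same route as the paper's: cite \bref{thm:kb_sep_compl} to see that $KB \in \B_{(\TT,\Sigma)}$, cite \bref{thm:rho_orth_to_each_sep_compl_ob} to see that $\B(\rho_B,B')$ is an isomorphism in $\V$ for each $B'$ in the subcategory, and then invoke \bref{prop:adjn_via_radj_and_unit}. Your extra remark that the fullness of the inclusion $J$ makes the hypothesis of \bref{prop:adjn_via_radj_and_unit} reduce to exactly this isomorphism is precisely what the paper records by writing $\B_{(\TT,\Sigma)}(KB,B') = \B(KB,B')$.
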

\begin{proof}
For each $B \in \B$ we have by \bref{thm:kb_sep_compl} that $KB$ lies in $\B_{(\TT,\Sigma)}$, and for each $B' \in \B_{(\TT,\Sigma)}$,
$$\B_{(\TT,\Sigma)}(KB,B') = \B(KB,B') \xrightarrow{\B(\rho_B,B')} \B(B,B')$$
is iso by \bref{thm:rho_orth_to_each_sep_compl_ob}.  The result follows by \bref{prop:adjn_via_radj_and_unit}.
\end{proof}

\begin{CorSub} \label{thm:sep_refl}
The full sub-$\V$-category of $\B$ consisting of the $\TT$-separated objects is $\V$-reflective in $\B$.
\end{CorSub}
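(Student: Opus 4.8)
The plan is to obtain this as an immediate special case of Theorem~\bref{thm:refl_orth}, by choosing the class $\Sigma$ as small as possible so that the $\Sigma$-completeness condition becomes vacuous and $\B_{(\TT,\Sigma)}$ reduces to the full sub-$\V$-category of $\TT$-separated objects.

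First I would take $\Sigma := \Iso\B$. Since the $\V$-functor $T$ preserves isomorphisms, every isomorphism of $\B$ is inverted by $T$, so $\Sigma = \Iso\B \subs \Sigma_T$, and hence the hypotheses of \bref{par:data_enr_orth_subcat_subord_adj} (equivalently, of \bref{thm:refl_orth}) are met for this choice, the ambient hypotheses on $\B$ already being in force. Next I would observe that with this $\Sigma$ every object of $\B$ is $\Sigma$-complete: for any isomorphism $f:A_1 \rightarrow A_2$ in $\B$ and any object $B$, the morphism $\B(f,B):\B(A_2,B) \rightarrow \B(A_1,B)$ is an isomorphism in $\V$, so $f \bot_\V B$; hence $B \in \Sigma^{\bot_\V}$ for every $B$, i.e. $\B_\Sigma = \B$. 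Consequently the full replete sub-$\V$-category $\B_{(\TT,\Sigma)}$ of $\Sigma$-complete $\TT$-separated objects \pbref{def:compl_sep_closed_dense} coincides with the full sub-$\V$-category of $\TT$-separated objects.

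Finally, applying Theorem~\bref{thm:refl_orth} to this choice of $\Sigma$ exhibits $\B_{(\TT,\Sigma)}$ --- that is, the full sub-$\V$-category of $\TT$-separated objects --- as a $\V$-reflective-subcategory of $\B$, with reflection unit the morphisms $\rho_B:B \rightarrow KB$ of \bref{def:completion}; repleteness is automatic from \bref{thm:refl_orth}. I expect no genuine obstacle here: the only thing to check is the triviality of $\V$-orthogonality to isomorphisms, which is immediate, so the corollary follows at once.
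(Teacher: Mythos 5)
Your proof is correct and follows essentially the same route as the paper, which specializes Theorem~\bref{thm:refl_orth} by taking $\Sigma = \emptyset$ rather than $\Sigma = \Iso\B$; both choices make $\Sigma$-completeness vacuous, so the difference is purely cosmetic.
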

\begin{proof}
Taking $\Sigma = \emptyset$ in \bref{thm:refl_orth}, the objects of $\B_{(\TT,\Sigma)}$ are exactly the $\TT$-separated objects of $\B$.
\end{proof}

\begin{CorSub} \label{thm:sigma_s_complete_are_sep_and_refl}
Every $\Sigma_T$-complete object of $\B$ is $\TT$-separated, so $\B_{\Sigma_T} = \B_{(\TT,\Sigma_T)}$.  Hence the $\V$-category $\B_{\Sigma_T}$ of $\Sigma_T$-complete objects is a $\V$-reflective-subcategory of $\B$.
\end{CorSub}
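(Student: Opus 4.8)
The plan is to deduce the statement from \bref{thm:refl_orth} by showing that, when $\Sigma = \Sigma_T$, the clause ``$\TT$-separated'' in the description of $\B_{(\TT,\Sigma_T)}$ is redundant. Since $\B_{(\TT,\Sigma_T)} \subs \B_{\Sigma_T}$ holds by \bref{def:compl_sep_closed_dense}, it suffices to prove the reverse inclusion, i.e.\ that every $\Sigma_T$-complete object $B$ is automatically $\TT$-separated; the two $\V$-categories then coincide, and the $\V$-reflectivity of $\B_{\Sigma_T}$ in $\B$ is exactly what \bref{thm:refl_orth} provides, applied with $\Sigma = \Sigma_T$.

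So I would fix a $\Sigma_T$-complete object $B$ and work throughout with $\Sigma = \Sigma_T$, recalling that $\Sigma_T = \Sigma_P$ for the chosen inducing $\V$-adjunction $P \dashv Q$ by \bref{thm:all_ladjs_ind_given_monad_inv_same_morphs}. Consider the $(\SigmaDense,\SigmaClEmb)$-factorization $\eta_B = i_B \cdot \rho_B$ of the unit, through $KB$, as in \bref{def:completion}. The decisive observation is that $\rho_B$ itself lies in $\Sigma$: by \bref{thm:rho_inverted_by_s}, $P\rho_B$ is an isomorphism, so $\rho_B \in \Sigma_P = \Sigma_T = \Sigma$. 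Since $B$ is $\Sigma_T$-complete, that is $B \in \B_\Sigma = \Sigma^{\bot_\V}$, we obtain $\rho_B \bot_\V B$; and as $\B$ is $\V$-finitely-well-complete it has a $\V$-terminal object, so by the remark following \bref{def:orth_subcat_sigma} this reads $\rho_B \downarrow_\V\, !_B$. Writing $!_B = !_{KB} \cdot \rho_B$ and invoking \bref{thm:orth_yielding_retr_and_iso}, it follows that $\rho_B$ is a section.

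The remainder is formal: a section is a $\V$-strong-mono by \bref{prop:sect_str_mono}, $i_B$ is a $\V$-strong-mono by construction, and $\V$-strong-monos are closed under composition by \bref{thm:stability_and_canc_for_strmonos}, so $\eta_B = i_B \cdot \rho_B$ is a $\V$-strong-mono, i.e.\ an embedding; hence $B$ is $\TT$-separated, completing the inclusion $\B_{\Sigma_T} \subs \B_{(\TT,\Sigma_T)}$ and with it the equality $\B_{\Sigma_T} = \B_{(\TT,\Sigma_T)}$, and the reflectivity follows. I do not anticipate a genuine obstacle; the only substantive input is the membership $\rho_B \in \Sigma_T$, which is precisely \bref{thm:rho_inverted_by_s} together with \bref{thm:all_ladjs_ind_given_monad_inv_same_morphs} --- the one point that reaches back to the \cite{CHK}-style argument underlying \bref{thm:dense_and_sent_to_section_implies_iso}.
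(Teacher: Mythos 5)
Your proposal is correct. It shares with the paper's proof the one substantive input --- that the reflection unit produced by \bref{def:completion} is inverted by $P$ and hence lies in $\Sigma_P = \Sigma_T$ \pbref{thm:rho_inverted_by_s}, so that it is $\V$-orthogonal to any $\Sigma_T$-complete object --- but the two arguments extract $\TT$-separatedness from this orthogonality in different ways. The paper instantiates the whole machinery at $\Sigma = \emptyset$: it forms the reflection onto the $\TT$-separated objects \pbref{thm:sep_refl}, observes that its unit components $\sigma_B$ lie in $\Sigma_T$, and then invokes \bref{thm:refl_subcats_are_orth_subcats} to conclude that a $\Sigma_T$-complete object, being orthogonal to all the $\sigma_B$, already lies in that reflective subcategory. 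You instead stay at $\Sigma = \Sigma_T$ and argue directly on the $(\Dense{\Sigma_T},\ClEmb{\Sigma_T})$-factorization $\eta_B = i_B \cdot \rho_B$: from $\rho_B \bot_\V B$ you pass, via the terminal object guaranteed by finite well-completeness and the remark after \bref{def:orth_subcat_sigma}, to $\rho_B \downarrow_\V {!_{KB}} \cdot \rho_B$, whence $\rho_B$ is a section by \bref{thm:orth_yielding_retr_and_iso}\,(2), and $\eta_B$ is a composite of $\V$-strong-monos. Each step checks out (in particular the application of the section criterion with $f = \rho_B$, $g = {!_{KB}}$ is correct). What your route buys is independence from \bref{thm:sep_refl} and \bref{thm:refl_subcats_are_orth_subcats}, at the modest cost of using the terminal object and the orthogonality-to-objects remark; what the paper's route buys is that it never needs to look inside the factorization of $\eta_B$ at all, only at which reflective subcategory $B$ belongs to. A small further shortcut available to you: since $KB \in \B_{\Sigma_T}$ by \bref{thm:kb_sep_compl}, the morphism $\rho_B$ lies in $\B_{\Sigma_T}$ as well as in $\Sigma_T$, so \bref{thm:orth_of_morphs_in_orth_subcat} gives $\rho_B \downarrow_\V \rho_B$ and hence $\rho_B$ iso outright, bypassing the terminal object.
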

\begin{proof}
By \bref{thm:sep_refl} we know that the full sub-$\V$-category $\B_{(\TT,\emptyset)}$ of $\B$ consisting of the $\TT$-separated objects is $\V$-reflective in $\B$, and we denote the components of the unit of the associated $\V$-reflection by $\sigma_B:B \rightarrow LB$ ($B \in \B$).  Hence, $\sigma_B$ is gotten as the morphism $\rho_B$ of \bref{def:completion} in the case that $\Sigma = \emptyset$.  By \bref{thm:rho_inverted_by_s} we know that each such component $\sigma_B$ is inverted by $P$ --- i.e. $\sigma_B \in \Sigma_P = \Sigma_T$.  Hence, given any $\Sigma_T$-complete object $B'$ of $\B$, we have that $\sigma_B \bot_\V B'$ for every $B \in \B$, so by \bref{thm:refl_subcats_are_orth_subcats}, $B' \in \B_{(\TT,\emptyset)}$.  
\end{proof}

In view of \bref{thm:sigma_s_complete_are_sep_and_refl}, we shall extend the terminology of \bref{def:compl_sep_closed_dense} as follows:

\begin{DefSub}\label{def:tcomplete}
We say that an object $B$ of $\B$ is \textit{$\TT$-complete} if $B$ is $\Sigma_T$-complete.  We denote the resulting $\V$-reflective-subcategory of $\B$ by $\B_{(\TT)} := \B_{\Sigma_T} = \B_{(\TT,\Sigma_T)}$, noting that by \bref{thm:sigma_s_complete_are_sep_and_refl}, every $\TT$-complete object is $\TT$-separated.
\end{DefSub}

\begin{ExaSub}\label{exa:func_complete_rmods}
Returning to the example of \bref{exa:func_distnl_compl}, where $\B = \RMod(\X)$ and $\TT = \HH$ is the double-dualization monad, the $\HH$-complete objects are the \textit{functionally complete} $R$-modules in $\X$.
\end{ExaSub}

\section{Generalities on factorization of an adjunction through a reflection}

\begin{LemSub} \label{thm:adj_factn_lemma}
Suppose given a $\V$-adjunction $P \nsststile{\varepsilon}{\eta} Q : \C \rightarrow \B$ and a $\V$-reflection $K \nsststile{}{\rho} J : \B' \hookrightarrow \B$ such that the image of $Q$ lies in $\B'$.  Then there is a $\V$-adjunction $P' \nsststile{\varepsilon'}{\eta'} Q':\C \rightarrow \B'$ with $JQ' = Q$, $P' = PJ$, $J\eta' = \eta J$, and $\varepsilon' = \varepsilon$.
\end{LemSub}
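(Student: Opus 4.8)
The plan is to produce all the required data by hand, using only that the inclusion $J:\B' \hookrightarrow \B$ of a $\V$-reflective-subcategory is $\V$-fully-faithful (since $\B'$ is by definition a full replete sub-$\V$-category of $\B$), and then to verify the two triangle identities directly in the 2-category $\VCAT$; the reflector $K$ and the unit $\rho$ play no role in this lemma.

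First I would construct $Q'$. Since $\B'$ is full and replete and the image of $Q$ lies in $\B'$, every object $QC$ lies in $\B'$; set $Q'C := QC$. The structure morphism $Q_{C_1 C_2}:\C(C_1,C_2)\to\B(QC_1,QC_2)$ factors through the isomorphism $J_{Q'C_1,Q'C_2}:\B'(Q'C_1,Q'C_2)\to\B(QC_1,QC_2)$, so by \bref{thm:factn_through_faithful_vfunctor} there is a unique $\V$-functor $Q':\C\to\B'$ with $JQ'=Q$. Put $P':=PJ:\B'\to\C$; then $P'Q'=P(JQ')=PQ$ as $\V$-functors, so $\varepsilon':=\varepsilon:P'Q'\to 1_\C$ is a well-defined $\V$-natural transformation. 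For the unit, for each $B'\in\B'$ the morphism $\eta_{JB'}:JB'\to QPJB'=J(Q'P'B')$ runs between objects in the image of $J$, so by fullness and faithfulness of $J$ there is a unique $\eta'_{B'}:B'\to Q'P'B'$ in $\B'$ with $J\eta'_{B'}=\eta_{JB'}$; thus $J\eta'=\eta J$. Since $\eta J$ is $\V$-natural (being a whiskering of $\eta$) and $J$ is $\V$-fully-faithful — so that a family of morphisms in $\B'$ is $\V$-natural exactly when its image under $J$ is — the family $\eta'$ is $\V$-natural.

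It then remains to verify the triangle identities for $(P',Q',\eta',\varepsilon')$. For $\varepsilon'P'\cdot P'\eta'=1_{P'}$ I would compute directly: using $J\eta'=\eta J$ and $P'=PJ$, the left-hand side equals $\varepsilon PJ\cdot PJ\eta' = \varepsilon PJ\cdot P\eta J = (\varepsilon P\cdot P\eta)J = 1_P J = 1_{P'}$. For $Q'\varepsilon'\cdot\eta'Q'=1_{Q'}$ I would apply the faithful underlying functor of $J$: using $JQ'=Q$, $\varepsilon'=\varepsilon$ and $J\eta'=\eta J$, one obtains $J(Q'\varepsilon'\cdot\eta'Q') = Q\varepsilon\cdot\eta Q = 1_Q = J(1_{Q'})$, and faithfulness of $J$ forces $Q'\varepsilon'\cdot\eta'Q'=1_{Q'}$. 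This yields the desired $\V$-adjunction $P'\dashv Q'$, and $JQ'=Q$, $P'=PJ$, $J\eta'=\eta J$, $\varepsilon'=\varepsilon$ all hold by construction. (Alternatively one could invoke \bref{prop:adjn_via_radj_and_unit}, transporting the structure isomorphisms $\C(PJB',C)\cong\B(JB',QC)\cong\B'(B',Q'C)$ across the $\V$-fully-faithful $J$, but the direct check above is shorter.)

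The main obstacle will be purely the enriched bookkeeping: making sure that $Q'$ is a genuine $\V$-functor with $JQ'=Q$ on the nose (rather than merely up to isomorphism) and that $\eta'$ is $\V$-natural rather than only ordinarily natural. Both reduce to the $\V$-fully-faithfulness of $J$, which is available because $\B'$ is a full replete sub-$\V$-category of $\B$; once that is in hand, the rest is a formal manipulation of whiskerings and vertical composites in $\VCAT$, with faithfulness of $J$ invoked only to transport the single triangle identity that lives over $\B'$.
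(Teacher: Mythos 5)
Your proof is correct and follows essentially the same route as the paper's (which simply defines $Q'$ as the corestriction of $Q$, takes $\eta'$ componentwise equal to $\eta$, and declares the triangle identities "readily verified"). You have merely supplied the details the paper omits, with the key points — $\V$-fully-faithfulness of $J$ giving both the $\V$-functoriality of $Q'$ and the $\V$-naturality of $\eta'$, and faithfulness of $J$ transporting the $Q$-side triangle identity — all in order.
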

\begin{proof}
$Q'$ is just the corestriction of $Q$, the components of $\eta'$ are just those of $\eta$; the $\V$-naturality of $\eta'$ is immediate, and the triangular equations are readily verified.
\end{proof}

The following terminology is motivated by \bref{thm:adj_factn_lemma}:

\begin{DefSub} \label{def:adj_factors_through_refl}
Let $P \nsststile{\varepsilon}{\eta} Q : \C \rightarrow \B$ be a $\V$-adjunction, and let $\B'$ be a $\V$-reflective-subcategory of $\B$.  We say that $P \nsststile{\varepsilon}{\eta} Q$ \textit{factors through $\B'$} if the image of $Q$ lies in $\B'$.  Given a $\V$-reflection $K \nsststile{}{\rho} J:\B' \hookrightarrow \B$, we say that \textit{$P \nsststile{\varepsilon}{\eta} Q$ factors through $K \nsststile{}{\rho} J$} if $P \nsststile{\varepsilon}{\eta} Q$ factors through $\B'$, and in this case we call the associated adjunction $P' \nsststile{\varepsilon'}{\eta'} Q':\C \rightarrow \B'$ of \bref{thm:adj_factn_lemma} the \textit{induced $\V$-adjunction}.
\end{DefSub}

\begin{LemSub} \label{thm:uniqueness_of_mnd_morph}
Suppose $\SSS = (S,\rho,\lambda)$ and $\TT = (T,\eta,\mu)$ are $\V$-monads on a $\V$-category $\B$, and suppose that $\SSS$ is idempotent.  If a morphism of $\V$-monads $\alpha:\SSS \rightarrow \TT$ exists, then it is unique, and its component at each $B \in \B$ is characterized as the unique morphism $\alpha_B$ such that
\begin{equation}\label{eqn:mm_unit_diag}
\xymatrix {
B \ar[rr]^{\eta_B} \ar[dr]_{\rho_B} &                       & TB \\
                                    & SB \ar[ur]_{\alpha_B} &
}
\end{equation}
commutes.
\end{LemSub}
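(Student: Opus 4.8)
The plan is to read off the characterization of $\alpha_B$ from the unit law of a monad morphism, and then to deduce uniqueness of $\alpha$ componentwise. Since both $\SSS$ and $\TT$ are $\V$-monads on the same $\V$-category $\B$, a morphism of $\V$-monads $\alpha:\SSS \rightarrow \TT$ is, in the sense of \bref{par:adj_and_mnd} (taken in $\K = \VCAT$), a $\V$-natural transformation $\alpha:S \rightarrow T$ satisfying $\alpha \cdot \rho = \eta$ and $\mu \cdot (\alpha \circ \alpha) = \alpha \cdot \lambda$. The unit law $\alpha \cdot \rho = \eta$ is exactly the assertion that each triangle \eqref{eqn:mm_unit_diag} commutes, so $\alpha_B$ is certainly \emph{a} morphism $SB \rightarrow TB$ with the stated property; all that remains is to show there is only one such morphism, since then any two $\V$-monad morphisms $\SSS \rightarrow \TT$ have equal components and hence coincide.

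The crux is the observation that for every $B \in \B$ the object $TB$ lies in the sub-$\V$-category $\B^{(\SSS)}$ \pbref{def:refl_subcats_and_idm_mnds}, i.e.\ that $\rho_{TB}$ is an isomorphism, with inverse $k := \mu_B \cdot \alpha_{TB}:STB \rightarrow TB$. Indeed, using the unit law of $\alpha$ at the object $TB$ and then the unit law of $\TT$ we get $k \cdot \rho_{TB} = \mu_B \cdot \alpha_{TB} \cdot \rho_{TB} = \mu_B \cdot \eta_{TB} = 1_{TB}$; and $\V$-naturality of $\rho:1_\B \rightarrow S$ applied to $k$ gives $\rho_{TB} \cdot k = Sk \cdot \rho_{STB}$, which, since $\SSS$ is idempotent and so $\rho_{STB} = S\rho_{TB}$ \pbref{par:idm_mnd}, equals $Sk \cdot S\rho_{TB} = S(k \cdot \rho_{TB}) = 1_{STB}$. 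Hence $\rho_{TB}$ is iso, so $TB \in \B^{(\SSS)}$. (Alternatively, one may identify $(TB,\ \mu_B \cdot \alpha_{TB})$ as the image under the pullback-along-$\alpha$ functor of the free $\TT$-algebra on $B$, via \bref{prop:enr_func_induced_by_mnd_mor}, and then appeal to \bref{thm:em_adj_idm_mnd}; but the direct computation above avoids any hypothesis on the existence of $\B^\TT$. Note that only the unit law for $\alpha$ is used.)

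Finally, since $\SSS$ is idempotent the sub-$\V$-category $\B^{(\SSS)}$ is $\V$-reflective with reflection unit the transformation $\rho$ \pbref{thm:refl_idemp_mnd}, so by \bref{thm:refl_subcats_are_orth_subcats} we have $\B^{(\SSS)} = \B_\Sigma$ for $\Sigma = \{\rho_{B'} \mid B' \in \B\}$; in particular every $\rho_{B'}$ is $\V$-orthogonal to every object of $\B^{(\SSS)}$, so $\rho_B \bot_\V TB$, i.e.\ $\B(\rho_B,TB):\B(SB,TB) \rightarrow \B(B,TB)$ is an isomorphism in $\V$. Passing to the underlying ordinary category of $\B$, precomposition with $\rho_B$ is therefore a bijection from the morphisms $SB \rightarrow TB$ onto the morphisms $B \rightarrow TB$, so $\alpha_B$ is the unique morphism with $\alpha_B \cdot \rho_B = \eta_B$, as claimed. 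The only step requiring any genuine work is the verification that $\rho_{TB}$ is an isomorphism; once that is granted the uniqueness is a formal consequence of the orthogonality of reflection units, and I anticipate no real obstacle.
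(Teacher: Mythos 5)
Your proof is correct and follows essentially the same strategy as the paper's: first establish that each $TB$ lies in the reflective subcategory $\B^{(\SSS)}$, then invoke the universal property (orthogonality) of the reflection unit $\rho_B$ to conclude that $\alpha_B$ is the unique morphism $SB \rightarrow TB$ extending $\eta_B$. The only difference is in how the first step is carried out: the paper notes that the induced functor $\B^\alpha:\B^\TT \rightarrow \B^\SSS$ commutes with the forgetful functors, so that $TB$ carries an $\SSS$-algebra structure and hence $\rho_{TB}$ is invertible, whereas you exhibit the inverse $\mu_B \cdot \alpha_{TB}$ explicitly and verify both composites by hand. Your computation is a transparent unpacking of the same fact (the $\SSS$-algebra structure that $\B^\alpha$ puts on $TB$ is precisely $\mu_B \cdot \alpha_{TB}$), and it has the minor merits of avoiding any appeal to Eilenberg--Moore categories and of making visible that only the unit law for $\alpha$ is actually used.
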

\begin{proof}
Suppose given a morphism of $\V$-monads $\alpha:\SSS \rightarrow \TT$.  We shall work only with the underlying morphism of ordinary monads, for which we will use the same notation.  Let $K \nsststile{}{\rho} J : \B' \hookrightarrow \B$ be the reflection determined by the idempotent monad $\SSS$, so that $\B'$ may be identified with $\B^\SSS$.  The monad morphism $\alpha$ induces a functor \mbox{$\B^\alpha:\B^\TT \rightarrow \B^\SSS = \B'$} \pbref{prop:enr_func_induced_by_mnd_mor} which commutes with the forgetful functors.  Hence, for each $B \in \B$, the carrier $TB$ of the free $\TT$-algebra on $B$ lies in $\B'$, and \eqref{eqn:mm_unit_diag} commutes since $\alpha$ is a monad morphism, so $\alpha_B$ is the unique extension of $\eta_B$ along the reflection unit $\rho_B$.
\end{proof}

\begin{PropSub}\label{thm:charns_adj_factn_via_mnd}
Let $\TT$ be a $\V$-monad, and let $P \nsststile{\varepsilon}{\eta} Q : \C \rightarrow \B$ be any $\V$-adjunction inducing $\TT$.  Let $K \nsststile{}{\rho} J : \B' \hookrightarrow \B$ be a $\V$-reflection with induced idempotent $\V$-monad $\SSS$.  Then the following are equivalent:
\begin{enumerate}
\item There exists a \textnormal{(}necessarily unique, \bref{thm:uniqueness_of_mnd_morph}\textnormal{)} morphism of $\V$-monads $\alpha:\SSS \rightarrow \TT$.
\item $T$ factors through $J:\B' \hookrightarrow \B$.
\item $\B'$ contains the carriers of all free $\TT$-algebras.
\item $P \nsststile{\varepsilon}{\eta} Q$ factors through $K \nsststile{}{\rho} J$.
\end{enumerate}
\end{PropSub}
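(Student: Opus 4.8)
The plan is to verify the easy equivalence $(2)\Leftrightarrow(3)$ directly and then close the cycle $(1)\Rightarrow(2)\Rightarrow(4)\Rightarrow(1)$; the parenthetical uniqueness in $(1)$ is exactly \bref{thm:uniqueness_of_mnd_morph}, so nothing further is needed for it. For $(2)\Leftrightarrow(3)$: $J:\B'\hookrightarrow\B$ is the inclusion of a replete full sub-$\V$-category and is $\V$-fully-faithful, so a $\V$-functor $T'$ with $JT'=T$ exists iff every $TB$ lies in $\B'$ (in which case $T'$ is just the corestriction of $T$), and the objects $TB$ are precisely the carriers of the free $\TT$-algebras $(TB,\mu_B)$; hence both conditions assert that $\B'$ contains all the $TB$.

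For $(1)\Rightarrow(2)$: a $\V$-monad morphism $\alpha:\SSS\rightarrow\TT$ induces, by \bref{prop:enr_func_induced_by_mnd_mor} with trivial base component, a $\V$-functor $\B^\TT\rightarrow\B^\SSS$ commuting with the forgetful $\V$-functors, so each free $\TT$-algebra $(TB,\mu_B)$ is carried to an $\SSS$-algebra with the same carrier $TB$; thus $TB\in\B^{(\SSS)}$, which by \bref{thm:em_adj_idm_mnd} together with \bref{thm:refl_idemp_mnd} is exactly the $\V$-reflective-subcategory $\B'$ corresponding to the idempotent $\V$-monad $\SSS$. Hence $\B'$ contains every $TB$.

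For $(2)\Rightarrow(4)$: by \bref{def:adj_factors_through_refl} it suffices to show $QC\in\B'$ for each $C\in\C$. A triangle identity gives $Q\varepsilon_C\cdot\eta_{QC}=1_{QC}$, exhibiting $QC$ as a retract of $TQC=QPQC$, which lies in $\B'$ by $(2)$; so it is enough to know that a $\V$-reflective-subcategory is closed under retracts. For that, suppose $B$ is a retract of $D\in\B'$ via $s:B\rightarrow D$ and $r:D\rightarrow B$ with $rs=1_B$. Since $K$ is a $\V$-functor, $JKB$ is a splitting of the idempotent $JK(sr)$ on $JKD$; on the other hand $sr$ splits through $B$ and, by $\V$-naturality of $\rho$ and invertibility of $\rho_D$, is conjugate to $JK(sr)$. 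Comparing these two splittings of one idempotent, the canonical comparison isomorphism $B\rightarrow JKB$ turns out (using $\rho_D\cdot s=JKs\cdot\rho_B$ and $rs=1_B$) to be $\rho_B$ itself, so $\rho_B$ is invertible and $B\in\B'$ by repleteness. Applying this with $D=TQC$ gives $QC\in\B'$, hence $(4)$.

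For $(4)\Rightarrow(1)$: \bref{thm:adj_factn_lemma} provides the induced $\V$-adjunction $P'\nsststile{\varepsilon'}{\eta'}Q':\C\rightarrow\B'$ with $JQ'=Q$; composing the $\V$-reflection $K\nsststile{}{\rho}J$ with it yields a $\V$-adjunction whose right $\V$-adjoint is $JQ'=Q$, the same right $\V$-adjoint as the given $P\nsststile{\varepsilon}{\eta}Q$ inducing $\TT$, and since the first factor $K\dashv J$ induces the monad $\SSS$, \bref{thm:mnd_morph_from_adj_factn} (in $\K=\VCAT$) produces a $\V$-monad morphism $\SSS\rightarrow\TT$, i.e.\ $(1)$; alternatively one may define $\alpha_B:SB=JKB\rightarrow TB$ from $(2)$ as the unique morphism with $\alpha_B\cdot\rho_B=\eta_B$ (available since $TB\in\B'$) and check directly that $\alpha$ is a $\V$-monad morphism, its associativity law being forced by idempotence of $\SSS$. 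The one step calling for real, if routine, care is the closure of a $\V$-reflective-subcategory under retracts used in $(2)\Rightarrow(4)$: one must check that the split-idempotent argument runs $\V$-enriched with no completeness hypothesis on $\B$, and that the comparison of the two splittings is exactly the reflection unit $\rho_B$; the remaining steps are bookkeeping with results already in hand.
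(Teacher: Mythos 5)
Your proof is correct, and it closes the cycle by a genuinely different route than the paper in one place. The two arguments agree on $2\Leftrightarrow 3$ and on $4\Rightarrow 1$ (both via \bref{thm:mnd_morph_from_adj_factn}); your $1\Rightarrow 2$ is likewise the same device the paper already uses inside \bref{thm:uniqueness_of_mnd_morph} (the induced $\V$-functor $\B^\TT\rightarrow\B^\SSS$ commuting with the forgetful functors, plus the fact that every $\SSS$-algebra for an idempotent $\SSS$ has its carrier in $\B'=\B^{(\SSS)}$). The divergence is in getting from there to $4$: the paper proves $1\Rightarrow 4$ directly by composing the comparison $\V$-functor $\C\rightarrow\B^\TT$ with $\B^\alpha:\B^\TT\rightarrow\B^\SSS\cong\B'$, which lands $QC$ in $\B'$ in one stroke; you instead exhibit $QC$ as a retract of $TQC$ via the triangle identity and invoke closure of a $\V$-reflective-subcategory under retracts. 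Your retract lemma is correct as you state it --- the comparison isomorphism between the two splittings of $JK(sr)$ is $JKr\cdot\rho_D\cdot s=\rho_B\cdot r\cdot s=\rho_B$, so $\rho_B$ is invertible and repleteness finishes it, all at the level of the underlying ordinary category with no completeness hypotheses --- but it is an extra lemma the paper never needs to state. What your route buys is the reusable fact that reflective subcategories are retract-closed; what the paper's route buys is economy, since the comparison functor makes the passage from free algebras to arbitrary $QC$ automatic. Either way the logical content is complete.
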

\begin{proof}
The equivalence 2 $\Leftrightarrow$ 3 obvious.  Observe that 3 is equivalent to the statement that the Kleisli $\V$-adjunction for $\TT$ factors through $K \nsststile{}{\rho} J$.  Hence it suffices to prove that 1 $\Leftrightarrow$ 4, for then the equivalence 1 $\Leftrightarrow$ 3 follows as a special case.  To this end, first observe that if 4 holds, then the existence of a morphism of $\V$-monads $\alpha:\SSS \rightarrow \TT$ is guaranteed by \bref{thm:mnd_morph_from_adj_factn}.  Conversely, suppose given a morphism of $\V$-monads $\alpha:\SSS \rightarrow \TT$.  We now work only with the underlying morphism of ordinary monads, for which we will use the same notation.  The adjunction $P \nsststile{\varepsilon}{\eta} Q$ determines a comparison functor $\C \rightarrow \B^\TT$, and we have also a functor $\B^\alpha:\B^\TT \rightarrow \B^\SSS$ \pbref{prop:enr_func_induced_by_mnd_mor}.  Both these functors commute with the forgetful functors to $\B$, and so too does their composite $\C \rightarrow \B^\TT \rightarrow \B^\SSS \cong \B'$.  Hence, applying this composite functor to any given $C \in \C$, we find that the carrier $QC$ of the associated $\SSS$-algebra lies in $\B'$.
\end{proof}

\begin{RemSub}
Proposition \bref{thm:charns_adj_factn_via_mnd} shows in particular that the question of whether a  $\V$-adjunction $P \nsststile{\varepsilon}{\eta} Q$ factors through a given $\V$-reflection depends only on the $\V$-monad $\TT$ induced by $P \nsststile{\varepsilon}{\eta} Q$.
\end{RemSub}

\begin{ParSub} \label{def:cats_idmmnd_refl_defn}
Let $\B$ be a $\V$-category.
\begin{enumerate}
\item The class $\IdmMnd_\V(\B)$ \pbref{thm:refl_idemp_mnd} of idempotent $\V$-monads on $\B$ carries the structure of a full subcategory of $\Mnd_{\VCAT}(\B)$ \pbref{par:adj_and_mnd}.
\item The class $\Refl_\V(\B)$ \pbref{thm:refl_idemp_mnd} of $\V$-reflections on $\B$ acquires the structure of a preordered class when ordered by inclusion of the associated $\V$-reflective-subcategories.
\end{enumerate}
\end{ParSub}

\begin{CorSub} \label{thm:refl_iso_idemp_mnd}
For any $\V$-category $\B$, $\IdmMnd_\V(\B)$ is a preordered class isomorphic to $(\Refl_\V(\B))^\op$ via the bijection given in \bref{thm:refl_idemp_mnd}.
\end{CorSub}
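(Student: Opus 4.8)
The plan is to show that the bijection $\Phi\colon \Refl_\V(\B) \to \IdmMnd_\V(\B)$ of \bref{thm:refl_idemp_mnd} --- which assigns to a $\V$-reflection its induced idempotent $\V$-monad --- reverses order, and hence, viewed as a map $(\Refl_\V(\B))^\op \to \IdmMnd_\V(\B)$, is an isomorphism of preordered classes. I first note that $\IdmMnd_\V(\B)$ really is a preordered class: by \bref{def:cats_idmmnd_refl_defn} it is a full subcategory of $\Mnd_{\VCAT}(\B)$, so it is reflexive and transitive via identity and composite $\V$-monad morphisms, and by \bref{thm:uniqueness_of_mnd_morph} there is at most one $\V$-monad morphism out of an idempotent $\V$-monad, whence at most one between any two of its objects; thus the category is thin, and $\SSS \leq \TT$ in $\IdmMnd_\V(\B)$ means precisely that a $\V$-monad morphism $\SSS \to \TT$ exists.

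The heart of the argument is the following equivalence. Let $r_1,r_2 \in \Refl_\V(\B)$ have associated $\V$-reflective-subcategories $\B_1,\B_2$ and induced idempotent $\V$-monads $\SSS_1 = \Phi(r_1)$, $\SSS_2 = \Phi(r_2)$, so $\B_i = \B^{(\SSS_i)}$ by \bref{thm:refl_idemp_mnd}. I claim that $\B_1 \subseteq \B_2$ if and only if a $\V$-monad morphism $\SSS_2 \to \SSS_1$ exists. To see this, apply \bref{thm:charns_adj_factn_via_mnd} with $\TT := \SSS_1$, using the $\V$-reflection $r_1$ itself (which induces $\SSS_1$) as the inducing $\V$-adjunction, and with the $\V$-reflection $r_2$ --- whose induced idempotent $\V$-monad is $\SSS_2$ --- in the role of $K \nsststile{}{\rho} J : \B' \hookrightarrow \B$. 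That proposition renders ``a $\V$-monad morphism $\SSS_2 \to \SSS_1$ exists'' equivalent to ``$\B_2$ contains the carriers of all free $\SSS_1$-algebras''. Finally I identify the latter condition with $\B_1 \subseteq \B_2$: since $\SSS_1 = (S_1,\rho^{(1)},\lambda^{(1)})$ is idempotent, each carrier $S_1B$ lies in $\B^{(\SSS_1)} = \B_1$, while conversely each $B' \in \B_1$ has $\rho^{(1)}_{B'}$ iso, hence $B' \cong S_1B'$ is itself the carrier of a free $\SSS_1$-algebra; as $\B_2$ is replete, ``$\B_2$ contains every $S_1B$'' is therefore equivalent to $\B_1 \subseteq \B_2$, as claimed.

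Assembling these, the equivalence just proved says that $r_1 \leq r_2$ in $\Refl_\V(\B)$ (i.e.\ $\B_1 \subseteq \B_2$) holds iff $\SSS_2 \leq \SSS_1$ in $\IdmMnd_\V(\B)$, i.e.\ iff $\Phi(r_2) \leq \Phi(r_1)$; read in both directions this shows that $\Phi$ and its inverse (which by \bref{thm:refl_idemp_mnd} sends $\SSS$ to the reflection onto $\B^{(\SSS)}$) are order-reversing. Hence $\Phi$, as a map $(\Refl_\V(\B))^\op \to \IdmMnd_\V(\B)$, is a monotone bijection with monotone inverse, i.e.\ an isomorphism of preordered classes, which is the assertion.

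I do not expect a genuine obstacle; the step requiring the most care is the variance bookkeeping together with the repleteness argument identifying ``carriers of free $\SSS_1$-algebras'' with the objects of $\B_1 = \B^{(\SSS_1)}$, which is exactly where the order gets reversed.
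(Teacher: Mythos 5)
Your proof is correct and follows essentially the same route as the paper: establish that $\IdmMnd_\V(\B)$ is a preorder via \bref{thm:uniqueness_of_mnd_morph}, then derive order-reversal of the bijection from \bref{thm:charns_adj_factn_via_mnd}. The only cosmetic difference is that you enter that proposition through its condition~3 (carriers of free algebras) and supply the small repleteness argument identifying those carriers with the objects of $\B^{(\SSS_1)}$, whereas the paper uses the equivalent condition~4 (factorization of the adjunction) and leaves that identification implicit.
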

\begin{proof}
By \bref{thm:uniqueness_of_mnd_morph}, $\IdmMnd_\V(\B)$ is a preorder.  Hence it suffices to show that the bijection $\Refl_\V(\B) \cong \IdmMnd_\V(\B)$ \pbref{thm:refl_idemp_mnd} and its inverse are contravariantly functorial (i.e. order-reversing).  But this follows from \bref{thm:charns_adj_factn_via_mnd}, since the given preorder relation on $\Refl_\V(\B)$ may equally be described as
$$(K' \nsststile{}{\rho'} J') \lt (K \nsststile{}{\rho} J)\;\;\Longleftrightarrow\;\;\text{$K'\nsststile{}{\rho'} J'$ factors through $K \nsststile{}{\rho} J$}\;.$$
\end{proof}

\begin{CorSub} \label{thm:iso_idmmnds_det_same_refl_subcat}
Isomorphic idempotent $\V$-monads on $\B$ determine the same associated $\V$-reflective-subcategory of $\B$.
\end{CorSub}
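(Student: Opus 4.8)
The plan is to argue directly from the description, given in \bref{def:refl_subcats_and_idm_mnds} and \bref{thm:refl_idemp_mnd}, of the $\V$-reflective-subcategory associated with an idempotent $\V$-monad: to an idempotent $\V$-monad $\SSS = (S,\rho,\lambda)$ there is associated the full sub-$\V$-category $\B^{(\SSS)}$ of $\B$ on those objects $B$ for which the component $\rho_B$ is an isomorphism. So let $\SSS = (S,\rho,\lambda)$ and $\SSS' = (S',\rho',\lambda')$ be isomorphic idempotent $\V$-monads on $\B$, and fix an isomorphism of $\V$-monads $\theta : \SSS \rightarrow \SSS'$; in particular $\theta$ is a $\V$-natural isomorphism $S \rightarrow S'$ satisfying $\theta \cdot \rho = \rho'$.

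First I would note that for each object $B$ of $\B$ we have $\rho'_B = \theta_B \cdot \rho_B$ with $\theta_B$ invertible, so $\rho_B$ is an isomorphism in $\B$ if and only if $\rho'_B$ is. Hence $\B^{(\SSS)}$ and $\B^{(\SSS')}$ are full sub-$\V$-categories of $\B$ on the same class of objects, and therefore coincide as $\V$-categories. By \bref{thm:refl_idemp_mnd} these are precisely the $\V$-reflective-subcategories associated with $\SSS$ and $\SSS'$, which is the assertion.

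An alternative, and perhaps the route suggested by the placement of the statement as a corollary, is to invoke \bref{thm:refl_iso_idemp_mnd} directly: there the bijection of \bref{thm:refl_idemp_mnd} is exhibited as an isomorphism of preordered classes $\IdmMnd_\V(\B) \cong (\Refl_\V(\B))^{\op}$, and an isomorphism of preordered classes both preserves and reflects isomorphisms. Isomorphic objects of the preorder $\Refl_\V(\B)$ --- ordered by inclusion of the associated $\V$-reflective-subcategories --- are by definition ones each of which is included in the other, and two full sub-$\V$-categories of $\B$ that are mutually included must be equal; so isomorphic idempotent $\V$-monads correspond to the same $\V$-reflective-subcategory.

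I do not anticipate a genuine obstacle. The only point deserving a line of care is that ``isomorphic idempotent $\V$-monads'' is unambiguous: by \bref{thm:uniqueness_of_mnd_morph} there is at most one morphism of $\V$-monads between idempotent $\V$-monads, so $\IdmMnd_\V(\B)$ is a preorder and an isomorphism there is the same thing as a pair of objects each admitting a morphism to the other --- and either formulation feeds straight into the arguments above.
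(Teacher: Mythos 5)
Your proposal is correct. The paper gives no explicit proof of this corollary, presenting it as an immediate consequence of \bref{thm:refl_iso_idemp_mnd}; that is precisely your second argument, and your handling of the one subtle point (that by \bref{thm:uniqueness_of_mnd_morph} the class $\IdmMnd_\V(\B)$ is a preorder, so an isomorphism is just a pair of mutual morphisms, and that mutually included full replete subcategories coincide) is exactly what is needed. Your first, direct argument --- using that a monad isomorphism $\theta$ satisfies $\theta \cdot \rho = \rho'$ with invertible components, so that $\rho_B$ is iso iff $\rho'_B$ is, whence $\B^{(\SSS)} = \B^{(\SSS')}$ by \bref{thm:refl_idemp_mnd} --- is a slightly more elementary route that bypasses the preorder isomorphism altogether; either suffices.
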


\begin{DefSub}
Let $\TT$ be a $\V$-monad on a $\V$-category $\B$.
\begin{enumerate}
\item Let $\Refl_\V(\TT)$ denote the full subcategory of $\Refl_\V(\B)$ \pbref{def:cats_idmmnd_refl_defn} consisting of those $\V$-reflections whose associated $\V$-reflective-subcategory contains the carriers of all free $\TT$-algebras.
\item Let $\IdmMnd_\V(\B) / \TT$ denote the full subcategory of $\IdmMnd_\V(\B)$ \pbref{def:cats_idmmnd_refl_defn} consisting of all idempotent $\V$-monads $\SSS$ on $\B$ for which a \textnormal{(}necessarily unique, \bref{thm:uniqueness_of_mnd_morph}\textnormal{)} morphism of $\V$-monads $\alpha:\SSS \rightarrow \TT$ exists.
\end{enumerate}
\end{DefSub}

\begin{CorSub} \label{thm:refls_for_t_iso_to_idm_mnds_over_q}
Let $\TT$ be a $\V$-monad on a $\V$-category $\B$.  Then the isomorphism $(\Refl_\V(\B))^\op \cong \IdmMnd_\V(\B)$ of \bref{thm:refl_iso_idemp_mnd} restricts to an isomorphism of preordered classes $(\Refl_\V(\TT))^\op \cong \IdmMnd_\V(\B) / \TT$.
\end{CorSub}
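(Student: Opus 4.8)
The plan is to deduce the statement directly from the chain of equivalences in \bref{thm:charns_adj_factn_via_mnd} together with the isomorphism of preordered classes $(\Refl_\V(\B))^\op \cong \IdmMnd_\V(\B)$ already established in \bref{thm:refl_iso_idemp_mnd}. Recall that this isomorphism sends a $\V$-reflection $K \nsststile{}{\rho} J : \B' \hookrightarrow \B$ to its induced idempotent $\V$-monad $\SSS$, and that the $\V$-reflective-subcategory associated to $\SSS$ is again $\B'$ \pbref{thm:refl_idemp_mnd}. Since $\Refl_\V(\TT)$ and $\IdmMnd_\V(\B)/\TT$ are by definition \emph{full} sub-preorders of $\Refl_\V(\B)$ and $\IdmMnd_\V(\B)$ respectively, it suffices to check that the underlying bijection of \bref{thm:refl_iso_idemp_mnd} restricts to a bijection between the objects of $\Refl_\V(\TT)$ and those of $\IdmMnd_\V(\B)/\TT$; the asserted isomorphism of preordered classes $(\Refl_\V(\TT))^\op \cong \IdmMnd_\V(\B)/\TT$ is then obtained simply by restricting the isomorphism of \bref{thm:refl_iso_idemp_mnd}.

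To this end, fix any $\V$-adjunction $P \nsststile{\varepsilon}{\eta} Q : \C \rightarrow \B$ inducing $\TT$ (for instance the Kleisli or Eilenberg-Moore $\V$-adjunction), and let $K \nsststile{}{\rho} J : \B' \hookrightarrow \B$ be a $\V$-reflection with induced idempotent $\V$-monad $\SSS$. By definition this reflection lies in $\Refl_\V(\TT)$ precisely when $\B'$ contains the carriers of all free $\TT$-algebras — that is, precisely when condition 3 of \bref{thm:charns_adj_factn_via_mnd} holds. On the other hand, $\SSS$ lies in $\IdmMnd_\V(\B)/\TT$ precisely when a morphism of $\V$-monads $\alpha : \SSS \rightarrow \TT$ exists — that is, precisely when condition 1 of \bref{thm:charns_adj_factn_via_mnd} holds. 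But \bref{thm:charns_adj_factn_via_mnd} asserts exactly the equivalence of its conditions 1 and 3, so the reflection lies in $\Refl_\V(\TT)$ if and only if $\SSS$ lies in $\IdmMnd_\V(\B)/\TT$, and the two full subclasses match up under the bijection as required.

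I do not expect any genuine obstacle here, as all of the substance has been carried out in \bref{thm:charns_adj_factn_via_mnd} and \bref{thm:refl_iso_idemp_mnd}. The only point that warrants a word of care is that the equivalence (1) $\Leftrightarrow$ (3) of \bref{thm:charns_adj_factn_via_mnd} is phrased relative to a chosen inducing $\V$-adjunction $P \nsststile{\varepsilon}{\eta} Q$, whereas the defining conditions of $\Refl_\V(\TT)$ and $\IdmMnd_\V(\B)/\TT$ mention only $\TT$; but conditions 1 and 3 of \bref{thm:charns_adj_factn_via_mnd} themselves refer only to $\TT$ and the given reflection, so the choice of inducing adjunction is immaterial — as already noted in the remark following \bref{thm:charns_adj_factn_via_mnd} — and the argument goes through verbatim.
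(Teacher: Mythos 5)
Your proof is correct and matches the paper's argument, which likewise deduces the corollary directly from the equivalence of conditions 1 and 3 of \bref{thm:charns_adj_factn_via_mnd}; your additional remarks about fullness of the sub-preorders and independence from the choice of inducing adjunction are accurate elaborations of what the paper leaves implicit.
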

\begin{proof}
This follows from \bref{thm:charns_adj_factn_via_mnd}.
\end{proof}

\begin{CorSub}\label{thm:mnd_mor_for_idm_mnds_subord_t}
Let $\TT$ be a $\V$-monad on a $\V$-category $\B$, let $\SSS_1$, $\SSS_2$ be idempotent $\V$-monads on $\B$, and suppose that we have inclusions of classes
$$\{TB \;|\; B \in \B \} \subs \Ob\B^{(\SSS_2)} \subs \Ob\B^{(\SSS_1)}\;.$$
Then there are unique morphisms of $\V$-monads as in the diagram
$$
\xymatrix {
                                         & \SSS_2 \ar[dr]^{\beta} &    \\
\SSS_1 \ar[ur]^{\alpha} \ar[rr]_{\gamma} &                        & {\TT\;,}
}
$$
which necessarily commutes.
\end{CorSub}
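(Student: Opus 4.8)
\section*{Proof proposal}

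The plan is to obtain each of the three morphisms from the interplay — established in \bref{thm:charns_adj_factn_via_mnd} and \bref{thm:refl_iso_idemp_mnd} — between idempotent $\V$-monads and $\V$-reflective-subcategories, and then to force commutativity using the uniqueness lemma \bref{thm:uniqueness_of_mnd_morph}. No completeness hypotheses on $\B$ are needed, matching the generality of those cited results.

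First I would construct $\beta$ and $\gamma$. By \bref{thm:refl_idemp_mnd} each idempotent $\V$-monad $\SSS_i$ ($i=1,2$) is the $\V$-monad induced by a $\V$-reflection whose associated $\V$-reflective-subcategory is $\B^{(\SSS_i)}$. Fixing also a $\V$-adjunction inducing $\TT$ (for instance the Kleisli $\V$-adjunction), the hypothesis $\{TB \mid B \in \B\} \subs \Ob\B^{(\SSS_2)} \subs \Ob\B^{(\SSS_1)}$ says exactly that each $\B^{(\SSS_i)}$ contains the carriers of all free $\TT$-algebras. Hence by the implication $(3)\Rightarrow(1)$ of \bref{thm:charns_adj_factn_via_mnd} there is a morphism of $\V$-monads $\SSS_i \rightarrow \TT$, necessarily unique by \bref{thm:uniqueness_of_mnd_morph} since $\SSS_i$ is idempotent; write $\gamma:\SSS_1 \rightarrow \TT$ and $\beta:\SSS_2 \rightarrow \TT$ for these.

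Next I would construct $\alpha:\SSS_1 \rightarrow \SSS_2$. By \bref{thm:refl_iso_idemp_mnd} the bijection of \bref{thm:refl_idemp_mnd} is an isomorphism of preordered classes $\IdmMnd_\V(\B) \cong (\Refl_\V(\B))^\op$; since $\Ob\B^{(\SSS_2)} \subs \Ob\B^{(\SSS_1)}$, the $\V$-reflection corresponding to $\SSS_2$ precedes that corresponding to $\SSS_1$ in $\Refl_\V(\B)$, so the reversed relation yields a morphism $\SSS_1 \rightarrow \SSS_2$ in $\IdmMnd_\V(\B)$; this $\alpha$ is unique because $\IdmMnd_\V(\B)$ is a preordered class (equivalently, by \bref{thm:uniqueness_of_mnd_morph}). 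Alternatively, one may re-apply \bref{thm:charns_adj_factn_via_mnd} with $\TT$ replaced by the $\V$-monad $\SSS_2$, using that the carriers of the free $\SSS_2$-algebras are the objects $S_2 B$, which lie in $\Ob\B^{(\SSS_2)} \subs \Ob\B^{(\SSS_1)}$ by \bref{thm:em_adj_idm_mnd}.

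Finally, $\beta \cdot \alpha:\SSS_1 \rightarrow \TT$ is again a morphism of $\V$-monads, being a composite in the category $\Mnd_{\VCAT}(\B)$ of \bref{par:adj_and_mnd}; since $\SSS_1$ is idempotent, \bref{thm:uniqueness_of_mnd_morph} forces $\beta \cdot \alpha = \gamma$, so the triangle commutes, and uniqueness of all three morphisms has already been recorded. I do not anticipate a genuine obstacle: the only points requiring care are the direction of the order-reversal in \bref{thm:refl_iso_idemp_mnd} — so that $\Ob\B^{(\SSS_2)} \subs \Ob\B^{(\SSS_1)}$ produces $\alpha:\SSS_1 \rightarrow \SSS_2$ rather than its reverse — and, if one routes $\alpha$ through \bref{thm:charns_adj_factn_via_mnd}, the (standard) observation that the free-$\SSS_2$-algebra carriers already lie in $\B^{(\SSS_2)}$.
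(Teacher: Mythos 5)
Your proposal is correct and is essentially the argument the paper intends: the paper states this corollary without proof, treating it as immediate from \bref{thm:charns_adj_factn_via_mnd}, \bref{thm:refl_iso_idemp_mnd}, and \bref{thm:uniqueness_of_mnd_morph}, which is exactly the route you take (including the correct handling of the order-reversal to get $\alpha:\SSS_1 \rightarrow \SSS_2$ from $\Ob\B^{(\SSS_2)} \subs \Ob\B^{(\SSS_1)}$, and the forcing of commutativity by uniqueness of morphisms out of the idempotent $\SSS_1$).
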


\section{Enriched adjoint functor factorization via orthogonality} \label{sec:enr_adj_factn}

\begin{PropSub} \label{thm:adj_factn_for_class_of_morphs}
Let $P \nsststile{\varepsilon}{\eta} Q : \C \rightarrow \B$ be a $\V$-adjunction, with induced $\V$-monad $\TT$, let $\Sigma \subs \Sigma_T$, and suppose that the $\V$-category $\B$ is $\V$-finitely-well-complete and cotensored.  Then the given $\V$-adjunction factors through the $\V$-reflection $K \nsststile{}{\rho} J : \B_{(\TT,\Sigma)} \hookrightarrow \B$, where $\B_{(\TT,\Sigma)}$ is the full sub-$\V$-category of $\B$ consisting of the $\Sigma$-complete $\TT$-separated objects \pbref{thm:refl_orth}.  Further, each component of the unit $\eta'$ of the induced $\V$-adjunction $P' \nsststile{\varepsilon'}{\eta'} Q':\C \rightarrow \B_{(\TT,\Sigma)}$ is a $\V$-strong-mono in $\B$.
\end{PropSub}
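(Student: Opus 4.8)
The plan is to dispatch the two assertions in turn; both will fall out directly from results already established. For the factorization claim, by \bref{def:adj_factors_through_refl} it suffices to check that the image of $Q$ lies in $\B_{(\TT,\Sigma)}$ --- the $\V$-reflective-subcategory furnished by \bref{thm:refl_orth}, which applies since $\B$ is $\V$-finitely-well-complete and cotensored. So fix $C \in \C$. On the one hand $QC$ is $\Sigma$-complete by \bref{thm:tc_complete}. On the other hand $QC$ is $\TT$-separated: the unit component $\eta_{QC} : QC \rightarrow TQC$ is a section, split by $Q\varepsilon_C$ via a triangle identity, hence an embedding by \bref{prop:sect_str_mono}, which is exactly the defining condition for $\TT$-separatedness \pbref{def:compl_sep_closed_dense}. (Alternatively one may invoke \bref{thm:sep_iff_exists_emb} with $m = 1_{QC}$.) Thus $QC \in \B_{(\TT,\Sigma)}$, so the given $\V$-adjunction factors through $K \nsststile{}{\rho} J : \B_{(\TT,\Sigma)} \hookrightarrow \B$, and \bref{thm:adj_factn_lemma} produces the induced $\V$-adjunction $P' \nsststile{\varepsilon'}{\eta'} Q' : \C \rightarrow \B_{(\TT,\Sigma)}$ with $P' = PJ$, $JQ' = Q$ and $J\eta' = \eta J$.

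For the second assertion I would simply unwind the identifications along $J$. Fix an object $Z$ of $\B_{(\TT,\Sigma)}$. The component $\eta'_Z$ has codomain $Q'P'Z$, and applying the inclusion $J$ gives $JQ'P'Z = Q(PJZ) = T(JZ)$, using $T = QP$; moreover $J\eta' = \eta J$ forces $J\eta'_Z = \eta_{JZ} : JZ \rightarrow T(JZ)$. Since $Z$ lies in $\B_{(\TT,\Sigma)}$, the object $JZ$ is $\TT$-separated, which by \bref{def:compl_sep_closed_dense} means precisely that $\eta_{JZ}$ is an embedding, i.e.\ a $\V$-strong-mono in $\B$. Hence every component of $\eta'$ is a $\V$-strong-mono in $\B$, as required.

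I do not expect a genuine obstacle: once the factorization is in place, the second statement is essentially a repackaging of the definition of $\TT$-separatedness. The only point requiring a moment's care is the bookkeeping of the identifications under $J$ --- that $JQ'P'Z = T(JZ)$ and $J\eta'_Z = \eta_{JZ}$ --- after which the conclusion reads off immediately from \bref{def:compl_sep_closed_dense}.
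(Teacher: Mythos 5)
Your proof is correct and follows essentially the same route as the paper: the paper verifies that $\B_{(\TT,\Sigma)}$ contains the carriers of all free $\TT$-algebras (via \bref{thm:carriers_of_free_talgs_sep_compl}, whose proof is exactly your combination of \bref{thm:tc_complete} with \bref{thm:sep_iff_exists_emb}) and then invokes the characterization \bref{thm:charns_adj_factn_via_mnd}, whereas you check the image of $Q$ directly against Definition \bref{def:adj_factors_through_refl} --- an immaterial difference in routing. Your treatment of the second assertion, identifying $\eta'_Z$ with $\eta_{JZ}$ via $J\eta' = \eta J$ and reading off $\TT$-separatedness, is exactly the paper's argument.
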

\begin{proof}
By \bref{thm:carriers_of_free_talgs_sep_compl}, $\B_{(\TT,\Sigma)}$ contains the carriers of all free $\TT$-algebras, so by \bref{thm:charns_adj_factn_via_mnd} the given $\V$-adjunction factors as needed.  Further, for each $B' \in \B_{(\TT,\Sigma)}$, since $J\eta' = \eta J$ the component $\eta'_{B'}$ is simply $\eta_{B'}$, which is a $\V$-strong-mono in $\B$ since $B'$ is $\TT$-separated.
\end{proof}

\begin{ThmSub} \label{thm:adj_factn}
Let $P \nsststile{\varepsilon}{\eta} Q : \C \rightarrow \B$ be a $\V$-adjunction with induced $\V$-monad $\TT$, and suppose that the $\V$-category $\B$ is $\V$-finitely-well-complete and cotensored.  Then $P \nsststile{\varepsilon}{\eta} Q$ factors through the $\V$-reflection $K \nsststile{}{\rho} J : \B_{(\TT)} \hookrightarrow \B$, where $\B_{(\TT)}$ is the full sub-$\V$-category of $\B$ consisting of the $\TT$-complete objects \pbref{def:tcomplete}.  Further, letting $P' \nsststile{\varepsilon'}{\eta'} Q':\C \rightarrow \B_{(\TT)}$ be the induced $\V$-adjunction, we have that $P'$ is conservative (i.e. $P'$ reflects isomorphisms), and each component of $\eta'$ is a $\V$-strong-mono in $\B$.
\end{ThmSub}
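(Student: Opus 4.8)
The plan is to obtain the factorization statement and the claim about $\eta'$ as the special case $\Sigma = \Sigma_T$ of Proposition \bref{thm:adj_factn_for_class_of_morphs}, and then to deduce the conservativity of $P'$ from the self-orthogonality criterion \bref{thm:orth_yielding_retr_and_iso}.

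First I would invoke \bref{thm:adj_factn_for_class_of_morphs} with $\Sigma := \Sigma_T$. Since $\Sigma_T \subs \Sigma_T$ and, by \bref{def:tcomplete}, the $\V$-category $\B_{(\TT)} = \B_{\Sigma_T} = \B_{(\TT,\Sigma_T)}$ is precisely the full sub-$\V$-category of $\Sigma_T$-complete $\TT$-separated objects, that proposition immediately yields that $P \nsststile{\varepsilon}{\eta} Q$ factors through the $\V$-reflection $K \nsststile{}{\rho} J : \B_{(\TT)} \hookrightarrow \B$ and that each component of the unit $\eta'$ of the induced $\V$-adjunction $P' \nsststile{\varepsilon'}{\eta'} Q'$ is a $\V$-strong-mono in $\B$. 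It then remains only to prove that $P'$ is conservative.

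For this, suppose $f$ is a morphism of $\B_{(\TT)}$ with $P'f$ an isomorphism in $\C$. Since $P' = PJ$ by \bref{thm:adj_factn_lemma} and $J$ is the inclusion $\V$-functor, this says that $Pf$ is an isomorphism, i.e. $f \in \Sigma_P$; and $\Sigma_P = \Sigma_T$ by \bref{thm:all_ladjs_ind_given_monad_inv_same_morphs}, so $f \in \Sigma_T$. On the other hand, $f$ is a morphism of the full sub-$\V$-category $\B_{\Sigma_T}$, so its domain and codomain lie in $\Sigma_T^{\bot_\V}$. Hence \bref{thm:orth_of_morphs_in_orth_subcat}, applied to the class $\Sigma_T$ with $e = m = f$, gives $f \downarrow_\V f$ in $\B$, and therefore $f$ is an isomorphism in $\B$ by \bref{thm:orth_yielding_retr_and_iso}(3). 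Since $\B_{(\TT)}$ is a full (replete) sub-$\V$-category of $\B$, $f$ is then an isomorphism in $\B_{(\TT)}$ as well, so $P'$ reflects isomorphisms.

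I do not expect a genuine obstacle here: the substantive content — the existence of the $\V$-reflection onto the $\TT$-complete objects and the fact that every $\V$-adjunction inducing $\TT$ factors through it — has already been established in \bref{thm:refl_orth} and \bref{thm:adj_factn_for_class_of_morphs}, and the only new observation is that conservativity of $P'$ amounts to saying that a $P$-inverted morphism between two $\TT$-complete objects is self-orthogonal and hence invertible. The only points needing care are the bookkeeping identifications $\B_{(\TT)} = \B_{(\TT,\Sigma_T)}$ and $\Sigma_P = \Sigma_T$ that bring the cited results to bear, both of which are recorded earlier (in \bref{thm:sigma_s_complete_are_sep_and_refl} and \bref{thm:all_ladjs_ind_given_monad_inv_same_morphs} respectively), together with the evident fact, used in invoking \bref{thm:adj_factn_for_class_of_morphs}, that $\B_{(\TT,\Sigma_T)}$ contains the carriers of all free $\TT$-algebras (\bref{thm:carriers_of_free_talgs_sep_compl}).
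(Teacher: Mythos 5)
Your proposal is correct and follows essentially the same route as the paper's own proof: both obtain the factorization and the strong-mono claim by specializing \bref{thm:adj_factn_for_class_of_morphs} to $\Sigma = \Sigma_T$ via \bref{thm:sigma_s_complete_are_sep_and_refl}, and both deduce conservativity from $f \in \Sigma_P = \Sigma_T$ together with \bref{thm:orth_of_morphs_in_orth_subcat} and \bref{thm:orth_yielding_retr_and_iso}. The bookkeeping identifications you flag are exactly the ones the paper relies on.
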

\begin{proof}
By \bref{thm:sigma_s_complete_are_sep_and_refl}, an invocation of \bref{thm:adj_factn_for_class_of_morphs} yields the needed factorization, and it suffices to show that $P'$ is conservative.  Indeed, if a morphism $f:B_1' \rightarrow B_2'$ in $\B_{(\TT)} = \B_{\Sigma_P}$ is such that $P'f$ is an isomorphism in $\C$, then since $P'f$ is simply $Pf$ we have that $f \in \Sigma_P$, so by \bref{thm:orth_of_morphs_in_orth_subcat} we deduce that $f \downarrow_{\V} f$ in $\eB$, whence $f$ is an isomorphism in $\B$ \pbref{thm:orth_yielding_retr_and_iso} and hence in $\B_{(\TT)}$.
\end{proof}

\begin{RemSub} \label{rem:factn_inversion}
In \bref{thm:adj_factn}, $K$ inverts the same morphisms as $P$ (i.e., $\Sigma_{K} = \Sigma_{P}$), since $P  \cong P'K$ and $P'$ is conservative.
\end{RemSub}

\begin{CorSub} \label{thm:base_adj_fact}
If $\V$ is finitely well-complete, then any given $\V$-adjunction \mbox{$P \nsststile{\varepsilon}{\eta} Q : \C \rightarrow \uV$} factors as in \bref{thm:adj_factn} (when we set $\B := \uV$); further, given also a class of morphisms $\Sigma$ inverted by the induced monad, \bref{thm:adj_factn_for_class_of_morphs} also applies.
\end{CorSub}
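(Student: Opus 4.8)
The plan is simply to check that the $\V$-category $\B := \uV$ satisfies the two standing hypotheses of \bref{thm:adj_factn} and \bref{thm:adj_factn_for_class_of_morphs} --- that it is $\V$-finitely-well-complete and cotensored --- and then to invoke those results directly, with no further work.

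First I would observe that $\uV$ is cotensored, the cotensor $[V,W]$ of $V \in \V$ into $W \in \uV$ being the internal hom $\uV(V,W)$. Indeed, using the symmetry of $\otimes$ together with the closed structure of $\V$, one has $\V$-natural isomorphisms $\uV(U,\uV(V,W)) \cong \uV(U \otimes V,W) \cong \uV(V \otimes U,W) \cong \uV(V,\uV(U,W))$ for all $U \in \uV$, which is exactly the defining universal property of the cotensor. Second, since $\V$ is assumed finitely well-complete, \bref{thm:base_fwc} gives that $\uV$ is $\V$-finitely-well-complete.

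Both hypotheses of \bref{thm:adj_factn} are therefore met by $\B := \uV$, so the given $\V$-adjunction $P \nsststile{\varepsilon}{\eta} Q : \C \rightarrow \uV$ factors through the $\V$-reflection $K \nsststile{}{\rho} J : \B_{(\TT)} \hookrightarrow \uV$ onto the $\TT$-complete objects, with the induced left $\V$-adjoint $P'$ conservative and each component of the induced unit $\eta'$ a $\V$-strong-mono, exactly as asserted. Likewise, given any class $\Sigma$ of morphisms inverted by the induced $\V$-monad $\TT$ --- that is, $\Sigma \subs \Sigma_T$ --- the hypotheses of \bref{thm:adj_factn_for_class_of_morphs} are met, so that $\V$-adjunction factors through the $\V$-reflection onto $\uV_{(\TT,\Sigma)}$ with induced unit components $\V$-strong-mono.

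There is no genuine obstacle here: the corollary merely records that the base category $\uV$ meets the abstract hypotheses under which the preceding adjoint-factorization theorems were established. The only point worth making explicit is the identification of cotensors in $\uV$ with internal homs, which is immediate from the closed symmetric monoidal structure of $\V$.
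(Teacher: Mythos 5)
Your proposal is correct and follows exactly the paper's own argument: verify that $\uV$ is cotensored and, via \bref{thm:base_fwc}, that it is $\V$-finitely-well-complete, then invoke \bref{thm:adj_factn} and \bref{thm:adj_factn_for_class_of_morphs}. The only difference is that you spell out the identification of cotensors in $\uV$ with internal homs, which the paper takes as given.
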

\begin{proof}
Since $\uV$ is cotensored, an invocation of \bref{thm:base_fwc} shows that the hypotheses of \bref{thm:adj_factn} and \bref{thm:adj_factn_for_class_of_morphs} are satisfied.
\end{proof}

\begin{RemSub}
Day \cite{Day:AdjFactn} shows that the above factorization of a left-adjoint $\V$-functor into a reflection followed by a conservative left adjoint determines a factorization system, in an `up-to-isomorphism' sense, on a suitable category.
\end{RemSub}

\section{Separated completion monads for a class of morphisms}\label{sec:sep_cpl_mnds_class_morphs}

In the present section, we fix a $\V$-finitely-well-complete and cotensored $\V$-category $\B$.

\begin{DefSub}\label{def:tsep_sigma_compl_mnd}
As in \bref{par:data_enr_orth_subcat_subord_adj}, let $\TT = (T,\eta,\mu)$ be a $\V$-monad on $\B$, and let $\Sigma \subs \Sigma_T$ be a class of morphisms inverted by $T$.  By \bref{thm:refl_orth}, the $\Sigma$-complete $\TT$-separated objects of $\B$ constitute a $\V$-reflective-subcategory $\B_{(\TT,\Sigma)}$ of $\B$, and we call the induced idempotent $\V$-monad $\TT_\Sigma$ on $\B$ \textit{the $\TT$-separated $\Sigma$-completion $\V$-monad}.
\end{DefSub}

\begin{ExaSub}\label{exa:func_distnl_completion_mnds}
Let us return to the example of \bref{exa:func_distnl_compl}, \bref{exa:func_complete_rmods}, where $\TT = \HH$ is the double-dualization $\sL$-monad on $\sL = \RMod(\X)$.  With $\Sigma := \Sigma_H$, we call the resulting idempotent $\sL$-monad $\HH_\Sigma$ the \textit{functional completion monad}; see \bref{def:func_compl}.  Instead taking $\Sigma$ to be the subclass $\tDelta \subs \Sigma_H$, we call the resulting idempotent $\sL$-monad $\HH_{\tDelta}$ the \textit{separated $\tDD$-distributional completion monad}; see \bref{def:distnl_compl_cl_dens}.
\end{ExaSub}

\begin{PropSub}\label{thm:uniq_morph_mnds_tsep_sigmacompl_mnd_to_t}
Given a $\V$-monad $\TT = (T,\eta,\mu)$ on $\B$ and a class $\Sigma \subs \Sigma_T$, there is a unique morphism of $\V$-monads
$$i:\TT_\Sigma \rightarrow \TT\;.$$
The components of $i$ are the $\Sigma$-closed embeddings $i_B:KB \rightarrowtail TB$ of \bref{def:completion}.
\end{PropSub}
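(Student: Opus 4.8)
The plan is to apply the general machinery about idempotent $\V$-monads that has been assembled in the preceding sections. First I would observe that $\TT_\Sigma$ is by definition \pbref{def:tsep_sigma_compl_mnd} the idempotent $\V$-monad induced by the $\V$-reflection $K \nsststile{}{\rho} J : \B_{(\TT,\Sigma)} \hookrightarrow \B$ of \bref{thm:refl_orth}, whose reflection unit components are precisely the $\Sigma$-dense morphisms $\rho_B : B \rightarrow KB$ of \bref{def:completion}, with $KB$ the domain of the $\Sigma$-closed embedding $i_B : KB \rightarrowtail TB$. To obtain a morphism of $\V$-monads $\TT_\Sigma \rightarrow \TT$, I would invoke \bref{thm:charns_adj_factn_via_mnd}: by that proposition, such a morphism exists if and only if $\B_{(\TT,\Sigma)}$ contains the carriers of all free $\TT$-algebras, and this is exactly the content of \bref{thm:carriers_of_free_talgs_sep_compl}, which says each $TB$ is $\Sigma$-complete and $\TT$-separated. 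Uniqueness of the morphism $i$ is then automatic from \bref{thm:uniqueness_of_mnd_morph}, since $\TT_\Sigma$ is idempotent.

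Next I would identify the components of $i$. By \bref{thm:uniqueness_of_mnd_morph}, the component $i_B$ of the monad morphism $\TT_\Sigma \rightarrow \TT$ at $B \in \B$ is characterized as the unique morphism making the triangle
$$
\xymatrix{
B \ar[rr]^{\eta_B} \ar[dr]_{\rho_B} & & TB \\
& KB \ar[ur]_{i_B} &
}
$$
commute. But this is precisely the defining property of the $\Sigma$-closed embedding $i_B : KB \rightarrowtail TB$ in \bref{def:completion}, namely that $(\rho_B, i_B)$ is the $(\SigmaDense,\SigmaClEmb)$-factorization of $\eta_B$. Hence the components of $i$ are exactly those embeddings, as claimed.

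I do not anticipate a serious obstacle here: the statement is essentially a bookkeeping corollary assembling \bref{thm:carriers_of_free_talgs_sep_compl}, \bref{thm:charns_adj_factn_via_mnd}, and \bref{thm:uniqueness_of_mnd_morph}. The one point that requires a moment's care is matching up the notation: one must check that the reflection unit of $\B_{(\TT,\Sigma)} \hookrightarrow \B$ furnished by \bref{thm:refl_orth} is literally the morphism $\rho_B$ of \bref{def:completion} (this is asserted in the statement of \bref{thm:refl_orth}), so that the idempotent monad $\TT_\Sigma$ has underlying functor $K$ and unit $\rho$, and therefore the monad morphism produced by \bref{thm:charns_adj_factn_via_mnd} — whose component at $B$ is forced by the unit triangle above — indeed has $i_B$ as its $B$-component. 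Once that identification is in hand, the proof is complete.
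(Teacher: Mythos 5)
Your proposal is correct and follows essentially the same route as the paper's proof: existence via \bref{thm:carriers_of_free_talgs_sep_compl} and \bref{thm:charns_adj_factn_via_mnd}, uniqueness via \bref{thm:uniqueness_of_mnd_morph}, and identification of the components via the unit triangle of \bref{thm:uniqueness_of_mnd_morph} together with \bref{def:completion}.
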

\begin{proof}
By \bref{thm:carriers_of_free_talgs_sep_compl}, the $\V$-reflective-subcategory $\B_{(\TT,\Sigma)}$ of $\B$ determined by $\TT_\Sigma$ contains the carriers of all free $\TT$-algebras, so by \bref{thm:charns_adj_factn_via_mnd} there is a unique morphism of $\V$-monads $\TT_\Sigma \rightarrow \TT$.  But by \bref{thm:uniqueness_of_mnd_morph} and \bref{def:completion}, the components of this $\V$-monad morphism must necessarily be the $i_B$.
\end{proof}

\begin{PropSub}\label{thm:mor_tsep_sigma_compl_mnds}
Given a $\V$-monad $\TT = (T,\eta,\mu)$ on $\B$ and classes of morphisms $\Sigma_1 \subs \Sigma_2 \subs \Sigma_T$ inverted by $T$, there are unique morphisms of $\V$-monads as in the diagram
$$
\xymatrix {
                                          & \TT_{\Sigma_2} \ar[dr]^{\beta} &    \\
\TT_{\Sigma_1} \ar[ur]^{\alpha} \ar[rr]_{\gamma} &                              & {\TT\;,}
}
$$
which necessarily commutes.
\end{PropSub}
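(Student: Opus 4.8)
The plan is to obtain this as an immediate corollary of \bref{thm:mnd_mor_for_idm_mnds_subord_t}, applied with $\SSS_1 := \TT_{\Sigma_1}$ and $\SSS_2 := \TT_{\Sigma_2}$. By \bref{def:tsep_sigma_compl_mnd}, each $\TT_{\Sigma_i}$ is an idempotent $\V$-monad on $\B$ whose associated $\V$-reflective-subcategory, in the sense of \bref{thm:refl_idemp_mnd}, is precisely $\B_{(\TT,\Sigma_i)}$; hence $\Ob\B^{(\TT_{\Sigma_i})} = \Ob\B_{(\TT,\Sigma_i)}$, the class of $\Sigma_i$-complete $\TT$-separated objects of $\B$. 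The section hypothesis that $\B$ is $\V$-finitely-well-complete and cotensored is exactly what makes \bref{def:tsep_sigma_compl_mnd} and the results it depends on available here.

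First I would verify the chain of inclusions $\{TB \mid B \in \B\} \subs \Ob\B^{(\TT_{\Sigma_2})} \subs \Ob\B^{(\TT_{\Sigma_1})}$ demanded by \bref{thm:mnd_mor_for_idm_mnds_subord_t}. The first inclusion is \bref{thm:carriers_of_free_talgs_sep_compl} applied with $\Sigma = \Sigma_2$: every carrier $TB$ ($B \in \B$) is $\Sigma_2$-complete and $\TT$-separated. For the second inclusion I would argue directly from the definition of $\B_\Sigma$ in \bref{def:orth_subcat_sigma}: since $\Sigma_1 \subs \Sigma_2$, an object $\V$-orthogonal to every morphism of $\Sigma_2$ is a fortiori $\V$-orthogonal to every morphism of $\Sigma_1$, so $\B_{\Sigma_2} \subs \B_{\Sigma_1}$; as $\TT$-separatedness is independent of the choice of $\Sigma$, this yields $\B_{(\TT,\Sigma_2)} \subs \B_{(\TT,\Sigma_1)}$, i.e. $\Ob\B^{(\TT_{\Sigma_2})} \subs \Ob\B^{(\TT_{\Sigma_1})}$.

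With these inclusions established, \bref{thm:mnd_mor_for_idm_mnds_subord_t} directly produces the unique $\V$-monad morphisms $\alpha : \TT_{\Sigma_1} \rightarrow \TT_{\Sigma_2}$, $\beta : \TT_{\Sigma_2} \rightarrow \TT$, and $\gamma : \TT_{\Sigma_1} \rightarrow \TT$ together with the identity $\gamma = \beta \cdot \alpha$, completing the argument; uniqueness is in any case automatic since $\TT_{\Sigma_1}$ and $\TT_{\Sigma_2}$ are idempotent (cf.\ \bref{thm:uniqueness_of_mnd_morph}). One may additionally remark that $\beta$ and $\gamma$ coincide with the canonical $\V$-monad morphisms $i$ of \bref{thm:uniq_morph_mnds_tsep_sigmacompl_mnd_to_t}, so that the only genuinely new content is the existence of $\alpha$ and the commutativity, but this refinement is not needed. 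I do not anticipate a real obstacle; the only point requiring a little care is the identification $\Ob\B^{(\TT_{\Sigma_i})} = \Ob\B_{(\TT,\Sigma_i)}$, which is exactly what the bijection of \bref{thm:refl_idemp_mnd} gives when applied to the $\V$-reflection of \bref{thm:refl_orth} and its induced idempotent $\V$-monad \bref{def:tsep_sigma_compl_mnd}.
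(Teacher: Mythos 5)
Your proposal is correct and follows essentially the same route as the paper's own proof: both reduce the statement to \bref{thm:mnd_mor_for_idm_mnds_subord_t} by establishing the inclusions $\{TB \mid B \in \B\} \subs \Ob\B_{(\TT,\Sigma_2)} \subs \Ob\B_{(\TT,\Sigma_1)}$ via \bref{thm:carriers_of_free_talgs_sep_compl} and the monotonicity $\B_{\Sigma_2} \subs \B_{\Sigma_1}$. Your additional remarks on the identification $\Ob\B^{(\TT_{\Sigma_i})} = \Ob\B_{(\TT,\Sigma_i)}$ and on uniqueness are correct but merely make explicit what the paper leaves implicit.
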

\begin{proof}
This follows from \bref{thm:mnd_mor_for_idm_mnds_subord_t}, since $\Ob\B_{\Sigma_2} \subs \Ob\B_{\Sigma_1}$ and hence, invoking \bref{thm:carriers_of_free_talgs_sep_compl},
$$\{TB \;|\; B \in \B \} \subs \Ob\B_{(\TT,\Sigma_2)} \subs \Ob\B_{(\TT,\Sigma_1)}\;.$$
\end{proof}

\section{The idempotent monad associated to an enriched monad}\label{sec:assoc_idm_mnd}

\begin{DefSub} \label{def:induced_idm_mnd_and_refl}
Let $\TT$ be a $\V$-monad on a $\V$-finitely-well-complete cotensored $\V$-category $\B$.  We call the idempotent $\V$-monad $\tTT := \TT_{\Sigma_T}$ \pbref{def:tsep_sigma_compl_mnd} \textit{the $\TT$-completion monad}, or, alternatively, \textit{the idempotent $\V$-monad induced by $\TT$}.
\end{DefSub}

\begin{ParSub}
The $\V$-reflective-subcategory of $\B$ determined by $\tTT$ is the $\V$-category $\B_{(\TT)}$ of $\TT$-complete objects \pbref{def:tcomplete}, since these objects are by definition the $\Sigma_T$-complete objects, each of which is automatically $\TT$-separated, by \bref{thm:sigma_s_complete_are_sep_and_refl}.  Let 
$$K \nsststile{}{\rho} J:\B_{(\TT)} \hookrightarrow \B$$
be the associated $\V$-reflection.
\end{ParSub}

The following universal characterizations of $\tTT$ and $\B_{(\TT)}$ justify calling $\tTT$ the idempotent $\V$-monad induced by $\TT$:

\begin{ThmSub} \label{thm:ind_idmmnd_and_refl}
Let $\TT$ be a $\V$-monad on a $\V$-finitely-well-complete cotensored $\V$-category $\B$.
\begin{enumerate}
\item $\tTT$ is terminal among idempotent $\V$-monads $\SSS$ for which a \textnormal{(}necessarily unique, \bref{thm:uniqueness_of_mnd_morph}\textnormal{)} morphism of $\V$-monads $\SSS \rightarrow \TT$ exists.  I.e., $\tTT$ is a terminal object of $\IdmMnd_\V(\B) / \TT$.
\item Given any $\V$-adjunction inducing $\TT$, $\B_{(\TT)}$ is the smallest $\V$-reflective-subcategory of $\B$ through which the given $\V$-adjunction factors.
\item $\B_{(\TT)}$ is the smallest $\V$-reflective-subcategory of $\B$ containing the carriers of all free $\TT$-algebras.
\end{enumerate}
\end{ThmSub}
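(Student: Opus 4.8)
The plan is to prove statement 3 first --- that $\B_{(\TT)}$ is the smallest $\V$-reflective-subcategory of $\B$ containing the carriers of all free $\TT$-algebras --- and then to read off statements 2 and 1 from it using the correspondences of the preceding sections. For statement 3 I would first observe that $\B_{(\TT)}$ is \emph{a} $\V$-reflective-subcategory of the asserted kind: it is $\V$-reflective in $\B$ by \bref{thm:sigma_s_complete_are_sep_and_refl} (which rests on \bref{thm:refl_orth}), and it contains the carrier $TB$ of every free $\TT$-algebra by \bref{thm:carriers_of_free_talgs_sep_compl} applied with $\Sigma := \Sigma_T$. So only minimality remains.

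The substantive step is an orthogonality description of $\Sigma_T$: a morphism of $\B$ lies in $\Sigma_T$ if and only if it is $\V$-orthogonal to the carrier $TX$ of every free $\TT$-algebra. To get this I would apply \bref{thm:charns_of_sigma_p} to the Kleisli $\V$-adjunction $F \nsststile{}{} G$ of $\TT$: since the class of objects in the image of the right $\V$-adjoint $G$ is exactly $\{\,TX \mid X \in \B\,\}$, that proposition gives $\{\,TX \mid X \in \B\,\}^{\top_\V} = \Sigma_F$, and $\Sigma_F = \Sigma_T$ by \bref{thm:all_ladjs_ind_given_monad_inv_same_morphs}. Granting this, let $\B'$ be any $\V$-reflective-subcategory of $\B$ with $TX \in \B'$ for all $X$, and fix a $\V$-reflection $K' \nsststile{}{\rho'} J' : \B' \hookrightarrow \B$. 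By \bref{thm:refl_subcats_are_orth_subcats}, $\B' = \B_{\Sigma'}$ with $\Sigma' := \{\rho'_B \mid B \in \B\}$, and each reflection unit $\rho'_B$ lies in $\Sigma_{K'} = (\ob\B')^{\top_\V}$, hence is $\V$-orthogonal in particular to every $TX$; by the description of $\Sigma_T$ just obtained, $\Sigma' \subs \Sigma_T$. Since $\Sigma \mapsto \B_\Sigma$ is manifestly order-reversing, $\B_{(\TT)} = \B_{\Sigma_T} \subs \B_{\Sigma'} = \B'$, which is the required minimality.

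Statement 2 should then follow quickly from \bref{thm:charns_adj_factn_via_mnd}: for any $\V$-adjunction inducing $\TT$, the equivalence there between ``the adjunction factors through $K' \nsststile{}{\rho'} J'$'' and ``$\B'$ contains the carriers of all free $\TT$-algebras'' shows both that the adjunction factors through $\B_{(\TT)}$ (using that $\B_{(\TT)}$ does contain those carriers) and that the $\V$-reflective-subcategories through which it factors are precisely those containing them; by statement 3 the smallest of these is $\B_{(\TT)}$. For statement 1, recall from \bref{def:induced_idm_mnd_and_refl} that $\tTT = \TT_{\Sigma_T}$ is an idempotent $\V$-monad whose associated $\V$-reflective-subcategory is $\B_{(\TT)}$ and which admits a morphism of $\V$-monads $i : \tTT \rightarrow \TT$ by \bref{thm:uniq_morph_mnds_tsep_sigmacompl_mnd_to_t}; thus $\tTT$ lies in $\IdmMnd_\V(\B)/\TT$. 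Under the isomorphism of preordered classes $(\Refl_\V(\TT))^\op \cong \IdmMnd_\V(\B)/\TT$ of \bref{thm:refls_for_t_iso_to_idm_mnds_over_q}, in which $\Refl_\V(\TT)$ is ordered by inclusion of $\V$-reflective-subcategories, statement 3 says $\B_{(\TT)}$ is the least element of $\Refl_\V(\TT)$, equivalently the terminal object of $(\Refl_\V(\TT))^\op$; transporting across the isomorphism, $\tTT$ is terminal in $\IdmMnd_\V(\B)/\TT$. (Uniqueness of each comparison monad morphism $\SSS \rightarrow \tTT$ is in any case automatic by \bref{thm:uniqueness_of_mnd_morph}.)

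I expect essentially all the genuine content to sit in the orthogonality description of $\Sigma_T$ in the second paragraph; everything else is bookkeeping, and the only place requiring care is tracking, through the contravariant bijections of \bref{thm:refl_iso_idemp_mnd} and \bref{thm:refls_for_t_iso_to_idm_mnds_over_q}, that ``smallest $\V$-reflective-subcategory containing the free carriers'' on the reflection side corresponds to ``terminal object of $\IdmMnd_\V(\B)/\TT$'' on the idempotent-monad side.
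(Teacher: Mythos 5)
Your proposal is correct. The paper runs the argument in the opposite order: it proves statement 2 first, obtaining the existence of the factorization from \bref{thm:adj_factn} and the minimality from the observation that if $P \nsststile{\varepsilon}{\eta} Q$ factors through a reflection $K' \nsststile{}{\rho'} J'$ then $P \cong P'K'$, whence $\Sigma_{K'} \subs \Sigma_P$ and so each reflection unit $\rho'_B$ lies in $\Sigma_P = \Sigma_T$; statement 3 then falls out as the Kleisli special case of 2, and 1 is deduced from \bref{thm:refls_for_t_iso_to_idm_mnds_over_q} exactly as you do. Your route reaches the same pivotal inclusion $\rho'_B \in \Sigma_T$ by a different mechanism, namely the object-orthogonality identity $\{TX \mid X \in \B\}^{\top_\V} = \Sigma_T$ obtained from \bref{thm:charns_of_sigma_p} applied to the Kleisli $\V$-adjunction, combined with $\Sigma_{K'} = (\ob\B')^{\top_\V}$ from \bref{thm:refl_subcats_are_orth_subcats}; this lets you argue directly from the hypothesis that $\B'$ contains the free carriers, without first passing to an adjunction that factors through $\B'$, at the cost of then needing \bref{thm:charns_adj_factn_via_mnd} to recover statement 2. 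Both arguments reduce minimality to $\Sigma' \subs \Sigma_T$ together with the order-reversal of $\Sigma \mapsto \B_\Sigma$, and both draw on the same cluster of lemmas, so the difference is essentially one of which statement is made primitive: you take 3 as primary and derive 2, the paper takes 2 as primary and derives 3. Each is complete; yours is marginally more self-contained on the minimality step, the paper's makes the adjoint-factorization theorem \bref{thm:adj_factn} do more of the work.
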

\begin{proof}
First, let us prove 2.  Any $\V$ adjunction $P \nsststile{\varepsilon}{\eta} Q$ inducing $\TT$ factors through $K \nsststile{}{\rho} J$ by \bref{thm:adj_factn}.  Consider any $\V$-reflection $K' \nsststile{}{\rho'} J':\B' \hookrightarrow \B$ through which $P \nsststile{\varepsilon}{\eta} Q$ factors, and let $P' \nsststile{\varepsilon}{\eta'} Q':\C \rightarrow \B'$ be the induced adjunction.  Then $P \cong P'K'$, whence $\Sigma_{K'} \subs \Sigma_P$.  But for each $B \in \B$, $\rho'_B$ is inverted by $K'$, so $\rho'_B$ lies in $\Sigma_P$.  Therefore, letting $\Sigma' := \{\rho'_B\;|\;B \in \B\}$, we have that $\Sigma' \subs \Sigma_P$, so $\B_{(\TT)} = \B_{\Sigma_P} \subs \B_{\Sigma'} = \B'$ (by \bref{thm:refl_subcats_are_orth_subcats}).

By 2, we have in particular that $\B_{(\TT)}$ is the smallest $\V$-reflective-subcategory of $\B$ through which the Kleisli $\V$-adjunction factors.  Hence 3 holds.  Moreover, we deduce by \bref{thm:refls_for_t_iso_to_idm_mnds_over_q} that 1 holds also.
\end{proof}

\begin{PropSub} \label{thm:addnl_facts_on_ind_idmmnd_refl}
Let $\TT$ be a $\V$-monad on a $\V$-finitely-well-complete cotensored $\V$-category $\B$.
\begin{enumerate}
\item $\tT$ inverts exactly the same morphisms as $T$ --- i.e. $\Sigma_{\tT} = \Sigma_T$.
\item The components of the unique morphism of $\V$-monads $\tTT \rightarrow \TT$ are the $\Sigma$-closed embeddings $i_B:KB \rightarrowtail TB$ of \bref{def:completion}, where $\Sigma = \Sigma_T$.
\end{enumerate}
\end{PropSub}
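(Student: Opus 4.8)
The two statements to prove concern the $\TT$-completion monad $\tTT = \TT_{\Sigma_T}$, and both follow quite directly from results already established in this section, specialized to $\Sigma = \Sigma_T$. The plan is to simply feed $\Sigma = \Sigma_T$ into \bref{thm:addnl_facts_on_ind_idmmnd_refl}'s predecessors. For statement 1, I would argue as follows: since $\tT$ is the underlying endofunctor of an idempotent $\V$-monad whose $\V$-reflective-subcategory is $\B_{(\TT)}$, and since there is a morphism of $\V$-monads $\tTT \rightarrow \TT$ (by \bref{thm:uniq_morph_mnds_tsep_sigmacompl_mnd_to_t}, with $\Sigma = \Sigma_T$), we get $\Sigma_{\tT} = \Sigma_K$ for the associated reflection $K \nsststile{}{\rho} J : \B_{(\TT)} \hookrightarrow \B$ by the general fact \bref{thm:all_ladjs_ind_given_monad_inv_same_morphs} applied to the reflection viewed as an adjunction. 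Then \bref{rem:factn_inversion} (or rather the argument behind it — that in the factorization of \bref{thm:adj_factn}, $K$ inverts exactly the morphisms inverted by $P$) gives $\Sigma_K = \Sigma_P = \Sigma_T$, using \bref{thm:all_ladjs_ind_given_monad_inv_same_morphs} once more to identify $\Sigma_P = \Sigma_T$. Hence $\Sigma_{\tT} = \Sigma_T$.

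Alternatively, and perhaps more cleanly, statement 1 can be read straight off the construction: the unit component $\rho_B : B \rightarrow KB$ of the idempotent monad $\tTT$ is, by \bref{def:completion}, the first factor in the $(\SigmaDense,\SigmaClEmb)$-factorization of $\eta_B$, with $\Sigma = \Sigma_T$, and by \bref{thm:rho_inverted_by_s} each $\rho_B$ is inverted by $P$, hence lies in $\Sigma_P = \Sigma_T$. Since $\tTT$ is idempotent, $\Sigma_{\tT}$ consists exactly of those morphisms inverted by the reflection, which (again by \bref{thm:all_ladjs_ind_given_monad_inv_same_morphs}) are those inverted by any left adjoint inducing $\tTT$; and the reflection unit $\rho$ generates this class in the sense of \bref{thm:refl_subcats_are_orth_subcats}, giving $\B_{(\TT)} = \B_{\Sigma_{\tT}} = \B_{\Sigma_T}$, whence $\Sigma_{\tT} = \Sigma_T$ since a $\V$-reflective-subcategory determines the inverted class of its reflection.

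Statement 2 is immediate: it is exactly \bref{thm:uniq_morph_mnds_tsep_sigmacompl_mnd_to_t} applied with $\Sigma = \Sigma_T$, since by \bref{def:induced_idm_mnd_and_refl} we have $\tTT = \TT_{\Sigma_T}$, and that proposition identifies the components of the unique morphism of $\V$-monads $\TT_{\Sigma_T} \rightarrow \TT$ as the $\Sigma_T$-closed embeddings $i_B : KB \rightarrowtail TB$ of \bref{def:completion}. Uniqueness of this morphism is guaranteed by \bref{thm:uniqueness_of_mnd_morph}, since $\tTT$ is idempotent.

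I do not anticipate a genuine obstacle here; the only point requiring a little care is making sure that in statement 1 one correctly invokes \bref{thm:all_ladjs_ind_given_monad_inv_same_morphs} to pass between "inverted by the endofunctor $\tT$", "inverted by the reflection left adjoint $K$", and "inverted by $P$", and that \bref{def:completion}'s factorization is being taken with respect to $\Sigma = \Sigma_T$ so that \bref{thm:rho_inverted_by_s} applies verbatim. Everything else is bookkeeping.
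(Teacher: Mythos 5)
Your proposal is correct and its main line is exactly the paper's argument: statement 1 via the chain $\Sigma_{\tT} = \Sigma_K = \Sigma_P = \Sigma_T$ using \bref{rem:factn_inversion} and \bref{thm:all_ladjs_ind_given_monad_inv_same_morphs}, and statement 2 as an immediate instance of \bref{thm:uniq_morph_mnds_tsep_sigmacompl_mnd_to_t} with $\Sigma = \Sigma_T$. The ``alternative'' route you sketch for statement 1 is a looser variant (the final step passing from $\B_{\Sigma_{\tT}} = \B_{\Sigma_T}$ back to $\Sigma_{\tT} = \Sigma_T$ would need justification via \bref{thm:refl_subcats_are_orth_subcats}), but it is not needed.
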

\begin{proof}
Choose any $\V$-adjunction $P \dashv Q$ inducing $\TT$.  By \bref{rem:factn_inversion} and \bref{thm:all_ladjs_ind_given_monad_inv_same_morphs}, $\Sigma_{\tT} = \Sigma_K = \Sigma_P = \Sigma_T$.  2 follows from \bref{thm:uniq_morph_mnds_tsep_sigmacompl_mnd_to_t}.
\end{proof}

\begin{RemSub} \label{rem:ind_idempt_mnd_on_v}
If $\V$ is finitely well-complete, then for any $\V$-monad $\TT$ on $\uV$ itself, the induced idempotent $\V$-monad $\tTT$ \pbref{thm:ind_idmmnd_and_refl} can be formed.  Indeed, as observed in \bref{thm:base_adj_fact}, $\uV$ is cotensored and $\V$-finitely-well-complete.
\end{RemSub}

\begin{ExaSub}
Returning to the example of \bref{exa:func_distnl_compl}, \bref{exa:func_distnl_completion_mnds}, where $\TT = \HH$ is the double-dualization $\sL$-monad on $\sL = \RMod(\X)$, the idempotent $\sL$-monad induced by $\HH$ is the \textit{functional completion monad} $\tHH$, whose associated $\sL$-reflective-subcategory of $\uL$ consists of the \textit{functionally complete} objects; see \bref{def:func_compl}.
\end{ExaSub}

\chapter{Symmetric monoidal closed adjunctions and commutative monads}\label{ch:smc_adj_comm_mnd}
\setcounter{subsection}{0}

Given a commutative ring $R$, the category of $R$-modules (call it $\sL$ for the moment) is \textit{symmetric monoidal closed}, as we can form for each pair of $R$-modules $E_1,E_2$ the \textit{tensor product} $E_1 \otimes E_2$ and the \textit{hom} $\uL(E_1,E_2)$ such that
\begin{equation}\label{eqn:tens_hom}\uL(E_1 \otimes E_2,E_3) \cong \uL(E_1,E_2;E_3) \cong \uL(E_1,\uL(E_2,E_3))\end{equation}
naturally in $E_1,E_2,E_3 \in \sL$, where $\uL(E_1,E_2;E_3)$ is the $R$-module of bilinear maps.  Taking $\sL$ instead to be the category of Banach spaces, we can again define a tensor product and hom of Banach spaces in such a way that a natural isomorphism of Banach spaces \eqref{eqn:tens_hom} obtains (see, e.g. \cite{Mich} 1.4).  But the situation becomes much more complicated when one moves to the setting of topological vector spaces \cite{Gro:Memoirs}.  

Whereas topological $R$-vector spaces ($R = \RR$ or $\CC$) are $R$-module objects in the category $\Top$ of topological spaces, we can recapture the natural isomorphisms \eqref{eqn:tens_hom} by working instead with $R$-module objects in any category $\X$ which has enough limits and colimits and shares with the category of sets $\Set$ the property of being \textit{cartesian closed} (such as the categories of convergence spaces and smooth spaces, \bref{exa:conv_sm_sp}).  Further, for each object or `space' $X \in \X$ we can form the \textit{free} $R$-module object $FX$ on $X$, so that with $\sL := \RMod(\X)$ we obtain an adjunction
\begin{equation}\label{eqn:smc_adj_intr}
\xymatrix {
\X \ar@/_0.5pc/[rr]_F^(0.4){}^(0.6){}^{\top} & & \sL \ar@/_0.5pc/[ll]_G
}
\end{equation}
which is compatible with the symmetric monoidal closed structure on $\sL$, making it a \textit{symmetric monoidal adjunction} \pbref{exa:enr_adj_mnd}.

The category of $R$-modules in $\X$ is an example of the category of algebras $\X^\TT$ of a \textit{symmetric monoidal monad} $\TT$ on a symmetric monoidal closed category $\X$ \pbref{thm:rmod_commutative_monadic}.  Further, given \textit{any} symmetric monoidal monad $\TT$ on $\uX$, if there exist certain equalizers in $\X$ and certain coequalizers in $\sL = \X^\TT$, then $\sL$ is again symmetric monoidal closed and the Eilenberg-Moore adjunction \eqref{eqn:smc_adj_intr} is symmetric monoidal \pbref{thm:smclosed_em_adj}.  Indeed, Kock \cite{Kock:ClsdCatsGenCommMnds} showed that $\X^\TT$ is \textit{closed} (but not necessarily \textit{monoidal} closed) and we assemble a proof on the basis of this and more recent work in the monoidal setting \cite{Jac:SemWkngContr},\cite{Seal:TensMndActn}.

The category of $\TT$-algebras $\sL$ is both $\sL$-enriched and $\X$-enriched, and moreover the adjunction \eqref{eqn:smc_adj_intr} is $\X$-enriched.  Indeed, the latter $\X$-enrichment arises in two equivalent ways.  Firstly, as shown by Kock \cite{Kock:SmComm}, symmetric monoidal monads $\TT$ are (up to a bijection) the same as \textit{commutative} $\X$-enriched monads on $\uX$, so the Eilenberg-Moore adjunction \eqref{eqn:smc_adj_intr} is $\X$-enriched.  Secondly, we give a proof of a `folklore' result \pbref{thm:assoc_enr_adj} to the effect that every symmetric monoidal adjunction of closed categories carries an associated $\X$-enrichment.  We show that the two resulting $\X$-enrichments are the same.  Moreover, we clarify the relation of the later enrichment to Kock's bijection between symmetric monoidal monads and commutative enriched monads, we show that this enrichment commutes with composition of adjunctions \pbref{thm:compn_assoc_enr_adj}, and we elucidate its status in relation to the 2-functorial \textit{change-of-base} for enriched categories \pbref{par:ch_base}, studied by Cruttwell \cite{Cr}.

Chapter \bref{ch:compl_enr_orth} provides a means of identifying certain $\sL$-enriched reflective subcategories $\sL' \hookrightarrow \sL$ consisting of suitably `complete' objects.  We show that the theory of commutative monads entails that the tensor-hom isomorphisms for $\sL$ \eqref{eqn:tens_hom} induce analogous isomorphisms for $\sL'$, when one employs the same `hom' but instead the \textit{completion} $E_1 \otimes' E_2$ of the tensor product $E_1 \otimes E_2$.  More generally, we show that every \textit{idempotent} $\V$-monad $\SSS$ on a symmetric monoidal closed category $\V$ is commutative \pbref{thm:idm_mnd_comm}; by the above it then follows that the corresponding reflective subcategory $\W$ of $\V$ (which is the category of algebras of $\SSS$) is symmetric monoidal closed and the associated reflection is symmetric monoidal \pbref{thm:enr_refl_smadj}.  We thus obtain a novel proof of Day's result \cite{Day:Refl} that a reflective subcategory $\W$ of $\V$ is symmetric monoidal closed as soon as the hom $\V(V,W)$ lies in $\W$ whenever $W \in \W$ \pbref{thm:day_refl}.  Indeed, the latter property entails that $\W$ is a $\V$-enriched reflective subcategory of $\V$, so that Day's result is subsumed by the theory of commutative monads.

\section{Enriched functors arising from monoidal functors} \label{sec:enr_mon_func}

\begin{ParSub} \label{par:ch_base}
Cruttwell \cite{Cr} defines a 2-functor $(-)_*:\MMCCAATT \rightarrow \TWOCCAATT$, from the 2-category of monoidal categories to the 2-category of 2-categories, sending each monoidal functor $M:\V \rightarrow \W$ to the \textit{change-of-base} 2-functor $M_*:\VCAT \rightarrow \WCAT$.  For a $\V$-category $\eA$, the $\W$-category $M_*\eA$ has objects those of $\eA$ and homs given by
$$(M_*\eA)(A_1,A_2) = M\eA(A_1,A_2)\;\;\;\;(A_1,A_2 \in \eA)\;.$$
Given a monoidal transformation $\alpha:M \rightarrow N$, where $M,N:\V \rightarrow \W$ are monoidal functors, we obtain an associated 2-natural transformation $\alpha_*:M_* \rightarrow N_*$ whose components are identity-on-object $\W$-functors
$$\alpha_*\A:M_*\A \rightarrow N_*\A\;,\;\;\;\;\A \in \VCAT$$
whose structure morphisms are simply the components
$$(\alpha_*\A)_{A_1 A_2} = \alpha_{\A(A_1,A_2)}:M\A(A_1,A_2) \rightarrow N\A(A_1,A_2)\;,\;\;\;\;(A_1,A_2 \in \A)$$
of $\alpha$.

Given a symmetric monoidal functor $M:\V \rightarrow \W$ between closed symmetric monoidal categories, we obtain a canonical $\W$-functor $\grave{M}:M_*\aeV \rightarrow \aeW$, given on objects just as $M$, and with each
\begin{equation}\label{eq:str_morph_mgrave}\grave{M}_{V_1 V_2} : (M_*\aeV)(V_1,V_2) = M\aeV(V_1,V_2) \rightarrow \aeW(MV_1,MV_2)\;\;\;\;(V_1,V_2 \in \V)\end{equation}
gotten as the transpose of the composite
$$MV_1 \otimes M\aeV(V_1,V_2) \rightarrow M(V_1 \otimes \aeV(V_1,V_2)) \xrightarrow{M(\Ev)} MV_2\;.$$
\end{ParSub}

\begin{ParSub} \label{par:cl_func}
Let $\SSMMCCAATT$ be the 2-category of symmetric monoidal categories, and let $\CCllSSMMCCAATT$ be the full sub-2-category of $\SSMMCCAATT$ with objects all closed symmetric monoidal categories.  Letting $\CCllCCAATT$ be the 2-category of closed categories \cite{EiKe}, there is a 2-functor $c:\CCllSSMMCCAATT \rightarrow \CCllCCAATT$, sending a symmetric monoidal functor $M:\V \rightarrow \W$ (with $\V$ and $\W$ closed symmetric monoidal categories) to the closed functor $cM:\V \rightarrow \W$ with the same underlying ordinary functor $M$ and the same unit morphism $I_{\W} \rightarrow MI_{\V}$, but with each structure morphism
$$M\aeV(V_1,V_2) \rightarrow \aeW(MV_1,MV_2)\;\;\;\;(V_1,V_2 \in \V)$$
equal to the morphism $\grave{M}_{V_1 V_2}$ \eqref{eq:str_morph_mgrave} associated to the $\W$-functor $\grave{M}:M_*\aeV \rightarrow \aeW$ of \bref{par:ch_base}.
\end{ParSub}

\begin{PropSub} \label{prop:comp_assoc_enr_fun}
Let $M:\U \rightarrow \V$, $N:\V \rightarrow \W$ be symmetric monoidal functors between closed symmetric monoidal categories.  Then the $\W$-functor
$$\widegrave{NM}:(NM)_*\aeU \rightarrow \aeW$$
is equal to the composite
$$N_*M_*\aeU \xrightarrow{N_*\grave{M}} N_*\aeV \xrightarrow{\grave{N}} \aeW\;.$$
\end{PropSub}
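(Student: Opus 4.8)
The plan is to verify the claimed equality of $\W$-functors $(NM)_*\aeU \to \aeW$ by checking that both sides agree on objects and on hom-objects, using the compatibility criterion for $\V$-functors into a faithful target — but in fact both sides will be seen to be literally equal, not merely isomorphic. On objects there is nothing to do: $\widegrave{NM}$ acts as $NM$ on objects, and so does the composite $\grave{N} \cdot N_*\grave{M}$, since $\grave{M}$ acts as $M$, $N_*$ is identity-on-objects, and $\grave{N}$ acts as $N$. So the whole content is the agreement of the structure morphisms.

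First I would write out, for objects $U_1,U_2 \in \U$, the structure morphism of the composite. The $\W$-functor $N_*\grave{M}:N_*M_*\aeU \to N_*\aeV$ has structure morphism $N(\grave{M}_{U_1 U_2}):NM\,\aeU(U_1,U_2) \to N\aeV(MU_1,MU_2)$, and then $\grave{N}:N_*\aeV \to \aeW$ contributes $\grave{N}_{MU_1,MU_2}:N\aeV(MU_1,MU_2)\to \aeW(NMU_1,NMU_2)$. Thus the composite's structure morphism is $\grave{N}_{MU_1,MU_2}\cdot N(\grave{M}_{U_1 U_2})$. On the other side, $\widegrave{NM}_{U_1 U_2}$ is by definition \pbref{par:ch_base} the transpose under the $\aeW$-adjunction of the composite
$$NMU_1 \otimes NM\,\aeU(U_1,U_2) \to NM\bigl(U_1 \otimes \aeU(U_1,U_2)\bigr) \xrightarrow{NM(\Ev)} NMU_2\;,$$
where the first arrow is the structure morphism of the composite monoidal functor $NM$, which itself factors as $N$ applied to the structure morphism of $M$, precomposed with the structure morphism of $N$ at the pair $(MU_1, M\aeU(U_1,U_2))$.

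The key step, then, is to compute the transpose on both sides and match them. For the composite side, $\grave{N}_{MU_1,MU_2}$ is the transpose of $NMU_1 \otimes N\aeV(MU_1,MU_2) \to N(MU_1 \otimes \aeV(MU_1,MU_2)) \xrightarrow{N(\Ev)} NMU_2$; precomposing with $NMU_1 \otimes N(\grave{M}_{U_1 U_2})$ and using naturality of the monoidal structure constraint of $N$ together with the fact that $\grave{M}_{U_1 U_2}$ is itself the transpose of $MU_1 \otimes M\aeU(U_1,U_2)\to M(U_1\otimes\aeU(U_1,U_2))\xrightarrow{M(\Ev)}MU_2$, one rewrites the whole composite and sees it equals the defining composite of $\widegrave{NM}_{U_1 U_2}$. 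This is a diagram chase involving only: (i) naturality of the lax monoidal constraint of $N$ in its arguments, (ii) the defining transpose relations, and (iii) the coherence equation expressing the monoidal structure of $NM$ as the pasting of those of $M$ and $N$. Having matched the underlying composites before transposing, the equality of transposes is immediate by faithfulness/uniqueness of transposition under the closed structure.

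I expect the main obstacle to be purely bookkeeping: correctly tracking the several constraint morphisms (the constraint of $N$ appears at the pair $(MU_1,M\aeU(U_1,U_2))$ in one place and at $(MU_1,\aeV(MU_1,MU_2))$ in another, related by applying $N$ to the constraint of $M$ and by naturality), and not an actual conceptual difficulty. No delicate hypotheses are needed — closedness of $\U,\V,\W$ guarantees the transposes exist, and the coherence of $(-)_*$ and of the assignment $M \mapsto \grave{M}$ is what makes the pieces fit; in effect this proposition records the functoriality $\widegrave{(-)}$ of $\grave{\,\cdot\,}$ with respect to composition, parallel to the 2-functoriality of $(-)_*$ in \bref{par:ch_base}. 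One could alternatively phrase the argument as: the two $\W$-functors have equal object-assignments and, by \bref{thm:crit_for_equal_faithful_vfunctors} applied to $\aeW$ (whose internal-hom functor is $\W$-faithful in the relevant sense), it suffices to check equality of structure morphisms, which the above chase provides.
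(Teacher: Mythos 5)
Your argument is correct: both sides act as $NM$ on objects, and your transpose computation for the structure morphisms goes through — precomposing the transpose of $\grave{N}_{MU_1 MU_2}$ with $1\otimes N(\grave{M}_{U_1 U_2})$, pushing $N(\grave{M})$ across the constraint $m^N$ by naturality, and then recognizing $(1\otimes\grave{M}_{U_1U_2});\Ev$ as $m^M;M(\Ev)$ reassembles exactly the constraint $m^{NM}=N(m^M)\cdot m^N$ followed by $NM(\Ev)$, i.e.\ the defining composite of $\widegrave{NM}_{U_1U_2}$. The paper reaches the same conclusion by a different (shorter) route: it observes via \bref{par:cl_func} that $\grave{M}_{U_1U_2}$ and $\grave{N}_{MU_1MU_2}$ are precisely the structure morphisms of the closed functors $cM$ and $cN$, so the composite structure morphism is that of $(cN)(cM)=c(NM)$, and the claim follows from the functoriality of the monoidal-to-closed 2-functor $c$ (the Eilenberg--Kelly passage). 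In effect your diagram chase is the unfolded proof of that functoriality, so your version is more self-contained and elementary, while the paper's delegates the bookkeeping to a cited construction. One small remark: your closing appeal to \bref{thm:crit_for_equal_faithful_vfunctors} is unnecessary — the two $\W$-functors already share domain and codomain, so equality on objects together with equality of structure morphisms is literally the definition of equality of $\W$-functors, and no faithfulness of the target is needed.
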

\begin{proof}
Both $\W$-functors are given on objects just as $NM$.  Letting $C$ be the given composite $\W$-functor, the structure morphisms $C_{U_1 U_2}$ of $C$ (where $U_1,U_2$ are objects of $N_*M_*\aeU$, equivalently, of $\U$) are the composites
$$NM\aeU(U_1,U_2) \xrightarrow{N(\grave{M}_{U_1 U_2})} N\aeV(MU_1,MU_2) \xrightarrow{\grave{N}_{MU_1 MU_2}} \aeW(NMU_1,NMU_2)\;,$$
but by \bref{par:cl_func}, we have that $\grave{M}_{U_1 U_2}$ and $\grave{N}_{MU_1 MU_2}$ are equally the structure morphisms of the closed functors $cM$ and $cN$, respectively, and so $C_{U_1 U_2}$ is the structure morphism of the composite $(cN)(cM)$ of these closed functors.  Since $(cN)(cM) = c(NM)$, $C_{U_1 U_2}$ is therefore equally the structure morphism of the closed functor $c(NM)$ associated to $NM$, which by \bref{par:cl_func} is equal to
$$\widegrave{NM}_{U_1 U_2}:((NM)_*\aeU)(U_1,U_2) \rightarrow \aeW(NMU_1,NMU_2)\;,$$
the structure morphism of the $\aeW$ functor $\widegrave{NM}$.
\end{proof}

\section{Enrichment of a symmetric monoidal closed adjunction} \label{sec:enr_smc_adj}

\begin{ParSub} \label{par:given_smcadj}
Let
\begin{equation}\label{eqn:smcadj}
\xymatrix {
\X \ar@/_0.5pc/[rr]_F^(0.4){\eta}^(0.6){\varepsilon}^{\top} & & \sL \ar@/_0.5pc/[ll]_G
}
\end{equation}
be a symmetric monoidal adjunction, where $\X = (\X,\boxtimes,I)$ and $\sL = (\sL,\otimes,R)$ are \textit{closed} symmetric monoidal categories.  By applying $(-)_*$ \pbref{par:ch_base} to this monoidal adjunction, we obtain an adjunction
$$
\xymatrix {
\XCAT \ar@/_0.5pc/[rr]_{F_*}^(0.4){\eta_*}^(0.6){\varepsilon_*}^{\top} & & \LCAT \ar@/_0.5pc/[ll]_{G_*}
}
$$
in $\TWOCCAATT$.  We have an $\X$-functor $\grave{G}:G_*\aeL \rightarrow \aeX$ and an $\sL$-functor $\grave{F}:F_*\aeX \rightarrow \aeL$ \pbref{par:ch_base}, and we obtain an $\X$-functor $\acute{F}:\aeX \rightarrow G_*\aeL$ as the the transpose of $\grave{F}$ under the preceding adjunction.
\end{ParSub}

\begin{ExaSub}\label{exa:em_cat_as_exa_smcadj}
If $\sL$ is the (ordinary) category of Eilenberg-Moore algebras of a symmetric monoidal monad $\TT$ on a symmetric monoidal closed category $\X$ (equivalently, a commutative $\X$-monad on $\uX$, \bref{thm:comm_sm_mnd}), then, assuming that $\X$ has equalizers and that $\sL$ has certain coequalizers \pbref{def:tens_prod_algs}, the Eilenberg-Moore adjunction acquires the structure of a symmetric monoidal adjunction with $\sL$ closed, by \bref{thm:smclosed_em_adj}.
\end{ExaSub}

\begin{ExaSub}\label{exa:rmod_smcadj}
Specializing \bref{exa:em_cat_as_exa_smcadj}, the category of $R$-module objects for a commutative ring $R$ in a countably-complete and~\hbox{-cocomplete} cartesian closed category $\X$ participates in a symmetric monoidal adjunction $F \dashv G:\RMod(\X) \rightarrow \X$ with $\RMod(\X)$ closed; see \bref{exa:rmod_smclosed_monadic}.
\end{ExaSub}

\begin{RemSub}\label{rem:left_adj_strong}
By \cite{Ke:Doctr} 1.4, the left adjoint $F$ is necessarily \textit{strong} symmetric monoidal, meaning that the structure morphisms $FX \otimes FY \rightarrow F(X \boxtimes Y)$ are iso.
\end{RemSub}

\begin{ThmSub}\label{thm:assoc_enr_adj}
Let $F \nsststile{\varepsilon}{\eta} G : \sL \rightarrow \X$ be a symmetric monoidal adjunction between closed symmetric monoidal categories (as in \bref{par:given_smcadj}).
\begin{enumerate}
\item There is an $\X$-adjunction
\begin{equation}\label{eqn:assoc_enr_adj}
\xymatrix {
\aeX \ar@/_0.5pc/[rr]_{\acute{F}}^(0.4){\eta}^(0.6){\varepsilon}^{\top} & & G_*\aeL \ar@/_0.5pc/[ll]_{\grave{G}}
}
\end{equation}
whose underlying ordinary adjunction may be identified with $F \nsststile{\varepsilon}{\eta} G$.
\item The $\X$-category $G_*\aeL$ is cotensored.  For all $X \in \X$ and $E \in \sL$, a cotensor $[X,E]$ in $G_*\aeL$ may be gotten as the internal hom $\aeL(FX,E)$ in $\sL$.
\item Further to 2, one may obtain the needed `hom-cotensor' $\X$-adjunction
$$
\xymatrix {
\aeX \ar@/_0.5pc/[rr]_{[-,E]}^(0.4){}^(0.6){}^{\top} & & {(G_*\aeL)^\op} \ar@/_0.5pc/[ll]_{(G_*\aeL)(-,E)}
}
$$
as exactly the following composite $\X$-adjunction
\begin{equation}\label{eq:composite_hom_cotensor}
\xymatrix {
\aeX \ar@/_0.5pc/[rr]_{\acute{F}}^(0.4){\eta}^(0.6){\varepsilon}^{\top} & & G_*\aeL \ar@/_0.5pc/[ll]_{\grave{G}} \ar@/_0.5pc/[rr]_{G_*(\aeL(-,E))}^(0.4){}^(0.6){}^{\top} & & {G_*(\aeL^\op)\;,} \ar@/_0.5pc/[ll]_{G_*(\aeL(-,E))}
}
\end{equation}
in which the rightmost adjunction is gotten by applying $G_*:\LCAT \rightarrow \XCAT$ to the $\sL$-adjunction
$$
\xymatrix {
\aeL \ar@/_0.5pc/[rr]_{\aeL(-,E)}^(0.4){}^(0.6){}^{\top} & & {\aeL^\op\;.} \ar@/_0.5pc/[ll]_{\aeL(-,E)}
}
$$
\end{enumerate}
\end{ThmSub}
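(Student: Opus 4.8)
The plan is to treat the three parts in sequence, concentrating the work in part 1; parts 2 and 3 then follow by short manipulations with the closed structures together with the change-of-base $2$-functoriality set up in \bref{sec:enr_mon_func}.

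For part 1 I would invoke \bref{prop:enr_of_ord_adj} with $\V := \X$, taking the $\X$-functor to be $\grave{G} : G_*\aeL \to \aeX$ (available from \bref{par:given_smcadj}) and the ordinary adjunction to be $F \dashv G$. One first checks that the underlying ordinary category of $G_*\aeL$ is (isomorphic to) $\sL$ and that the underlying ordinary functor of $\grave{G}$ is $G$: indeed $\X(I, G\aeL(E_1,E_2)) \cong \sL(FI, \aeL(E_1,E_2)) \cong \sL(R, \aeL(E_1,E_2)) \cong \sL(E_1,E_2)$, using that $F$ is strong symmetric monoidal so $FI \cong R$ (\bref{rem:left_adj_strong}). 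The substantive point is then that for each $X \in \X$, $E \in \sL$ the morphism $\phi_{XE} : G\aeL(FX, E) \to \aeX(X, GE)$ of \bref{prop:adjn_via_radj_and_unit} is an isomorphism in $\X$, which I would prove by the Yoneda lemma in $\X$: for $W \in \X$,
$$\X(W, G\aeL(FX,E)) \cong \sL(FW, \aeL(FX,E)) \cong \sL(FW \otimes FX, E) \cong \sL(F(W \boxtimes X), E) \cong \X(W \boxtimes X, GE) \cong \X(W, \aeX(X, GE)),$$
using the ordinary adjunction $F \dashv G$ twice, the internal-hom adjunctions of $\sL$ and $\X$, and again the strong monoidality of $F$ for $FW \otimes FX \cong F(W \boxtimes X)$; one checks this family is represented by $\phi_{XE}$. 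Then \bref{prop:enr_of_ord_adj} yields an $\X$-adjunction with right $\X$-adjoint $\grave{G}$ whose underlying ordinary adjunction is $F \dashv G$.

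It remains to identify the left $\X$-adjoint so produced with $\acute{F}$. Since $\acute{F} = G_*\grave{F} \cdot (\eta_*)_{\aeX}$ by definition of the transpose under $F_* \dashv G_*$, it agrees with $F$ on objects; one then matches structure morphisms, using \bref{prop:comp_assoc_enr_fun} to simplify $\grave{G} \cdot \acute{F}$ and comparing with the defining formula \eqref{eqn:ladj_formula}, or alternatively one observes that $\acute{F}$ and the produced functor are both left $\X$-adjoint to $\grave{G}$, agree on objects, and have unit $\eta$, hence are equal by \bref{thm:enradj_detd_by_radj_and_univ_arr}. For part 2, set $[X,E] := \aeL(FX,E)$ and exhibit the cotensor isomorphism directly: for $B \in G_*\aeL$,
$$(G_*\aeL)(B,[X,E]) = G\aeL(B, \aeL(FX,E)) \cong G\aeL(FX, \aeL(B,E)) \cong \aeX(X, G\aeL(B,E)) = \aeX(X, (G_*\aeL)(B,E)),$$
the first isomorphism being $G_*$ applied to the standard symmetry-and-internal-hom isomorphism $\aeL(B,\aeL(FX,E)) \cong \aeL(FX,\aeL(B,E))$ in $\sL$ (natural in $B$), and the second the hom-isomorphism of $\acute{F} \dashv \grave{G}$ evaluated at $\aeL(B,E)$; as $(-)_*$ is a $2$-functor each step is $\X$-natural in $B$, so $[X,E]$ is a cotensor.

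For part 3, compose $\X$-adjunctions. Applying the $2$-functor $G_* : \LCAT \to \XCAT$ to the evident $\sL$-self-adjunction $\aeL(-,E) \dashv \aeL(-,E) : \aeL^\op \to \aeL$ gives an $\X$-adjunction $G_*(\aeL(-,E)) \dashv G_*(\aeL(-,E)) : (G_*\aeL)^\op \to G_*\aeL$, using that $G_*(\aeL^\op) = (G_*\aeL)^\op$. Composing it with $\acute{F} \dashv \grave{G}$ from part 1 yields an $\X$-adjunction $\aeX \rightleftarrows (G_*\aeL)^\op$ whose left adjoint sends $X \mapsto \aeL(FX,E) = [X,E]$ and whose right adjoint sends $A \mapsto G\aeL(A,E) = (G_*\aeL)(A,E)$; this is therefore a hom-cotensor $\X$-adjunction $[-,E] \dashv (G_*\aeL)(-,E)$ realizing the cotensors of part 2 (and equal to the canonical one by \bref{thm:enradj_detd_by_radj_and_univ_arr}, the right adjoint, the object-assignment of the left adjoint, and the units all matching). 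I expect the main obstacle to be not any conceptual difficulty but the bookkeeping in part 1 — keeping genuine $\X$-naturality (rather than mere ordinary naturality) under control, and pinning down the on-the-nose identification of the abstractly produced left adjoint with $\acute{F}$.
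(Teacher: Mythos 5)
Your proposal is correct in outline but organizes the proof quite differently from the paper. The paper proves part 1 head-on: it first identifies the underlying ordinary category of $G_*\aeL$ with $\sL$ and the underlying functor of $\grave{G}$ with $G$, then (in Lemma \bref{thm:x_nat_of_eta_and_epsilon}) verifies by explicit diagram chases -- using that $\eta$ and $\varepsilon$ are \emph{monoidal} transformations, plus a $2$-categorical pasting argument for $\varepsilon$ -- that $\eta:1\rightarrow\grave{G}\acute{F}$ and $\varepsilon:\acute{F}\grave{G}\rightarrow 1$ are $\X$-natural, after which the triangle identities are inherited from the ordinary adjunction; it then proves part 3 by computing the structure morphisms of the composite right $\X$-adjoint and deduces part 2 as a corollary. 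You instead route everything through the representability criterion \bref{prop:adjn_via_radj_and_unit}/\bref{prop:enr_of_ord_adj}, proving the $\phi_{XE}$ are isomorphisms by a Yoneda chain, and you prove part 2 directly before part 3. This buys you a cleaner conceptual skeleton and lets you avoid the paper's somewhat elaborate verification that $\varepsilon$ is $\X$-natural; the cost is that the two steps you defer -- checking that your chain of bijections is represented by $\phi_{XE}$ (which needs the monoidality of $\eta$, not just the strong monoidality of $F$), and identifying the abstractly produced left adjoint with $\acute{F}$ -- together amount to essentially the $\eta$-half of the paper's Lemma \bref{thm:x_nat_of_eta_and_epsilon}, so the diagram-chasing is relocated rather than eliminated.

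One caveat on the identification of the left adjoint: your fallback argument (``both are left $\X$-adjoint to $\grave{G}$ with unit $\eta$, hence equal by \bref{thm:enradj_detd_by_radj_and_univ_arr}'') is circular, since the assertion that $\acute{F}$ is left $\X$-adjoint to $\grave{G}$ with unit $\eta$ is precisely what part 1 claims and is not known in advance. Your first method is the one that works: show that the structure morphisms of $\acute{F} = G_*\grave{F}\cdot\eta_*\aeX$ satisfy $\phi_{X,FY}\circ\acute{F}_{XY} = \aeX(X,\eta_Y)$, i.e.\ agree with formula \eqref{eqn:ladj_formula}; unwinding this via \bref{prop:comp_assoc_enr_fun} reduces it to exactly the $\X$-naturality square for $\eta$ that the paper verifies.
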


\begin{ParSub}
In order to begin proving \bref{thm:assoc_enr_adj}, recall from \bref{par:clsmcat_notn} that the underlying ordinary category of the $\X$-category $\uX$ is isomorphic to $\X$; similarly for the $\sL$-category $\uL$.  Let $\SET$ be a category of classes \pbref{par:cat_classes} in which the hom-classes of all these categories reside, so that all are $\SET$-categories.  We then have the following.
\end{ParSub}

\begin{PropSub}
There is a diagram of monoidal functors
$$
\xymatrix{
\sL \ar[rr]^G \ar[dr]_U &      & \X \ar[dl]^V \\
                        & \SET &
}
$$
that commutes up to an isomorphism of monoidal functors $\theta:U \xrightarrow{\sim} VG$, where $U = \sL(R,-)$ and $V = \X(I,-)$ are equipped with their canonical monoidal structures (\cite{EiKe}, 8.1).  Explicitly, the isomorphism $\theta$ is the composite
$$U = \sL(R,-) \xrightarrow{G} \X(GR,G-) \xrightarrow{\X(e^G,G-)} \X(I,G-) = VG\;.$$
\end{PropSub}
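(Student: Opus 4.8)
The plan is first to verify that $\theta$ is a natural isomorphism of ordinary functors, and then to check that it is a monoidal natural transformation; since an invertible monoidal natural transformation is automatically an isomorphism of monoidal functors (its inverse being monoidal as well), this suffices. That $G$, $U$, $V$ are monoidal functors is given --- $G$ by hypothesis, $U$ and $V$ by \cite{EiKe}, 8.1 --- and $VG$ is understood to carry the composite structure.

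For the first point I would exhibit $\theta$ as a composite of two patent natural isomorphisms. By \bref{rem:left_adj_strong} the left adjoint $F$ is strong symmetric monoidal, so its unit structure morphism $e^F : R \rightarrow FI$ is invertible in $\sL$. Hence, for each $E \in \sL$, the composite
$$\sL(R,E) \xrightarrow{\sL((e^F)^{-1},E)} \sL(FI,E) \xrightarrow{\Phi_E} \X(I,GE)\;,$$
in which $\Phi_E$ denotes the adjunction bijection $g \mapsto Gg \cdot \eta_I$, is a bijection natural in $E$. It then remains to identify this composite with $\theta_E$: tracing $f : R \rightarrow E$ through it yields $Gf \cdot G((e^F)^{-1}) \cdot \eta_I$, so the identification amounts to the equation $\eta_I = G(e^F) \cdot e^G$. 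But this is precisely the unit-axiom component of the assertion that $\eta : 1_\X \Rightarrow GF$ is a monoidal transformation --- where $GF$ carries the composite monoidal structure, whose unit morphism is $G(e^F) \cdot e^G$, and $1_\X$ the identity structure --- which is part of what it means for $F \nsststile{\varepsilon}{\eta} G$ to be a monoidal adjunction. Thus $\theta$ is a natural isomorphism $U \cong VG$.

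For the second point, recall that the canonical lax monoidal structure on $U = \sL(R,-) : (\sL,\otimes,R) \rightarrow (\SET,\times,1)$ has unit morphism $1 \rightarrow \sL(R,R)$ picking out $1_R$, and multiplication sending $(f_1,f_2) \in \sL(R,E_1) \times \sL(R,E_2)$ to the composite $R \xrightarrow{u^{-1}} R \otimes R \xrightarrow{f_1 \otimes f_2} E_1 \otimes E_2$, where $u : R \otimes R \rightarrow R$ is the (left, equivalently right) unitor of $\sL$ at $R$; likewise for $V$. The unit-compatibility square for $\theta$ will commute at once, since $\theta_R(1_R) = G(1_R) \cdot e^G = e^G$ is the unit morphism of the composite structure on $VG$. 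For the multiplication-compatibility square I would chase a pair $(f_1,f_2)$, using naturality of the binary structure morphism $\mu^G$ of the lax monoidal functor $G$ to slide $Gf_1 \boxtimes Gf_2$ across $\mu^G$; this reduces the square, after cancelling the common left factor $G(f_1 \otimes f_2)$, to the identity
$$G(u^{-1}) \cdot e^G \;=\; \mu^G_{R,R} \cdot (e^G \boxtimes e^G) \cdot \lambda_I^{-1} \;:\; I \longrightarrow G(R \otimes R)\;,$$
where $\lambda_I : I \boxtimes I \rightarrow I$ is the unitor of $\X$. Finally I would establish this identity by factoring $e^G \boxtimes e^G = (e^G \boxtimes 1_{GR}) \cdot (1_I \boxtimes e^G)$, invoking the left-unit coherence axiom for the lax monoidal functor $G$ (which rewrites $\mu^G_{R,R} \cdot (e^G \boxtimes 1_{GR})$ as $G(u^{-1}) \cdot \lambda_{GR}$), and then using naturality of $\lambda$ at $e^G : I \rightarrow GR$ together with the fact that the two unitors of a monoidal category coincide at its unit object.

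The only mildly delicate point is this last coherence manipulation and the attendant bookkeeping of which unitors and structure morphisms appear; everything else is formal, and no properties of the particular categories $\X$, $\sL$ are used beyond the symmetric monoidal adjunction structure and the strongness of $F$.
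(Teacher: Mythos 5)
Your proof is correct, but where you compute, the paper simply cites: its entire proof is that $\theta$ is a monoidal transformation by \cite{EiKe} (8.20), and is invertible by \cite{Ke:Doctr} 2.1, which asserts exactly that the canonical comparison $\sL(R,-) \to \X(I,G-)$ is iso whenever the monoidal functor $G$ is the right adjoint of a monoidal adjunction. Your argument for invertibility is in effect an unpacking of that doctrinal-adjunction result: you exhibit $\theta$ as precomposition with the mate $e^F$ of $e^G$ followed by the adjunction bijection, and use that $F$ is strong so that $e^F:R\to FI$ is invertible. One small caution: the paper's \bref{rem:left_adj_strong} as literally stated only asserts invertibility of the binary constraints $FX\otimes FY\to F(X\boxtimes Y)$, whereas your argument hinges on invertibility of the \emph{unit} constraint $e^F$; this does hold (it is part of \cite{Ke:Doctr} 1.4, strong monoidal including the unit), but you should cite that rather than the remark. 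Your verification of the monoidal-transformation axioms --- reducing the multiplication square via naturality of $\mu^G$ to the identity $G(u^{-1})\cdot e^G=\mu^G_{R,R}\cdot(e^G\boxtimes e^G)\cdot\lambda_I^{-1}$ and settling it with the left-unit coherence axiom for $G$ plus naturality of $\lambda$ --- is sound and is essentially the content of \cite{EiKe} (8.20) written out. What your route buys is self-containedness; what the paper's buys is brevity and the reassurance that both facts are standard.
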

\begin{proof}
By \cite{EiKe} (8.20), $\theta$ is a monoidal transformation, and it is shown in \cite{Ke:Doctr} 2.1 that $\theta$ is iso as soon as the monoidal functor $G$ serves as right adjoint in a monoidal adjunction, as is presently the case.
\end{proof}

\begin{CorSub}\label{thm:underlying_ord_cats_comm_triangle}
The diagram of 2-functors
$$
\xymatrix{
\LCAT \ar[rr]^{G_*} \ar[dr]_{U_*} &             & \XCAT \ar[dl]^{V_*} \\
                                 & \eCAT{\SET} &
}
$$
commutes up to an isomorphism of 2-functors, namely $\theta_*:U_* \xrightarrow{\sim} (VG)_* = V_*G_*$.
\end{CorSub}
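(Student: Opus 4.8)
The plan is to obtain \bref{thm:underlying_ord_cats_comm_triangle} as a purely formal consequence of the preceding proposition together with the 2-functoriality of Cruttwell's change-of-base construction $(-)_*:\MMCCAATT \rightarrow \TWOCCAATT$ recalled in \bref{par:ch_base}. The preceding proposition furnishes an invertible monoidal transformation $\theta:U \xrightarrow{\sim} VG$ between the monoidal functors $U = \sL(R,-):\sL \rightarrow \SET$ and $VG:\sL \rightarrow \SET$, with $V = \X(I,-)$; that is, $\theta$ is an isomorphism 2-cell in the 2-category $\MMCCAATT$. I would simply apply the 2-functor $(-)_*$ to this 2-cell and unwind the result.

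First I would record what $(-)_*$ does to the data at hand. On 1-cells it sends the monoidal functor $U$ to the change-of-base 2-functor $U_*:\LCAT \rightarrow \eCAT{\SET}$, the monoidal functor $V$ to $V_*:\XCAT \rightarrow \eCAT{\SET}$, and the monoidal functor $G$ to $G_*:\LCAT \rightarrow \XCAT$; since a 2-functor preserves composition of 1-cells strictly, it carries the composite $VG$ to the composite 2-functor $V_*G_*$, so that $(VG)_* = V_*G_*$ on the nose. Applying $(-)_*$ to the 2-cell $\theta$ therefore yields a 2-cell $\theta_* := (-)_*(\theta)$ in $\TWOCCAATT$ with domain $U_*$ and codomain $V_*G_*$, i.e.\ a 2-natural transformation $\theta_*:U_* \rightarrow V_*G_*$; by the description in \bref{par:ch_base}, its component at an $\sL$-category $\A$ is the identity-on-objects $\SET$-functor $\theta_*\A:U_*\A \rightarrow (V_*G_*)\A$ whose structure morphisms are the components $\theta_{\A(A_1,A_2)}$ of $\theta$.

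Finally, since $(-)_*$ is a 2-functor it preserves invertible 2-cells --- it carries $\theta^{-1}$ to a two-sided inverse of $\theta_*$ --- so $\theta_*$ is an isomorphism of 2-functors; equivalently, each component $\theta_*\A$ is an isomorphism of $\SET$-categories, since each $\theta_{\A(A_1,A_2)}$ is an isomorphism in $\SET$. This is precisely the assertion that the triangle of 2-functors commutes up to the isomorphism $\theta_*$, and the proof is complete. I do not anticipate a genuine obstacle here: the argument is bookkeeping, and the only points worth a moment's care are the strict identity $(VG)_* = V_*G_*$ (immediate from 2-functoriality) and the recognition that an invertible 2-cell in $\TWOCCAATT$ is exactly what \bref{par:ch_base} describes as an identity-on-objects, componentwise-invertible 2-natural transformation --- that is, an isomorphism of 2-functors in the intended sense.
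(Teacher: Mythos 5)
Your proposal is correct and is exactly the argument the paper intends: the corollary is stated without proof precisely because it follows by applying the 2-functor $(-)_*$ of \bref{par:ch_base} to the invertible monoidal transformation $\theta:U \xrightarrow{\sim} VG$ of the preceding proposition, using that 2-functors preserve composition of 1-cells strictly and carry invertible 2-cells to invertible 2-cells. Your unwinding of the components $\theta_*\A$ matches the description of $\alpha_*$ given in \bref{par:ch_base}, so there is nothing to add.
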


\begin{RemSub}
Note that $V_*$ (resp. $U_*$) is the canonical 2-functor sending an $\X$-category (resp. $\sL$-category) to its underlying ordinary category.
\end{RemSub}

\begin{CorSub}\label{thm:isos_of_underlying_ord_cats}
The underlying ordinary category of $G_*\uL$ is isomorphic to $\sL$, via the composite isomorphism
$$V_*G_*\uL \cong U_*\uL \cong \sL\;.$$
\end{CorSub}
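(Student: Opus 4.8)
The plan is to obtain the asserted isomorphism by composing two isomorphisms that are essentially already at hand: the one furnished by \bref{thm:underlying_ord_cats_comm_triangle}, and the canonical identification of the underlying ordinary category of $\uL$ with $\sL$.

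First I would instantiate the isomorphism of $2$-functors $\theta_* : U_* \xrightarrow{\sim} V_*G_*$ of \bref{thm:underlying_ord_cats_comm_triangle} at the $\sL$-category $\uL \in \LCAT$. By \bref{par:ch_base}, applied to the monoidal transformation $\theta$, the resulting functor $\theta_*\uL : U_*\uL \rightarrow V_*G_*\uL$ is identity-on-objects (hence bijective on objects), and its structure morphisms are the components $\theta_{\uL(E_1,E_2)}$ of $\theta$; since $\theta$ is a monoidal \emph{isomorphism} these are isomorphisms in $\SET$, so $\theta_*\uL$ is also fully faithful and is therefore an isomorphism of ordinary categories, not merely an equivalence. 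This yields $V_*G_*\uL \cong U_*\uL$.

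Next I would invoke the convention of \bref{par:clsmcat_notn}: the $\sL$-category $\uL$ is by construction one whose underlying ordinary category is isomorphic to $\sL$. Since $U_* = \sL(R,-)_*$ is exactly the ``underlying ordinary category'' $2$-functor on $\LCAT$ (as noted in the remark following \bref{thm:underlying_ord_cats_comm_triangle}), that underlying ordinary category is $U_*\uL$, so $U_*\uL \cong \sL$; and likewise $V_*G_*\uL$ is the underlying ordinary category of the $\X$-category $G_*\uL$. Stringing these together gives the chain
$$V_*G_*\uL \;\cong\; U_*\uL \;\cong\; \sL\;,$$
which is the claim.

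I do not expect a genuine obstacle here: the substance is all absorbed into \bref{thm:underlying_ord_cats_comm_triangle}, which ultimately rests on the fact (from \cite{Ke:Doctr} 2.1) that $\theta$ is a monoidal isomorphism because $G$ is the right adjoint of a monoidal adjunction. The one step that repays a moment's attention is the verification above that change of base along a monoidal isomorphism produces an isomorphism --- rather than merely an equivalence --- of underlying ordinary categories; this is immediate from the explicit formula for $\theta_*$ recalled in \bref{par:ch_base}.
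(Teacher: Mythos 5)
Your proposal is correct and is exactly the derivation the paper intends: the corollary is stated without proof precisely because it is the component at $\uL$ of the $2$-natural isomorphism $\theta_*\colon U_* \xrightarrow{\sim} V_*G_*$ from \bref{thm:underlying_ord_cats_comm_triangle}, composed with the identification $U_*\uL \cong \sL$ from the convention of \bref{par:clsmcat_notn}. Your extra remark that change of base along a monoidal isomorphism yields an isomorphism (not merely an equivalence) of ordinary categories — being identity-on-objects with invertible structure morphisms — is a worthwhile point the paper leaves implicit.
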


\begin{ParSub}\label{par:desc_iso}
The isomorphisms in \bref{thm:isos_of_underlying_ord_cats} are identity-on-objects
and are given on hom-classes via natural bijections
$$\X(I,G\uL(E_1,E_2)) \cong \sL(R,\uL(E_1,E_2)) \cong \sL(E_1,E_2)\;,\;\;E_1,E_2 \in \sL\;.$$
Under the first, a morphism
$$k:R \rightarrow \uL(E_1,E_2)$$
corresponds to the composite
$$I \xrightarrow{e^G} GR \xrightarrow{Gk} G\uL(E_1,E_2)\;.$$
We also have an identity-on-objects isomorphism $V_*\uX \cong \X$, given on hom-classes by the natural bijection $\X(I,\uX(X,Y)) \cong \X(X,Y)$, $X,Y \in \X$.
\end{ParSub}

\begin{PropSub}\label{thm:underlying_ord_functor_iso_to_g}
The diagram of functors
$$
\xymatrix{
{V_*G_*\uL} \ar[d]_{V_*(\grave{G})} \ar[r]^\sim & {U_*\uL} \ar[r]^\sim & \sL \ar[d]^G \\
{V_*\uX} \ar[rr]^\sim                           &                      & \X
}
$$
commutes (where the indicated isomorphisms are as given in \bref{thm:isos_of_underlying_ord_cats}, \bref{par:desc_iso}).
\end{PropSub}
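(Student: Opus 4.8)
The plan is to reduce the commutativity of the square to a pointwise check on hom-classes. Since all three isomorphisms displayed in the diagram (those of \bref{thm:isos_of_underlying_ord_cats} and \bref{par:desc_iso}) are identity-on-objects and the $\X$-functor $\grave{G}$ acts on objects exactly as $G$ does \pbref{par:ch_base}, the square commutes on objects immediately, so only the action on morphisms remains.

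Fix $E_1,E_2\in\sL$ and a morphism $f:E_1\rightarrow E_2$ in $\sL$. Let $k_f:R\rightarrow\aeL(E_1,E_2)$ be the morphism corresponding to $f$ under the hom-tensor isomorphism $\sL(E_1,E_2)\cong\sL(R,\aeL(E_1,E_2))$ (the ``name'' of $f$), and let $e^G:I\rightarrow GR$ be the unit morphism of the monoidal functor $G$. By \bref{par:desc_iso}, the element of $(V_*G_*\aeL)(E_1,E_2)=\X(I,G\aeL(E_1,E_2))$ corresponding to $f$ under the composite isomorphism $V_*G_*\aeL\xrightarrow{\sim}U_*\aeL\xrightarrow{\sim}\sL$ is the composite $I\xrightarrow{e^G}GR\xrightarrow{Gk_f}G\aeL(E_1,E_2)$. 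Tracing this element along the top-then-right leg of the square returns $Gf$, while tracing it along the left-then-bottom leg applies $V_*(\grave{G})$ (post-composition with the structure morphism $\grave{G}_{E_1 E_2}$) and then the isomorphism $\X(I,\aeX(GE_1,GE_2))\cong\X(GE_1,GE_2)$. Hence the square commutes if and only if $\grave{G}_{E_1 E_2}\cdot Gk_f\cdot e^G$ is the name of $Gf:GE_1\rightarrow GE_2$ in $\X$.

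To verify this I would transpose both sides under $GE_1\boxtimes(-)\dashv\aeX(GE_1,-)$. Writing $\phi$ for the monoidal structure morphism of $G$ and using that, by \bref{par:ch_base}, $\grave{G}_{E_1 E_2}$ is the transpose of $GE_1\boxtimes G\aeL(E_1,E_2)\xrightarrow{\phi}G(E_1\otimes\aeL(E_1,E_2))\xrightarrow{G\Ev}GE_2$, the transpose of $\grave{G}_{E_1 E_2}\cdot Gk_f\cdot e^G$ is the composite
$$GE_1\boxtimes I\xrightarrow{GE_1\boxtimes e^G}GE_1\boxtimes GR\xrightarrow{GE_1\boxtimes Gk_f}GE_1\boxtimes G\aeL(E_1,E_2)\xrightarrow{\phi}G(E_1\otimes\aeL(E_1,E_2))\xrightarrow{G\Ev}GE_2\;.$$
I would then collapse this composite in three steps: (i) naturality of $\phi$ in its second argument rewrites $\phi\cdot(GE_1\boxtimes Gk_f)$ as $G(E_1\otimes k_f)\cdot\phi_{E_1,R}$; (ii) since $k_f$ is the name of $f$, we have $\Ev\cdot(E_1\otimes k_f)=f\cdot r_{E_1}$, where $r$ denotes right unit isomorphisms; (iii) the unit coherence axiom for the monoidal functor $G$ gives $G(r_{E_1})\cdot\phi_{E_1,R}\cdot(GE_1\boxtimes e^G)=r_{GE_1}$. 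Chaining (i)--(iii), the composite equals $Gf\cdot r_{GE_1}$, which is precisely the transpose of the name of $Gf$; so the two morphisms coincide and the square commutes.

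The only real work is this last bookkeeping — keeping track of the right unit isomorphisms and invoking the three monoidal-functor identities in the right order — and I expect no conceptual obstacle, since the required identity is exactly what the coherence conditions for the lax monoidal functor $G$ provide once the definition of $\grave{G}$ is unwound.
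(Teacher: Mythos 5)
Your proposal is correct and follows essentially the same route as the paper's proof: both reduce to the action on morphisms, transpose the composite $\grave{G}_{E_1E_2}\cdot Gk_f\cdot e^G$, and collapse it using the naturality of the monoidal structure $m^G$, the unit coherence for the lax monoidal functor $G$, and the correspondence between $f$ and its name $k_f$. The only difference is cosmetic bookkeeping (you compare transposes out of $GE_1\boxtimes I$, while the paper precomposes with $r^{-1}$ to compare morphisms out of $GE_1$ directly).
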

\begin{proof}
The given isomorphisms are identity-on-objects, and $V_*(\grave{G})$ acts just as $G$ on objects, so the diagram commutes on objects.  Let $E_1,E_2 \in \sL$, let 
$$g \in (V_*G_*\uL)(E_1,E_2) = \X(I,G\uL(E_1,E_2))$$
and let
$$k \in (U_*\uL)(E_1,E_2) = \sL(R,\uL(E_1,E_2))\;,\;\;\;\;\;\;h \in \sL(E_1,E_2)$$
be the arrows associated to $g$ via the isomorphisms at the top of the diagram, so that the upper-right composite sends $g$ to $Gh$.  On the other hand, $V_*(\grave{G})$ sends $g$ to the composite
$$I \xrightarrow{g} G\uL(E_1,E_2) \xrightarrow{\grave{G}_{E_1 E_2}} \uX(GE_1,GE_2)\;,$$
and, letting $x:GE_1 \rightarrow GE_2$ be the associated morphism in $\X$, our task is to show that $x = Gh$.

Using the definition of $\grave{G}_{E_1 E_2}$ \pbref{par:ch_base}, we find that $x$ is the composite of the upper-right path around the periphery of the following diagram
$$
\xymatrix{
GE_1 \ar[r]^{r^{-1}} \ar@/_2ex/[ddr]|{Gr^{-1}} \ar@/_4.5ex/@{=}[dddr] & {GE_1 \boxtimes I} \ar[d]|{1 \boxtimes e^G} \ar[dr]^{1 \boxtimes g} & \\
                                                     & {GE_1 \boxtimes GR} \ar[d]^{m^G} \ar[r]^(.42){G1 \boxtimes Gk} & {GE_1 \boxtimes G\uL(E_1,E_2)} \ar[d]^{m^G} \\
                                                     & G(E_1 \otimes R) \ar[d]^{Gr} \ar[r]^(.42){G(1 \otimes k)} & G(E_1 \otimes \uL(E_1,E_2)) \ar[d]^{G\Ev} \\
                                                     & GE_1 \ar[r]^{Gh}                                     & GE_2
}
$$
in which $e^G$, $m^G$ are the monoidal structures carried by $G$ and $r$ denotes the structure isomorphism in $\X$ or $\sL$ as appropriate.  The upper cell on the left side of the diagram commutes since $G$ is a monoidal functor, the top cell on the right of the diagram commutes by \bref{par:desc_iso}, the upper square commutes by the naturality of $m^G$, and the bottom square commutes since $h$ corresponds to $k$ under the canonical bijection $\sL(R,\uL(E_1,E_2)) \cong \sL(E_1,E_2)$.
\end{proof}

\begin{ParSub} \label{par:ident_underl_ord}
According to the convention of \bref{par:clsmcat_notn}, we shall now identify the underlying ordinary categories of $\uX$ and $\uL$ with $\X$ and $\sL$, resp., via the canonical isomorphisms \pbref{par:desc_iso}.  Moreover, having shown that the `underlying ordinary category' functors $U_*$ and $V_*$ commute with the change-of-base $G_*$ \pbref{thm:underlying_ord_cats_comm_triangle}, we shall identify the isomorphic 2-functors $U_* \cong V_*G_*$ of \bref{thm:underlying_ord_cats_comm_triangle} via the given isomorphism $\theta_*$.  Hence, for example, we thus identify the underlying ordinary category of $G_*\uL$ with that of $\uL$, i.e. with $\sL$ itself.  (Such identifications can be avoided by working instead with monoidal categories equipped with specified \textit{normalizations} as in the classic \cite{EiKe}, but it is common practice to eschew this extra baggage).

In view of \bref{thm:underlying_ord_functor_iso_to_g}, we shall moreover identify $G:\sL \rightarrow \X$ with the underlying ordinary functor $V_*(\grave{G})$ of $\grave{G}:G_*\uL \rightarrow \uX$.
\end{ParSub}

\begin{LemSub}\label{thm:x_nat_of_eta_and_epsilon}
The given natural transformations $\eta,\varepsilon$ serve as $\X$-natural transformations
$$\eta:1 \rightarrow \grave{G}\acute{F}\;,\;\;\;\;\varepsilon:\acute{F}\grave{G} \rightarrow 1\;.$$
\end{LemSub}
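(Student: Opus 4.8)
The plan is to verify, componentwise, the two defining $\X$-naturality equations. Since the given adjunction lives in $\SSMMCCAATT$, the transformations $\eta:1_\X\to GF$ and $\varepsilon:FG\to 1_\sL$ are \emph{monoidal} natural transformations, and this — together with ordinary naturality at evaluation morphisms — is all that will be needed. Under the identifications of \bref{par:ident_underl_ord} and \bref{thm:underlying_ord_functor_iso_to_g}, the $\X$-functors $\acute F,\grave G$ act on objects as $F,G$, so $(\eta_X)_X$ and $(\varepsilon_E)_E$ have the correct sources and targets to be candidate components of $\X$-natural transformations $1_{\uX}\to\grave G\acute F$ and $\acute F\grave G\to 1_{G_*\uL}$.

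The first thing I would do is obtain workable formulas for the relevant structure morphisms. Since $\acute F$ is the transpose of $\grave F$ under the $\TWOCCAATT$-adjunction $F_*\dashv G_*$ of \bref{par:given_smcadj}, we have $\acute F=G_*(\grave F)\circ\eta_*\uX$, hence $\acute F_{XY}=G(\grave F_{XY})\cdot\eta_{\uX(X,Y)}$. Applying \bref{prop:comp_assoc_enr_fun} with $M=F$, $N=G$ yields $\grave G\circ G_*\grave F=\widegrave{GF}$, so $\grave G\acute F=\widegrave{GF}\circ\eta_*\uX$; applying it instead with $M=G$, $N=F$, together with the 2-naturality of $\eta_*$ at the $\X$-functor $\grave G$, yields $\acute F\grave G=G_*(\widegrave{FG})\circ\eta_*(G_*\uL)$. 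By \bref{par:ch_base}, the structure morphism $\widegrave{GF}_{XY}$ (resp. $\widegrave{FG}_{E_1E_2}$) is the transpose of $GF(\Ev)$ (resp. $FG(\Ev)$) precomposed with the monoidal constraint of the composite functor, and the structure morphisms of $\eta_*\uX$ (resp. $\eta_*(G_*\uL)$) are components of $\eta$ at the appropriate hom-objects.

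For $\eta$, I would transpose the $\X$-naturality equation across the internal-hom adjunction $X\boxtimes(-)\dashv\uX(X,-)$, turning it into an equation of morphisms $X\boxtimes\uX(X,Y)\to GFY$. The left side unwinds, via the above formulas, to $GF(\Ev)$ precomposed with the monoidal constraint of $GF$ and with $\eta_X\boxtimes\eta_{\uX(X,Y)}$; the monoidality axiom for $\eta$ rewrites the latter two as a single component $\eta_{X\boxtimes\uX(X,Y)}$, and then ordinary naturality of $\eta$ at $\Ev$ identifies the result with the transposed right side $\eta_Y\circ\Ev$. (Abstractly this is the fact that any monoidal natural transformation $\alpha:1\to P$ of symmetric monoidal endofunctors of a closed symmetric monoidal category is automatically $\X$-natural as a transformation $1_{\uX}\to\grave P\circ\alpha_*\uX$.) For $\varepsilon$ I would argue in the same spirit but transposing within $\sL$: naturality of $\varepsilon$ at $\Ev$ and the monoidality axiom for $\varepsilon$ reduce the right side of the $\X$-naturality equation to $G(\uL(\varepsilon_{E_1},E_2))\circ G(\varepsilon_{\uL(E_1,E_2)})\circ\eta_{G\uL(E_1,E_2)}$, whereupon the triangle identity $G\varepsilon\cdot\eta G=1$ collapses the last two factors, leaving $G(\uL(\varepsilon_{E_1},E_2))=(G_*\uL)(\varepsilon_{E_1},E_2)$, which is the left side.

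I expect no conceptual obstacle: the entire difficulty is bookkeeping — correctly tracking the several transposition isomorphisms across the internal-hom adjunctions of $\X$ and of $\sL$ and invoking the right monoidal constraint, naturality square, or triangle identity at each stage. The one genuine care-point is to apply the identifications of \bref{par:ident_underl_ord} and \bref{thm:underlying_ord_functor_iso_to_g} consistently when writing down the structure-morphism formulas for $\acute F$ and $\grave G$, so that the resulting $\X$-natural transformations have $\eta$ and $\varepsilon$ themselves (not isomorphic variants) as underlying ordinary transformations; and if any structure morphism of $F$ must be inverted along the way, one invokes \bref{rem:left_adj_strong}, that $F$ is strong symmetric monoidal.
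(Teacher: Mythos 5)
Your proposal is correct and follows essentially the same route as the paper: for $\eta$ it is the identical argument (express $\grave G\acute F$ as $\widegrave{GF}\circ\eta_*\uX$ via \bref{prop:comp_assoc_enr_fun}, transpose the $\X$-naturality equation, and invoke monoidality of $\eta$ together with its ordinary naturality at $\Ev$), and for $\varepsilon$ your key square $\uL(FGE_1,\varepsilon_{E_2})\circ\widegrave{FG}_{E_1E_2}=\uL(\varepsilon_{E_1},E_2)\circ\varepsilon_{\uL(E_1,E_2)}$ is exactly the $\sL$-naturality square the paper establishes from the monoidality of $\varepsilon$. The only cosmetic difference is at the finish for $\varepsilon$: the paper applies $G_*$ and concludes by a 2-categorical pasting using a triangular identity for $F_*\dashv G_*$, whereas you apply the triangle identity $G\varepsilon\cdot\eta G=1$ componentwise at the hom-objects — these are the same fact in different packaging.
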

\begin{proof}
We first treat $\eta$.  Using \bref{prop:comp_assoc_enr_fun} and the definition of $\acute{F}$ \pbref{par:given_smcadj}, we find that $\grave{G}\acute{F}$ is equal to the lower composite in the following commutative diagram.
\begin{equation}\label{eqn:expr_for_ggrave_of_facute}
\xymatrix{
 & G_*\uL \ar[dr]^{\grave{G}} & \\
\uX \ar[ur]^{\acute{F}} \ar[r]_(.4){\eta_*\uX} & G_*F_*\uX \ar[u]|{G_*\grave{F}} \ar[r]_(.6){\widegrave{GF}} & \uX
}
\end{equation}
Hence the needed $\X$-naturality of $\eta$ amounts to the requirement that the composite
$$\uX(X,Y) \xrightarrow{\eta} GF\uX(X,Y) \xrightarrow{\widegrave{GF}_{X Y}} \uX(GFX,GFY) \xrightarrow{\uX(\eta_X,GFY)} \uX(X,GFY)$$
be equal to $\uX(X,\eta_Y)$ for all objects $X,Y \in \uX$.  But, using the definition of $\widegrave{GF}_{X Y}$ \pbref{par:ch_base}, this composite is the transpose of the upper path along the periphery of the following diagram
$$
\xymatrix{
 & GFX \boxtimes GF\uX(X,Y) \ar[dr]^{m^{GF}} & \\
X \boxtimes \uX(X,Y) \ar[d]_\Ev \ar[ur]^{\eta \boxtimes \eta} \ar[rr]^\eta & & GF(X \boxtimes \uX(X,Y)) \ar[d]^{GF\Ev} \\
Y \ar[rr]^{\eta} & & GFY
}
$$
which commutes since $\eta$ is a monoidal natural transformation.  This proves the needed equality, since the lower-left composite in this diagram is the transpose of $\uX(X,\eta_Y)$.

In order to establish the $\X$-naturality of $\varepsilon$, let us first show that $\varepsilon$ serves as an $\sL$-natural transformation
\begin{equation}\label{eqn:epsilon_differently_construed}
\xymatrix{
F_*G_*\uL \ar@/^2ex/[rr]^{\widegrave{FG}}_{}="0" \ar@/_2ex/[rr]_{\varepsilon_*\uL}^{}="1" \ar@{=>}"0";"1"^\varepsilon & & {\uL\;.}
}
\end{equation}
This $\sL$-naturality is the requirement that the following square commutes for each pair of objects $E_1,E_2 \in \uL$.
$$
\xymatrix{
FG\uL(E_1,E_2) \ar[d]|{(\varepsilon_*\uL)_{E_1 E_2} \:=\: \varepsilon_{\uL(E_1,E_2)}} \ar[rr]^{\widegrave{FG}_{E_1 E_2}} & & \uL(FGE_1,FGE_2) \ar[d]^{\uL(FGE_1,\varepsilon_{E_2})} \\
\uL(E_1,E_2) \ar[rr]_{\uL(\varepsilon_{E_1},E_2)} & & \uL(FGE_1,E_2)
}
$$
But the two composites in this square are the transposes of the two paths around the periphery of the following diagram
$$
\xymatrix{
FGE_1 \otimes FG\uL(E_1,E_2) \ar[dr]_{\varepsilon \otimes \varepsilon} \ar[r]^{m^{FG}} & FG(E_1 \otimes \uL(E_1,E_2)) \ar[d]^\varepsilon \ar[r]^(.7){FG\Ev} & FGE_2 \ar[d]^\varepsilon \\
 & E_1 \otimes \uL(E_1,E_2) \ar[r]_{\Ev} & E_2
}
$$
which commutes since $\varepsilon$ is a monoidal natural transformation.

Applying $G_*$ to the resulting $\sL$-natural transformation \eqref{eqn:epsilon_differently_construed}, we obtain an $\X$-natural transformation \hbox{$G_*\widegrave{FG} \rightarrow G_*\varepsilon_*\uL$} which (as a family of morphisms) is simply $\varepsilon$ itself.  By the definition of $\acute{F}$ \pbref{par:given_smcadj}, $\acute{F}\grave{G}$ is the composite of the nontrivial path around the periphery of the following diagram.
$$
\xymatrix{
& \uX \ar[dr]^{\eta_*\uX} & \\
G_*\uL \ar[ur]^{\grave{G}} \ar[dr]|{\eta_*G_*\uL} \ar@/_6ex/[ddr]_1 & & G_*F_*\uX \ar@/^6ex/[ddl]^{G_*(\grave{F})} \\
& G_*F_*G_*\uL \ar[ur]|{G_*F_*(\grave{G})} \ar@/_3ex/[d]_{G_*\varepsilon_*\uL}^{}="0" \ar@/^3ex/[d]^{G_*\widegrave{FG}}_{}="1" \ar@{=>}"1";"0"_{\varepsilon} & \\
& G_*\uL &
}
$$
The diamond-shaped cell commutes by the naturality of $\eta_*$, the cell at the left commutes by one of the triangular identities for $F_* \nsststile{\varepsilon_*}{\eta_*} G_*$ \pbref{par:given_smcadj}, and the cell at the right commutes by \bref{prop:comp_assoc_enr_fun}.  Hence each empty cell is an identity 2-cell, and by pasting we obtain a 2-cell $\acute{F}\grave{G} \rightarrow 1$.  Inspecting the diagram, the resulting 2-cell is obtained from the indicated 2-cell $\varepsilon$ by whiskering with the identity-on-objects $\X$-functor $\eta_*G_*\uL$, so this $\X$-natural transformation $\acute{F}\grave{G} \rightarrow 1$ is simply $\varepsilon$.
\end{proof}

\begin{CorSub}\label{thm:und_ord_functor_of_acute_f_is_f}
The underlying ordinary functor of $\acute{F}:\uX \rightarrow G_*\uL$ is $F:\X \rightarrow \sL$.
\end{CorSub}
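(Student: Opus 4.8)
The plan is to compute the underlying ordinary functor of $\acute F$ directly from its definition as a transpose, using the identifications of underlying ordinary categories set up in \bref{par:ident_underl_ord}--\bref{par:desc_iso}. Recall from \bref{par:given_smcadj} that $\acute F = G_*(\grave G)$'s partner, namely $\acute F = G_*(\grave F) \circ (\eta_*\uX)$, the transpose of $\grave F : F_*\uX \to \uL$ under $F_* \nsststile{\varepsilon_*}{\eta_*} G_*$; by the descriptions of $G_*$ and of $\eta_*\uX$ in \bref{par:ch_base}, the structure morphisms of $\acute F$ are the composites $\acute F_{XY} = G(\grave{F}_{XY}) \circ \eta_{\uX(X,Y)} : \uX(X,Y) \to GF\uX(X,Y) \to G\uL(FX,FY)$. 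Since $\eta_*\uX$ and $G_*$ are identity-on-objects and $\grave F$ is given on objects just as $F$, the functor $\acute F$ acts as $F$ on objects, so after the identifications of \bref{par:ident_underl_ord} it remains only to compare $\acute F$ and $F$ on morphisms.

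First I would rephrase this comparison in terms of names of morphisms. Under the identification $V_*\uX \cong \X$ of \bref{par:desc_iso}, a morphism $f : X \to Y$ of $\X$ is the name $\ulcorner f\urcorner : I \to \uX(X,Y)$, and the underlying ordinary functor of $\acute F$ sends it to the composite $I \xrightarrow{\ulcorner f\urcorner} \uX(X,Y) \xrightarrow{\acute F_{XY}} G\uL(FX,FY)$. Via the bijection $\X(I,G\uL(FX,FY)) \cong \sL(R,\uL(FX,FY)) \cong \sL(FX,FY)$ of \bref{par:desc_iso}, this element corresponds to the morphism $h : FX \to FY$ whose name satisfies $G(\ulcorner h\urcorner) \circ e^G = G(\grave{F}_{XY}) \circ \eta_{\uX(X,Y)} \circ \ulcorner f\urcorner$, where $e^G : I \to GR$ is the unit of the monoidal functor $G$. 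It therefore suffices to show that $\ulcorner Ff\urcorner$ satisfies this equation.

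The key step I would isolate is the identity $\ulcorner Ff\urcorner = \grave{F}_{XY} \circ F(\ulcorner f\urcorner) \circ e^F$ in $\sL(R,\uL(FX,FY))$, where $e^F : R \to FI$ is the unit of the monoidal functor $F$. This I would prove by transposing under $FX \otimes (-) \dashv \uL(FX,-)$: using the defining property of $\grave{F}_{XY}$ as the transpose of $F(\Ev)$ composed with the monoidal structure of $F$, together with the naturality of that structure and the unit coherence of $F$, the right-hand side transposes to $FX \otimes R \cong FX \xrightarrow{Ff} FY$, which is precisely the transpose of $\ulcorner Ff\urcorner$. Granting this, the required equation follows by a short chase: $G(\ulcorner Ff\urcorner) \circ e^G = G(\grave{F}_{XY}) \circ G(F(\ulcorner f\urcorner)) \circ G(e^F) \circ e^G = G(\grave{F}_{XY}) \circ G(F(\ulcorner f\urcorner)) \circ \eta_I = G(\grave{F}_{XY}) \circ \eta_{\uX(X,Y)} \circ \ulcorner f\urcorner$, where the second equality is the unit coherence $\eta_I = G(e^F) \circ e^G$ for the monoidal natural transformation $\eta : 1 \to GF$ and the third is the naturality of $\eta$ applied to $\ulcorner f\urcorner$. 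This gives $h = Ff$, and hence the underlying ordinary functor of $\acute F$ is $F$.

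The only substantive ingredients are thus the coherence axioms expressing that $F$ is a monoidal functor and that $\eta$ is a monoidal natural transformation, together with the definitions of $\grave F$ and $\acute F$ and the bookkeeping of the several identifications of \bref{par:desc_iso}; there is no conceptual difficulty, and the $\X$-naturality established in \bref{thm:x_nat_of_eta_and_epsilon} is not itself needed. I expect the transpose computation establishing $\ulcorner Ff\urcorner = \grave{F}_{XY} \circ F(\ulcorner f\urcorner) \circ e^F$ to be the most delicate point, chiefly because of the care required in tracking the unit isomorphisms and the successive transpositions.
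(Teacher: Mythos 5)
Your proposal is correct, but it takes a genuinely different route from the paper. The paper disposes of the morphism part in three lines: it invokes Lemma \bref{thm:x_nat_of_eta_and_epsilon} to get that $\eta:1 \rightarrow \grave{G}\acute{F}$ is $\X$-natural, hence an ordinary natural transformation of underlying functors, so $G\acute{F}f \cdot \eta_X = \eta_Y \cdot f$ for every $f:X \rightarrow Y$; the universal property of $\eta_X$ as a universal arrow to $G$ then forces $\acute{F}f = Ff$. You instead bypass that lemma entirely and compute the underlying ordinary functor of $\acute{F}$ by hand: unwind $\acute{F} = G_*(\grave{F}) \circ \eta_*\uX$ to get $\acute{F}_{XY} = G(\grave{F}_{XY}) \circ \eta_{\uX(X,Y)}$, translate through the identifications of \bref{par:desc_iso} to names of morphisms, and establish the identity $\ulcorner Ff\urcorner = \grave{F}_{XY} \circ F(\ulcorner f\urcorner) \circ e^F$ by a transpose computation using the definition of $\grave{F}_{XY}$, the naturality of $m^F$, and the unit coherence of the monoidal functor $F$; the final chase uses the unit coherence $\eta_I = G(e^F)\cdot e^G$ and the naturality of $\eta$. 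I have checked the transpose computation and the chase, and they go through. The trade-off: the paper's argument is shorter and reuses a lemma that is needed anyway for Theorem \bref{thm:assoc_enr_adj}, while yours is self-contained and makes explicit exactly which coherence data is doing the work --- though note that the two proofs are not as independent as your closing remark suggests, since the paper's proof of the $\X$-naturality of $\eta$ in Lemma \bref{thm:x_nat_of_eta_and_epsilon} itself rests on $\eta$ being a monoidal transformation, which is precisely the ingredient you use directly. The main risk in your route is the bookkeeping of unit isomorphisms and the several identifications, which you correctly flag as the delicate point.
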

\begin{proof}
It is immediate from the definition of $\acute{F}$ \pbref{par:given_smcadj} that $\acute{F}$ is given objects just as $F$.  By \bref{thm:x_nat_of_eta_and_epsilon}, $\eta:1 \rightarrow \grave{G}\acute{F}$ is an $\X$-natural transformation and so in particular is a natural transformation of the underlying ordinary functors.  Hence for each morphism $f:X \rightarrow Y$ in $\X$ we have a commutative square as follows.
$$
\xymatrix{
X \ar[r]^{\eta_X} \ar[d]_f & GFX \ar[d]^{\grave{G}\acute{F}f}\\
Y \ar[r]_{\eta_Y}          & GFY
}
$$
But $\grave{G}\acute{F}f = G\acute{F}f$ (since the underlying ordinary functor of $\grave{G}$ is $G$, \bref{par:ident_underl_ord}), so by the universal property of $\eta_X$, $\acute{F}f = Ff$.
\end{proof}

\begin{proof}[Proof of \bref{thm:assoc_enr_adj}]\emptybox

1.  In view of \bref{thm:x_nat_of_eta_and_epsilon}, it now suffices to show that the triangular identities for the needed $\X$-adjunction $\acute{F} \nsststile{\varepsilon}{\eta} \grave{G}$ are satisfied.  But the underlying ordinary functors of $\acute{F},\grave{G}$ are $F,G$, respectively (\bref{par:ident_underl_ord},\bref{thm:und_ord_functor_of_acute_f_is_f}), so the needed triangular identities follow immediately from those of the given adjunction $F \nsststile{\varepsilon}{\eta} G$.

3.  It suffices to show that the composite right $\X$-adjoint $C$ in \eqref{eq:composite_hom_cotensor} is equal to $(G_*\uL)(-,E)$.  On objects, this is immediate.  Given objects $E_1,E_2$ in $(G_*\uL)^\op$, the associated structure morphism $C_{E_1 E_2}$ is the composite
$$G\uL(E_2,E_1) \xrightarrow{G(\uL(-,E)_{E_1 E_2})} G\uL(\uL(E_1,E),\uL(E_2,E)) \xrightarrow{\grave{G}} \uX(G\uL(E_1,E),G\uL(E_2,E))\;.$$
Using the definition of $\grave{G}$ \pbref{par:ch_base}, $C_{E_1 E_2}$ is therefore the transpose of the rightmost path along the periphery of the following commutative diagram
$$
\xymatrix{
G\uL(E_1,E) \boxtimes G\uL(E_2,E_1) \ar[d]^{m^G} \ar[rrr]^(.4){G1 \boxtimes G(\uL(-,E)_{E_1 E_2})} & & & G\uL(E_1,E) \boxtimes G\uL(\uL(E_1,E),\uL(E_2,E)) \ar[d]^{m^G} \\
G(\uL(E_1,E) \otimes \uL(E_2,E_1)) \ar[d]^{Gc} \ar[rrr]^(.4){G(1 \otimes \uL(-,E)_{E_1 E_2})} & & & G(\uL(E_1,E) \otimes \uL(\uL(E_1,E),\uL(E_2,E))) \ar[dlll]^{G\Ev} \\
G\uL(E_2,E)
}
$$
in which $c$ denotes the composition morphism in $\sL$.  On the other hand, the leftmost path is the composition morphism carried by $G_*\uL$ and hence is the transpose of the structure morphism $(G_*\uL)(-,E)_{E_1,E_2}$.  Hence $C = (G_*\uL)(-,E)$.

2.  The composite left $\X$-adjoint in \eqref{eq:composite_hom_cotensor} is given on objects by $X \mapsto \uL(FX,E)$.
\end{proof}

\section{Compatibility with composition of adjunctions}

The following result shows that the construction of \bref{thm:assoc_enr_adj} commutes with composition of adjunctions.

\begin{PropSub} \label{thm:compn_assoc_enr_adj}
Let $\xymatrix {\X \ar@/_0.5pc/[rr]_F^(0.4){}^(0.6){}^{\top} & & \sL \ar@/_0.5pc/[ll]_G}$ and $\xymatrix {\sL \ar@/_0.5pc/[rr]_{P}^(0.4){}^(0.6){}^{\top} & & \U \ar@/_0.5pc/[ll]_{Q}}$ be symmetric monoidal adjunctions between symmetric monoidal closed categories.  Then the $\X$-adjunction
$$
\xymatrix {
\uX \ar@/_0.5pc/[rr]_{\wideacute{PF}}^(0.4){}^(0.6){}^{\top} & & (GQ)_*\uU \ar@/_0.5pc/[ll]_{\widegrave{GQ}}
}
$$
associated to the composite symmetric monoidal adjunction $PF \dashv GQ$ is identical to the composite $\X$-adjunction
$$
\xymatrix {
\uX \ar@/_0.5pc/[rr]_{\acute{F}}^(0.4){}^(0.6){}^{\top} & & G_*\uL \ar@/_0.5pc/[ll]_{\grave{G}} \ar@/_0.5pc/[rr]_{G_*\acute{P}}^(0.4){}^(0.6){}^{\top} & & {G_*Q_*\uU\;,} \ar@/_0.5pc/[ll]_{G_*\grave{Q}}
}
$$
\end{PropSub}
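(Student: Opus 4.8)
The plan is to invoke \bref{thm:enradj_detd_by_radj_and_univ_arr}, according to which an $\X$-adjunction is completely determined by (i) its right $\X$-adjoint, (ii) the family of objects in the image of its left $\X$-adjoint, and (iii) its family of unit morphisms. Both $\X$-adjunctions under comparison are genuine $\X$-adjunctions: the one on the left by \bref{thm:assoc_enr_adj} --- noting that $PF \dashv GQ$ is indeed a symmetric monoidal adjunction, its left adjoint being strong symmetric monoidal by \bref{rem:left_adj_strong} and its unit and counit monoidal as composites of monoidal transformations --- and the one on the right as a composite of $\X$-adjunctions in the $2$-category $\XCAT$. It therefore suffices to verify that items (i)--(iii) coincide for the two.

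For (i): since the change-of-base construction is $2$-functorial \pbref{par:ch_base}, $G_*Q_* = (GQ)_*$, so the composite right $\X$-adjoint on the right-hand side is $\grave{G} \cdot G_*\grave{Q} : (GQ)_*\uU \rightarrow \uX$; and since $GQ$ is a symmetric monoidal functor between closed symmetric monoidal categories, \bref{prop:comp_assoc_enr_fun}, applied with $M := Q$ and $N := G$, gives precisely $\widegrave{GQ} = \grave{G} \cdot G_*\grave{Q}$, so the right $\X$-adjoints are equal on the nose. For (ii): the definitions of the enriched left adjoints in \bref{par:given_smcadj}, together with \bref{par:ch_base}, make each of $\acute{F}$, $\acute{P}$, $\wideacute{PF}$ act on objects exactly as $F$, $P$, $PF$, while $G_*\acute{P}$ acts on objects as $\acute{P}$; hence both left $\X$-adjoints send an object $X$ of $\uX$ to $PFX$.

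For (iii), write $\eta$ and $\eta'$ for the units of the ordinary adjunctions $F \dashv G$ and $P \dashv Q$. By \bref{thm:x_nat_of_eta_and_epsilon} and \bref{thm:assoc_enr_adj}, the unit of $\wideacute{PF} \dashv \widegrave{GQ}$ is the unit of the ordinary adjunction $PF \dashv GQ$ regarded as an $\X$-natural transformation, so its component at $X$ is the usual composite $X \xrightarrow{\eta_X} GFX \xrightarrow{G\eta'_{FX}} GQPFX$. On the other hand, by the standard formula for the unit of a composite adjunction in a $2$-category, the unit of the composite $\X$-adjunction on the right is the pasting $1 \xrightarrow{\eta} \grave{G}\acute{F} \rightarrow \grave{G}G_*\grave{Q}G_*\acute{P}\acute{F}$ whose second arrow is $\grave{G}$ whiskered onto $G_*(\text{the unit of } \acute{P} \dashv \grave{Q})$ whiskered onto $\acute{F}$. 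Applying \bref{thm:x_nat_of_eta_and_epsilon} and \bref{thm:assoc_enr_adj} to the symmetric monoidal adjunction $P \dashv Q$ shows that the unit of the $\sL$-adjunction $\acute{P} \dashv \grave{Q}$ has underlying ordinary natural transformation $\eta'$; and since change of base along $G$ commutes with passage to underlying ordinary categories \pbref{thm:underlying_ord_cats_comm_triangle} and the underlying ordinary functor of $\grave{G}$ is $G$ \pbref{thm:underlying_ord_functor_iso_to_g}, the component of this pasting at $X$ is again $G\eta'_{FX} \cdot \eta_X$, which is exactly the component at $X$ of the unit of $\wideacute{PF} \dashv \widegrave{GQ}$. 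Thus the unit families coincide, and \bref{thm:enradj_detd_by_radj_and_univ_arr} now yields the identity of the two $\X$-adjunctions.

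The step requiring the most care is this last one: one must carefully track the several identifications of underlying ordinary categories and functors along $G_*$ that are fixed in \bref{par:ident_underl_ord} on the basis of \bref{thm:underlying_ord_cats_comm_triangle} and \bref{thm:underlying_ord_functor_iso_to_g}, and in particular confirm that $G_*$ applied to the unit of the $\sL$-adjunction $\acute{P} \dashv \grave{Q}$ has underlying ordinary natural transformation exactly $\eta'$, so that whiskering with $\grave{G}$ reproduces the factor $G\eta'_{FX}$. Everything else is a routine unwinding of the definitions in \bref{par:ch_base} and \bref{par:given_smcadj}.
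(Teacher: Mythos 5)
Your proposal is correct and follows essentially the same route as the paper's proof: both establish equality of the right $\X$-adjoints via \bref{prop:comp_assoc_enr_fun} and then conclude by \bref{thm:enradj_detd_by_radj_and_univ_arr}. The only difference is presentational — the paper simply observes that the two $\X$-adjunctions share the same underlying ordinary adjunction (namely $PF \dashv GQ$, using the identifications of \bref{par:ident_underl_ord}), whereas you unpack that observation into the explicit check of the object assignment and unit components, which is exactly the data the determination lemma requires.
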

\begin{proof}
By \bref{prop:comp_assoc_enr_fun}, we deduce that the right $\X$-adjoints of these $\X$-adjunctions are equal.  According to \bref{par:ident_underl_ord}, the underlying ordinary adjunction of $G_*\acute{P} \nsststile{}{} G_*\grave{Q}$ is the given adjunction $P \nsststile{}{} Q$.  Hence the two $\X$-adjunctions being compared have the same underlying ordinary adjunction, and the result follows by \bref{thm:enradj_detd_by_radj_and_univ_arr}.
\end{proof}

\section{Commutative monads and symmetric monoidal closed adjunctions} \label{sec:comm_mnd_smc_adj}

Let $\X = (\X,\btimes,I)$ be a closed symmetric monoidal category.

\begin{DefSub}\label{def:tens_str}(Kock \cite{Kock:Comm})
Let $T:\uX \rightarrow \uX$ be an $\X$-functor.  For objects $X,Y \in \X$, we define morphisms
$$t'_{XY} = t'^T_{XY}:T X \btimes Y \rightarrow T(X \btimes Y)\;,\;\;\;\;t''_{XY} = t''^T_{XY}:X \btimes T Y \rightarrow T(X \btimes Y)$$
as the transposes of the following composite morphisms
$$Y \rightarrow \aeX(X,X \btimes Y)
\xrightarrow{\bT} \aeX(T X,T(X \btimes Y))\;,\;\;\;\;X \rightarrow \aeX(Y,X \btimes Y) \xrightarrow{\bT} \aeX(T Y,T(X \btimes Y))\;.$$
\end{DefSub}

\begin{RemSub}\label{rem:enr_nat_of_tens_str}
It is straightforward to verify that $t'_{X Y}, t''_{X Y}$ are $\X$-natural in $X,Y \in \uX$.
\end{RemSub}

\begin{PropSub}\label{prop:xnat_transf_commutes_w_tens_str}
Let $\lambda:T \rightarrow S$ be an $\X$-natural transformation, where $S,T$ are $\X$-endofunctors on $\X$.  Then the following diagrams commute:
$$
\xymatrix{
TX \boxtimes Y \ar[d]_{\lambda_X \boxtimes 1} \ar[r]^{t'^T_{X Y}} & T(X \boxtimes Y) \ar[d]^{\lambda_{X \boxtimes Y}} & & X \boxtimes TY \ar[d]_{1 \boxtimes \lambda_Y} \ar[r]^{t''^T_{X Y}} & T(X \boxtimes Y) \ar[d]^{\lambda_{X \boxtimes Y}} \\
SX \boxtimes Y \ar[r]_{t'^S_{X Y}} & S(X \boxtimes Y) & & X \boxtimes SY \ar[r]_{t''^S_{XY}} & S(X \boxtimes Y)
}
$$
\end{PropSub}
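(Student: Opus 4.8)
The plan is to reduce both squares to the $\X$-naturality axiom for $\lambda$ by transposing each pair of legs across a suitable hom--tensor $\X$-adjunction. For the first square, both legs are morphisms $TX \btimes Y \rightarrow S(X\btimes Y)$; since $\uX$ is closed I would instead compare their transposes under $TX \btimes (-) \dashv \aeX(TX,-)$, which are morphisms $Y \rightarrow \aeX(TX,S(X\btimes Y))$. The two properties of this transposition that I would invoke are both routine: post-composition by a morphism $h$ passes to post-composition by $\aeX(TX,h)$, and pre-composition by a morphism of the shape $g \btimes 1_Y$ with $g : TX \rightarrow SX$ passes, after one first transposes across $SX \btimes (-) \dashv \aeX(SX,-)$, to pre-composition by $\aeX(g,-)$.

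Using these together with Definition \bref{def:tens_str}, the transpose of $\lambda_{X\btimes Y} \cdot t'^T_{XY}$ works out to $\aeX(TX,\lambda_{X\btimes Y}) \cdot T_{X,X\btimes Y} \cdot c$, and the transpose of $t'^S_{XY} \cdot (\lambda_X \btimes 1_Y)$ to $\aeX(\lambda_X,S(X\btimes Y)) \cdot S_{X,X\btimes Y} \cdot c$, where in both cases $c : Y \rightarrow \aeX(X,X\btimes Y)$ is the canonical morphism adjoint to $1_{X\btimes Y}$ and $T_{A B}, S_{A B}$ denote the structure morphisms of the $\X$-functors $T,S$. Thus the claim is reduced to the identity $\aeX(TX,\lambda_{X\btimes Y}) \cdot T_{X,X\btimes Y} = \aeX(\lambda_X,S(X\btimes Y)) \cdot S_{X,X\btimes Y}$ of morphisms $\aeX(X,X\btimes Y) \rightarrow \aeX(TX,S(X\btimes Y))$ in $\X$, and this is precisely the $\X$-naturality square for $\lambda : T \rightarrow S$ evaluated at the pair of objects $X$ and $X \btimes Y$. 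The second square would be handled symmetrically: one transposes its two legs $X \btimes TY \rightarrow S(X\btimes Y)$ across $(-) \btimes TY \dashv \aeX(TY,-)$, using the symmetry of $\btimes$, unwinds the definition of $t''$ from \bref{def:tens_str}, and reduces in the same way to the $\X$-naturality square for $\lambda$ at the pair $Y$ and $X\btimes Y$.

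The only point requiring genuine care is the transposition bookkeeping --- in particular confirming that pre-composing with $\lambda_X \btimes 1_Y$ on the tensor factor becomes pre-composing with $\aeX(\lambda_X,-)$ after transposition, and keeping straight which of the adjunctions $TX\btimes(-)\dashv\aeX(TX,-)$ and $SX\btimes(-)\dashv\aeX(SX,-)$ is being used at each step. It is convenient to write the $\X$-naturality of $\lambda$ in its internal-hom form (the commuting square built from the $\X$-functor structure morphisms and the internal-hom functoriality), since that is exactly what makes the two transposed legs literally coincide rather than merely coincide after a further transposition. Everything else is the straightforward unwinding of Definition \bref{def:tens_str}, in the same spirit as the verification announced in \bref{rem:enr_nat_of_tens_str}.
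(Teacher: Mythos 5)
Your proposal is correct and is essentially the paper's own argument: the paper likewise proves the first square by checking that the transposes $Y \rightarrow \uX(TX,S(X\boxtimes Y))$ of the two legs coincide, using the definition of $t'$ and the $\X$-naturality of $\lambda$, and handles the second square analogously. Your write-up merely spells out the transposition bookkeeping that the paper leaves implicit.
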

\begin{proof}
One finds that the transposes $Y \rightarrow \uX(TX,S(X \boxtimes Y))$ of the two composites in the leftmost diagram are equal, by the definition of $t'$ and the $\X$-naturality of $\lambda$.  The commutativity of the rightmost diagram is deduced analogously.
\end{proof}

\begin{DefSub}\label{def:comm_mnd}(Kock \cite{Kock:Comm})
Let $\TT = (T,\eta,\mu)$ be an $\X$-monad on $\aeX$.
\begin{enumerate}
\item For objects $X,Y \in \X$, we define morphisms $\otimes_{XY} = \otimes^\TT_{XY}$, $\widetilde{\otimes}_{XY} = \widetilde{\otimes}^\TT_{XY}$ as the following composites:
$$\otimes_{XY} := \left(T X \btimes T Y \xrightarrow{t''_{TX Y}} T(TX \btimes Y) \xrightarrow{T t'_{XY}} TT(X \btimes Y) \xrightarrow{\mu} T (X \btimes Y)\right)\;,$$
$$\widetilde{\otimes}_{XY} := \left(T X \btimes T Y \xrightarrow{t'_{X TY}} T (X \btimes T Y) \xrightarrow{T t''_{XY}} T T (X \btimes Y) \xrightarrow{\mu} T (X \btimes Y)\right)\;.$$
\item $\TT$ is \textit{commutative} if $\otimes_{XY} = \widetilde{\otimes}_{XY}$ for all objects $X,Y$ in $\X$.
\end{enumerate}
\end{DefSub}

\begin{ExaSub}\label{exa:rmod_monad_comm}
Letting $R$ be a commutative ring object in a countably-complete and~\hbox{-cocomplete} cartesian closed category $\X$, we show in \bref{thm:rmod_commutative_monadic} that the category $\RMod(\X)$ of $R$-module objects in $\X$ is isomorphic to the category $\X^\TT$ of Eilenberg-Moore algebras of (the underlying ordinary monad of) a commutative $\X$-monad $\TT$ on $\X$.
\end{ExaSub}

\begin{RemSub}\label{rem:enr_nat_of_tensor_maps}
By \bref{rem:enr_nat_of_tens_str}, we immediately deduce the following:
\begin{itemize}
\item[] $t'_{X Y}$, $t''_{X Y}$, $\otimes_{X Y}$, $\widetilde{\otimes}_{X Y}$ are $\X$-natural in $X,Y \in \uX$.
\end{itemize}
\end{RemSub}

We shall require the following in proving in \bref{sec:comm_idm_mnds} that all idempotent $\X$-monads on $\uX$ are commutative:

\begin{PropSub}\label{thm:mon_structs_assoc_enr_mnd}
Let $\TT = (T,\eta,\mu)$ be an $\X$-monad on $\uX$.
\begin{enumerate}
\item The morphisms $\otimes_{X Y}$, $\widetilde{\otimes}_{X Y}$ of \bref{def:comm_mnd} serve as the structure morphisms of monoidal functors  
$$(T,\otimes,\eta_I), (T,\widetilde{\otimes},\eta_I):\X \rightarrow \X\;.$$
\item $\eta$ serves as a monoidal transformation with respect to either structure:  i.e. both
$$\eta:(1_\V,1,1) \rightarrow (T,\otimes,\eta_I)\;\;\;\;\text{and}\;\;\;\;\eta:(1_\V,1,1) \rightarrow (T,\widetilde{\otimes},\eta_I)$$
are monoidal transformations.
\end{enumerate}
\end{PropSub}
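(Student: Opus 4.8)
The plan is to reduce both parts to the basic compatibility laws (``strength axioms'', in the sense of Kock \cite{Kock:Comm,Kock:ClsdCatsGenCommMnds}) satisfied by the tensorial strengths $t'_{XY}:TX\btimes Y\to T(X\btimes Y)$ and $t''_{XY}:X\btimes TY\to T(X\btimes Y)$ of \bref{def:tens_str}. First I would record the following identities, each proved by checking that the two relevant transposes agree, using only the $\X$-naturality of the strengths \pbref{rem:enr_nat_of_tens_str}, the $\X$-functoriality of $T$, and the unit and associativity coherence of $\X$: (a) $t'$ is compatible with the right unit isomorphism of $\X$, and $t''$ with the left unit isomorphism; (b) $t'$ and $t''$ are each compatible with the associativity isomorphism of $\X$; (c) the interchange identity $t'_{X\btimes Y,Z}\cdot(t''_{XY}\btimes Z)=t''_{X,Y\btimes Z}\cdot(X\btimes t'_{YZ})$ modulo the associator, expressing that the two strengths act independently on distinct tensor factors of a single $T$; and (d) $t'_{XY}\cdot(\eta_X\btimes Y)=\eta_{X\btimes Y}=t''_{XY}\cdot(X\btimes\eta_Y)$, immediate from the $\X$-naturality of $\eta$. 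Separately, using moreover the $\X$-naturality of $\mu$, I would establish (e) the compatibility of the strengths with $\mu$: $t'_{XY}\cdot(\mu_X\btimes Y)=\mu_{X\btimes Y}\cdot T(t'_{XY})\cdot t'_{TX,Y}$ and its $t''$-analogue. Here (a)--(d) are formal consequences of enriched naturality, while (e) is the one place where the monad multiplication genuinely enters and is the technically most delicate.

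Granting (a)--(e), part 1 is the verification of the monoidal-functor coherence axioms for $(T,\otimes,\eta_I)$. The unit axioms follow by pasting (a) with (d): expanding $\otimes_{IX}\cdot(\eta_I\btimes TX)$ via the definition $\otimes_{IX}=\mu\cdot T(t'_{IX})\cdot t''_{TI,X}$ \pbref{def:comm_mnd}, identity (d) collapses $t''_{TI,X}\cdot(\eta_I\btimes TX)$, (a) handles the residual unitor, and the monad law $\mu\cdot\eta T=\id$ finishes it; symmetrically for the right unit. The associativity coherence is the one substantial computation: it is a pasting diagram built from one instance each of (b) for $t'$ and for $t''$, the interchange law (c), the multiplication law (e) (used to commute $T$ past the strengths when the two $\otimes$-composites are unfolded), the associativity axiom $\mu\cdot\mu T=\mu\cdot T\mu$ of $\TT$, and naturality squares. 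The claims for $(T,\widetilde{\otimes},\eta_I)$ then require no new work: the strength $t''$ is obtained from $t'$ by conjugating with the symmetry $\sigma$ of $\X$, so $\widetilde{\otimes}$ is likewise a conjugate of $\otimes$ by braidings, and its monoidal-functor coherence follows from that of $\otimes$ using the naturality of $\sigma$ and the symmetric-monoidal coherence axioms of $\X$ (equivalently, the list (a)--(e) is stable under interchanging $t'$ with $t''$).

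For part 2, the unit condition for a monoidal transformation reduces to the triviality $\eta_I=\eta_I$, so the only content is the hexagon $\otimes_{XY}\cdot(\eta_X\btimes\eta_Y)=\eta_{X\btimes Y}$. Unwinding the left side by the definition of $\otimes$: (d) for $t''$ reduces $t''_{TX,Y}\cdot(\eta_X\btimes\eta_Y)$; (d) for $t'$, applied under $T$, reduces $T(t'_{XY})$ of the result; and the monad unit laws $\mu\cdot T\eta=\id=\mu\cdot\eta T$ complete the identification with $\eta_{X\btimes Y}$. The $\widetilde{\otimes}$ case is identical with $t'\leftrightarrow t''$. I expect the main obstacle to be, first, establishing the multiplication law (e) for the strengths — where care is needed to track the enriched naturality of $\mu$ rather than treat it as automatic — and, second, organizing the associativity pasting diagram for $\otimes$ in part 1; neither is deep, but both are sizeable diagram chases.
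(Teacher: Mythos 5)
Your proposal is correct, but it takes a substantially more self-contained route than the paper does. The paper disposes of part 1 and of the $\widetilde{\otimes}$-half of part 2 entirely by citation to Kock (\cite{Kock:Comm} 2.1 and the first part of the proof of Theorem 3.2 there, noting that the latter does not actually use commutativity), and then obtains the remaining $\otimes$-half of part 2 by a single prism diagram exploiting the identity $\widetilde{\otimes}_{Y X}\cdot\sigma = T\sigma\cdot\otimes_{X Y}$ — i.e.\ it transfers the statement for one tensor to the other by conjugating with the symmetry of $\X$. You instead rebuild the whole argument from the strength axioms (a)--(e), which is exactly the content of Kock's cited results: (a)--(d) are indeed formal consequences of the $\X$-naturality of $t'$, $t''$, $\eta$ and the coherence of $\X$, while (e) is the one identity that genuinely uses the $\X$-naturality of $\mu$, and the associativity coherence of $(T,\otimes,\eta_I)$ is the one large pasting. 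Your treatment of part 2 is, if anything, cleaner than the paper's: the computation $\otimes_{XY}\cdot(\eta_X\btimes\eta_Y) = \mu\cdot\eta T(X\btimes Y)\cdot t'_{XY}\cdot(\eta_X\btimes Y) = \eta_{X\btimes Y}$ (using (d), ordinary naturality of $\eta$, and $\mu\cdot\eta T = 1$) works verbatim for $\widetilde{\otimes}$ with $t'\leftrightarrow t''$, so no appeal to the symmetry is needed there at all, whereas the paper needs $\sigma$ precisely because it only imports the $\widetilde{\otimes}$ case from the literature. What the paper's approach buys is brevity and a reusable bridge to Kock's notation ($\widetilde{\otimes}=\psi$, $\otimes=\widetilde{\psi}$); what yours buys is independence from external sources at the cost of two sizeable diagram chases ((e) and the associativity hexagon), both of which are standard and present no real obstacle.
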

\begin{proof}
1 follows from \cite{Kock:Comm} 2.1 and the remark at the beginning of section 3 there; see also \cite{Kock:SmComm} (2.1)-(2.2).  To deduce that $\eta$ is monoidal with respect to $\widetilde{\otimes}, \eta_I$, we may employ the first part of the proof of Theorem 3.2 of \cite{Kock:Comm}, which applies without the assumption that $\TT$ is commutative (as remarked also in \cite{Kock:BilCartClMnds} 1.5); note that $\widetilde{\otimes} = \psi$ in the notation of \cite{Kock:Comm}, \cite{Kock:BilCartClMnds}.  The statement that $\eta$ is monoidal with respect to $\otimes, \eta_I$ follows, as we now demonstrate.  For all objects $X,Y \in \X$ we have a triangular prism
$$
\xymatrix {
                                                                  & {X \boxtimes Y} \ar[dl]_{\eta_X \boxtimes \eta_Y} \ar[dr]^{\eta_{X \boxtimes Y}} \ar@{.>}[dd]_(.7){\sigma} & \\
{TX \boxtimes TY} \ar[rr]^(.6){\otimes_{X Y}} \ar[dd]_{\sigma}           &                                                                            & {T(X \boxtimes Y)} \ar[dd]_{T\sigma}\\
                                                                  & {Y \boxtimes X} \ar@{.>}[dl]_{\eta_Y \boxtimes \eta_X} \ar@{.>}[dr]^{\eta_{Y \boxtimes X}} & \\
{TY \boxtimes TX} \ar[rr]^(.6){\widetilde{\otimes}_{Y X}}               &                                                                            & {T(Y \boxtimes X)}\\
}
$$
in which $\sigma$ is the symmetry carried by $\X$.  The bottom face commutes.  Also, the left face commutes by the naturality of $\sigma$, the right face commutes by the naturality of $\eta$, and the front face commutes by the first characterization of $\otimes_{XY}$ given in \cite{Kock:BilCartClMnds},\S 1 (in which $\otimes$ is written instead as $\widetilde{\psi}$).  Hence since the vertical morphisms are isos, the top face also commutes.
\end{proof}

\begin{PropSub}\label{prop:mnd_mor_monoidal_wrt_tensors}
Let $\lambda:\TT \rightarrow \SSS$ be a morphism of $\X$-monads $\TT = (T,\eta^\TT,\mu^\TT)$, $\SSS = (S,\eta^\SSS,\mu^\SSS)$ on $\uX$.  Then $\lambda$ serves as a monoidal transformation with respect to each of the following choices of monoidal structure on $T$, $S$:
\begin{description}
\item $\lambda:(T,\otimes^\TT,\eta^\TT_I) \rightarrow (S,\otimes^\SSS,\eta^\SSS_I)$
\item $\lambda:(T,\widetilde{\otimes}^\TT,\eta^\TT_I) \rightarrow (S,\widetilde{\otimes}^\SSS,\eta^\SSS_I)$
\end{description}
\end{PropSub}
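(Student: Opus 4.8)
The plan is to verify directly, in each of the two cases, the two defining conditions for a monoidal transformation: writing $(F,\phi,\phi_0) = (T,\otimes^\TT,\eta^\TT_I)$ and $(G,\psi,\psi_0) = (S,\otimes^\SSS,\eta^\SSS_I)$ (resp.\ with $\widetilde\otimes$ in place of $\otimes$), we must show that $\lambda_I\cdot\phi_0 = \psi_0$ and that $\lambda_{X\btimes Y}\cdot\phi_{XY} = \psi_{XY}\cdot(\lambda_X\btimes\lambda_Y)$ for all $X,Y\in\X$. The unit condition is immediate and common to both cases, since $\lambda_I\cdot\eta^\TT_I = \eta^\SSS_I$ is precisely the unit law for the $\X$-monad morphism $\lambda$ \pbref{par:general_mnd_mor} evaluated at $I$. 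Note also that, being a $2$-cell in $\XCAT$, the underlying transformation of $\lambda$ is $\X$-natural, so that \bref{prop:xnat_transf_commutes_w_tens_str} applies to it.

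For the multiplication condition in the $\otimes$ case, I would expand $\otimes^\TT_{XY}$ and $\otimes^\SSS_{XY}$ by their definitions \pbref{def:comm_mnd} as three-step composites factoring through $T(TX\btimes Y)$ and $TT(X\btimes Y)$ (respectively through $S(SX\btimes Y)$ and $SS(X\btimes Y)$), and then build a commutative ladder of three squares with these composites as top and bottom rows, $\lambda_X\btimes\lambda_Y$ as leftmost vertical, $\lambda_{X\btimes Y}$ as rightmost vertical, and intermediate verticals $\lambda_{SX\btimes Y}\cdot T(\lambda_X\btimes 1_Y) = S(\lambda_X\btimes 1_Y)\cdot\lambda_{TX\btimes Y}$ and $\lambda_{S(X\btimes Y)}\cdot T\lambda_{X\btimes Y} = S\lambda_{X\btimes Y}\cdot\lambda_{T(X\btimes Y)}$ (the two descriptions of each agreeing by ordinary naturality of $\lambda$). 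The leftmost square commutes by pasting the naturality of $t''^{T}$ in its first variable \pbref{rem:enr_nat_of_tens_str}, applied to $\lambda_X\colon TX\to SX$, with the $t''$-square of \bref{prop:xnat_transf_commutes_w_tens_str} for $\lambda\colon T\to S$ at objects $SX,Y$. The middle square commutes by pasting the naturality of $\lambda$ at the morphism $t'^{T}_{XY}$ with the image under $S$ of the $t'$-square of \bref{prop:xnat_transf_commutes_w_tens_str} for $\lambda$ at $X,Y$. The rightmost square is exactly the associativity law for the monad morphism $\lambda$ \pbref{par:general_mnd_mor} at the object $X\btimes Y$. The outer rectangle of the ladder then gives $\lambda_{X\btimes Y}\cdot\otimes^\TT_{XY} = \otimes^\SSS_{XY}\cdot(\lambda_X\btimes\lambda_Y)$, as required.

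For the $\widetilde\otimes$ case, rather than repeat the ladder with $t'$ and $t''$ (and the two leftmost tensor factors) interchanged, I would reduce to the case just proved. The proof of \bref{thm:mon_structs_assoc_enr_mnd} uses, for any $\X$-monad, the identity $T\sigma_{XY}\cdot\otimes^\TT_{XY} = \widetilde\otimes^\TT_{YX}\cdot\sigma_{TX,TY}$ with $\sigma$ the symmetry of $\X$, whence $\widetilde\otimes^\TT_{XY} = T\sigma_{YX}\cdot\otimes^\TT_{YX}\cdot\sigma_{TX,TY}$ and likewise for $\SSS$; chaining the naturality of $\sigma$ (at $\lambda_X,\lambda_Y$), the $\otimes$-case equality at $(Y,X)$, and the naturality of $\lambda$ (at $\sigma_{YX}$) then yields $\lambda_{X\btimes Y}\cdot\widetilde\otimes^\TT_{XY} = \widetilde\otimes^\SSS_{XY}\cdot(\lambda_X\btimes\lambda_Y)$.

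The only real work is the bookkeeping in the first ladder — in particular, checking that the intermediate vertical chosen on the right of one square is literally the one chosen on the left of the next, which is where the two alternative forms of each intermediate vertical (related by naturality of $\lambda$) are needed. I do not expect a conceptual obstacle: every constituent square is either an instance of naturality or one of the defining laws of $\lambda$, and the enriched inputs are confined to the two squares supplied by \bref{prop:xnat_transf_commutes_w_tens_str}.
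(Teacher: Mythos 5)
Your proof is correct and takes essentially the same route as the paper, whose proof simply asserts that the result follows from \bref{prop:xnat_transf_commutes_w_tens_str} together with the unit and associativity axioms for the monad morphism $\lambda$ --- exactly the ingredients your three-square ladder assembles. Your reduction of the $\widetilde{\otimes}$ case to the $\otimes$ case via the symmetry is a harmless variation on repeating the ladder with $t'$ and $t''$ interchanged.
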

\begin{proof}
This follows from \bref{prop:xnat_transf_commutes_w_tens_str} and the diagrammatic axioms for the monad morphism $\lambda$.
\end{proof}

\begin{RemSub} \label{rem:comm_inv_under_iso}
As a corollary to \bref{prop:mnd_mor_monoidal_wrt_tensors}, the property of commutativity is invariant under isomorphism of $\X$-monads.
\end{RemSub}

\begin{ThmSub}\label{thm:comm_sm_mnd}\textnormal{(Kock \cite{Kock:SmComm})}
Let $\TT = (T,\eta,\mu)$ be an ordinary monad on $\X$.  Then there is a bijection between the following kinds of structure on $\TT$:
\begin{enumerate}
\item $\X$-enrichments of $T$ making $\TT$ a commutative $\X$-monad on $\aeX$;
\item monoidal structures on $T$ making $\TT$ a symmetric monoidal monad.
\end{enumerate}
In particular, if $\TT$ is equipped with the structure of a symmetric monoidal monad, then the associated $\X$-enrichment of $T$ consists of the structure morphisms
$$\aeX(X,Y) \rightarrow \aeX(TX,TY)$$
gotten as the transposes of the following composites:
\begin{equation}\label{eqn:transp_tc}TX \btimes \aeX(X,Y) \xrightarrow{1 \btimes \eta} TX \btimes T\aeX(X,Y) \rightarrow T(X \btimes \aeX(X,Y)) \xrightarrow{T(\Ev)} TY\;.\end{equation}
\end{ThmSub}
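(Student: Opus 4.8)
The plan is to obtain the bijection by factoring both sides through Kock's intermediate notion of a \emph{tensorial strength}. First I would recall that $\X$-functor structures on the underlying ordinary functor $T$ (those acting as the identity on objects) are in natural bijection with families of morphisms $t'_{XY}:TX \boxtimes Y \rightarrow T(X \boxtimes Y)$, $\X$-natural in $X$ and $Y$, subject to the two coherence conditions with respect to the associativity and left-unit constraints of $\boxtimes$ that make $t'$ a strength. In the direction from enrichments to strengths one takes $t'$ as in \bref{def:tens_str}; in the reverse direction, given a strength $t'$, one defines the candidate structure morphisms $\uX(X,Y) \rightarrow \uX(TX,TY)$ as the transposes of
$$TX \boxtimes \uX(X,Y) \xrightarrow{t'_{X,\uX(X,Y)}} T(X \boxtimes \uX(X,Y)) \xrightarrow{T(\Ev)} TY\;.$$
A routine exercise in transposition shows the two assignments are mutually inverse and that the $\X$-functoriality (unit and composition) axioms for $T$ correspond precisely to the strength coherence conditions, with the $\X$-naturality half already covered by \bref{rem:enr_nat_of_tens_str}. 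Under this correspondence, the requirement that $\eta$ and $\mu$ be $\X$-natural --- i.e.\ that $\TT$ be an $\X$-monad --- translates into the requirement that $t'$ be compatible with $\eta$ and $\mu$, i.e.\ that $(T,\eta,\mu,t')$ be a strong monad.

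For the direction sending a commutative $\X$-monad to a symmetric monoidal monad, I would equip $T$ with the morphisms $\otimes^\TT_{XY} = \widetilde{\otimes}^\TT_{XY}$ of \bref{def:comm_mnd} --- these agree by commutativity --- together with the unit $\eta_I:I \rightarrow TI$. By \bref{thm:mon_structs_assoc_enr_mnd}, $(T,\otimes^\TT,\eta_I)$ is then a monoidal functor and $\eta:1 \rightarrow (T,\otimes^\TT,\eta_I)$ is a monoidal transformation, so there remain only two things to verify: that this monoidal functor is symmetric, and that $\mu$ is a monoidal transformation. The first follows from $\otimes^\TT = \widetilde{\otimes}^\TT$ together with the compatibility $T\sigma \cdot \otimes^\TT_{XY} = \widetilde{\otimes}^\TT_{YX} \cdot \sigma$ of the two constructions with the symmetry $\sigma$ of $\X$, which is exactly the front face of the triangular prism used in the proof of \bref{thm:mon_structs_assoc_enr_mnd}. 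The second is the substantive content of Kock's theorem, and I would invoke the proofs in \cite{Kock:Comm} and \cite{Kock:SmComm}; it is here that commutativity is genuinely used.

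Conversely, from a symmetric monoidal monad $(T,m^T,\eta_I)$ I would define a strength by
$$t'_{XY} := \left(TX \boxtimes Y \xrightarrow{1 \boxtimes \eta_Y} TX \boxtimes TY \xrightarrow{m^T_{XY}} T(X \boxtimes Y)\right)$$
and check, using that $\eta$ is monoidal and that $m^T$ obeys the monoidal-functor coherences, that $t'$ is a strength making $(T,\eta,\mu)$ a strong monad; the associated $\X$-enrichment then makes $\TT$ an $\X$-monad. Expanding \bref{def:comm_mnd} for this $\X$-monad, the commutativity equation $\otimes^\TT_{XY} = \widetilde{\otimes}^\TT_{XY}$ reduces to the associativity and symmetry coherences of $m^T$, so $\TT$ is commutative. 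That the two passages are mutually inverse comes down to showing that $\otimes^\TT$ built from this strength collapses back to $m^T$, and conversely that the strength reconstructed from $\otimes^\TT$ is the original $t'$; \bref{prop:mnd_mor_monoidal_wrt_tensors} and \bref{rem:comm_inv_under_iso} show in addition that the correspondence respects isomorphisms of monads. Finally, substituting the displayed formula for $t'$ into the enrichment-from-strength formula of the first paragraph gives immediately that the $\X$-enrichment associated to a symmetric monoidal monad has structure morphisms the transposes of the composites \eqref{eqn:transp_tc}.

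The main obstacle is the verification that $\mu$ is a monoidal transformation with respect to $\otimes^\TT$, and, relatedly, the full bookkeeping needed to confirm that the strength- and monoidal-coherence conditions match up so that the two constructions are true mutual inverses. Rather than reproduce these diagram chases I would cite Kock \cite{Kock:Comm,Kock:ClsdCatsGenCommMnds,Kock:SmComm} for the core argument, supplying only the translation between his conventions and \bref{def:tens_str} and \bref{def:comm_mnd} and the short computation pinning down the explicit formula \eqref{eqn:transp_tc}.
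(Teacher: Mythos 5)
The paper gives no proof of this theorem: it is stated as a result of Kock and simply cited to \cite{Kock:SmComm}, so your outline via tensorial strengths, with the substantive diagram chases (in particular the monoidality of $\mu$ and the round-trip verifications) deferred to Kock's papers, is the standard route and is consistent with how the paper itself handles the statement. Your derivation of the explicit formula \eqref{eqn:transp_tc} by substituting $t' = m^T \circ (1 \boxtimes \eta)$ into the enrichment-from-strength formula, and your use of the prism identity $T\sigma \cdot \otimes_{XY} = \widetilde{\otimes}_{YX} \cdot \sigma$ to get symmetry from $\otimes = \widetilde{\otimes}$, are both correct.
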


\begin{DefSub}
Given a symmetric monoidal monad $\TT = (T,\eta,\mu)$ on $\X$, let $\TT^c = (\bT^c,\eta,\mu)$ denote the associated commutative $\X$-monad on $\aeX$ \pbref{thm:comm_sm_mnd}.
\end{DefSub}

\begin{PropSub} \label{thm:tc_via_tgrave}
Given a symmetric monoidal monad $\TT = (T,\eta,\mu)$ on $\X$, the associated $\X$-functor $\bT^c:\aeX \rightarrow \aeX$ is the composite
$$\aeX \xrightarrow{\eta_*\uX} T_*\aeX \xrightarrow{\grave{T}} \aeX\;,$$
where $\eta_*:1_{\XCAT} = (1_{\X})_* \rightarrow T_*$ is gotten by applying $(-)_*$ \pbref{par:ch_base} to the symmetric monoidal transformation $\eta$.
\end{PropSub}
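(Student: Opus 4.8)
The plan is to prove the two $\X$-functors equal by comparing them componentwise. Write $C$ for the composite $\aeX \xrightarrow{\eta_*\uX} T_*\aeX \xrightarrow{\grave{T}} \aeX$. Both $C$ and $\bT^c$ are already known to be $\X$-functors --- $\bT^c$ by \bref{thm:comm_sm_mnd}, and $C$ as a composite of the $\X$-functors described in \bref{par:ch_base} --- so it will suffice to check that $C$ and $\bT^c$ agree on objects and that their structure morphisms $\uX(X,Y) \rightarrow \uX(TX,TY)$ coincide for all $X,Y \in \X$; the $\X$-functoriality axioms need not be revisited.

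First I would dispatch the action on objects: $\eta_*\uX$ is identity-on-objects \pbref{par:ch_base} and $\grave{T}$ acts as $T$ on objects, so $C$ sends each $X$ to $TX$, in agreement with $\bT^c$, which is by construction an $\X$-enrichment of $T$ \pbref{thm:comm_sm_mnd}. Next I would unwind the structure morphisms of $C$. By \bref{par:ch_base}, applied to the symmetric monoidal transformation $\eta : 1_\X \rightarrow T$ and the $\X$-category $\uX$, the $\X$-functor $\eta_*\uX$ has structure morphism at $(X,Y)$ the component $\eta_{\uX(X,Y)} : \uX(X,Y) \rightarrow T\uX(X,Y)$; and $\grave{T}$ has structure morphism $\grave{T}_{X Y}$ equal to the transpose (under the internal-hom adjunction of $\X$) of the composite $TX \btimes T\uX(X,Y) \xrightarrow{m^T} T(X \btimes \uX(X,Y)) \xrightarrow{T(\Ev)} TY$, where $m^T$ denotes the monoidal structure morphism carried by $T$. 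Hence the structure morphism of $C$ at $(X,Y)$ is the composite $\grave{T}_{X Y} \cdot \eta_{\uX(X,Y)} : \uX(X,Y) \rightarrow \uX(TX,TY)$. Using the naturality of transposition, this composite transposes to the composite $TX \btimes \uX(X,Y) \xrightarrow{1 \btimes \eta} TX \btimes T\uX(X,Y) \xrightarrow{m^T} T(X \btimes \uX(X,Y)) \xrightarrow{T(\Ev)} TY$, which is exactly the composite \eqref{eqn:transp_tc}; and by \bref{thm:comm_sm_mnd} the transpose of \eqref{eqn:transp_tc} is precisely the structure morphism $\bT^c_{X Y}$. Therefore $C$ and $\bT^c$ have the same structure morphisms, so $C = \bT^c$.

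The hard part is really only the convention bookkeeping inside this last computation. I will need to confirm that the monoidal structure morphism of $T$ appearing in \bref{par:ch_base}'s recipe for $\grave{T}$ is literally the same morphism $m^T$ that appears in the composite \eqref{eqn:transp_tc} of \bref{thm:comm_sm_mnd}, and that the two appeals to ``transpose'' are taken with respect to the same variance of the internal-hom adjunction $\X(B \btimes A, C) \cong \X(A, \uX(B,C))$ in $\X$. Once those are aligned, the identification of structure morphisms is a one-line naturality argument and no substantive obstacle remains.
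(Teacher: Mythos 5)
Your proposal is correct and follows essentially the same route as the paper's own proof: both functors are seen to act as $T$ on objects, and the structure morphism $\grave{T}_{X Y} \cdot \eta_{\uX(X,Y)}$ of the composite is shown, via naturality of transposition, to transpose to exactly the composite \eqref{eqn:transp_tc} defining $\bT^c_{X Y}$. The convention bookkeeping you flag at the end is indeed the only delicate point, and it works out as you expect.
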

\begin{proof}
$\eta_*\uX$ is identity-on-objects, and $\grave{T}$ acts as $T$ on objects, so both $\bT^c$ and the indicated composite are given as $T$ on objects.  For all $X,Y \in \aeX$, the associated structure morphism of the composite $\X$-functor $\grave{T} \circ \eta_*\uX$ is the composite 
$$\aeX(X,Y) \xrightarrow{\eta} T\aeX(X,Y) \xrightarrow{\grave{T}_{X Y}} \aeX(TX,TY)\;,$$
whose transpose $TX \btimes \aeX(X,Y) \rightarrow TY$ is (by the definition of $\grave{T}$, \bref{par:ch_base}) exactly the composite \eqref{eqn:transp_tc} employed in defining $\bT^c_{X Y}$.
\end{proof}

\begin{PropSub} \label{thm:enradj_assoc_smadj_ind_commmnd}
Let $\xymatrix {\X \ar@/_0.5pc/[rr]_F^(0.4){\eta}^(0.6){\varepsilon}^{\top} & & \W \ar@/_0.5pc/[ll]_G}$ be a symmetric monoidal adjunction with $\X$,$\W$ symmetric monoidal closed, and let $\TT$ be the induced symmetric monoidal monad on $\X$.  Then the associated commutative $\X$-monad $\TT^c$ on $\aeX$ coincides with the $\X$-monad $\TT'$ induced by the $\X$-adjunction
$$
\xymatrix {
\aeX \ar@/_0.5pc/[rr]_{\acute{F}}^(0.4){\eta}^(0.6){\varepsilon}^{\top} & & G_*\aeW \ar@/_0.5pc/[ll]_{\grave{G}}
}
$$
gotten via \bref{thm:assoc_enr_adj}.  In particular, $\TT'$ is commutative.
\end{PropSub}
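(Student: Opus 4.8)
The plan is to prove that $\TT'$ and $\TT^c$ coincide as $\X$-monads on $\uX$, by first identifying their underlying $\X$-endofunctors and then observing that, once the underlying $\X$-functors agree, the unit and multiplication $\X$-natural transformations are forced to agree because they already have the same underlying families of morphisms.

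First I would recall that the symmetric monoidal monad $\TT = (T,\eta,\mu)$ induced by the symmetric monoidal adjunction $F \nsststile{\varepsilon}{\eta} G$ has underlying symmetric monoidal endofunctor $T = GF$, equipped with the composite monoidal structure (as in \bref{thm:mon_func_detd_by_adj}), unit the adjunction unit $\eta$ — which is a symmetric monoidal transformation — and multiplication $\mu = G\varepsilon F$. Consequently the $\X$-functor $\grave{T} \colon T_*\uX \to \uX$ of \bref{par:cl_func} is literally $\widegrave{GF}$, the $\grave{(-)}$-construction applied to the composite symmetric monoidal functor $GF$. Now, in the course of proving \bref{thm:x_nat_of_eta_and_epsilon} it was shown — see the diagram \eqref{eqn:expr_for_ggrave_of_facute}, using the definition of $\acute F$ as the transpose of $\grave F$ and \bref{prop:comp_assoc_enr_fun} — that the $\X$-endofunctor $\grave{G}\acute{F}$ underlying $\TT'$ equals the composite $\uX \xrightarrow{\eta_*\uX} G_*F_*\uX \xrightarrow{\widegrave{GF}} \uX$, where $\eta_*$ is obtained by applying $(-)_*$ \pbref{par:ch_base} to the monoidal transformation $\eta$. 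Since $G_*F_* = T_*$ and $\widegrave{GF} = \grave{T}$, and since the $\eta_*$ appearing here is exactly the one appearing in \bref{thm:tc_via_tgrave}, this composite is precisely $\grave{T} \circ \eta_*\uX = \bT^c$. Hence $\TT'$ and $\TT^c$ have the same underlying $\X$-endofunctor.

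Next I would compare the monad structures. By \bref{thm:assoc_enr_adj}(1) and \bref{thm:x_nat_of_eta_and_epsilon}, the $\X$-adjunction $\acute{F} \nsststile{\varepsilon}{\eta} \grave{G}$ has unit and counit whose underlying families of morphisms are $\eta$ and $\varepsilon$; hence the induced $\X$-monad $\TT'$ has unit with underlying family $\eta$ and multiplication $\grave{G}\varepsilon\acute{F}$ with underlying family $G\varepsilon F = \mu$ (invoking \bref{thm:und_ord_functor_of_acute_f_is_f} and \bref{par:ident_underl_ord} to identify the underlying ordinary functors of $\acute F$, $\grave G$ with $F$, $G$). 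By the definition of $\TT^c$ \pbref{thm:comm_sm_mnd}, its unit and multiplication likewise have underlying families $\eta$ and $\mu$. Since an $\X$-natural transformation between a fixed pair of $\X$-functors is determined by its underlying family of morphisms in the underlying ordinary category, the units of $\TT'$ and $\TT^c$ coincide, and so do their multiplications; therefore $\TT' = \TT^c$ as $\X$-monads. As $\TT^c$ is commutative by \bref{thm:comm_sm_mnd}, it follows that $\TT'$ is commutative.

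I do not expect a genuine obstacle here, since essentially all of the analytic content has already been isolated in \bref{thm:x_nat_of_eta_and_epsilon}, \bref{prop:comp_assoc_enr_fun}, and \bref{thm:tc_via_tgrave}. The one point demanding care is bookkeeping: verifying that the composite monoidal structure on $GF$ used in forming $\TT$ is exactly the one for which $\widegrave{GF}$ agrees with $\grave{T}$ — which amounts to unwinding \bref{par:cl_func} together with the 2-functoriality of $c$ and of $(-)_*$ — and keeping track of the identifications of underlying ordinary data made in \bref{par:ident_underl_ord}, so that the families of morphisms genuinely match on the nose rather than merely up to the canonical isomorphisms.
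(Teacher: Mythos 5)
Your proposal is correct and follows essentially the same route as the paper: both identify $\bT' = \grave{G}\acute{F}$ with $\bT^c = \grave{T}\circ\eta_*\uX$ via \eqref{eqn:expr_for_ggrave_of_facute} and \bref{thm:tc_via_tgrave}, and both conclude that the units and multiplications coincide because the underlying ordinary adjunction of $\acute{F} \dashv \grave{G}$ is $F \dashv G$ (so the underlying families of morphisms are $\eta$ and $\mu$ in each case). The paper phrases the second step as an equality of the underlying ordinary monads rather than of the transformations' component families, but this is the same observation.
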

\begin{proof}
Letting $\TT = (T,\eta,\mu)$, we have that $\TT^c = (\bT^c,\eta,\mu)$.  By \bref{thm:assoc_enr_adj}, the underlying ordinary adjunction of $\acute{F} \nsststile{\varepsilon}{\eta} \grave{G}$ is $F \nsststile{\varepsilon}{\eta} G$, so the underlying ordinary monad of $\TT'$ is equal to that of $\TT$; hence $\TT' = (\bT',\eta,\mu)$, where $\bT' = \grave{G}\acute{F}$.  Further, $\bT'= \bT^c$, since by \bref{thm:tc_via_tgrave}, $T^c$ is the composite
$$\aeX \xrightarrow{\eta_*\uX} (GF)_*\aeX \xrightarrow{\widegrave{GF}} \aeX\;,$$
which is equal to $\grave{G}\acute{F} = T'$, as noted at \eqref{eqn:expr_for_ggrave_of_facute}.
\end{proof}

\section{The symmetric monoidal closed Eilenberg-Moore adjunction}

Let $\X = (\X,\boxtimes,I)$ be a closed symmetric monoidal category, and let $\TT = (T,\eta,\mu)$ be a symmetric monoidal monad on $\X$, which we identify with the corresponding commutative $\X$-monad $\TT^c$ on $\uX$ via the bijection \bref{thm:comm_sm_mnd}.  Let $F \nsststile{\varepsilon}{\eta} G:\X^\TT \rightarrow \X$ be the (ordinary) Eilenberg-Moore adjunction.  Following \cite{Jac:SemWkngContr}, \cite{Seal:TensMndActn}, we make the following definition

\begin{DefSub}\label{def:tens_prod_algs}
We say that $\TT$ has \textit{tensor products of algebras} if for each pair of $\TT$-algebras $A = (X,a)$, $B = (Y,b)$ there is an associated coequalizer in $\X^\TT$ of the (reflexive) pair
$$(T(TX \boxtimes TY),\mu) \xrightarrow{Tm_{X Y}} (TT(X \boxtimes Y), \mu) \xrightarrow{\mu} (T(X \boxtimes Y),\mu)\;,$$
$$(T(TX \boxtimes TY),\mu) \xrightarrow{T(a \boxtimes b)} (T(X \boxtimes Y),\mu)\;,$$
where $m_{X Y}$ is the structure morphism for the monoidal functor $T$.  We denote the associated coequalizer, as object of $\X^\TT$, by $A \otimes B$.
\end{DefSub}

\begin{RemSub} \label{rem:bihomom}
By \cite{Jac:SemWkngContr} 4.1 or \cite{Seal:TensMndActn} 2.3.4, the tensor product $A \otimes B$ of $\TT$-algebras $A$ and $B$ \pbref{def:tens_prod_algs} {represents bimorphisms} in the sense that $\X^\TT(A \otimes B,C) \cong \X^\TT(A,B;C)$, naturally in $C \in \X^\TT$, where $\X^\TT(A,B;C)$ is the class of all \textit{bimorphisms} (or \textit{$\TT$-bihomomorphisms}) from $A,B$ to $C$ (\cite{Jac:SemWkngContr} 5,  \cite{Seal:TensMndActn} 2.1).
\end{RemSub}

\begin{PropSub}
If $\X$ has reflexive coequalizers and $T$ preserves reflexive coequalizers, then $\TT$ has tensor products of algebras \pbref{def:tens_prod_algs}.
\end{PropSub}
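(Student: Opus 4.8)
The statement asserts that if $\X$ has reflexive coequalizers and $T$ preserves them, then for each pair of $\TT$-algebras $A = (X,a)$, $B = (Y,b)$, the requisite coequalizer in $\X^\TT$ of the reflexive pair in \bref{def:tens_prod_algs} exists. The natural approach is to observe that the parallel pair displayed in \bref{def:tens_prod_algs} is the image under the free functor $F:\X \to \X^\TT$ (or rather, lies in the essential image of free algebras) of a reflexive pair in $\X$, and then to invoke the standard fact that for a monad $\TT$ on a category $\X$ with reflexive coequalizers, if $T$ preserves reflexive coequalizers then $\X^\TT$ has reflexive coequalizers, computed as in $\X$. So first I would identify the underlying reflexive pair in $\X$: the two morphisms
$$T(TX \boxtimes TY) \xrightarrow{\mu_{X \boxtimes Y} \cdot Tm_{XY}} T(X \boxtimes Y)\;, \qquad T(TX \boxtimes TY) \xrightarrow{T(a \boxtimes b)} T(X \boxtimes Y)$$
between the underlying objects, noting these are exactly the underlying morphisms of the pair in $\X^\TT$ (whose domains and codomains are free algebras). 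A common section is provided by $T(\eta_X \boxtimes \eta_Y):T(X \boxtimes Y) \to T(TX \boxtimes TY)$, using the unit laws and the fact that $m_{XY} \cdot (\eta_X \boxtimes \eta_Y) = \eta_{X \boxtimes Y}$ (a monoidal-functor axiom for $T$) together with the algebra unit laws $a \cdot \eta_X = 1_X$, $b \cdot \eta_Y = 1_Y$; so this pair is reflexive in $\X$.

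**Main step.** Since $\X$ has reflexive coequalizers, form the coequalizer $q:T(X \boxtimes Y) \to C$ in $\X$ of this reflexive pair. I would then endow $C$ with a $\TT$-algebra structure. The usual argument: because $T$ preserves reflexive coequalizers, $Tq:T^2(X\boxtimes Y)\to TC$ is the coequalizer of the pair obtained by applying $T$; one checks that $q \cdot \mu_{X\boxtimes Y}:T^2(X\boxtimes Y)\to C$ coequalizes $T$ of the original pair (using naturality of $\mu$ and associativity), hence factors uniquely through $Tq$ as $c:TC \to C$ with $c \cdot Tq = q \cdot \mu_{X\boxtimes Y}$; the monad axioms for $(C,c)$ then follow by a routine diagram chase, using that $Tq$ (and $T^2q$, again by preservation) is epic in the relevant sense. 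This is precisely the Linton-style construction of algebra coequalizers, and the forgetful functor $G^\TT$ creates this coequalizer. Consequently $q$ lifts to a morphism $(T(X\boxtimes Y),\mu) \to (C,c)$ in $\X^\TT$ which is a coequalizer of the pair in \bref{def:tens_prod_algs}, establishing that $\TT$ has tensor products of algebras, with $A \otimes B = (C,c)$.

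**Where the work lies.** The genuinely load-bearing point is the lifting of the coequalizer along $G^\TT$, i.e. that $\X^\TT$ has reflexive coequalizers created from $\X$ when $T$ preserves them; everything else (reflexivity of the pair, the monoidal-functor identity $m_{XY}\cdot(\eta_X\boxtimes\eta_Y)=\eta_{X\boxtimes Y}$, the algebra unit laws) is bookkeeping. I expect the main obstacle — really the only nontrivial verification — to be checking that the induced $c:TC\to C$ satisfies the unit and associativity laws, which requires invoking preservation of reflexive coequalizers twice (to know $Tq$ and $T^2q$ are coequalizers, hence jointly/suitably epic) so that the monad-algebra equations can be deduced from the corresponding equations one level down. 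Since this is the textbook argument for creation of reflexive coequalizers by $G^\TT$ (cf.\ the Crude Monadicity Theorem \bref{thm:crude_mndcity} and its use of reflexive pairs), I would cite it rather than reproduce it in full, and simply note that the parallel pair of \bref{def:tens_prod_algs} is a reflexive pair of free algebras, so its coequalizer exists in $\X^\TT$ under the stated hypotheses.
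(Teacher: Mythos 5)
Your proposal is correct and follows essentially the same route as the paper: the paper's entire proof is a citation of Linton's Corollary 3 in \cite{Lin:CoeqCatsAlgs}, which is precisely the statement you isolate as the load-bearing step (that $G^\TT$ creates reflexive coequalizers when $\X$ has them and $T$ preserves them), applied to the reflexive pair of free algebras from \bref{def:tens_prod_algs}. Your explicit verification of the common section $T(\eta_X \boxtimes \eta_Y)$ is a correct (if optional) addition, since the definition already records that the pair is reflexive.
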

\begin{proof}
Apply \cite{Lin:CoeqCatsAlgs}, Corollary 3.
\end{proof}

\begin{ThmSub}\label{thm:smclosed_em_adj}
Let $\TT$ be a symmetric monoidal monad on a closed symmetric monoidal category $\X$ with equalizers, and suppose that $\TT$ has tensor products of algebras \pbref{def:tens_prod_algs}.
\begin{enumerate}
\item The Eilenberg-Moore adjunction $F \nsststile{\varepsilon}{\eta} G:\X^\TT \rightarrow \X$ acquires the structure of a symmetric monoidal adjunction, with $\X^\TT = (\X^\TT,\otimes,FI)$ a closed symmetric monoidal category.
\item Given $\TT$-algebras $A = (X,a), B = (Y,b)$, the underlying $\X$-object of the internal hom $\underline{\X^\TT}(A,B)$ in $\X^\TT$ is the equalizer in $\X$ of the morphisms
\begin{equation}\label{eqn:pair_for_which_hom_is_equalizer}
\uX(X,Y) \xrightarrow{T_{X Y}} \uX(TX,TY) \xrightarrow{\uX(TX,b)} \uX(TX,Y) \;,
\end{equation}
$$\uX(X,Y) \xrightarrow{\uX(a,Y)} \uX(TX,Y)\;.$$
\item The structure morphisms
$$G\underline{\X^\TT}(A,B) \rightarrow \uX(GA,GB)\;,\;\;A,B \in \X^\TT$$
of the closed functor associated to the symmetric monoidal functor $G:\X^\TT \rightarrow \X$ via \bref{par:cl_func} are simply the equalizers of 2.
\end{enumerate}
\end{ThmSub}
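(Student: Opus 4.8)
The plan is to obtain the symmetric monoidal closed structure on $\X^\TT$ by combining the tensor product of algebras of \bref{def:tens_prod_algs} with Kock's closed structure \cite{Kock:ClsdCatsGenCommMnds}, and then to read statements 2 and 3 off the explicit descriptions involved. For the monoidal part: the bifunctor $\otimes$ on $\X^\TT$ is given on objects by \bref{def:tens_prod_algs}, and by \bref{rem:bihomom} it represents $\TT$-bimorphisms, $\X^\TT(A \otimes B, C) \cong \X^\TT(A,B;C)$ naturally in $C$. Since $\TT$ is a \emph{symmetric} monoidal monad, the notion of $\TT$-bimorphism is symmetric in its two covariant slots, and one has the analogous representation of $\TT$-trimorphisms; from these one extracts the associativity, symmetry and unit ($FI$) constraints together with their coherence, exactly as in \cite{Seal:TensMndActn}~\S 2 and \cite{Jac:SemWkngContr}~\S\S 4--5, so that $(\X^\TT,\otimes,FI)$ is a symmetric monoidal category. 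The forgetful functor $G$ carries the evident lax symmetric monoidal structure $m^G$ with components $GA \boxtimes GB \xrightarrow{\eta} GF(GA \boxtimes GB) \to G(A \otimes B)$ (the second map being $G$ applied to the universal bimorphism) and $\eta_I : I \to TI$; and since a bimorphism out of two free algebras is the same thing as a morphism out of the tensor of the generators, $F(X) \otimes F(Y)$ and $F(X \boxtimes Y)$ represent the same functor, so the canonical comparison between them is invertible. Hence $F$ is strong symmetric monoidal, and by \cite{Ke:Doctr}~1.4 the Eilenberg-Moore adjunction $F \nsststile{\varepsilon}{\eta} G$ is a symmetric monoidal adjunction.

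For the closed part, fix $\TT$-algebras $A = (X,a)$ and $B = (Y,b)$ and let $E$ be the equalizer in $\X$ of the pair \eqref{eqn:pair_for_which_hom_is_equalizer}, which exists because $\X$ has equalizers and which is literally the pair \eqref{eq:pair_defining_em_hom} defining the hom-object of the $\X$-enriched Eilenberg-Moore category in \bref{par:em_adj}. By Kock \cite{Kock:ClsdCatsGenCommMnds}, $E$ carries a canonical $\TT$-algebra structure, and the resulting object $[A,B]$ of $\X^\TT$ represents bimorphisms in the remaining variable: $\X^\TT(C,[A,B]) \cong \X^\TT(C,A;B)$ naturally in $C$. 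Composing this with \bref{rem:bihomom} yields $\X^\TT(C \otimes A, B) \cong \X^\TT(C,A;B) \cong \X^\TT(C,[A,B])$ naturally in $C$, so $[A,B]$ is an internal hom; thus $\X^\TT$ is symmetric monoidal closed with $\underline{\X^\TT}(A,B) \cong [A,B]$, proving statement 1. Since $G$ only forgets the algebra structure, $G\underline{\X^\TT}(A,B) = E$, the equalizer of \eqref{eqn:pair_for_which_hom_is_equalizer}, which is statement 2.

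For statement 3, by \bref{par:cl_func} the structure morphism of the closed functor $cG$ associated to $G$ at $(A,B)$ is $\grave{G}_{A B} : G\underline{\X^\TT}(A,B) \to \uX(GA,GB)$, which by \bref{par:ch_base} (cf. \eqref{eq:str_morph_mgrave}) is the transpose of the composite $GA \boxtimes G\underline{\X^\TT}(A,B) \xrightarrow{m^G} G(A \otimes \underline{\X^\TT}(A,B)) \xrightarrow{G\,\Ev_{A B}} GB$, with $\Ev_{A B} : A \otimes \underline{\X^\TT}(A,B) \to B$ the evaluation in $\X^\TT$ and $m^G$ the lax structure of $G$. Writing $j : E \rightarrowtail \uX(X,Y)$ for the equalizer inclusion, its transpose is $X \boxtimes E \xrightarrow{1 \boxtimes j} X \boxtimes \uX(X,Y) \xrightarrow{\Ev} Y$. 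The point is that Kock's $\TT$-algebra structure on $E$, and hence the evaluation $\Ev_{AB}$, is defined precisely so that the underlying $\X$-morphism $GA \boxtimes E \to GB$ of $G\,\Ev_{AB} \cdot m^G$ coincides with $\Ev \cdot (1 \boxtimes j)$; comparing transposes then gives $\grave{G}_{AB} = j$, i.e. the structure morphisms of $cG$ are the equalizers of statement 2.

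The main obstacle is not a single difficult idea but the bookkeeping at two junctures. First, one must check that the monoidal structure of \bref{def:tens_prod_algs} and Kock's closed structure are genuinely \emph{adjoint}, which hinges on Kock's statement that $[A,B]$ represents bimorphisms in the variable $C$ and so has to be quoted carefully. Second, the identification $\grave{G}_{AB} = j$ requires matching Kock's explicit formula for the $\TT$-algebra structure on the equalizer $E$ against the definitions of the evaluation bimorphism in $\X^\TT$ and of the enriched functor $\grave{G}$; this is the diagram chase where the concrete form of Kock's closed structure actually enters.
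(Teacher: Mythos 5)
Your proposal is correct and follows essentially the same route as the paper: both assemble the symmetric monoidal closed structure on $\X^\TT$ from the tensor product of algebras \pbref{def:tens_prod_algs}, the representability of bimorphisms \pbref{rem:bihomom}, and Kock's equalizer description of the internal hom, and both deduce monoidality of the adjunction from strong monoidality of $F$ via Kelly's doctrinal adjunction. The one place you are too quick is the assertion that ``$F$ is strong \emph{symmetric} monoidal'': your representability argument establishes that the comparison $FX \otimes FY \rightarrow F(X \boxtimes Y)$ is invertible, i.e.\ strong monoidality, but the compatibility of $F$ with the symmetries of $\X$ and $\X^\TT$ is a separate condition, and the paper singles this out as the one point not verified in any of the cited sources. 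The paper closes it by noting that the two composites $FX \otimes FY \rightarrow F(Y \boxtimes X)$ in the relevant square are induced by the bihomomorphisms obtained from the corresponding composites $TX \boxtimes TY \rightarrow T(Y \boxtimes X)$, which agree precisely because $T$ is a \emph{symmetric} monoidal functor; you should supply this (short) check rather than fold it into the citation. Your treatment of statement 3, matching Kock's algebra structure on the equalizer against the definition of $\grave{G}_{AB}$ from \pbref{par:cl_func}, is if anything more explicit than the paper's.
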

\begin{proof}
This is almost entirely clear from \cite{Jac:SemWkngContr} 5.3, \cite{Seal:TensMndActn} (in particular, 2.5.5 and 2.7.3), and \cite{Kock:ClsdCatsGenCommMnds}.  In particular, $\X^\TT$ is a closed symmetric monoidal category (\cite{Jac:SemWkngContr} 5.3), via the same symmetric monoidal structure given in \cite{Seal:TensMndActn}.  The internal hom employed in \cite{Jac:SemWkngContr} 5.3 is the same used in defining a (not-necessarily monoidal) closed structure on $\X^\TT$ in \cite{Kock:ClsdCatsGenCommMnds}, namely the equalizer appearing in 2.  By \cite{Seal:TensMndActn} 2.7.3, the given adjunction is monoidal; indeed the monoidality of the adjunction follows from the strong monoidality of the left adjoint, by \cite{Ke:Doctr} 1.5.  The \textit{symmetry} of this monoidal adjunction does not seem to have been verified or asserted in any of our sources, so we now perform this verification.  In order to show that $F \nsststile{\varepsilon}{\eta} G$ is symmetric monoidal it suffices, again by by \cite{Ke:Doctr} 1.5, to show that $F$ is strong symmetric monoidal, and all that remains for this is to verify the symmetry of the monoidal functor $F$.  Given $X,Y \in \X$, we must show that the following square commutes
$$
\xymatrix{
FX \otimes FY \ar[r] \ar[d] & F(X \boxtimes Y) \ar[d]^{F\sigma_{X Y}} \\
FY \otimes FX \ar[r]        & F(Y \boxtimes X) 
}
$$
where the top and bottom sides are the structure morphisms for the monoidal functor $F$, the left side is the symmetry in $\X^\TT$, and $\sigma$ is the symmetry in $\X$.  But the two composite $\TT$-homomorphisms in this diagram are induced the $\TT$-bihomomorphisms \pbref{rem:bihomom} gotten as the respective composites in the following square
$$
\xymatrix{
{TX \boxtimes TY} \ar[r] \ar[d]^{\sigma_{TX TY}} & T(X \boxtimes Y) \ar[d]^{T\sigma_{X Y}} \\
{TY \boxtimes TX} \ar[r]                         & T(Y \boxtimes X)
}
$$
in which the top and bottom sides are the structure morphisms for the monoidal functor $T$.  But $T$ is a symmetric monoidal functor, so the latter square commutes and hence the former does as well.
\end{proof}

\begin{RemSub} \label{rem:smccl_emadj_wknd_hyp}
We can of course weaken the hypotheses of \bref{thm:smclosed_em_adj} slightly, by assuming only that the needed equalizers \eqref{eqn:pair_for_which_hom_is_equalizer} exist rather than requiring that $\X$ has all equalizers.  Note that this weaker assumption exactly coincides with the condition of the existence of the Eilenberg-Moore $\X$-category $\uX^\TT$ \pbref{par:em_adj} for the corresponding $\X$-monad \pbref{thm:comm_sm_mnd}.  Indeed, the underlying $\X$-object of the internal hom $\underline{\X^\TT}(A,B)$ in $\X^\TT$ as given defined in \bref{thm:smclosed_em_adj} is the same as the hom-object $\uX^\TT(A,B)$ for the Eilenberg-Moore $\X$-category.
\end{RemSub}

\begin{PropSub}\label{thm:commmnd_sm_em_adj_dets_em_vadj}
Let $\X$ be a closed symmetric monoidal category, and let $\TT$ be a commutative $\X$-monad on $\uX$ whose Eilenberg-Moore $\X$-category exists \pbref{par:em_adj}.  Suppose that (the corresponding symmetric monoidal monad) $\TT$ has tensor products of algebras \pbref{def:tens_prod_algs}.  Let $F \nsststile{\varepsilon}{\eta} G:\sL \rightarrow \X$ be the Eilenberg-Moore adjunction for the ordinary monad $\TT$, equipped with its associated structure as a symmetric monoidal adjunction \pbref{thm:smclosed_em_adj} (in which $\sL$ is closed).  Then the associated $\X$-adjunction $\acute{F} \nsststile{\varepsilon}{\eta} \grave{G}:G_*\uL \rightarrow \uX$ \pbref{thm:assoc_enr_adj} coincides with the Eilenberg-Moore $\X$-adjunction $F^\TT \nsststile{\varepsilon}{\eta} G^\TT:\uX^\TT \rightarrow \uX$ determined by the given $\uX$-monad $\TT$.
\end{PropSub}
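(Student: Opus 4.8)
The plan is to invoke the uniqueness result \bref{thm:enradj_detd_by_radj_and_univ_arr}, by which an $\X$-adjunction is completely determined by its right $\X$-adjoint together with the family of objects and universal arrows for the left $\X$-adjoint. Accordingly, it suffices to establish three things: (i) that $G_*\uL = \uX^\TT$ as $\X$-categories and $\grave{G} = G^\TT$ as $\X$-functors $G_*\uL \to \uX$; (ii) that $\acute{F}$ and $F^\TT$ agree on objects; and (iii) that the unit $\eta$ of the $\X$-adjunction $\acute{F} \nsststile{\varepsilon}{\eta} \grave{G}$ coincides componentwise with the monad unit $\eta$, which is the unit of the Eilenberg-Moore $\X$-adjunction $F^\TT \nsststile{\varepsilon}{\eta} G^\TT$. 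Granting (i), the conclusion then follows formally from \bref{thm:enradj_detd_by_radj_and_univ_arr} once (ii) and (iii) are in hand.

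Items (ii) and (iii) are immediate. On objects $\acute{F}X = FX = (TX,\mu_X) = F^\TT X$, using the construction of $\acute{F}$ in \bref{par:given_smcadj} (cf.\ \bref{thm:und_ord_functor_of_acute_f_is_f}) and the definition of the Eilenberg-Moore $\X$-adjunction for the given $\X$-monad $\TT$. For the units, \bref{thm:assoc_enr_adj} identifies the unit of $\acute{F} \nsststile{\varepsilon}{\eta} \grave{G}$ with $\eta$ — indeed \bref{thm:x_nat_of_eta_and_epsilon} exhibits $\eta$ as the pertinent $\X$-natural transformation — and $\eta$ is by definition the unit of $F^\TT \nsststile{\varepsilon}{\eta} G^\TT$.

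Item (i) is the substantive point, and I would deduce it from the criterion \bref{thm:crit_for_equal_faithful_vfunctors} for equality of $\X$-faithful $\X$-functors. First, $\grave{G}$ and $G^\TT$ are both defined on the object-class $\ob\sL = \ob\uX^\TT$ and send a $\TT$-algebra to its underlying $\X$-object: for $G^\TT$ this is immediate, and for $\grave{G}$ it follows from \bref{thm:underlying_ord_functor_iso_to_g}, modulo the identifications of underlying ordinary categories fixed in \bref{par:ident_underl_ord}. Second, for hom-objects, $(G_*\uL)(E_1,E_2) = G\uL(E_1,E_2)$ is the $\X$-object underlying the internal hom $\underline{\X^\TT}(E_1,E_2)$, and by \bref{rem:smccl_emadj_wknd_hyp} this is precisely the Eilenberg-Moore hom-object $\uX^\TT(E_1,E_2)$; so the two $\X$-categories have identical hom-objects. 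Third, the structure morphism $\grave{G}_{E_1 E_2}\colon G\underline{\X^\TT}(E_1,E_2) \to \uX(GE_1,GE_2)$ is, by \bref{thm:smclosed_em_adj}(3), exactly the equalizer of the pair \eqref{eqn:pair_for_which_hom_is_equalizer}, whereas $G^\TT_{E_1 E_2}$ is by definition \pbref{par:em_adj} the equalizer of the same pair \eqref{eq:pair_defining_em_hom}; hence these structure morphisms coincide. Both $\grave{G}$ and $G^\TT$ are $\X$-faithful, since their structure morphisms are equalizers and therefore monomorphisms in $\X$. The criterion \bref{thm:crit_for_equal_faithful_vfunctors} then yields $G_*\uL = \uX^\TT$ and $\grave{G} = G^\TT$, which with (ii), (iii) and \bref{thm:enradj_detd_by_radj_and_univ_arr} completes the argument.

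The main obstacle is the bookkeeping in the proof of (i): one must track carefully through the several identifications of underlying ordinary categories set up in \bref{par:ident_underl_ord} and verify that the hom-object $G\uL(E_1,E_2)$ of $G_*\uL$ is literally the $\X$-object underlying $\underline{\X^\TT}(E_1,E_2)$, and that the structure morphism delivered by the change-of-base construction \bref{par:ch_base}--\bref{par:cl_func} is the very same equalizer arrow used to build the Eilenberg-Moore $\X$-category. Both of these facts are, modulo those identifications, recorded in \bref{thm:smclosed_em_adj} and \bref{rem:smccl_emadj_wknd_hyp}, so no new computation is required; the remainder of the proof is purely formal.
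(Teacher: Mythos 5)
Your proposal is correct and follows essentially the same route as the paper: the substantive step in both is the identification of the hom-objects of $G_*\uL$ and $\uX^\TT$ as the same equalizers and of the structure morphisms of $\grave{G}$ and $G^\TT$ as the same equalizer arrows (via \bref{thm:smclosed_em_adj}~3 and \bref{par:em_adj}), followed by an appeal to \bref{thm:crit_for_equal_faithful_vfunctors}. The only (immaterial) divergence is at the very end, where the paper deduces $\acute{F}=F^\TT$ from the commutative triangle of \bref{thm:enradj_assoc_smadj_ind_commmnd} together with the $\X$-faithfulness of $\grave{G}=G^\TT$ and agreement on objects, whereas you invoke \bref{thm:enradj_detd_by_radj_and_univ_arr} via the shared unit; both finishes are valid.
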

\begin{proof}
Firstly, these two $\X$-adjunctions have the same underlying ordinary adjunction, namely the Eilenberg-Moore adjunction.  Next, for each pair of objects $A = (X,a)$ and $B = (Y,b)$ in $\sL = \X^\TT$, the hom-object $(G_*\uL)(A,B) = G\uL(A,B)$ is the underlying $\X$-object of the internal hom $\uL(A,B)$ in $\sL$; by definition, this $\X$-object is the equalizer
\begin{equation}\label{eqn:hom_as_equ}
G\uL(A,B) \rightarrow \X(X,Y) = \X(GA,GB)
\end{equation}
of the morphisms \eqref{eqn:pair_for_which_hom_is_equalizer}.  Moreover, by \bref{thm:smclosed_em_adj} 3, this equalizer morphism is the structure morphism of the closed functor determined by $G$, which by \bref{par:cl_func} is equally the structure morphism $\grave{G}_{A B}$ of the $\X$-functor $\grave{G}:G_*\uL \rightarrow \uX$.  On the other hand, the structure morphism
$$G^\TT_{A B}:\uX^\TT(A,B) \rightarrow \X(GA,GB) = \X(X,Y)$$
for the Eilenberg-Moore forgetful $\X$-functor is, by definition \pbref{par:em_adj}, the equalizer of the same pair of morphisms \eqref{eqn:pair_for_which_hom_is_equalizer}.

Hence, the hom-objects of the $\X$-categories $G_*\uL$ and $\uX^\TT$ coincide, as do the structure morphisms of the $\X$-faithful $\X$-functors $\grave{G}$ and $G^\TT$, so by \bref{thm:crit_for_equal_faithful_vfunctors}, the categories $G_*\uL$ and $\uX^\TT$ are identical and $\grave{G} = G^\TT$.

Lastly, the diagram of $\X$-functors
$$
\xymatrix{
\uX \ar[d]_{\acute{F}} \ar[r]^{F^\TT} \ar[dr]^T & \uX^\TT \ar[d]^{G^\TT = \grave{G}} \\
G_*\uL \ar[r]_{\grave{G} = G^\TT}               & \uX
}
$$
commutes by \bref{thm:enradj_assoc_smadj_ind_commmnd}, so since $\grave{G} = G^\TT$ is $\X$-faithful and $\ob F^\TT = \ob \acute{F}$, it follows that $F^\TT = \acute{F}$.
\end{proof}

\begin{ExaSub} \label{exa:rmod_smclosed_monadic}
Returning to the example in \bref{exa:rmod_monad_comm}, we show in \bref{thm:rmod_commutative_monadic} that $\RMod(\X) \cong \X^\TT$ for a commutative $\X$-monad $\TT$ with tensor products of algebras, and moreover that the given isomorphism commutes strictly with both the \textit{free} and the \textit{forgetful} functors.  Identifying the Eilenberg-Moore adjunction with the free-forgetful adjunction $F \dashv G:\RMod(\X) \rightarrow \X$, the latter acquires the structure of a symmetric monoidal adjunction, via \bref{thm:smclosed_em_adj}, with $\RMod(\X)$ symmetric monoidal closed.  This adjunction also underlies an $\X$-enriched adjunction, which by \bref{thm:commmnd_sm_em_adj_dets_em_vadj} can be described both as the Eilenberg-Moore $\X$-adjunction for $\TT$ and as the $\X$-adjunction associated to the symmetric monoidal adjunction $F \dashv G$ via \bref{thm:assoc_enr_adj}.
\end{ExaSub}

\begin{RemSub}\label{rem:standard_cotensors_in_talg}
In the situation of \bref{thm:commmnd_sm_em_adj_dets_em_vadj}, we know by \bref{thm:assoc_enr_adj} that the Eilenberg-Moore $\X$-category $\uX^\TT = G_*\uL$ is cotensored, with cotensors $[X,E]$ $(X \in \X, E \in \uX^\TT)$ obtained as the internal homs $\uL(FX,E)$ in $\sL = \X^\TT$.  But there is also another canonical choice of cotensors:  As noted in \cite{Kock:Dist} \S 2, one obtains a cotensor $[X,E]$ by equipping the hom $\uX(X,GE)$ with what we may call the \textit{pointwise} $\TT$-algebra structure.
\end{RemSub}

\begin{ExaSub}\label{exa:std_cotensors_in_rmod}
Returning to the example of \bref{exa:rmod_smclosed_monadic}, where $\X^\TT \cong \RMod(\X)$, let us assume for the sake of illustration that $\X$ is a locally small well-pointed cartesian closed category \pbref{par:well_pointed_ccc}.  Then the underlying set functor $\X \rightarrow \Set$ sends $R$ to a commutative ring, and for each $X \in \X$ and $E \in \RMod(\X)$, the cotensor $[X,E]$ formed in the manner of \bref{rem:standard_cotensors_in_talg} consists of the function space $\uX(X,E)$ equipped with the familiar pointwise $R$-module structure.
\end{ExaSub}

\section{Commutative idempotent monads and symmetric monoidal closed reflections}\label{sec:comm_idm_mnds}

\begin{LemSub}\label{sec:univ_biunit}
Let $\SSS = (S,\rho,\lambda)$ be an idempotent $\V$-monad on $\uV$.  Then for all objects $U,V$ of $\uV$ and each object $W$ of the associated $\V$-reflective-subcategory $\uV^{(\SSS)}$ of $\uV$,
$$\uV(\rho_U \otimes \rho_V,W):\uV(SU \otimes SV,W) \rightarrow \uV(U \otimes V,W)$$
is an isomorphism in $\V$ (and hence also induces a bijection between the associated hom-classes of $\V$).
\end{LemSub}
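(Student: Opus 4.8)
The plan is to reduce the claim about the tensor-product morphism to two successive applications of the biunit property $\rho_U \bot_\V W$, using the fact that $W$ lies in $\uV^{(\SSS)}$ and that (by \bref{thm:idm_mnd_comm}, which the excerpt announces will be proved in this section) $\SSS$ is a commutative $\V$-monad, so that the tensor product in $\uV$ interacts well with the reflection. Concretely, I would first recall from \bref{thm:refl_subcats_are_orth_subcats} that $\uV^{(\SSS)} = \uV_\Sigma$ for $\Sigma = \{\rho_V \mid V \in \uV\}$, so that $W \in \uV^{(\SSS)}$ means precisely $\rho_V \bot_\V W$ for every $V$, i.e.\ $\uV(\rho_V,W):\uV(SV,W) \to \uV(V,W)$ is an isomorphism in $\V$.

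The key step is to factor the morphism $\rho_U \otimes \rho_V : U \otimes V \to SU \otimes SV$ as the composite
$$U \otimes V \xrightarrow{\rho_U \otimes 1} SU \otimes V \xrightarrow{1 \otimes \rho_V} SU \otimes SV,$$
and to show that each of the two factors is inverted by $\uV(-,W)$. For the second factor, applying the internal hom I would use the closed structure to identify $\uV(1_{SU} \otimes \rho_V, W)$ with $\uV(\rho_V, \uV(SU,W))$ up to isomorphism; since $\uV(SU,W) \in \uV^{(\SSS)}$ — because $\uV^{(\SSS)}$ is closed under the internal hom $\uV(-,W)$ when $W$ is there, a consequence of $\SSS$ being commutative/idempotent that will be established in \bref{thm:enr_refl_smadj} or can be derived directly from \bref{thm:tc_complete}-style arguments since cotensors of objects in a reflective subcategory land back in it — orthogonality of $\rho_V$ to $\uV(SU,W)$ gives that this map is iso. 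The first factor $\rho_U \otimes 1_V$ is handled symmetrically after transposing, identifying $\uV(\rho_U \otimes 1_V, W)$ with $\uV(\rho_U, \uV(V,W))$ and using $\uV(V,W) \in \uV^{(\SSS)}$.

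The main obstacle I anticipate is justifying that the internal hom $\uV(X,W)$ lies in $\uV^{(\SSS)}$ whenever $W$ does — that is, that the reflective subcategory is an exponential ideal. In the enriched setting this is where commutativity of $\SSS$ genuinely enters: one needs $\rho_X \bot_\V \uV(Y,W)$, which by the closed-category adjunction $\uV(\rho_X \otimes Y, W) \cong \uV(\rho_X, \uV(Y,W))$ reduces to showing $\rho_X \otimes Y \in \Sigma_S$ (inverted by $S$), and this uses that $S$, as a commutative $\V$-monad, is a monoidal functor (\bref{thm:mon_structs_assoc_enr_mnd}) together with idempotency. I would isolate this as a preliminary observation (or invoke \bref{thm:idm_mnd_comm} and the subsequent reflection results directly if they are already available at this point in the text), and then the two-step factorization argument above closes the proof. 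The final parenthetical remark — that an isomorphism in $\V$ induces a bijection on the underlying hom-classes — is immediate since the underlying-class functor $\V \to \SET$ preserves isomorphisms.
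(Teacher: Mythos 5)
Your overall plan --- factor $\rho_U \otimes \rho_V$ as $(1 \otimes \rho_V)\cdot(\rho_U \otimes 1)$ and invert each factor against $W$ separately --- is exactly the skeleton of the paper's proof. The problem is how you justify the two individual steps: you lean on the commutativity of $\SSS$ (\bref{thm:idm_mnd_comm}) and on the symmetric monoidal structure of the reflection (\bref{thm:enr_refl_smadj}) to obtain the ``exponential ideal'' property of $\uV^{(\SSS)}$. Both of those results come \emph{after} this lemma, and \bref{thm:idm_mnd_comm} cites this very lemma in its proof, so invoking either here is circular --- commutativity of $\SSS$ is precisely what this lemma is a stepping stone toward. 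Moreover, commutativity genuinely is not needed. The paper's mechanism is purely formal: by \bref{thm:refl_subcats_are_orth_subcats}, $\uV^{(\SSS)} = \uV_{\Sigma_S}$ and $(\Sigma_S,(\Sigma_S)^{\downarrow_\V})$ is a $\V$-prefactorization-system on $\uV$; by \bref{prop:upperclass_closed_under_tensors} the left class of any $\V$-prefactorization-system is closed under tensors, so $U \otimes \rho_V$ and $\rho_U \otimes SV$ lie in $\Sigma_S$, hence so does their composite $\rho_U \otimes \rho_V$, which is therefore $\V$-orthogonal to every object of $\uV_{\Sigma_S}$. That is the entire proof.

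Your hedge --- deriving $\uV(X,W) \in \uV^{(\SSS)}$ from the fact that a $\V$-reflective subcategory of the cotensored $\V$-category $\uV$ is closed under cotensors $[X,W] = \uV(X,W)$ --- is a correct and non-circular alternative; it is just the adjoint transpose of the tensor-closure argument above, and if you commit to that route and delete every appeal to commutativity, your proof goes through. As written, though, the proposal presents the circular argument as the primary one and asserts that commutativity ``genuinely enters,'' which is false and is the gap you need to close.
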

\begin{proof}
By \bref{thm:refl_subcats_are_orth_subcats}, $\uV^{(\SSS)} = \uV_{\Sigma_S}$ and $(\Sigma_S,(\Sigma_S)^{\downarrow_\V})$ is a $\V$-prefactorization-system on $\uV$.  Hence since $\rho_U,\rho_V \in \Sigma_S$, we deduce by \bref{prop:upperclass_closed_under_tensors} that each of the two composable morphisms
$$U \otimes V \xrightarrow{U \otimes \rho_V} U \otimes SV \xrightarrow{\rho_U \otimes SV} SU \otimes SV$$
lies in $\Sigma_S$, so their composite $\rho_U \otimes \rho_V$ lies in $\Sigma_S$.  Hence $\rho_U \otimes \rho_V \bot_\V W$, yielding the needed result.
\end{proof}

\begin{PropSub} \label{thm:idm_mnd_comm}
Every idempotent $\V$-monad on $\uV$ is commutative.
\end{PropSub}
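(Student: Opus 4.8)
The plan is to show, for the given idempotent $\V$-monad $\SSS = (S,\rho,\lambda)$ on $\uV$ and all objects $X,Y \in \V$, that the two morphisms $\otimes_{XY},\widetilde{\otimes}_{XY}:SX \otimes SY \rightarrow S(X \otimes Y)$ of \bref{def:comm_mnd} coincide, by first exhibiting them as equal upon precomposition with $\rho_X \otimes \rho_Y$ and then cancelling that morphism by means of \bref{sec:univ_biunit}. For the first step I would invoke \bref{thm:mon_structs_assoc_enr_mnd}: it tells us that $\otimes_{XY}$ and $\widetilde{\otimes}_{XY}$ are, respectively, the structure morphisms of monoidal functors $(S,\otimes,\rho_I)$ and $(S,\widetilde{\otimes},\rho_I)$, and that $\rho$ is a monoidal transformation from the identity monoidal functor $(1_\V,1,1)$ to each of them. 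Since the multiplicative structure morphisms of $(1_\V,1,1)$ are identities, the monoidal-transformation axiom for $\rho$ collapses in each case to the single equation
\[ \otimes_{XY} \cdot (\rho_X \otimes \rho_Y) \;=\; \rho_{X \otimes Y} \;=\; \widetilde{\otimes}_{XY} \cdot (\rho_X \otimes \rho_Y), \qquad X,Y \in \V. \]
Thus the two morphisms $SX \otimes SY \rightarrow S(X \otimes Y)$ in question agree after precomposition with $\rho_X \otimes \rho_Y$.

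It then remains to cancel $\rho_X \otimes \rho_Y$, which is exactly what \bref{sec:univ_biunit} provides, as long as the common codomain $S(X \otimes Y)$ lies in the $\V$-reflective-subcategory $\uV^{(\SSS)}$ associated to $\SSS$. That it does follows from idempotence: the monad unit law gives $\lambda_{X \otimes Y} \cdot \rho_{S(X \otimes Y)} = 1$, exhibiting $\rho_{S(X \otimes Y)}$ as a split monomorphism with retraction $\lambda_{X \otimes Y}$, and $\lambda$ is an isomorphism because $\SSS$ is idempotent \pbref{par:idm_mnd}; hence $\rho_{S(X \otimes Y)}$ is an isomorphism, i.e. $S(X \otimes Y) \in \uV^{(\SSS)}$. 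Applying \bref{sec:univ_biunit} with $U := X$, $V := Y$, $W := S(X \otimes Y)$, the map $\uV(\rho_X \otimes \rho_Y,\, S(X \otimes Y))$, i.e. precomposition with $\rho_X \otimes \rho_Y$, is a bijection on the underlying hom-classes; since it carries both $\otimes_{XY}$ and $\widetilde{\otimes}_{XY}$ to $\rho_{X \otimes Y}$, we conclude $\otimes_{XY} = \widetilde{\otimes}_{XY}$, so that $\SSS$ is commutative.

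I do not expect a genuine obstacle here: once \bref{thm:mon_structs_assoc_enr_mnd} and \bref{sec:univ_biunit} are in hand the argument is a two-line deduction. The only steps that need a moment's care are bookkeeping ones — recognising that the monoidal-naturality of $\rho$ with respect to $\otimes$ (and likewise $\widetilde{\otimes}$) is precisely the displayed equation, and verifying that $S(X \otimes Y)$ is orthogonal to the morphisms inverted by $S$ so that the cancellation afforded by \bref{sec:univ_biunit} is legitimate — and both of these are immediate from the cited results.
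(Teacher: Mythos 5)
Your proof is correct and follows essentially the same route as the paper's: invoke \bref{thm:mon_structs_assoc_enr_mnd} to get $\otimes_{XY}\cdot(\rho_X\otimes\rho_Y)=\rho_{X\otimes Y}=\widetilde{\otimes}_{XY}\cdot(\rho_X\otimes\rho_Y)$, then cancel $\rho_X\otimes\rho_Y$ via \bref{sec:univ_biunit}. Your explicit check that $S(X\otimes Y)$ lies in $\uV^{(\SSS)}$ (via the unit law and the invertibility of $\lambda$) is a detail the paper leaves implicit, and it is correct.
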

\begin{proof}
Let $\SSS = (S,\rho,\lambda)$ be an idempotent $\V$-monad on $\uV$.  By \bref{thm:mon_structs_assoc_enr_mnd}, $\rho$ is a monoidal transformation with respect to each of the monoidal structures $\otimes, \widetilde{\otimes}$.  Hence, for all $U,V \in \V$, the diagram
$$
\xymatrix{
                        & U \otimes V \ar[dl]_{\rho_U \otimes \rho_V} \ar[dr]^{\rho_{U \otimes V}}       & \\
SU \otimes SV \ar[rr]^m &                                                                               & S(U \otimes V)
}
$$
commutes for each $m = \otimes_{U V}, \widetilde{\otimes}_{U V}$, so by \bref{sec:univ_biunit}, $\otimes_{U V} = \widetilde{\otimes}_{U V}$.
\end{proof}

\begin{LemSub}\label{thm:idm_mnd_tens_prod_algs}
Let $\SSS = (S,\rho,\lambda)$ be an idempotent $\V$-monad on $\uV$.  Then (the associated symmetric monoidal monad) $\SSS$ has tensor products of algebras \pbref{def:tens_prod_algs}.  In particular, identifying $\V^\SSS$ with the reflective subcategory $\V^{(\SSS)}$ determined by $\SSS$, the tensor product $U \widehat{\otimes} V$ of $\SSS$-algebras $U,V \in \V^\SSS$ is simply 
$$U \widehat{\otimes} V = S(U \otimes V)\;.$$
\end{LemSub}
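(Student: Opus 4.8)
The plan is to show that the two parallel morphisms occurring in Definition~\bref{def:tens_prod_algs} are \emph{already equal}, so that their coequalizer exists trivially and is the free $\SSS$-algebra on $U \otimes V$; this gives the asserted formula at once.

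First I would fix the setup. By \bref{thm:idm_mnd_comm}, $\SSS$ is commutative, so via \bref{thm:comm_sm_mnd} it is identified with a symmetric monoidal monad whose monoidal structure morphism at objects $U,V$ is, by \bref{thm:mon_structs_assoc_enr_mnd}, $m_{U V} = \otimes^\SSS_{U V} = \widetilde{\otimes}^\SSS_{U V}$. Since $\SSS = (S,\rho,\lambda)$ is idempotent, every $\SSS$-algebra has the form $(U,\rho_U^{-1})$, so for $\SSS$-algebras $A = (U,\rho_U^{-1})$ and $B = (V,\rho_V^{-1})$ the reflexive pair of Definition~\bref{def:tens_prod_algs} takes the form
$$\lambda_{U \otimes V} \cdot S(\otimes^\SSS_{U V}) \;,\quad S(\rho_U^{-1} \otimes \rho_V^{-1}) \;:\; S(SU \otimes SV) \longrightarrow S(U \otimes V)\;.$$

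Next I would prove that these two morphisms agree. By \bref{thm:mon_structs_assoc_enr_mnd}, $\rho$ is a monoidal transformation $1 \to (S,\otimes^\SSS,\rho_I)$, whence $\otimes^\SSS_{U V} \cdot (\rho_U \otimes \rho_V) = \rho_{U \otimes V}$; applying $S$ and postcomposing with $\lambda_{U \otimes V}$, the monad unit law $\lambda \cdot S\rho = 1$ yields $\lambda_{U \otimes V} \cdot S(\otimes^\SSS_{U V}) \cdot S(\rho_U \otimes \rho_V) = 1_{S(U \otimes V)}$. As in the proof of \bref{sec:univ_biunit}, the morphism $\rho_U \otimes \rho_V$ lies in $\Sigma_S$ (by \bref{prop:upperclass_closed_under_tensors}), so $S(\rho_U \otimes \rho_V)$ is invertible, and cancelling it gives $\lambda_{U \otimes V} \cdot S(\otimes^\SSS_{U V}) = S\big((\rho_U \otimes \rho_V)^{-1}\big) = S(\rho_U^{-1} \otimes \rho_V^{-1})$, as wanted.

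Finally, a parallel pair of equal morphisms with common codomain $D$ has coequalizer $1_D : D \to D$; hence $\SSS$ has tensor products of algebras, and $A \widehat{\otimes} B$ is the free $\SSS$-algebra $(S(U \otimes V),\lambda_{U \otimes V})$, which under the identification $\V^\SSS \cong \V^{(\SSS)}$ is precisely $S(U \otimes V)$. I expect the only mildly delicate points to be the bookkeeping identifying $m_{U V}$ with $\otimes^\SSS_{U V}$ and keeping track of the standard free-algebra structures, rather than any substantial obstacle.
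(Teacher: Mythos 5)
Your proposal is correct and follows essentially the same route as the paper's proof: both establish that the two parallel morphisms of Definition \bref{def:tens_prod_algs} coincide by using that $\rho$ is a monoidal transformation together with the invertibility of $S(\rho_U \otimes \rho_V)$, so that the coequalizer exists trivially and equals $S(U \otimes V)$. The only cosmetic difference is that you spell out why $S(\rho_U \otimes \rho_V)$ is invertible (via $\Sigma_S$ and \bref{prop:upperclass_closed_under_tensors}), which the paper leaves implicit.
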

\begin{proof}
As $\SSS$-algebras, $U$ and $V$ carry structure morphisms $\rho_U^{-1}:SU \rightarrow U$ and $\rho_V^{-1}:SV \rightarrow V$, respectively.  The tensor product $U \widehat{\otimes} V$ (if it exists) is defined \pbref{def:tens_prod_algs} as the coequalizer in $\V^\SSS$ of the morphisms
$$S(SU \otimes SV)  \xrightarrow{S(m_{U \otimes V})} SS(U \otimes V) \xrightarrow{\lambda_{U \otimes V}} S(U \otimes V)$$
$$S(SU \otimes SV) \xrightarrow{S(\rho_U^{-1} \otimes \rho_U^{-1})} S(U \otimes V)\;,$$
where $m$ is the monoidal structure carried by $S$.  But these morphisms are equal, since
$$
\xymatrix {
S(SU \otimes SV) \ar[rr]^{S(m_{U \otimes V})} & & SS(U \otimes V) \ar[d]^{\lambda_{U \otimes V}} \\
S(U \otimes V) \ar[u]^{S(\rho_U \otimes \rho_V)}_\wr \ar@{=}[rr] \ar[urr]_{S\rho_{U \otimes V} } & & S(U \otimes V)
}
$$
commutes (as $\rho$ is a monoidal transformation).  Hence $S(U \otimes V)$ serves as their coequalizer in $\V$, but $S(U \otimes V)$ lies in the full subcategory $\V^{(\SSS)} = \V^\SSS$ of $\V$ and hence serves also as coequalizer in this subcategory.
\end{proof}

\begin{ThmSub}\label{thm:enr_refl_smadj}
Let $K \nsststile{}{\rho} J : \W \hookrightarrow \uV$ be a $\V$-reflection.
\begin{enumerate}
\item Then the underlying ordinary adjunction $K \nsststile{}{\rho} J : \W \hookrightarrow \V$ carries the structure of a symmetric monoidal adjunction, with $\W = (\W,\widehat{\otimes},KI)$ a symmetric monoidal closed category.
\item The associated $\V$-adjunction $\acute{K} \nsststile{}{\rho} \grave{J} : J_*\uW \rightarrow \uV$ \pbref{thm:assoc_enr_adj} is identical to the given $\V$-reflection.
\item For $U,V \in \W$, the internal hom $\uW(U,V)$ is simply $\uV(U,V)$.
\end{enumerate}
\end{ThmSub}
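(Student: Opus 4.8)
The plan is to recognize the given $\V$-reflection as the Eilenberg--Moore $\V$-adjunction of an idempotent $\V$-monad and then deduce all three claims from the theory of commutative monads developed in \bref{sec:comm_mnd_smc_adj}--\bref{sec:comm_idm_mnds}. First, let $\SSS = (S,\rho,\lambda)$ be the idempotent $\V$-monad induced by $K \nsststile{}{\rho} J$ \pbref{thm:refl_idemp_mnd}. By \bref{thm:em_adj_idm_mnd} the Eilenberg--Moore $\V$-category $\uV^\SSS$ exists and the comparison $\V$-functor $\uV^{(\SSS)} \to \uV^\SSS$ is an isomorphism with identity structure morphisms; invoking \bref{rem:em_adj_idm_mnd} we identify $\W$ with $\uV^{(\SSS)} = \uV^\SSS$ and the given $\V$-reflection with the Eilenberg--Moore $\V$-adjunction $F^\SSS \nsststile{}{\rho} G^\SSS : \uV^\SSS \to \uV$. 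By \bref{thm:idm_mnd_comm}, $\SSS$ is commutative, hence corresponds under \bref{thm:comm_sm_mnd} to a symmetric monoidal monad on $\V$; and by \bref{thm:idm_mnd_tens_prod_algs} this symmetric monoidal monad has tensor products of algebras \pbref{def:tens_prod_algs}, with $U \widehat{\otimes} V = S(U \otimes V)$.

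For statement 1, note that since the Eilenberg--Moore $\V$-category for $\SSS$ exists, the needed equalizers \eqref{eqn:pair_for_which_hom_is_equalizer} exist \pbref{rem:smccl_emadj_wknd_hyp}, so \bref{thm:smclosed_em_adj} applies: the ordinary Eilenberg--Moore adjunction $F^\SSS \nsststile{}{\rho} G^\SSS : \V^\SSS \to \V$ carries the structure of a symmetric monoidal adjunction with $\V^\SSS = (\V^\SSS,\widehat{\otimes},F^\SSS I)$ a closed symmetric monoidal category. Transporting along the identification $\V^\SSS = \W$ (under which $F^\SSS I$ becomes $KI$) gives 1.

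For statement 3, observe that for $\SSS$-algebras $U = (U,\rho_U^{-1})$, $V = (V,\rho_V^{-1})$ the pair of morphisms \eqref{eqn:pair_for_which_hom_is_equalizer} coincides --- this is exactly the computation in the proof of \bref{thm:em_adj_idm_mnd} --- so their equalizer may be taken to be the identity on $\uV(U,V)$. By \bref{thm:smclosed_em_adj}(2), the underlying $\V$-object of the internal hom $\underline{\V^\SSS}(U,V)$ is therefore $\uV(U,V)$; and since $\uV^{(\SSS)} = \uV^\SSS$ is a full replete sub-$\V$-category of $\uV$, the object $\underline{\V^\SSS}(U,V)$ of $\V^\SSS$ is literally $\uV(U,V)$, whence $\uW(U,V) = \uV(U,V)$ after identification.

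Finally, for statement 2, apply \bref{thm:commmnd_sm_em_adj_dets_em_vadj} to the commutative $\V$-monad $\SSS$ equipped with the symmetric monoidal structure produced above: the $\V$-adjunction $\acute{K} \nsststile{}{\rho} \grave{J} : J_*\uW \to \uV$ associated via \bref{thm:assoc_enr_adj} to the symmetric monoidal adjunction of 1 coincides with the Eilenberg--Moore $\V$-adjunction $F^\SSS \nsststile{}{\rho} G^\SSS : \uV^\SSS \to \uV$, which under our identification is the given $\V$-reflection. The main obstacle is not any single hard argument but the careful bookkeeping of the chain of identifications --- $\W \cong \uV^{(\SSS)} \cong \uV^\SSS$, the $\V$-reflection with its Eilenberg--Moore $\V$-adjunction, $KI$ with $F^\SSS I$, and the equalizer-homs of \bref{thm:smclosed_em_adj} with the plain internal homs of $\V$ --- together with checking that the hypotheses of \bref{thm:smclosed_em_adj} and \bref{thm:commmnd_sm_em_adj_dets_em_vadj} (existence of the Eilenberg--Moore $\V$-category and of tensor products of algebras) hold in the idempotent case.
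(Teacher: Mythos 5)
Your proposal is correct and follows essentially the same route as the paper: identify the reflection with the Eilenberg--Moore $\V$-adjunction of the induced idempotent $\V$-monad $\SSS$ via \bref{thm:em_adj_idm_mnd}, invoke \bref{thm:idm_mnd_comm} and \bref{thm:idm_mnd_tens_prod_algs} together with \bref{rem:smccl_emadj_wknd_hyp} to apply \bref{thm:smclosed_em_adj} and \bref{thm:commmnd_sm_em_adj_dets_em_vadj}. The only (immaterial) divergence is in statement 3, which you extract directly from the coincidence of the equalizer pair \eqref{eqn:pair_for_which_hom_is_equalizer} and \bref{thm:smclosed_em_adj}(2), whereas the paper reads it off from statement 2 via the chain $\uW(U,V) = (J_*\uW)(U,V) = \uV(U,V)$; both are valid.
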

\begin{proof}
Letting $\SSS = (S,\rho,\lambda)$ be the induced idempotent $\V$-monad on $\uV$, we have by \bref{thm:em_adj_idm_mnd} that the Eilenberg-Moore $\V$-category for $\SSS$ exists and that we may identify the Eilenberg-Moore $\V$-adjunction with the the given $\V$-reflection, so that $\W = \V^\SSS$.   Further, $\SSS$ is commutative, by \bref{thm:idm_mnd_comm}, and in view of \bref{thm:idm_mnd_tens_prod_algs} and \bref{rem:smccl_emadj_wknd_hyp} we may apply \bref{thm:smclosed_em_adj} and \bref{thm:commmnd_sm_em_adj_dets_em_vadj} in order to obtain 1 and 2.  By 2, $J_*\uW$ is equally the given $\V$-reflective-subcategory $\W$ of $\uV$, so 3 follows as
$$\uW(U,V) = J\uW(U,V) = (J_*\uW)(U,V) = \W(U,V) = \uV(U,V)\;.$$
\end{proof}

The preceding theorem yields an alternative proof of the following result of Day \cite{Day:Refl}:

\begin{CorSub}\label{thm:day_refl}
Let $\V$ be a closed symmetric monoidal category, let $\W$ be a (full, replete) reflective subcategory of $\V$, with associated adjunction $K \nsststile{}{\rho} J : \W \hookrightarrow \V$, and suppose that
$$\forall V \in \V, W \in \W \;:\; \uV(V,W) \in \W\;.$$
Then the given adjunction acquires the structure of a symmetric monoidal adjunction, with $\W$ a closed symmetric monoidal category.
\end{CorSub}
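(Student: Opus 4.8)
The plan is to reduce the statement to Theorem~\bref{thm:enr_refl_smadj} by showing that the displayed hypothesis forces the given ordinary reflection $K \dashv J : \W \hookrightarrow \V$ to underlie a $\V$-reflection of $\uV$. Let $\uW$ be the full sub-$\V$-category of $\uV$ whose objects are those of $\W$, with inclusion $\V$-functor $\grave{J} : \uW \hookrightarrow \uV$ whose underlying ordinary functor is $J$. I would apply \bref{prop:enr_of_ord_adj} (via \bref{prop:adjn_via_radj_and_unit}) to the ordinary adjunction $K \dashv J$: because $\uW$ is a full sub-$\V$-category of $\uV$, the comparison morphism $\phi_{V,W}$ of \bref{prop:adjn_via_radj_and_unit} is simply
$$\uV(\rho_V,W) : \uV(KV,W) \longrightarrow \uV(V,W) \qquad (V \in \V,\ W \in \W),$$
so everything comes down to checking that this is an isomorphism in $\V$.

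This is the one place the hypothesis enters. For each $X \in \V$, applying the functor $\V(X,-)$ and using the tensor--hom adjunction of $\V$ together with the symmetry of its tensor, one identifies $\V(X,\uV(\rho_V,W))$ with the function
$$\V(\rho_V,\uV(X,W)) : \V(KV,\uV(X,W)) \longrightarrow \V(V,\uV(X,W)).$$
Since $\uV(X,W) \in \W$ by hypothesis, and $\rho_V : V \to KV$ exhibits $KV$ as the reflection of $V$ into $\W$, this map is a bijection. Hence $\V(X,\uV(\rho_V,W))$ is a bijection for every $X \in \V$, so $\uV(\rho_V,W)$ is an isomorphism in $\V$ by the Yoneda lemma. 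Therefore \bref{prop:enr_of_ord_adj} produces a $\V$-adjunction $\acute{K} \dashv \grave{J} : \uW \hookrightarrow \uV$ whose underlying ordinary adjunction is $K \dashv J$, and since $\W$ is full and replete in $\V$ this is a $\V$-reflection in the sense of \bref{def:refl_subcats_and_idm_mnds}.

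It then remains only to cite Theorem~\bref{thm:enr_refl_smadj}(1), which applied to the $\V$-reflection just built asserts precisely that the underlying ordinary adjunction $K \dashv J : \W \hookrightarrow \V$ carries the structure of a symmetric monoidal adjunction with $\W = (\W,\widehat{\otimes},KI)$ a symmetric monoidal closed category. I expect the only substantive step to be the verification that $\uV(\rho_V,W)$ is invertible --- i.e.\ recognising the hypothesis as an enriched-reflectivity condition; the chief nuisance there is bookkeeping of the several tensor--hom transposes and their naturality, but nothing deeper is involved.
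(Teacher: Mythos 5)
Your proposal is correct and follows essentially the same route as the paper's own proof: both reduce to Theorem~\bref{thm:enr_refl_smadj} by upgrading the ordinary reflection to a $\V$-reflection via \bref{prop:enr_of_ord_adj}, and both verify that $\uV(\rho_V,W)$ is invertible by transposing under the tensor--hom adjunction to $\V(\rho_V,\uV(U,W))$, invoking the hypothesis $\uV(U,W)\in\W$, and concluding by Yoneda. The commutative square of classes in the paper's proof is exactly the naturality bookkeeping you anticipated.
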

\begin{proof}
It suffices to show that for each $V \in \V$, $W \in \W$, the morphism $\uV(\rho_V,W):\uV(KV,W) \rightarrow \uV(V,W)$ is an iso in $\V$, for then by \bref{prop:enr_of_ord_adj} the given adjunction is the underlying ordinary adjunction of a $\V$-reflection, and hence an invocation of \bref{thm:enr_refl_smadj} yields the needed structure.  To this end, observe that for each $U \in \V$, we have a commutative square of classes
$$
\xymatrix {
\V(U,\uV(KV,W)) \ar[rr]^{\V(U,\uV(\rho_V,W))} \ar[d]_\wr & & \V(U,\uV(V,W)) \ar[d]_\wr \\
\V(KV,\uV(U,W)) \ar[rr]^{\V(\rho_V,\uV(U,W))}             & & \V(V,\uV(U,W))
}
$$
in which the left and right sides are isomorphisms.  Since $\uV(U,W) \in \W$ by our hypothesis, the bottom side is an isomorphism, so the top side is also an isomorphism.  Hence by the Yoneda lemma, $\uV(\rho_V,W)$ is an isomorphism as needed.
\end{proof}

\chapter{Enriched finitary commutative algebraic theories}
\setcounter{subsection}{0}

Whereas \textit{universal algebra} or \textit{general algebra} studies in generality the varieties of algebraic objects defined by operations and equations, it was established by Lawvere \cite{Law:PhdTh} that each variety of finitary algebras can be described as a category $\T$, called an \textit{algebraic theory}, that embodies all possible derived operations and equations generic to the algebras of that variety.  Hence algebraic theories or \textit{Lawvere theories} describe varieties of algebras in a \textit{presentation-independent} way and are related to the \textit{clones} of universal algebra.  Algebraic theories and monads permit the consideration of algebraic objects in categories $\X$ other than $\Set$, such as the category of topological spaces.  But the usual $\Set$-based algebraic theories are insufficient for describing topological $R$-modules, since such theories allow only operations of the form $X^n \rightarrow X$, whereas a topological $R$-module $X$ must carry an action $R \times X \rightarrow X$ continuous with respect to the product of topological spaces.  This limitation is overcome when one employs instead \textit{$\X$-enriched} algebraic theories $\T$, which could also be called \textit{enriched single-sorted finite product theories}.  The theory of finitary algebraic theories enriched in a cartesian closed category $\X$ was developed in unpublished work of Gray \cite{Gray:UnAlgCCC} and treated briefly for the more general \textit{$\pi$-categories} $\X$ by Borceux and Day \cite{BoDay}.

Building upon work of Linton in the non-enriched case \cite{Lin:AutEqCats}, Borceux and Day give a definition of \textit{commutative} $\X$-enriched finitary theory.  Commutative theories are those in which the operations commute with each other in a strong sense:  For example, binary operations $*,\circ:X^2 \rightarrow X$ on a set $X$ commute in this sense if and only if
$$(w * x) \circ (y * z) = (w \circ y) * (x \circ z)$$
for all $w,x,y,z \in X$.  In the $\Set$-based case, $R$-modules for a commutative ring $R$ are the algebras of a commutative theory.  The aim of this chapter is to confirm the following, for suitable cartesian closed categories $\X$:
\begin{enumerate}
\item Every commutative $\X$-enriched finitary theory $\T$ determines an associated commutative $\X$-monad $\TT$ with \textit{tensor products of algebras} (\bref{def:comm_mnd},\bref{def:tens_prod_algs}).
\item The $\X$-category of algebras $\Alg{\T}$ of an $\X$-theory $\T$ is equivalent to that of the associated monad $\TT$, and its full subcategory of \textit{normal} $\T$-algebras \pbref{def:talgs} is \textit{isomorphic} to $\uX^\TT$.
\item The category of $R$-module objects for a commutative ring object $R$ in $\X$ is isomorphic to the category of normal $\T$-algebas for a commutative $\X$-theory $\T$.
\end{enumerate}
For such \textit{finitary} theories, the existence of tensor products of algebras in 1 follows from the existence of all reflexive coequalizers in the category of algebras.  While the equivalence $\Alg{\T} \simeq \uX^\TT$ in 2 is known and 1 and 3 are unsurprising and ought to be known, it seems that 1 and 3 have never been proved in the literature.  By proving these statements we make available an essential class of examples for the theory developed in Chapters \bref{ch:smc_adj_comm_mnd}, \bref{ch:nat_acc_dist}, and \bref{ch:dcompl_vint}, including our paradigmatic example of $R$-module objects in $\X$.

The reader may skip the present chapter if content to accept Theorem \bref{thm:rmod_commutative_monadic}.  We begin in \bref{sec:fin_alg_ccc} with a largely self-contained treatment of finitary enriched algebraic theories sufficient to enable proofs of 1 and 3 in sections \bref{sec:comm_alg_th_ccc} and \bref{sec:rmods_alg}, respectively.

\section{Finitary algebra in a cartesian closed category}\label{sec:fin_alg_ccc}

\begin{DefSub}\label{def:xtheory}
Let $\X$ be a category with finite products.  A \textit{(single-sorted finitary algebraic) $\X$-theory} consists of an $\X$-category $\T$ whose objects are the finite ordinal numbers $0,1,2,...$, together with an assignment to each object $n$ a cone $(\pi_i:n \rightarrow 1)_{i = 1}^n$ that exhibits $n$ as an $n$-th $\X$-power of $1$ in $\T$.
\end{DefSub}

\begin{ParSub}\label{par:th_in_presence_of_finite_copowers}
For the remainder of this section, let us assume (unless otherwise indicated) that $\X$ is a cartesian closed category (with designated finite products) equipped with finite copowers $n \cdot 1$ of the terminal object~$1$.
\begin{enumerate}
\item Let $\N$ denote the category whose objects are finite ordinals and whose morphisms are arbitrary functions between these ordinals, so that $\N(m,n)$ may be identified with $n^m$.
\item We may form the free $\X$-category $\N_\X$ on $\N$ (just as per \cite{Ke:Ba} 2.5), with objects those of $\N$ and with $\N_\X(m,n) = n^m \cdot 1$.  The underlying ordinary category of $\N_\X$ may be identified with $\N$, and then for each ordinary functor $Q:\N \rightarrow \B$ into an $\X$-category $\B$, there is a unique $\X$-functor $\N_\X \rightarrow \B$ whose underlying ordinary functor is $Q$.  We shall simply write $\N$ to denote $\N_\X$.
\item There is a canonical functor $J = (-)\cdot 1:\N \rightarrow \X$.  The induced $\X$-functor $J:\N \rightarrow \uX$ (2) is $\X$-fully-faithful, since its structure morphisms $J_{m n}:\N(m,n) \rightarrow \uX(m \cdot 1, n \cdot 1)$ are the composite isomorphisms $n^m \cdot 1 \cong (n \cdot 1)^m \cong \uX(m \cdot 1,n \cdot 1)$.
\item $\N$ has finite $\X$-copowers.  Given finite ordinals $m,n$, there is an associated $\X$-copower cocone $(\iota_i:m \rightarrow n \times m)_{i = 1}^n$ in $\N$.   Hence $\N^\op$ has finite $\X$-powers, with designated $\X$-power cones $(\pi_i:n \times m \rightarrow m)_{i = 1}^n$ in $\N^\op$ gotten as simply the cocones $(\iota_i)$ of $\N$.
\item For such a category $\X$, an $\X$-theory may be defined equivalently as an $\X$-category $\T$ equipped with an identity-on-objects $\X$-functor $\tau:\N^\op \rightarrow \T$ which preserves finite $\X$-powers.  $\T$ then has finite $\X$-powers, with a canonical $\X$-power cone $(\pi_i:\tau(n \times m) \rightarrow \tau(m))_{i = 1}^n$ for each pair of ordinals $m,n$ gotten by applying $\tau$ to the canonical $\X$-copower cocone $(\iota_i)$ in $\N$.
\end{enumerate}
\end{ParSub}

\begin{DefSub}\label{def:talgs}
Let $\T$ be an $\X$-theory.
\begin{enumerate}
\item A \textit{$\T$-algebra} is an $\X$-functor $A:\T \rightarrow \uX$ which preserves finite $\X$-powers.
\item A \textit{normal $\T$-algebra} is an $\X$-functor $A:\T \rightarrow \uX$ which sends each of the designated $\X$-power cones $(\pi_i:n \rightarrow 1)_{i = 1}^n$ in $\T$ to the designated $\X$-power cone in $\uX$.  (It then follows that $A$ preserves all finite $\X$-powers).
\item Given $\T$-algebras $A,B$, we refer to $\X$-natural transformations $h:A \rightarrow B$ as \textit{$\T$-homomorphisms}.
\item We let $\Alg{\T}$ denote the full subcategory of $\XCAT(\T,\uX)$ consisting of $\T$-algebras, and we let $\Alg{\T}_!$ denote the full subcategory of $\Alg{\T}$ consisting of normal $\T$-algebras.
\end{enumerate}
\end{DefSub}

\begin{ExaSub}
The representable $\X$-functors $\T(t,-)$ $(t \in \T)$ on an $\X$-theory $\T$ are $\T$-algebras (but need not be normal).
\end{ExaSub}

\begin{RemSub}\label{rem:first_alt_desc_talgs}
For an $\X$-theory $\T$, one can easily show that an $\X$-functor $A:\T \rightarrow \uX$ is a $\T$-algebra if and only if the composite 
$$\N^\op \xrightarrow{\tau} \T \xrightarrow{A} \uX$$
preserves finite $\X$-powers.  Analogously, $A$ is a normal $\T$-algebra if and only if this composite strictly preserves the designated $\X$-power cones $(\pi_i:n \rightarrow 1)_{i = 1}^n$ of $\N^\op$. 
\end{RemSub}

\begin{RemSub}
Given any $\X$-functor $C:\N^\op \rightarrow \uX$ and an object $n \in \N$, the canonical cocone presenting $n$ as a copower $n \cdot 1$ in $\N$ determines a comparison morphism $\kappa_n:Cn \rightarrow (C1)^n$ in $\uX$, and these morphisms constitute an $\X$-natural transformation $\kappa:C \rightarrow (C1)^{(-)}$.  $C$ preserves finite $\X$-powers iff $\kappa$ is iso, and $C$ strictly preserves the designated $\X$-power cones $(\pi_i:n \rightarrow 1)_{i = 1}^n$ of $\N^\op$ iff $\kappa$ is an identity.

Hence, given an $\X$-theory $\T$ and an $\X$-functor $A:\T \rightarrow \uX$, $A$ is a $\T$-algebra iff the comparison transformation
\begin{equation}\label{eqn:compn_trans}
\xymatrix{
                                    & \T \ar[dr]^{A} \ar@{}[d]_\Downarrow^\kappa & \\
\N^\op \ar[ur]^\tau \ar[rr]_{(A1)^{(-)}} & & \uX 
}
\end{equation}
is iso, and $A$ is a normal $\T$-algebra iff $\kappa$ is an identity, i.e. iff the triangle commutes.
\end{RemSub}

\begin{RemSub}\label{rem:alt_desc_normal_talg}
A normal $\T$-algebra $A$ is uniquely determined by the choice of object $X := A1$ together with the family of morphisms
$$A_{m n}:\T(m,n) \rightarrow \uX(X^m,X^n)\;,\;\;\;\;(m,n \in \N)\;,$$
which in turn are uniquely determined by just the morphisms
$$A_{m 1}:\T(m,1) \rightarrow \uX(X^m,X^1)\;,\;\;\;\;(m \in \N)\;.$$
It follows that a normal $\T$-algebra can be defined alternatively as an object $X$ of $\X$ together with morphisms
\begin{equation}\label{eqn:ops} A_m:\T(m,1) \times X^m \rightarrow X\;,\;\;\;\;(m \in \N)\end{equation}
such that each of the following diagrams commute
\begin{equation}
\xymatrix{
1 \times X^m \ar[r]^(.4){u \times 1} \ar[dr]_{\sim} & \T(m,m) \times X^m \ar[d]^{A_{m m}} & & \T(m,n) \times \T(l,m) \times X^l \ar[r]^(.65){c \times 1} \ar[d]_{1 \times A_{l m}} & \T(l,n) \times X^l \ar[d]^{A_{l n}}\\                                                 
& X^m                                 & & \T(m,n) \times X^m \ar[r]^{A_{m n}} & {X^n\;,}
}                                  
\end{equation}
where $u$ and $c$ are the unit and composition morphisms for $\T$, respectively, and each of the morphisms
$$A_{m n}:\T(m,n) \times X^m \rightarrow X^n\;,\;\;\;\;(m,n \in \N)$$
is induced by the family
$$\T(m,n) \times X^m \xrightarrow{\T(m,\pi_i) \times 1} \T(m,1) \times X^m \xrightarrow{A_m} X\;,\;\;\;\;(i = 1,...,n)\;.$$
Further, it is equivalent to require the commutativity of the rightmost diagram in just those case where $n = 1$.
\end{RemSub}

\begin{RemSub}\label{rem:alt_desc_thomom}
Given a $\T$-homomorphism $h:A \rightarrow B$, the naturality of $h$ and the preservation of finite powers by $A$ and $B$ imply that the components $h_m$ of $h$ can be expressed in terms of just the component $h_1:A1 \rightarrow B1$ via the commutativity of the following diagram.
$$
\xymatrix {
Am \ar[d]_{\wr} \ar[r]^{h_m} & Bm \ar[d]^\wr \\
(A1)^m \ar[r]^{h_1^m}               & (B1)^m
}
$$
In view of \bref{rem:alt_desc_normal_talg}, it follows that a $\T$-homomorphism of normal $\T$-algebras $h:(X,(A_m)) \rightarrow (Y,(B_m))$ may be defined equivalently as a morphism $h:X \rightarrow Y$ such that each diagram
\begin{equation}\label{eqn:alt_desc_thomom}
\xymatrix{
\T(m,1) \times X^m \ar[d]_{A_m} \ar[rr]^{1 \times h^m} & & \T(m,1) \times Y^m \ar[d]^{B_m} \\
X \ar[rr]^h                                                    & & Y
}
\end{equation}
commutes.
\end{RemSub}

\begin{DefSub}\label{def:alt_def_norm_talg}
The alternative definition of normal $\T$-algebra given in \bref{rem:alt_desc_normal_talg} applies equally in the case that $\X$ is not necessarily cartesian closed but is merely assumed to have (designated) finite products.  Similarly, the notion of $\T$-homomorphism (of normal $\T$-algebras) is also available, in view of \bref{rem:alt_desc_thomom}.
\end{DefSub}

\begin{ParSub}\label{par:assns_for_monadicity_of_algt}
For the remainder of the section, we assume that $\X$ is a countably-complete and~\hbox{-cocomplete} cartesian closed category, and we let $\T$ be an $\X$-theory.  Since the set of objects of $\T$ is countable, we can form the $\X$-category of $\X$-functors $[\T,\uX]$ (as per \cite{Ke:Ba} 2.2), and we obtain full sub-$\X$-categories $\Alg{\T}$ and $\Alg{\T}_!$ of $[\T,\uX]$ whose underlying ordinary categories are the categories of $\T$-algebras and normal $\T$-algebras \pbref{def:talgs}.
\end{ParSub}

\begin{PropSub}\label{thm:alg_iso_nalg}
There is an equivalence of $\X$-categories $\Alg{\T} \simeq \Alg{\T}_!$.
\end{PropSub}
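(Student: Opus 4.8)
The plan is to exhibit an inverse equivalence by \emph{normalizing} an arbitrary $\T$-algebra. Given a $\T$-algebra $A:\T \rightarrow \uX$, the comparison transformation $\kappa:A \rightarrow (A1)^{(-)}\tau^{-1}$ (in the sense of \eqref{eqn:compn_trans}, using that $\tau$ is identity-on-objects) is an isomorphism, since $A$ preserves finite $\X$-powers and $\tau$ strictly preserves the designated $\X$-power cones. First I would use these isomorphisms $\kappa_n:An \xrightarrow{\sim} (A1)^n$ as the object-components of an $\X$-natural isomorphism and transport the $\X$-functor structure of $A$ along them: define $NA:\T \rightarrow \uX$ on objects by $(NA)(n) := (A1)^n$ and on homs by conjugating $A_{mn}$ by the appropriate $\kappa$'s, i.e.
\begin{equation*}
(NA)_{mn} := \left(\T(m,n) \xrightarrow{A_{mn}} \uX(Am,An) \xrightarrow{\uX(\kappa_m^{-1},\kappa_n)} \uX((A1)^m,(A1)^n)\right).
\end{equation*}
One checks $NA$ is an $\X$-functor (the $\X$-functoriality axioms transport along any family of isomorphisms) and that $\kappa:A \rightarrow NA$ is an $\X$-natural isomorphism; since $\kappa$ sends each designated power cone of $\T$ to the designated power cone of $\uX$, $NA$ is a \emph{normal} $\T$-algebra. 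This gives the object-assignment of an $\X$-functor $N:\Alg{\T} \rightarrow \Alg{\T}_!$; on homs one takes $N$ to act as the identity on $\XCAT(\T,\uX)$-homs after the transport, or more carefully defines $N_{AB}$ by conjugating with $\kappa^A$, $\kappa^B$, which is visibly $\X$-functorial.

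Next I would verify this is part of an adjoint equivalence. Let $I:\Alg{\T}_! \hookrightarrow \Alg{\T}$ be the inclusion. For a normal $\T$-algebra $A$, $\kappa^A$ is the identity, so $NIA = A$, giving $NI = 1_{\Alg{\T}_!}$ strictly. For an arbitrary $\T$-algebra $A$, the $\X$-natural isomorphism $\kappa^A:A \xrightarrow{\sim} INA$ is a component of an $\X$-natural isomorphism $1_{\Alg{\T}} \xrightarrow{\sim} IN$ — naturality in $A$ amounts to the statement that for a $\T$-homomorphism $h:A \rightarrow B$ one has $NI(h) \cdot \kappa^A = \kappa^B \cdot h$, which holds because $N$ acts by conjugation with exactly these $\kappa$'s (by \bref{rem:alt_desc_thomom} it suffices to check the single component at $1$, where $\kappa_1 = 1$, so it is immediate). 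Hence $I$ and $N$ are mutually inverse up to $\X$-natural isomorphism, which is precisely an equivalence of $\X$-categories $\Alg{\T} \simeq \Alg{\T}_!$.

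The main obstacle — and it is a mild one — is bookkeeping: confirming that the transported data $(NA)_{mn}$ genuinely satisfies the $\X$-functoriality diagrams and that the conjugated maps $N_{AB}$ are compatible with composition and units in $\Alg{\T}$, all at the level of $\V = \X$-enriched (not merely ordinary) naturality. This is where one must be slightly careful rather than merely invoking the ordinary case; the cleanest route is to observe that transporting an $\X$-functor along an object-indexed family of isomorphisms in the target always yields an $\X$-functor with a canonical $\X$-natural isomorphism to the original (a general fact about $\X$-categories, provable by pasting the defining hexagons), and that this construction is $\X$-functorial in the evident sense. Everything else — normality of $NA$, $NI = 1$, $X$-naturality of $\kappa$ — then follows formally, using \bref{rem:alt_desc_normal_talg} and \bref{rem:alt_desc_thomom} to reduce checks to the component at the object $1$, where $\kappa$ is an identity. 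I would also remark that the equivalence commutes with the forgetful functors to $\uX$ sending $A \mapsto A1$, since $(NA)1 = A1$ and $\kappa_1 = 1$, which is the form in which the result is used in the sequel.
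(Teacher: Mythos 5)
Your proposal is correct and follows essentially the same route as the paper: both normalize a $\T$-algebra $A$ by transporting its structure along the comparison isomorphisms $\kappa_n:An\xrightarrow{\sim}(A1)^n$ to obtain a normal algebra $NA$ together with an $\X$-natural isomorphism $A\cong NA$. The only (minor) divergence is at the end: where you define $N$ on hom-objects by explicit conjugation and verify the adjoint-equivalence identities by hand, the paper obtains the left $\X$-adjoint $N$ to the inclusion --- and hence the equivalence, since the unit is invertible --- directly from \bref{prop:adjn_via_radj_and_unit}, using that $\Alg{\T}(\rho_A,B)$ is automatically an isomorphism because $\rho_A$ is.
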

\begin{proof}
Given a $\T$-algebra $A$, the comparison transformation $\kappa:A\tau \Rightarrow (A1)^{(-)}:\N^\op \rightarrow \uX$ \eqref{eqn:compn_trans} is iso.  Concretely, the components of $\kappa$ are isomorphisms $\kappa_n:An \rightarrow X^n$ $(n \in \N)$ in $\uX$, where we let $X := A1$, so we can define an $\X$-functor $NA:\T \rightarrow \uX$ on objects by $(NA)(n) := X^n$ and on homs by taking each $(NA)_{m n}$ to be the following composite:
$$
\xymatrix{
\T(m,n) \ar[dr]_{A_{m n}} \ar[rr]^{(NA)_{m n}} &                                                       & \uX((NA)m,(NA)n) \\
                                            & \uX(Am,An) \ar[ur]^\sim_{\uX(\kappa^{-1}_m,\kappa_n)} &
}
$$
It is then immediate that the morphisms $\kappa_n$ constitute an $\X$-natural isomorphism $A \Rightarrow NA:\T \rightarrow \uX$, which we refer to as $\rho$ in order to distinguish it from $\kappa:A\tau \Rightarrow (A1)^{(-)}:\N^\op \rightarrow\uX$.

In order to show that $NA$ is a normal $\T$-algebra, it suffices by \bref{rem:first_alt_desc_talgs} to show that the composite $\N^\op \xrightarrow{\tau} \T \xrightarrow{NA} \uX$ is simply $X^{(-)}$.  This composite $(NA)\tau$ acts on objects as $X^{(-)}$, and its structure morphism $((NA)\tau)_{m n}$ for each pair $m,n \in \N^\op$ appears within a commutative diagram
$$
\xymatrix{
\N(n,m) \ar[rr]^{((NA)\tau)_{m n}} \ar[dd]_{\tau_{m n}} & & \uX(X^m,X^n) \ar[dr]^{\uX(\kappa_m,X^n)}_\sim & \\
& & & \uX(Am,X^n) \\
\T(m,n) \ar[rr]^{A_{m n}} & & \uX(Am,An) \ar[ur]^\sim_{\uX(Am,\kappa_n)}
}
$$
But if we substitute $X^{(-)}_{m n}$ for $((NA)\tau)_{m n}$ in this diagram, the resulting diagram also commutes, by the $\X$-naturality of $\kappa:A\tau \Rightarrow X^{(-)}$.  Hence since the morphisms on the right side of the diagram are iso, $((NA)\tau)_{m n} = X^{(-)}_{m n}$ as needed.

Thus for each $A \in \Alg{\T}$ we have an associated isomorphism $\rho_A:A \rightarrow NA$ in $\Alg{\T}$ with $NA \in \Alg{\T}_!$.  Since for each $B \in \Alg{\T}_!$, the morphism
$$\Alg{\T}(\rho_A,B):\Alg{\T}(NA,B) \rightarrow \Alg{\T}(A,B)$$
is iso, we obtain by \bref{thm:enradj_detd_by_radj_and_univ_arr} a left $\X$-adjoint $N:\Alg{\T} \rightarrow \Alg{\T}_!$ to the inclusion $\Alg{\T}_! \hookrightarrow \Alg{\T}$, and $N$ is an equivalence of $\X$-categories since the unit $\rho$ is iso.
\end{proof}

\begin{DefSub}
Given $\X$-theories $\sS,\T$, a \textit{(normal) morphism of $\X$-theories} $\phi:\sS \rightarrow \T$ is an identity-on-objects $\X$-functor which strictly preserves finite $\X$-powers.  Equivalently, $\phi:\sS \rightarrow \T$ is an $\X$-functor such that
$$
\xymatrix{
\sS \ar[rr]^{\phi} &                                      & \T \\
                                    & \N^\op \ar[ul]^\sigma \ar[ur]_{\tau} &
}
$$
commutes, where $\sigma$ and $\tau$ are the associated $\X$-functors \pbref{par:th_in_presence_of_finite_copowers}.

Given a morphism of $\X$-theories $\phi:\sS \rightarrow \T$, the $\X$-functor $[\phi,\uX]:[\T,\uX] \rightarrow [\sS,\uX]$ clearly restricts to an $\X$-functor $\phi^\natural:\Alg{\T} \rightarrow \Alg{\sS}$ and further to an $\X$-functor $\phi^!:\Alg{\T}_! \rightarrow \Alg{\sS}_!$.
\end{DefSub}

\begin{ParSub}\emptybox
\begin{enumerate}
\item $\N^\op$ itself is an $\X$-theory whose associated $\X$-functor simply the identity $\N^\op \rightarrow \N^\op$.
\item There is an equivalence of $\X$-categories $\Alg{\N^\op} \simeq \uX$, which sends an $\N^\op$-algebra $A$ to $A1$.  This restricts to an isomorphism of $\X$-categories $\Alg{\N^\op}_! \cong \uX$.
\item Given an $\X$-theory $\T$, the associated $\X$-functor $\tau:\N^\op \rightarrow \T$ is the unique (normal) morphism from $\N^\op$ to $\T$, and we obtain a commutative diagram of $\X$-functors
\begin{equation}\label{eqn:alg_forg_funcs_diag}
\xymatrix{
\Alg{\T}_! \ar@{^{(}->}[r]^\simeq \ar[d]^{\tau^!}       & \Alg{\T} \ar[d]^{\tau^\natural} \\
\Alg{\N^\op}_! \ar@{^{(}->}[r]^{\simeq} \ar[d]_{\cong} & \Alg{\N^\op} \ar[d]^{\simeq} \\
\uX \ar@{=}[r]                                         & \uX
}
\end{equation}
in which $\simeq$ indicates an equivalence and $\cong$ an isomorphism.  We denote the leftmost and rightmost vertical composites by $G_!$ and $G_\natural$, respectively, and we call these the \textit{forgetful $\X$-functors}.  Each is simply given by evaluation at $1$.
\end{enumerate}
\end{ParSub}

\begin{ParSub}\label{par:ladjs_to_alg_funcs}
Let $\phi:\sS \rightarrow \T$ be a morphism of $\X$-theories.  Since $\X$ is countably\hbox{-cocomplete} and $\sS,\T$ have countably many objects, the $\X$-functor $[\phi,\uX]:[\T,\uX] \rightarrow [\sS,\uX]$ has a left $\X$-adjoint $\Lan_\phi$, given by pointwise left $\X$-Kan-extension along $\phi$; see \cite{Ke:Ba} or \cite{Dub}.  By an argument as in \cite{BoDay} 2.2.1, if an $\X$-functor $A:\sS \rightarrow \uX$ preserves finite $\X$-powers, then its pointwise left $\X$-Kan-extension $\Lan_\phi A:\T \rightarrow \uX$ also preserves finite $\X$-powers.  Hence $\Lan_\phi$ restricts to a left $\X$-adjoint $\phi_\natural$  for $\phi^\natural:\Alg{\T} \rightarrow \Alg{\sS}$.  Hence we also obtain a left $\X$-adjoint $\phi_!$ to $\phi^!:\Alg{\T}_! \rightarrow \Alg{\sS}_!$ by composing with the equivalences of \bref{thm:alg_iso_nalg}.  Letting $\y s := \sS(s,-) : \sS \rightarrow \uX$, $s \in \sS$, note that
$$\phi_\natural \y s = \Lan_\phi \y s \cong \T(\phi(s),-)$$
$\X$-naturally in $s \in \sS$.
\end{ParSub}

\begin{DefSub}
Given an $\X$-theory $\T$, we define
\begin{description}
\item $F_\natural := (\uX \xrightarrow{\simeq} \Alg{\N^\op} \xrightarrow{\tau_\natural} \Alg{\T})$
\item $F_! := (\uX \xrightarrow{\cong} \Alg{\N^\op}_! \xrightarrow{\tau_!} \Alg{\T}_!)$\;,
\end{description}
so that $F_\natural$,$F_!$ are left $\X$-adjoint to the forgetful $\X$-functors $G_\natural$,$G_!$, respectively.
\end{DefSub}

\begin{RemSub}\label{rem:charn_f_nat}
One readily computes that $F_\natural$ is given by 
$$F_\natural X = \int^{n \in \N} \T(\tau(n),-) \times X^n$$
$\X$-naturally in $X \in \uX$.  Observe also that the free $\T$-algebra on a finite copower $n \cdot 1$ of the terminal object $1$ is 
$$F_\natural(n \cdot 1) = \tau_\natural((n \cdot 1)^{(-)}) \cong \tau_\natural(\N(-,n)) \cong \T(\tau(n),-)\;,$$
$\X$-naturally in $n \in \N$.
\end{RemSub}

\begin{DefSub}\label{def:mnds_assoc_th}
Let $\TT_\natural$, $\TT_!$ be the $\X$-monads induced by the $\X$-adjunctions
\begin{enumerate}
\item[] $F_\natural \dashv G_\natural:\Alg{\T} \rightarrow \uX$
\item[] $F_! \dashv G_!:\Alg{\T}_! \rightarrow \uX$
\end{enumerate}
respectively.
\end{DefSub}

\begin{RemSub}\label{rem:charn_t_nat}
By \bref{rem:charn_f_nat}, the endofunctor $T_\natural = G_\natural F_\natural$ is given by
$$T_\natural X = \int^{n \in \N} \T(\tau(n),1) \times X^n$$
$\X$-naturally in $X \in \uX$.
\end{RemSub}

\begin{PropSub}\label{thm:assoc_mnds_iso}
The $\X$-monads $\TT_\natural$, $\TT_!$ are isomorphic.
\end{PropSub}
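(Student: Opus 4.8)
The plan is to realize $F_! \dashv G_!$ as a copy of the composite of the $\X$-adjunction $F_\natural \dashv G_\natural$ with the adjoint equivalence between $\Alg{\T}_!$ and $\Alg{\T}$, and then to transfer this description to the induced $\X$-monads by means of the 2-categorical results of \bref{sec:two_cats}.

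First I would assemble two ingredients. By \bref{thm:alg_iso_nalg}, the full sub-$\X$-category inclusion $J:\Alg{\T}_! \hookrightarrow \Alg{\T}$ is an $\X$-equivalence: it is $\X$-right-adjoint to the normalization $\X$-functor $N:\Alg{\T} \to \Alg{\T}_!$ with unit $\rho:1_{\Alg{\T}} \Rightarrow JN$ an isomorphism (and, $J$ being fully faithful, with invertible counit as well), so that $N \dashv J$ is an adjoint equivalence in $\XCAT$. Secondly, the commutative diagram \eqref{eqn:alg_forg_funcs_diag} gives $G_\natural \circ J = G_!$. Consequently, composing the $\X$-adjunction $F_\natural \dashv G_\natural:\Alg{\T} \to \uX$ with the $\X$-adjunction $N \dashv J:\Alg{\T}_! \to \Alg{\T}$ yields an $\X$-adjunction $\mathcal{A}$ with left $\X$-adjoint $N F_\natural:\uX \to \Alg{\T}_!$ and right $\X$-adjoint $G_\natural J = G_!:\Alg{\T}_! \to \uX$. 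Since $\mathcal{A}$ and $F_! \dashv G_!$ are $\X$-adjunctions sharing the right $\X$-adjoint $G_!$, they are isomorphic adjunctions by \bref{thm:uniq_adj} and therefore induce isomorphic $\X$-monads on $\uX$ (\bref{par:adj_and_mnd}); in particular $\TT_!$ is isomorphic to the $\X$-monad induced by $\mathcal{A}$.

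It then remains to identify the $\X$-monad induced by $\mathcal{A}$ with $\TT_\natural$. For this I would apply \bref{thm:mon_func_detd_by_adj} to the $\X$-adjunction $F_\natural \dashv G_\natural:\Alg{\T} \to \uX$: the $\X$-monad on $\uX$ induced by $\mathcal{A}$ equals $[F_\natural,G_\natural](\SSS)$, where $[F_\natural,G_\natural] := \XCAT(F_\natural,G_\natural):\XCAT(\Alg{\T},\Alg{\T}) \to \XCAT(\uX,\uX)$ is the associated monoidal functor and $\SSS$ is the $\X$-monad $JN$ on $\Alg{\T}$ induced by $N \dashv J$. Because $N \dashv J$ is an adjoint equivalence, the unit $\rho$ of the monad $\SSS$ is invertible; but $\rho$ is the unique monad morphism $\ONEONE_{\Alg{\T}} \to \SSS$ (\bref{par:adj_and_mnd}), so it is an isomorphism of $\X$-monads and $\SSS \cong \ONEONE_{\Alg{\T}}$. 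Applying the monoidal --- hence monoid-preserving --- functor $[F_\natural,G_\natural]$ gives $[F_\natural,G_\natural](\SSS) \cong [F_\natural,G_\natural](\ONEONE_{\Alg{\T}})$, and $[F_\natural,G_\natural](\ONEONE_{\Alg{\T}})$ is exactly $\TT_\natural$, being the $\X$-monad induced by the composite of $F_\natural \dashv G_\natural$ with the identity adjunction on $\Alg{\T}$, that is, by $F_\natural \dashv G_\natural$ itself. Stringing these isomorphisms together yields $\TT_! \cong \TT_\natural$.

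The argument is essentially bookkeeping with the apparatus of \bref{sec:two_cats}; the one step carrying genuine content --- and the one I expect to be the main obstacle --- is the passage from ``the underlying $\X$-endofunctors $G_!F_!$ and $G_\natural F_\natural$ are isomorphic'' (which is transparent, since by the construction in \bref{thm:alg_iso_nalg} normalization of a $\T$-algebra does not change its value at the object $1$) to ``the $\X$-monads $\TT_!$ and $\TT_\natural$ are isomorphic'', and that passage is exactly what \bref{thm:mon_func_detd_by_adj} together with \bref{thm:uniq_adj} are set up to supply.
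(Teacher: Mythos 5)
Your argument is correct and is essentially the paper's own proof: both apply the monoidal functor $[F_\natural,G_\natural]$ of \bref{thm:mon_func_detd_by_adj} to the isomorphism of $\X$-monads $\ONEONE_{\Alg{\T}} \cong \SSS$ arising from the invertible unit $\rho$ of the equivalence of \bref{thm:alg_iso_nalg}. The only difference is that you interpose \bref{thm:uniq_adj} to compare the composite adjunction with $F_! \dashv G_!$ via their common right adjoint $G_!$, whereas the paper takes $[F_\natural,G_\natural](\SSS) = \TT_!$ as holding on the nose from the way $F_!$ is defined; your extra step is harmless and, if anything, slightly more careful.
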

\begin{proof}
Letting $\SSS$ be the $\X$-monad induced by the equivalence $N \nsststile{}{\rho} E:\Alg{\T}_! \hookrightarrow \Alg{\T}$, the unit $\rho$ is iso, so the morphism of $\X$-monads $\rho:\ONEONE_{\Alg{\T}} \rightarrow \SSS$ \pbref{par:adj_and_mnd} is an isomorphism.  Hence, applying the monoidal functor $[F_\natural,G_\natural]$ \pbref{thm:mon_func_detd_by_adj} to this isomorphism, we find that 
$$\TT_\natural = [F_\natural,G_\natural](\ONEONE_{\Alg{\T}}) \cong [F_\natural,G_\natural](\SSS) = \TT_!\;.$$
\end{proof}

\begin{PropSub}\label{thm:nalg_forg_func_cr_refl_coeqs}\emptybox
\begin{enumerate}
\item $G_!:\Alg{\T}_! \rightarrow \uX$ creates $\X$-coequalizers of reflexive pairs.
\item $G_\natural:\Alg{\T} \rightarrow \uX$ detects, preserves, and reflects $\X$-coequalizers of reflexive pairs.
\end{enumerate}
\end{PropSub}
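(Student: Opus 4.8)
The plan is to prove statement 2 first, since it is the more standard assertion, and then deduce statement 1 from it by transporting the creation property along the equivalence of \bref{thm:alg_iso_nalg}. Throughout I would work with the alternative descriptions of normal $\T$-algebras and their homomorphisms given in \bref{rem:alt_desc_normal_talg} and \bref{rem:alt_desc_thomom}, so that a normal $\T$-algebra is an object $X \in \X$ together with morphisms $A_m : \T(m,1) \times X^m \rightarrow X$ subject to the displayed unit and composition laws, and a homomorphism is a morphism commuting with the $A_m$ via \eqref{eqn:alt_desc_thomom}.

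For statement 2, let $h,k : A \rightrightarrows B$ be a reflexive pair in $\Alg{\T}$ with common section $j$. First I would observe that the underlying ordinary functor of $G_\natural$ is evaluation at $1$ up to the equivalences in \eqref{eqn:alg_forg_funcs_diag}, so I may compute with the pair $h_1, k_1 : A1 \rightrightarrows B1$ in $\X$. Suppose $q : B1 \rightarrow Q$ is an $\X$-coequalizer of $h_1, k_1$ in $\X$ (this is the hypothesis that such a coequalizer is to be detected/preserved/reflected). The key step is to equip $Q$ with a $\T$-algebra structure making $q$ a $\T$-homomorphism: for each $m$, the composite $\T(m,1) \times (B1)^m \xrightarrow{B_m} B1 \xrightarrow{q} Q$ coequalizes the pair $\T(m,1) \times h_1^m$ and $\T(m,1) \times k_1^m$, because $\T(m,1) \times (-)^m$ preserves the relevant $\X$-coequalizer. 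Here I would use that $\X$ is cartesian closed, so $\T(m,1) \times (-)$ preserves all colimits, and that $(-)^m$ preserves the reflexive coequalizer in question — this last point is where reflexivity is used, together with the standard fact (which I would invoke, e.g. via the $3\times 3$ argument in \cite{Lin:CoeqCatsAlgs}) that in a countably-cocomplete category the $n$-fold product functor preserves reflexive coequalizers; the section $j$ supplies the requisite reflexivity at the level of $(B1)^m$. This yields a unique $Q_m : \T(m,1) \times Q^m \rightarrow Q$ with $Q_m \cdot (1 \times q^m) = q \cdot B_m$, and the unit and composition laws for $(Q,(Q_m))$ follow from those for $B$ by the epi property of the coequalizer maps $1 \times q^m$ (again using colimit-preservation to see these are epi). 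Then $q$ is a $\T$-homomorphism by construction, $q$ is a coequalizer of $h,k$ in $\Alg{\T}$ since homs into any $\T$-algebra $C$ biject with homs into $C$ at level $1$ commuting with operations, and reflection/detection are immediate from the uniqueness of the algebra structure. The $\X$-enriched (as opposed to merely ordinary) coequalizer statement follows because $G_\natural$, being evaluation at $1$ on an $\X$-functor category, preserves and reflects $\X$-colimits, i.e. conical colimits in the enriched sense.

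For statement 1, I would argue that $G_!$ creates $\X$-coequalizers of reflexive pairs by combining statement 2 with the equivalence $N : \Alg{\T} \xrightarrow{\simeq} \Alg{\T}_!$ of \bref{thm:alg_iso_nalg}, under which $G_\natural \cong G_! \cdot N$ with $N$ an equivalence whose unit $\rho$ is iso; but more directly, I would just rerun the construction above landing in $\Alg{\T}_!$: given a reflexive pair in $\Alg{\T}_!$, form the coequalizer $q$ at level $1$ in $\X$ — which exists since $\X$ is cocomplete — then equip $Q$ with the normal $\T$-algebra structure $(Q_m)$ exactly as above. Creation means this is the \emph{unique} lift and that it is genuinely a colimit; uniqueness is forced because any lift must have underlying object the colimit $Q$ and operations compatible with $q$, and the operations are then pinned down by the epi $1 \times q^m$. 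The one genuine subtlety to watch is the passage from ordinary to $\X$-enriched coequalizers: I would handle this by noting, as in \cite{Ke:Ba}, that a colimit in $[\T,\uX]$ or its full sub-$\X$-categories is computed pointwise, and evaluation at the object $1 \in \T$ both preserves and reflects such pointwise ($=$ conical $\X$-) colimits, so the enriched statements reduce to the ordinary ones I have already established.

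The main obstacle I anticipate is the lemma that the $m$-fold product functor $(-)^m : \X \rightarrow \X$ preserves coequalizers of reflexive pairs — this is the crux on which everything turns, and it is exactly the point where countable (co)completeness and reflexivity are both needed. I would not reprove it from scratch but cite \cite{Lin:CoeqCatsAlgs} (whose Corollary 3 is already invoked in \bref{sec:comm_mnd_smc_adj}), observing that a binary product $X \times Y$ is a finite limit and that the relevant diagram, being reflexive, falls under Linton's hypotheses; an easy induction then handles $(-)^m$ for all finite $m$, and the general formula $T_\natural X = \int^{n} \T(\tau(n),1) \times X^n$ of \bref{rem:charn_t_nat} is built from such products and a countable coproduct, all of which are preserved.
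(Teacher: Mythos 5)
Your core argument is the same as the paper's: the crux in both cases is that the finite-power functors $(-)^n$ preserve reflexive coequalizers (the paper cites \cite{Joh:Ele} A1.2.12 where you cite \cite{Lin:CoeqCatsAlgs}; either is fine), the lifted algebra structure on the coequalizer object is obtained from the universal property of $\T(n,1)\times q^n$, and the algebra laws and the uniqueness of the lift follow because $1\times q^n$ is epi. Where you differ is in the organization, and the inversion causes two problems. First, your opening plan --- prove statement 2 and then obtain statement 1 ``by transporting the creation property along the equivalence'' of \bref{thm:alg_iso_nalg} --- does not work: creation is a \emph{strict} lifting property (a unique lift on the nose), and it is not invariant under equivalence of categories; detection, preservation and reflection are. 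This is exactly why the paper argues in the opposite direction: it proves creation for $G_!$ directly, and then statement 2 for $G_\natural$ falls out because those weaker, equivalence-invariant properties transfer across $\Alg{\T}_!\simeq\Alg{\T}$. Second, your direct proof of statement 2 leans on the operations description of \bref{rem:alt_desc_normal_talg} and \eqref{eqn:alt_desc_thomom}, but that description is only available verbatim for \emph{normal} $\T$-algebras; for a general $\T$-algebra $B$ one only has $Bn\cong(B1)^n$, so the computation with $B_m:\T(m,1)\times(B1)^m\to B1$ would have to be routed through the comparison isomorphisms. You do recover from both issues with your ``more directly, rerun the construction landing in $\Alg{\T}_!$'' fallback, which is in substance the paper's proof of statement 1 (the paper phrases it as a pointwise coequalizer of $\X$-functors, with $q_n:=p^n$, followed by the check that the colimit functor is normal; your operations version is equivalent). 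I would also tighten the claim that evaluation at $1$ ``preserves and reflects'' $\X$-colimits: evaluation at a single object of a functor category does not reflect colimits in general, and the correct statement here is the one the paper makes, namely that the cocone is a pointwise $\X$-coequalizer at \emph{every} level $n$ because $p^n$ is a coequalizer of $g_1^n,h_1^n$, whence it is a colimit in $[\T,\uX]$ and then in the full subcategory once normality of the colimit object is verified. So: reorder the proof to do statement 1 first and deduce statement 2 by equivalence-invariance, and the remainder of your argument stands.
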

\begin{proof}
It suffices to prove the first statement, since the second follows, as we have an equivalence of $\X$-categories $\Alg{\T}_! \hookrightarrow \Alg{\T}$ which commutes with $G_!,G_\natural$ \eqref{eqn:alg_forg_funcs_diag}.  Given a reflexive pair of morphisms $g,h:A \rightarrow B$ in $\Alg{\T}_!$ and an $\X$-coequalizer $p:G_!B\rightarrow Z$ in $\uX$ of the reflexive pair $G_! g = g_1, G_! h = h_1$, we must show firstly that there is a unique morphism $q:B \rightarrow C$ in $\uX$ with $G_! q = p$, and secondly that $q$ is an $\X$-coequalizer of $g,h$.  Let $X := G_! A$, $Y := G_!B $.  Since $p$ is a coequalizer of the reflexive pair $g_1,h_1$ in $\X$, it follows by \cite{Joh:Ele} A1.2.12 that for each $n \in \N$, $p^n:Y^n \rightarrow Z^n$ is a coequalizer of $g_1^n,h_1^n:X^n \rightarrow Y^n$, and by \bref{thm:lim_mono_epi_in_base_are_enr} these are $\X$-coequalizers in $\X$.  Hence we have an $\X$-coequalizer $q_n := p^n:Bn \rightarrow Cn := Z^n$ of each pair of components $g_n = g_1^n$, $h_n = h_1^n$, so we obtain a pointwise $\X$-coequalizer $q:B \rightarrow C$ of $g,h$ in $[\T,\uX]$.  

To see that $C$ is a normal $\T$-algebra, observe that for each designated $\X$-power projection $\pi_i:n \rightarrow 1$ in $\T$ we have a naturality square
$$
\xymatrix{
Y^n \ar[r]^{q_n = p^n} \ar[d]_{B\pi_i} & Z^n \ar[d]^{C\pi_i} \\
Y \ar[r]^{q_1 = p}                       & {Z}
}
$$
for $q$, but since $B\pi_i = \pi_i$ is the designated projection (since $B$ is normal), this square also commutes when we replace $C\pi_i$ by the designated projection $\pi_i$, so since $p^n$ is epi, $C\pi_i = \pi_i$.

Hence $q$ serves as the needed $\X$-coequalizer in $\Alg{\T}_!$.  In view of the characterization of $\T$-homomorphisms of normal $\T$-algebras at \eqref{eqn:alt_desc_thomom}, it remains only to show that the structure morphisms $C_{n}:\T(n,1) \times Z^n \rightarrow Z$ of the normal $\T$-algebra $(Z,(C_n))$ are uniquely determined by the property that $p:Y \rightarrow Z$ is a $\T$-homomorphism $(Y,(B_n)) \rightarrow (Z,(C_n))$, i.e. by the commutativity of the squares
$$
\xymatrix{
\T(n,1) \times Y^n \ar[d]_{B_n} \ar[r]^{1 \times p^n} & \T(n,1) \times Z^n \ar[d]_{C_n} \\
Y \ar[r]_p & Z
}
$$
for each $n \in \N$.  But this is immediate, since $1 \times p^n$ is epi (since $p^n$ is epi and $\T(n,1) \times (-):\X \rightarrow \X$ is a left adjoint).
\end{proof}

\begin{ThmSub}\label{thm:nalg_monadic}
Let $\T$ be an $\X$ theory, where $\X$ is a countably-complete and~\hbox{-cocomplete} cartesian closed category.
\begin{enumerate}
\item The $\X$-adjunction $F_\natural \dashv G_\natural:\Alg{\T} \rightarrow \uX$ is $\X$-monadic \pbref{def:monadic}.
\item The $\X$-adjunction $F_! \dashv G_!:\Alg{\T}_! \rightarrow \uX$ is strictly $\X$-monadic \pbref{def:monadic}.
\end{enumerate}
\end{ThmSub}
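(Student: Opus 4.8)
The proof will be a near-immediate application of the Crude Monadicity Theorem \bref{thm:crude_mndcity}, with all the substantive content already extracted in \bref{thm:nalg_forg_func_cr_refl_coeqs}. Before invoking it I would record a small preliminary: the base of enrichment $\X$ is countably complete, hence in particular has equalizers, so by \bref{par:em_adj} the Eilenberg-Moore $\X$-categories $\uX^{\TT_\natural}$ and $\uX^{\TT_!}$ exist \pbref{def:mnds_assoc_th}; thus the notions of $\X$-monadic and strictly $\X$-monadic $\X$-adjunction \pbref{def:monadic} are meaningful for the two $\X$-adjunctions in question, each of which is a genuine $\X$-adjunction by the construction in \bref{par:ladjs_to_alg_funcs} and the surrounding discussion, with induced $\X$-monads $\TT_\natural$ and $\TT_!$ by \bref{def:mnds_assoc_th}.

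For part 2, I would invoke \bref{thm:nalg_forg_func_cr_refl_coeqs}(1): the forgetful $\X$-functor $G_!:\Alg{\T}_! \rightarrow \uX$ creates $\X$-coequalizers of reflexive pairs. Part 2 of \bref{thm:crude_mndcity} then applies verbatim to $F_! \dashv G_!$ and yields that this $\X$-adjunction is strictly $\X$-monadic, i.e. the comparison $\X$-functor $\Alg{\T}_! \rightarrow \uX^{\TT_!}$ is an isomorphism of $\X$-categories. For part 1, I would similarly invoke \bref{thm:nalg_forg_func_cr_refl_coeqs}(2): $G_\natural:\Alg{\T} \rightarrow \uX$ detects, preserves, and reflects $\X$-coequalizers of reflexive pairs, whence part 1 of \bref{thm:crude_mndcity} gives that $F_\natural \dashv G_\natural$ is $\X$-monadic, the comparison $\X$-functor $\Alg{\T} \rightarrow \uX^{\TT_\natural}$ being an equivalence of $\X$-categories. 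Combined with \bref{thm:alg_iso_nalg} and \bref{thm:assoc_mnds_iso}, this also recovers, compatibly with the forgetful $\X$-functors, the chain $\Alg{\T} \simeq \Alg{\T}_! \cong \uX^{\TT_!} \cong \uX^{\TT_\natural}$, but this is not needed for the statement itself.

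I do not anticipate a genuine obstacle here: the real work has already been done in establishing \bref{thm:nalg_forg_func_cr_refl_coeqs}, which in turn rests on transferring the creation of reflexive coequalizers through the power functors $(-)^n$ (via \cite{Joh:Ele} A1.2.12) together with the fact that $\T(n,1) \times (-)$, being a left adjoint, preserves epimorphisms. The only thing to verify in writing out the present proof is therefore bookkeeping: that the hypotheses of \bref{thm:crude_mndcity} align exactly with the two clauses of \bref{thm:nalg_forg_func_cr_refl_coeqs}, and that the $\X$-monads appearing in the comparison $\X$-functors are precisely the $\TT_\natural$, $\TT_!$ of \bref{def:mnds_assoc_th} --- both of which hold by definition.
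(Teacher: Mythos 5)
Your proposal is correct and follows exactly the paper's own argument: the paper's entire proof is to invoke the Crude Monadicity Theorem \pbref{thm:crude_mndcity} via the two clauses of \bref{thm:nalg_forg_func_cr_refl_coeqs}, matching clause 1 to $\X$-monadicity of $F_\natural \dashv G_\natural$ and clause 2 to strict $\X$-monadicity of $F_! \dashv G_!$. Your additional remarks on the existence of the Eilenberg--Moore $\X$-categories and the identification of the induced $\X$-monads with $\TT_\natural$, $\TT_!$ are harmless bookkeeping that the paper leaves implicit.
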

\begin{proof}
Using \bref{thm:nalg_forg_func_cr_refl_coeqs}, we can invoke the Crude Monadicity Theorem \pbref{thm:crude_mndcity}.
\end{proof}

\section{Commutative algebraic theories in a cartesian closed category}\label{sec:comm_alg_th_ccc}

\begin{ParSub}\label{par:power_morphs}
Given an $\X$-theory $\T$ \pbref{sec:fin_alg_ccc}, let us denote the designated $m$-th $\X$-power of each object $n$ \pbref{par:th_in_presence_of_finite_copowers} of $\T$ by $[m,n]$, so that $[m,n]$ is simply the product of ordinals $m \times n$.  We therefore have canonical morphisms $[m,-]_{n k}:\T(n,k) \rightarrow \T(m \times n,m \times k)$ for all finite ordinals $m,n,k$.  Observe that each $[m,n]$ serves as a cotensor $[m \cdot 1,n]$ in $\T$ of $n$ by the $m$-th copower $m \cdot 1$ of the terminal object $1$ of $\uX$, since
$$\T(l,[m,n]) \cong \T(l,n)^m \cong \uX(1,\T(l,n))^m \cong \uX(m \cdot 1,\T(l,n))$$
$\X$-naturally in $l \in \T$.
\end{ParSub}

The following notion was defined in \cite{BoDay}, building upon the $\Set$-based case in \cite{Lin:AutEqCats}.

\begin{DefSub}\label{def:comm_th}
An $\X$-theory $\T$ is \textit{commutative} if the following diagram commutes for all finite ordinals $m,n$
\begin{equation}\label{eqn:commutative_th}
\xymatrix{
\T(m,1)\times\T(n,1) \ar[d]_{[n,-]_{m 1} \times 1} \ar[r]^(.45){1 \times [m,-]_{n 1}} & \T(m,1)\times\T(m \times n,m) \ar[d]^c \\
\T(m \times n,n)\times\T(n,1) \ar[r]^c & \T(m \times n, 1)
}
\end{equation}
where $c$ denotes the composition morphism for $\T$.
\end{DefSub}

\begin{LemSub}\label{thm:emb_theory_into_kleisli_op}
Let $\T$ be an $\X$-theory, with $\X$ a countably-complete and~\hbox{-cocomplete} cartesian closed category.  Then there is an $\X$-fully-faithful $\X$-functor $\T \rightarrowtail \uX_{\TT_\natural}^\op$ sending each object $n$ of $\T$ to the copower $n \cdot 1$ in $\X$ of the terminal object $1$ of $\X$ (where $\uX_{\TT_\natural}$ denotes the Kleisli $\X$-category of the associated $\X$-monad $\TT_\natural$, \bref{def:mnds_assoc_th}).
\end{LemSub}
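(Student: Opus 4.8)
The plan is to exhibit $\T$ as isomorphic to the $\X$-full subcategory of $\uX_{\TT_\natural}^\op$ spanned by the objects $n\cdot 1$, by transporting the enriched Yoneda embedding of $\T$ across two identifications: the identification of the free $\T$-algebra $F_\natural(n\cdot 1)$ with the representable $\T(n,-)$, and the standard $\X$-fully-faithful embedding of a Kleisli $\X$-category into a category of algebras over it.

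First I would assemble two ingredients. (a) The $\X$-adjunction $F_\natural \dashv G_\natural : \Alg{\T}\rightarrow\uX$ inducing $\TT_\natural$ (\bref{def:mnds_assoc_th}), which is $\X$-monadic by \bref{thm:nalg_monadic}, yields the usual $\X$-fully-faithful comparison $\X$-functor $L : \uX_{\TT_\natural}\rightarrow\Alg{\T}$ out of the Kleisli $\X$-category, given on objects by $X\mapsto F_\natural X$ and with structure morphisms the adjunction isomorphisms $\uX_{\TT_\natural}(X,Y) = \uX(X,G_\natural F_\natural Y)\cong\Alg{\T}(F_\natural X,F_\natural Y)$; equivalently, one may use that the Kleisli $\X$-category embeds $\X$-fully-faithfully into $\uX^{\TT_\natural}$ as the free algebras and then compose with the $\X$-equivalence $\uX^{\TT_\natural}\simeq\Alg{\T}$ furnished by $\X$-monadicity, which commutes with the free $\X$-functors. (b) By \bref{rem:charn_f_nat} there is, for each object $n$ of $\T$, an isomorphism $F_\natural(n\cdot 1)\cong\T(\tau(n),-) = \T(n,-)$ in $\Alg{\T}$ (recall $\tau$ is identity on objects).

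Next I would take the enriched Yoneda $\X$-functor $\T^\op\rightarrow[\T,\uX]$, $n\mapsto\T(n,-)$; it lands in $\Alg{\T}$ because representable $\X$-functors on $\T$ are $\T$-algebras, and it is $\X$-fully-faithful since $\Alg{\T}$ is $\X$-full in $[\T,\uX]$ and $[\T,\uX](\T(m,-),\T(n,-))\cong\T(n,m)$ by the Yoneda lemma. Transporting this $\X$-functor along the family of object-isomorphisms of (b) gives an $\X$-fully-faithful $\X$-functor $\T^\op\rightarrow\Alg{\T}$ sending $n$ to $F_\natural(n\cdot 1)$; let $Q:\T\rightarrow\Alg{\T}^\op$ be its opposite, so $Q$ is $\X$-fully-faithful with $Q(n) = F_\natural(n\cdot 1)$. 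Now $L^\op:\uX_{\TT_\natural}^\op\rightarrow\Alg{\T}^\op$ is $\X$-fully-faithful, hence $\X$-faithful, with $L^\op(n\cdot 1) = F_\natural(n\cdot 1) = Q(n)$ and with structure morphisms $(L^\op)_{n\cdot 1,\,m\cdot 1} = L_{m\cdot 1,\,n\cdot 1}$ that are isomorphisms; hence each structure morphism $Q_{n m}$ of $Q$ factors (automatically and uniquely) through $(L^\op)_{n\cdot 1,\,m\cdot 1}$, and \bref{thm:factn_through_faithful_vfunctor} produces an $\X$-functor $P:\T\rightarrow\uX_{\TT_\natural}^\op$ with $\ob P$ the assignment $n\mapsto n\cdot 1$ and $L^\op\circ P = Q$. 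Its structure morphisms $P_{n m} = \bigl((L^\op)_{n\cdot 1,\,m\cdot 1}\bigr)^{-1}\circ Q_{n m}$ are composites of isomorphisms, so $P$ is the desired $\X$-fully-faithful $\X$-functor $\T\rightarrowtail\uX_{\TT_\natural}^\op$.

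The main obstacle I expect is organizational rather than conceptual: pinning down $L$ as a genuine $\X$-functor (with invertible structure morphisms, not merely a bijection on hom-classes) and checking that transporting the Yoneda $\X$-functor along a family of object-isomorphisms again yields an $\X$-functor --- both routine in the enriched setting but requiring some care. As a consistency check, the hom-objects may be computed outright: $\uX_{\TT_\natural}^\op(n\cdot 1,m\cdot 1) = \uX(m\cdot 1,\,T_\natural(n\cdot 1))\cong\uX(m\cdot 1,\,\T(n,1))\cong\T(n,1)^m\cong\T(n,m)$, using \bref{rem:charn_f_nat}, the fact that $G_\natural$ is evaluation at $1$, and that $m\cdot 1$ and $m$ are respectively the $m$-fold $\X$-copower of $1$ in $\X$ and the $m$-fold $\X$-power of $1$ in $\T$.
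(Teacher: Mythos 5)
Your proposal is correct and follows essentially the same route as the paper's proof: the enriched Yoneda embedding $\T^\op \rightarrowtail \Alg{\T}$, the identification $F_\natural(n\cdot 1)\cong\T(n,-)$ from \bref{rem:charn_f_nat}, and a factorization through the $\X$-fully-faithful Kleisli comparison. The only difference is organizational --- the paper routes through $\uX^{\TT_\natural}$ using both comparison $\X$-functors and factors ``up to isomorphism,'' whereas you transport the Yoneda $\X$-functor along the object-isomorphisms first and then invoke \bref{thm:factn_through_faithful_vfunctor} to make the factorization strict, which is a perfectly sound (arguably tidier) way to handle the same step.
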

\begin{proof}
Let $\TT := \TT_\natural$.  The comparison $\X$-functor (\cite{Dub} II.1) $K:\uX_\TT \rightarrow \uX^\TT$ induced by the Kleisli $\X$-adjunction is $\X$-fully-faithful.  Moreover, we have a (strictly) commutative diagram
$$
\xymatrix{
\uX_\TT \ar@{ >->}[r]^K & \uX^\TT & \Alg{\T} \ar[l]_L^\simeq \\
& \uX \ar[ul]^{F_\TT} \ar[u]|{F^\TT} \ar[ur]_{F_\natural} & 
}
$$
in which the upper row consists of the comparison $\X$-functors, the rightmost of which is an equivalence by \bref{thm:nalg_monadic}.  Also, the Yoneda embedding $\T^\op \rightarrowtail [\T,\uX]$ restricts to an $\X$-fully-faithful $\X$-functor $\y:\T^\op \rightarrow \Alg{\T}$, and as noted in \bref{rem:charn_f_nat}, $\y n = \T(n,-) \cong F_\natural(n \cdot 1)$ for each finite ordinal $n$.  Hence the $\X$-fully-faithful composite
$$\T^\op \xrightarrow{\y} \Alg{\T} \xrightarrow{L} \uX^\TT$$
sends each object $n \in \T$ to $L\y n \cong LF_\natural(n \cdot 1) = F^\TT(n \cdot 1) = KF_\TT(n \cdot 1) = K(n \cdot 1)$, and it follows that the given composite factors up to isomorphism through $K$ via an $\X$-fully-faithful $\X$-functor given on objects by $n \mapsto n \cdot 1$.
\end{proof}

\begin{ParSub}
Given a symmetric monoidal closed category $\X = (\X,\boxtimes,I)$ and an $\X$-monad $\TT = (T,\eta,\mu)$ on $\uX$, there is for each pair of objects $X \in \X$, $Y \in \uX_\TT^\op$ a cotensor $[X,Y]$ in $\uX_\TT^\op$ given as simply the monoidal product $X \boxtimes Y$ in $\X$.  The associated unit morphism $\xi:X \rightarrow \X_\TT^\op([X,Y],Y) = \uX(Y,T(X \boxtimes Y))$ is simply the transpose of $\eta_{X \boxtimes Y}:X \boxtimes Y \rightarrow T(X \boxtimes Y)$.  In analogy with \bref{par:power_morphs}, \bref{def:comm_th}, we have canonical morphisms $[X,-]_{Y I}:\uX_\TT^\op(Y,I) \rightarrow \uX_\TT^\op(X \boxtimes Y, X)$.
\end{ParSub}

\begin{LemSub}\label{thm:tensor_morphs_via_kleisli_comp}
Let $\X = (\X,\boxtimes,I)$ be a symmetric monoidal closed category, and let $\TT$ be an $\X$-monad on $\uX$.  Then the canonical morphisms $\otimes_{X Y},\widetilde{\otimes}_{X Y}:TX \boxtimes TY \rightarrow T(X \boxtimes Y)$ \pbref{def:comm_mnd}, $X,Y \in \X$, may be obtained from the lower-left and upper-right composites (resp.) in the following square
\begin{equation}\label{eqn:tensor_morphs_via_kleisli_comp}
\xymatrix{
\uX_\TT^\op(X,I)\boxtimes\uX_\TT^\op(Y,I) \ar[d]^{[Y,-]_{X I} \boxtimes 1} \ar[r]^(.45){1 \boxtimes [X,-]_{Y I}} & \uX_\TT^\op(X,I)\boxtimes\uX_\TT^\op(X \boxtimes Y,X) \ar[d]^c \\
\uX_\TT^\op(X \boxtimes Y,Y)\boxtimes\uX_\TT^\op(Y,I) \ar[r]^c & \uX_\TT^\op(X \boxtimes Y,I)
}
\end{equation}
by composing with the canonical isomorphisms 
$$\uX_\TT^\op(X,I)\boxtimes\uX_\TT^\op(Y,I) = \uX(I,TX)\boxtimes\uX(I,TY) \cong TX \boxtimes TY$$
and 
$$\uX_\TT^\op(X \boxtimes Y, I) = \uX(I,T(X \boxtimes Y)) \cong T(X \boxtimes Y)\;.$$  Hence $\TT$ is commutative iff each such square commutes.
\end{LemSub}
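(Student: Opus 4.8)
The plan is to unwind the $\X$-enriched structure of the Kleisli category $\uX_\TT^\op$ and then recognize the two composites in \bref{thm:tensor_morphs_via_kleisli_comp}'s square \eqref{eqn:tensor_morphs_via_kleisli_comp} directly as the morphisms of \bref{def:comm_mnd}. Recall that the cotensor $[X,Y] = X \boxtimes Y$ in $\uX_\TT^\op$ is witnessed by the isomorphism $\uX_\TT^\op(W, X\boxtimes Y) = \uX(X\boxtimes Y, TW) \cong \uX(X, \uX(Y, TW)) = \uX(X, \uX_\TT^\op(W, Y))$ coming from the closed structure of $\X$, with universal morphism $\xi_Y : X \to \uX_\TT^\op(X\boxtimes Y, Y) = \uX(Y, T(X\boxtimes Y))$ the transpose of $\eta_{X\boxtimes Y}$, as stated just before the Lemma. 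First I would record the general formula for the structure morphism of a cotensor $\X$-functor: $[X,-]_{Y_1 Y_2} : \uX_\TT^\op(Y_1,Y_2) \to \uX_\TT^\op([X,Y_1],[X,Y_2])$ is the transpose, under the defining isomorphism $\uX_\TT^\op([X,Y_1],[X,Y_2]) \cong \uX(X, \uX_\TT^\op([X,Y_1],Y_2))$, of the composite $X \otimes \uX_\TT^\op(Y_1,Y_2) \xrightarrow{\xi_{Y_1}\otimes 1} \uX_\TT^\op([X,Y_1],Y_1) \otimes \uX_\TT^\op(Y_1,Y_2) \xrightarrow{c} \uX_\TT^\op([X,Y_1],Y_2)$.

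Specializing to $Y_1 = Y$, $Y_2 = I$ (using the identification $X\boxtimes I \cong X$), and unwinding the composition morphism $c$ of $\uX_\TT^\op$ as Kleisli composition, which sends a pair $(\phi : I \to TY,\ \psi : Y \to T(X\boxtimes Y))$ to $\mu \cdot T\psi \cdot \phi$, the monad unit law $\mu \cdot T\eta = 1$ collapses the formula and shows that $[X,-]_{Y I}$, read through the canonical isomorphisms $\uX_\TT^\op(X,I) = \uX(I,TX) \cong TX$ and $\uX_\TT^\op(X\boxtimes Y, X) = \uX(X, T(X\boxtimes Y))$, is precisely the morphism whose transpose $X \boxtimes TY \to T(X\boxtimes Y)$ is the strength $t''_{XY}$ of \bref{def:tens_str}. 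An entirely parallel computation, combined with the symmetry of $\X$ and the fact that \bref{def:tens_str} is symmetric in its two arguments, identifies $[Y,-]_{X I}$ with $t'_{XY}$.

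Second I would substitute these identifications into \eqref{eqn:tensor_morphs_via_kleisli_comp}. Reading the square through the stated canonical isomorphisms on all four corners, the upper--right composite $c \cdot (1 \boxtimes [X,-]_{Y I})$ becomes $TX \boxtimes TY \xrightarrow{t'_{X, TY}} T(X \boxtimes TY) \xrightarrow{Tt''_{XY}} TT(X\boxtimes Y) \xrightarrow{\mu} T(X\boxtimes Y)$, which is exactly $\widetilde{\otimes}_{XY}$; and the lower--left composite $c \cdot ([Y,-]_{X I} \boxtimes 1)$ becomes $TX \boxtimes TY \xrightarrow{t''_{TX, Y}} T(TX \boxtimes Y) \xrightarrow{Tt'_{XY}} TT(X\boxtimes Y) \xrightarrow{\mu} T(X\boxtimes Y)$, which is exactly $\otimes_{XY}$. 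Both reductions use only the unwinding of Kleisli composition together with one more application of $\mu \cdot T\eta = 1$ to absorb the unitors introduced by $\xi$. This establishes the first assertion. For the second, since the four corner isomorphisms are isomorphisms, the square \eqref{eqn:tensor_morphs_via_kleisli_comp} commutes if and only if $\otimes_{XY} = \widetilde{\otimes}_{XY}$; letting $X, Y$ range over all objects of $\X$, this is precisely the condition that $\TT$ be commutative in the sense of \bref{def:comm_mnd}.

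The main obstacle is the bookkeeping in the first two steps: pinning down the structure morphism $[X,-]_{Y I}$ of the cotensor $\X$-functor and matching it on the nose with $t''_{XY}$, while keeping straight the $\uX_\TT$ versus $\uX_\TT^\op$ conventions, the left/right unitors hidden in the identifications $\uX(I,V)\cong V$ and $X\boxtimes I\cong X$, and the transpositions across the closed structure of $\X$. Everything here is routine and diagrammatic, but it is the step where a swapped argument or a misplaced unitor would most easily creep in; in particular the symmetry argument identifying $[Y,-]_{X I}$ with $t'_{XY}$ — rather than a twisted variant of $t''$ — deserves explicit care.
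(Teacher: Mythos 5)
Your proposal is correct and follows essentially the same route as the paper's proof: identify the cotensor structure morphism $[X,-]_{Y I}$ (read through the canonical isomorphisms) with the transpose of the strength $t''_{X Y}$ via the cotensor unit $\xi$ and the unwinding of Kleisli composition, then recognize the two composites around the square as $\widetilde{\otimes}_{X Y}$ and $\otimes_{X Y}$. The paper likewise verifies the $\widetilde{\otimes}$ case by an explicit diagram and declares the $\otimes$ case analogous, so the only difference is presentational.
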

\begin{proof}
We will prove given statement relating $\widetilde{\otimes}_{X Y}$ to the upper-right composite; the proof of analogous statement regarding $\otimes_{X Y}$ is analogous.  Observe that the composite in question appears along the left side of the following diagram
$$
\renewcommand{\objectstyle}{\scriptstyle}
\renewcommand{\labelstyle}{\scriptstyle}
\xymatrix@!0@C=12ex @R=6ex{
\uX_\TT^\op(X,I)\boxtimes\uX_\TT^\op(Y,I) \ar[d]_{1 \boxtimes [X,-]_{Y I}} \ar@{=}[rr] & & \uX(I,TX)\boxtimes\uX(I,TY) \ar[rr]^\sim & & TX \boxtimes TY \ar[d]^{1 \boxtimes \tilde{t}_{XY}''} \ar@{=}[rr] & & TX \boxtimes TY \ar[dd]|{t'_{X TY}} \ar@/^7ex/[dddd]^{\widetilde{\otimes}_{XY}} \\
\uX_\TT^\op(X,I)\boxtimes\uX_\TT^\op(X\boxtimes Y,X) \ar[ddd]_c \ar@{=}[rr] & & \uX(I,TX)\boxtimes\uX(X,T(X\boxtimes Y)) \ar[d]_{1\boxtimes T} \ar[rr]^\sim & & TX \boxtimes \uX(X,T(X\boxtimes Y)) \ar[d]^{1 \boxtimes T} & & \\
 & & \uX(I,TX)\boxtimes \uX(TX,TT(X\boxtimes Y)) \ar[d]_c \ar[rr]^\sim & & TX \boxtimes \uX(TX,TT(X\boxtimes Y)) \ar[d]^\Ev & & T(X \boxtimes TY) \ar[d]^{Tt''_{XY}}\\
 & & \uX(I,TT(X\boxtimes Y)) \ar[d]_{\uX(1,\mu)} \ar[rr]^\sim & & TT(X \boxtimes Y) \ar[d]^\mu \ar@{=}[rr] & & TT(X\boxtimes Y) \ar[d]^\mu \\
 \uX_\TT^\op(X \boxtimes Y,I) \ar@{=}[rr] && \uX(I,T(X \boxtimes Y)) \ar[rr]^\sim & & T(X\boxtimes Y) \ar@{=}[rr] & & T(X \boxtimes Y) 
}
$$
in which $\tilde{t}_{XY}''$ is the transpose of $t''_{XY}:X \boxtimes TY \rightarrow T(X \boxtimes Y)$ \pbref{def:comm_mnd}.  It suffices to show that the diagram commutes.  Observe first that the rectangle at the bottom-left commutes by the definition of the composition morphism for $\uX_\TT$.  The cell at the extreme right side of the diagram commutes by the definition of $\widetilde{\otimes}$.  Also the three squares in middle column clearly commute, as does the square at the bottom-right.  Regarding the large rectangle at the top-right, we find that the transposes of the composites therein are the two composites on the periphery of the following diagram, which clearly commutes.
$$
\xymatrix {
TY \ar[r] \ar[d]_{\tilde{t}''_{XY}} & \uX(X,X \boxtimes TY) \ar[dl]^{\uX(X,t''_{XY})} \ar[r]^T & \uX(TX,T(X \boxtimes TY)) \ar[d]^{\uX(TX,Tt''_{XY})} \\
\uX(X,T(X\boxtimes Y)) \ar[rr]_T & & \uX(TX,TT(X\boxtimes Y))
}
$$
Regarding the remaining rectangle, at the top-left, we must show that the morphisms
\begin{enumerate}
\item[] $\uX(I,TY) = \uX_\TT^\op(Y,I) \xrightarrow{[X,-]_{Y I}} \uX_\TT^\op(X \boxtimes Y,X) = \uX(X,T(X \boxtimes Y))$
\item[] $\uX(I,TY) \xrightarrow{\sim} TY \xrightarrow{\tilde{t}''_{XY}} \uX(X,T(X\boxtimes Y))$
\end{enumerate}
are equal, but the first is the transpose of the path around the top, right, and bottom sides of the following commutative diagram, and the second is the transpose of the left side.
$$
\renewcommand{\objectstyle}{\scriptstyle}
\renewcommand{\labelstyle}{\scriptscriptstyle}
\xymatrix@!0@C=12ex @R=6ex{
X \boxtimes \uX(I,TY) \ar@{=}[rr] \ar[d]_{\wr} & & X \boxtimes \uX_\TT^\op(Y,I) \ar[rrr]^{\xi \boxtimes 1} & & & \uX_\TT^\op(X \boxtimes Y,Y) \boxtimes \uX_\TT^\op(Y,I) \ar@{=}[ddl] \ar[dddd]^c \\
X \boxtimes TY \ar@/_13ex/[dddd]^{t''_{X Y}} \ar[d]_{\upsilon \boxtimes 1} \ar[drr]^{\xi \boxtimes 1} & & & & & \\
\uX(Y,X \boxtimes Y) \boxtimes TY \ar[d]_{T \boxtimes 1} \ar[rr]_(.45){\uX(1,\eta) \boxtimes 1} & & \uX(Y,T(X \boxtimes Y)) \boxtimes TY \ar[d]_{T \boxtimes 1} \ar@{<-}[rr]^\sim & & \uX(Y,T(X\boxtimes Y)) \boxtimes \uX(I,TY) \ar[d]^{T \boxtimes 1} & \\
\uX(TY,T(X\boxtimes Y)) \boxtimes TY \ar@{=}[d] \ar[rr]^{\uX(1,T\eta) \boxtimes 1} & & \uX(TY,TT(X\boxtimes Y)) \boxtimes TY \ar[dll]|{\uX(1,\mu) \boxtimes 1} \ar[d]^\Ev \ar@{<-}[rr]^\sim & & \uX(TY,TT(X \boxtimes Y)) \boxtimes \uX(I,TY) \ar[d]^c & \\
\uX(TY,T(X \boxtimes Y)) \boxtimes TY \ar[d]^\Ev & & TT(X \boxtimes Y) \ar[dll]_\mu \ar@{<-}[rr]^\sim & & \uX(I,TT(X \boxtimes Y)) \ar@{=}[r] \ar[dr]|{\uX(1,\mu)} & \uX_\TT^\op(X \boxtimes Y,I) \ar@{=}[d] \\
T(X \boxtimes Y) \ar@{<-}[rrrrr]^\sim & & & & & \uX(I,T(X \boxtimes Y))
}
$$

\end{proof}

\begin{ThmSub}\label{thm:mnds_assoc_comm_xth_are_comm}
Let $\T$ be a commutative $\X$-theory, where $\X$ is a countably-complete and~\hbox{-cocomplete} cartesian closed category.  Then the associated (isomorphic) $\X$-monads $\TT_\natural$, $\TT_!$ are commutative and have tensor products of algebras \pbref{def:tens_prod_algs}, and
$$\uX^{\TT_\natural} \simeq \Alg{\T} \simeq \Alg{\T}_! \cong \uX^{\TT_!}\;.$$
\end{ThmSub}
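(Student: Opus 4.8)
The plan is to read off the displayed chain of equivalences from the monadicity results already in hand, to get tensor products of algebras from a reflexive-coequalizer–preservation argument, and to extract commutativity from the two lemmas \bref{thm:emb_theory_into_kleisli_op} and \bref{thm:tensor_morphs_via_kleisli_comp} together with the coend presentation of $T_\natural$. I expect the commutativity claim to be the only substantive part; everything else is bookkeeping on top of cited results.

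\emph{The displayed string and tensor products.} Since $F_\natural \dashv G_\natural : \Alg{\T} \rightarrow \uX$ is $\X$-monadic \pbref{thm:nalg_monadic}, the comparison $\X$-functor gives an equivalence $\Alg{\T} \simeq \uX^{\TT_\natural}$; since $F_! \dashv G_! : \Alg{\T}_! \rightarrow \uX$ is strictly $\X$-monadic \pbref{thm:nalg_monadic}, the comparison $\X$-functor is an isomorphism $\Alg{\T}_! \cong \uX^{\TT_!}$; and $\Alg{\T} \simeq \Alg{\T}_!$ by \bref{thm:alg_iso_nalg}, yielding $\uX^{\TT_\natural} \simeq \Alg{\T} \simeq \Alg{\T}_! \cong \uX^{\TT_!}$. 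For tensor products of algebras: $\X$ is countably-cocomplete, hence has coequalizers of reflexive pairs, and $T_! = G_! F_!$ preserves them, since $F_!$ is a left $\X$-adjoint and $G_!$ creates (in particular preserves) $\X$-coequalizers of reflexive pairs by \bref{thm:nalg_forg_func_cr_refl_coeqs}; so by \cite{Lin:CoeqCatsAlgs} (exactly as in the Proposition preceding \bref{thm:smclosed_em_adj}) $\TT_!$ has tensor products of algebras \pbref{def:tens_prod_algs}. Since $\TT_\natural \cong \TT_!$ as $\X$-monads \pbref{thm:assoc_mnds_iso}, so that the algebra categories and the defining reflexive pairs of \bref{def:tens_prod_algs} correspond, the same holds for $\TT_\natural$.

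\emph{Commutativity.} By \bref{thm:tensor_morphs_via_kleisli_comp}, $\TT_\natural$ is commutative iff the square \eqref{eqn:tensor_morphs_via_kleisli_comp} commutes for every pair $X,Y \in \X$. The $\X$-fully-faithful $\X$-functor $\T \rightarrowtail \uX_{\TT_\natural}^\op$ of \bref{thm:emb_theory_into_kleisli_op} identifies $\T$ with the full sub-$\X$-category of $\uX_{\TT_\natural}^\op$ spanned by the copowers $n\cdot 1$ of the terminal object $1 = I$; under this identification the cotensor $[m,n]$ of $\T$ \pbref{par:power_morphs} goes to the cotensor $[m\cdot 1,n\cdot 1] = (m\cdot 1)\boxtimes(n\cdot 1)$ of $\uX_{\TT_\natural}^\op$, composition morphisms go to composition morphisms, and the canonical maps $[m,-]_{n1}$ go to the maps $[X,-]_{YI}$ of \bref{thm:tensor_morphs_via_kleisli_comp}. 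Hence the instance of \eqref{eqn:tensor_morphs_via_kleisli_comp} with $X = m\cdot 1$, $Y = n\cdot 1$ is precisely the square \eqref{eqn:commutative_th} expressing commutativity of $\T$, so the hypothesis gives that \eqref{eqn:tensor_morphs_via_kleisli_comp} commutes whenever $X,Y$ are copowers of $1$. To pass to arbitrary $X,Y$ I would invoke the coend presentation $T_\natural X = \int^{n\in\N} \T(\tau(n),1)\times X^n$ \pbref{rem:charn_t_nat}: since $\boxtimes$ preserves colimits in each variable and $(X\boxtimes Y)^k \cong X^k \times Y^k$, both composite legs of \eqref{eqn:tensor_morphs_via_kleisli_comp} — which are $\X$-natural in $X,Y$ \pbref{rem:enr_nat_of_tensor_maps} — are assembled, via these coends, out of $\X$-functors of the form $X \mapsto X^n \cong \uX(n\cdot 1,X)$, which are $\X$-representable; by the enriched Yoneda lemma such $\X$-natural families are determined by their values at the copowers $m\cdot 1, n\cdot 1$ (where $T_\natural(m\cdot 1) = \T(m,1)$ by \bref{rem:charn_f_nat}). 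Therefore \eqref{eqn:tensor_morphs_via_kleisli_comp} commutes for all $X,Y$, so $\TT_\natural$ — and hence $\TT_! \cong \TT_\natural$ by \bref{rem:comm_inv_under_iso} — is commutative.

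\emph{Where the work is.} The delicate step is this last reduction. Lemma \bref{thm:tensor_morphs_via_kleisli_comp} is stated for all objects of $\X$, whereas commutativity of $\T$ only constrains the copowers of the terminal object, and these are in general far from dense in $\X$; so the reduction cannot be done by a generation argument inside $\X$ but must be effected functorially, through the coend formula for $T_\natural$ and the corepresentability of $X \mapsto X^n$. Making precise that the $\X$-enriched naturality of the two legs of \eqref{eqn:tensor_morphs_via_kleisli_comp} really forces the desired coherence from its validity on copowers of $1$ is the one place that needs care; the matching of cotensors, composition morphisms and the power maps $[m,-]_{n1} \leftrightarrow [X,-]_{YI}$ under \bref{thm:emb_theory_into_kleisli_op}, and the residual naturality manipulations, are routine.
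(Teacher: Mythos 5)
Your proposal is correct and takes essentially the same route as the paper: the displayed equivalences come from \bref{thm:nalg_monadic} and \bref{thm:alg_iso_nalg}, tensor products from the reflexive-coequalizer results, and commutativity is reduced, via \bref{thm:tensor_morphs_via_kleisli_comp} and the embedding \bref{thm:emb_theory_into_kleisli_op}, to the case of copowers of $1$ by exhibiting $T_\natural(-)\times T_\natural(-)$ as a left $\X$-Kan-extension along $J\times J$ through the coend formula of \bref{rem:charn_t_nat}. The step you flag as delicate is exactly the one the paper handles by citing the universal property of the left Kan extension (\cite{Ke:Ba} 4.43), which is the precise form of your "enriched Yoneda" reduction.
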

\begin{proof}
The given isomorphisms and equivalences were established in \bref{thm:nalg_monadic}, so it suffices to show that $\TT_\natural$ is commutative, since it then follows by \bref{rem:comm_inv_under_iso} that $\TT_!$ is also commutative and further, by \bref{thm:nalg_forg_func_cr_refl_coeqs}, that both have tensor products of algebras.

Let $\TT := \TT_\natural$, $\TT = (T,\eta,\mu)$.  By \bref{rem:enr_nat_of_tensor_maps}, we have $\X$-natural transformations
$$
\xymatrix{
\uX \times \uX \ar[r]^{T \times T} \ar[d]_\times & \uX \times \uX \ar@{=>}[dl]|{\otimes,\widetilde{\otimes}} \ar[d]^\times \\
\uX \ar[r]_T & \uX
}
$$
and we must show that $\otimes = \widetilde{\otimes}$.  But we claim that the upper-right composite $\X$-functor is a left $\X$-Kan-extension of its restriction along $J \times J:\N \times \N \rightarrow \uX \times \uX$ \pbref{par:th_in_presence_of_finite_copowers}, so that by \cite{Ke:Ba} 4.43 it then suffices to show that the restrictions of $\otimes,\widetilde{\otimes}$ along $J \times J$ are equal.  Indeed, using \bref{rem:charn_t_nat}, \bref{rem:charn_f_nat}
$$
\begin{array}{lll}
TX \times TY & = & (\int^{m \in \N} \T(\tau(m),1) \times X^m) \times (\int^{n \in \N} \T(\tau(n),1) \times Y^n) \\
& \cong & \int^{m,n \in \N} \T(\tau(m),1) \times \T(\tau(n),1) \times X^m \times Y^n \\
 & \cong & \int^{m,n \in \N} T(m \cdot 1) \times T(n \cdot 1) \times \uX(m \cdot 1,X) \times \uX(n \cdot 1,Y)\\
 & = & \int^{m,n \in \N} TJm \times TJn \times \uX(Jm,X) \times \uX(Jn,Y) \\
 & = & (\Lan_{J \times J}(TJ-\:\times\:TJ-))(X,Y)
\end{array}
$$
$\X$-naturally in $X,Y \in \uX$.

Hence it now suffices to show that $\otimes_{m \cdot 1, n \cdot 1} = \widetilde{\otimes}_{m \cdot 1, n \cdot 1}$ for all finite copowers $m \cdot 1$, $n \cdot 1$ of the terminal object $1$ of $\X$.  But the equality of these morphisms is equivalent to the commutativity of the square \eqref{eqn:tensor_morphs_via_kleisli_comp} with $X := m \cdot 1$, $Y := n \cdot 1$, and via the $\X$-fully-faithful $\X$-functor $\T \rightarrowtail \uX_\TT^\op$ \pbref{thm:emb_theory_into_kleisli_op} this reduces to the commutativity of the analogous square \eqref{eqn:commutative_th} characterizing the commutativity of $\T$.
\end{proof}

\section{$R$-modules in a category}\label{sec:rmods_alg}

\begin{ParSub}\label{par:th_of_rmods}
Let $\X$ be a category with finite products, and let $R$ be a commutative (unital) ring object in $\X$.  We shall define an $\X$-theory $\T_R$ with
$$\T_R(m,n) := R^{n \times m}\;,\;\;\;\;(m,n\;\text{finite ordinals})\;.$$
As the composition morphisms, we take the \textit{matrix multiplication} morphisms
$$R^{n \times m} \times R^{m \times l} = \T_R(m,n)\times\T_R(l,m) \xrightarrow{c} \T_R(l,n) = R^{n \times l}\;,$$
induced by the composites
$$R^{n \times m} \times R^{m \times l} \xrightarrow{\pi_i \times \pi_k} R^m \times R^m \xrightarrow{((\pi_j,\pi_j))_{j=1}^m} (R \times R)^m \xrightarrow{(\cdot)^m} R^m \xrightarrow{(+)_m} R\;,$$
($i = 1,...,n$, $k = 1,...,l$), where $(\cdot)$ is the multiplication carried by $R$ and $(+)_m$ is the $m$-fold addition.  We take the identity morphism
$$\delta:1 \rightarrow R^{n \times n} = \T_R(n,n)\;,\;\;\;\;(\text{$n$ a finite ordinal})$$
to be the \textit{identity matrix} induced by the morphisms
$$\delta_{i j} : 1 \rightarrow R\;,\;\;\;\;(i,j = 1,...,n)$$
defined as the multiplicative identity of $R$ if $i = j$ and the zero otherwise.
\end{ParSub}

\begin{PropSub}\label{thm:tr_enr_theory}
$\T_R$ is an $\X$-theory.
\end{PropSub}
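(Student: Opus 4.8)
The content of the statement is that the data of \bref{par:th_of_rmods} verify \bref{def:xtheory}: one must check that $\T_R$ is a genuine $\X$-category --- i.e.\ that the ``matrix multiplication'' composition morphisms are associative and that the ``identity matrix'' morphisms $\delta$ are two-sided units --- and that for each finite ordinal $n$ the object $n$ carries a cone $(\pi_i:n\to 1)_{i=1}^n$ exhibiting it as an $n$-th $\X$-power of $1$ in $\T_R$. Since \bref{par:th_of_rmods} assumes only that $\X$ has finite products, the plan is to reduce all of this to the classical fact that matrices over an ordinary commutative ring form a category with finite powers, by means of the Yoneda embedding. Concretely, the family of representable functors $\X(A,-):\X\to\Set$ $(A\in\X)$ is jointly faithful and jointly reflects limits; hence it suffices to verify the associativity and unit equations, and the assertion that each candidate cone $(\pi_i)_i$ becomes a product cone, after applying every $\X(A,-)$.

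So fix $A\in\X$ and put $S_A:=\X(A,R)$. As $\X(A,-)$ preserves finite products and carries the commutative-ring-object structure of $R$ (its multiplication, addition, multiplicative unit, and zero) to ordinary operations, $S_A$ is an ordinary commutative ring and $\X(A,\T_R(m,n))=\X(A,R^{n\times m})\cong S_A^{n\times m}$, naturally in the finite ordinals $m,n$. Unwinding the definitions of the composition morphisms $c$ and of the identities $\delta$ in \bref{par:th_of_rmods} --- and again invoking that $\X(A,-)$ preserves finite products and the ring operations of $R$ --- one checks that, under these identifications, $c$ is carried to ordinary matrix multiplication over $S_A$ and $\delta$ to the $n\times n$ identity matrix. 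Granting this, $\X(A,-)$ sends $\T_R$ onto the classical category of matrices over $S_A$, which is well known to be a category; since the $\X(A,-)$ are jointly faithful, the associativity and unit laws hold in $\X$, so $\T_R$ is an $\X$-category.

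For the $\X$-powers, I would take $\pi_i:n\to 1$ $(1\le i\le n)$ to be the $i$-th row of the identity matrix, i.e.\ the morphism $1\to R^n=\T_R(n,1)$ whose $j$-th component $1\to R$ is the multiplicative identity of $R$ when $j=i$ and the zero of $R$ otherwise. To see that $(\pi_i)_{i=1}^n$ exhibits $n$ as an $n$-th $\X$-power of $1$ --- equivalently, that $\langle\T_R(l,\pi_i)\rangle_{i=1}^n:\T_R(l,n)\to\T_R(l,1)^n$ is an isomorphism in $\X$ for every finite ordinal $l$ --- apply $\X(A,-)$: by the computation of composition above, $\pi_i\circ f$ is the $i$-th row of any matrix $f\in S_A^{n\times l}=\X(A,\T_R(l,n))$, so the induced map $\X(A,\T_R(l,n))\to\prod_{i=1}^n\X(A,\T_R(l,1))$ is the bijection carrying a matrix to the sequence of its rows. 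Thus every $\X(A,-)$ sends $(\pi_i)_i$ to a product cone, whence $(\pi_i)_i$ is an $\X$-power cone (the degenerate case $n=0$ being the empty cone on the terminal object $\T_R(l,0)=R^0=1$). Together with the previous paragraph this shows $\T_R$ is an $\X$-theory.

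I expect the only non-formal point --- and hence the main, though minor, obstacle --- to be the verification that the concrete morphisms $c$ and $\delta$ built in \bref{par:th_of_rmods} from the ring structure of $R$ really do translate into matrix multiplication and the identity matrix over $S_A$ after applying $\X(A,-)$; once that bookkeeping is done, everything else is inherited from the elementary theory of matrices over a commutative ring.
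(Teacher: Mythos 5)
Your proposal is correct and follows essentially the same route as the paper: reduce the $\X$-category axioms and the power-cone verification to the classical matrix calculus over an ordinary commutative ring by applying the jointly faithful, product-preserving representables $\X(A,-)$ (the paper phrases this as ``reducing to the case $\X=\SET$''), and take the power projections $n\to 1$ to be the rows of the identity matrix. The only cosmetic difference is that, per the paper's conventions in \bref{par:cat_classes}, the representables land in a category $\SET$ of classes rather than $\Set$, since $\X$ is not assumed locally small.
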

\begin{proof}
Let $\SET$ be a category of classes \pbref{par:cat_classes} in which lie the hom-classes of $\X$.  For each object $X$ of $\X$, the functor $\X(X,-):\X \rightarrow \SET$ preserves products, so the commutative ring object $R$ in $\X$ is sent to a commutative ring object $\X(X,R)$ in $\SET$.  In order to verify that $\T_R$ satisfies the diagrammatic laws for an $\X$-category, it suffices (by Yoneda) to verify for each $X \in \X$ the commutativity of the associated diagrams in $\SET$ obtained by applying the product-preserving functor $\X(X,-)$.  In this way, we reduce this verification to the case of $\X = \SET$, where the associativity and unit laws are simply the familiar associativity and unit laws for matrix multiplication.

Having thus deduced that $\T_R$ is an $\X$-category, we claim that for each object $n$ of $\T_R$, the morphisms
$$p_i : 1 \rightarrow R^n = \T_R(n,1)\;,\;\;\;\;(i = 1,...,n)$$
in $\X$ induced by $\delta_{i j}:1 \rightarrow R$ $(j = 1,...,n)$ serve as the components
$$p_i:n \rightarrow 1\;,\;\;\;\;(i = 1,...,n)$$
in $\T_R$ of an $\X$-power cone.  Indeed, for each object $m \in \T_R$, the (ordinary) functor $\T_R(m,-):\T_R \rightarrow \X$ sends each $p_i$ to the composite
$$R^{n \times m} = \T_R(m,n) \xrightarrow{(p_i,1)} \T_R(n,1)\times\T_R(m,n) = R^{1 \times n} \times R^{n \times m} \xrightarrow{c} \T_R(m,1) = R^m\;,$$
which we claim is merely the projection $\pi_i$ in $\X$; again we may reduce the verification of this claim to the case of $\X = \SET$, where it is immediate.
\end{proof}

\begin{PropSub}
The $\X$-theory $\T_R$ is commutative \pbref{def:comm_th}.
\end{PropSub}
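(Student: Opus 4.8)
The plan is to reduce, exactly as in the proof of \bref{thm:tr_enr_theory}, to an elementary identity about matrices over an ordinary commutative ring. By the Yoneda lemma it suffices to show that for each object $X$ of $\X$ the diagram \eqref{eqn:commutative_th} defining commutativity of $\T = \T_R$ commutes after applying the product-preserving functor $\X(X,-):\X \rightarrow \SET$. This functor carries the commutative ring object $R$ to the commutative ring object $S := \X(X,R)$ in $\SET$, and carries the composition, unit, and power morphisms of $\T_R$ to the corresponding data built from $S$ (all of which are defined purely in terms of the ring structure and finite products); hence the image of \eqref{eqn:commutative_th} is precisely the instance of \eqref{eqn:commutative_th} attached to the theory $\T_S$ of \bref{par:th_of_rmods}. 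So it is enough to treat the case $\X = \SET$ with $R$ an arbitrary commutative ring object.

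Next, since the terminal object $1$ is a dense, hence strong, generator of $\SET$ \pbref{par:cat_classes}, the functor $\SET(1,-)$ is faithful, and it preserves finite products, so it suffices to check that the two legs of \eqref{eqn:commutative_th} agree on global elements. Under $\SET(1,-)$ the hom-object $\T_R(m,n) = R^{n \times m}$ becomes the set of $n \times m$ matrices over the ordinary commutative ring $\bar R := \SET(1,R)$, the composition morphism $c$ becomes ordinary matrix multiplication (this being precisely how $c$ was defined in \bref{par:th_of_rmods}), the unit becomes the identity matrix, and the power morphism $[m,-]_{n1}:\T_R(n,1) \rightarrow \T_R(m \times n, m)$ becomes the block-diagonal embedding carrying a row vector $b \in \bar R^{n}$ to the $m \times (m \times n)$ matrix $\widehat b$ with $m$ copies of $b$ down the block diagonal and zeros elsewhere; similarly $[n,-]_{m1}$ becomes $a \mapsto \widehat a$ (this follows by unwinding the cotensor description of $[m,n]$ in \bref{par:th_in_presence_of_finite_copowers}).

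It then remains to chase a pair $(a,b) \in \bar R^{m} \times \bar R^{n}$ around the square. Along the leg $1 \times [m,-]_{n1}$ followed by $c$, the pair maps to $a\widehat b \in \bar R^{1 \times (m \times n)}$, whose entry at position $(i,j) \in m \times n$ is $a_i b_j$ (each entry of $a\widehat b$ has a single nonzero summand, as $\widehat b$ is block-diagonal). Along the leg $[n,-]_{m1} \times 1$ followed by $c$, the pair maps to $b\widehat a$, whose entry at $(i,j)$ is $b_j a_i$. Both legs land in the object displayed as $\T_R(m \times n, 1)$ at the bottom-right of \eqref{eqn:commutative_th}, indexed the same way, so no transposition intervenes; the two outputs therefore coincide precisely by the commutativity of multiplication in $\bar R$. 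Hence \eqref{eqn:commutative_th} commutes, and $\T_R$ is a commutative $\X$-theory.

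The only delicate point is the bookkeeping: one must unwind the definitions of the power morphisms $[m,-]$ from \bref{par:th_in_presence_of_finite_copowers} and of the matrix-multiplication composition of $\T_R$ from \bref{par:th_of_rmods} consistently enough to see that the two legs of \eqref{eqn:commutative_th} produce matrices with literally the same index set, after which the single identity $a_i b_j = b_j a_i$ carries the whole argument. No ring axiom other than commutativity of multiplication is invoked.
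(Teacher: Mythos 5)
Your proof is correct and follows essentially the same route as the paper's: reduce via the Yoneda lemma and the product-preserving functors $\X(X,-)$ to the case of a commutative ring in $\SET$, identify the power morphisms $[m,-]_{n1}$ as block-diagonal matrices, and observe that the two legs of \eqref{eqn:commutative_th} produce the matrices with entries $a_i b_j$ and $b_j a_i$ respectively, which agree by commutativity of the ring. The extra elementwise reduction via $\SET(1,-)$ is a harmless elaboration of what the paper leaves implicit.
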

\begin{proof}
One computes that the canonical morphisms
$$[m,-]_{n 1}:R^n = \T(n,1) \rightarrow \T(m \times n,m \times 1) = R^{m \times (m \times n)}$$
employed in the definition of commutativity (\bref{def:comm_th}, \bref{par:power_morphs}) are induced by the morphisms
$$\theta_{j j'}:R^n \rightarrow R^n\;,\;\;\;\;(j,j' = 1,...,m)$$
where $\theta_{j j'}$ is the identity morphism if $j = j'$ and the zero morphism otherwise.  We must show that the diagram
$$
\xymatrix{
R^m \times R^n \ar[d]_{[n,-]_{m 1} \times 1} \ar[r]^(.45){1 \times [m,-]_{n 1}} & R^m\times R^{m \times (m \times n)} \ar[d]^c \\
R^{n \times (m \times n)}\times R^n \ar[r]^c & R^{m \times n}
}
$$
commutes for all $m,n$, where $c$ denotes the matrix multiplication morphism.  Again using the Yoneda lemma as in \bref{thm:tr_enr_theory}, we reduce to the case of $\X = \SET$, where it is straightforward to verify that both composites send a pair of row vectors $(U = (U_j)_{j = 1}^m,V = (V_i)_{i = 1}^n)$ to the matrix $(U_j \cdot V_i)_{j=1,...,m,i=1,...,n}$.
\end{proof}

\begin{ThmSub}\label{thm:rmod_cat_of_algs_comm_th}
Given a commutative ring object $R$ in a category $\X$ with finite products, the category $\RMod(\X)$ of $R$-module objects in $\X$ is isomorphic to the category $\Alg{\T_R}_!$ of normal $\T_R$-algebras for the commutative $\X$-theory $\T_R$.
\end{ThmSub}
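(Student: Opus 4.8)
The plan is to prove that, for each object $X$ of $\X$, the collection of $R$-module structures on $X$ is in natural bijection with the collection of normal $\T_R$-algebra structures on $X$, and that this bijection is compatible with the two notions of homomorphism. By \bref{rem:alt_desc_normal_talg} and \bref{def:alt_def_norm_talg}, a normal $\T_R$-algebra amounts to an object $X$ together with morphisms $A_m\colon\T_R(m,1)\times X^m = R^m\times X^m\to X$ ($m\in\N$) subject to the unit and composition diagrams recorded there; and by \bref{rem:alt_desc_thomom}, via \eqref{eqn:alt_desc_thomom}, a $\T_R$-homomorphism of normal $\T_R$-algebras is exactly a morphism of $\X$ commuting with all the $A_m$. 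Thus the whole statement reduces to an equivalence of elementary structures carried by objects of $\X$.

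I would set up the bijection as follows. Given an $R$-module object $(X,+,0,-,a\colon R\times X\to X)$, let $\Phi$ assign the normal $\T_R$-algebra $(X,(A_m))$ in which $A_m$ is the \emph{generalized linear combination} morphism, namely the composite
\[
R^m\times X^m \;\xrightarrow{\ \sim\ }\; (R\times X)^m \;\xrightarrow{\ a^m\ }\; X^m \;\xrightarrow{\ +_m\ }\; X
\]
with $+_m$ the $m$-fold addition and $A_0 = 0\colon 1\to X$. Conversely, given a normal $\T_R$-algebra $(X,(A_m))$, let $\Psi$ recover the action as $a := A_1\colon R\times X\to X$, the addition as $A_2$ precomposed with the morphism $1\times X^2\to R^2\times X^2$ picking out the row vector $(1_R,1_R)$, the zero as $A_0$, and the negation as $A_1$ precomposed with the morphism $1\times X\to R\times X$ picking out $-1_R$, where $1_R,-1_R\colon 1\to R$ are the evident global elements of the commutative ring object $R$.

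To verify that $\Phi$ and $\Psi$ are well defined, mutually inverse, and compatible with homomorphisms, I would use the Yoneda reduction already employed in \bref{thm:tr_enr_theory}: each required identity is a commutativity statement about a diagram built from finite products in $\X$, so it suffices to apply the product-preserving functor $\X(Y,-)\colon\X\to\SET$ for every $Y\in\X$; this turns $R$ into the commutative ring $\X(Y,R)$, turns a candidate $R$-module object into an ordinary $\X(Y,R)$-module and a candidate normal $\T_R$-algebra into an ordinary algebra of the (classical) theory whose operations are matrices over $\X(Y,R)$ with matrix multiplication as composition, and thereby reduces every claim to entirely classical facts about modules over a commutative ring. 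In particular: the $\T_R$-algebra laws for $(A_m)$ follow from the module axioms (so $\Phi$ is well defined); the module axioms for the operations recovered by $\Psi$ follow from the unit and composition laws (so $\Psi$ is well defined); $\Psi\Phi = \mathrm{id}$ is immediate from the defining formulas; and compatibility with homomorphisms holds since a morphism $h$ commutes with all $A_m$ iff it commutes with $A_1$ and with $A_2(1_R,1_R;-,-)$, i.e.\ with the action and the addition.

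The step I expect to be the main obstacle is $\Phi\Psi = \mathrm{id}$, i.e.\ that the composition law of a normal $\T_R$-algebra forces $A_m((r_i);(x_i))$ to equal the evident linear combination built from $A_1$ and $A_2$, not merely for $m\le 2$. The clean route is to instantiate the composition diagram of \bref{def:alt_def_norm_talg} at the factorization in $\T_R$ of the row vector $(r_1,\dots,r_m)\colon m\to 1$ as the matrix product of $(1_R,\dots,1_R)\colon m\to 1$ with $\mathrm{diag}(r_1,\dots,r_m)\colon m\to m$, together with an iterated factorization of $(1_R,\dots,1_R)$ through binary sums, so as to express each $A_m$ entirely in terms of $A_0,A_1,A_2$; once this bookkeeping is set up, the Yoneda reduction to $\Set$ disposes of it. The remaining verifications are routine diagram chases of the same reduce-to-$\Set$ kind, and yield that $\Phi$ and $\Psi$ are inverse functors exhibiting the isomorphism $\RMod(\X)\cong\Alg{\T_R}_!$.
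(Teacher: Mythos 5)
Your proposal is correct and follows essentially the same route as the paper: the same linear-combination morphisms $A_m$ in one direction, the same recovery of addition, scalar action and zero from $A_2$, $A_1$, $A_0$ in the other, and the same Yoneda-style reduction of all verifications to the case $\X=\SET$ by applying the product-preserving functors $\X(Y,-)$. Your extra detail on why the composition law forces $A_m$ to be determined by $A_0,A_1,A_2$ (factoring a row vector through a diagonal matrix) fills in a step the paper dismisses as straightforward, and is a correct way to do so.
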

\begin{proof}
Given an $R$-module object $M$ in $\X$, we can define for each finite ordinal $n$ an associated `linear combination' morphism
$$A_n:\T(n,1) \times M^n = R^{1 \times n} \times M^n \rightarrow M$$
as the composite
$$R^n \times M^n \xrightarrow{(\pi_i,\pi_i)_{i=1}^n} (R \times M)^n \xrightarrow{(\cdot)^n} M^n \xrightarrow{(+)_n} M$$
where $(\cdot)$ is the scalar multiplication carried by $M$ and $(+)_n$ is the $n$-fold addition.  In order to show that $(M,(A_n))$ is a normal $\T_R$-algebra \pbref{def:alt_def_norm_talg}, we must show that the diagrams
$$
\xymatrix{
1 \times M^m \ar[r]^(.4){\delta \times 1} \ar[dr]_{\sim} & R^{m \times m} \times M^m \ar[d]|{(A_m \cdot (\pi_j \times 1))_{j=1}^m} & & R^{1 \times m} \times R^{m \times l} \times M^l \ar[r]^(.65){c \times 1} \ar[d]|{1 \times (A_l \cdot (\pi_j \times 1))_{j=1}^m} & R^{1 \times l} \times M^l \ar[d]^{A_l}\\                                                 & M^m                                 & & R^{1 \times m} \times M^m \ar[r]^{A_m} & {M^1\;,}
}
$$
commute.  Again this reduces to the case of $\X = \SET$ as in \bref{thm:tr_enr_theory}, where the verification is straightforward.

Conversely, given a normal $\T_R$-algebra $(X,(A_n))$, we can equip $X$ with the structure of an $R$-module object by taking as addition morphism the composite
$$X^2 \cong 1 \times X^2 \xrightarrow{(e,e) \times 1} R^2 \times X^2 \xrightarrow{A_2} X$$
(where $e$ is the multiplicative identity for $R$), as scalar multiplication simply $A_1:R^1 \times X^1 \rightarrow X$, and as additive identity $1 \xrightarrow{\sim} R^{1 \times 0} \times X^0 \xrightarrow{A_0} X$.  In order to verify the diagrammatic axioms for an $R$-module, one again reduces to the straightforward case of $\X = \SET$.

One may verify through another reduction to $\X = \SET$ that these processes are inverse to one another and further that every $\T_R$-homomorphism is a homomorphism of the corresponding $R$-module objects, and vice-versa.
\end{proof}

\begin{ThmSub} \label{thm:rmod_commutative_monadic}
Let $R$ be a commutative ring object in a countably-complete and~\hbox{-cocomplete} cartesian closed category $\X$.  Then the category $\RMod(\X)$ of $R$-module objects in $\X$ is isomorphic to the (ordinary) category of Eilenberg-Moore algebras of a commutative $\X$-monad $\TT$ with tensor products of algebras \pbref{def:tens_prod_algs}.
\end{ThmSub}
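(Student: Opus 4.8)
The plan is to assemble the statement from the results established in Sections~\bref{sec:fin_alg_ccc}--\bref{sec:rmods_alg}; no essentially new argument is needed, only a composition of previously constructed isomorphisms. First I would apply Theorem~\bref{thm:rmod_cat_of_algs_comm_th} to the given commutative ring object $R$: this produces an isomorphism of ordinary categories $\RMod(\X) \cong \Alg{\T_R}_!$, where $\T_R$ is the $\X$-theory of \bref{par:th_of_rmods}, and $\T_R$ is commutative by the proposition immediately preceding \bref{thm:rmod_cat_of_algs_comm_th}. This first step requires only that $\X$ have finite products.

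Next, using the additional hypotheses that $\X$ is countably complete and cocomplete and cartesian closed, I would invoke Theorem~\bref{thm:mnds_assoc_comm_xth_are_comm} with $\T := \T_R$. This yields a commutative $\X$-monad $\TT := \TT_!$ on $\uX$ which has tensor products of algebras \bref{def:tens_prod_algs}, together with an isomorphism of $\X$-categories $\Alg{\T_R}_! \cong \uX^{\TT_!}$ --- the rightmost link in the chain $\uX^{\TT_\natural} \simeq \Alg{\T} \simeq \Alg{\T}_! \cong \uX^{\TT_!}$, this last link being a strict isomorphism in view of the strict $\X$-monadicity of $F_! \dashv G_!$ from Theorem~\bref{thm:nalg_monadic}. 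Passing to underlying ordinary categories and composing with the isomorphism from the previous paragraph, I obtain $\RMod(\X) \cong \X^\TT$, the ordinary category of Eilenberg--Moore algebras of (the underlying ordinary monad of) the commutative $\X$-monad $\TT$, which is exactly the assertion.

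There is no real obstacle here: the substance was discharged in the enriched-theory machinery of the preceding sections, and the present proof is bookkeeping. The only points warranting a word of care are that the enriched isomorphisms restrict automatically to isomorphisms of underlying ordinary categories, and that one may equally take $\TT := \TT_\natural$, since $\TT_\natural \cong \TT_!$ by \bref{thm:assoc_mnds_iso} while both commutativity (\bref{rem:comm_inv_under_iso}) and the existence of tensor products of algebras are invariant under isomorphism of $\X$-monads. I would also remark that, since every isomorphism in the chain commutes with the relevant forgetful (and hence free) functors to $\X$, the resulting isomorphism $\RMod(\X) \cong \X^\TT$ is one over $\X$ and strictly compatible with the free functors, as is used in \bref{exa:rmod_smclosed_monadic}.
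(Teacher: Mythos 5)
Your proposal is correct and follows exactly the paper's own route: the paper's proof consists precisely of citing Theorem \bref{thm:rmod_cat_of_algs_comm_th} together with Theorem \bref{thm:mnds_assoc_comm_xth_are_comm}, which is the composition you spell out. Your added remarks on the invariance of commutativity and tensor products of algebras under isomorphism of monads, and on compatibility with the free and forgetful functors, are consistent with what the paper uses later in \bref{exa:rmod_smclosed_monadic}.
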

\begin{proof}
This follows from \bref{thm:rmod_cat_of_algs_comm_th} and \bref{thm:mnds_assoc_comm_xth_are_comm}.
\end{proof}
\chapter{The natural and accessible distribution monads}\label{ch:nat_acc_dist}
\setcounter{subsection}{0}

Given a commutative ring object $R$ in a suitable cartesian closed category $\X$, whose objects we call \textit{spaces}, we call the objects of the category $\sL$ of $R$-modules in $\X$ \textit{linear spaces}.  We call the morphisms in these categories simply \textit{maps} and \textit{linear maps}, respectively.  As an example, we may take $\X$ to be the category $\Conv$ of \textit{convergence spaces} \pbref{exa:conv_sm_sp}, which includes all topological spaces as a full subcategory; with $R = \RR$ or $\CC$, $\sL$ is then the category of \textit{convergence vector spaces}.  Another example is the category $\X = \Smooth$ of Fr\"olicher's \textit{smooth spaces} \pbref{exa:conv_sm_sp}, which includes paracompact smooth manifolds as a full subcategory, and here $\sL$ is the category of \textit{smooth vector spaces}.  In each case we can associate to every space $X$ a canonical linear space $[X,R]$ of maps $X \rightarrow R$, \textit{continuous} maps in our first example, and \textit{smooth} in our second.  Indeed, $[X,R]$ may be characterized as the \textit{cotensor} of $R$ in the $\X$-enriched category $\sL$.  Considering any locally compact Hausdorff space $X$ as an object of $\X = \Conv$, the (continuous) linear maps 
$$\mu:[X,R] \rightarrow R$$
are the \textit{$R$-valued Radon measures of compact support} on $X$ \pbref{exa:nat_dist_conv}.  On the other hand, given any separable paracompact smooth manifold $X$, as object of $\X = \Smooth$, the (smooth) linear maps $\mu:[X,R] \rightarrow R$ are the \textit{Schwartz distributions of compact support} on $X$ \pbref{exa:nat_dist_smooth}.

In general, we call the morphisms $[X,R] \rightarrow R$ in $\sL$ \textit{natural distributions} on $X$.  For suitable $\X$, the category of linear spaces $\sL$ is \textit{symmetric monoidal closed}, so we can form a canonical linear space $DX = \sL([X,R],R) = [X,R]^*$ whose elements are natural distributions.  Further, the resulting endofunctor $D:\X \rightarrow \X$ is part of an $\X$-enriched monad $\DD$ on $\uX$.  With categories of smooth spaces in mind, Kock has called $\DD$ the \textit{Schwartz double-dualization monad} \cite{Kock:ProResSynthFuncAn,Kock:Dist}.

Generalizing the above, we work in the present chapter with a given symmetric monoidal  adjunction $F \dashv G:\sL \rightarrow \X$ between given symmetric monoidal closed categories $\X$ and $\sL$.  We give a succinct definition of the natural distribution monad $\DD$ in this setting \pbref{def:nat_dist}, and we observe in \pbref{sec:nat_distn_dbl_dualn} that
$$DX = [X,R]^* \cong (FX)^{**}\;,$$
so that $DX$ is the double-dual of the \textit{free} linear space $FX$ on $X$.
Further, the $\X$-monad $\DD$ is induced, up to isomorphism, by a composite $\X$-adjunction
$$
\xymatrix {
\uX \ar@/_0.5pc/[rr]_F^(0.4){}^(0.6){}^{\top} & & \sL \ar@/_0.5pc/[ll]_G \ar@/_0.5pc/[rr]_{(-)^*}^(0.4){}^(0.6){}^{\top} & & {\sL^\op\;,} \ar@/_0.5pc/[ll]_{(-)^*}
}
$$
in which $(-)^*$ is the $\X$-functor sending a linear space $E$ to its dual $E^*$, so that the $\X$-monad $\HH$ induced by the rightmost $\X$-adjunction may be called the \textit{double-dualization monad}.  Hence $\DD$ may be obtained from $\HH$ via the leftmost $\X$-adjunction; in symbols, $\DD \cong [F,G](\HH)$ \pbref{thm:mon_func_detd_by_adj}.

Under suitable hypotheses, we next form the \textit{idempotent} $\sL$-enriched monad $\tHH$ induced by $\HH$ \pbref{sec:assoc_idm_mnd}, whose associated reflective subcategory $\tL \hookrightarrow \sL$ consists of the \textit{$\HH$-complete} objects \pbref{def:tcomplete}, which we call \textit{functionally complete} linear spaces \pbref{def:func_compl}.  Whereas the space of natural distributions $DX \cong (FX)^{**}$ is the double-dual of the free span $FX$ of $X$, we define the space of \textit{accessible distributions} $\tD X$ as the \textit{functional completion} $\tH FX$ of the free span.  Hence $\tD X$ embeds as a \textit{subspace} of $DX$, and so in our examples above consists again of certain functionals $\mu:[X,R] \rightarrow R$.  We define the \textit{accessible distribution monad} as $\tDD = [F,G](\tHH)$.

We prove that if the cotensors $[X,R]$ in $\sL$ are \textit{reflexive}, then $\tDD \cong \DD$, so that accessible distributions are the same as natural distributions.  Since Butzmann proved that the convergence vector spaces $[X,R]$ are indeed reflexive \cite{Bu}, this result is applicable in the example of $\X = \Conv$.

Since $\tDD$ is induced by the composite $\X$-adjunction
\begin{equation}\label{eq:fg_compl_composite_adj}
\xymatrix {
\X \ar@/_0.5pc/[rr]_F^(0.4){}^(0.6){}^{\top} & & \sL \ar@/_0.5pc/[ll]_G \ar@/_0.5pc/[rr]^(0.4){}^(0.6){}^{\top} & & {\tL\;,} \ar@{_{(}->}@/_0.5pc/[ll]
}
\end{equation}
the linear space $\tD X = \tH FX$ of accessible distributions on $X$ is equally the \textit{free functionally complete linear space} on $X$.  Hence, in the case that the cotensors $[X,R]$ are reflexive, and for $\X = \Conv$ in particular, the linear space $DX$ of natural distributions is the free functionally complete linear space on $X$.  Explicitly, given any continuous linear map $f:X \rightarrow E$ into a functionally complete convergence vector space $E$, there is a unique continuous linear map $f^\sharp:DX \rightarrow E$ such that the diagram
$$
\xymatrix{
X \ar[r]^{\delta_X} \ar[dr]_f & DX \ar@{-->}[d]^{f^\sharp} \\
                              & E
}
$$
commutes, where the map $\delta_X$ sends each point of $X$ to its associated \textit{Dirac functional}.  In particular, this characterizes the space of $R$-valued compactly-supported Radon measures on a locally compact space $X$ via a universal property.  These results shed light on a long-standing question of Lawvere as to what universal property might be possessed by \hbox{$\delta_X:X \rightarrow DX$} in the context of a ringed topos $\X$ \cite{Kock:ProResSynthFuncAn}.  Lawvere's question was prompted by a theorem of Waelbroeck \cite{Wae} to the effect that the linear space of Schwartz distributions of compact support has a universal property with respect to \textit{complete bornological vector spaces} \cite{HN}.

We show that the $\X$-monad $\tDD$ is always \textit{commutative}, and in view of a formal connection noted by Kock \cite{Kock:ProResSynthFuncAn,Kock:Dist} between the notion of commutative monad and the Fubini equation, we obtain as a corollary a Fubini theorem for accessible distributions \pbref{thm:fub_for_acc_distns}.  In the example of convergence spaces, where $\DD \cong \tDD$, we deduce that $\DD$ is commutative; the resulting Fubini theorem for natural distributions, which appears also in the author's recent paper \cite{Lu:Fu}, generalizes to arbitrary convergence spaces the classical Fubini theorem for compactly supported Radon measures on locally compact spaces \cite{Bou}, in particular yielding an entirely new proof of the latter.  These results again shed light on a further long-standing problem of Lawvere and Kock \cite{Kock:ProResSynthFuncAn,Kock:Dist} regarding the apparent non-commutativity of the Schwartz double-dualization monad.

In order to prove that $\tDD$ is commutative, we note that since $\tL$ is an $\sL$-enriched reflective subcategory of $\uL$, it follows by \bref{thm:enr_refl_smadj} that $\tL$ is symmetric monoidal closed and that the rightmost adjunction in \eqref{eq:fg_compl_composite_adj} is symmetric monoidal.  Hence, reasoning informally, the underlying ordinary adjunction of the composite \eqref{eq:fg_compl_composite_adj} is symmetric monoidal, so its induced monad $\tDD$ is symmetric monoidal and hence commutative \pbref{thm:comm_sm_mnd}.  But in the interest of rigour, we must also (in effect) verify that the resulting commutative $\X$-enriched \textit{structure} on (the ordinary monad underlying) $\tDD$ coincides with the given $\X$-enriched structure carried by $\tDD$, and this follows from theory developed in Chapter \bref{ch:smc_adj_comm_mnd}.

\section{The natural distribution monad}\label{sec:nat_distn_mnd}

\begin{ParSub} \label{par:smc_adj}
Throughout this chapter, we will consider a given symmetric monoidal adjunction $F \nsststile{\varepsilon}{\eta} G:\sL \rightarrow \X$, as in \eqref{eqn:smcadj}, where $\X = (\X,\btimes,I)$ and $\sL = (\sL,\otimes,R)$ are \textit{closed} symmetric monoidal categories.
By \bref{thm:assoc_enr_adj}, there is an associated $\X$-enriched adjunction $\acute{F} \nsststile{\varepsilon}{\eta} \grave{G}:G_*\aeL \rightarrow \aeX$ \eqref{eqn:assoc_enr_adj} whose underlying ordinary adjunction coincides with that of $F \nsststile{\varepsilon}{\eta} G$.
\end{ParSub}

\begin{ExaSub}\label{exa:linear_sp}
Given a commutative ring object $R$ in a countably-complete and~\hbox{-cocomplete} cartesian closed category $\X$, we obtain an example of an adjunction as in \bref{par:smc_adj} by taking $\sL := \RMod(\X)$ to be the category of $R$-modules in $\X$; see \bref{exa:rmod_smclosed_monadic}.  For example, if we take $\X$ to be the category $\Conv$ of \textit{convergence spaces} \pbref{exa:conv_sm_sp} and let $R := \RR$ or $\CC$, then $\RMod(\X)$ is the category of \textit{convergence vector spaces} \pbref{exa:rmodx_smc_and_xenr}.  With $\X = \Smooth$ the category of Fr\"olicher's \textit{smooth spaces} \pbref{exa:conv_sm_sp} and $R := \RR$, we obtain as $\RMod(\X)$ the category of \textit{smooth vector spaces} \pbref{exa:rmodx_smc_and_xenr}.
\end{ExaSub}

With examples of the sort given in \bref{exa:linear_sp} in mind, we shall at times employ the following synthetic terminology in order to render the exposition more intuitive.

\begin{DefSub}\label{def:first_synth_terminology}\emptybox
\begin{enumerate}
\item We call the objects of $\X$ \textit{spaces} and the morphisms of $\X$ \textit{maps}.
\item We call the objects of $\sL$ \textit{linear spaces} and the morphisms of $\sL$ \textit{linear maps}.
\item Given a linear space $E$, we call $GE$ \textit{the underlying space} of $E$, and given a linear map $h$, we call $Gh$ \textit{the underlying map} of $h$.
\item Given a space $X$, we call $FX$ the \textit{free linear space} on $X$, or the \textit{free span} of $X$.
\end{enumerate}
\end{DefSub}

\begin{DefSub} \label{def:nat_dist}
Given data as in \bref{par:smc_adj}, the $\X$-category $\eL := G_*\aeL$ is cotensored (by \bref{thm:assoc_enr_adj}), so we have in particular an $\X$-adjunction
\begin{equation}\label{eq:hom_cotensor}
\xymatrix {
\aeX \ar@/_0.5pc/[rr]_{[-,R]}^(0.4){}^(0.6){}^{\top} & & {\eL^\op\;.} \ar@/_0.5pc/[ll]_{\eL(-,R)}
}
\end{equation}
We call the $\X$-monad $\DD = (\bD,\delta^\DD,\kappa^\DD)$ on $\aeX$ induced by this $\X$-adjunction the \textit{natural distribution monad}.  Hence 
$$\bD X = \eL([X,R],R)\;\;\;\;\text{$\X$-naturally in $X \in \uX$.}$$
Employing the synthetic terminology of \bref{def:first_synth_terminology}, $DX = G\uL([X,R],R)$ is therefore the underlying space of the linear space $\uL([X,R],R)$, which we call \textit{the linear space of natural distributions} on $X$.
\end{DefSub}

\begin{ExaSub}\label{exa:nat_dist_conv}
In our example where $\sL$ is the category of convergence vector spaces \pbref{exa:linear_sp}, we know by \bref{exa:std_cotensors_in_rmod} that one can form cotensors $[X,R]$ in $\eL$ ($X \in \X$) by equipping the space $\aeX(X,R)$ of continuous $R$-valued functions with the pointwise vector space structure.  The associated cotensor unit morphism $\delta^\DD_X:X \rightarrow \eL([X,R],R)$ sends each $x \in X$ to the \textit{Dirac functional}, given by $f \mapsto f(x)$.  For each convergence space $X \in \X$, $DX = \eL([X,R],R)$ is the convergence vector space of continuous linear functionals $[X,R] \rightarrow R$.  In the case that $X$ is a locally compact Hausdorff topological space, it is well-known that the convergence structure on $G[X,R] = \aeX(X,R)$ (namely, \textit{continuous convergence}) coincides with \textit{uniform convergence on compact sets}; see, e.g., \cite{McCNta} III.1.  Hence the elements of $DX$ may be identified with the $R$-valued Radon measures of compact support on $X$.
\end{ExaSub}

\begin{ExaSub}\label{exa:nat_dist_smooth}
In our example where $\sL$ is the category of smooth vector spaces \pbref{exa:linear_sp}, we can form cotensors $[X,E]$ in $\sL$ by equipping the space $\uX(X,GE)$ of smooth $E$-valued functions on a smooth space $X$ with the pointwise vector space structure.  For each smooth space $X \in \X$, $DX = \sL([X,\RR],\RR)$ is the space of smooth linear functionals $[X,\RR] \rightarrow \RR$.  When $X$ is a separable paracompact smooth manifold (considered as a smooth space) the elements of $DX$ are the Schwartz distributions of compact support on $X$; indeed, in view of \cite{Fro:CccAnSmthMaps} 6, this is established in \cite{FroKr} 5.1.
\end{ExaSub}

\begin{ExaSub}\label{exa:monadic_exa_integ_notn}
Generalizing \bref{exa:nat_dist_conv} and \bref{exa:nat_dist_smooth}, we have a large class of examples in which $F \dashv G$ is the Eilenberg-Moore adjunction of a suitable symmetric monoidal monad $\TT$ \pbref{exa:em_cat_as_exa_smcadj}.  In this general setting, we can again form each cotensor $[X,R]$ in $\sL$ in such a way that its underlying space is $\uX(X,GR)$ \pbref{rem:standard_cotensors_in_talg}.  For the sake of illustration, we shall sometimes assume that $\X$ is a locally small \textit{well-pointed} cartesian closed category \pbref{par:well_pointed_ccc} and work with elements of the underlying sets of the objects of $\X$.  In this case, we employ the following integral notation
\begin{equation}\label{eq:scalar_integ_nat_distn}\int_x f(x)\;d\mu := \int f\;d\mu := \mu(f)\end{equation}
for each map $f:X \rightarrow R$ and each element $\mu:[X,R] \rightarrow R$ of $DX$.  In the example of a locally compact Hausdorff topological space $X$, considered in the setting of convergence spaces \pbref{exa:nat_dist_conv}, this accords with the identification of each Radon measure of compact support with its associated integration functional $\mu \in DX$.
\end{ExaSub}

\section{Embeddings of linear spaces}

We work with given data as in \bref{par:smc_adj}.

\begin{PropSub}\label{thm:epi_lepi_xepi}
For a morphism $e:E_1 \rightarrow E_2$ in $\sL$, the following are equivalent
\begin{enumerate}
\item $f$ is epi (resp. mono) in $\sL$.
\item $f$ is $\X$-epi (resp. $\X$-mono) in $\sL = G_*\uL$.
\item $f$ is $\sL$-epi (resp. $\sL$-mono) in $\uL$.
\end{enumerate}
\end{PropSub}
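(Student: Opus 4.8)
The plan is to exploit the tower of enrichments relating $\sL$ (as an ordinary category), the $\X$-category $G_*\uL$, and the $\sL$-category $\uL$, together with the fact that in each of these settings monos/epis are detected by the representable functors into a suitable base. First I would observe that all three ``hom-valued'' functors factor compatibly: for fixed $A \in \sL$, the ordinary hom $\sL(A,-):\sL \to \SET$, the $\X$-valued hom $(G_*\uL)(A,-) = G\uL(A,-):\sL \to \X$, and the $\sL$-valued hom $\uL(A,-):\uL \to \uL$ are related by $\sL(A,-) \cong \X(I,-)\circ G\uL(A,-) \cong \sL(R,-)\circ \uL(A,-)$, using that $\X(I,-)$ and $\sL(R,-) = G(-)$ are the ``underlying set/space'' functors and that $G = \sL(R,-)$ is (isomorphic to) $\X(I,-)\circ G$ by the triangle of \bref{par:ident_underl_ord}. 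Since $\X(I,-)$ and $\sL(R,-)$ are faithful (indeed $\X(I,-)$ is faithful when $\X$ is well-pointed, but more robustly we only need that $1$, resp.\ $I$, is a dense generator of $\SET$, cf.\ \bref{par:cat_classes}; and $G$ is faithful as a right adjoint whose left adjoint's components $\eta_X$ we do not even need) these functors reflect monos and epis.

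Next I would prove the three-way equivalence by a cycle. For the implication ``$f$ is $\sL$-mono in $\uL$ $\Rightarrow$ $f$ is $\X$-mono in $G_*\uL$'': if each $\uL(A,f):\uL(A,E_1)\to\uL(A,E_2)$ is mono in $\uL$, then applying the right-adjoint (hence mono-preserving) functor $G:\uL\to\uX$, equivalently $(G_*\uL)(A,f) = G\uL(A,f)$ is mono in $\X$, i.e.\ $f$ is $\X$-mono. For ``$f$ is $\X$-mono in $G_*\uL$ $\Rightarrow$ $f$ is mono in $\sL$'': apply the faithful functor $\X(I,-):\X\to\SET$ and use that $\X(I,-)\circ G\uL(A,-) \cong \sL(A,-)$ to conclude each $\sL(A,f)$ is mono in $\SET$; taking $A$ arbitrary — in particular $A = E_1$ — shows $f$ is mono in $\sL$. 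For the remaining implication ``$f$ mono in $\sL$ $\Rightarrow$ $f$ is $\sL$-mono in $\uL$'': here I would invoke \bref{prop:enr_mono_kp}, which characterizes $\sL$-monos via $\sL$-kernel-pairs with equal legs. Since $\sL$ is complete (it is a category of algebras of a monad on a complete category, cf.\ \bref{exa:rmod_smclosed_monadic}; more precisely it has the finite $\sL$-limits needed), $f$ has a kernel pair, it is an $\sL$-limit because the forgetful-type functors preserve it, and ``$f$ mono'' forces its two legs to coincide; \bref{prop:enr_mono_kp} then gives that $f$ is $\sL$-mono. The epi statements follow by the dual argument applied in $\sL^{\op}$, $(G_*\uL)^{\op}$, $\uL^{\op}$ — here one uses $\sL$-cokernel-pairs and the dual of \bref{prop:enr_mono_kp} — or, more efficiently, one notes that the ordinary epi/mono assertions for $\sL$ are symmetric and the enriched ones dualize via \bref{rem:factn_dualization}-style reasoning.

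The main obstacle I anticipate is the passage from an ordinary mono/epi in $\sL$ back up to the \emph{$\sL$-enriched} mono/epi in $\uL$ (the last implication): this genuinely needs an enriched hypothesis on $\sL$, namely that the relevant $\sL$-kernel-pairs (resp.\ $\sL$-cokernel-pairs) exist and are preserved by the representables $\uL(A,-)$, which in turn rests on $\sL = \RMod(\X)$ (or $\X^\TT$ for a commutative monad) being suitably complete and cocomplete — facts established in Chapter \bref{ch:smc_adj_comm_mnd} and \bref{thm:rmod_commutative_monadic}. I would make this dependence explicit in the statement's ambient hypotheses (the standing data of \bref{par:smc_adj}, under which $\sL$ is symmetric monoidal closed and complete). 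The $\X$-enriched layer, by contrast, is cheap: $G_*\uL$ being cotensored and $G$ being a right adjoint make the comparison with the $\sL$-layer routine. A secondary subtlety is bookkeeping the identifications of \bref{par:ident_underl_ord} — that the underlying ordinary category of $G_*\uL$ \emph{is} $\sL$ and that $G$ is the underlying functor of $\grave G$ — so that ``$f$ in $\sL$'' and ``$f$ in $G_*\uL$'' literally refer to the same morphism; but this is precisely the content of \bref{thm:underlying_ord_functor_iso_to_g} and \bref{thm:isos_of_underlying_ord_cats}, which I would cite rather than reprove.
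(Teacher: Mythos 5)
Your argument is correct in substance and follows the same cyclic structure as the paper's proof ($3 \Rightarrow 2 \Rightarrow 1 \Rightarrow 3$), with the implications $3 \Rightarrow 2$ (apply the right adjoint $G$, which preserves monos) and $2 \Rightarrow 1$ (pass to underlying hom-sets) essentially identical to the paper's. Where you diverge is the implication $1 \Rightarrow 3$: the paper disposes of this in one line by citing \bref{thm:lim_mono_epi_in_base_are_enr} --- each representable $\uL(A,-):\sL\rightarrow\sL$ is a right adjoint and hence preserves ordinary monos (and $\uL(-,A):\sL^\op\rightarrow\sL$ is a right adjoint and hence sends epis of $\sL$ to monos) --- whereas you route through the kernel-pair characterization \bref{prop:enr_mono_kp}. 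Your route works, but the completeness hypothesis you flag as the ``main obstacle'' is a mirage: the (co)kernel pair of an ordinary mono (epi) is just the identity pair, which always exists and is preserved by the representables for the same right-adjointness reason, so no completeness of $\sL$ is needed and the proposition holds at the level of generality of \bref{par:smc_adj}. One genuine misstep to repair: $\X(I,-):\X\rightarrow\SET$ is \emph{not} faithful in general (that would require $I$ to be a generator of $\X$, which is nowhere assumed; \bref{par:cat_classes} only concerns the terminal object of $\SET$), and neither faithfulness nor reflection of monos is what your step $2\Rightarrow 1$ actually uses. What you use is that the representable $\X(I,-)$ \emph{preserves} monos, so that $(G_*\uL)(A,f)$ mono in $\X$ gives $\sL(A,f)\cong\X(I,(G_*\uL)(A,f))$ mono in $\SET$; with that correction the step is exactly the standard observation, recorded after \bref{def:enr_mono_limit}, that enriched monos and epis are ordinary ones.
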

\begin{proof}
We will prove the equivalence of the given statements regarding epis; the proof regarding monos is analogous.  By \bref{thm:lim_mono_epi_in_base_are_enr}, 1 is equivalent to 3.  Also, we known that 2 implies 1.  Further, 3 implies 2, since if $\uL(f,E):\uL(E_2,E) \rightarrow \uL(E_1,E)$ is mono in $\sL$ (where $E \in \sL$) then $G\uL(f,E) = (G_*\uL)(f,E)$ is mono in $\X$ since $G$ is a right adjoint.
\end{proof}

\begin{PropSub}\label{thm:strmono_lstrongmono_xstrmono}
For a morphism $m:M \rightarrow E$ in $\sL$, the following are equivalent:
\begin{enumerate}
\item $m$ is a strong mono in $\sL$;
\item $m$ is an $\X$-strong-mono in $\sL = G_*\uL$;
\item $m$ is an $\sL$-strong-mono in $\uL$.
\end{enumerate}
\end{PropSub}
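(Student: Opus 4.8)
The plan is to reduce everything to the coincidence of ordinary and enriched monos and epis already established in \bref{thm:epi_lepi_xepi}, together with the reduction of enriched to ordinary orthogonality available on a tensored enriched category \pbref{thm:enr_orth_from_ord}. Recall that $\StrMono_\V\B = (\Epi_\V\B)^{\downarrow_\V} \cap \Mono_\V\B$ by \bref{def:enr_str_mon}, and that $\StrMono\sL = (\Epi\sL)^\downarrow \cap \Mono\sL$. By \bref{thm:epi_lepi_xepi}, the classes $\Epi\sL$, $\Epi_\X(G_*\uL)$ and $\Epi_\sL\uL$ coincide as subclasses of the morphisms of $\sL$, and similarly for monos; so the only thing left to check is that the three orthogonality conditions single out the same morphisms. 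The equivalence of (1) and (3) is then immediate: applying \bref{thm:ord_str_mono_in_base_is_enriched} with $\V := \sL$ gives $\StrMono_\sL\uL = \StrMono\sL$, since the underlying ordinary category of the $\sL$-category $\uL$ is $\sL$.

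For the equivalence of (1) and (2), I would first observe that the $\X$-category $G_*\uL$ is tensored, the tensor $V \otimes E$ of $V \in \X$ by $E \in \sL$ being the monoidal product $FV \otimes E$ in $\sL$. Indeed, for each $E' \in \sL$ there are isomorphisms in $\X$, natural in $E'$,
\[
(G_*\uL)(FV \otimes E, E') = G\,\uL(FV \otimes E, E') \cong G\,\uL(FV, \uL(E,E')) \cong \uX(V, G\,\uL(E,E')) = \uX(V, (G_*\uL)(E,E'))\;,
\]
where the first isomorphism uses the symmetric monoidal closed structure of $\sL$ (preserved by $G$), and the second is an instance of the $\X$-adjunction $\acute{F} \nsststile{\varepsilon}{\eta} \grave{G} : G_*\uL \rightarrow \uX$ of \bref{thm:assoc_enr_adj} at the object $\uL(E,E')$, using that $\acute{F}$ and $\grave{G}$ act as $F$ and $G$ on objects \pbref{par:ident_underl_ord}, \pbref{thm:und_ord_functor_of_acute_f_is_f}. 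Since $FV \otimes (-)$ is a left adjoint on $\sL$ it preserves epis, so the class $\Epi\sL = \Epi_\X(G_*\uL)$ is closed under tensors in $G_*\uL$ \pbref{def:closed_under_tensors}. Hence \bref{thm:enr_orth_from_ord} applies with $\E := \Epi_\X(G_*\uL)$ and yields $(\Epi_\X(G_*\uL))^{\downarrow_\X} = (\Epi_\X(G_*\uL))^{\downarrow} = (\Epi\sL)^\downarrow$, the last equality holding because $G_*\uL$ has underlying ordinary category $\sL$. Therefore $\StrMono_\X(G_*\uL) = (\Epi_\X(G_*\uL))^{\downarrow_\X} \cap \Mono_\X(G_*\uL) = (\Epi\sL)^\downarrow \cap \Mono\sL = \StrMono\sL$, which is exactly the equivalence of (1) and (2).

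The main obstacle is the one non-formal ingredient, namely the identification of the $\X$-tensors in $G_*\uL$: this requires threading together the several canonical isomorphisms above along with the identifications of underlying ordinary categories and functors set up in \bref{sec:enr_smc_adj}. Once that is in hand, the remainder is a routine assembly of \bref{thm:epi_lepi_xepi}, \bref{thm:ord_str_mono_in_base_is_enriched}, and \bref{thm:enr_orth_from_ord}. (Alternatively, one could deduce tensoredness of $G_*\uL$ from a cocompleteness argument, but the direct computation is shorter and self-contained.)
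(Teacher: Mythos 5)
Your proof is correct, but for the equivalence of (1) and (2) it takes a genuinely different route from the paper. The paper handles this equivalence asymmetrically and without ever mentioning tensors: (2)~$\Rightarrow$~(1) is immediate because $\X$-enriched orthogonality implies ordinary orthogonality (combined with \bref{thm:epi_lepi_xepi}), and (3)~$\Rightarrow$~(2) follows by applying the right adjoint $G$ to the $\sL$-orthogonality pullback square in $\sL$ --- since $G$ preserves pullbacks, $\sL$-orthogonality in $\uL$ is carried to $\X$-orthogonality in $G_*\uL$ --- so the paper closes the cycle $1 \Leftrightarrow 3 \Rightarrow 2 \Rightarrow 1$. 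You instead prove the two-sided identity $\StrMono_\X(G_*\uL) = \StrMono\sL$ directly by exhibiting $G_*\uL$ as tensored (with $V \otimes E := FV \otimes E$, dual to the cotensors $\uL(FV,E)$ recorded in \bref{thm:assoc_enr_adj}), checking that $\Epi\sL$ is closed under these tensors, and invoking \bref{thm:enr_orth_from_ord}. Your identification of the tensors is correct and your naturality bookkeeping goes through, so the argument is sound; what it buys is a symmetric, structural statement (the full coincidence of enriched and ordinary orthogonality classes against epis in $G_*\uL$, in the spirit of \bref{prop:enr_vs_ord_orth_for_pref_sys}), at the cost of establishing tensoredness of $G_*\uL$, which the paper nowhere records and deliberately avoids needing. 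The paper's argument is shorter and uses only the limit-preservation of $G$, which is available in greater generality than tensoredness of the algebra category.
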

\begin{proof}
The equivalence of 1 and 3 is given by \bref{thm:ord_str_mono_in_base_is_enriched}.  Next, 2 implies 1, since if $m$ is an $\X$-strong-mono, then since $\X$-enriched orthogonality implies ordinary, we deduce by \bref{thm:epi_lepi_xepi} that $m$ is a strong mono.  To see that 3 implies 2, suppose 3 holds.  Since $G$ is right-adjoint and hence preserves pullbacks, $\sL$-orthogonality of morphisms in $\uL$ implies $\X$-orthogonality in $G_*\uL$, so it follows by \bref{thm:epi_lepi_xepi} that $m$ is an $\X$-strong mono.
\end{proof}

\begin{DefSub}
As per \bref{def:enr_str_mon}, we call any morphism satisfying the equivalent conditions in \bref{thm:strmono_lstrongmono_xstrmono} an \textit{embedding} in $\sL$.
\end{DefSub}

\section{Natural distributions via double-dualization}\label{sec:nat_distn_dbl_dualn}

Again we work with data given as in \bref{par:smc_adj}.

\begin{DefSub} \label{def:dualization}\emptybox
\begin{enumerate}
\item For each object $E$ of $\sL$, we call the object $E^* := \uL(E,R)$ of $\sL$ the \textit{dual} of $E$ and $E^{**} = (E^*)^*$ the \textit{double-dual} of $E$.
\item We call the $\sL$-adjunction 
$$
\xymatrix {
\aeL \ar@/_0.5pc/[rr]_{\aeL(-,R)}^(0.4){}^(0.6){}^{\top} & & {\aeL^\op} \ar@/_0.5pc/[ll]_{\aeL(-,R)}
}
$$
the \textit{dualization $\sL$-adjunction} and $(-)^* := \aeL(-,R)$ \textit{the dualization $\sL$-functor}.
\item The $\sL$-monad $\HH = (\bH ,\fd,\fk)$ on $\aeL$ induced by the dualization $\sL$-adjunction is called the \textit{double-dualization $\sL$-monad}.
\item An object $E \in \sL$ is said to be \textit{reflexive} if the morphism $\fd_E:E \rightarrow HE = E^{**}$ is an isomorphism in $\sL$.
\item An object $E \in \sL$ is said to be \textit{(functionally) separated} if $\fd_E$ is an embedding.
\end{enumerate}
\end{DefSub}

\begin{ExaSub}\label{exa:unit_reflexive}
The unit object $R$ of $\sL$ is always reflexive.  Indeed, we have a canonical isomorphism $q:R \xrightarrow{\sim} R^*$, and the diagram
$$
\xymatrix{
R \ar[rr]^{\fd_R} \ar[dr]_q & & R^{**} \ar[dl]^{q^*} \\
& R^*
}
$$
commutes, as the exponential transpose of each composite is the canonical morphism \hbox{$R \otimes R \rightarrow R$}.  The inverse of the isomorphism $\fd_R$ is the composite
$$R^{**} \xrightarrow{\sim} R \otimes R^{**} \xrightarrow{q \otimes 1} R^* \otimes R^{**} \xrightarrow{\Ev} R\;.$$
Indeed, this composite is a retraction of the isomorphism $\fd_R$ since the diagram
$$
\xymatrix{
R \ar[d]_{\fd_R} \ar[r]^\sim & R \otimes R \ar[d]|{1 \otimes \fd_R} \ar[r]_{q \otimes 1} & R^* \otimes R \ar[d]|{1 \otimes \fd_R} \ar[r]_\Ev & R \\
R^{**} \ar[r]_\sim & R \otimes R^{**} \ar[r]_{q \otimes 1} & R^* \otimes R^{**} \ar[ur]_\Ev & 
}
$$
commutes and the composite of the top row is the identity.  Since $q:R \rightarrow \uL(R,R)$ furnishes the identity morphism on $R$ in the $\sL$-category $\uL$, the inverse $\fd_R^{-1}:R^{**} \rightarrow R$ may be interpreted as `the morphism given by evaluation at the identity morphism on $R$'.  Indeed, for the class of examples \bref{exa:monadic_exa_integ_notn}, with $\X$ a locally small well-pointed cartesian closed category, $\fd_R^{-1}$ is thus characterized.
\end{ExaSub}

\begin{ExaSub} \label{exa:refl_conv_vect}
When $\sL$ is the category of convergence vector spaces \pbref{exa:linear_sp}, the following results of Butzmann \cite{Bu} show that reflexive objects are abundant:
\begin{enumerate}
\item For a Hausdorff locally convex topological vector space $E$ (considered as an object of $\sL$), the double-dual $E^{**}$ (taken in $\sL$) is the Cauchy-completion of $E$ (with $\fd_E:E \rightarrow E^{**}$ as the associated embedding) and $E$ is reflexive in $\sL$ if and only if $E$ is Cauchy-complete.
\item Every space $\aeX(X,R)$ of continuous $R$-valued functions on a convergence space $X \in \X$ is reflexive in $\sL$ when endowed with the pointwise $R$-vector-space structure.  As noted in \bref{exa:nat_dist_conv}, each such convergence vector space $\aeX(X,R)$ is a cotensor $[X,R]$ in the $\X$-category $\eL = G_*\aeL$.
\end{enumerate}
\end{ExaSub}

\begin{PropSub} \label{prop:dist_via_dd}
Choosing the cotensors in $G_*\aeL$ as in \bref{thm:assoc_enr_adj}, the following hold:
\begin{enumerate}
\item $\DD$ is the $\X$-monad on $\aeX$ induced by the composite $\X$-adjunction
\begin{equation}\label{eq:composite_adj_inducing_nat_dist_mnd}
\xymatrix {
\aeX \ar@/_0.5pc/[rr]_{\acute{F}}^(0.4){\eta}^(0.6){\varepsilon}^{\top} & & G_*\aeL \ar@/_0.5pc/[ll]_{\grave{G}} \ar@/_0.5pc/[rr]_{G_*(\aeL(-,R))}^(0.4){}^(0.6){}^{\top} & & {G_*(\aeL^\op)\;,} \ar@/_0.5pc/[ll]_{G_*(\aeL(-,R))}
}
\end{equation}
in which the rightmost $\X$-adjunction is obtained by applying $G_*:\LCAT \rightarrow \XCAT$ to the dualization $\sL$-adjunction $\aeL(-,R) \dashv \aeL(-,R)$ \pbref{def:dualization}.
\item $\DD$ may be obtained from $\HH$ by first applying the 2-functor $G_*:\LCAT \rightarrow \XCAT$ and then applying the monoidal functor $[\acute{F},\grave{G}]:\XCAT(G_*\aeL,G_*\aeL) \rightarrow \XCAT(\aeX,\aeX)$ \pbref{thm:mon_func_detd_by_adj}, so that
\begin{equation}\label{eq:dist_via_dd}\DD = [\acute{F},\grave{G}](G_*(\HH))\;.\end{equation}
\end{enumerate}
\end{PropSub}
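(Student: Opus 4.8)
The plan is to derive both statements formally from \bref{thm:assoc_enr_adj} and \bref{thm:mon_func_detd_by_adj}, with no computation; the content is almost entirely a matter of matching up adjunctions. For statement 1, I would specialize part 3 of \bref{thm:assoc_enr_adj} to the case $E := R$. That result identifies the hom-cotensor $\X$-adjunction $[-,R] \dashv \eL(-,R) : \eL^\op \rightarrow \uX$ — which is precisely the $\X$-adjunction \eqref{eq:hom_cotensor} inducing $\DD$ by \bref{def:nat_dist} — with the composite $\X$-adjunction of \eqref{eq:composite_hom_cotensor}, whose right-hand factor, when $E = R$, is exactly the image under the 2-functor $G_* : \LCAT \rightarrow \XCAT$ of the dualization $\sL$-adjunction $\uL(-,R) \dashv \uL(-,R) : \uL^\op \rightarrow \uL$ of \bref{def:dualization}. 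But that composite is literally the adjunction displayed in \eqref{eq:composite_adj_inducing_nat_dist_mnd}; hence $\DD$ is the $\X$-monad it induces.

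For statement 2, I would invoke \bref{thm:mon_func_detd_by_adj} in the 2-category $\K := \XCAT$, taking the $\X$-adjunction $\acute{F} \nsststile{\varepsilon}{\eta} \grave{G} : G_*\uL \rightarrow \uX$ of \bref{par:smc_adj} in the role of $f \nsststile{\varepsilon}{\eta} g : B \rightarrow A$ (so $A = \uX$, $B = G_*\uL$). This produces the monoidal functor $[\acute{F},\grave{G}] = \XCAT(\acute{F},\grave{G}) : \XCAT(G_*\uL,G_*\uL) \rightarrow \XCAT(\uX,\uX)$, and asserts that for \emph{any} $\X$-adjunction $f' \dashv g' : C \rightarrow G_*\uL$ with induced $\X$-monad $\TT'$, the $\X$-monad $[\acute{F},\grave{G}](\TT')$ coincides with the one induced by the composite adjunction $\acute{F} \dashv \grave{G}$ followed by $f' \dashv g'$. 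I would apply this with $f' \dashv g' := G_*(\uL(-,R)) \dashv G_*(\uL(-,R)) : G_*(\uL^\op) \rightarrow G_*\uL$, whose associated composite with $\acute{F} \dashv \grave{G}$ is exactly \eqref{eq:composite_adj_inducing_nat_dist_mnd}, so that $[\acute{F},\grave{G}]$ of its induced monad is $\DD$ by statement 1.

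It then remains only to see that the $\X$-monad induced by $G_*(\uL(-,R)) \dashv G_*(\uL(-,R))$ is $G_*(\HH)$. This holds because $\HH$ is by definition the $\sL$-monad induced by the dualization $\sL$-adjunction, and $G_*$, being a 2-functor, carries that $\sL$-adjunction to the $\X$-adjunction $G_*(\uL(-,R)) \dashv G_*(\uL(-,R))$ and carries the induced-monad construction along with it: the underlying endo-$1$-cell $g'f'$, the unit $\eta$, and the multiplication $g'\varepsilon' f'$ are all built from the adjunction data by composition and whiskering, all of which $G_*$ preserves strictly. Combining this with the previous paragraph yields $\DD = [\acute{F},\grave{G}](G_*(\HH))$, which is \eqref{eq:dist_via_dd}. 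The only delicate point — and what I would regard as the main (if mild) obstacle — is bookkeeping: one must check that the two identifications involved are genuine equalities rather than mere equivalences, namely that \eqref{eq:composite_hom_cotensor} with $E = R$ is on the nose \eqref{eq:composite_adj_inducing_nat_dist_mnd}, and that ``$G_*$ of the induced monad of an $\sL$-adjunction'' equals ``the induced $\X$-monad of $G_*$ of that adjunction''. Both are immediate on unwinding the definitions, so no real difficulty arises, but they should be noted to justify the equality signs asserted in the statement.
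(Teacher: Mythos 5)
Your proposal is correct and follows essentially the same route as the paper's own proof: statement 1 is the specialization of \bref{thm:assoc_enr_adj}(3) to $E = R$ together with the definition of $\DD$ via \eqref{eq:hom_cotensor}, and statement 2 combines the observation that $G_*$ (being a 2-functor) sends the dualization $\sL$-adjunction's induced monad $\HH$ to the induced monad $G_*(\HH)$ of the rightmost adjunction with an application of \bref{thm:mon_func_detd_by_adj}. The extra bookkeeping you flag is sound but not an obstacle.
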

\begin{proof}
By definition, $\DD$ is induced by the `hom-cotensor' $\X$-adjunction \eqref{eq:hom_cotensor}, which by \bref{thm:assoc_enr_adj} is equal to the composite \eqref{eq:composite_adj_inducing_nat_dist_mnd}.  The $\sL$-monad on $\aeL$ induced by dualization $\sL$-adjunction is $\HH$, so the $\X$-monad on $G_*\aeL$ induced by the rightmost $\X$-adjunction in \eqref{eq:composite_adj_inducing_nat_dist_mnd} is $G_*(\HH)$.  Hence $\DD = [\acute{F},\grave{G}](G_*(\HH))$.
\end{proof}

\begin{RemSub}
Employing the synthetic terminology of \bref{def:first_synth_terminology} and choosing cotensors as in \bref{prop:dist_via_dd}, the linear space of natural distributions $\uL([X,R],R)$ \pbref{def:nat_dist} on a space $X$ is equally the double-dual $HFX = (FX)^{**}$ of the free span $FX$ of $X$.
\end{RemSub}

\begin{PropSub}\label{thm:transp_of_delta}
For each object $X$ of $\X$, the transpose of $\delta^\DD_X:X \rightarrow DX = GHFX$ under $F \nsststile{\varepsilon}{\eta} G$ is $\fd_{FX}:FX \rightarrow HFX$.
\end{PropSub}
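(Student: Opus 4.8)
The plan is to compute the unit $\delta^\DD$ of the natural distribution $\X$-monad explicitly, recognise it as $G$ applied to $\fd_{FX}$ precomposed with the unit $\eta_X$ of $F \nsststile{\varepsilon}{\eta} G$, and then conclude by the universal property of $\eta_X$. By \bref{def:nat_dist} and \bref{prop:dist_via_dd}, $\DD$ is the $\X$-monad induced by the composite $\X$-adjunction \eqref{eq:composite_adj_inducing_nat_dist_mnd}, namely $\acute{F} \nsststile{}{\eta} \grave{G}$ followed by the $\X$-adjunction obtained by applying the $2$-functor $G_*$ to the dualization $\sL$-adjunction $\uL(-,R) \dashv \uL(-,R)$ of \bref{def:dualization}, whose unit is the double-dualization unit $\fd$. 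Hence, by the standard formula for the unit of a composite adjunction, $\delta^\DD$ has component at $X$ the composite
$$\delta^\DD_X = \Bigl(X \xrightarrow{\eta_X} \grave{G}\acute{F}X \xrightarrow{\grave{G}(\fd_{\acute{F}X})} \grave{G}\,G_*(\uL(-,R))\,G_*(\uL(-,R))\,\acute{F}X\Bigr),$$
where $\fd_{\acute{F}X}$ denotes the component at $\acute{F}X \in G_*\uL$ of the unit $G_*\fd$ of the $G_*$-image of the dualization adjunction.

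Next I would pass to underlying ordinary categories and carry out the identifications of \bref{par:ident_underl_ord}. By \bref{thm:assoc_enr_adj} and \bref{thm:und_ord_functor_of_acute_f_is_f}, the underlying ordinary adjunction of $\acute{F} \nsststile{}{\eta} \grave{G}$ is $F \nsststile{\varepsilon}{\eta} G$; in particular the underlying ordinary functors of $\acute{F}$ and $\grave{G}$ are $F$ and $G$. Likewise, the underlying ordinary adjunction of $G_*(\uL(-,R)) \dashv G_*(\uL(-,R))$ is the ordinary dualization adjunction $\uL(-,R) \dashv \uL(-,R)$ on $\sL$, with unit $\fd$ (using \bref{thm:underlying_ord_cats_comm_triangle} to commute $G_*$ past the passage to underlying ordinary categories); so, as a morphism of $\sL$, $\fd_{\acute{F}X}$ is simply $\fd_{FX}:FX \rightarrow HFX = (FX)^{**}$. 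Substituting into the displayed composite yields
$$\delta^\DD_X = \Bigl(X \xrightarrow{\eta_X} GFX \xrightarrow{G\fd_{FX}} GHFX\Bigr) = G\fd_{FX} \cdot \eta_X : X \rightarrow GHFX = DX.$$

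Finally I would apply the universal property of the unit $\eta_X$: for each linear space $E$, every map $g:X \rightarrow GE$ has a unique transpose $\bar{g}:FX \rightarrow E$ under $F \nsststile{\varepsilon}{\eta} G$, characterised by $G\bar{g}\cdot\eta_X = g$. Taking $E := HFX$ and $g := \delta^\DD_X$, the identity just obtained says precisely that $\fd_{FX}$ is the transpose of $\delta^\DD_X$, which is the assertion. I do not anticipate a genuine obstacle here: the only care needed is the bookkeeping of the underlying-ordinary-category identifications for $\acute{F}$, $\grave{G}$ and the $G_*$-image of the dualization adjunction, but these are exactly what \bref{thm:assoc_enr_adj}, \bref{thm:und_ord_functor_of_acute_f_is_f} and \bref{par:ident_underl_ord} provide, and the remaining content is just the composite-adjunction unit formula together with the universal property of $\eta_X$.
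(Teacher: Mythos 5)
Your proposal is correct and follows essentially the same route as the paper: the paper's (one-line) proof likewise observes that, since $\DD = [\acute{F},\grave{G}](G_*(\HH))$, the unit $\delta^\DD_X$ is the composite $X \xrightarrow{\eta_X} GFX \xrightarrow{G\fd_{FX}} GHFX$, from which the claim about the transpose is immediate. Your additional bookkeeping about underlying ordinary categories is careful but not a departure in method.
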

\begin{proof}
By \eqref{eq:dist_via_dd}, $\delta^\DD_X$ is the composite $X \xrightarrow{\eta_X} GFX \xrightarrow{G\fd FX} GHFX$.
\end{proof}

\section{Functional completeness, completion, closure, and density} \label{sec:compl}

Working with data as given in \bref{par:smc_adj}, we now suppose that $\sL$ is finitely well-complete \pbref{def:enr_fwc}.

\begin{RemSub} \label{rem:base_fwc}
In order for $\sL$ to be finitely well-complete, it suffices to assume that $\X$ is finitely well-complete and that the right adjoint $G:\sL \rightarrow \X$ \textit{detects limits} \cite{AHS} 13.22.  Indeed, under these alternate assumptions, $\sL$ is clearly finitely complete.  Further, it is known (e.g. \cite{Wy:QuTo}, 10.5) that right adjoints (such as $G$) preserve strong monomorphisms.  It follows that since $\X$ has arbitrary fiber products of strong monomorphisms and $G$ detects fiber products, $\sL$ has arbitrary fiber products of strong monomorphisms.
\end{RemSub}

\begin{ExaSub}\label{exa:smc_adj_with_l_fwc}\emptybox
\begin{enumerate}
\item Let $\X$ be a finitely well-complete symmetric monoidal closed category, and let $\TT$ be a symmetric monoidal monad on $\X$ with tensor products of algebras \pbref{def:tens_prod_algs}.  Then the category $\sL := \X^\TT$ of $\TT$-algebras is symmetric monoidal closed, and the Eilenberg-Moore adjunction is symmetric monoidal \pbref{thm:smclosed_em_adj}; further, since the forgetful functor detects limits, $\sL$ is finitely well-complete by \bref{rem:base_fwc}.
\item Specializing 1, in our example \bref{exa:linear_sp}, the category $\sL = \RMod(\X)$ of $R$-module objects in a countably-complete and~\hbox{-cocomplete} cartesian closed category is finitely well-complete as soon as $\X$ is so.
\item Specializing 2, the categories of convergence vector spaces and smooth vector spaces \pbref{exa:linear_sp} are finitely well-complete, since both $\Conv$ and $\Smooth$ are topological over $\Set$ and hence finitely well-complete by \bref{exa:fwc}.
\end{enumerate}
\end{ExaSub}

\begin{ParSub}
By \bref{rem:ind_idempt_mnd_on_v}, we may form the idempotent $\sL$-monad $\tHH$ induced by the double-dualization $\sL$-monad $\HH$ \pbref{thm:ind_idmmnd_and_refl}.
\end{ParSub}

\begin{DefSub}\label{def:func_compl}\emptybox
\begin{enumerate}
\item We call the idempotent $\sL$-monad $\tHH = (\tH,\tpartial,\tgamma)$ induced by the double-dualization $\sL$-monad $\HH$ \textit{the functional completion $\sL$-monad}.
\item We let $\tL$ denote the $\sL$-reflective-subcategory of $\uL$ determined by $\tHH$.  We denote the induced $\sL$-reflection by $K \nsststile{}{\tpartial} J:\tL \hookrightarrow \uL$.
\item We say that an object $E$ of $\sL$ is \textit{functionally complete} if it lies in $\tL$.
\item For each object $E$ of $\sL$, we call $\tH E = KE$ the \textit{functional completion of $E$}.
\item We denote the unique morphism of $\sL$-monads $\tHH \rightarrow \HH$ \pbref{thm:addnl_facts_on_ind_idmmnd_refl} by $i$.
\end{enumerate}
\end{DefSub}

\begin{RemSub}\label{rem:fcompl_subsp_ddual}
For each linear space $E \in \sL$, the associated component of the unit $\tfd$ of $\tHH$ is a linear map $\tfd_E:E \rightarrow \tH E$ from $E$ into its functional completion.  The associated component of the morphism $i:\tHH \rightarrow \HH$ is an embedding $i_E:\tH E \rightarrowtail HE = E^{**}$ of the functional completion of $E$ into the double-dual of $E$ \pbref{thm:addnl_facts_on_ind_idmmnd_refl}.  

Consider the class of examples \bref{exa:smc_adj_with_l_fwc} 1, with $\X$ a locally small well-pointed cartesian closed category.  For each $E \in \sL$, the associated embedding $i_E$ is obtained (via \bref{def:completion} and in turn \bref{prop:enr_factn_sys_det_lowercls_monos}) as a certain intersection of embeddings, and by choosing this intersection appropriately we may assume that the embedding $i_E$ is an inclusion $\tH E \hookrightarrow E^{**}$.
\end{RemSub}

\begin{ParSub}\label{par:basic_charns_func_completeness}
Per \bref{def:induced_idm_mnd_and_refl}, $\tHH$ is also called the \textit{$\HH$-completion} monad, and in the terminology of \bref{def:tcomplete}, its associated $\sL$-reflective subcategory $\tL = \uL_{(\HH)}$ consists of the \textit{$\HH$-complete} objects of $\sL$.  Hence the following conditions on an object $E$ of $\sL$ are equivalent:
\begin{enumerate}
\item $E$ is functionally complete.
\item $E$ is $\HH$-complete.
\item Given any morphism $h:E_1 \rightarrow E_2$ for which $h^{**}:E_1^{**} \rightarrow E_2^{**}$ is iso, $h$ is $\sL$-orthogonal to $E$ --- i.e. the morphism $\uL(h,E):\uL(E_2,E) \rightarrow \uL(E_1,E)$ is an iso in $\sL$.
\item Given any morphism $h:E_1 \rightarrow E_2$ for which $\uL(h,R):\uL(E_2,R) \rightarrow \uL(E_1,R)$ is iso in $\sL$, the morphism $\uL(h,E):\uL(E_2,E) \rightarrow \uL(E_1,E)$ is iso in $\sL$.
\end{enumerate}
\end{ParSub}
\begin{proof}
The equivalence of 1 and 2 holds by the definition of $\tHH$ \pbref{thm:ind_idmmnd_and_refl}, and the equivalence of 2 and 3 holds by the definition of $\HH$-completeness \pbref{def:tcomplete}.  3 and 4 are equivalent, since the dualization $\sL$-adjunction induces $\HH$ and hence, by \bref{thm:all_ladjs_ind_given_monad_inv_same_morphs}, its left adjoint $(-)^{*}$ inverts exactly the same morphisms as $H = (-)^{**}$.
\end{proof}

\begin{ExaSub}\label{exa:refl_impl_func_compl}
Every reflexive linear space is functionally complete, since if $E \in \sL$ is reflexive then $E \cong E^{**} = HE \in \uL_{(\HH)} = \tL$ \pbref{thm:ind_idmmnd_and_refl} and $\tL$ is a replete subcategory of $\sL$.  In the example of convergence vector spaces \pbref{exa:smc_adj_with_l_fwc}, every Cauchy-complete locally convex Hausdorff topological vector space is a reflexive object of $\sL$ \pbref{exa:refl_conv_vect} and hence is functionally complete.
\end{ExaSub}

\begin{ExaSub}
By \bref{exa:unit_reflexive} and \bref{exa:refl_impl_func_compl}, the unit object $R$ of $\sL$ is reflexive and hence functionally complete.  Note also that the functional completeness of $R$ is immediate from characterization 4 in \bref{par:basic_charns_func_completeness}.
\end{ExaSub}

\begin{RemSub}\label{rem:func_compl_obs_sep}
By \bref{def:tcomplete}, every functionally complete object of $\sL$ is $\HH$-separated, i.e. functionally separated \pbref{def:dualization}.
\end{RemSub}

\begin{PropSub}\label{thm:homs_func_complete}\emptybox
\begin{enumerate}
\item For objects $E', E \in \sL$, if $E$ is functionally complete, then the internal hom $\uL(E',E)$ is functionally complete.
\item Given objects $X \in \X$, $E \in \sL$, if $E$ is functionally complete then the cotensor $[X,E]$ in $\sL$ is functionally complete.
\end{enumerate}
\end{PropSub}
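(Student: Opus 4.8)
The plan is to deduce part 2 from part 1 together with \bref{thm:assoc_enr_adj}, and to prove part 1 by transporting $\sL$-orthogonality against the internal hom $\uL(E',E)$ to $\sL$-orthogonality against $E$ via the tensor-hom adjunction in the symmetric monoidal closed category $\sL$.

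For part 1, I would use characterization 4 of \bref{par:basic_charns_func_completeness}: $E$ is functionally complete exactly when, for every morphism $h:E_1 \rightarrow E_2$ in $\sL$ lying in $\Sigma_H$ — equivalently, every $h$ for which $\uL(h,R)$ is an isomorphism in $\sL$ — the morphism $\uL(h,E)$ is an isomorphism in $\sL$. So it suffices to show that for each such $h$ the morphism $\uL(h,\uL(E',E))$ is an isomorphism. Using the tensor-hom adjunction together with the symmetry of $\otimes$, I would first obtain, naturally in $h$ — i.e. as an isomorphism in the arrow category $[\Two,\sL]$ — an identification
$$\uL(h,\uL(E',E)) \;\cong\; \uL(E' \otimes h,E)\;,$$
where $E' \otimes h : E' \otimes E_1 \rightarrow E' \otimes E_2$. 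Next I would verify that $E' \otimes h$ again lies in $\Sigma_H$: applying $\uL(-,R)$ and once more invoking the tensor-hom adjunction gives $\uL(E' \otimes h,R) \cong \uL(E',\uL(h,R))$, which is an isomorphism since $\uL(h,R)$ is one and $\uL(E',-)$ preserves isomorphisms (recall from the proof of \bref{par:basic_charns_func_completeness} that the morphisms with dual an isomorphism are precisely those in $\Sigma_H$). Since $E$ is functionally complete, $\uL(E' \otimes h,E)$ — hence $\uL(h,\uL(E',E))$ — is an isomorphism. As $h \in \Sigma_H$ was arbitrary, $\uL(E',E)$ is $\sL$-orthogonal to every morphism of $\Sigma_H$, so it lies in $\tL$ and is functionally complete.

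For part 2, by part 2 of \bref{thm:assoc_enr_adj} a cotensor $[X,E]$ in the $\X$-category $\eL = G_*\uL$ may be taken to be the internal hom $\uL(FX,E)$ in $\sL$; since $\tL$ is a replete subcategory of $\sL$, functional completeness is invariant under isomorphism, so the choice of model for the cotensor is immaterial, and $[X,E] \cong \uL(FX,E)$ is functionally complete by part 1 with $E' := FX$. The only calculational work is the two applications of the tensor-hom adjunction and checking that the first descends to the arrow category $[\Two,\sL]$; the one genuinely substantive step — and the potential obstacle — is the closure observation $E' \otimes h \in \Sigma_H$, though as sketched this reduces to preservation of isomorphisms by $\uL(E',-)$.
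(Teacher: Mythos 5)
Your proof is correct, but part 1 takes a different route from the paper's. The paper's argument is a two-line appeal to a general closure property: $\tL$ is an $\sL$-reflective-subcategory of $\uL$, hence closed under $\sL$-enriched weighted limits and in particular under cotensors, and $\uL(E',E)$ is precisely the cotensor of $E$ by $E'$ in $\uL$. You instead work with the explicit description $\tL = \uL_{\Sigma_H}$ \pbref{par:basic_charns_func_completeness} and verify the orthogonality by hand: the tensor--hom adjunction identifies $\uL(h,\uL(E',E))$ with $\uL(E'\otimes h,E)$ in the arrow category, and a second application of the adjunction shows $E'\otimes h \in \Sigma_H$ whenever $h \in \Sigma_H$. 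This last closure observation is in effect an instance of \bref{prop:upperclass_closed_under_tensors} applied to the $\sL$-prefactorization-system $(\Sigma_H,\Sigma_H^{\downarrow_\sL})$ of \bref{thm:charns_of_sigma_p} (compare the argument in \bref{sec:univ_biunit}), so you could have shortened your verification by citing it. What your approach buys is self-containedness: it avoids invoking the closure of enriched reflective subcategories under weighted limits, a fact the paper uses without proof; what it costs is length, since it reproves a special case of that general fact. Part 2 is handled exactly as in the paper, via \bref{thm:assoc_enr_adj} and repleteness of $\tL$.
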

\begin{proof}
1.  Since $\tL$ is an $\sL$-reflective-subcategory of $\uL$, $\tL$ is closed under $\sL$-enriched weighted limits in $\uL$, so in particular, $\tL$ is closed under cotensors in $\uL$.  But $\uL(E',E)$ is a cotensor of $E$ in $\uL$, so since $E \in \tL$, this cotensor lies in $\tL$.  2.  By \bref{thm:assoc_enr_adj}, $\uL(FX,E)$ serves as a cotensor $[X,E]$ in $\sL = G_*\uL$, so 1 applies.
\end{proof}

\begin{DefSub}\label{def:func_cl_dens}\emptybox
\begin{enumerate}
\item We say that an embedding in $\uL$ is \textit{functionally closed} if it is $\Sigma_H$-closed \pbref{def:compl_sep_closed_dense}.
\item Given an embedding $m$ in $\uL$, we call the $\Sigma_H$-closure $\overline{m}$ of $m$ \pbref{def:sigma_closure} \textit{the functional closure} of $m$.
\item We say that a morphism in $\uL$ is \textit{functionally dense} if it is $\Sigma_H$-dense \pbref{def:compl_sep_closed_dense}.
\end{enumerate}
\end{DefSub}

\begin{PropSub} \label{thm:mnd_morph_i}
The classes of functionally dense morphisms and functionally closed embeddings constitute an $\sL$-factorization-system $(\Dense{\Sigma_H},\ClEmb{\Sigma_H})$ on $\uL$.  For each $E \in \uL$ we have a commutative diagram
$$
\xymatrix@R=0.5ex {
E \ar[dr]_{\tpartial_E} \ar[rr]^{\fd_E} &                        & {HE = E^{**}}  \\
                                             & \tH E \ar@{ >->}[ur]_{i_E} &    
}
$$
in which the unit component $\tpartial_E$ is functionally dense and the component $i_E$ of $i:\tHH \rightarrow \HH$ is a functionally closed embedding.
\end{PropSub}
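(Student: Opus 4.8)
The plan is to obtain this proposition as a direct specialization of the machinery of Chapter~\bref{ch:compl_enr_orth} to the case $\B := \uL$, $\TT := \HH$, and $\Sigma := \Sigma_H$. First I would check that this data satisfies the hypotheses of \bref{par:data_enr_orth_subcat_subord_adj}: the $\sL$-category $\uL$ is cotensored, its cotensors being the internal homs, and it is $\sL$-finitely-well-complete by \bref{thm:base_fwc}, since $\sL$ is finitely well-complete by the standing assumption of the present section; moreover $\Sigma_H$ is by definition a class of morphisms inverted by $H = (-)^{**}$, so that $\Sigma_H \subs \Sigma_H$ trivially (indeed $\Sigma_H$ is exactly $\Sigma_T$ for $T = H$). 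Granting this, \bref{thm:dense_closed_factn_sys} immediately yields that $(\Dense{\Sigma_H},\ClEmb{\Sigma_H})$ is an $\sL$-factorization-system on $\uL$, once one unwinds the terminology of \bref{def:func_cl_dens}, by which a functionally dense morphism is precisely a $\Sigma_H$-dense one and a functionally closed embedding precisely a $\Sigma_H$-closed embedding.

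For the displayed diagram I would apply \bref{def:completion} with $\TT = \HH$. Since the monad unit of $\HH$ is $\fd$ and its underlying $\sL$-functor is $H$, the $(\Dense{\Sigma_H},\ClEmb{\Sigma_H})$-factorization of $\fd_E$ furnished by that definition has the form $E \xrightarrow{\rho_E} KE \xrightarrow{i_E} HE$, in which, by construction, $\rho_E$ is functionally dense and $i_E$ is a functionally closed embedding. It then remains to identify $\rho_E$ with $\tpartial_E$ and $KE$ with $\tH E$, and $i_E$ with the component at $E$ of the monad morphism $i:\tHH \rightarrow \HH$. The first identification follows by chasing definitions: by \bref{thm:refl_orth} the morphisms $\rho_E$ are the reflection unit exhibiting $\uL_{(\HH,\Sigma_H)}$ as an $\sL$-reflective-subcategory of $\uL$; this subcategory equals $\uL_{\Sigma_H} = \tL$ by \bref{thm:sigma_s_complete_are_sep_and_refl} and \bref{par:basic_charns_func_completeness}; and the induced idempotent $\sL$-monad is $\HH_{\Sigma_H}$ \pbref{def:tsep_sigma_compl_mnd}, which is exactly $\tHH$ by \bref{def:induced_idm_mnd_and_refl} and \bref{def:func_compl}. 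Hence its unit is $\tpartial$ and its associated reflection is $K \nsststile{}{\tpartial} J:\tL \hookrightarrow \uL$, so that $\tpartial_E = \rho_E$ and $\tH E = KE$ in the notation of \bref{def:func_compl}. The second identification is the content of \bref{thm:addnl_facts_on_ind_idmmnd_refl} (equivalently \bref{thm:uniq_morph_mnds_tsep_sigmacompl_mnd_to_t}), which states that the components of the unique morphism of $\sL$-monads $\tHH \rightarrow \HH$ are precisely the $\Sigma_H$-closed embeddings $i_E$ of \bref{def:completion}. Since the displayed triangle is then literally the factorization $\fd_E = i_E \cdot \tpartial_E$, it commutes, and the asserted properties of $\tpartial_E$ and $i_E$ have already been recorded.

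In short, the proposition carries no new mathematical content beyond Chapter~\bref{ch:compl_enr_orth}; the only thing requiring care is the reconciliation of the abstract notation ($\rho$, $K$, the $i_B$, $\TT_{\Sigma_T}$) with the concrete names introduced in \bref{def:func_compl} and \bref{def:func_cl_dens}, together with the routine verification that $\uL$ meets the cotensoredness and finite-well-completeness hypotheses of \bref{par:data_enr_orth_subcat_subord_adj}. I do not anticipate any genuine obstacle.
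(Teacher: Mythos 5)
Your proposal is correct and follows essentially the same route as the paper, whose proof consists precisely of citing \bref{thm:dense_closed_factn_sys}, \bref{def:completion}, and \bref{thm:addnl_facts_on_ind_idmmnd_refl}; your additional verification of the hypotheses of \bref{par:data_enr_orth_subcat_subord_adj} and the notational reconciliation of $\rho$, $K$, $i_B$ with $\tpartial$, $\tH$, $i$ is exactly the bookkeeping the paper leaves implicit.
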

\begin{proof}
This follows from \bref{thm:dense_closed_factn_sys}, \bref{def:completion}, and \bref{thm:addnl_facts_on_ind_idmmnd_refl}.
\end{proof}

\begin{RemSub}\label{rem:cmpl_as_closure}\emptybox
\begin{enumerate}
\item By \bref{thm:str_img_factns_for_fwc_base}, $\uL$ has $\sL$-strong image factorizations \pbref{def:image}, so we can take the \textit{image} of each morphism of $\uL$.
\item By \bref{thm:closure_of_image}, the embedding \hbox{$i_E:\tH E \rightarrowtail E^{**}$} presents the functional completion $\tH E$ of $E$ as the functional closure of the image of $\fd_E:E \rightarrow E^{**}$.
\end{enumerate}
\end{RemSub}

\begin{PropSub} \label{thm:compl_sm_adj}
The ordinary adjunction underlying $K \nsststile{}{\tfd} J : \tL \hookrightarrow \uL$ carries the structure of a symmetric monoidal adjunction, with $\tL$ a closed symmetric monoidal category.  The associated $\sL$-adjunction \hbox{$\acute{K} \nsststile{}{\tfd} \grave{J} : J_*\left(\aetL\right) \rightarrow \aeL$} \pbref{thm:assoc_enr_adj} is simply the same $\sL$-adjunction $K \nsststile{}{\tfd} J$.
\end{PropSub}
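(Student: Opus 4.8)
The plan is to recognize Proposition \bref{thm:compl_sm_adj} as essentially an application of Theorem \bref{thm:enr_refl_smadj} to the $\sL$-reflection $K \nsststile{}{\tfd} J : \tL \hookrightarrow \uL$. First I would invoke \bref{def:func_compl}, where $\tL$ is defined precisely as an $\sL$-reflective-subcategory of $\uL$, with the induced $\sL$-reflection written $K \nsststile{}{\tpartial} J : \tL \hookrightarrow \uL$. Thus we are exactly in the situation of \bref{thm:enr_refl_smadj}, with the ambient closed symmetric monoidal category being $\sL$ itself (playing the role of $\V$ there) and the $\V$-reflection being $K \nsststile{}{\tfd} J$.

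Applying \bref{thm:enr_refl_smadj}, part 1 gives that the underlying ordinary adjunction $K \nsststile{}{\tfd} J : \tL \hookrightarrow \sL$ carries the structure of a symmetric monoidal adjunction, with $\tL = (\tL,\widehat{\otimes},KR)$ a symmetric monoidal closed category; note that here the unit object is $KR$ (the functional completion of $R$), though since $R$ is functionally complete \pbref{exa:refl_impl_func_compl} we have $KR \cong R$ and the tensor on $\tL$ is $U \mathbin{\widehat{\otimes}} V = K(U \otimes V)$. Part 2 of \bref{thm:enr_refl_smadj} gives exactly the second assertion: the $\sL$-adjunction $\acute{K} \nsststile{}{\tfd} \grave{J} : J_*(\aetL) \rightarrow \aeL$ obtained via \bref{thm:assoc_enr_adj} is identical to the given $\sL$-reflection $K \nsststile{}{\tfd} J$. (Part 3, that $\uW(U,V) = \uV(U,V)$, is also available but is not needed for the present statement, being essentially the content of \bref{thm:homs_func_complete} in this case.)

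The only point requiring care is verifying that the hypotheses of \bref{thm:enr_refl_smadj} are met in the present setting, but they are minimal: \bref{thm:enr_refl_smadj} requires only that $\sL$ be a closed symmetric monoidal category and that we have an $\sL$-reflection on $\aeL$. The former holds by \bref{par:smc_adj}, and the latter is part of the definition of $\tL$ in \bref{def:func_compl}, which is legitimate precisely because, under the standing assumption of this section that $\sL$ is finitely well-complete, the idempotent $\sL$-monad $\tHH$ and its associated $\sL$-reflective-subcategory exist by \bref{thm:ind_idmmnd_and_refl} (as recorded just before \bref{def:func_compl}). So there is no genuine obstacle; the proof is a direct citation. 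I would therefore write the proof simply as: ``Since $\tL$ is an $\sL$-reflective-subcategory of $\uL$, with associated $\sL$-reflection $K \nsststile{}{\tfd} J$ \pbref{def:func_compl}, this follows immediately from \bref{thm:enr_refl_smadj}.'' If one wishes to be slightly more explicit, one adds that the first sentence is \bref{thm:enr_refl_smadj} part 1 and the second is \bref{thm:enr_refl_smadj} part 2, together with the remark that the unit object $KR$ of $\tL$ coincides with $R$ since $R$ is functionally complete.
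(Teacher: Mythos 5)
Your proposal is correct and matches the paper exactly: the paper's proof of \bref{thm:compl_sm_adj} is simply ``This follows from \bref{thm:enr_refl_smadj}.'' Your additional remarks on verifying the hypotheses and on the unit object $KR \cong R$ are accurate but not needed beyond the direct citation.
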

\begin{proof}
This follows from \bref{thm:enr_refl_smadj}.
\end{proof}

\section{Accessible distributions} \label{sec:acc_distn}

We continue to work with data as given in \bref{par:smc_adj}, and as in \bref{sec:compl} we again suppose that $\sL$ is finitely well-complete.

\begin{ParSub} \label{par:mnd_morph_tdd_to_dd}
Applying the 2-functor $G_*:\LCAT \rightarrow \XCAT$ to the monad morphism $i:\tHH \rightarrow \HH$ \pbref{thm:mnd_morph_i}, we obtain a monad morphism $G_*(i):G_*(\tHH) \rightarrow G_*(\HH)$ in $\XCAT$.  Next, applying the monoidal functor $[\acute{F},\grave{G}]:\XCAT(G_*\aeL,G_*\aeL) \rightarrow \XCAT(\aeX,\aeX)$ \pbref{thm:mon_func_detd_by_adj}, we obtain a morphism of $\X$-monads
\begin{equation}\label{eq:morph_td_to_d}[\acute{F},\grave{G}](G_*(\tHH)) \xrightarrow{[\acute{F},\grave{G}](i) = GiF} [\acute{F},\grave{G}](G_*(\HH)),\end{equation}
but by \eqref{eq:dist_via_dd}, $[\acute{F},\grave{G}](G_*(\HH)) = \DD$ is the natural distribution monad.  This leads to the following definition.
\end{ParSub}

\begin{DefSub}\label{def:acc_distns}\emptybox
\begin{enumerate}
\item We call the $\X$-monad $\tDD := [\acute{F},\grave{G}](G_*(\tHH))$ \eqref{eq:morph_td_to_d} on $\uX$ \textit{the accessible distribution monad}.
\item We denote the morphism of $\X$-monads $[\acute{F},\grave{G}](i)$ \eqref{eq:morph_td_to_d} by $j^{\tDD}:\tDD \rightarrow \DD$.
\item For each object $X \in \X$, we call $\tH FX$ \textit{the linear space of accessible distributions} on $X$.
\end{enumerate}
\end{DefSub}

\begin{RemSub}\label{rem:acc_distns}Employing our lexicon of synthetic terminology \pbref{def:first_synth_terminology}, we note the following.
\begin{enumerate}
\item For a space $X \in \X$, $\tD X$ is the underlying space of the linear space of accessible distributions on $X$.
\item The linear space of accessible distributions $\tH FX$ on $X$ is equally the functional completion of the free span $FX$ of $X$.
\item Each component $j^{\tDD}_X = GiFX$ of $j^{\tDD}:\tDD \rightarrow \DD$ is the underlying map of the functionally closed embedding $i_{FX}:\tH FX \rightarrowtail HFX = (FX)^{**}$ into the linear space of natural distributions $(FX)^{**}$.
\item By \bref{rem:cmpl_as_closure}, the embedding $i_{FX}$ presents $\tH FX$ as the functional closure of the image of $\fd_{FX}:FX \rightarrow (FX)^{**}$.
\item Since $\fd_{FX}$ is the transpose of \hbox{$\delta^\DD_X:X \rightarrow DX = GHFX$} under $F \nsststile{\varepsilon}{\eta} G$ (by \bref{thm:transp_of_delta}), there is thus a sense in which $i_{FX}$ presents $\tH FX$ as the functional closure of the span of the Dirac distributions in the linear space of natural distributions.
\item In contrast with \bref{prop:dist_via_dd} 1, $\tDD$ is induced by the composite $\X$-adjunction
\begin{equation}\label{eq:comp_adj_ind_ddt}
\xymatrix {
\aeX \ar@/_0.5pc/[rr]_{\acute{F}}^(0.4){\eta}^(0.6){\varepsilon}^{\top} & & G_*\aeL \ar@/_0.5pc/[ll]_{\grave{G}} \ar@/_0.5pc/[rr]_{G_*(\bK)}^(0.4){}^(0.6){}^{\top} & & {G_*\teL\;.} \ar@/_0.5pc/[ll]_{G_*(\bJ)}
}
\end{equation}
\end{enumerate}
\end{RemSub}

\begin{RemSub}\label{rem:tdx_free_fc_linear_sp}
By \bref{rem:acc_distns} 6, the linear space $\tH FX$ of accessible distributions on $X$ is the \textit{free} functionally complete linear space on $X$.
\end{RemSub}

\begin{RemSub}\label{rem:acc_dist_are_nat_dist}
Consider the class of examples \bref{exa:smc_adj_with_l_fwc} 1, with $\X$ a locally small well-pointed cartesian closed category.  For any choice of cotensors $[X,R]$ in $\sL$ $(X \in \X)$, we have isomorphisms $HFX \cong \uL([X,R],R)$ $(X \in \X)$.  Recall that $i_{FX}:\tH FX \rightarrowtail HFX = (FX)^{**}$ is a certain intersection of strong monomorphisms, and by choosing this intersection appropriately we may assume that the composite embedding $\tH FX \rightarrowtail HFX \cong \uL([X,R],R)$ induces an inclusion of the underlying sets.  Hence, regardless of the choice of cotensors $[X,R]$ we may assume that each accessible distribution $\mu \in \tD X$ is a natural distribution $\mu:[X,R] \rightarrow R$.
\end{RemSub}

\begin{LemSub} \label{thm:partial_fx_dense}
Let $X \in \X$, and suppose that the cotensor $[X,R]$ in $G_*\uL$ is reflexive (in $\sL$).  Then $\fd_{FX}:FX \rightarrow HFX = (FX)^{**}$ is functionally dense.
\end{LemSub}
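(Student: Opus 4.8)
The plan is to show that $\fd_{FX}$ lies in $\Dense{\Sigma_H}$ by exhibiting it as a composite of morphisms already known to be functionally dense, then invoking the closure properties of the left class of a $\V$-prefactorization-system. The key observation is that the hypothesis of reflexivity of $[X,R]$ forces the double-dualization functor $H$ to invert $\fd_{FX}$, after which one of the characterizations in \bref{par:basic_charns_func_completeness} can be exploited. Concretely, I would proceed as follows.

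\textbf{Step 1: identify $\fd_{FX}$ with a morphism in $\Sigma_H$.} By \bref{prop:dist_via_dd} and the discussion following it, $HFX = (FX)^{**} \cong \uL([X,R],R)$, where the identification uses that $(FX)^* = \uL(FX,R) \cong \uL([X,R],R)$ is just $[X,R]$ itself once one recalls $\uL(FX,R)$ is a cotensor, i.e. $(FX)^* \cong [X,R]$. Thus $(FX)^{**} \cong [X,R]^*$. The claim that $\fd_{FX}$ is functionally dense is, by \bref{thm:mnd_morph_i}, equivalent to showing that the functionally closed embedding $i_{FX}:\tH FX \rightarrowtail HFX$ is an isomorphism, which by \bref{thm:props_sigma_cl_dens} 5 would follow if $i_{FX}$ were itself functionally dense. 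So it suffices to show $\fd_{FX} \in \Dense{\Sigma_H}$, and by \bref{thm:props_sigma_cl_dens} 2 it suffices to show $\fd_{FX} \in \Sigma_H$, since $\Sigma \subs \Dense{\Sigma}$ always holds for the functionally dense class (taking $\Sigma = \Sigma_H$).

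\textbf{Step 2: verify $\fd_{FX} \in \Sigma_H$ using reflexivity.} The class $\Sigma_H$ consists of those $h$ with $\uL(h,R)$ an isomorphism in $\sL$, equivalently (as noted in \bref{par:basic_charns_func_completeness}, proof) those $h$ with $h^{**}$ an isomorphism. So I must check that $(\fd_{FX})^{**}:(FX)^{**} \rightarrow (FX)^{****}$ is an isomorphism, or equivalently that $(\fd_{FX})^* = \uL(\fd_{FX},R):(FX)^{***} \to (FX)^*$ is an isomorphism. Now $(FX)^* \cong [X,R]$ is reflexive by hypothesis, so $\fd_{(FX)^*}:(FX)^* \to (FX)^{***}$ is an isomorphism. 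By one of the standard triangle identities for the double-dualization monad $\HH$ (namely $\fd_{E^*} \cdot 1 = $ section of $(\fd_E)^*$, i.e. $(\fd_E)^* \cdot \fd_{E^*} = 1_{E^*}$), the morphism $(\fd_{FX})^*$ is a retraction of the isomorphism $\fd_{(FX)^*}$ and hence is itself an isomorphism. Therefore $\fd_{FX} \in \Sigma_H$, hence $\fd_{FX} \in \Dense{\Sigma_H}$, as required.

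\textbf{Main obstacle.} The routine-but-delicate point is the bookkeeping in Step 2: making precise the identification $(FX)^* \cong [X,R]$ (for the chosen cotensors as in \bref{prop:dist_via_dd}) and invoking the correct triangle identity for $\HH$ so that $(\fd_{FX})^*$ is seen to be a retraction of $\fd_{(FX)^*}$. This is the monad-unit identity $G^\HH \varepsilon^\HH \cdot \fd G^\HH = 1$ specialized appropriately, combined with the fact that the dualization adjunction has $(-)^*$ as both adjoints; I expect this is where one must be careful about variance. Everything else is a direct appeal to \bref{thm:props_sigma_cl_dens}, \bref{thm:mnd_morph_i}, and \bref{par:basic_charns_func_completeness}.
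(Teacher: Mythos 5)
Your proposal is correct and follows essentially the same route as the paper: reduce to showing $\fd_{FX} \in \Sigma_H$ (since $\Sigma_H \subs \Dense{\Sigma_H}$), use reflexivity of $[X,R] = (FX)^*$ to see that $\fd_{(FX)^*}$ is an isomorphism, and conclude that its retraction $(\fd_{FX})^*$, hence $H\fd_{FX} = (\fd_{FX})^{**}$, is an isomorphism. The only difference is presentational: where you invoke the triangle identity of the dualization adjunction to see that $(\fd_{FX})^* \cdot \fd_{(FX)^*} = 1$, the paper verifies this retraction identity by an explicit computation with transposes and evaluation morphisms --- precisely the ``delicate bookkeeping'' you flagged, and your identification of it as the identity $G\varepsilon \cdot \eta G = 1$ for the self-adjoint functor $(-)^*$ is the right one.
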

\begin{proof}
Since $\Sigma_H \subs \Dense{\Sigma_H}$, it suffices to show that $\fd_{FX}$ is inverted by $H$.

Having chosen the cotensor $[X,R]$ in $G_*\aeL$ as in \bref{thm:assoc_enr_adj} 2, we have that
$$[X,R] = \aeL(FX,R) = (FX)^*\;,$$
so since $[X,R]$ is reflexive, the unit component
$$\fd_{(FX)^*}:(FX)^* \rightarrow H((FX)^*) = (FX)^{***}$$
is an isomorphism.

But $\fd_{(FX)^*}$ has a retraction $\fd_{FX}^*$,
as follows.  Indeed, as
$$\fd_{(FX)^*}:(FX)^* \rightarrow \aeL(\aeL((FX)^*,R),R)$$
is the transpose of 
$$\Ev:(FX)^* \otimes \aeL((FX)^*,R) \rightarrow R\;,$$
we find that the composite
$$(FX)^* \xrightarrow{\fd_{(FX)^*}} \aeL(\aeL((FX)^*,R),R) \xrightarrow{\fd_{FX}^* \;=\; \aeL(\fd_{FX},R)} \aeL(FX,R) = (FX)^*$$
has transpose
$$(FX)^* \otimes FX \xrightarrow{1 \otimes \fd_{FX}} (FX)^* \otimes \aeL((FX)^*,R) \xrightarrow{\Ev} R\;,$$
which is equal to $\Ev:\aeL(FX,R) \otimes FX \rightarrow R$, so that $\fd_{FX}^* \cdot \fd_{(FX)^*} = 1_{(FX)^*}$ as needed.

Hence, since $\fd_{(FX)^*}$ is an isomorphism, its retraction $\fd_{FX}^*:(FX)^{***} \rightarrow (FX)^*$ is an isomorphism.  Applying $(-)^*:\sL^\op \rightarrow \sL$, we obtain an isomorphism
$$H\fd_{FX} = \fd_{FX}^{**} : HFX = (FX)^{**} \rightarrow (FX)^{****} = HHFX\;.$$
\end{proof}

\begin{ThmSub} \label{thm:dist_via_compl}
Suppose that for each $X \in \X$, the cotensor $[X,R]$ in $G_*\uL$ is reflexive (in $\sL$).  
\begin{enumerate}
\item Then the morphism of $\X$-monads $j^{\tDD}:\tDD \rightarrow \DD$ \eqref{eq:morph_td_to_d} is an isomorphism.
\item Moreover, each embedding $i_{FX} : \tH FX \rightarrowtail HFX$ $(X \in \X)$ is an isomorphism in $\sL$.
\end{enumerate}
\end{ThmSub}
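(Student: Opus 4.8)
The key observation is that $j^{\tDD}:\tDD \to \DD$ has components $j^{\tDD}_X = G i_{FX} F$ (by \bref{rem:acc_distns} 3), and $\acute{F},\grave{G}$ are adjoint with underlying ordinary functors $F,G$; since $\DD$ and $\tDD$ are induced (up to the same monoidal functor $[\acute{F},\grave{G}]$) by the $\sL$-monads $\HH$ and $\tHH$, and since $[\acute{F},\grave{G}]$ is a functor, $j^{\tDD}$ will be an isomorphism of $\X$-monads as soon as $i:\tHH \to \HH$ is an isomorphism of $\sL$-monads on the relevant objects. More precisely, it suffices to show that each component $i_{FX}:\tH FX \rightarrowtail HFX$ is an isomorphism in $\sL$ — this gives statement 2, and statement 1 follows by applying the monoidal functor $[\acute{F},\grave{G}] \circ G_*$ to the monad morphism $i$ (as in \bref{par:mnd_morph_tdd_to_dd}), since a monad morphism all of whose components are isomorphisms is an isomorphism of monads. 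So the two statements are essentially equivalent and I would prove statement 2 directly, then deduce statement 1.

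To prove that $i_{FX}$ is an isomorphism, recall from \bref{thm:mnd_morph_i} (or \bref{def:completion}, \bref{thm:dense_closed_factn_sys}) that $\fd_{FX} = i_{FX} \cdot \tpartial_{FX}$ is the $(\Dense{\Sigma_H},\ClEmb{\Sigma_H})$-factorization of $\fd_{FX}:FX \to HFX$, with $\tpartial_{FX}$ functionally dense and $i_{FX}$ a functionally closed embedding. By \bref{thm:partial_fx_dense}, the reflexivity hypothesis on the cotensor $[X,R]$ guarantees that $\fd_{FX}$ is itself functionally dense. Now I would invoke \bref{thm:props_sigma_cl_dens} 3: since the composite $i_{FX} \cdot \tpartial_{FX} = \fd_{FX}$ is $\Sigma_H$-dense, its second factor $i_{FX}$ is $\Sigma_H$-dense. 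But $i_{FX}$ is also a $\Sigma_H$-closed embedding, so by \bref{thm:props_sigma_cl_dens} 5 (equivalently \bref{thm:int_of_upper_and_lower_classes_is_iso}), a morphism that is both $\Sigma_H$-dense and a $\Sigma_H$-closed embedding is an isomorphism. Hence $i_{FX}$ is an isomorphism in $\sL$, which is statement 2.

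For statement 1, $j^{\tDD} = [\acute{F},\grave{G}](G_*(i))$ where $i:\tHH \to \HH$ is the morphism of $\sL$-monads whose components are the $i_{FX}$ — wait, more precisely $i$ has components $i_E$ for all $E \in \sL$, but the relevant point is that $G_*(i)$ followed by $[\acute{F},\grave{G}]$ produces an $\X$-monad morphism whose $X$-component is $G i_{FX} F = G i_{FX}$ (identifying underlying functors as in \bref{par:ident_underl_ord}). Since each $i_{FX}$ is an isomorphism in $\sL$ by statement 2, and $G$ preserves isomorphisms, each component $j^{\tDD}_X$ is an isomorphism in $\X$; a morphism of $\X$-monads that is a componentwise isomorphism is an isomorphism of $\X$-monads, so $j^{\tDD}$ is an isomorphism. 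The main obstacle is really already dispatched: it is Lemma \bref{thm:partial_fx_dense}, i.e. the passage from reflexivity of $[X,R] = (FX)^*$ to functional density of $\fd_{FX}$, which rests on the retraction computation showing $\fd_{(FX)^*}$ has retraction $\fd_{FX}^*$ and hence $H\fd_{FX} = \fd_{FX}^{**}$ is iso. Given that lemma, the present theorem is a short formal deduction from the factorization-system machinery of Chapter \bref{ch:compl_enr_orth}, with no further calculation needed.
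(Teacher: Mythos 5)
Your proposal is correct and follows essentially the same route as the paper: reduce statement 1 to statement 2 via $j^{\tDD} = GiF$, then use Lemma \bref{thm:partial_fx_dense} to get that $\fd_{FX}$ is functionally dense, cancel via \bref{thm:props_sigma_cl_dens} 3 to conclude $i_{FX}$ is functionally dense, and finish with \bref{thm:props_sigma_cl_dens} 5 since $i_{FX}$ is also a functionally closed embedding. The only difference is that you spell out the (routine) componentwise-isomorphism argument for deducing statement 1, which the paper leaves implicit.
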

\begin{proof}
Since $j^{\tDD} = GiF$, it suffices to prove 2.  Since $i_{FX}$ is a functionally closed embedding, it suffices by \bref{thm:props_sigma_cl_dens} to show that $i_{FX}$ is functionally dense.  We know that $\fd_{FX}$ factors as
$$FX \xrightarrow{\tpartial_{FX}} \tH FX \overset{i_{FX}}{\rightarrowtail} HFX\;,$$
but by \bref{thm:partial_fx_dense}, $\fd_{FX}$ is functionally dense, so by \bref{thm:props_sigma_cl_dens}, $i_{FX}$ is functionally dense as well.
\end{proof}

\begin{CorSub}\label{thm:nat_dist_free_fc_linsp_when_cot_refl}
Letting $X \in \X$, if the cotensor $[X,R]$ in $\sL$ is reflexive, then the linear space $\uL([X,R],R)$ of natural distributions on $X$ is the free functionally complete linear space on $X$.
\end{CorSub}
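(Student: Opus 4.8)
The plan is to derive the corollary almost immediately from Theorem~\bref{thm:dist_via_compl} together with the identification, established in \bref{sec:acc_distn} and \bref{sec:compl}, of the linear space of accessible distributions as a free functionally complete linear space. Throughout we work with data as in \bref{par:smc_adj} and assume $\sL$ finitely well-complete, as in \bref{sec:compl} and \bref{sec:acc_distn}.

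First I would recall that, by \bref{rem:tdx_free_fc_linear_sp}, the linear space $\tH FX$ of accessible distributions on $X$ is the free functionally complete linear space on $X$; more precisely, $\tDD$ is induced by the composite $\X$-adjunction \eqref{eq:comp_adj_ind_ddt}, so $\tH FX = KFX$ is the value at $X$ of the left adjoint of that composite, and its underlying space $\tD X$ comes equipped with the Dirac map $\delta^{\tDD}_X := G_*(\bK)\acute{F}$-unit component exhibiting the relevant universal property against functionally complete linear spaces. Next I would invoke Theorem~\bref{thm:dist_via_compl}: since the cotensor $[X,R]$ in $\sL$ is assumed reflexive, part~2 of that theorem gives that the embedding $i_{FX}:\tH FX \rightarrowtail HFX$ is an isomorphism in $\sL$. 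Hence the linear space $\uL([X,R],R)$ of natural distributions on $X$ — which, by the choice of cotensors as in \bref{prop:dist_via_dd} and the remark following it, is precisely $HFX = (FX)^{**}$ — is isomorphic, via $i_{FX}$, to $\tH FX$, which is the free functionally complete linear space on $X$.

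It then remains to observe that this isomorphism identifies the universal map: the triangle in \bref{thm:mnd_morph_i} shows $i_{FX} \cdot \tpartial_{FX} = \fd_{FX}$, and by \bref{thm:transp_of_delta} the transpose of $\delta^\DD_X$ is $\fd_{FX}$, so the natural-distribution Dirac map $\delta^\DD_X$ corresponds under $i_{FX}$ to the accessible-distribution Dirac map. Since being a free functionally complete linear space is a property stable under isomorphism of linear spaces compatible with the structure map, $\uL([X,R],R)$ together with $\delta^\DD_X$ is the free functionally complete linear space on $X$, which is the assertion.

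I do not anticipate a genuine obstacle here, as the corollary is a direct packaging of \bref{thm:dist_via_compl}; the only point requiring a modicum of care is the bookkeeping around the two descriptions of the cotensors $[X,R]$ and the compatibility of the Dirac maps, which is handled by \bref{prop:dist_via_dd}, \bref{thm:transp_of_delta}, and \bref{thm:mnd_morph_i}. If one wished to be maximally explicit about the universal property, one could instead note that the composite $\X$-adjunction \eqref{eq:comp_adj_ind_ddt} and the composite \eqref{eq:composite_adj_inducing_nat_dist_mnd} induce isomorphic $\X$-monads by part~1 of \bref{thm:dist_via_compl}, and that by \bref{thm:sep_iff_exists_emb} and the theory of \bref{ch:compl_enr_orth} the functionally complete linear spaces are exactly the objects of the $\sL$-reflective subcategory $\tL$ through which \eqref{eq:comp_adj_ind_ddt} factors; the stated corollary is then the object-level manifestation of that factorization.
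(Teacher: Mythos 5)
Your proposal is correct and follows essentially the same route as the paper, which states this corollary without further proof as an immediate consequence of Theorem \bref{thm:dist_via_compl}~2 (the isomorphism $i_{FX}:\tH FX \rightarrowtail HFX$) combined with \bref{rem:tdx_free_fc_linear_sp} (that $\tH FX$ is the free functionally complete linear space on $X$). Your additional check that the Dirac maps correspond under $i_{FX}$, via \bref{thm:mnd_morph_i} and \bref{thm:transp_of_delta}, is a reasonable piece of bookkeeping that the paper leaves implicit.
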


\begin{ExaSub}\label{exa:acc_nat_dist_coincide_for_conv}
In the example in which $\X = \Conv$ is the category of convergence spaces \pbref{exa:smc_adj_with_l_fwc}, the cotensors $[X,R]$ in $G_*\aeL$ are reflexive in $\sL$ by \bref{exa:refl_conv_vect}, so by \bref{thm:dist_via_compl} and \bref{thm:nat_dist_free_fc_linsp_when_cot_refl}, we obtain the following:
\begin{enumerate}
\item $\tDD \cong \DD$.
\item For each convergence space $X$, the convergence vector space $\uL([X,R],R)$ of natural distributions on $X$ is the free functionally complete linear space on $X$.
\end{enumerate}
Further, by \bref{rem:acc_dist_are_nat_dist} we may assume that each component $j^{\tDD}_X:\tD X \rightarrow DX$ induces an inclusion of the underlying sets, whence since $j^{\tDD}_X$ is an isomorphism, $\tD X = DX$ and $j^{\tDD}_X$ is the identity morphism.  It then follows that the $\X$-monads $\tDD$, $\DD$ are identical and that $j^{\tDD}:\tDD \rightarrow \DD$ is the identity.  Hence, for convergence spaces, accessible distributions are the same as natural distributions. 
\end{ExaSub}

\section{An abstract Fubini theorem}

\begin{PropSub}\label{thm:td_comm}
Given data as in \bref{par:smc_adj} with $\sL$ finitely well-complete \pbref{def:enr_fwc}, the $\X$-monad $\tDD$ is commutative.
\end{PropSub}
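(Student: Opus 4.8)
The plan is to exhibit $\tDD$ as the $\X$-monad induced by a symmetric monoidal adjunction, and then invoke Kock's bijection \pbref{thm:comm_sm_mnd} together with the compatibility results of Chapter \bref{ch:smc_adj_comm_mnd}. Recall from \bref{rem:acc_distns} 6 that $\tDD$ is induced (up to isomorphism) by the composite $\X$-adjunction \eqref{eq:comp_adj_ind_ddt}, namely $\acute F \dashv \grave G$ followed by $G_*(\bK) \dashv G_*(\bJ)$, where $K \nsststile{}{\tfd} J : \tL \hookrightarrow \uL$ is the $\sL$-reflection of \bref{def:func_compl}. First I would observe that the given symmetric monoidal adjunction $F \nsststile{\varepsilon}{\eta} G$ and the symmetric monoidal reflection $K \nsststile{}{\tfd} J : \tL \hookrightarrow \sL$ of \bref{thm:compl_sm_adj} compose to a symmetric monoidal adjunction $KF \dashv GJ : \tL \rightarrow \X$ between symmetric monoidal closed categories. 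Its induced symmetric monoidal monad on $\X$ therefore corresponds, via the bijection of \bref{thm:comm_sm_mnd}, to a commutative $\X$-monad on $\uX$; call it $\tDD^c$.

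The second step is to identify $\tDD^c$ with $\tDD$ as an $\X$-monad — not merely as an ordinary monad, but including the $\X$-enrichment. Here I would apply \bref{thm:compn_assoc_enr_adj}: the $\X$-adjunction associated (via \bref{thm:assoc_enr_adj}) to the composite symmetric monoidal adjunction $KF \dashv GJ$ is identical to the composite of the $\X$-adjunction associated to $F \dashv G$ (which is $\acute F \dashv \grave G$ by \bref{par:smc_adj}) with the $\X$-adjunction associated to $K \dashv J$ (which by \bref{thm:compl_sm_adj} is simply the $\sL$-reflection $K \nsststile{}{\tfd} J$ itself, pushed along $G_*$). That composite $\X$-adjunction is exactly \eqref{eq:comp_adj_ind_ddt}, whose induced $\X$-monad is $\tDD$ by \bref{rem:acc_distns} 6. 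Meanwhile, by \bref{thm:enradj_assoc_smadj_ind_commmnd}, the commutative $\X$-monad associated to a symmetric monoidal adjunction coincides with the $\X$-monad induced by its associated $\X$-adjunction. Hence $\tDD^c = \tDD$ as $\X$-monads, so $\tDD$ is commutative.

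An alternative, slightly more elementary route avoiding an explicit reference to the composite-adjunction lemma would be: apply \bref{thm:mnds_assoc_comm_xth_are_comm}-style reasoning is not available here since we are not dealing with a theory, so instead use \bref{rem:comm_inv_under_iso} — commutativity of $\X$-monads is invariant under isomorphism — to reduce from the isomorphism $\tDD \cong [\acute F,\grave G](G_*(\tHH))$ of \bref{def:acc_distns} to showing that the $\X$-monad induced by \eqref{eq:comp_adj_ind_ddt} is commutative, and then run the argument of the previous paragraph. Either way the conceptual content is the same.

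The main obstacle I anticipate is the bookkeeping in step two: one must be careful that the $\X$-enrichment of $\tDD$ coming from the construction $[\acute F,\grave G](G_*(\tHH))$ in \bref{def:acc_distns}, the $\X$-enrichment coming from the composite $\X$-adjunction \eqref{eq:comp_adj_ind_ddt}, and the $\X$-enrichment $\tDD^c$ coming from Kock's bijection applied to the composite symmetric monoidal adjunction, are all literally the same structure and not merely isomorphic. The excerpt has been built precisely to make this identification available: \bref{thm:mon_func_detd_by_adj} relates $[\acute F,\grave G](G_*(\tHH))$ to the induced monad of the composite adjunction, \bref{thm:compn_assoc_enr_adj} handles functoriality of the $\X$-enrichment under composition, \bref{thm:commmnd_sm_em_adj_dets_em_vadj} and \bref{thm:enradj_assoc_smadj_ind_commmnd} pin down the Kock enrichment, and \bref{thm:compl_sm_adj} guarantees that the associated $\X$-adjunction of the reflection is the reflection itself. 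So the obstacle is entirely one of assembling these pieces in the right order rather than proving anything genuinely new; the key insight is that \emph{every} step of the chain $\X \to \sL \to \tL$ is both symmetric monoidal and $\X$-enriched in a mutually compatible way, which forces $\tDD$ to be a commutative $\X$-monad.
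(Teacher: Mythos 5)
Your proposal is correct and follows essentially the same route as the paper's own proof: both exhibit $\tDD$ as the $\X$-monad induced by the composite symmetric monoidal adjunction $KF \dashv GJ:\tL \rightarrow \X$ via \bref{thm:compl_sm_adj}, \bref{rem:acc_distns}~6, and \bref{thm:compn_assoc_enr_adj}, and then conclude commutativity from \bref{thm:enradj_assoc_smadj_ind_commmnd}. Your intermediate object $\tDD^c$ is just an unpacking of what \bref{thm:enradj_assoc_smadj_ind_commmnd} already asserts, so no new content is needed there.
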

\begin{proof}
Recall that the underlying ordinary adjunction of \hbox{$\bK \nsststile{}{\tfd} \bJ : \teL \hookrightarrow \aeL$} carries the structure of a symmetric monoidal adjunction, with $\tL$ closed \pbref{thm:compl_sm_adj}.  Further, this symmetric monoidal adjunction determines an $\sL$-adjunction \hbox{$\acute{K} \nsststile{}{\tfd} \grave{J} : J_*\left(\aetL\right) \rightarrow \aeL$} which is in fact just the same $\sL$-adjunction $\bK \nsststile{}{\tfd} \bJ$ \pbref{thm:compl_sm_adj}. 

Hence, by \bref{rem:acc_distns} 6, $\tDD$ is the $\X$-monad induced by the composite $\X$-adjunction
\begin{equation}
\xymatrix {
\aeX \ar@/_0.5pc/[rr]_{\acute{F}}^(0.4){}^(0.6){}^{\top} & & G_*\aeL \ar@/_0.5pc/[ll]_{\grave{G}} \ar@/_0.5pc/[rr]_{G_*(\acute{K})}^(0.4){}^(0.6){}^{\top} & & {G_*J_*(\aetL)\;,} \ar@/_0.5pc/[ll]_{G_*(\grave{J})}
}
\end{equation}
which by \bref{thm:compn_assoc_enr_adj} is the same as the $\X$-adjunction
\begin{equation}\label{eq:enradj_kf_gj}
\xymatrix {
\aeX \ar@/_0.5pc/[rr]_{\wideacute{KF}}^(0.4){}^(0.6){}^{\top} & & (GJ)_*\aetL \ar@/_0.5pc/[ll]_{\widegrave{GJ}}
}
\end{equation}
associated to the composite symmetric monoidal adjunction
$$
\xymatrix {
\X \ar@/_0.5pc/[rr]_F^(0.4){}^(0.6){}^{\top} & & \sL \ar@/_0.5pc/[ll]_G \ar@/_0.5pc/[rr]_K^(0.4){}^(0.6){}^{\top} & & {\tL\;.} \ar@{_{(}->}@/_0.5pc/[ll]_J
}
$$
Hence by \bref{thm:enradj_assoc_smadj_ind_commmnd}, $\tDD$ is commutative.
\end{proof}

In view of \bref{thm:dist_via_compl}, \bref{thm:td_comm} yields the following.

\begin{ThmSub} \label{thm:general_fubini}
Let $\xymatrix {\X \ar@/_0.5pc/[rr]_F^(0.4){}^(0.6){}^{\top} & & \sL \ar@/_0.5pc/[ll]_G}$ be a symmetric monoidal adjunction, with $\X$ and $\sL$ symmetric monoidal closed and $\sL$ finitely well-complete \pbref{def:enr_fwc}.  Suppose that each cotensor $[X,R]$ in $G_*\uL$ is reflexive, where $X \in \X$ and $R$ is the unit object of $\sL$.  Then the natural distribution monad $\DD$ is commutative.
\end{ThmSub}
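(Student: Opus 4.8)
The plan is to derive Theorem~\bref{thm:general_fubini} as an immediate consequence of \bref{thm:td_comm} together with the coincidence result \bref{thm:dist_via_compl}. First I would observe that the hypotheses of the theorem are exactly those under which \bref{thm:td_comm} applies: we have a symmetric monoidal adjunction $F \nsststile{\varepsilon}{\eta} G : \sL \rightarrow \X$ between symmetric monoidal closed categories with $\sL$ finitely well-complete. Hence \bref{thm:td_comm} tells us directly that the accessible distribution monad $\tDD$ is commutative as an $\X$-monad on $\uX$.

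Next I would invoke the additional hypothesis that each cotensor $[X,R]$ in $G_*\uL$ is reflexive. Under exactly this hypothesis, \bref{thm:dist_via_compl}~(1) asserts that the morphism of $\X$-monads $j^{\tDD} : \tDD \rightarrow \DD$ of \eqref{eq:morph_td_to_d} is an isomorphism. Thus $\DD$ is isomorphic, as an $\X$-monad, to $\tDD$.

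Finally, I would close the argument by appealing to \bref{rem:comm_inv_under_iso}, which records that commutativity of an $\X$-monad is invariant under isomorphism of $\X$-monads. Since $\tDD$ is commutative and $\DD \cong \tDD$ as $\X$-monads, it follows that $\DD$ is commutative, which is the assertion of the theorem.

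I do not anticipate any genuine obstacle here, since all the substantive work has been carried out in the preceding results: the proof is simply the composition ``$\tDD$ commutative (\bref{thm:td_comm}) $+$ $\tDD \cong \DD$ under the reflexivity hypothesis (\bref{thm:dist_via_compl}) $+$ commutativity is iso-invariant (\bref{rem:comm_inv_under_iso})''. The only point requiring any care is to confirm that the isomorphism $j^{\tDD}$ supplied by \bref{thm:dist_via_compl} is indeed an isomorphism \emph{of $\X$-monads} (not merely of underlying endofunctors), which is precisely what that result states, so the invocation of \bref{rem:comm_inv_under_iso} is legitimate. Accordingly the plan is just to write out these three citations in order as the complete proof.
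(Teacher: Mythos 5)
Your proposal is correct and is essentially the paper's own argument: the theorem is stated immediately after the sentence ``In view of \bref{thm:dist_via_compl}, \bref{thm:td_comm} yields the following,'' i.e.\ the paper likewise combines the commutativity of $\tDD$ with the isomorphism $j^{\tDD}:\tDD \rightarrow \DD$ of $\X$-monads, with iso-invariance of commutativity \pbref{rem:comm_inv_under_iso} supplying the final step. Your explicit citation of \bref{rem:comm_inv_under_iso} just makes visible what the paper leaves implicit.
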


\begin{CorSub}\label{thm:nat_dist_on_conv_comm}
The natural distribution monad $\DD$ on the category $\X = \Conv$ of convergence spaces \pbref{exa:nat_dist_conv} is commutative.
\end{CorSub}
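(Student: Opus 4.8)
The plan is to obtain this corollary as an immediate instance of the abstract Fubini theorem \bref{thm:general_fubini}, applied to the symmetric monoidal adjunction $F \nsststile{\varepsilon}{\eta} G : \sL \rightarrow \X$ in which $\X = \Conv$ is the cartesian closed category of convergence spaces \pbref{exa:conv_sm_sp}, $R = \RR$ or $\CC$, and $\sL = \RMod(\X)$ is the category of convergence vector spaces. First I would recall that $\Conv$ is a countably-complete and -cocomplete cartesian closed category \pbref{exa:conv_sm_sp}, so that by \bref{exa:rmod_smclosed_monadic} the free--forgetful adjunction $F \dashv G$ between $\X$ and $\RMod(\X)$ underlies a symmetric monoidal adjunction with $\sL = \RMod(\X)$ symmetric monoidal closed; this places us in exactly the setting of \bref{par:smc_adj}, with $\X$ and $\sL$ closed symmetric monoidal.

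Next I would verify the two remaining hypotheses of \bref{thm:general_fubini}. That $\sL$ is finitely well-complete \pbref{def:enr_fwc} is exactly \bref{exa:smc_adj_with_l_fwc}(3): since $\Conv$ is topological over $\Set$ it is finitely well-complete by \bref{exa:fwc}, whence $\RMod(\Conv)$ is finitely well-complete. That each cotensor $[X,R]$ in $G_*\uL$ is reflexive is the content of \bref{exa:refl_conv_vect}(2), which records Butzmann's theorem \cite{Bu} that the convergence vector space $\aeX(X,R)$ of continuous $R$-valued functions on any convergence space $X$, with the pointwise structure, is reflexive in $\sL$ --- and by \bref{exa:nat_dist_conv} (together with \bref{exa:std_cotensors_in_rmod}) this convergence vector space is precisely a cotensor $[X,R]$ in $\eL = G_*\aeL$. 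With all hypotheses of \bref{thm:general_fubini} confirmed, its conclusion yields that $\DD$ is commutative, which is the assertion.

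Since every step is a citation of an already-established result, there is no genuine obstacle here; the only point requiring a moment of care is confirming that the object called ``cotensor $[X,R]$'' in the reflexivity hypothesis of \bref{thm:general_fubini} is the same convergence vector space that Butzmann's theorem concerns, which is settled by the explicit description of cotensors in $G_*\aeL$ recalled in \bref{exa:nat_dist_conv}. As an alternative route, I could instead invoke \bref{thm:td_comm}, which already gives that the accessible distribution monad $\tDD$ is commutative with no reflexivity assumption at all, and then transport commutativity across the identification $\tDD = \DD$ established for convergence spaces in \bref{exa:acc_nat_dist_coincide_for_conv}, using that commutativity of $\X$-monads is invariant under isomorphism \pbref{rem:comm_inv_under_iso}; I would present the first route as the main argument and mention the second as a remark.
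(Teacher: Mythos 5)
Your proposal is correct and takes the same route as the paper: the paper's proof is precisely an invocation of \bref{thm:general_fubini} justified by noting that the cotensors $[X,R]$ in $G_*\aeL$ are reflexive by Butzmann's theorem \pbref{exa:refl_conv_vect}, with the remaining hypotheses supplied by \bref{exa:smc_adj_with_l_fwc}. Your alternative route via \bref{thm:td_comm} and the identification $\tDD \cong \DD$ is also sound, and is in substance how \bref{thm:general_fubini} itself is derived in the paper.
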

\begin{proof}
In the example of convergence spaces \pbref{exa:smc_adj_with_l_fwc}, the cotensors $[X,R]$ in $G_*\aeL$ are reflexive in $\sL$ by \bref{exa:refl_conv_vect}.
\end{proof}

A formal connection between the notion of commutative monad and the Fubini equation was noted by Kock with regard to the Schwartz double-dualization monad $\DD$ on a ringed topos in \cite{Kock:ProResSynthFuncAn}, yet $\DD$ was not proved commutative and Kock later stated that (in a general sense) $\DD$ ``is not commutative'' (\cite{Kock:Dist}, pg. 97).  But we have proved above that $\DD$ is in fact commutative under certain hypotheses \pbref{thm:general_fubini}, and that $\tDD$ is always commutative \pbref{thm:td_comm}, and as corollaries we obtain Fubini-type theorems for accessible and natural distributions, as follows.

\begin{CorSub} \label{thm:fub_for_acc_distns}
Suppose data as in \bref{exa:smc_adj_with_l_fwc} \textnormal{1} are given, and assume that $\X$ is a locally small well-pointed cartesian closed category.
\begin{enumerate}
\item Given accessible distributions $\mu:[X,R] \rightarrow R$ and $\nu:[Y,R] \rightarrow R$, there is a unique accessible distribution $\mu \otimes \nu:[X \times Y, R] \rightarrow R$ such that
\begin{equation}\label{eqn:fubini}\int_y \int_x f(x,y) \;d\mu d\nu = \int f \;d(\mu \otimes \nu) = \int_x \int_y f(x,y) \;d\nu d\mu\end{equation}
for all maps $f:X \times Y \rightarrow R$.
\item Suppose that the cotensors $[X,R]$ in $G_*\uL$ are reflexive $(X \in \X)$.  Given morphisms $\mu:[X,R] \rightarrow R$ and $\nu:[Y,R] \rightarrow R$ in $\sL$, there is a unique morphism $\mu \otimes \nu:[X \times Y, R] \rightarrow R$ in $\sL$ such that \eqref{eqn:fubini} holds for all maps $f:X \times Y \rightarrow R$.
\end{enumerate}
\end{CorSub}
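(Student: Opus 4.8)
The plan is to deduce Corollary~\bref{thm:fub_for_acc_distns} from the commutativity of $\tDD$ (\bref{thm:td_comm}) and, for part~2, the isomorphism $j^{\tDD}:\tDD \xrightarrow{\sim} \DD$ (\bref{thm:dist_via_compl}), by unwinding the integral notation into statements about the monad multiplication and the canonical transformations $\otimes^{\tDD}_{XY}, \widetilde{\otimes}^{\tDD}_{XY}$ of \bref{def:comm_mnd}. First I would recall, as in \bref{exa:monadic_exa_integ_notn}, that since $\X$ is well-pointed we may identify each $\mu \in \tD X$ with a function $[X,R] \to R$ via \bref{rem:acc_dist_are_nat_dist}, and that $\int f\,d\mu = \mu(f)$. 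The map $\mu \otimes \nu$ is to be produced by applying the composite
$$
\tD X \btimes \tD Y \xrightarrow{\otimes^{\tDD}_{XY}} \tD(X\times Y)
$$
to the pair $(\mu,\nu)$ — here I use that $\btimes$ in $\X$ is the cartesian product, so an element of $\tD X \btimes \tD Y$ is just a pair. The content of the Fubini equation \eqref{eqn:fubini} is then exactly the assertion that both $\otimes^{\tDD}_{XY}(\mu,\nu)$ and $\widetilde{\otimes}^{\tDD}_{XY}(\mu,\nu)$, evaluated against $f$, compute the iterated integrals in the two orders; since $\tDD$ is commutative these two morphisms coincide, giving a single well-defined $\mu\otimes\nu$ satisfying both halves of \eqref{eqn:fubini}.

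The key technical step is therefore to verify the two identifications
$$
\left(\otimes^{\tDD}_{XY}(\mu,\nu)\right)(f) = \int_y\!\int_x f(x,y)\,d\mu\,d\nu,
\qquad
\left(\widetilde{\otimes}^{\tDD}_{XY}(\mu,\nu)\right)(f) = \int_x\!\int_y f(x,y)\,d\nu\,d\mu.
$$
I would establish the first by chasing through the definition of $\otimes^{\tDD}_{XY}$ as the composite $\tD X \btimes \tD Y \xrightarrow{t''} \tD(\tD X \btimes Y) \xrightarrow{\tD t'} \tD\tD(X\btimes Y) \xrightarrow{\kappa^{\tDD}} \tD(X\btimes Y)$, using the explicit descriptions of $t', t''$ in \bref{def:tens_str} as transposes built from the $\X$-functoriality of $\tD$, together with the fact that the monad structure on $\tDD$ is, via \eqref{eq:morph_td_to_d} and \bref{thm:mon_func_detd_by_adj}, transported from the double-dualization structure on $\tHH$, whose unit $\tfd$ and multiplication act by the evident ``evaluation'' formulas on functionals. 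Concretely: a map $f:X\times Y \to R$ has a partial transpose $\hat f:Y \to [X,R]$, applying $\mu$ pointwise gives $x\mapsto\int_x f(x,y)\,d\mu$ as a map $Y \to R$, and then applying $\nu$ yields the iterated integral; one checks this is precisely the value of the composite above at $(\mu,\nu)$ evaluated at $f$. The second identification is the symmetric computation with the roles of the two variables interchanged, matching the definition of $\widetilde{\otimes}^{\tDD}_{XY}$.

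For part~2, once the cotensors $[X,R]$ are reflexive, \bref{thm:dist_via_compl} gives that $j^{\tDD}:\tDD \to \DD$ is an isomorphism of $\X$-monads; in the well-pointed setting \bref{exa:acc_nat_dist_coincide_for_conv}'s reasoning applies and we may in fact take $\tD X = DX$ with $j^{\tDD}$ the identity, so $\DD$ is commutative (this is also \bref{thm:general_fubini}). Then the statement of part~1 transports verbatim to natural distributions: $\mu\otimes\nu := \otimes^{\DD}_{XY}(\mu,\nu)$ is a morphism $[X\times Y,R]\to R$ in $\sL$, uniqueness follows because a natural distribution is determined by its action on all $f:X\times Y\to R$ (the underlying-set functor $\X(1,-)$ is faithful), and \eqref{eqn:fubini} holds by commutativity of $\DD$ exactly as above.

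The main obstacle I anticipate is the bookkeeping in the key technical step: carefully matching the categorical composite defining $\otimes^{\tDD}_{XY}$ — built from $t', t''$, the functor $\tD$, and the transported monad multiplication — against the set-level iterated-integral formula, keeping straight which variable is being integrated at each stage and checking that the transpositions and the identification $\tD X \cong$ (functionals on $[X,R]$) are applied consistently. None of this is deep, but it requires threading the description of $\tDD$'s structure back through \eqref{eq:morph_td_to_d}, \bref{thm:mon_func_detd_by_adj}, and the definition of $\tHH$ as the idempotent monad induced by double-dualization, so that the monad unit and multiplication have their expected ``evaluation at functionals'' form; the commutativity input itself is already in hand from \bref{thm:td_comm}.
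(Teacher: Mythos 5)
Your overall architecture is sound and rests on the same engine as the paper's proof (the commutativity of $\tDD$ from \bref{thm:td_comm}, plus the identification of $\otimes$ and $\widetilde{\otimes}$ with the two iterated integrals), but you run the deduction in the opposite order. The paper first does the explicit element-level computation for the \emph{natural} distribution monad $\DD$ --- where the formulas for $t'$, $t''$, the counit $\sigma_E(e) = \lambda\phi.\phi(e)$, and $\kappa^\DD_X = \eL(\sigma_{[X,R]},R)$ are all directly readable off the hom-cotensor adjunction --- obtaining part 2 from \bref{thm:general_fubini}; it then deduces part 1 by restricting along the monomorphic monad morphism $j^{\tDD}:\tDD\rightarrow\DD$, using \bref{prop:mnd_mor_monoidal_wrt_tensors} to see that $\otimes^{\tDD}$ and $\widetilde{\otimes}^{\tDD}$ are the restrictions of $\otimes^{\DD}$ and $\widetilde{\otimes}^{\DD}$ to the subsets $\tD Z\subseteq DZ$. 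You instead propose to compute directly with $\tDD$ for part 1 and then transport to $\DD$ via the isomorphism $j^{\tDD}$ for part 2; that direction is logically fine, and your part 2 is unproblematic.

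The one soft spot is your ``key technical step.'' You assert that the unit and multiplication of $\tHH$ ``act by the evident evaluation formulas on functionals,'' but that is not quite right: $\tHH$ is the \emph{idempotent} completion monad, whose multiplication is an isomorphism \pbref{par:idm_mnd}, not an evaluation/restriction map of the kind carried by $\HH$ and $\DD$. The transparent formulas $t''_{XY}(x,\nu)=\lambda f.\nu(\lambda y.f(x,y))$, $\kappa^\DD_X = \eL(\sigma_{[X,R]},R)$, etc., belong to $\DD$; the corresponding structure of $\tDD$ is only ``evaluation-like'' because it is the restriction of $\DD$'s structure along the inclusions $j^{\tDD}_Z:\tD Z\hookrightarrow DZ$, and justifying that restriction is exactly the content of $j^{\tDD}$ being a monad morphism compatible with the tensorial strengths, i.e.\ \bref{prop:xnat_transf_commutes_w_tens_str} and \bref{prop:mnd_mor_monoidal_wrt_tensors}. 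So if you carry out your direct computation honestly you will be forced back through $\DD$ anyway; you should either make that reduction explicit (at which point you have reproduced the paper's argument with the two parts interchanged) or replace the claim about $\tHH$ with the correct statement that $\otimes^{\tDD}$ and $\widetilde{\otimes}^{\tDD}$ are computed by restricting $\otimes^{\DD}$ and $\widetilde{\otimes}^{\DD}$. With that repair the proof is complete.
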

\begin{proof}
Note that the Fubini equation \eqref{eqn:fubini} can be rewritten in the notation of \textit{lambda calculus} as
\begin{equation}\label{eqn:lambda_fubini}\nu(\lambda y.\mu(\lambda x.f(x,y))) = (\mu \otimes \nu)(f) = \mu(\lambda x.\nu(\lambda y.f(x,y)))\;.\end{equation}

The morphisms $t'_{XY}:DX \times Y \rightarrow D (X \times Y)$ and $t''_{XY}:X \times D Y \rightarrow D (X \times Y)$ \pbref{def:tens_str} are given by
$$t'_{XY}(\mu,y) = \lambda f.\mu(\lambda x.f(x,y))\;,\;\;\;\;t''_{XY}(x,\nu) = \lambda f.\nu(\lambda y.f(x,y))\;.$$
Next note that the components of the counit $\sigma$ of the $\X$-adjunction $[-,R] \dashv \eL(-,R)$ inducing $\DD$ \pbref{def:nat_dist} are the morphisms $\sigma_E:E \rightarrow [\eL(E,R),R]$ of $\sL$ given by $\sigma_E(e) = \lambda \phi.\phi(e)$.  Note also that for each $X \in \aeX$, the morphisms $\kappa^\DD_X:DDX \rightarrow DX$ are given as
$$\kappa^\DD_X = \eL(\sigma_{[X,R]},R) : \eL([D X,R],R) \rightarrow \eL([X,R],R)\;.$$
Using these facts, it is straightforward to compute that the maps $\otimes_{XY},\widetilde{\otimes}_{X Y}:D X \times D Y \rightarrow D (X \times Y) = \eL([X \times Y,R],R)$ of \bref{def:comm_mnd} send a pair $(\mu,\nu) \in DX \times DY$ to the functionals $\otimes_{XY}(\mu,\nu),\widetilde{\otimes}_{X Y}(\mu,\nu)$ whose values at each $f \in [X \times Y,R]$ are the left- and right-hand-sides (respectively) of the Fubini equation \eqref{eqn:lambda_fubini} (equivalently, \eqref{eqn:fubini}).  Hence 2 follows from \bref{thm:general_fubini}.

The components of $j := j^{\tDD}:\tD \rightarrow D$ induce inclusions of the underlying sets \pbref{rem:acc_dist_are_nat_dist}, and by \bref{prop:mnd_mor_monoidal_wrt_tensors}, the diagram
$$
\xymatrix {
\tD X \times \tD Y \ar@{^{(}->}[d]_{j_X \times j_Y} \ar[r]^{m^{\tDD}} & \tD (X \times Y) \ar@{^{(}->}[d]^{j_{X \times Y}} \\
DX \times DY \ar[r]^{m^\DD} & D(X \times Y)
}
$$
commutes, both with $m^{\tDD} := \otimes^{\tDD}$, $m^\DD := \otimes^\DD$ and also with $m^{\tDD} := \widetilde{\otimes}^{\tDD}$, $m^\DD := \widetilde{\otimes}^\DD$.  Since $\tDD$ is commutative \pbref{thm:td_comm}, 1 now follows.
\end{proof}

\begin{CorSub}\label{thm:sc_fub_for_conv_sp}
Let $X,Y \in \Conv$ be convergence spaces, and let $\mu:[X,R] \rightarrow R$ and $\nu:[Y,R] \rightarrow R$ be continuous linear functionals, where $R = \RR$ or $\CC$.  Then there is a unique continuous linear functional $\mu \otimes \nu:[X \times Y, R] \rightarrow R$ such that \eqref{eqn:fubini} holds for all continuous maps $f:X \times Y \rightarrow R$.
\end{CorSub}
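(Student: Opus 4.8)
The plan is to derive Corollary \bref{thm:sc_fub_for_conv_sp} as the special case of \bref{thm:fub_for_acc_distns} 2 obtained by taking $\X = \Conv$ and $R = \RR$ or $\CC$, so that $\sL = \RMod(\X)$ is the category of convergence vector spaces. First I would verify that the hypotheses of \bref{thm:fub_for_acc_distns} 2 are met in this instance: by \bref{exa:smc_adj_with_l_fwc}, $\Conv$ is a locally small well-pointed cartesian closed category that is finitely well-complete, and the category $\sL$ of convergence vector spaces, being $\RMod(\X)$, is symmetric monoidal closed (\bref{exa:rmod_smclosed_monadic}) and finitely well-complete (\bref{exa:smc_adj_with_l_fwc} 3); moreover, by \bref{exa:refl_conv_vect} 2, each cotensor $[X,R]$ in $G_*\uL$ is reflexive in $\sL$. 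Hence all the assumptions needed to invoke \bref{thm:fub_for_acc_distns} 2 are in place.

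Second, I would unwind the synthetic terminology. By \bref{exa:nat_dist_conv}, for a convergence space $X$ the cotensor $[X,R]$ in $G_*\uL$ may be taken to be the space $\uX(X,R)$ of continuous $R$-valued functions equipped with the pointwise vector-space structure, and its underlying set is the set of continuous maps $X \rightarrow R$; the morphisms $[X,R] \rightarrow R$ in $\sL$ are precisely the continuous linear functionals. The maps $f : X \times Y \rightarrow R$ occurring in the Fubini equation \eqref{eqn:fubini} are, in this example, exactly the continuous maps $X \times Y \rightarrow R$, since $\X = \Conv$ is well-pointed and the morphisms of $\X$ are identified with their underlying functions. Thus the statement of \bref{thm:fub_for_acc_distns} 2, read in $\Conv$, is verbatim the statement of \bref{thm:sc_fub_for_conv_sp}.

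So the proof is essentially a one-line appeal: apply \bref{thm:fub_for_acc_distns} 2 with the data $F \dashv G : \RMod(\Conv) \rightarrow \Conv$ and $R = \RR$ or $\CC$, using \bref{exa:refl_conv_vect} 2 to supply the reflexivity hypothesis on the cotensors. The only thing that really requires a word is the translation between the abstract hom-notation $[X \times Y, R]$ and the concrete description of continuous functions and continuous linear functionals, and that translation is exactly \bref{exa:nat_dist_conv} together with the well-pointedness convention of \bref{par:well_pointed_ccc}. I do not anticipate any genuine obstacle here; the content of the corollary is entirely carried by \bref{thm:general_fubini} (equivalently \bref{thm:nat_dist_on_conv_comm}) and \bref{thm:fub_for_acc_distns}, and the remaining step is bookkeeping to confirm that the example of convergence spaces satisfies the standing hypotheses and that the synthetic vocabulary specializes to the stated concrete objects.

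\begin{proof}
Take $\X := \Conv$ with $R := \RR$ or $\CC$, so that $\sL := \RMod(\X)$ is the category of convergence vector spaces and we have the symmetric monoidal adjunction $F \nsststile{\varepsilon}{\eta} G : \sL \rightarrow \X$ of \bref{exa:linear_sp}, \bref{exa:rmod_smclosed_monadic}. This is precisely an instance of the data of \bref{exa:smc_adj_with_l_fwc} 1 with $\X$ a locally small well-pointed cartesian closed category: indeed $\Conv$ is well-pointed cartesian closed \pbref{exa:conv_sm_sp} and finitely well-complete \pbref{exa:fwc}, while $\sL$ is symmetric monoidal closed and finitely well-complete by \bref{exa:smc_adj_with_l_fwc} 3. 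Moreover, by \bref{exa:refl_conv_vect} 2, for every $X \in \X$ the cotensor $[X,R]$ in $G_*\uL$ is reflexive in $\sL$. Hence the hypotheses of \bref{thm:fub_for_acc_distns} 2 are satisfied.

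By \bref{exa:nat_dist_conv}, for each convergence space $X$ the cotensor $[X,R]$ in $G_*\uL$ may be taken to be the convergence vector space $\uX(X,R)$ of continuous $R$-valued maps with pointwise structure, whose morphisms $[X,R] \rightarrow R$ in $\sL$ are exactly the continuous linear functionals; and since $\X = \Conv$ is well-pointed, the maps $f : X \times Y \rightarrow R$ appearing in \eqref{eqn:fubini} are exactly the continuous maps $X \times Y \rightarrow R$. Thus, applying \bref{thm:fub_for_acc_distns} 2 to $\mu : [X,R] \rightarrow R$ and $\nu : [Y,R] \rightarrow R$, we obtain a unique morphism $\mu \otimes \nu : [X \times Y,R] \rightarrow R$ in $\sL$ — that is, a unique continuous linear functional — such that \eqref{eqn:fubini} holds for all continuous $f : X \times Y \rightarrow R$. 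This is the assertion of the corollary.
\end{proof}
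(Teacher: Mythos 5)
Your proposal is correct and matches the paper's intent exactly: the corollary is stated without proof precisely because it is the specialization of \bref{thm:fub_for_acc_distns}~2 to $\X = \Conv$, with the reflexivity hypothesis on the cotensors $[X,R]$ supplied by Butzmann's result \pbref{exa:refl_conv_vect}. Your verification of the standing hypotheses via \bref{exa:smc_adj_with_l_fwc} and the translation of the synthetic vocabulary into continuous maps and continuous linear functionals is exactly the bookkeeping the paper leaves implicit.
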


\chapter{Distributional completeness and vector-valued integration} \label{ch:dcompl_vint}
\setcounter{subsection}{0}

In the present chapter, we develop an abstract theory of vector-valued integration with respect to natural and accessible distributions \pbref{ch:nat_acc_dist}, and so in particular with respect to compactly-supported Radon measures and Schwartz distributions.  Further, we examine the relation between this vector integration and notions of completeness in linear spaces.

In particular, we study in a general setting three distinct approaches to vector integration, and we show that they all coincide in appropriate contexts.  Firstly, we reinterpret Pettis' 1938 notion of vector integral \cite{Pett}, employed also by Bourbaki in a different setting \cite{Bou}, showing that it may described in terms of monad morphisms.  Secondly, we study an approach to vector integration on the basis of a universal property of the space of distributions, inspired by work of Schwartz \cite{Schw} and Waelbroeck \cite{Wae} and formulated for the monad $\DD$ on a ringed topos by Lawvere and Kock \cite{Kock:ProResSynthFuncAn}.  Thirdly, we pursue an approach via monads introduced recently by the author \cite{Lu:AlgThVectInt} and closely related to recent work of Kock \cite{Kock:Dist}, in which integration valued in a space $E$ is seen as a family of \textit{algebraic operations} governed by equations, so that algebras of a suitable \textit{distribution monad} $\MM$ are construed as linear spaces in which the integral may take its value.

In this chapter we work with a given commutative monad $\LL$ on a suitable symmetric monoidal closed category $\X$, and we consider the $\X$-enriched Eilenberg-Moore adjunction \hbox{$F \dashv G:\sL \rightarrow \uX$}.  Having in mind the case of the category $\sL = \RMod(\X)$ of $R$-modules in a suitable cartesian closed category $\X$, we apply the terminology of Chapter \bref{ch:nat_acc_dist}, calling the objects of $\X$ and $\sL$ \textit{spaces} and \textit{linear spaces}, respectively, and the morphisms \textit{maps} and \textit{linear maps}.  In particular, we have again the examples of convergence spaces and smooth spaces noted in \bref{ch:nat_acc_dist}.  We define a notion of \textit{abstract distribution monad} $(\MM,\Delta,\xi)$, which includes as examples the natural and accessible distribution monads $\DD$ and $\tDD$, the former serving as terminal object among abstract distribution monads.

For each linear space $E$, there is a canonical map $ME \rightarrow E^{**}$ from the space of \textit{(abstract) distributions} $ME$ on $E$ to the double-dual $E^{**}$ of $E$, and these maps constitute a morphism of $\X$-monads $\xi:\MM \rightarrow \HH$ into the double-dualization monad $\HH$ on $\sL$.  Given a map $f:X \rightarrow E$ and a distribution $\mu \in MX$, we call the value of the composite map
$$MX \xrightarrow{Mf} ME \xrightarrow{\xi_E} E^{**}$$
at $\mu$ the \textit{Dunford-type integral}, written $\dint{} f\;d\mu$.  For \textit{separated} linear spaces $E$ \pbref{sec:nat_distn_dbl_dualn}, we may ask whether there exists a (necessarily unique) element $e \in E$ sent by the canonical embedding $E \rightarrowtail E^{**}$ to the Dunford integral, in which case we call $e$ the \textit{Pettis-type integral}, written $\pint{} f\;d\mu$.  In the example of convergence spaces, with $\MM = \DD$, these are the Dunford- and Pettis-type integrals employed by Bourbaki \cite{Bou} for Hausdorff locally convex topological vector spaces $E$ and locally compact spaces $X$ \pbref{sec:vint_pett_bou}.  Given a separated linear space $E$, the existence of Pettis-type integrals for \textit{all} incoming maps and natural distributions is tantamount to the existence of a (necessarily unique) map $a$ making the diagram
$$
\xymatrix{
ME \ar[dr]_a \ar[rr]^{\xi_E} &                  & E^{**} \\
                                     & E \ar@{ >->}[ur] & 
}
$$
commute.  If $E$ satisfies this condition, we call $E$ an \textit{$\MM$-Pettis} linear space.

In the Schwartz-Waelbroeck approach, given a linear space $E$ we ask instead whether for each map $f:X \rightarrow E$ there exists a unique linear map $f^\sharp$ such that the diagram
$$
\xymatrix{
X \ar[r]^{\delta_X} \ar[dr]_f & MX \ar@{-->}[d]^{f^\sharp} \\
                              & E
}
$$
commutes.  If the space $E$ satisfies this condition, then we define the integral of $f$ with respect to a distribution $\mu \in MX$ as $\int f \;d\mu := f^\sharp(\mu)$.  Working in the context of enriched category theory, we demand moreover that $\delta_X$ determines an \textit{isomorphism} of spaces
\begin{equation}\label{eqn:mdist_compl_intro}\uX(X,E) \cong \sL(MX,E)\end{equation}
for each $X \in \X$, in which case we call $E$ an \textit{$\MM$-distributionally complete} linear space.  For an $R$-module $E$ in a ringed topos, the natural isomorphism \eqref{eqn:mdist_compl_intro} in the case $\MM = \DD$ was called the \textit{Waelbroeck property} by Kock in \cite{Kock:ProResSynthFuncAn}.  We show that every functionally complete linear space \pbref{def:func_compl} is $\tDD$-distributionally complete.  Further, we prove the following:
\begin{quotation}
\noindent\textit{A linear space $E$ is $\tDD$-Pettis if and only if $E$ is $\tDD$-distributionally complete and separated.}
\end{quotation}
We show that the latter class of linear spaces constitutes a symmetric monoidal closed reflective subcategory of $\sL$, thus shedding light on a long-standing problem of Kock \cite{Kock:ProResSynthFuncAn} as to the potential reflectivity of the subcategory of $\RMod(\X)$ consisting of $R$-modules with the Waelbroeck property.

We show that both $\MM$-Pettis linear spaces and $\MM$-distributionally complete linear spaces are \textit{$\MM$-algebras}.  Indeed, the map $a:ME \rightarrow E$ witnessing that a space $E$ is $\MM$-Pettis serves as an $\MM$-algebra structure on the underlying space of $E$.  For an $\MM$-distributionally complete space $E$, the unique linear extension $a:ME \rightarrow E$ of the identity map $1_E$ serves as \hbox{$\MM$-algebra} structure.  In each case, the integral (of Pettis- or Schwartz-Waelbroeck-type) of a map $f:X \rightarrow E$ with respect to a distribution $\mu \in MX$ may be expressed in terms of the $\MM$-algebra structure as the result of applying the composite
$$MX \xrightarrow{Mf} ME \xrightarrow{a} E$$
to $\mu$.  For an \textit{arbitrary} $\MM$-algebra $(Z,a)$, this prompts us to define
$$\int f\;d\mu := (a \cdot Mf)(\mu)$$
for all $f:X \rightarrow Z$ and $\mu \in MX$, thus generalizing both the Pettis- and Schwartz-Waelbroeck-type integrals.  While the carrier $Z$ of an $\MM$-algebra is not a priori a linear space, but rather just an object of $\X$, a monad morphism $\Delta:\LL \rightarrow \MM$ associated to $\MM$ serves to equip each $\MM$-algebra with the structure of a linear space.  In the case of the accessible distribution monad $\tDD$, we show that the resulting notion of vector-valued integration in $\tDD$-algebras only \textit{slightly} generalizes the Pettis- and Schwartz-Waelbroeck-type integrals (which coincide in this case).  Indeed, whereas we show that the $\tDD$-Pettis linear spaces embed as a full reflective subcategory of the category of $\tDD$-algebras, we prove also that an arbitrary $\tDD$-algebra is the image of a $\tDD$-Pettis space as soon as its underlying linear space is \textit{separated}.

We show that the \textit{linearity} of the integral carried by each $\MM$-algebra is tantamount to the equality of two canonical morphisms 
$$\otimes^\Delta_{X Y},\widetilde{\otimes}^\Delta_{X Y}:MX \boxtimes LY \rightarrow M(X \boxtimes Y)$$
for each pair of objects $X,Y$ in the symmetric monoidal closed category $\X = (\X,\boxtimes,I)$.  In this case, we say that the morphism of $\X$-monads $\Delta:\LL \rightarrow \MM$ is \textit{relatively commutative}, and we then call $\MM$ a \textit{linear} abstract distribution monad.  We prove that the natural and accessible distribution monads $\DD$ and $\tDD$ are indeed always linear.  Our notion of relatively commutative morphism of monads generalizes the notion of commutative monad, in that an $\X$-monad $\TT$ on $\X$ is commutative if and only if the identity morphism on $\TT$ is relatively commutative.

Further, we show that if $\MM$ is commutative, then the vector-valued integral carried by $\MM$-algebras satisfies a Fubini-type theorem.  This applies in particular to the accessible distribution monad $\tDD$, which is always commutative \pbref{thm:td_comm}.

In the example of convergence spaces, where $\tDD \cong \DD$, we obtain several corollaries for vector integration with respect to \textit{natural distributions} and compactly-supported Radon measures in particular, many of which we summarize in detail in \pbref{sec:vint_pett_bou}.

\section{On the vector integrals of Pettis and Bourbaki}\label{sec:vint_pett_bou}

Given a Banach space $E$, let us recall Pettis' 1938 definition of the integral of a function $f:X \rightarrow E$ with respect to a measure $\mu$ on a measurable space $X$ \cite{Pett}.  We say that $f$ is \textit{scalarly $\mu$-integrable} if for each continuous linear functional $\varphi:E \rightarrow R$ ($R = \RR$ or $\CC$), the composite
$$X \xrightarrow{f} E \xrightarrow{\varphi} R$$
is $\mu$-integrable.  Given a vector $e \in E$, we say that $e$ is a \textit{Pettis integral} of $f$ with respect to $\mu$ if $f$ is scalarly $\mu$-integrable and for each continuous linear functional $\varphi$ on $E$, $\varphi(e) = \int \varphi \cdot f \;d\mu$.  Since the continuous linear functionals $\varphi \in E^*$ on $E$ separate points, the Pettis integral is unique if it exists, and we denote it by $\pint{x} f(x) \;d\mu = \pint{} f\;d\mu$.  Hence the defining property of the Pettis integral is the equation
$$\varphi\left(\:\pint{x} f(x)\;d\mu\right) = \int_x \varphi(f(x)) \;d\mu\;.$$
Regardless of whether the Pettis integral exists, if $f$ is scalarly $\mu$-integrable then we can consider the functional $E^* \rightarrow R$ given by $\varphi \mapsto \int \varphi \cdot f\;d\mu$.  The latter functional is an element of the double-dual $E^{**}$ of the Banach space $E$ and is called the \textit{Dunford integral} \cite{Dun}.  Denoting the Dunford integral by $\dint{} f\;d\mu$, the map $f$ is Pettis integrable iff $\dint{} f\;d\mu$ lies in the image of the canonical embedding of Banach spaces $E \rightarrow E^{**}$.

These definitions of Dunford and Pettis were adopted by Bourbaki \cite{Bou} in relation to Hausdorff locally convex topological vector spaces $E$ and Radon measures on locally compact Hausdorff spaces $X$.  In this case we can interpret the resulting definitions within the wider setting of the category $\X := \Conv$ of convergence spaces \pbref{exa:linear_sp} and in relation to the natural distribution monad $\DD$ on $\X$ \pbref{exa:nat_dist_conv}, as a motivating example for the more general development that shall follow in subsequent sections.  For the sake of illustration, let $f:X \rightarrow E$ be a continuous map on a compact Hausdorff topological space $X$, and let $\mu:[X,R] \rightarrow R$ be an $R$-valued Radon measure on $X$ \pbref{exa:nat_dist_conv}.  Hence $\mu \in DX = \sL([X,R],R)$, and, applying the map $Df:DX \rightarrow DE$, we obtain a continuous linear functional $(Df)(\mu):[E,R] \rightarrow R$ on the convergence vector space $[E,R]$ of $R$-valued functions on $E$, given by $g \mapsto \mu(g \cdot f) = \int g \cdot f \;d\mu$.  Forming the dual $E^{*}$ and double-dual $E^{**}$ in the category $\sL = \RMod(\X)$ of convergence vector spaces, $E^{*}$ is a subspace of $[E,R]$, so we obtain a continuous linear map
$$\xi^\DD_E:DE \rightarrow E^{**} = HE$$
sending each continuous linear functional $[E,R] \rightarrow R$ to its restriction $E^* \hookrightarrow [E,R] \rightarrow R$; in fact, $\xi^\DD_E$ is the component at $E$ of a monad morphism $\DD \rightarrow \HH$ (\bref{def:can_mnd_mor_distns},\bref{thm:inv_charn_of_morphs_for_d},\bref{exa:desc_of_xi}).  Applying $\xi^\DD_E$ to the element $(Df)(\mu)$ of $DE$, we obtain an element $\Phi$ of $E^{**}$, so that $\Phi$ is equally the result of applying the composite
$$DX \xrightarrow{Df} DE \xrightarrow{\xi^\DD_E} E^{**}$$
to $\mu \in DX$.  Explicitly, $\Phi = (\xi^\DD_E \cdot Df)(\mu)$ is a functional $E^* \rightarrow R$ given by $\Phi(\varphi) = ((Df)(\mu))(\varphi) = \mu(\varphi \cdot f) = \int \varphi \cdot f \;d\mu$ --- i.e. $\Phi$ is the Dunford integral $\dint{} f \;d\mu$, which Bourbaki calls simply the \textit{integral} of $f$ with respect to $\mu$ (\cite{Bou} \S 3).  Bourbaki studies sufficient conditions under which the integral \textit{lies in $E$}, so that the associated Pettis-type integral exists.

More generally, let $E$ be a convergence vector space, and suppose that the canonical map $\fd_E:E \rightarrow E^{**}$ is an embedding, so that in our terminology, $E$ is a \textit{functionally separated} object of $\sL$ \pbref{def:dualization}.  For example, any Hausdorff locally convex topological vector space is functionally separated \pbref{exa:refl_conv_vect}.  Given an arbitrary natural distribution $\mu \in DX$ on a convergence space $X \in \X$ and a continuous map $f:X \rightarrow E$, we can define the Dunford-type integral $\dint{} f \;d\mu$ just as above, and we may again ask also whether the Pettis-type integral $\pint{} f \;d\mu$ exists.  The latter may be characterized as the unique element of $E$ such that $\fd_E(\pint{} f \;d\mu) = \dint{} f \;d\mu$.

\begin{PropSub}\label{thm:pett_cvs}
Let $E$ a functionally separated convergence vector space.  Then the following are equivalent:
\begin{enumerate}
\item For each $R$-valued Radon measure $\mu$ on any compact Hausdorff topological space $X$ and each continuous map $f:X \rightarrow E$, the Pettis-type integral $\pint{} f \;d\mu$ exists.
\item For each natural distribution $\mu \in DX$ on any convergence space $X$ and each continuous map $f:X \rightarrow E$, the Pettis-type integral $\pint{} f \;d\mu$ exists.
\item For each natural distribution $\mu \in DE$ on $E$ itself, the Pettis-type integral $\int 1_E \;d\mu$ of the identity map $1_E:E \rightarrow E$ exists.
\item There exists a (necessarily unique) continuous map $a_E$ such that the diagram
$$
\xymatrix{
DE \ar[dr]_{a_E} \ar[rr]^{\xi^\DD_E} &                    & E^{**} \\
                                 & E \ar@{ >->}[ur]_{\fd_E} & 
}
$$
commutes.
\end{enumerate}
\end{PropSub}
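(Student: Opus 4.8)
The plan is to prove the cycle of implications $(4) \Rightarrow (2) \Rightarrow (1) \Rightarrow (3) \Rightarrow (4)$, using the characterization of the Pettis-type integral as the unique element of $E$ mapped by $\fd_E$ to the Dunford-type integral, together with the fact that $\fd_E$ is an embedding (so a $\V$-mono in $\uL$ and in particular a monomorphism), which guarantees uniqueness of $a_E$ in (4) and of Pettis integrals throughout.

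First I would prove $(4) \Rightarrow (2)$. Assume a continuous map $a_E : DE \rightarrow E$ with $\fd_E \cdot a_E = \xi^\DD_E$. Given any convergence space $X$, a natural distribution $\mu \in DX$, and a continuous map $f : X \rightarrow E$, set $e := a_E((Df)(\mu)) \in E$. Then $\fd_E(e) = (\fd_E \cdot a_E)((Df)(\mu)) = \xi^\DD_E((Df)(\mu)) = (\xi^\DD_E \cdot Df)(\mu) = \dint{} f\;d\mu$, which is exactly the defining property of the Pettis-type integral; uniqueness follows since $\fd_E$ is an embedding hence mono. The implication $(2) \Rightarrow (1)$ is trivial, since every $R$-valued Radon measure on a compact Hausdorff space $X$ is a natural distribution $\mu \in DX$ (via the identification in \bref{exa:nat_dist_conv}), and every continuous map out of such an $X$ is in particular a continuous map out of a convergence space. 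Likewise $(1) \Rightarrow (3)$ is a matter of specialization: take $X = E$ itself. Here one must note that $E$, being a functionally separated convergence vector space, is in particular a convergence space, and the identity $1_E : E \rightarrow E$ is continuous; but (1) as stated concerns \emph{compact Hausdorff} $X$, so strictly one should run the argument through the slightly stronger apparent reading or observe that the intended statement of (1) suffices. I would phrase (3) as asserting existence of $\pint{} 1_E\;d\mu$ for every $\mu \in DE$, and derive it from (2) rather than (1) if the compactness restriction in (1) is genuinely present — or better, establish $(2)\Rightarrow(3)$ directly and note $(1)$ sits between $(2)$ and something weaker only via the example identification, so the clean cycle is $(4)\Rightarrow(2)\Rightarrow(3)\Rightarrow(4)$ with $(1)$ inserted as $(2)\Rightarrow(1)\Rightarrow(3)$ once one checks that compactly-supported Radon measures on $E$-valued test spaces suffice to probe $\xi^\DD_E$; I expect this to be the one genuinely delicate point.

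For the crucial implication $(3) \Rightarrow (4)$: assume that for each $\mu \in DE$ the Pettis-type integral $\int 1_E\;d\mu$ exists, i.e. there is a unique $a_E(\mu) \in E$ with $\fd_E(a_E(\mu)) = \xi^\DD_E(\mu)$. This defines a function $a_E : DE \rightarrow E$ on underlying sets; the content is to show it is a \emph{continuous} map and that $\fd_E \cdot a_E = \xi^\DD_E$ as maps (the latter is immediate pointwise, hence holds as maps since $\X = \Conv$ is well-pointed, \bref{par:well_pointed_ccc}). Continuity is the main obstacle. The argument is: the diagram $\fd_E \cdot a_E = \xi^\DD_E$ exhibits, set-theoretically, $DE$ as mapping through the image of the embedding $\fd_E$, and since $\fd_E$ is a (strong) mono with the universal property that any map into $E^{**}$ factoring set-theoretically through $\fd_E$ factors through it \emph{as a map}, one concludes $a_E$ is continuous. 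Concretely I would use that $\fd_E : E \rightarrowtail E^{**}$ is an embedding in $\Conv$-vector-spaces, hence (forgetting structure) the underlying map is an initial injection in $\Conv$ \pbref{exa:fwc}, so the factorization $\xi^\DD_E = (\text{underlying }\fd_E) \circ a_E$ at the level of sets automatically yields continuity of $a_E$ from continuity of $\xi^\DD_E$; finally $a_E$ is then automatically linear because $\fd_E$ is a mono of linear spaces and $\xi^\DD_E, \fd_E$ are linear. Uniqueness of $a_E$ with the stated property again follows since $\fd_E$ is mono.

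The hard part will be the passage from the pointwise/set-level factorization to continuity and linearity of $a_E$ in $(3)\Rightarrow(4)$ — that is, correctly invoking that $\fd_E$ being an embedding (a $\V$-strong-mono, equivalently an initial injection over $\Set$ for these topological-type categories) makes the factoring map a morphism — together with the bookkeeping needed to reconcile the compact-Hausdorff hypothesis in (1) with the all-convergence-spaces formulation in (2) and (3). Everything else is a direct unwinding of the definition of the Dunford- and Pettis-type integrals and of the component $\xi^\DD_E$ of the monad morphism $\DD \rightarrow \HH$; no nontrivial computation is required beyond the chasing already indicated.
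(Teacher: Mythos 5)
Your treatment of the equivalence of (2), (3) and (4) is essentially the paper's: the Dunford-type integral is the value of $\xi^\DD_E \cdot Df$ at $\mu$, and since $\fd_E$ is an embedding (a strong mono, hence with underlying map an initial injection in $\Conv$), a set-level factorization of $\xi^\DD_E$ through the image of $\fd_E$ automatically upgrades to a factorization by a continuous (and then linear) map $a_E$. That part is fine.

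The genuine gap is the implication from (1) back into the cycle. You correctly notice that ``take $X = E$'' is illegitimate because (1) only quantifies over \emph{compact Hausdorff topological} spaces $X$, and you then explicitly defer the needed step (``once one checks that compactly-supported Radon measures on $E$-valued test spaces suffice to probe $\xi^\DD_E$ \dots\ I expect this to be the one genuinely delicate point'') without supplying it. But that step is precisely the mathematical content of $(1)\Rightarrow(3)$, and it is not formal: one must show that every natural distribution $\mu \in DE$ on $E$ itself is of the form $(Df)(\mu')$ for some continuous $f:K \rightarrow E$ with $K$ compact Hausdorff topological and $\mu' \in DK$. The paper gets this from convergence-space theory: since $\fd_E = \xi^\DD_E \cdot \delta^\DD_E$ is an embedding, $\delta^\DD_E$ is an embedding, so $E$ is $c$-embedded; by Beattie--Butzmann 3.5.21 every continuous linear functional on $[E,R]$ then factors as $(D\iota_K)(\mu')$ for a compact subspace $K \subseteq E$, and further results (1.5.24, 1.5.27, 1.5.28 of that reference) show $K$ is a compact Hausdorff \emph{topological} space. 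Without an argument of this kind your proof establishes only that (2), (3), (4) are equivalent and that each implies (1); statement (1) is left strictly weaker a priori, so the proposition as stated is not proved.
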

\begin{proof}
Recall that for any continuous map $f:X \rightarrow E$ and any $\mu \in DX$, the Dunford-type integral $\dint{} f\;d\mu$ is obtained by applying the composite
\begin{equation}\label{eqn:dun_int_comp}DX \xrightarrow{Df} DE \xrightarrow{\xi^\DD_E} E^{**}\end{equation}
to $\mu$.  Hence 2 requires exactly that for each $f$ the image of this composite lies in the image of $\fd_E:E \rightarrow E^{**}$, and since $\fd_E$ is an embedding of convergence spaces, this is equivalent to the statement that the given composite factors through $\fd_E$.  The equivalence of 2, 3, and 4 follows immediately.  2 obviously implies 1, so it suffices to show that 1 implies 3.  Letting $\mu \in DE$, it suffices to show that there exists a continuous map $f:K \rightarrow E$ on a compact Hausdorff topological space $K$ and some $\mu' \in DK$ such that $(Df)(\mu') = \mu$.  To this end, first observe that the diagram
$$
\xymatrix{
                   & E \ar[dl]_{\delta^\DD_E} \ar[dr]^{\fd_E} & \\
DE \ar[rr]^{\xi^\DD_E} &   & E^{**}
}
$$
commutes, where $\delta^\DD$ is the unit of $\DD$ \pbref{exa:nat_dist_conv}.  Hence since $\fd_E$ is an embedding, $\delta^\DD_E$ is an embedding, so the composite
$$E \xrightarrow{\delta^\DD_E} \sL([E,R],R) \hookrightarrow \uX(\uX(E,R),R)$$
is an embedding.  Hence $E$ is a \textit{$c$-embedded} convergence space (\cite{BeBu} 1.5.20).  Therefore, it is immediate from \cite{BeBu} 3.5.21 that there is a compact subspace $K \subs E$ and some $\mu' \in DK$ such that $(D\iota_K)(\mu') = \mu$, where $\iota_K:K \hookrightarrow E$ is the inclusion.   Since $E$ is $c$-embedded, its subspace $K$ is $c$-embedded (by \cite{BeBu} 1.5.27), so since $K$ is compact, we deduce by \cite{BeBu} 1.5.28 that $K$ is a topological convergence space.  Further, by \cite{BeBu} 1.5.24, $K$ is \textit{functionally Hausdorff} and hence Hausdorff.  Therefore, $K$ is a compact Hausdorff topological space (qua convergence space), and the needed result is established.
\end{proof}

According to Definition \bref{thm:pett_cvs} below, which we formulate in a more general setting, if a functionally separated convergence vector space $E$ satisfies the equivalent conditions of \bref{thm:pett_cvs} then we say that $E$ is \textit{$\DD$-Pettis}.

For a Hausdorff locally convex topological vector space $E$, the double-dual $E^{**}$ in $\sL$ coincides with the usual \textit{Cauchy-completion} of $E$ \pbref{exa:refl_conv_vect}, with the canonical morphism $\fd_E:E \rightarrow E^{**}$ serving as the associated embedding.  Hence each Dunford-type integral $\dint{} f\;d\mu$ lies in the Cauchy-completion of $E$.  If $E$ is Cauchy-complete then $\fd_E$ is an isomorphism (i.e. $E$ is a \textit{reflexive} convergence vector space, \bref{def:dualization}) and hence $E$ is $\DD$-Pettis.

More generally then, every reflexive convergence vector space $E$ is $\DD$-Pettis.  Still more generally, we show in \bref{exa:dunf_int_wrt_nat_dist_on_convsp_lies_in_func_compl} that each Dunford-type integral $\dint{} f\;d\mu$ lies in the \textit{functional completion} $\tH E \hookrightarrow E^{**}$ of $E$ (\bref{def:func_compl}, \bref{rem:fcompl_subsp_ddual}).  In particular then, every convergence vector space $E$ that is \textit{functionally complete} \pbref{def:func_compl} is $\DD$-Pettis.

Moreover, in \bref{sec:distnl_compl_ls} and \bref{sec:distnl_compl_pett_for_acc_distns} we employ the techniques of enriched orthogonality developed in \bref{sec:cmpl_cl_dens} in order to define and study a weaker abstract notion of completeness, namely \hbox{\textit{$\DD$-distributional completeness}}, which exactly characterizes $\DD$-Pettis convergence vector spaces.  Indeed, our general result \bref{thm:dist_compl_sep_vs_pett} entails the following, via \bref{rem:d_distnl_compl_for_conv}:
\begin{quotation}
\noindent\textit{A convergence vector space $E$ is $\DD$-Pettis if and only if $E$ is $\DD$-distributionally complete and functionally separated.}
\end{quotation}
We show that the $\DD$-distributionally complete separated convergence vector spaces form a symmetric monoidal closed reflective sub-$\sL$-category of the category $\sL$ of convergence vector spaces (\bref{thm:dist_compl_sep_vs_pett},\bref{rem:d_distnl_compl_for_conv}).  Hence for each convergence vector space $E$, we can form the \textit{$\DD$-distributional completion} $\dH E$ of $E$, which embeds into the functional completion $\tH E$, and again each Dunford-type integral $\dint{} f\;d\mu$ lies in the $\DD$-distributional completion.

Given a $\DD$-Pettis convergence vector space $E$, \bref{thm:pett_spaces_are_malgs} below entails that the associated map $a_E:DE \rightarrow E$ is in fact a $\DD$-algebra structure on the underlying convergence space $E$.  Morever, given a continuous map $f:X \rightarrow E$ and a natural distribution $\mu \in DX$, the associated Pettis integral $\pint{} f\;d\mu$ is the value of the composite map $DX \xrightarrow{Df} DE \xrightarrow{a_E} E$ at $\mu$.  This motivates the following definition, which we formulate in a more general context in \bref{def:vint_in_malgs}.  Given an \textit{arbitrary} $\DD$-algebra $(Z,a)$, a continuous map $f:X \rightarrow Z$, and a natural distribution $\mu \in DX$, we define the \textit{integral} of $f$ with respect to $\mu$ as
$$\int f \;d\mu := (a \cdot Df)(\mu)\;,$$
so that the integral is an element of $Z$.  In fact, this too is a \textit{vector-valued} integral, since each $\DD$-algebra carries the structure of a convergence vector space.  Indeed, recalling that $\sL$ may be identified with the category of algebras of a monad $\LL$, we define a monad morphism $\Delta:\LL \rightarrow \DD$ (\bref{def:can_mnd_mor_distns},\bref{thm:inv_charn_of_morphs_for_d}) and thus obtain an associated functor $\uX^\Delta:\uX^\DD \rightarrow \uX^\LL = \sL$ which sends each $\DD$-algebra to its \textit{underlying convergence vector space} \pbref{par:underlying_linear_space}.  For each $\mu \in DX$, we show in \bref{thm:lin_of_vint_wrt_nat_acc_distn_wpt} that the associated mapping
$$\uX(X,Z) \rightarrow Z\;,\;\;\;\;f \mapsto \int f\;d\mu$$
is a continuous linear map.  We show also that our scalar Fubini theorem \bref{thm:sc_fub_for_conv_sp} (equivalently, the commutativity of $\DD$, \bref{thm:nat_dist_on_conv_comm}) yields also a Fubini-type theorem for this vector-valued integration \pbref{thm:vv_fub_conv_sp}.

Thus the latter notion of vector-valued integration in $\DD$-algebras subsumes the Pettis-type integral with respect to natural distributions, including compactly-supported Radon measures.  But in fact we show that integration in $\DD$-algebras only \textit{slightly} generalizes the Pettis-type integral:  Whereas the $\DD$-Pettis convergence vector spaces embed as a full reflective subcategory of the category $\X^\DD$ of $\DD$-algebras (\bref{thm:pett_td_algs_refl_in_dalg},\bref{rem:d_distnl_compl_for_conv}), we show that an arbitrary $\DD$-algebra $(Z,a)$ arises from a $\DD$-Pettis space if and only if the underlying convergence vector space of $(Z,a)$ is separated \pbref{rem:dalg_pett_if_sep_for_conv}.

We shall see also that many of the above results hold also for categories $\X$ other than $\Conv$, provided we employ the \textit{accessible} distribution monad $\tDD$ rather than $\DD$.  In the case of $\Conv$, we have shown that $\tDD$ coincides with $\DD$ \pbref{exa:acc_nat_dist_coincide_for_conv}.

\section{Canonical morphisms determined by a monad on algebras}\label{sec:can_mor_mnd_on_algs}

\begin{PropSub}\label{thm:can_mnd_mor}
In a 2-category $\K$, let $f \nsststile{\varepsilon}{\eta} g:B \rightarrow A$ be an adjunction with induced monad $\TT = (t,\eta,\mu)$, and let $\SSS = (s,\phi,\nu)$ be a monad on $B$.
\begin{enumerate}
\item The following diagram of 2-cells commutes.
$$
\xymatrix{
fg \ar[dd]_{\phi fg} \ar[dr]_\varepsilon \ar[rr]^{fg\phi} &                & fgs \ar[dd]^{\varepsilon s}\\
                                                       & 1_B \ar[dr]_\phi &     \\
sfg \ar[rr]_{s\varepsilon}                                &                & s
}
$$
\item Letting $\varsigma:fg \rightarrow s$ be the common composite 2-cell in 1,
$$(g\varsigma,g):\TT \rightarrow \SSS$$
is a morphism of monads.
\end{enumerate}
\end{PropSub}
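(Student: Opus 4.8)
The statement has two parts, and part 2 depends on part 1. For part 1, I would verify the commutativity of the given hexagonal diagram directly from the naturality of $\varepsilon$ and $\phi$ together with the unit/multiplication laws of the monads $\TT$ and $\SSS$. Concretely, the lower-left triangle asserts $\phi \cdot \varepsilon = s\varepsilon \cdot \phi fg$, which is just the naturality of $\phi : 1_B \to s$ applied to the 2-cell $\varepsilon : fg \to 1_B$; the upper-right triangle asserts $\phi \cdot \varepsilon = \varepsilon s \cdot fg\phi$, which is the naturality of $\varepsilon : fg \to 1_B$ applied to the 2-cell $\phi : 1_B \to s$. So part 1 is essentially two instances of interchange/naturality, and the point is simply that both paths around the hexagon equal $\phi \cdot \varepsilon$; I would record this common composite as $\varsigma : fg \to s$.

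For part 2, I need to check that $(g\varsigma, g) : \TT \to \SSS$ is a morphism of monads in the sense of \bref{par:general_mnd_mor}, i.e.\ that $g\varsigma : tg = gfg \to gs$ is a morphism of endo-1-cells satisfying the Unit Law and the Associativity Law of that paragraph (here $A$ and $B$ may be distinct objects, the component 1-cell being $g : B \to A$). For the Unit Law, I must show $g\varsigma \cdot \eta g = g\phi$, i.e.\ that the composite $g \xrightarrow{\eta g} gfg \xrightarrow{g\varsigma} gs$ equals $g\phi$. Using $\varsigma = \phi \cdot \varepsilon$, this composite is $g(\phi \cdot \varepsilon) \cdot \eta g = g\phi \cdot g\varepsilon \cdot \eta g = g\phi \cdot 1_g = g\phi$, where the penultimate step is one of the triangle identities for the adjunction $f \dashv g$. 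For the Associativity Law, I must show that the square in \bref{par:general_mnd_mor} commutes, namely that the two composites $ttg = gfgfg \to gs = gss \to gs$ agree: going one way, $gs\nu \cdot g\varsigma s \cdot t(g\varsigma)$, and the other way, $g\varsigma \cdot \mu g$, where $\mu = g\varepsilon f$ is the multiplication of $\TT$ and $\nu$ that of $\SSS$. Expanding $\varsigma$ and using naturality of $\varepsilon$ to slide the two copies of $fg$ past each other, plus the associativity/unit laws of $\SSS$ to collapse $s\nu \cdot \varsigma s \cdot fg\varsigma$ down to $\varsigma$, should reduce both sides to $g\varsigma \cdot g\varepsilon fg = g\varsigma \cdot \mu g$ after whiskering by $g$.

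The step I expect to be the main obstacle is the Associativity Law: unwinding the definition $\varsigma = \phi \cdot \varepsilon$ inside $ttg$ and keeping track of which $fg$-block each $\varepsilon$ and each $\phi$ acts on requires a careful 2-categorical diagram chase, since there are several copies of $f$, $g$, and $s$ in play and the interchange law must be invoked repeatedly. I would organize this as a single pasting diagram, decomposing $\mu g = g\varepsilon fg$ on the target side and then using (i) naturality of $\varepsilon$ to commute it past the inner $g\phi$-block, and (ii) the fact that $s\nu \cdot (\phi\varepsilon)s \cdot fg(\phi\varepsilon) = \phi\varepsilon$ — which is itself a consequence of the monad laws for $\SSS$ and part 1 — to eliminate the extra $\SSS$-structure. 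Everything else (that $g\varsigma$ is a well-formed morphism of endo-1-cells, that the composites typecheck) is routine, so the proof is really: establish the hexagon by naturality, then verify the two monad-morphism axioms by whiskering the adjunction triangle identities and the $\SSS$-monad laws with $g$.
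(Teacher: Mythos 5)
Your proposal is correct and follows essentially the same route as the paper: part 1 by the interchange law, the unit law by the triangle identity $g\varepsilon\cdot\eta g=1_g$, and the associativity law by sliding $\varepsilon$ past $\varsigma$ via naturality and collapsing with the unit law $\nu\cdot\phi s=1_s$ of $\SSS$, then whiskering by $g$. (Only minor notational slips: the last arrow in the associativity square should be $g\nu$, not $gs\nu$, and the collapse $\nu\cdot\varsigma s\cdot fg\varsigma=\varsigma$ needs only the unit law of $\SSS$, not its associativity.)
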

\begin{proof}
1 follows immediately from the axioms for a 2-category.  Regarding 2, first observe that the unit law for $(g\varsigma,g):\TT \rightarrow \SSS$ holds since
$$
\xymatrix {
*!/r2ex/+{tg = gfg} \ar[rr]^{g\varsigma} \ar[dr]^{g\varepsilon} &                                                 & gs \\
                                                             & g \ar[ur]^{g\phi}                               &    \\
                                                             & g \ar[uul]^{\eta g} \ar[u]_{1} \ar[uur]_{g\phi} &    
}
$$
commutes.  Further, the diagram
$$
\xymatrix {
fgfg \ar[r]^{fg\varsigma} \ar[dd]_{\varepsilon fg} & fgs \ar[rr]^{\varsigma s} \ar[dr]|{\varepsilon s} &                                & ss \ar[dd]^{\nu} \\
                                                &                                                & s \ar@{=}[dr] \ar[ur]|{\phi s} & \\
fg \ar[rrr]_{\varsigma}                         &                                                &                                & s
}
$$
commutes; whiskering on the left by $g$, we obtain the associativity law for $(g\varsigma,g)$.
\end{proof}

\begin{ParSub} \label{par:data_mnd_on_em_cat}
For the remainder of this section, we suppose that $\V$ has equalizers, and we work with given data as follows:
\begin{enumerate}
\item Let $\TT = (T,\eta,\mu)$ be a $\V$-monad on a $\V$-category $\A$.
\item Let $\SSS = (S,\phi,\nu)$ be a $\V$-monad on $\B := \A^\TT$.
\end{enumerate}
We denote by $F \nsststile{\varepsilon}{\eta} G:\B := \A^\TT \rightarrow \A$ the Eilenberg-Moore $\V$-adjunction for $\TT$.
\end{ParSub}

\begin{DefSub} \label{def:can_mnd_mor}
Suppose data as in \bref{par:data_mnd_on_em_cat} are given.  Applying \bref{thm:can_mnd_mor} with respect to the Eilenberg-Moore $\V$-adjunction for $\TT$, we call the resulting morphism of $\V$-monads $(\zeta^{\TT\SSS},G):\TT \rightarrow \SSS$ \textit{the canonical morphism}, where $\zeta^{\TT\SSS} := G\varsigma$.
\end{DefSub}

\begin{PropSub}\label{thm:vfunc_ind_by_can_mor_is_forgetful}
Suppose data as in \bref{par:data_mnd_on_em_cat} are given.  Then the $\V$-functor \linebreak[4] \hbox{$G^{\zeta}:\B^\SSS \rightarrow \A^\TT = \B$} induced by the canonical morphism $(\zeta,G):\TT \rightarrow \SSS$ \pbref{def:can_mnd_mor} is simply the Eilenberg-Moore forgetful $\V$-functor \hbox{$G^\SSS:\B^\SSS \rightarrow \B$}.
\end{PropSub}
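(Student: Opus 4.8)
The claim is that the $\V$-functor $G^\zeta:\B^\SSS \rightarrow \A^\TT = \B$ induced by the canonical monad morphism $(\zeta,G):\TT \rightarrow \SSS$ (where $\zeta = G\varsigma$ and $\varsigma:FG \rightarrow S$ is the common composite of \bref{thm:can_mnd_mor} 1) coincides with the Eilenberg-Moore forgetful $\V$-functor $G^\SSS:\B^\SSS \rightarrow \B$. The strategy is to use the uniqueness provided by \bref{prop:enr_func_induced_by_mnd_mor}: by construction $G^\zeta$ makes the square \eqref{eqn:comm_diag_func_ind_by_mnd_mor} commute, with $Q = G:\B \rightarrow \A$ as the underlying component $\V$-functor. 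So I would show that $G^\SSS$ has the same two defining properties: it fits into the analogous commutative square over $G$, and it acts on objects by the formula \eqref{eqn:action_of_mndmor_on_algs}. Since \bref{prop:enr_func_induced_by_mnd_mor} gives $G^\zeta$ uniquely determined by being a $\V$-functor over $G$ acting on objects by that formula, verifying these two facts for $G^\SSS$ will force $G^\SSS = G^\zeta$.

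\textbf{Key steps.} First I would recall from \bref{par:em_adj} that the Eilenberg-Moore forgetful $\V$-functor $G^\SSS:\B^\SSS \rightarrow \B$ sends an $\SSS$-algebra $(B,b:SB \rightarrow B)$ to its carrier $B$; its structure morphisms are the equalizers defining the hom-objects of $\B^\SSS$. Next I would compute the object-assignment of $G^\zeta$ directly from \eqref{eqn:action_of_mndmor_on_algs}: it sends $(B,b:SB \rightarrow B)$ to the $\TT$-algebra $(GB, TGB \xrightarrow{\zeta_B} GSB \xrightarrow{Gb} GB)$. The point is that $TGB = GFGB$ and the composite $GFGB \xrightarrow{G\varsigma_B} GSB \xrightarrow{Gb} GB$ is, by the definition of $\varsigma$ as $\phi \cdot (FG)\text{-stuff}$ and the triangular identities, exactly the canonical $\TT$-algebra structure $GFGB \xrightarrow{G\varepsilon_B} GB$ on $GB$ coming from $B$ being (the carrier of nothing in particular --- rather) an object of $\B = \A^\TT$. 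Concretely: using the first composite in the square of \bref{thm:can_mnd_mor} 1, $\varsigma_B = \phi_B \cdot \varepsilon_B$ (reading $\varepsilon:FG \rightarrow 1$), so $b \cdot \varsigma_B = b \cdot \phi_B \cdot \varepsilon_B = b \cdot \varepsilon_B$ since $b \cdot \phi_B = 1_B$ by the unit law for the $\SSS$-algebra $b$. Thus $Gb \cdot G\varsigma_B = G\varepsilon_B$, which is precisely the $\TT$-algebra structure borne by the object $GB \in \B$ regarded as lying in $\A^\TT$. Hence on objects $G^\zeta(B,b) = (GB, G\varepsilon_B) = G^\SSS(B,b)$. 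Then I would observe that $G^\SSS$ evidently sits over $G$ (the forgetful $\V$-functors compose: $G \circ G^\SSS$ sends $(B,b)$ to $GB = $ the carrier, matching the lower-left path of \eqref{eqn:comm_diag_func_ind_by_mnd_mor}), and since $G^\SSS$ is a $\V$-functor acting by \eqref{eqn:action_of_mndmor_on_algs} over $G$, the uniqueness clause of \bref{prop:enr_func_induced_by_mnd_mor} yields $G^\SSS = G^\zeta$. Alternatively, one can invoke \bref{thm:crit_for_equal_faithful_vfunctors}: both $G^\zeta$ and $G^\SSS$ are $\V$-faithful (the latter by its equalizer-definition in \bref{par:em_adj}, the former because $G^\zeta$ postcomposed with the faithful $G$ equals $G^\SSS$ postcomposed with $G$), agree on objects, and have identical structure morphisms since each is the equalizer defining $\B^\SSS(\cdot,\cdot)$.

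\textbf{Main obstacle.} The genuinely careful point is the identification $Gb \cdot G\varsigma_B = G\varepsilon_B$, i.e. tracing through the definition of $\varsigma$ from \bref{thm:can_mnd_mor} and using the $\SSS$-algebra unit law to collapse $\phi_B$; this is a short diagram chase but must be done in the correct 2-category ($\K = \VCAT$ with $A = \A$, $B = \B$, $f = F^\TT$, $g = G^\TT$) and with attention to which whiskered composite of the two in \bref{thm:can_mnd_mor} 1 one uses. A secondary subtlety is checking that the structure morphisms of $G^\zeta$ — which by the proof of \bref{prop:enr_func_induced_by_mnd_mor} are the factorizations through the faithful $\V$-functor $G$ — literally coincide with the equalizer morphisms defining $G^\SSS$, rather than merely being isomorphic; this is why routing the final step through \bref{thm:crit_for_equal_faithful_vfunctors} (which only requires equality of object-assignments and of structure morphisms into the common codomain $\B$ after composing with $G$) is the cleanest closing move, and I expect to phrase the argument that way.
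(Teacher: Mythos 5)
Your proposal is correct and follows essentially the same route as the paper: the identity $\varsigma_B = \phi_B\cdot\varepsilon_B$ together with the $\SSS$-algebra unit law $b\cdot\phi_B = 1_B$ gives $Gb\cdot\zeta_B = G\varepsilon_B$, identifying the object assignments, and the conclusion then follows from the commuting square over the $\V$-faithful $G$. The paper's proof is exactly this, phrased as "it suffices to check equality on objects" rather than routing explicitly through \bref{thm:crit_for_equal_faithful_vfunctors}, but that is a cosmetic difference.
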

\begin{proof}
Since the diagram
$$
\xymatrix {
\B^\SSS \ar[r]^{G^{\zeta}} \ar[d]_{G^\SSS} & *!/l2ex/+{\A^\TT = \B} \ar[d]^{G^\TT\:=\:G} \\
\B \ar[r]^G                                & \A                            
}
$$
commutes and $G$ is $\V$-faithful, it suffices to show that $G^{\zeta} = G^\SSS$ on objects.  Given an $\SSS$-algebra $(B,b:SB \rightarrow B)$,
$$G^{\zeta}(B,b) = (GB,GFGB = TGB \xrightarrow{\zeta_B} GSB \xrightarrow{Gb} GB)\;,$$
whereas $G^\SSS(B,b) = B = (GB,G\varepsilon_B:GFGB \rightarrow GB)$, but the diagram
$$
\xymatrix {
GFGB \ar[rr]^{\zeta_B} \ar[dr]_{G\varepsilon_B}           &                                  & GSB \ar[dd]^{Gb} \\
                                                       & GB \ar@{=}[dr] \ar[ur]_{G\phi_B} &     \\
                                                       &                                  & GB 
}
$$
commutes.
\end{proof}

\begin{DefSub}\label{def:Delta_ts}
Given data as in \bref{par:data_mnd_on_em_cat}, we let $\Delta^{\TT\SSS} := [F,G](\phi) : \TT = [F,G](\ONEONE_\B) \rightarrow [F,G](\SSS)$ denote the morphism of $\V$-monads obtained by applying the monoidal functor $[F,G]:\VCAT(\B,\B) \rightarrow \VCAT(\A,\A)$ \pbref{thm:mon_func_detd_by_adj} to the unique monad morphism $\phi:\ONEONE_\B \rightarrow \SSS$ \pbref{par:adj_and_mnd} (whose underlying 2-cell is simply the unit $\phi$ of $\SSS$).
\end{DefSub}

\begin{PropSub}\label{thm:mnd_mor_gsf_to_s}
Given data as in \bref{par:data_mnd_on_em_cat}, there is a morphism of $\V$-monads
$$(GS\varepsilon,G):[F,G](\SSS) \rightarrow \SSS\;.$$
\end{PropSub}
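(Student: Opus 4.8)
The plan is to recognize $(GS\varepsilon,G):[F,G](\SSS)\to\SSS$ as an instance of the abstract monad-morphism-from-adjunction machinery already developed, rather than verifying the unit and associativity laws by brute computation. The key observation is the following: given the Eilenberg-Moore $\V$-adjunction $F\nsststile{\varepsilon}{\eta}G:\B\to\A$ (with induced monad $\TT$) together with the Eilenberg-Moore $\V$-adjunction $F^\SSS\nsststile{\varepsilon^\SSS}{\phi}G^\SSS:\B^\SSS\to\B$ (with induced monad $\SSS$), the composite $\V$-adjunction $F^\SSS F\nsststile{}{}GG^\SSS:\B^\SSS\to\A$ has a right $\V$-adjoint that factors as $GG^\SSS$. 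By \bref{thm:mon_func_detd_by_adj}, the monad induced by this composite adjunction is precisely $[F,G](\SSS)$, since $\SSS$ is the monad induced by $F^\SSS\dashv G^\SSS$ and $[F,G] = \K(F,G)$ sends it to the monad of the composite. Meanwhile, we have the ``hidden'' third adjunction $F^S\nsststile{\varepsilon^S}{\phi}G^S$ whose right leg is not $GG^\SSS$ but rather — after suitable identification — we want a morphism of monads back to $\SSS$ itself.

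Concretely, I would invoke \bref{thm:mnd_morph_from_adj_factn}. Set, in the notation of that proposition, the first adjunction to be $F\nsststile{\varepsilon}{\eta}G:\B\to\A$ (inducing $\TT$), the second adjunction to be $F^\SSS\nsststile{\varepsilon^\SSS}{\phi}G^\SSS:\B^\SSS\to\B$ (inducing $\SSS$, as a monad on $\B$), and the third adjunction $F''\nsststile{\varepsilon''}{\eta''}G'':\B^\SSS\to\A$ to be any convenient adjunction with $G'' = GG^\SSS$; by \bref{thm:uniq_adj} all such are isomorphic, so we may take $F'' = F^\SSS F$. The hypothesis $gg' = g''$ of \bref{thm:mnd_morph_from_adj_factn} reads $GG^\SSS = G''$, which holds by construction. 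The proposition then yields an associated monad morphism $\TT'' \to ?$ — but we must be careful about which monad morphism we get. Reviewing \bref{thm:mnd_morph_from_adj_factn}: it produces a morphism $\TT\to\TT''$ where $\TT$ is induced by the first adjunction and $\TT''$ by the third. That is not quite what we want; we want a morphism whose source is $[F,G](\SSS)$.

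The cleaner route is therefore direct: apply \bref{thm:can_mnd_mor} to the adjunction $F^\SSS\nsststile{\varepsilon^\SSS}{\phi}G^\SSS:\B^\SSS\to\B$ (whose induced monad is $\SSS$) and to the monad $[F,G](\SSS)$ regarded as a monad on $\B$ — no, that monad lives on $\A$, not $\B$. So instead I would work in $\K = \VCAT$ and apply \bref{thm:can_mnd_mor} in the following configuration: take the adjunction $f\nsststile{\varepsilon}{\eta}g := (F\nsststile{\varepsilon}{\eta}G:\B\to\A)$, with induced monad $\TT = (T,\eta,\mu)$, and take the monad $\SSS = (S,\phi,\nu)$ on $\B$ exactly as given. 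Then \bref{thm:can_mnd_mor} part 2 already gives the canonical morphism $(\zeta,G):\TT\to\SSS$. But the statement we want, $(GS\varepsilon,G):[F,G](\SSS)\to\SSS$, has source $[F,G](\SSS) = (GSF,\ldots)$. So I would instead re-run the proof-pattern of \bref{thm:can_mnd_mor} with the roles reorganized: the relevant common composite 2-cell is $GS\varepsilon: GSFG \to GS$ paired with the $1$-cell $G:\B\to\A$, and one checks the unit law via $GS\varepsilon\cdot GSF\eta\cdot(\text{unit of }[F,G](\SSS)) $ and the associativity law via the two-square diagram analogous to the one in \bref{thm:can_mnd_mor} part 2, using that $\varepsilon F\cdot F\eta = 1$ and the monad axioms for $\SSS$. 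The main obstacle is purely bookkeeping: correctly identifying the unit $GSFG\eta_\B$-type structure map of $[F,G](\SSS)$ (namely $G\phi F$ for the unit and $GS\varepsilon F\cdot GS FG\mu^{[F,G](\SSS)}$-style composites for multiplication, as spelled out in the proof of \bref{thm:mon_func_detd_by_adj}) and then verifying that $GS\varepsilon$ intertwines these with $\phi,\nu$ — which reduces, after whiskering the diagrams of \bref{thm:can_mnd_mor} by $G$ on the left and $F$ on the right, to the two-dimensional naturality squares and the triangle identity $\varepsilon F\cdot F\eta = 1_F$. I expect this to be a short diagram chase of the same shape as the proof of \bref{thm:can_mnd_mor}, with no genuinely new idea required.
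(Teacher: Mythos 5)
Your final plan---define $\xi := GS\varepsilon$ and verify the unit and associativity laws directly against the structure maps $G\phi F\cdot\eta$ and $G\nu F\cdot GS\varepsilon SF$ of $[F,G](\SSS)$---is exactly the paper's proof, and your earlier detours through \bref{thm:mnd_morph_from_adj_factn} and \bref{thm:can_mnd_mor} are correctly discarded as giving morphisms with the wrong source. One small correction: the triangle identity actually used (in the unit law) is $G\varepsilon\cdot\eta G = 1_G$, not $\varepsilon F\cdot F\eta = 1_F$; the associativity law needs only the naturality of $\varepsilon$ and $\nu$.
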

\begin{proof}
Let $[F,G](\SSS) = (GSF,\delta,\kappa)$, and let $\xi := GS\varepsilon : GSFG \rightarrow GS$.  The unit law for the candidate morphism of $\V$-monads $(\xi,G)$ holds since
$$
\xymatrix{
GSFG \ar[rrr]^{\xi\:=\:GS\varepsilon}  &                                                               &                                                & GS \\
                                  & GFG \ar[ul]|{G\phi FG} \ar[r]^{G\varepsilon}                     & G \ar[ur]^{G\phi}                              &    \\
                                  & G \ar[u]^{\eta G} \ar[ur]_{1_G} \ar@/^1ex/[uul]^{\delta G}  &                                                &
}
$$
commutes, and the associativity law holds since 
$$
\xymatrix {
*!/r3ex/+{GSFGSFG} \ar[rr]^{GSF\xi\:=\:GSFGS\varepsilon} \ar[dd]_{\kappa G} \ar[drr]|{GS\varepsilon SFG} & & GSFGS \ar[rr]^{\xi S\:=\:GS\varepsilon S}             & & GSS \ar[dd]^{G\nu} \\
                                                                                      & & GSSFG \ar[dll]|{G\nu FG} \ar[urr]_{GSS\varepsilon}  & &                    \\
GSFG \ar[rrrr]_{\xi\:=\:GS\varepsilon}                                                     & &                                                  & & GS
}
$$
commutes.
\end{proof}

\begin{DefSub}\label{def:xi_ts}
Given data as in \bref{par:data_mnd_on_em_cat}, we define $\xi^{\TT\SSS} := GS\varepsilon$, so that the morphism of $\V$-monads given in \bref{thm:mnd_mor_gsf_to_s} is denoted by $(\xi^{\TT\SSS},G):[F,G](\SSS) \rightarrow \SSS$.
\end{DefSub}

The following proposition assembles within a commutative diagram the morphisms of $\V$-monads defined above.

\begin{PropSub}\label{thm:comm_diag_can_mnd_morphs}
Suppose that $\V$ has equalizers.  Let $\TT$ be a $\V$-monad on a $\V$-category $\A$ with Eilenberg-Moore $\V$-adjunction $F \dashv G:\B := \A^\TT \rightarrow \A$, and let $\SSS$ be a $\V$-monad on $\B$.  Then we obtain a commutative diagram of $\V$-monads
\begin{equation}\label{eqn:comm_diag_can_mnd_morphs}
\xymatrix {
\TT \ar[dr]_{(\Delta^{\TT\SSS},1_\A)} \ar[rr]^{(\zeta^{\TT\SSS},G)} &                                         & \SSS \\
                                                                    & [F,G](\SSS) \ar[ur]_{(\xi^{\TT\SSS},G)} & 
}
\end{equation}
in which the given morphisms of $\V$-monads are as defined in \bref{def:can_mnd_mor}, \bref{def:Delta_ts}, \bref{def:xi_ts}.
\end{PropSub}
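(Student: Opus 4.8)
The plan is to verify commutativity of the triangle \eqref{eqn:comm_diag_can_mnd_morphs} by reducing it to an identity of $2$-cells in the underlying $2$-category, where the monad morphisms in question are built out of the unit $\phi$ of $\SSS$, the counit $\varepsilon$ of the Eilenberg-Moore $\V$-adjunction $F \dashv G$, and the monoidal-functor construction $[F,G]$ of \bref{thm:mon_func_detd_by_adj}. Since all three edges of the triangle are morphisms of $\V$-monads in the general sense of \bref{par:general_mnd_mor}, and the composite $(\xi^{\TT\SSS},G)\cdot(\Delta^{\TT\SSS},1_\A)$ has component $1$-cell equal to $G \cdot 1_\A = G$, which is also the component $1$-cell of $(\zeta^{\TT\SSS},G)$, it suffices to check that the $2$-cell components agree; I would then invoke that equality of monad morphisms is detected by their underlying morphisms of endo-$1$-cells (the diagrammatic monad-morphism axioms being automatically inherited).

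First I would unwind the definitions. By \bref{def:Delta_ts}, $\Delta^{\TT\SSS} = [F,G](\phi)$, and by the formula for the monoidal structure on $[F,G]$ in the proof of \bref{thm:mon_func_detd_by_adj}, the underlying $2$-cell of $[F,G](\phi)$ is $G\phi F \colon T = GF \to GSF$ precomposed appropriately — concretely it is the whiskering $G \star \phi \star F$. By \bref{def:xi_ts}, the underlying $2$-cell of $(\xi^{\TT\SSS},G)$ is $\xi^{\TT\SSS} = GS\varepsilon \colon GSFG \to GS$. Composing these as morphisms of endo-$1$-cells (per the associativity/pasting conventions of \bref{par:general_mnd_mor}), the composite $2$-cell is
$$
TG = GFG \xrightarrow{\;G\phi FG\;} GSFG \xrightarrow{\;GS\varepsilon\;} GS\;.
$$
On the other hand, by \bref{def:can_mnd_mor}, $\zeta^{\TT\SSS} = G\varsigma$ where $\varsigma \colon fg \to s$ is the common composite $2$-cell of \bref{thm:can_mnd_mor}(1); taking the route $fg \xrightarrow{fg\phi} fgs \xrightarrow{\varepsilon s} s$ and whiskering by $G$ on the left gives precisely $G\varsigma = GS\varepsilon \cdot G\phi FG \colon GFG \to GS$ (reading $\varepsilon s$ as $\varepsilon$ whiskered by $S$, so $G(\varepsilon s) = GS\varepsilon$ after identifying $s$ with $S$ on the relevant object, and $G(fg\phi) = G\phi FG$). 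Thus the two composites are literally the same pasting of whiskered $2$-cells, and the triangle commutes.

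The main obstacle I anticipate is purely bookkeeping: getting the whiskering conventions and the direction of the component $2$-cells straight, since $(\zeta^{\TT\SSS},G)$ is a morphism of monads in the generalized sense (the $2$-cell $\zeta$ going "the wrong way" relative to the component $1$-cell $G$, as flagged in \bref{par:general_mnd_mor}), and since the two paths around the triangle involve composing such generalized monad morphisms, which requires care with how the endo-$1$-cell morphisms compose. I would handle this by first recording, as a lemma or a remark, the explicit underlying $2$-cell of each of the three morphisms $\Delta^{\TT\SSS}$, $\xi^{\TT\SSS}$, $\zeta^{\TT\SSS}$ as a whiskered composite built from $\phi$ and $\varepsilon$, then drawing the single large diagram of $2$-cells over the $1$-cell $G$ and observing that it commutes by the interchange law together with \bref{thm:can_mnd_mor}(1). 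Once the $2$-cell identity is in place, the fact that both sides are genuine morphisms of $\V$-monads (hence the diagrammatic axioms hold) is already guaranteed by \bref{thm:can_mnd_mor}(2), \bref{thm:mnd_mor_gsf_to_s}, and the functoriality of $[F,G]$, so no further verification is needed.
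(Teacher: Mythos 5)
Your proposal is correct and takes essentially the same route as the paper's proof, which simply displays the triangle of $2$-cells $GFG \xrightarrow{G\phi FG} GSFG \xrightarrow{GS\varepsilon} GS$ and notes that it commutes by the definition of $\zeta^{\TT\SSS} = G\varsigma$ together with \bref{thm:can_mnd_mor}. The one slip is a whiskering mismatch: the factorization $GS\varepsilon \cdot G\phi FG$ arises from the route $s\varepsilon \cdot \phi fg$ for $\varsigma$, not from $\varepsilon s \cdot fg\phi$ (whose left $G$-whiskering is $G\varepsilon S \cdot GFG\phi$) --- but this is harmless, since the two routes agree by \bref{thm:can_mnd_mor}(1), so the identity $G\varsigma = GS\varepsilon \cdot G\phi FG$ that you need still holds.
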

\begin{proof}
Indeed, the following diagram commutes by the definition of $\zeta^{\TT\SSS}$ \pbref{def:can_mnd_mor} and \bref{thm:can_mnd_mor}.
$$
\xymatrix{
*!/r2ex/+{TG\:=\:GFG} \ar[rr]^{\zeta^{\TT\SSS}} \ar[dr]|{\Delta^{\TT\SSS} G\:=\:G\phi FG} &                                           & GS \\
                                                                                     & GSFG \ar[ur]_{\xi^{\TT\SSS}\:=\:GS\varepsilon} &
}
$$
\end{proof}

\begin{CorSub}\label{thm:comm_diag_funcs_ind_by_can_mnd_morphs}
The commutative diagram \eqref{eqn:comm_diag_can_mnd_morphs} induces via \bref{prop:enr_func_induced_by_mnd_mor} a commutative triangle of $\V$-functors
\begin{equation}\label{eqn:comm_diag_funcs_ind_by_can_mnd_morphs}
\xymatrix{
*!/r3ex/+{\B = \A^\TT} &                                      & {\B^\SSS} \ar[ll]_{G^{\zeta^{\TT\SSS}}} \ar[dl]^{G^{\xi^{\TT\SSS}}}\\
       & \A^{[F,G](\SSS)} \ar[ul]^{\A^{\Delta^{\TT\SSS}}} &
}
\end{equation}
in which the top side is \textnormal{(}by \bref{thm:vfunc_ind_by_can_mor_is_forgetful}\textnormal{)} the forgetful $\V$-functor.
\end{CorSub}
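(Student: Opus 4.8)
The plan is to deduce the corollary directly from the functoriality statement at the end of \bref{par:general_mnd_mor} (equivalently \bref{par:alg_endofunc}): assigning to each morphism of $\V$-monads its induced $\V$-functor on Eilenberg-Moore categories is a contravariant functor from the category of $\V$-monads to $\VCAT$. Since \bref{thm:comm_diag_can_mnd_morphs} exhibits the triangle of $\V$-monads \eqref{eqn:comm_diag_can_mnd_morphs} as commutative, applying this contravariant functor immediately yields a commutative triangle of $\V$-functors, which is exactly \eqref{eqn:comm_diag_funcs_ind_by_can_mnd_morphs} with the arrows reversed; the labels $G^{\zeta^{\TT\SSS}}$, $G^{\xi^{\TT\SSS}}$, $\A^{\Delta^{\TT\SSS}}$ are just the names (from \bref{prop:enr_func_induced_by_mnd_mor} and \bref{par:alg_endofunc}) of the $\V$-functors induced by the three monad morphisms $(\zeta^{\TT\SSS},G)$, $(\xi^{\TT\SSS},G)$, $(\Delta^{\TT\SSS},1_\A)$ appearing in \eqref{eqn:comm_diag_can_mnd_morphs}. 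So the first and main step is simply: invoke \bref{prop:enr_func_induced_by_mnd_mor} to get the three induced $\V$-functors, then invoke the contravariant-functoriality clause to transport the commutativity of \eqref{eqn:comm_diag_can_mnd_morphs}.

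Next I would identify the codomains and domains correctly. The monad $(\Delta^{\TT\SSS},1_\A):\TT \rightarrow [F,G](\SSS)$ has underlying $1$-cell the identity on $\A$, so the induced $\V$-functor goes $\A^{[F,G](\SSS)} \rightarrow \A^\TT = \B$, which is the $\V$-functor labelled $\A^{\Delta^{\TT\SSS}}$ (here the notation $\A^{(-)}$ for the functor induced by a monad morphism over the identity $1$-cell is consistent with \bref{prop:enr_func_induced_by_mnd_mor}/\bref{par:alg_endofunc}). The monads $(\zeta^{\TT\SSS},G)$ and $(\xi^{\TT\SSS},G)$ both have underlying $1$-cell $G:\B \rightarrow \A$, so the induced $\V$-functors have domain $\B^\SSS$ and $\A^{[F,G](\SSS)}$ respectively (the Eilenberg-Moore categories of the codomain monads) and common codomain $\B = \A^\TT$; these are $G^{\zeta^{\TT\SSS}}:\B^\SSS \rightarrow \B$ and $G^{\xi^{\TT\SSS}}:\A^{[F,G](\SSS)} \rightarrow \B$. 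Thus the contravariant image of \eqref{eqn:comm_diag_can_mnd_morphs} is exactly the triangle \eqref{eqn:comm_diag_funcs_ind_by_can_mnd_morphs}. Finally, the identification of the top side $G^{\zeta^{\TT\SSS}}$ with the Eilenberg-Moore forgetful $\V$-functor $\B^\SSS \rightarrow \B = \A^\TT$ is precisely the content of \bref{thm:vfunc_ind_by_can_mor_is_forgetful}, so one simply cites that.

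There is essentially no obstacle here: the statement is a formal consequence of already-established results, and the only thing to be careful about is bookkeeping of variances and the identification of the several Eilenberg-Moore categories in play (in particular that $\B^\SSS$, $\A^{[F,G](\SSS)}$, and $\A^\TT = \B$ are the relevant ones, and that each exists — $\B^\SSS$ by hypothesis since $\V$ has equalizers and $\SSS$ is a $\V$-monad on $\B$, and $\A^{[F,G](\SSS)}$ because $[F,G](\SSS)$ is a $\V$-monad on $\A$). The mild subtlety worth a sentence in the write-up is that $\A^{[F,G](\SSS)}$ is the shared object through which the triangle factors on both the monad-morphism side and the functor side, matching the corner of \eqref{eqn:comm_diag_funcs_ind_by_can_mnd_morphs}. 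Accordingly the proof is short:

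\begin{proof}
The three morphisms of $\V$-monads in the commutative diagram \eqref{eqn:comm_diag_can_mnd_morphs} induce, via \bref{prop:enr_func_induced_by_mnd_mor}, $\V$-functors as in \eqref{eqn:comm_diag_funcs_ind_by_can_mnd_morphs}; since the assignment of a $\V$-functor to a morphism of $\V$-monads is contravariantly functorial \pbref{par:general_mnd_mor}, the commutativity of \eqref{eqn:comm_diag_can_mnd_morphs} entails the commutativity of \eqref{eqn:comm_diag_funcs_ind_by_can_mnd_morphs}.  By \bref{thm:vfunc_ind_by_can_mor_is_forgetful}, the $\V$-functor $G^{\zeta^{\TT\SSS}}:\B^\SSS \rightarrow \A^\TT = \B$ induced by the canonical morphism $(\zeta^{\TT\SSS},G)$ is the Eilenberg-Moore forgetful $\V$-functor.
\end{proof}
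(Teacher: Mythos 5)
Your proposal is correct and matches the paper's reasoning exactly: the paper states this corollary without a separate proof, treating it as an immediate consequence of the contravariant functoriality clause in \bref{prop:enr_func_induced_by_mnd_mor} applied to the commutative triangle \eqref{eqn:comm_diag_can_mnd_morphs}, together with \bref{thm:vfunc_ind_by_can_mor_is_forgetful} for the identification of the top side. Your bookkeeping of variances and of the three Eilenberg-Moore categories is accurate, so nothing needs to be added.
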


\begin{PropSub}\label{thm:free_gsf_alg_via_xi}
For each $A \in \A$, the $\X$-functor $G^{\xi^{\TT\SSS}}$ \pbref{thm:comm_diag_funcs_ind_by_can_mnd_morphs} sends the free $\SSS$-algebra $(SFA,\nu_{FA})$ on $FA$ to the free $[F,G](\SSS)$-algebra on $A$.
\end{PropSub}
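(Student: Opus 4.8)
The plan is to unwind both sides of the claimed equality on the level of underlying objects and $\TT$-algebra structures, using the fact established in \bref{thm:vfunc_ind_by_can_mor_is_forgetful} and \bref{thm:comm_diag_funcs_ind_by_can_mnd_morphs} that the composite $G^{\zeta^{\TT\SSS}}$ is the forgetful $\V$-functor, together with the explicit description of the action of a monad-morphism-induced functor on objects given by \eqref{eqn:action_of_mndmor_on_algs}. Concretely, write $[F,G](\SSS) = (GSF,\delta,\kappa)$ as in \bref{def:Delta_ts}, \bref{thm:mnd_mor_gsf_to_s}. The free $[F,G](\SSS)$-algebra on $A$ is $(GSFA,\kappa_A)$, where $\kappa_A: GSFGSFA \to GSFA$ is the component of the multiplication $\kappa$ of $[F,G](\SSS)$; recall from \bref{thm:mon_func_detd_by_adj} that $\kappa$ is built from $\mu^\SSS = \nu$ and the counit $\varepsilon$, namely $\kappa = G\nu F \cdot GSF G \varepsilon SF$ (or the equivalent standard formula). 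On the other hand, $G^{\xi^{\TT\SSS}}$ sends the free $\SSS$-algebra $(SFA, \nu_{FA})$ on $FA$ to the object $(G(SFA), GSFGSFA \xrightarrow{\xi^{\TT\SSS}_{SFA}} GS(SFA) \xrightarrow{G\nu_{FA}} GSFA)$, by \eqref{eqn:action_of_mndmor_on_algs}, where $\xi^{\TT\SSS}_{SFA} = GS\varepsilon_{SFA}: GSFGSFA \to GS SFA$.

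First I would check that the two underlying objects agree: both are $GSFA$, trivially. Then the substance is to verify that the two $[F,G](\SSS)$-algebra structures on $GSFA$ coincide, i.e.\ that
\begin{equation*}
G\nu_{FA} \cdot GS\varepsilon_{SFA} \;=\; \kappa_A \,.
\end{equation*}
This is a diagram chase that uses only (i) the explicit formula for $\kappa$ coming from \bref{thm:mon_func_detd_by_adj} (the monoidal structure on $[F,G]$ is given by $gh\varepsilon hf$-type morphisms), (ii) the naturality of $\varepsilon$, and (iii) the triangle identities for $F \dashv G$. Indeed, $\kappa_A = G\nu_{FA} \cdot GS F G\varepsilon_{SFA}$, and the equation to prove reduces, after applying $G\nu_{FA}$ on the outside, to $S\varepsilon_{SFA} = SFG\varepsilon_{SFA}$ as morphisms $SFGSFA \to SSFA$ — wait, that is not literally right, so the chase must instead exploit that $\varepsilon_{SFA}: FGSFA \to SFA$ and $FG\varepsilon_{SFA}$ differ by a unit-counit relation; the correct identity is that $\varepsilon_{SFA} = \varepsilon_{SFA} \cdot FG\varepsilon_{SFA} \cdot (\text{coherence})$, which follows from $\varepsilon \cdot F G \varepsilon = \varepsilon \cdot \varepsilon F G$ plus naturality. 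I would lay this out as a single commutative diagram with the naturality square for $\varepsilon$ at $\varepsilon_{SFA}$ on one side and the composite formula for $\kappa$ on the other.

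The main obstacle, and the only place requiring genuine care, is keeping the bookkeeping straight between the three nested applications of $S$, $F$, $G$ and correctly transcribing the formula for the multiplication $\kappa$ of the monad $[F,G](\SSS)$ from \bref{thm:mon_func_detd_by_adj}: there the monoidal structure on $[f,g]$ is described via $gh\varepsilon hf$, so $\kappa_A$ is obtained by instantiating $h = k = S$ and then postcomposing with $G\nu F$. Once that formula is pinned down, the equality $G\nu_{FA} \cdot GS\varepsilon_{SFA} = \kappa_A$ is forced by naturality of $\varepsilon$ (applied to the morphism $\varepsilon_{FA}$, so that $S$-images of the two sides of a naturality square match) and one triangle identity. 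I do not expect to need any hypothesis beyond those already standing in \bref{par:data_mnd_on_em_cat} (in particular the existence of equalizers in $\V$, needed only so that $\B^\SSS$ and $\A^{[F,G](\SSS)}$ exist). I would present the argument as: (1) identify both algebras' carriers; (2) write out both structure maps explicitly via \eqref{eqn:action_of_mndmor_on_algs} and the $[F,G]$-multiplication formula; (3) a short diagram chase with naturality of $\varepsilon$ and a triangle identity; (4) conclude by $\V$-faithfulness of the forgetful functor $\A^{[F,G](\SSS)} \to \A$ that the two $[F,G](\SSS)$-algebras are equal, since they have the same carrier and the same structure morphism.
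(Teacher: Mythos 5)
Your overall strategy is the same as the paper's: both carriers are $GSFA$, and the point is to compare the structure morphism $G\nu_{FA}\cdot\xi^{\TT\SSS}_{SFA}=G\nu_{FA}\cdot GS\varepsilon_{SFA}$ produced by \eqref{eqn:action_of_mndmor_on_algs} with the multiplication component $\kappa_A$ of $[F,G](\SSS)$. But there is a concrete error at the decisive step: your formula $\kappa_A = G\nu_{FA}\cdot GSFG\varepsilon_{SFA}$ is wrong, and is not even well-typed ($GSFG\varepsilon_{SFA}$ has domain $GSFGFGSFA$, not $GSFGSFA$). The monoidal structure on $[f,g]$ in \bref{thm:mon_func_detd_by_adj} is the whiskering $gh\varepsilon kf:ghfgkf\rightarrow ghkf$; instantiating $h=k=S$ gives $GS\varepsilon SF:GSFGSF\rightarrow GSSF$, so the multiplication of $[F,G](\SSS)$ is exactly $\kappa = G\nu F\cdot GS\varepsilon SF$, i.e.\ $\kappa_A = G\nu_{FA}\cdot GS\varepsilon_{SFA}$. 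With the correct formula the two structure morphisms are \emph{syntactically identical}, and the proof is finished; there is nothing to chase. This is precisely the paper's (two-line) argument.

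The downstream consequences of the wrong formula are where your write-up genuinely fails rather than merely being inefficient: you notice yourself that the reduction to ``$S\varepsilon_{SFA}=SFG\varepsilon_{SFA}$'' is ``not literally right,'' and the proposed repair via ``$\varepsilon_{SFA}=\varepsilon_{SFA}\cdot FG\varepsilon_{SFA}\cdot(\text{coherence})$'' is not a meaningful identity — there is no coherence morphism to insert, and no appeal to the triangle identities or to naturality of $\varepsilon$ is needed anywhere in this proposition. Steps (1), (2), and (4) of your outline are fine (and (4), invoking $\V$-faithfulness of the forgetful functor, is harmless though unnecessary once the structure maps are seen to be equal on the nose); step (3) should simply be deleted after correcting the transcription of $\kappa$.
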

\begin{proof}
$$G^{\xi^{\TT\SSS}}(SFA,\nu_{FA}) = (GSFA,GSFGSFA \xrightarrow{\xi^{\TT\SSS} SFA} GSSFA \xrightarrow{G\nu FA} GSFA)$$
and since by definition $\xi^{\TT\SSS} = GS\varepsilon$, the latter $[F,G](\SSS)$-algebra structure is the composite
$$GSFGSFA \xrightarrow{GS\varepsilon SFA} GSSFA \xrightarrow{G\nu FA} GSFA\;,$$
which is the free $[F,G](\SSS)$-algebra structure on $A$.
\end{proof}

\begin{CorSub}\label{thm:t_alg_obtained_from_free_gsf_alg_on_a_via_delta_is_sfa}
For each object $A \in \A$, the object of $\B = \A^\TT$ obtained by applying $\A^{\Delta^{\TT\SSS}}$ to the free $[F,G](\SSS)$-algebra on $A$ is $SFA$.
\end{CorSub}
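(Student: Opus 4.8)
The plan is to chase through the two commutative triangles \eqref{eqn:comm_diag_can_mnd_morphs} and \eqref{eqn:comm_diag_funcs_ind_by_can_mnd_morphs} together with \bref{thm:free_gsf_alg_via_xi}. First I would recall that by \bref{thm:comm_diag_funcs_ind_by_can_mnd_morphs} the triangle of $\V$-functors induced by \eqref{eqn:comm_diag_can_mnd_morphs} commutes; in particular $\A^{\Delta^{\TT\SSS}} \circ G^{\xi^{\TT\SSS}} = G^{\zeta^{\TT\SSS}}$. By \bref{thm:vfunc_ind_by_can_mor_is_forgetful}, $G^{\zeta^{\TT\SSS}} = G^\SSS : \B^\SSS \to \B$, the Eilenberg--Moore forgetful $\V$-functor. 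Hence $\A^{\Delta^{\TT\SSS}} \circ G^{\xi^{\TT\SSS}} = G^\SSS$.

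Next I would apply both sides of this identity to the free $\SSS$-algebra $(SFA, \nu_{FA})$ on $FA \in \B$. On the one hand, $G^\SSS(SFA,\nu_{FA}) = SFA$, since the Eilenberg--Moore forgetful functor sends a free algebra $(SB,\nu_B)$ to $B$, here with $B = FA$. On the other hand, by \bref{thm:free_gsf_alg_via_xi}, $G^{\xi^{\TT\SSS}}(SFA,\nu_{FA})$ is the free $[F,G](\SSS)$-algebra on $A$. Applying $\A^{\Delta^{\TT\SSS}}$ to this and using the identity just established, we conclude that $\A^{\Delta^{\TT\SSS}}$ sends the free $[F,G](\SSS)$-algebra on $A$ to $SFA$, which is exactly the claim.

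There is no real obstacle here: the statement is a direct corollary of \bref{thm:comm_diag_funcs_ind_by_can_mnd_morphs}, \bref{thm:vfunc_ind_by_can_mor_is_forgetful}, and \bref{thm:free_gsf_alg_via_xi}, and the only thing to be careful about is bookkeeping --- namely keeping straight that $\B = \A^\TT$, that the free $\SSS$-algebra in question is the one on the object $FA$ of $\B$ (not on $A$), and that the Eilenberg--Moore forgetful functor $G^\SSS$ does indeed send $(SFA,\nu_{FA})$ to $FA$. One could alternatively give a bare-hands verification by writing out $\A^{\Delta^{\TT\SSS}}$ applied to the free $[F,G](\SSS)$-algebra structure $G\nu FA \cdot GS\varepsilon SFA$ on $GSFA$ via the formula \eqref{eqn:action_of_mndmor_on_algs} and checking the resulting $\TT$-algebra is $(SFA,\ldots)$ as an object of $\A^\TT$, but the route through the induced-functor triangle is cleaner and avoids recomputing anything. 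I would write the proof in the two short steps above.
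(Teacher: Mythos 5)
Your proof is correct and follows exactly the route the paper intends: the corollary is obtained by combining \bref{thm:free_gsf_alg_via_xi} with the commuting triangle of induced $\V$-functors \pbref{thm:comm_diag_funcs_ind_by_can_mnd_morphs}, whose top side is the forgetful $\V$-functor by \bref{thm:vfunc_ind_by_can_mor_is_forgetful}. One slip in your bookkeeping, though: you justify the (correct) equation $G^\SSS(SFA,\nu_{FA}) = SFA$ by saying the Eilenberg--Moore forgetful functor ``sends a free algebra $(SB,\nu_B)$ to $B$,'' and you repeat in your closing remarks that $G^\SSS$ sends $(SFA,\nu_{FA})$ to $FA$ --- but the forgetful functor sends an algebra to its \emph{carrier}, so $(SFA,\nu_{FA})$ goes to $SFA$, not $FA$; this is precisely why the corollary's answer is $SFA$ rather than $FA$, and your displayed equation (not its stated justification) is the one your argument actually uses.
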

\begin{proof}
This follows from \bref{thm:free_gsf_alg_via_xi} and \bref{thm:comm_diag_funcs_ind_by_can_mnd_morphs}.
\end{proof}

\section{Canonical morphisms for the distribution monads}

As we continue our study of the natural and accessible distribution monads $\DD$ and $\tDD$ of Chapter \bref{ch:nat_acc_dist}, we shall again work with a symmetric monoidal adjunction as in \bref{par:smc_adj}, but now we assume that this adjunction is monadic, as follows.

\begin{ParSub}\label{par:data_for_dcompl_and_pett}\emptybox
\begin{enumerate}
\item Let $\X = (\X,\btimes,I)$ be a closed symmetric monoidal category, and suppose that $\X$ is finitely well-complete \pbref{def:enr_fwc}.
\item Let $\LL = (L,\eta,\mu)$ be a commutative $\X$-monad on $\uX$, and suppose that $\LL$ has tensor products of algebras \pbref{def:tens_prod_algs}.
\end{enumerate}
Then by \bref{thm:smclosed_em_adj}, the Eilenberg-Moore adjunction $F \nsststile{\varepsilon}{\eta} G:\sL \rightarrow \X$ for $\LL$ acquires the structure of a symmetric monoidal adjunction, in which the category $\sL := \X^\LL$ of $\LL$-algebras is a symmetric monoidal closed category $(\sL,\otimes,R)$.  Also, as noted in \bref{exa:smc_adj_with_l_fwc}, $\sL$ is finitely well-complete.  Further, by \bref{thm:commmnd_sm_em_adj_dets_em_vadj}, the $\X$-adjunction $\acute{F} \nsststile{\varepsilon}{\eta} \grave{G}:G_*\uL \rightarrow \uX$ determined by this symmetric monoidal adjunction (via \bref{thm:assoc_enr_adj}) is simply the Eilenberg-Moore $\X$-adjunction $F^\LL \nsststile{}{} G^\LL:\uX^\LL \rightarrow \uX$.  We employ the following notation and terminology:
\begin{enumerate}
\item Let $F \nsststile{\varepsilon}{\eta} G:\sL \rightarrow \uX$ denote the Eilenberg-Moore $\X$-adjunction for $\LL$.
\item We call the $\X$-monad $\HH = (H,\fd,\fk)$ obtained by applying the 2-functor $G_*:\LCAT \rightarrow \XCAT$ to the double-dualization $\sL$-monad \pbref{def:dualization} \textit{the double-dualization $\X$-monad}.
\end{enumerate}
\end{ParSub}

\begin{ExaSub}
Examples of the above data are noted in \bref{exa:smc_adj_with_l_fwc}.
\end{ExaSub}

\begin{DefSub}\label{def:can_mnd_mor_distns}
We now define a diagram of $\X$-monad morphisms
\begin{equation}\label{eqn:can_mnd_mor_distns}
\xymatrix{
                                                                               & \DD \ar[dr]^{(\xi^{\DD},G)}   &     \\
\LL \ar[ur]^{(\Delta^{\DD},1_{\uX})} \ar[dr]_{(\Delta^{\tDD},1_{\uX})} \ar[rr]|{(\zeta,G)} &                         & \HH \\
                                                                               & \tDD \ar[ur]_{(\xi^{\tDD},G)} & 
}
\end{equation}
as follows.  Recalling that $\DD = [F,G](\HH)$ \pbref{prop:dist_via_dd}, we define $\Delta^{\DD} := \Delta^{\LL\HH}$ \pbref{def:Delta_ts} and $\xi^{\DD} := \xi^{\LL\HH}$ \pbref{def:xi_ts}.  Recalling also that $\tDD = [F,G](\tHH)$ \pbref{sec:acc_distn}, we define $\Delta^{\tDD} := \Delta^{\LL\tHH}$, but we take $(\xi^{\tDD},G)$ to be the composite
$$\tDD = [F,G](\tHH) \xrightarrow{(\xi^{\LL\tHH},G)} \tHH \xrightarrow{(i,1_\sL)} \HH\;,$$
where $i$ is as defined in \bref{def:func_compl}.  We define $\zeta := \zeta^{\LL\HH}$, so that $(\zeta,G)$ is the canonical morphism \pbref{def:can_mnd_mor}.
\end{DefSub}

\begin{PropSub}\label{thm:can_mnd_mor_distns}
The diagram \eqref{eqn:can_mnd_mor_distns} commutes, as does the following diagram
\begin{equation}\label{eqn:jtd_mor_abs_distn_mnd}
\xymatrix{
                                                             & \DD \ar[dr]^{(\xi^{\DD},G)}                         &     \\
\LL \ar[ur]^{(\Delta^{\DD},1_{\uX})} \ar[dr]_{(\Delta^{\tDD},1_{\uX})}   &                                               & \HH \\
                                                             & \tDD \ar[uu]|{(j^{\tDD},1_{\uX})} \ar[ur]|{(\xi^{\tDD},G)} &
}
\end{equation}
where $j^{\tDD}$ is as given in \bref{def:acc_distns}.
\end{PropSub}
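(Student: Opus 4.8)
\textbf{Proof proposal for Proposition \bref{thm:can_mnd_mor_distns}.}

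The plan is to reduce everything to the general commutative diagram \bref{thm:comm_diag_can_mnd_morphs} of canonical monad morphisms, applied in two instances: once with the $\X$-monad $\SSS := G_*(\HH)$ on $\sL$ (the double-dualization $\X$-monad transported via $G_*$ — here I should be careful that $\HH$ as defined in \bref{par:data_for_dcompl_and_pett} is already $G_*$ of the $\sL$-monad, so $\SSS$ really is an $\X$-monad on the $\X$-category $G_*\uL$ whose underlying ordinary category is $\sL$), and once with $\SSS := G_*(\tHH)$. Taking $\TT := \LL$ in \bref{par:data_mnd_on_em_cat}, so that $F \dashv G : \sL \rightarrow \uX$ is the Eilenberg--Moore $\X$-adjunction (legitimate by \bref{par:data_for_dcompl_and_pett}, which identifies $\acute F \dashv \grave G$ with $F^\LL \dashv G^\LL$), \bref{thm:comm_diag_can_mnd_morphs} yields directly the commutativity of the upper triangle $\LL \to \DD \to \HH$ via $\Delta^{\DD} = \Delta^{\LL\HH}$, $\xi^{\DD} = \xi^{\LL\HH}$, $\zeta = \zeta^{\LL\HH}$, since $\DD = [F,G](\HH)$ by \bref{prop:dist_via_dd}. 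Applying it again with $\SSS := G_*(\tHH)$ gives the commutativity of the triangle $\LL \xrightarrow{\Delta^{\LL\tHH}} [F,G](\tHH) = \tDD \xrightarrow{\xi^{\LL\tHH}} \tHH$ and the equation $(\zeta^{\LL\tHH},G) = (\xi^{\LL\tHH},G)\cdot(\Delta^{\LL\tHH},1_{\uX})$.

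Next I would handle the two morphisms that are \emph{not} directly instances of the general construction, namely the bottom leg $(\xi^{\tDD},G)$ and the vertical $(j^{\tDD},1_{\uX})$. By Definition \bref{def:can_mnd_mor_distns}, $(\xi^{\tDD},G)$ is the composite $\tDD \xrightarrow{(\xi^{\LL\tHH},G)} \tHH \xrightarrow{(i,1_\sL)} \HH$, where $i:\tHH \to \HH$ is the unique morphism of $\sL$-monads from \bref{def:func_compl}/\bref{thm:addnl_facts_on_ind_idmmnd_refl}; and by Definition \bref{def:acc_distns}, $j^{\tDD} = [\acute F,\grave G](i) = [F,G](i)$. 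So I must show two triangles commute: (a) $(\xi^{\DD},G)\cdot(j^{\tDD},1_{\uX}) = (\xi^{\tDD},G)$, i.e. $[F,G](i)$ followed by $\xi^{\LL\HH}$ equals $\xi^{\LL\tHH}$ followed by $i$; and (b) $(j^{\tDD},1_{\uX})\cdot(\Delta^{\tDD},1_{\uX}) = (\Delta^{\DD},1_{\uX})$, i.e. $[F,G]$ applied to the unit-composite $\ONEONE_\sL \xrightarrow{\phi^{\tHH}} \tHH \xrightarrow{i} \HH$ equals $[F,G]$ of $\ONEONE_\sL \xrightarrow{\phi^{\HH}} \HH$. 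For (b) the point is simply that $i\cdot\phi^{\tHH} = \phi^{\HH}$ — this is exactly the unit law for the monad morphism $i$ (equivalently, $\ONEONE_\sL$ is initial among monads by \bref{par:adj_and_mnd}, so the composite $i\cdot\phi^{\tHH}$ is forced to equal the unique morphism $\phi^{\HH}$) — and then one applies the functor $[F,G]$. For (a), $\xi^{\LL\SSS} = GS\varepsilon$ is natural in the monad $\SSS$ in the appropriate sense: given a monad morphism $i:\tHH\to\HH$, the square relating $\xi^{\LL\tHH} = G\tH\varepsilon$, $\xi^{\LL\HH} = GH\varepsilon$, $[F,G](i) = GiF$, and $i$ commutes by the naturality of $\varepsilon$ together with the componentwise definition of $i$; concretely, on an object $A\in\uX$ one chases $GiF$ composed with $GH\varepsilon$ against $G\tH\varepsilon$ composed with $i$, using $i_{HGE}\cdot$-naturality.

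Finally I would assemble: diagram \eqref{eqn:can_mnd_mor_distns} is the union of the upper triangle (first instance of \bref{thm:comm_diag_can_mnd_morphs}), the lower-left triangle $\LL \to \tDD \to \HH$ (obtained from the second instance post-composed with $(i,1_\sL)$ using (b) to identify the slanted edge $\Delta^{\DD}$... actually the lower-left triangle of \eqref{eqn:can_mnd_mor_distns} asserts $(\xi^{\tDD},G)\cdot(\Delta^{\tDD},1_{\uX}) = (\zeta,G)$, which follows from the second instance giving $(\xi^{\LL\tHH},G)\cdot(\Delta^{\tDD},1_{\uX}) = (\zeta^{\LL\tHH},G)$, then postcomposing with $i$ and using that $i\cdot\zeta^{\LL\tHH} = \zeta^{\LL\HH} = \zeta$; this last identity is again a naturality-of-$\zeta$ statement, where $\zeta^{\LL\SSS} = G\varsigma^{\SSS}$ and $\varsigma$ is natural in $\SSS$ by its construction in \bref{thm:can_mnd_mor} from $\varepsilon$ and the unit of $\SSS$, compatibly with monad morphisms). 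Diagram \eqref{eqn:jtd_mor_abs_distn_mnd} is then \eqref{eqn:can_mnd_mor_distns} with the edge $\tDD\to\HH$ refactored through $(j^{\tDD},1_{\uX})$ and $(\xi^{\DD},G)$, which is exactly statement (a) above, plus (b) for the remaining triangle through $\Delta^{\DD}$. The main obstacle I anticipate is purely bookkeeping: keeping straight which morphisms live in $\XCAT(\uX,\uX)$ versus $\XCAT(G_*\uL, G_*\uL)$, and verifying that the ``naturality in $\SSS$'' of the assignments $\SSS \mapsto (\zeta^{\LL\SSS},G)$ and $\SSS \mapsto (\xi^{\LL\SSS},G)$ holds with respect to the monad morphism $i:G_*\tHH \to G_*\HH$ — these are not stated as lemmas in the excerpt, so I would either prove them as small auxiliary lemmas (each is a one- or two-square diagram chase from the definitions in \bref{def:can_mnd_mor}, \bref{def:xi_ts} and the 2-categorical identities in \bref{thm:can_mnd_mor}) or, more economically, observe that both $\zeta^{\LL\SSS}$ and $\xi^{\LL\SSS}$ are built by whiskering/pasting from $\varepsilon$, $\eta$ and the structure 2-cells of $\SSS$, so any monad morphism out of $\SSS$ is automatically compatible with them.
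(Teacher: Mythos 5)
Your proposal is correct and follows essentially the same route as the paper: the upper triangle of \eqref{eqn:can_mnd_mor_distns} is the instance of \bref{thm:comm_diag_can_mnd_morphs}, the left triangle of \eqref{eqn:jtd_mor_abs_distn_mnd} is $[F,G]$ applied to the unit law $i\cdot\tpartial = \fd$ (initiality of $\ONEONE_{\sL}$), and the right triangle follows from the definition of $\xi^{\tDD}$ as the composite through $\tHH$ together with the naturality of $i$ whiskered with $\varepsilon$. The only (harmless) difference is that you obtain the lower triangle of \eqref{eqn:can_mnd_mor_distns} via a second instance of \bref{thm:comm_diag_can_mnd_morphs} and the identity $i\cdot\zeta^{\LL\tHH}=\zeta$, whereas the paper deduces it more economically from the commutativity of \eqref{eqn:jtd_mor_abs_distn_mnd} combined with the already-established upper triangle.
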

\begin{proof}
It suffices to show that the latter diagram commutes, since the upper triangle in \eqref{eqn:can_mnd_mor_distns} commutes by \bref{thm:comm_diag_can_mnd_morphs}.  The leftmost triangle in this latter diagram commutes, since it may be obtained by applying the monoidal functor $[F,G]:\XCAT(\sL,\sL) \rightarrow \XCAT(\uX,\uX)$ \pbref{thm:mon_func_detd_by_adj} to the triangle
$$
\xymatrix{
                                               & \HH                 \\
\ONEONE_\sL \ar[ur]^\fd \ar[dr]_\tpartial &                     \\
                                               & \tHH \ar[uu]_i
}
$$
which commutes since the unit $\fd$ of $\HH$ is the unique morphism of $\X$-monads $\ONEONE_\sL \rightarrow \HH$ \pbref{par:adj_and_mnd}.  Regarding the rightmost triangle, observe that the upper triangle in the following diagram commutes
$$
\xymatrix{
*!/r2ex/+{\tD G = G\tH FG} \ar[d]_{j^{\tDD} G\:=\:GiFG} \ar[rr]^{\xi^{\LL\tHH}\:=\:G\tH \varepsilon} \ar[drr]|{\xi^{\tDD}} & & G\tH \ar[d]^{Gi} \\
*!/r2ex/+{DG = GHFG} \ar[rr]_{\xi^{\DD}\:=\:\xi^{\LL\HH} = GH\varepsilon} & & GH \\
}
$$
by the definition of $\xi^{\tDD}$, so since the periphery commutes by the naturality of $i$, the lower triangle commutes, as needed.
\end{proof}

\begin{DefSub}\label{def:iota}
For each object $E \in \sL$, there is a canonical morphism $\iota_E:E^* \rightarrow [GE,R]$ in $\sL$ gotten as the transpose of $G\fd_E:GE \rightarrow GHE = \sL(E^*,R)$ under the `hom-cotensor' adjunction for $R$ \eqref{eq:hom_cotensor}.
\end{DefSub}

In defining the morphisms of $\X$-monads $\Delta^\DD$, $\xi^\DD$ in \bref{def:can_mnd_mor_distns}, we have assumed that the cotensors $[X,R]$ in $\sL$ employed in defining $DX = \sL([X,R],R)$ and the $\X$-monad $\DD$ are chosen as $[X,R] = (FX)^*$ (per \bref{thm:assoc_enr_adj}).  Any other choice of cotensors yields (by \bref{thm:uniq_adj}) an isomorphic `hom-cotensor' $\X$-adjunction $[-,R] \dashv \sL(-,R):\sL^\op \rightarrow \uX$ and hence an isomorphic induced $\X$-monad $\DD$. Hence \bref{def:can_mnd_mor_distns} implicitly furnishes definitions applicable for any choice of cotensors $[X,R]$.  The following provides a characterization of $\Delta^\DD$, $\xi^\DD$ that is independent of the choice of cotensors.

\begin{PropSub}\label{thm:inv_charn_of_morphs_for_d}
Let $X \in \X$ and $E \in \sL$.
\begin{enumerate}
\item $\Delta^\DD_X:LX \rightarrow DX$ is the underlying map of the transpose $FX \rightarrow \uL([X,R],R)$ of $\delta^\DD_X:X \rightarrow G\uL([X,R],R) = DX$ under $F \dashv G$.
\item $\zeta_E:LGE \rightarrow GHE$ is the underlying map of the transpose $FGE \rightarrow HE$ of \linebreak[4]\hbox{$G\fd_E:GE \rightarrow GHE$} under $F \dashv G$.
\item $\xi^\DD_E:DGE \rightarrow GHE$ is exactly the map
$$\sL(\iota_E,R):\sL([GE,R],R) \rightarrow \sL(E^*,R)\;,$$
where $\iota_E:E^* \rightarrow [GE,R]$ is the canonical map defined in \bref{def:iota}.
\end{enumerate}
\end{PropSub}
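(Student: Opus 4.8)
\textbf{Proof strategy for \bref{thm:inv_charn_of_morphs_for_d}.} The plan is to unwind the definitions of $\Delta^\DD$, $\zeta$, and $\xi^\DD$ given in \bref{def:can_mnd_mor_distns}, which were built by transporting the monad morphisms of \bref{par:data_mnd_on_em_cat}--\bref{def:xi_ts} through the identification $\DD = [F,G](\HH)$ of \bref{prop:dist_via_dd}, and to reconcile them with the explicit `hom-cotensor' descriptions of $\delta^\DD$, $\fd$, and $\iota_E$. Since $\grave{G}$ is $\X$-faithful and we are asserting equalities of morphisms in $\X$, it will suffice throughout to compute on underlying maps and to use the universal property of the various transpositions. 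For parts 1 and 2 the key is that a morphism of $\X$-monads obtained by applying $[F,G]$ to a monad morphism $\phi:\ONEONE_\sL \to \SSS$ has components computed by the monoidal structure of $[F,G]$ described in \bref{thm:mon_func_detd_by_adj}, namely as whiskerings of $\phi$ by the unit $\eta$ of $F \dashv G$.

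First I would handle part 1. By \bref{def:can_mnd_mor_distns}, $\Delta^\DD = \Delta^{\LL\HH} = [F,G](\fd)$, where $\fd:\ONEONE_\sL \to \HH$ is the unit of the double-dualization monad (which by \bref{par:adj_and_mnd} is the unique monad morphism $\ONEONE_\sL \to \HH$). Unpacking the formula for $[F,G](\fd)$ from the proof of \bref{thm:mon_func_detd_by_adj}, the component at $X$ is the composite $LX = GFX \xrightarrow{G\fd_{FX}} GHFX = DX$. By \bref{thm:transp_of_delta}, $\fd_{FX}:FX \to HFX$ is exactly the transpose of $\delta^\DD_X:X \to DX$ under $F \dashv G$; hence $\Delta^\DD_X = G\fd_{FX}$, which by the definition of transposition is the underlying map of this transpose. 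This is precisely the claim, so part 1 follows essentially by bookkeeping once \bref{thm:transp_of_delta} is invoked.

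For part 2, $\zeta = \zeta^{\LL\HH} = G\varsigma$ where $\varsigma:FG \to H$ is the common composite 2-cell of \bref{thm:can_mnd_mor} applied to $F \dashv G$ and $\SSS = \HH$. Reading off that common composite — say the path $FG \xrightarrow{FG\fd} FGH \xrightarrow{\varepsilon H} H$, using that the unit of $\HH$ is $\fd$ — one finds that $\varsigma_E:FGE \to HE$ is the transpose under $F \dashv G$ of $G\fd_E:GE \to GHE$, since transposition sends a morphism $E \xrightarrow{G\fd_E} GHE$ to $FGE \xrightarrow{F(G\fd_E)} FGHE \xrightarrow{\varepsilon_{HE}} HE$, which by naturality of $\varepsilon$ equals $\varepsilon_{HE} \cdot FG\fd_E$. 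Then $\zeta_E = G\varsigma_E$ is the underlying map of that transpose, as asserted.

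The main work, and the step I expect to be the genuine obstacle, is part 3: identifying $\xi^\DD_E = \xi^{\LL\HH}_E = (GH\varepsilon)_E = GH\varepsilon_E : GHFGE \to GHE$, i.e. $\sL(\varepsilon_E^{**}, \cdot)$ read correctly, with the map $\sL(\iota_E,R)$ where $\iota_E:E^* \to [GE,R]$ is the transpose of $G\fd_E$ under the hom-cotensor adjunction \eqref{eq:hom_cotensor}. Here one must carefully track the identification $DGE = \sL([GE,R],R)$ with $HFGE = (FGE)^{**}$, which rests on the choice $[GE,R] = (FGE)^*$ from \bref{thm:assoc_enr_adj}, and then check that under this identification $H\varepsilon_E = (\varepsilon_E)^{**}:(FGE)^{**} \to E^{**}$ corresponds to $\sL(\iota_E,R):\sL((FGE)^*,R) \to \sL(E^*,R)$. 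The crux is the commutative triangle relating $\iota_E$, $\varepsilon_E^*:E^* \to (FGE)^*$, and the canonical isomorphism $(FGE)^* \cong [GE,R]$: one shows $\varepsilon_E^* = (\text{iso}) \cdot$ something, or rather that $\iota_E$ \emph{is} $\varepsilon_E^*$ transported along $(FGE)^* = [GE,R]$, by transposing both sides through \eqref{eq:hom_cotensor} and comparing with $G\fd_E$ using the triangle identity $G\varepsilon_E \cdot \eta_{GE} = 1_{GE}$ and the definition of $\delta^\DD$ from \bref{thm:transp_of_delta}. Once that triangle is established, applying the contravariant $(-)^* = \sL(-,R)$ and using naturality yields $H\varepsilon_E = \sL(\iota_E,R)$ modulo the standing identification, which is the assertion. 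I would organize this last part as: (i) expand $\xi^\DD_E = GH\varepsilon_E$ via \bref{def:xi_ts}; (ii) write out $\iota_E$ by its defining transposition and show $\iota_E$ agrees with $\varepsilon_E^*$ under the identification $[GE,R] \cong (FGE)^*$, using the triangle identities and \bref{thm:transp_of_delta}; (iii) apply $\sL(-,R)$ and conclude.
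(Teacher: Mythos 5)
Your proposal is correct and follows essentially the same route as the paper: part 1 reduces to \bref{thm:transp_of_delta} once $\Delta^\DD_X$ is unpacked as $G\fd_{FX}$, part 2 reads off the common composite $\varepsilon H \cdot FG\fd$ from \bref{thm:can_mnd_mor} as the transpose of $G\fd_E$, and part 3 hinges, exactly as in the paper, on showing $\iota_E = \varepsilon_E^*$ under the identification $[GE,R] = (FGE)^*$ via the triangle identity and the naturality of $\fd$, then applying $\sL(-,R)$. The only blemish is the garbled notation ``$\sL(\varepsilon_E^{**},\cdot)$'' for what should be $\sL(\varepsilon_E^*,R)$, but your subsequent argument uses the correct object.
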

\begin{proof}
\emptybox
1. In view of the definition of $\Delta^\DD = G\fd F :GF \rightarrow GHF$, the claim follows from \bref{thm:transp_of_delta}.

2. Since the transpose in question is the composite
$$FGE \xrightarrow{FG\fd_E} FGHE \xrightarrow{\varepsilon HE} HE\;,$$
this follows from \bref{thm:can_mnd_mor} and the definition of $\zeta = \zeta^{\LL \HH}$ \pbref{def:can_mnd_mor}.

3. Employing the cotensors $[X,R] = (FX)^{*}$, the transpose $\iota_E:E^* \rightarrow [GE,R]$ of $G\fd_E:GE \rightarrow GHE = \sL(E^*,R)$ under the `hom-cotensor' adjunction $[-,R] \nsststile{}{\delta^\DD} \sL(-,R):\sL^\op \rightarrow \uX$ is $\iota_E = \varepsilon_E^*:E^* \rightarrow [GE,R] = (FGE)^*$, since
$$
\xymatrix{
GE \ar@/_4ex/@{=}[ddr] \ar[dr]|{\eta_{GE}} \ar[rr]^{\delta^\DD_{GE}} & & *!/l6ex/+{GHFGE = \sL((FGE)^*,R)} \ar[d]^{GH\varepsilon_E \:=\: \sL(\varepsilon_E^*,R)} \\
& GFGE \ar[ur]|{G\fd FGE} \ar[d]|{G\varepsilon_E} & *!/l4ex/+{GHE = \sL(E^*,R)}\\
& GE \ar[ur]_{G\fd_E} & 
}
$$
commutes (using the fact that that $\DD = [G,F](\HH)$ under this choice of cotensors).  But by definition 
$$\xi^\DD_E = GH\varepsilon_E = \sL(\varepsilon^*_E,R)\;\;:\;\;GHFGE = \sL((FGE)^*,R) \rightarrow GHE = \sL(E^*,R)\;.$$ 
\end{proof}

\begin{ExaSub}\label{exa:desc_of_xi}
Let us assume for the sake of illustration that our base category $\X$ is cartesian closed, locally small, and well-pointed \pbref{par:well_pointed_ccc}, as in \bref{exa:monadic_exa_integ_notn}, and let us choose the cotensors $[X,R]$ in $\sL$ as in \bref{rem:standard_cotensors_in_talg}, \bref{exa:monadic_exa_integ_notn}, so that the elements of $[X,R]$ are maps $f:X \rightarrow GR$ in $\X$.  For each $E \in \sL$, one finds that the underlying map of $\iota_E:E^* \rightarrow [GE,R]$ \pbref{def:iota} is simply the inclusion $G_{E R}:\sL(E,R) \hookrightarrow \uX(GE,GR)$.  Hence, by \bref{thm:inv_charn_of_morphs_for_d} 3, we obtain the following:
\begin{enumerate}
\item $\xi^\DD_E:DGE = \sL([GE,R],R) \rightarrow \sL(E^*,R) = GHE$ sends each $\mu:[GE,R] \rightarrow R$ in $\sL$ to the restriction $E^{*} \rightarrow R$ of $\mu$, given by $\varphi \mapsto \mu(\varphi) = \int \varphi\;d\mu$.
\item Given a map $f:X \rightarrow GE$ in $\X$ and a natural distribution $\mu \in DX$, the composite
$$DX \xrightarrow{Df} DGE \xrightarrow{\xi^\DD_E} G(E^{**})$$
sends $\mu$ to the functional $E^* \rightarrow R$ given by $\varphi \mapsto \int \varphi \cdot f \;d\mu$.  We call this functional \textit{the Dunford-type integral} and denote it by $\dint{} f\;d\mu$.
\end{enumerate}
\end{ExaSub}

\section{Abstract distribution monads}

We continue to work with data as given in \bref{par:data_for_dcompl_and_pett}.

\begin{DefSub}\label{def:abs_distn_mnd}\emptybox
\begin{enumerate}
\item An \textit{abstract distribution monad} $(\MM,\Delta,\xi)$ consists of an $\X$-monad $\MM = (M,\delta,\kappa)$ on $\uX$ equipped with $\X$-natural transformations $\Delta, \xi$ yielding a commutative diagram of $\X$-monad morphisms
\begin{equation}\label{eqn:mnd_morphs_for_abs_distn_mnd}
\xymatrix {
\LL \ar[dr]_{(\Delta,1_{\uX})} \ar[rr]^{(\zeta,G)} &                               & \HH \\
                                                & \MM \ar[ur]_{(\xi,G)} & 
}
\end{equation}
in which $\HH$ is the double-dualization $\X$-monad on $\sL$ \pbref{par:data_for_dcompl_and_pett} and $(\zeta,G)$ is the canonical morphism \pbref{def:can_mnd_mor}.  
\item A \textit{morphism} of abstract distribution monads $\lambda:(\MM,\Delta,\xi) \rightarrow (\MM',\Delta',\xi')$ consists of a morphism of $\X$-monads $\lambda:\MM \rightarrow \MM'$ such that the diagram
$$
\xymatrix{
                                                             & \MM' \ar[dr]^{(\xi,G)}                         &     \\
\LL \ar[ur]^{(\Delta,1_{\uX})} \ar[dr]_{(\Delta,1_{\uX})}   &                                               & \HH \\
                                                             & \MM \ar[uu]|{(\lambda,1_{\uX})} \ar[ur]_{(\xi,G)} &
}
$$
commutes.
\end{enumerate}
We shall often write simply the name of the $\X$-monad $\MM$ to denote an abstract distribution monad; we then denote the associated morphisms by $\delta^\MM$, $\kappa^\MM$, $\Delta^\MM$, and $\xi^\MM$.  Abstract distribution monads clearly form a category with the given morphisms.
\end{DefSub}

\begin{ExaSub}\emptybox
\begin{enumerate}
\item The natural distribution monad $\DD$ and the accessible distribution monad $\tDD$ carry the structure of abstract distribution monads $(\DD,\Delta^\DD, \xi^\DD)$, $(\tDD,\Delta^{\tDD}, \xi^{\tDD})$ \pbref{def:can_mnd_mor_distns}.
\item By \bref{thm:can_mnd_mor_distns}, we have a morphism of abstract distribution monads $j^{\tDD}:\tDD \rightarrow \DD$.
\item The monad $\LL$ itself carries the structure of an abstract distribution monad, with $\Delta^\LL := 1_L$ and $\xi^\LL := \zeta$.  In fact, $\LL$ is an initial object in the category of abstract distribution monads \pbref{def:abs_distn_mnd}, since for any abstract distribution monad $\MM$, $\Delta^\MM$ is the unique morphism of abstract distribution monads $\LL \rightarrow \MM$.
\end{enumerate}
\end{ExaSub}

\begin{ExaSub}\label{exa:d_td_for_conv_ident_as_abs_distn_mnds}
In the example of convergence spaces \pbref{exa:smc_adj_with_l_fwc}, $j^{\tDD}:\tDD \rightarrow \DD$ is an isomorphism of abstract distribution monads, by \bref{exa:acc_nat_dist_coincide_for_conv}.  Further, by \bref{exa:acc_nat_dist_coincide_for_conv}, the $\X$-monads $\tDD$, $\DD$ may be taken to be identical, with $j^{\tDD}$ the identity, and in this case the abstract distribution monads $(\tDD,\Delta^{\tDD},\xi^{\tDD})$ and $(\DD,\Delta^{\DD},\xi^{\DD})$ are then also identical.
\end{ExaSub}

For the remainder of the section, we fix an abstract distribution monad $(\MM,\Delta,\xi)$, $\MM = (M,\delta,\kappa)$.

\begin{RemSub}\label{rem:dun_pett_ints_wrt_abs_distn}
For the sake of illustration, assume for the moment that $\X$ is cartesian closed, locally small, and well-pointed \pbref{par:well_pointed_ccc}.  Given a linear space $E \in \sL$, a map $f:X \rightarrow E$ in $\X$, and an element $\mu$ of $MX$, we call the element of $E^{**}$ obtained by applying
\begin{equation}\label{eqn:comp_for_dun_integ}MX \xrightarrow{Mf} MGE \xrightarrow{\xi^\MM_E} GHE = G(E^{**})\end{equation}
to $\mu$ \textit{the Dunford-type integral} of $f$ with respect to $\mu$, and we denote this element by $\dint{} f\;d\mu$.  Further, if $E$ is separated, then given an element $e \in E$, we say that $e$ is a \textit{Pettis-type integral} of $f$ with respect to $\mu$ if $\fd_E(e) = \dint{} f\;d\mu$.  Since $\fd_E:E \rightarrowtail E^{**}$ is injective, the Pettis-type integral is unique if it exists, and we denote it by $\pint{} f\;d\mu$.
\end{RemSub}

\begin{ExaSub}\label{exa:dunf_int_wrt_acc_dist_lies_in_func_compl}
Assume that $\X$ is cartesian closed, locally small, and well-pointed, and consider the case where $\MM = \tDD$.  By its definition, the morphism $\xi^{\tDD}_E:\tD GE \rightarrow GHE = G(E^{**})$ factors through the embedding of linear spaces $i_E:\tH E \rightarrowtail E^{**}$.  Taking the latter embedding to be an inclusion, as in \bref{rem:fcompl_subsp_ddual}, we find that for each map $f:X \rightarrow E$ in $\X$ the associated composite \eqref{eqn:comp_for_dun_integ} factors through the inclusion $\tH E \hookrightarrow E^{**}$, so that each Dunford-type integral $\dint{} f\;d\mu$ $(\mu \in \tD X)$ lies in the functional completion $\tH E$ of $E$.
\end{ExaSub}

\begin{ExaSub}\label{exa:dunf_int_wrt_nat_dist_on_convsp_lies_in_func_compl}
Consider the example of convergence spaces \pbref{exa:smc_adj_with_l_fwc}, in which $j^{\tDD}:\tDD \rightarrow \DD$ is an isomorphism of abstract distribution monads \pbref{exa:d_td_for_conv_ident_as_abs_distn_mnds}.  For any convergence vector space $E \in \sL$, any continuous map $f:X \rightarrow E$, and any natural distribution $\mu \in DX$, we deduce by \bref{exa:dunf_int_wrt_acc_dist_lies_in_func_compl} that the Dunford-type integral $\dint{} f\;d\mu$ lies in the functional completion $\tH E$ of $E$.
\end{ExaSub}

\begin{ParSub}\label{par:underlying_linear_space}
Each $\MM$-algebra $(X,a)$ carries the structure of a linear space (i.e. an object of $\sL = \uX^\LL$, \bref{def:first_synth_terminology}), via the $\X$-functor
$$\uX^\Delta:\uX^\MM \rightarrow \uX^\LL = \sL$$
associated to $\Delta:\LL \rightarrow \MM$ via \bref{prop:enr_func_induced_by_mnd_mor}.  We call $\uX^\Delta(X,a)$ \textit{the underlying linear space} of $(X,a)$.  Since $\uX^\Delta$ commutes with the forgetful $\X$-functors, the underlying space $G\uX^\Delta(X,a)$ of $\uX^\Delta(X,a)$ is simply $X$.
\end{ParSub}

\begin{ExaSub}\label{exa:und_linear_spaces_of_free_d_and_td_algs}\emptybox
\begin{enumerate}
\item The underlying linear space of each free $\DD$-algebra $(DX,\kappa^\DD_X)$ $(X \in \X)$ is the linear space of natural distributions $\uL([X,R],R) = (FX)^{**} = HFX$ on $X$.
\item The underlying linear space of each free $\tDD$-algebra $(\tD X,\kappa^{\tDD}_X)$ $(X \in \X)$ is the linear space of accessible distributions $\tH FX$ on $X$.
\begin{proof}
Recalling that $\DD = [F,G](\HH)$ and $\tDD = [F,G](\tHH)$, each of 1 and 2 is obtained by an invocation of \bref{thm:t_alg_obtained_from_free_gsf_alg_on_a_via_delta_is_sfa}.
\end{proof}
\end{enumerate}
\end{ExaSub}

\begin{ParSub}\label{par:underlying_malg_of_halg}
The diagram of $\X$-monad morphisms \eqref{eqn:mnd_morphs_for_abs_distn_mnd} determines a commutative diagram of $\X$-functors
\begin{equation}\label{eqn:xfunctors_for_abs_distn_mnd}
\xymatrix{
*!/r3ex/+{\sL = \uX^\LL} &                                             & {\sL^\HH} \ar[ll]_{G^\zeta} \ar[dl]^{G^\xi}\\
                               & \uX^\MM \ar[ul]^{\uX^\Delta} &
}
\end{equation}
via \bref{prop:enr_func_induced_by_mnd_mor}, and by \bref{thm:vfunc_ind_by_can_mor_is_forgetful}, $G^\zeta$ is simply the forgetful $\X$-functor.  In particular, then, every $\HH$-algebra $(E,b)$ determines an $\MM$-algebra, via $G^\xi$, whose underlying linear space is simply $E$.
\end{ParSub}

\begin{ExaSub}\label{exa:r_malg}
The unit object $R$ of $\sL$ is reflexive \pbref{exa:unit_reflexive}, and it follows readily that $(R,\fd_R^{-1}:HR \xrightarrow{\sim} R)$ is an $\HH$-algebra, whence $R$ is the underlying linear space of an $\MM$-algebra with structure morphism
$$MGR \xrightarrow{\xi_R} GHR \xrightarrow{G\fd_R^{-1}} GR\;.$$
\end{ExaSub}

\begin{ExaSub}\label{exa:r_dalg}
Suppose $\X$ is cartesian closed, locally small, and well-pointed, and consider the example of $\MM = \DD$.  One deduces via \bref{exa:desc_of_xi} and \bref{exa:unit_reflexive} that the $\DD$-algebra structure $a:DGR = \sL([GR,R],R) \rightarrow GR$ associated to $R$ via \bref{exa:r_malg} sends each natural distribution $\mu:[GR,R] \rightarrow R$ on the underlying space $GR$ of $R$ to the result of evaluating $\mu$ at the identity map $1_{GR}:R \rightarrow R$; i.e.
$$a(\mu) = \mu(1_{GR}) = \int 1_{GR} \;d\mu\;.$$
\end{ExaSub}

\begin{PropSub}\label{thm:malg_struct_on_dx}
Given a space $X \in \X$, the space of natural distributions $DX$ carries the structure of an $\MM$-algebra, whose underlying linear space is the linear space of natural distributions $(FX)^{**} = HFX$.
\end{PropSub}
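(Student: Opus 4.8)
The plan is to realize the sought $\MM$-algebra structure on $DX$ as the image, under the $\X$-functor $G^\xi$ of \bref{par:underlying_malg_of_halg}, of the free $\HH$-algebra on the linear space $FX$, and then to read off the underlying linear space from the commutative triangle \eqref{eqn:xfunctors_for_abs_distn_mnd}.

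First I would make the identification $DX = GHFX$ explicit. Choosing the cotensors in $G_*\uL$ as $[X,R] = \uL(FX,R) = (FX)^*$ in the manner of \bref{thm:assoc_enr_adj}, one has $DX = \bD X = (G_*\uL)([X,R],R) = G\uL((FX)^*,R) = G((FX)^{**}) = GHFX$; thus $DX$ is precisely the underlying space of the linear space $HFX = (FX)^{**}$, which by the remark following \bref{prop:dist_via_dd} is the linear space of natural distributions on $X$. By the discussion preceding \bref{thm:inv_charn_of_morphs_for_d}, any other choice of cotensors yields an isomorphic $\DD$, so fixing this one is harmless.

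Next I would invoke \bref{par:underlying_malg_of_halg}: the commutative diagram \eqref{eqn:mnd_morphs_for_abs_distn_mnd} of $\X$-monad morphisms induces, via \bref{prop:enr_func_induced_by_mnd_mor}, the commutative triangle \eqref{eqn:xfunctors_for_abs_distn_mnd}, in which $G^\xi:\sL^\HH \rightarrow \uX^\MM$ turns each $\HH$-algebra into an $\MM$-algebra with the same underlying linear space, since $\uX^\Delta \circ G^\xi = G^\zeta$ and $G^\zeta$ is the Eilenberg-Moore forgetful $\X$-functor for $\HH$ by \bref{thm:vfunc_ind_by_can_mor_is_forgetful}. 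Applying $G^\xi$ to the free $\HH$-algebra $(HFX,\fk_{FX})$ on $FX$, with $\fk$ the multiplication of the double-dualization $\X$-monad $\HH$ \bref{par:data_for_dcompl_and_pett}, then yields, by \eqref{eqn:action_of_mndmor_on_algs}, an $\MM$-algebra with carrier $GHFX = DX$ and structure map $MDX = MGHFX \xrightarrow{\xi_{HFX}} GHHFX \xrightarrow{G\fk_{FX}} GHFX = DX$, whose underlying linear space is $G^\zeta(HFX,\fk_{FX}) = HFX = (FX)^{**}$. This is exactly the assertion.

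I do not expect any real obstacle here; the proposition is an assembly of the identification $DX = GHFX$ with \bref{par:underlying_malg_of_halg}. The only point requiring care is the bookkeeping with underlying-space identifications, namely confirming that the carrier of the $\MM$-algebra delivered by $G^\xi$ is literally the object the paper denotes $DX$ for the chosen cotensors rather than merely isomorphic to it, together with the observation that the whole construction is independent, up to isomorphism of $\MM$-algebras, of that choice.
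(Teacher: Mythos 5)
Your proposal is correct and follows exactly the paper's own argument: the paper's proof is precisely to apply $G^\xi$ (via \bref{par:underlying_malg_of_halg}) to the free $\HH$-algebra $(HFX,\fk_{FX})$ and read off that the underlying linear space is $HFX$ and the carrier is $GHFX = DX$. Your additional bookkeeping about the choice of cotensors and the explicit structure map is consistent with the conventions the paper fixes elsewhere and does not change the argument.
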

\begin{proof}
By \bref{par:underlying_malg_of_halg}, the free $\HH$-algebra $(HFX,\fk_{FX})$ on $FX$ determines an $\MM$-algebra \linebreak[4] $G^\xi(HFX,\fk_{FX})$ whose underlying linear space is again $HFX$, and whose underlying $\X$-object is $GHFX = DX$.
\end{proof}

\begin{ExaSub}\label{exa:malg_str_on_dx}\emptybox
\begin{enumerate}
\item Since $\DD$ itself is an abstract distribution monad, \bref{thm:malg_struct_on_dx} endows $DX$ with the structure of a $\DD$-algebra, and, as one might hope, this $\DD$-algebra is simply the free $\DD$-algebra on $X$.  Indeed, $G^{\xi^\DD}(HFX,\fk_{FX}) = (DX,\kappa^\DD_X)$.
\begin{proof}
Since $\DD = [F,G](\HH)$, this follows from \bref{thm:free_gsf_alg_via_xi}.
\end{proof}
\item Since $\tDD$ is an abstract distribution monad, \bref{thm:malg_struct_on_dx} endows $DX$ with the structure of a $\tDD$-algebra.
\end{enumerate}
\end{ExaSub}

\begin{ThmSub}\label{thm:mor_from_absdistnmnd_to_d}
For each abstract distribution monad $(\MM,\Delta^\MM,\xi^\MM)$ there is a unique morphism of abstract distribution monads \pbref{def:abs_distn_mnd}
$$j = j^\MM:\MM \rightarrow \DD$$
to the natural distribution monad $\DD$.
\end{ThmSub}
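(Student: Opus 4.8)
The plan is to exhibit $j^\MM$ by an explicit formula and then check the axioms, nearly all of which are formal consequences of the monad-morphism conditions already recorded. Since $\DD=[F,G](\HH)$ by \bref{prop:dist_via_dd}, the underlying $\X$-endofunctor of $\DD$ is $D=GHF$, and I would define
$$j^\MM := \left(M \xrightarrow{\;M\eta\;} MGF \xrightarrow{\;\xi^\MM F\;} GHF = D\right),$$
a whiskered composite of $\X$-natural transformations, hence itself $\X$-natural. Informally this is the mate of the monad morphism $(\xi^\MM,G):\MM\to\HH$ across $F\nsststile{\varepsilon}{\eta}G$; as a sanity check, when $\MM=\DD$ and $\xi^\MM=\xi^\DD=GH\varepsilon$ the triangle identity collapses this formula to $1_\DD$, the expected value in the terminal case.

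I would then verify, in order: (i) $j^\MM$ is a morphism of $\X$-monads $\MM\to\DD$, and (ii) $j^\MM$ respects the abstract-distribution data of \bref{def:abs_distn_mnd}. For the unit law in (i), combine $\X$-naturality of $\delta^\MM$ at $\eta_X$, the unit law of the general monad morphism $(\xi^\MM,G)$ \pbref{par:general_mnd_mor} (i.e. $\xi^\MM_E\cdot\delta^\MM_{GE}=G\fd_E$), and the identity $\delta^\DD_X=G\fd_{FX}\cdot\eta_X$ from \bref{thm:transp_of_delta}, to get $j^\MM_X\cdot\delta^\MM_X=G\fd_{FX}\cdot\eta_X=\delta^\DD_X$. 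For (ii), the condition $\xi^\DD\cdot(j^\MM\star G)=\xi^\MM$ follows from $\xi^\DD=GH\varepsilon$ \pbref{def:can_mnd_mor_distns}, $\X$-naturality of $\xi^\MM$ at $\varepsilon_E$, and the triangle identity $G\varepsilon\cdot\eta G=1_G$; and $\Delta^\DD=j^\MM\cdot\Delta^\MM$ follows from $\X$-naturality of $\Delta^\MM$ at $\eta_X$, the abstract-distribution axiom \eqref{eqn:mnd_morphs_for_abs_distn_mnd} ($\xi^\MM\cdot(\Delta^\MM\star G)=\zeta$), the formula $\zeta_E=G(H\varepsilon_E\cdot\fd_{FGE})$ extracted from \bref{thm:can_mnd_mor} and \bref{thm:inv_charn_of_morphs_for_d}, and $\Delta^\DD=G\fd F$ \pbref{def:can_mnd_mor_distns} (the last step using naturality of $\fd$ and a triangle identity, exactly as in the computation $\zeta_{FX}\cdot L\eta_X=G\fd_{FX}$).

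For uniqueness I would show the formula is forced: given any morphism of abstract distribution monads $j':\MM\to\DD$, the second condition of \bref{def:abs_distn_mnd} gives $\xi^\DD_E\cdot j'_{GE}=\xi^\MM_E$ for all $E\in\sL$; then for arbitrary $X\in\uX$, $\X$-naturality of $j'$ at $\eta_X$ gives $D\eta_X\cdot j'_X=j'_{GFX}\cdot M\eta_X$, and postcomposing with $\xi^\DD_{FX}=GH\varepsilon_{FX}$ and using $GH\varepsilon_{FX}\cdot D\eta_X=GH(\varepsilon_{FX}\cdot F\eta_X)=1_{DX}$ yields $j'_X=\xi^\MM_{FX}\cdot M\eta_X=j^\MM_X$. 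The one genuinely non-formal step is the multiplication law for $j^\MM$ in (i): here one must match the multiplication $\kappa^\DD=G\fk F\cdot GH\varepsilon HF$ of $\DD=[F,G](\HH)$ — whose shape comes from the monoidal structure on $[F,G]$ \pbref{thm:mon_func_detd_by_adj} — against the composite assembled from the associativity law of $(\xi^\MM,G)$ and the triangle identities; this is the step I expect to require the most care, everything else being routine bookkeeping.
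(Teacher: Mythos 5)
Your construction of $j^\MM$ is exactly the paper's ($j^\MM = \xi^\MM F \cdot M\eta$), and your unit-law and $\xi$-compatibility computations match the paper's in substance; but your verification route for the remaining axioms is genuinely different. Where you propose to establish the multiplication law by chasing $\kappa^\DD = G\fk F\cdot GH\varepsilon HF$ directly against the associativity law of $(\xi^\MM,G)$, the paper instead invokes \bref{thm:mnd_mor_as_homom}: it reduces the monad-morphism axioms to the statement that $j_X$ is an $\MM$-homomorphism $(MX,\kappa^\MM_X)\rightarrow G^{\xi^\MM}(HFX,\fk_{FX})$ extending $\delta^\DD_X$, and then simply observes that $j_X$ factors as a composite of two $\MM$-homomorphisms ($M\eta_X$ between free algebras, and $\xi^\MM_{FX}$ again by \bref{thm:mnd_mor_as_homom}). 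Likewise for $\Delta^\DD = j\cdot\Delta^\MM$ and for uniqueness the paper works at the level of induced functors on free $\DD$-algebras via \bref{thm:equality_of_mnd_morphs_via_free_algs}, whereas you compute directly. Both routes are valid: the one step you defer (the multiplication law) does go through with precisely the ingredients you name --- the associativity law of $(\xi^\MM,G)$ at $FX$ gives $\xi^\MM_{FX}\cdot\kappa^\MM_{GFX}=G\fk_{FX}\cdot\xi^\MM_{HFX}\cdot M\xi^\MM_{FX}$, and naturality of $\xi^\MM$ at $\varepsilon_{HFX}$ plus a triangle identity collapses $GH\varepsilon_{HFX}\cdot\xi^\MM_{FGHFX}\cdot M\eta_{GHFX}$ to $\xi^\MM_{HFX}$ --- but you should be aware that the paper's appeal to \bref{thm:mnd_mor_as_homom} exists precisely to avoid writing out that chase, so you are trading a prepared lemma for bookkeeping. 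Your uniqueness argument (postcompose the naturality square of $j'$ at $\eta_X$ with $\xi^\DD_{FX}$ and use $GH(\varepsilon_{FX}\cdot F\eta_X)=1_{DX}$) is in fact more elementary and more self-contained than the paper's free-algebra argument, and is a nice simplification.
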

\begin{proof}
Define $j:M \rightarrow D$ to be the composite $\X$-natural transformation
$$j := (M \xrightarrow{M\eta} MGF \xrightarrow{{\xi^\MM} F} GHF = D )\;.$$
The follow diagram of morphisms of $\X$-endofunctors \pbref{par:general_mnd_mor} commutes
$$
\xymatrix{
D \ar[dr]^{(\xi^{\DD},G)}                       &   \\
                                                & H \\
M \ar[uu]|{(j,1_{\uX})} \ar[ur]|{(\xi^{\MM},G)} &
}
$$
since the following diagram commutes
$$
\xymatrix{
MG \ar@/_6ex/@{=}[dd] \ar[d]|{M\eta G} \ar[dr]^{jG} & \\
MGFG \ar[d]|{MG\varepsilon} \ar[r]_{\xi^\MM FG} & *!/l5ex/+{DG = GHFG} \ar[d]|{\xi^\DD\:=\:GH\varepsilon} \\
MG \ar[r]^{\xi^\MM} & GH 
}
$$
by a triangular inequality for $F \nsststile{\varepsilon}{\eta} G$ and the naturality of $\xi^\MM$.  Hence the following diagram of functors (obtained via \bref{par:alg_endofunc}) commutes,
$$
\xymatrix{
\X^D \ar[dd]_{\X^j} &\\
     & \sL^H \ar[ul]_{G^{\xi^\DD}} \ar[dl]^{G^{{\xi^\MM}}} \\
\X^M  & 
}
$$
so in particular we note that
\begin{equation}\label{eqn:coin_of_algs}G^{\xi^\MM}(HFX,\fk_{FX}) = \X^j G^{\xi^\DD}(HFX,\fk_{FX}) = \X^j(DX,\kappa^\DD_X)\end{equation}
since $G^{\xi^\DD}(HFX,\fk_{FX}) = (DX,\kappa^\DD_X)$ by \bref{exa:malg_str_on_dx}.

Next observe that for each $X \in \X$, the diagram
$$
\xymatrix{
MX \ar[rr]^{j_X} \ar[dr]|{M\eta_X} & & *!/l3ex/+{GHFX = DX} \\
& MGFX \ar[ur]|{{\xi^\MM} FX} & \\
X \ar@/_10ex/[uurr]|{\delta^\DD_X} \ar[uu]|{\delta^\MM_X} \ar[r]^{\eta_X} & GFX \ar[u]|{\delta^\MM GFX} \ar@/_2ex/[uur]|{G\fd FX} & 
}
$$
commutes, using the definition of $\delta^\DD$ and the assumption that $({\xi^\MM},G)$ is a morphism of $\X$-monads; hence $j_X$ extends $\delta^\DD_X$ along $\delta^\MM_X$.  In order to show that $j$ is a morphism of $\X$-monads, it therefore suffices by \bref{thm:mnd_mor_as_homom} to show that $j_X$ serves as an $M$-homomorphism
$$j_X:(MX,\kappa^\MM_X) \rightarrow \X^j(DX,\kappa^\DD_X)\;,$$
but the $M$-algebra $\X^j(DX,\kappa^\DD_X)$ is identical to the $\MM$-algebra $G^{\xi^\MM}(HFX,\fk_{FX})$ by \eqref{eqn:coin_of_algs}.  Moreover, the factors of $j_X$ serve as $\MM$-homomorphisms
$$(MX,\kappa^\MM_X) \xrightarrow{M\eta_X} (MGFX,\kappa^\MM_{GFX}) \xrightarrow{{\xi^\MM}_{FX}} G^{\xi^\MM}(HFX,\fk_{FX})\;,$$
(the latter by \bref{thm:mnd_mor_as_homom}, since $({\xi^\MM},G)$ is a morphism of $\X$-monads), so $j_X$ is an $\MM$-homomorphism.

Now we have a diagram
\begin{equation}\label{eqn:abs_distn_mnd_diag_for_j}
\xymatrix{
                                                             & \DD \ar[dr]^{(\xi^{\DD},G)}                         &     \\
\LL \ar[ur]^{(\Delta^{\DD},1_{\uX})} \ar[dr]_{(\Delta^{\MM},1_{\uX})}   &                                               & \HH \\
                                                             & \MM \ar[uu]|{(j,1_{\uX})} \ar[ur]_{(\xi^{\MM},G)} &
}
\end{equation}
consisting of $\X$-monad morphisms, and in order to show that $j$ is a morphism of abstract distribution monads it remains only to prove that the leftmost triangle commutes.  To this end we shall employ \bref{thm:equality_of_mnd_morphs_via_free_algs}.  For each free $\DD$-algebra $(DX,\kappa^\DD_X)$, we compute as follows, using \eqref{eqn:coin_of_algs}, \bref{exa:malg_str_on_dx} 1, and the commutativity of \eqref{eqn:xfunctors_for_abs_distn_mnd} for each of the abstract distribution monads $\MM,\DD$:
$$
\begin{array}{lllll}
\uX^{\Delta^\DD}(DX,\kappa^\DD_X) & = & \uX^{\Delta^\DD}G^{\xi^\DD}(HFX,\fk_{FX}) & = & G^\zeta(HFX,\fk_{FX}) \\
& = & \uX^{{\Delta^\MM}}G^{{\xi^\MM}}(HFX,\fk_{FX}) & = & \uX^{{\Delta^\MM}}\uX^j(DX,\kappa^\DD_X)\;.
\end{array}
$$

For uniqueness, observe that any morphism of abstract distribution monads $j:\MM \rightarrow \DD$ determines a commutative diagram \eqref{eqn:abs_distn_mnd_diag_for_j}, so $\uX^j:\uX^\DD \rightarrow \uX^\MM$ sends each free $\DD$-algebra $(DX,\kappa^\DD_X) = G^{\xi^\DD}(HFX,\fk_{FX})$ \pbref{exa:malg_str_on_dx} to
$\uX^j(DX,\kappa^\DD_X) = \uX^j G^{\xi^\DD}(HFX,\fk_{FX}) = \uX^{\xi^\MM}(HFX,\fk_{FX})\;,$
which is independent of $j$, and by \bref{thm:equality_of_mnd_morphs_via_free_algs}, $j$ is thus uniquely determined.
\end{proof}

\begin{CorSub}
The natural distribution monad $\DD$ is a terminal object in the category of abstract distribution monads \pbref{def:abs_distn_mnd}.
\end{CorSub}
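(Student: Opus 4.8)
The plan is to observe that this corollary is an immediate consequence of the theorem \bref{thm:mor_from_absdistnmnd_to_d} just established. Recall that in any category, an object $T$ is terminal precisely when for every object $X$ there is exactly one morphism $X \rightarrow T$. Here the ambient category is the category of abstract distribution monads \pbref{def:abs_distn_mnd}, and \bref{thm:mor_from_absdistnmnd_to_d} asserts exactly that for each abstract distribution monad $(\MM,\Delta^\MM,\xi^\MM)$ there is a \emph{unique} morphism of abstract distribution monads $j^\MM:\MM \rightarrow \DD$. So first I would simply quote this universal property and conclude that $\DD$ satisfies the defining condition of a terminal object.

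For completeness I would add the (trivial) remark that the unique morphism $j^\DD:\DD \rightarrow \DD$ guaranteed by \bref{thm:mor_from_absdistnmnd_to_d} must be the identity morphism $1_\DD$: since $1_\DD$ is itself a morphism of abstract distribution monads (the diagram in \bref{def:abs_distn_mnd} 2 commutes trivially with $\lambda = 1_\DD$), the uniqueness clause forces $j^\DD = 1_\DD$. No further computation is needed, and there is essentially no obstacle here — the entire content of the corollary has already been discharged in the proof of \bref{thm:mor_from_absdistnmnd_to_d}, where both the existence and the uniqueness of $j^\MM$ were verified (existence via \bref{thm:mnd_mor_as_homom} and the compatibility diagram, uniqueness via \bref{thm:equality_of_mnd_morphs_via_free_algs}).

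\begin{proof}
By \bref{thm:mor_from_absdistnmnd_to_d}, for every abstract distribution monad $\MM$ there is a unique morphism of abstract distribution monads $j^\MM:\MM \rightarrow \DD$. This is precisely the statement that $\DD$ is a terminal object in the category of abstract distribution monads \pbref{def:abs_distn_mnd}. In particular, applying this to $\MM = \DD$, and noting that the identity morphism $1_\DD$ is a morphism of abstract distribution monads, we have $j^\DD = 1_\DD$.
\end{proof}
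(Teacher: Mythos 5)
Your proof is correct and is exactly the intended argument: the paper states this corollary without proof precisely because it is an immediate restatement of Theorem \bref{thm:mor_from_absdistnmnd_to_d}, which supplies the existence and uniqueness of a morphism $\MM \rightarrow \DD$ for every abstract distribution monad $\MM$. Your additional observation that $j^{\DD} = 1_{\DD}$ is a harmless (and correct) bonus.
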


\begin{RemSub}
For the sake of illustration, suppose for the moment that $\X$ is cartesian closed, locally small, and well-pointed.  Let $\MM$ be an abstract distribution monad, $E \in \sL$ a linear space, $f:X \rightarrow E$ a map in $\X$, and $\mu$ an element of $MX$.  Then, by the commutativity of \eqref{eqn:abs_distn_mnd_diag_for_j}, the Dunford-type integral $\dint{} f\;d\mu \in E^{**}$ \pbref{rem:dun_pett_ints_wrt_abs_distn} is equally the Dunford-type integral with respect to the associated natural distribution $j^\MM_X(\mu) \in DX$; i.e. $\dint{} f\;d\mu = \dint{} f\;dj^\MM(\mu)$.  Consequently, for a separated linear space $E$, the existence of the Pettis-type integral $\pint{} f\;d\mu$ is equivalent to that of $\pint{} f\;dj^\MM(\mu)$, and these Pettis-type integrals are identical if they exist.
\end{RemSub}

\section{An algebraic formalism for vector integration with respect to abstract distributions}

Again working with data as in \bref{par:data_for_dcompl_and_pett}, in the present section we shall assume for the sake of illustration that $\X$ is cartesian closed, locally small, and well-pointed.  Let $\MM$ be an abstract distribution monad.  We shall forgo the notational distinction between a linear space $E \in \sL$ and its underlying space $GE \in \X$.

\begin{DefSub}\label{def:scalar_integ_wrt_abs_distn}
Given a space $X \in \X$, the morphism $j^\MM_X:MX \rightarrow DX = \sL([X,R],R)$ associates to each $\mu \in MX$ a natural distribution $j^\MM_X(\mu):[X,R] \rightarrow R$, so that for each map $f:X \rightarrow R$ in $\X$ we can define the \textit{integral of $f$ with respect to $\mu$} as
$$\int f \;d\mu := (j^\MM_X(\mu))(f)\;,$$
which is equally the integral $\int f\;dj^\MM_X(\mu)$ with respect to the natural distribution $j^\MM_X(\mu)$ as defined in \bref{exa:monadic_exa_integ_notn}.  Hence, in the case that $\DD = \MM$ (so that $j^\MM = 1_\DD$), this definition specializes to \eqref{eq:scalar_integ_nat_distn}.
\end{DefSub}

\begin{ParSub}\label{par:rec_r_malg}
Recall that $R$ is the underlying linear space of an $\MM$-algebra, obtained by applying the $\X$-functor $G^{\xi^\MM}:\sL^\HH \rightarrow \uX^\MM$ to the $\HH$-algebra $(R,\fd_R^{-1})$ \pbref{exa:r_malg}.
\end{ParSub}

\begin{PropSub}\label{thm:scalar_integ_via_str_map}
Given $f:X \rightarrow R$ in $\X$ and $\mu \in MX$, the integral $\int f\;d\mu$ may be obtained by applying the composite $MX \xrightarrow{Mf} MR \xrightarrow{a} R$
to $\mu$, where $a$ is the $\MM$-algebra structure carried by $R$ \pbref{par:rec_r_malg}; i.e.
$$\int f\;d\mu = (a \cdot Mf)(\mu)\;.$$
\end{PropSub}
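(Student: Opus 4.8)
The plan is to unwind both sides down to their definitions and match them. First I would recall the relevant definitions. The integral $\int f\;d\mu$ is, by \bref{def:scalar_integ_wrt_abs_distn}, the value $(j^\MM_X(\mu))(f)$, which by the remark following \bref{thm:mor_from_absdistnmnd_to_d} equals $(j^\DD_X(\mu))(f)$ computed in $DX = \sL([X,R],R)$ via the natural-distribution integral of \bref{exa:monadic_exa_integ_notn}. On the other side, the $\MM$-algebra structure $a$ on $R$ is, by \bref{exa:r_malg}, the composite $MGR \xrightarrow{\xi^\MM_R} GHR \xrightarrow{G\fd_R^{-1}} GR$, i.e. it is obtained by applying $G^{\xi^\MM}$ to the $\HH$-algebra $(R,\fd_R^{-1})$. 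Combining this with the description of $j^\MM$ in the proof of \bref{thm:mor_from_absdistnmnd_to_d}, where $j^\MM := \xi^\MM F \cdot M\eta$, reduces the claim to an identity among maps into $R$.

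The key step is then to show that the composite $MX \xrightarrow{Mf} MR \xrightarrow{a} R$ coincides with $\mu \mapsto (j^\MM_X(\mu))(f)$. I would do this by the following chain: first, by naturality of the abstract-distribution-monad structure and functoriality of $M$, reduce to working with the free $\HH$-algebra on $FX$ and the component $j^\DD_X$, using the coincidence of algebras \eqref{eqn:coin_of_algs} from the proof of \bref{thm:mor_from_absdistnmnd_to_d}; second, observe that evaluation of a natural distribution $\nu \in DX = \sL([X,R],R)$ at a map $f:X \rightarrow R$ can be rewritten via the unit $\delta^\DD$ and the reflexivity isomorphism $\fd_R$ (\bref{exa:unit_reflexive}) — concretely, $\nu(f) = \fd_R^{-1}\bigl((DF)(\nu)\bigr)$ where one transports $\nu$ along $Df:DX \rightarrow DR$ and then applies the $\DD$-algebra structure $a_R$ on $R$ described in \bref{exa:r_dalg}, which sends $\nu' \mapsto \nu'(1_R) = \int 1_R\;d\nu'$; third, use the commutativity of \eqref{eqn:abs_distn_mnd_diag_for_j} together with the fact that $j^\MM$ is a morphism of $\X$-monads (hence commutes with the algebra structures, by \bref{prop:enr_func_induced_by_mnd_mor} and \bref{thm:mnd_mor_as_homom}) to pull this description back from $\DD$ to $\MM$, obtaining $(a \cdot Mf)(\mu) = \fd_R^{-1}\bigl(\xi^\MM_R((Mf)(\mu))\bigr)$, which is precisely the composite defining $a$ applied after $Mf$.

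I expect the main obstacle to be bookkeeping: carefully tracking how the $\MM$-algebra structure on $R$ (which is defined abstractly through $G^{\xi^\MM}$ and $\fd_R^{-1}$) matches, under the map $j^\MM$ and the identifications of \bref{exa:desc_of_xi}, the concrete evaluation $\nu \mapsto \nu(f)$ of a functional at a map. The cleanest route is probably to avoid elementwise manipulations where possible and instead argue that both sides arise by applying $G^{\xi^\MM}$ to the $\HH$-algebra $(R,\fd_R^{-1})$ and then precomposing with $Mf$: the left side $a \cdot Mf$ is this by definition, and the right side is this because, for the terminal abstract distribution monad $\DD$, evaluation at $f$ is exactly the $\DD$-algebra structure on $R$ applied after $Df$ (\bref{exa:r_dalg}, \bref{exa:malg_str_on_dx}), and $j^\MM$ is a morphism of abstract distribution monads intertwining these structures. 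So the core of the argument is the naturality square for $j^\MM$ relative to $f$ together with \eqref{eqn:coin_of_algs}; the rest is unpacking definitions, and I would not grind through those routine diagram chases in detail.
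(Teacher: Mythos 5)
Your plan is correct and follows essentially the same route as the paper's proof: factor the $\MM$-algebra structure $a$ on $R$ as $b \cdot j^\MM_R$ using that $j^\MM$ is a morphism of abstract distribution monads, apply the naturality square $j^\MM_R \cdot Mf = Df \cdot j^\MM_X$, and finish with the description of the $\DD$-algebra structure $b$ as evaluation at $1_R$ from \bref{exa:r_dalg} together with the fact that $Df$ is precomposition with $f$. The extra scaffolding you mention (e.g.\ \eqref{eqn:coin_of_algs}, the explicit form of $\fd_R^{-1}$) is not needed beyond what the paper itself uses, but it does no harm.
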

\begin{proof}
Since $j^\MM:\MM \rightarrow \DD$ is a morphism of abstract distribution monads, the diagram of $\X$-functors
$$
\xymatrix{
\uX^\DD \ar[dd]_{\uX^{j^\MM}} & \\
                & \sL^\HH \ar[ul]_{G^{\xi^\DD}} \ar[dl]^{G^{\xi^\MM}}\\
\uX^\MM &                
}
$$
commutes.  Hence the $\MM$-algebra $(R,a) := G^{\xi^\MM}(R,\fd_R^{-1})$ may be obtained from the $\DD$-algebra $(R,b) := G^{\xi^\DD}(R,\fd_R^{-1})$ by applying $\uX^{j^\MM}$, so that the structure morphism $a$ is the composite
$$MR \xrightarrow{j^\MM_R} DR \xrightarrow{b} R\;.$$
Further, by the naturality of $j^\MM$, the diagram
$$
\xymatrix{
MX \ar[r]^{Mf} \ar[d]_{j^\MM_X} & MR \ar[dr]^a \ar[d]_{j^\MM_R} & \\
DX \ar[r]_{Df} & DR \ar[r]_b & R
}
$$
commutes.  Hence, using the characterization of $b$ given in \bref{exa:r_dalg} and the fact that $Df = \sL([f,R],R):\sL([X,R],R) \rightarrow \sL([R,R],R)$, we compute that
$$
\begin{array}{lllllll}
(a \cdot Mf)(\mu) & = & (b \cdot Df \cdot j^\MM_X)(\mu) & = & ((Df \cdot j^\MM_X)(\mu))(1_R) & & \\
                  & = & (j^\MM_X(\mu))(1_R \cdot f)   & = & (j^\MM_X(\mu))(f) & = &\int f \;d\mu\;,
\end{array}
$$
where $1_R:R \rightarrow R$ is the identity morphism in $\X$.
\end{proof}

Proposition \bref{thm:scalar_integ_via_str_map} motivates the following definition, which generalizes \bref{def:scalar_integ_wrt_abs_distn} and hence also \eqref{eq:scalar_integ_nat_distn}.

\begin{DefSub}\label{def:vint_in_malgs}
Given an $\MM$-algebra $(Z,a)$, a map $f:X \rightarrow Z$, and an element $\mu$ of $MX$, we define \textit{the integral of $f$ with respect to $\mu$} as the element of $Z$ obtained by applying the composite $MX \xrightarrow{Mf} MZ \xrightarrow{a} Z$ to $\mu$, so that
$$\int_x f(x)\;d\mu := \int f\;d\mu := (a \cdot Mf)(\mu)\;.$$
\end{DefSub}

\begin{RemSub}
We shall see in \bref{thm:pett_spaces_are_malgs} that any linear space $E$ that is \textit{$\MM$-Pettis} (i.e. has enough Pettis-type integrals, \bref{def:pett_obs}, \bref{def:charn_pett_malgs_via_pett_integs}) carries the structure of an $\MM$-algebra, in such a way that the integral defined in \bref{def:vint_in_malgs} generalizes the Pettis-type integral \pbref{rem:integ_in_pett_malg_is_pett_integ}.
\end{RemSub}

\begin{RemSub}\label{rem:int_notn_appl_arb_mnd}
Although we have assumed that $\MM$ is an abstract distribution monad, notice that the integral notation in Definition \bref{def:vint_in_malgs} may be applied with respect to any monad $\MM$ on an arbitrary concrete category $\X$, and indeed this suffices also for the remainder of this section.
\end{RemSub}

\begin{ExaSub}\label{exa:real_integral_conv_sp}
Consider our example with $\X = \Conv$ the category of convergence spaces \pbref{exa:nat_dist_conv} and $R = \RR$.  $\RR$ is a $\DD$-algebra \pbref{par:rec_r_malg}, and for a continuous map $f:X \rightarrow \RR$ on a locally compact Hausdorff topological space $X$, the resulting integral \pbref{def:vint_in_malgs} with respect to $\mu \in DX$ coincides with the usual integral with respect to the compactly supported Radon measure $\mu$.
\end{ExaSub}

\begin{RemSub}
Observe that, in the notation of \bref{def:vint_in_malgs}, the structure map $a:MZ \rightarrow Z$ of an $\MM$-algebra $(Z,a)$ is given by $a(\mu) = \int 1_Z \;d\mu = \int_z z \;d\mu$.
\end{RemSub}

\begin{PropSub}\label{thm:compn_of_integrand}
Given an $\MM$-algebra $(Z,a)$, maps $f:X \rightarrow Y$ and $g:Y \rightarrow Z$, and an element $\mu$ of $MX$, 
$$\int g \cdot f \;d\mu = \int g \;d(Mf)(\mu)\;.$$
\end{PropSub}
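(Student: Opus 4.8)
The statement $\int g \cdot f \;d\mu = \int g \;d(Mf)(\mu)$ is a direct unwinding of Definition \bref{def:vint_in_malgs} together with the functoriality of $M$. The plan is to expand both sides using the defining formula and then compare.

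First I would rewrite the left-hand side. By \bref{def:vint_in_malgs}, $\int g \cdot f \;d\mu$ is the result of applying the composite $MX \xrightarrow{M(g \cdot f)} MZ \xrightarrow{a} Z$ to $\mu$, i.e. $(a \cdot M(g \cdot f))(\mu)$. Since $M$ is (the endofunctor underlying) an $\X$-monad, it is in particular a functor, so $M(g \cdot f) = Mg \cdot Mf$, giving $(a \cdot Mg \cdot Mf)(\mu)$.

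Next I would rewrite the right-hand side. Again by \bref{def:vint_in_malgs}, $\int g \;d(Mf)(\mu)$ is the value of the composite $MY \xrightarrow{Mg} MZ \xrightarrow{a} Z$ at the element $(Mf)(\mu) \in MY$; that is, $(a \cdot Mg)((Mf)(\mu)) = (a \cdot Mg \cdot Mf)(\mu)$. Since the two expressions coincide, the result follows. (In the well-pointed setting assumed throughout this section, equality of elements obtained by applying equal morphisms is immediate; more pedantically, one could phrase the whole argument as the equality of morphisms $a \cdot M(g \cdot f) = a \cdot Mg \cdot Mf : MX \rightarrow Z$ and then evaluate at $\mu$.)

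There is essentially no obstacle here: the only ingredient beyond the definitions is the functoriality of $M$, which is part of the data of an $\X$-monad, and the argument does not even require that $\MM$ be an abstract distribution monad, only a monad on a concrete category, consistent with \bref{rem:int_notn_appl_arb_mnd}. The one small point to be careful about is keeping the order of composition straight when passing between the "element" notation $(Mf)(\mu)$ and the morphism $Mf$, but this is routine.
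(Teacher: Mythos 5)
Your argument is correct and is exactly the paper's proof: expand both sides via Definition \bref{def:vint_in_malgs} and use the functoriality of $M$ to get $(a \cdot M(g \cdot f))(\mu) = (a \cdot Mg \cdot Mf)(\mu) = \int g \;d(Mf)(\mu)$. Your closing remark that only functoriality of a monad on a concrete category is needed matches the paper's observation in \bref{rem:int_notn_appl_arb_mnd}.
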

\begin{proof}
$$\int g \cdot f \;d\mu = (a \cdot M(g \cdot f))(\mu) = (a \cdot Mg \cdot Mf)(\mu) = \int g \;d(Mf)(\mu)\;.$$
\end{proof}

\begin{PropSub}\label{thm:homom_via_integ}
Given $\MM$-algebras $(Z,a),(W,b)$ and a map $h:Z \rightarrow W$, $h$ is an $\MM$-homomorphism if and only if
$$h(\int_x f(x)\;d\mu) = \int_x h(f(x)) \;d\mu$$
for every map $f:X \rightarrow Z$ and every element $\mu$ of $MX$.
\end{PropSub}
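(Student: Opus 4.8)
The plan is to reduce both implications to the single identity $h \cdot a = b \cdot Mh$ of maps $MZ \to W$, which by \bref{par:alg_endofunc} is precisely the condition that $h$ be an $\MM$-homomorphism $(Z,a) \to (W,b)$.

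First I would dispatch the forward implication. Assuming $h \cdot a = b \cdot Mh$, then for any map $f:X \to Z$ and any element $\mu \in MX$ one computes, using \bref{def:vint_in_malgs} together with the functoriality of the underlying endofunctor $M$,
\[ h\!\left(\int f\;d\mu\right) = (h \cdot a \cdot Mf)(\mu) = (b \cdot Mh \cdot Mf)(\mu) = (b \cdot M(h \cdot f))(\mu) = \int h \cdot f\;d\mu\,, \]
which is exactly the asserted equation $h(\int_x f(x)\,d\mu) = \int_x h(f(x))\,d\mu$.

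For the converse, I would specialise the hypothesis to the case $X := Z$ and $f := 1_Z : Z \to Z$. Then for each $\mu \in MZ$ the left-hand side of the displayed equation is $h(\int 1_Z\;d\mu) = h(a(\mu))$, since the $\MM$-algebra structure map satisfies $a(\mu) = \int 1_Z\;d\mu$ (as recorded in the remark just preceding \bref{thm:compn_of_integrand}), while the right-hand side is $\int h\;d\mu = (b \cdot Mh)(\mu)$ by \bref{def:vint_in_malgs} applied with $X := Z$ and integrand $h : Z \to W$. Hence the two parallel maps $h \cdot a,\, b \cdot Mh : MZ \to W$ agree on every element of $MZ$.

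The only point requiring care — the step I would flag as the main (though mild) obstacle — is passing from this elementwise agreement to the genuine equality $h \cdot a = b \cdot Mh$ of morphisms in $\X$. This is where the standing hypothesis of the section is invoked: $\X$ is a locally small, well-pointed cartesian closed category (equivalently, $\X(1,-):\X \to \Set$ is faithful), so two parallel morphisms inducing the same function on underlying sets coincide. With $h \cdot a = b \cdot Mh$ established, $h$ is an $\MM$-homomorphism $(Z,a) \to (W,b)$, which completes the proof.
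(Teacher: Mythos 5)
Your proof is correct and follows essentially the same route as the paper's: reduce the displayed equation to the morphism identity $h \cdot a = b \cdot Mh$, obtaining the forward direction by functoriality of $M$ and the converse by specializing to $f = 1_Z$. You are slightly more explicit than the paper in flagging that well-pointedness (the standing hypothesis of the section) is what converts elementwise agreement into equality of morphisms, but this is a presentational difference only.
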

\begin{proof}
By the definition of the integral, the given condition states that
$$h \cdot a \cdot Mf = b \cdot M(h \cdot f)$$
for every map $f:X \rightarrow Z$.  But $h$ is an $\MM$-homomorphism if and only if this condition holds for $f = 1_Z$, and the result follows.
\end{proof}

\begin{ExaSub}
Consider again our example with $\X = \Conv$ the category of convergence spaces \pbref{exa:nat_dist_conv} and $R = \RR$.  Since $\RR$ is a $\DD$-algebra \pbref{exa:real_integral_conv_sp}, we can form the $n$-fold product $\RR^n$ in $\X^\DD$ for any set $n$ (as the forgetful functor $\X^\DD \rightarrow \X$ creates limits).  For any continuous map $f:X \rightarrow \RR^n$ and any $\mu \in DX$ we can form the integral $\int f \;d\mu \in \RR^n$ via Definition \bref{def:vint_in_malgs}, and since the projections $\pi_i:\RR^n \rightarrow \RR$ $(i \in n)$ are $\DD$-homomorphisms, the $i$-th coordinate of the integral is
$$\pi_i(\int_x f(x)\;d\mu) = \int_x \pi_i(f(x))\;d\mu\;,$$
so that in the case that $X$ is a locally compact Hausdorff topological space, the integral $\int f \;d\mu$ is simply the familiar coordinatewise integral of $f$ with respect to the compactly supported Radon measure $\mu$.
\end{ExaSub}

\section{On the linearity of vector-valued integration}

Let $\X = (\X,\boxtimes,I)$ be a symmetric monoidal closed category, let $\LL = (L,\eta,\mu)$, $\MM = (M,\delta,\kappa)$ be $\X$-monads on $\uX$.

\begin{DefSub}
Let $\Delta:\LL \rightarrow \MM$ be a morphism of $\X$-monads.  
\begin{enumerate}
\item For objects $X,Y \in \X$, let $\otimes^\Delta_{X Y}$ denote the composite
$$MX \boxtimes LY \xrightarrow{t''^L_{MX Y}} L(MX \boxtimes Y) \xrightarrow{Lt'^M_{X Y}} LM(X \boxtimes Y) \xrightarrow{\Delta M} MM(X \boxtimes Y) \xrightarrow{\kappa} M(X \boxtimes Y)\;,$$
and let $\widetilde{\otimes}^\Delta_{X Y}$ denote
$$MX \boxtimes LY \xrightarrow{t'^M_{X LY}} M(X \boxtimes LY) \xrightarrow{Mt''^L_{X Y}} ML(X \boxtimes Y) \xrightarrow{M\Delta} MM(X \boxtimes Y) \xrightarrow{\kappa} M(X \boxtimes Y)\;.$$
\item We say that the morphism $\Delta$ is \textit{relatively commutative} if $\otimes^\Delta_{X Y} = \widetilde{\otimes}^\Delta_{X Y}$ for all $X,Y \in \X$.
\end{enumerate}
\end{DefSub}

\begin{ExaSub}
An $\X$-monad $\TT$ on $\uX$ is commutative if and only if the identity morphism on $\TT$ is relatively commutative.
\end{ExaSub}

\begin{RemSub}\label{rem:nat_rel_tensors}
Observe that the morphisms $\otimes^\Delta_{X Y}, \widetilde{\otimes}^\Delta_{X Y}$ are $\X$-natural in $X,Y \in \uX$, since $t''^L$, $t'^M$, $\Delta$, and $\kappa$ are $\X$-natural transformations \pbref{rem:enr_nat_of_tens_str}.
\end{RemSub}

\begin{ParSub}\label{par:data_for_lin_integ}
For the remainder of this section, we assume that $\LL$ is commutative and has tensor products of algebras \pbref{def:tens_prod_algs}, and further that the Eilenberg-Moore $\X$-category for $\LL$ exists \pbref{par:em_adj}.
\end{ParSub}

\begin{ExaSub}
Take $(\MM,\Delta,\xi)$ to be an abstract distribution monad \pbref{def:abs_distn_mnd}.
\end{ExaSub}

\begin{ParSub}\label{par:cotensors_in_lalg}
Recall that for an $\LL$-algebra $E := (Z,b)$ and an object $X \in \X$, there is an associated cotensor $[X,E]$ in the Eilenberg-Moore $\X$-category $\uX^\LL$, whose underlying $\X$-object is simply the internal hom $\uX(X,Z)$ in $\X$ \pbref{rem:standard_cotensors_in_talg}.  In particular, the structure morphism for $[X,E]$ is the composite
$$L\uX(X,Z) \xrightarrow{\varsigma} \uX(X,LZ) \xrightarrow{\uX(X,b)} \uX(X,Z)$$
where $\varsigma$ is the \textit{cotensorial strength} \cite{Kock:Dist} for $\LL$, which is the transpose of the composite
$$X \boxtimes L\uX(X,Z) \xrightarrow{t''^L} L(X \boxtimes \uX(X,Z)) \xrightarrow{L\Ev} LZ\;.$$
For the remainder of the section, we shall implicitly endow $\uX(X,Z)$ with this $\LL$-algebra structure when $Z$ is the carrier of a given $\LL$-algebra.
\end{ParSub}

\begin{ParSub}\label{par:lalg_from_malg}
Via the $\X$-functor $\uX^\Delta:\uX^\MM \rightarrow \uX^\LL$ \pbref{prop:enr_func_induced_by_mnd_mor}, each $\MM$-algebra $(Z,a)$ determines an $\LL$-algebra with structure morphism $a \cdot \Delta_Z$, and in the present section, we will implicitly endow the carrier $Z$ with this $\LL$-algebra structure.
\end{ParSub}

\begin{DefSub}\label{def:extenders}
Given an $\MM$-algebra $(Z,a)$ and an object $X \in \X$, we define the \textit{extender} $\Omega^a_X$ as the composite
$$\Omega^a_X := \left(\uX(X,Z) \xrightarrow{M_{X Z}} \uX(MX,MZ) \xrightarrow{\uX(1,a)} \uX(MX,Z)\right)\;.$$
In particular, then, we have morphisms
$$\Omega^{\kappa_Y}_X:\uX(X,MY) \rightarrow \uX(MX,MY)$$
for all $X,Y \in \X$.
\end{DefSub}

\begin{ParSub}
In the situation of \bref{def:extenders}, the objects $\uX(X,Z),\uX(MX,Z)$ carry the structure of $\LL$-algebras, via \bref{par:lalg_from_malg} and \bref{par:cotensors_in_lalg}, so we can ask whether the extender $\Omega^a_X$ is an $\LL$-homomorphism.  We show below that $\Delta$ is relatively commutative if and only if each extender $\Omega^a_X$ is an $\LL$-homomorphism \pbref{thm:rel_comm_iff_extenders_lhomom}.
\end{ParSub}

\begin{LemSub}\label{thm:split_epi_final}
Let $\TT$ be a monad on a category $\A$, and let $e:(A,a) \rightarrow (B,b)$ be a $\TT$-homomorphism whose underlying morphism $e:A \rightarrow B$ is split epi in $\A$.  Then $e$ is \textit{final} with respect to the forgetful functor $\A^\TT \rightarrow \A$; i.e. if $f:B \rightarrow C$ in $\A$ is such that $f \cdot e:(A,a) \rightarrow (C,c)$ is a $\TT$-homomorphism, then $f:(B,b) \rightarrow (C,c)$ is a $\TT$-homomorphism.
\end{LemSub}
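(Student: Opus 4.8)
The plan is to exploit the splitting of $e$ together with the $\TT$-homomorphism hypotheses on $e$ and $f \cdot e$. Let $s:B \rightarrow A$ be a section of $e$ in $\A$, so $e \cdot s = 1_B$. We must verify that $f$ commutes with the algebra structures, i.e. that $f \cdot b = c \cdot Tf$. The key observation is that since $e$ is split epi and $T$ is a functor, $Te:TA \rightarrow TB$ is also split epi, with section $Ts$; hence $Te$ is an epimorphism in $\A$, and it suffices to prove the identity $f \cdot b = c \cdot Tf$ after precomposition with $Te$.

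First I would write out what the two hypotheses give us. Since $e:(A,a) \rightarrow (B,b)$ is a $\TT$-homomorphism, $b \cdot Te = e \cdot a$. Since $f \cdot e:(A,a) \rightarrow (C,c)$ is a $\TT$-homomorphism, $(f \cdot e) \cdot a = c \cdot T(f \cdot e) = c \cdot Tf \cdot Te$. Now I compute the composite $f \cdot b \cdot Te$: using the first relation, $f \cdot b \cdot Te = f \cdot e \cdot a$, and then using the second relation, $f \cdot e \cdot a = c \cdot Tf \cdot Te$. Thus $f \cdot b \cdot Te = c \cdot Tf \cdot Te$, and since $Te$ is epi we conclude $f \cdot b = c \cdot Tf$, as required. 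This is a short, purely formal diagram chase; there is no real obstacle, the only thing to be careful about is that the epimorphism property of $Te$ is used at the very end rather than attempting to cancel $e$ itself (which would be illegitimate since we are working with algebra-structure equations, not with $\A^\TT$-morphisms). For the finality statement as phrased --- that $f:(B,b) \rightarrow (C,c)$ is a $\TT$-homomorphism whenever $f \cdot e$ is --- this is exactly the computation above, so nothing further is needed.

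I would remark that the same argument shows, more generally, that any monad's forgetful functor $\A^\TT \rightarrow \A$ is faithful and reflects those morphisms that become homomorphisms after precomposition with a split (or merely $T$-preserved) epi; this is presumably the use to which the lemma will be put, namely to recognize that a certain extender-type morphism out of a split-epi $\TT$-homomorphism is itself a homomorphism. No earlier result from the excerpt is needed beyond the definition of $\TT$-homomorphism and the elementary fact that functors preserve split epimorphisms.
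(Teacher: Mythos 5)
Your proof is correct: the computation $f \cdot b \cdot Te = f \cdot e \cdot a = c \cdot Tf \cdot Te$ followed by cancelling the (split, hence genuine) epimorphism $Te$ is exactly the standard argument, and the paper itself states this lemma without proof, evidently regarding it as this elementary diagram chase. Nothing is missing; your closing remark about the intended application (recognizing an extender-type morphism as a homomorphism after precomposition with a split epi such as $\uX(X,a)$) also correctly anticipates how the lemma is used in the proof of \bref{thm:rel_comm_iff_extenders_lhomom}.
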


\begin{LemSub}\label{thm:tens_str_and_transp}
Let $h:\uX(X,Y) \rightarrow \uX(X',Y')$ be a morphism in $\X$ with transpose \hbox{$\overline{h}:X' \boxtimes \uX(X,Y) \rightarrow Y'$}.  Then the diagram
$$
\xymatrix{
X' \boxtimes L\uX(X,Y) \ar[r]^{1 \boxtimes Lh} \ar[d]_{t''^L} & \X' \boxtimes L\uX(X',Y') \ar[r]^{1 \boxtimes \varsigma} & X' \boxtimes \uX(X',LY') \ar[d]^\Ev \\
L(X' \boxtimes \uX(X,Y)) \ar[rr]_{L\overline{h}} & & LY'
}
$$
commutes.
\end{LemSub}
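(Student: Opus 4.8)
The plan is to reduce the upper-right path of the square to its lower-left path by combining the defining transpose property of the cotensorial strength $\varsigma$ with the $\X$-naturality of $t''^L$ in its second variable.

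First I would recall from \bref{par:cotensors_in_lalg} that $\varsigma:L\uX(X',Y') \rightarrow \uX(X',LY')$ is characterized by the property that its transpose under the hom-tensor adjunction $X' \boxtimes (-) \dashv \uX(X',-)$ is the composite $X' \boxtimes L\uX(X',Y') \xrightarrow{t''^L} L(X' \boxtimes \uX(X',Y')) \xrightarrow{L\Ev} LY'$. Equivalently, $\Ev \cdot (1_{X'} \boxtimes \varsigma) = L\Ev \cdot t''^L$, and hence the upper-right path of the square equals $L\Ev \cdot t''^L \cdot (1_{X'} \boxtimes Lh)$.

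Next I would invoke the $\X$-naturality of $t''^L$ in its second variable \pbref{rem:enr_nat_of_tens_str}, applied to the morphism $h:\uX(X,Y) \rightarrow \uX(X',Y')$ of $\X$, which gives $t''^L_{X',\uX(X',Y')} \cdot (1_{X'} \boxtimes Lh) = L(1_{X'} \boxtimes h) \cdot t''^L_{X',\uX(X,Y)}$. Substituting this into the previous expression, the upper-right path becomes $L\Ev \cdot L(1_{X'} \boxtimes h) \cdot t''^L = L\bigl(\Ev \cdot (1_{X'} \boxtimes h)\bigr) \cdot t''^L$. Since $\overline{h}$ is by definition the transpose of $h$ under $X' \boxtimes (-) \dashv \uX(X',-)$, we have $\overline{h} = \Ev \cdot (1_{X'} \boxtimes h)$, so this composite is exactly $L\overline{h} \cdot t''^L$, namely the lower-left path of the square.

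I do not expect a real obstacle here; the only point requiring attention is keeping the two instances $t''^L_{X',\uX(X,Y)}$ and $t''^L_{X',\uX(X',Y')}$ of the strength straight and confirming that it is indeed the second-variable naturality that is being used, which is routine.
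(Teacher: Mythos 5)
Your proof is correct and is essentially the paper's own argument: the paper pastes the same two commuting squares (the left by naturality of $t''^L$, the right by the defining transpose property of $\varsigma$) and observes that the bottom row $L\Ev \cdot L(1 \boxtimes h)$ is $L\overline{h}$, which is exactly your equational computation written diagrammatically. No gaps.
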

\begin{proof}
In the following diagram
$$
\xymatrix{
X' \boxtimes L\uX(X,Y) \ar[d]_{t''^L} \ar[r]^{1 \boxtimes Lh} & X' \boxtimes L\uX(X',Y') \ar[d]^{t''^L} \ar[r]^{1 \boxtimes \varsigma} & X' \boxtimes \uX(X',LY') \ar[d]^\Ev \\
L(X' \boxtimes \uX(X,Y)) \ar[r]_{L(1 \boxtimes h)} & L(X' \boxtimes \uX(X',Y')) \ar[r]_{L\Ev} & LY
}
$$
the leftmost square commutes by the naturality of $t''$ \pbref{rem:enr_nat_of_tensor_maps}, and the rightmost commutes by the definition of $\varsigma$.  But the composite of the bottom row is $L\overline{h}$.
\end{proof}

\begin{LemSub}\label{thm:lemma_on_tens_str}
For all $X,Z \in \X$, the diagram
$$
\xymatrix{
MX \boxtimes \uX(X,Z) \ar[r]^{t'^M} \ar[d]_{1 \boxtimes M} & M(X \boxtimes \uX(X,Z)) \ar[d]^{M\Ev} \\
MX \boxtimes \uX(MX,MZ) \ar[r]_\Ev & MZ
}
$$
commutes.
\end{LemSub}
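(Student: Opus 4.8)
The statement to prove is that for all $X, Z \in \X$, the square
$$
\xymatrix{
MX \boxtimes \uX(X,Z) \ar[r]^{t'^M} \ar[d]_{1 \boxtimes M} & M(X \boxtimes \uX(X,Z)) \ar[d]^{M\Ev} \\
MX \boxtimes \uX(MX,MZ) \ar[r]_\Ev & MZ
}
$$
commutes, where the morphism labelled $M$ on the left is the structure morphism $M_{X Z}:\uX(X,Z) \to \uX(MX,MZ)$ of the $\X$-functor $M$, and the bottom arrow is the evaluation $\Ev_{MX,MZ}$. The plan is to reduce the claim to the \emph{definition} of the tensorial strength $t'^M$ given in Definition~\bref{def:tens_str}, which says that $t'^M_{X' Y}:MX' \boxtimes Y \to M(X' \boxtimes Y)$ is the transpose of the composite $Y \to \uX(X', X' \boxtimes Y) \xrightarrow{\;M\;} \uX(MX', M(X' \boxtimes Y))$, together with the standard fact that an enriched functor's action on homs, post-composed with evaluation, is characterized by a naturality/transpose identity.

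Concretely, first I would transpose the entire square across the tensor-hom adjunction $(-) \boxtimes \uX(X,Z) \dashv \uX(\uX(X,Z), -)$, or more directly recognize both composites as morphisms $MX \boxtimes \uX(X,Z) \to MZ$ and compare them by a Yoneda-style argument: it suffices to check equality after applying $\X(W, -)$ for all $W$, i.e., to compare the two composite set-maps on generalized elements. A generalized element of $MX \boxtimes \uX(X,Z)$ can, after the tensor-hom juggling, be fed to both sides; the clockwise path sends a pair $(\mu, g)$ (with $g: X \to Z$) first to $t'^M(\mu \boxtimes g') $ where $g'$ is the name of $g$, i.e. this uses the definition of $t'^M$ as transpose, landing in $M(X \boxtimes \uX(X,Z))$, then applies $M\Ev$; by $\X$-functoriality of $M$ and the defining transpose property of $t'^M$, the result is exactly $M$ applied to the composite $X \xrightarrow{\langle 1, g'\rangle} X \boxtimes \uX(X,Z) \xrightarrow{\Ev} Z$, namely $Mg$, evaluated at $\mu$ — that is, $(Mg)(\mu)$. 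The counterclockwise path sends $(\mu, g)$ to $\Ev(\mu \boxtimes M_{X Z}(g))$, and by the definition of $M_{X Z}$ followed by evaluation (the standard identity $\Ev \cdot (1 \boxtimes M_{X Z}) = M_{X Z \text{ applied to } g}$ interpreted correctly, i.e. the component $Mg: MX \to MZ$), this is also $(Mg)(\mu)$. Hence the two agree.

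Rather than argue elementwise, the cleaner route — and the one I would actually write — is diagrammatic: paste the square together with the transpose square defining $t'^M$. That is, precompose the top edge $t'^M_{X, \uX(X,Z)}$ with its defining property: $M\Ev \cdot t'^M_{X, \uX(X,Z)}$ equals (by the definition of $t'^M$ as the transpose of $\uX(X,Z) \to \uX(X, X \boxtimes \uX(X,Z)) \xrightarrow{M} \uX(MX, M(X\boxtimes\uX(X,Z)))$, post-composed with $M\Ev$) the transpose of the map $\uX(X,Z) \to \uX(MX, MZ)$ obtained by applying $M$ to $\Ev: X \boxtimes \uX(X,Z) \to Z$ in the appropriate sense. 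But applying $M$ to the naturality of $\Ev$ — equivalently, the axiom relating the $\X$-functor structure $M_{X Z}$ to evaluation — shows precisely that this transpose \emph{is} $M_{X Z}$. Thus $M\Ev \cdot t'^M = \Ev \cdot (1 \boxtimes M_{X Z})$, which is the claim. The one supporting fact needed is the identity expressing that $M_{X Z}$, as structure morphism of the $\X$-functor $M$, satisfies $\Ev_{MX, MZ} \cdot (MX \boxtimes M_{X Z}) = $ the transpose of $M$ applied to $\Ev_{X Z}$ — this is essentially one of the two $\X$-functoriality axioms (compatibility with evaluation / composition) and can be cited from \cite{Ke:Ba} or \cite{EiKe}.

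The main obstacle is purely bookkeeping: carefully matching the transpose conventions in Definition~\bref{def:tens_str} for $t'^M$ with the transpose convention implicit in the statement of the $\X$-functor axioms, and tracking associativity/unit coherence isomorphisms of $\boxtimes$ that I am suppressing above (the map $X \to \uX(X', X' \boxtimes Y)$ in Definition~\bref{def:tens_str} hides a coherence cell). None of this is deep, and all of it is routine enriched-category diagram chasing; I expect no genuine difficulty, only the need to be scrupulous about which slot of which internal hom is being acted upon. I would therefore present the proof as a single pasted diagram with a one-line justification of each region, invoking the defining transpose of $t'^M$ for the top region and the evaluation axiom for the $\X$-functor $M$ for the bottom region.
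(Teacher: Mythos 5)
Your proposal is correct and takes essentially the same route as the paper: the paper's proof likewise transposes the composite $M\Ev\cdot t'^M$ across the tensor--hom adjunction, uses the defining transpose of $t'^M$ from Definition \bref{def:tens_str}, and identifies the result with $M_{X Z}$ via the triangle identity for the unit $\uX(X,Z)\rightarrow\uX(X,X\boxtimes\uX(X,Z))$ together with the compatibility of the structure morphisms of the $\X$-functor $M$ with postcomposition by $\Ev$. Your generalized-element warm-up is only heuristic in a general symmetric monoidal closed $\X$, but you correctly discard it in favour of the diagrammatic argument, which is the one the paper gives.
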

\begin{proof}
By the definition of $t'^M$, the transpose of the upper-right composite is the composite along the upper-right periphery of the following commutative diagram.
$$
\xymatrix{
\uX(X,Z) \ar[r] \ar@{=}[dr] & \uX(X,X \boxtimes \uX(X,Z)) \ar[d]^{\uX(1,\Ev)} \ar[rr]^M & & \uX(MX,M(X \boxtimes \uX(X,Z))) \ar[d]^{\uX(1,M\Ev)} \\
& \uX(X,Z) \ar[rr]_M & & \uX(MX,MZ)
}
$$
\end{proof}

\begin{LemSub}\label{thm:transp_of_extender}
Given $X,Y \in \X$, the extender $\Omega^{\kappa_Y}_X:\uX(X,MY) \rightarrow \uX(MX,MY)$ has transpose
$$MX \boxtimes \uX(X,MY) \xrightarrow{t'^M} M(X \boxtimes \uX(X,MY)) \xrightarrow{M\Ev} MMY \xrightarrow{\kappa_Y} MY\;.$$
\end{LemSub}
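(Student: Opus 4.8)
The plan is to unwind the definition of the extender $\Omega^{\kappa_Y}_X = \uX(1,\kappa_Y) \cdot M_{X,MY}$ and compute its transpose directly, reducing the claim to \bref{thm:lemma_on_tens_str}. First I would observe that, by the adjunction $X \boxtimes (-) \dashv \uX(X,-)$, taking the transpose of a morphism $h \colon A \to \uX(X,B)$ amounts to composing $X \boxtimes A \xrightarrow{1 \boxtimes h} X \boxtimes \uX(X,B) \xrightarrow{\Ev} B$; but here the source object in the transpose is $MX$, not $X$, so I will use the transpose along $MX \boxtimes (-) \dashv \uX(MX,-)$ instead. Concretely, the transpose of $\Omega^{\kappa_Y}_X \colon \uX(X,MY) \to \uX(MX,MY)$ is
$$MX \boxtimes \uX(X,MY) \xrightarrow{1 \boxtimes \Omega^{\kappa_Y}_X} MX \boxtimes \uX(MX,MY) \xrightarrow{\Ev} MY\;.$$

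Next I would factor $1 \boxtimes \Omega^{\kappa_Y}_X$ as $(1 \boxtimes \uX(1,\kappa_Y)) \cdot (1 \boxtimes M_{X,MY})$ and push the evaluation through. The morphism $MX \boxtimes \uX(X,MY) \xrightarrow{1 \boxtimes M_{X,MY}} MX \boxtimes \uX(MX,M(MY)) \xrightarrow{\Ev} M(MY)$ is, by \bref{thm:lemma_on_tens_str} applied with $Z := MY$, equal to $MX \boxtimes \uX(X,MY) \xrightarrow{t'^M} M(X \boxtimes \uX(X,MY)) \xrightarrow{M\Ev} M(MY)$. Then the remaining factor $\uX(1,\kappa_Y)$ contributes a post-composition by $\kappa_Y$, which after passing through the evaluation becomes $M(MY) \xrightarrow{\kappa_Y} MY$ (using naturality of $\Ev$ in its second variable, or equivalently the identity $\Ev \cdot (1 \boxtimes \uX(1,k)) = k \cdot \Ev$). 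Assembling these, the transpose of $\Omega^{\kappa_Y}_X$ is exactly
$$MX \boxtimes \uX(X,MY) \xrightarrow{t'^M} M(X \boxtimes \uX(X,MY)) \xrightarrow{M\Ev} MMY \xrightarrow{\kappa_Y} MY\;,$$
as claimed.

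The bookkeeping step I expect to be the mildest obstacle is keeping track of which hom-tensor adjunction is being used when taking transposes — the source object switches from $X$ to $MX$ in passing from $\uX(X,MY)$ to $\uX(MX,MY)$, so one must be careful that the factorization of $1 \boxtimes \Omega^{\kappa_Y}_X$ lines up with the hypotheses of \bref{thm:lemma_on_tens_str}, whose diagram has $MX \boxtimes \uX(X,Z)$ on the left with the given $t'^M$. Everything else is a routine diagram chase using naturality of $\Ev$, the definition of $t'^M$ (\bref{def:tens_str}), and the definition of the extender (\bref{def:extenders}); no new lemma beyond \bref{thm:lemma_on_tens_str} should be needed.
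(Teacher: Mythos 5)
Your proposal is correct and is essentially the paper's own argument: the paper likewise invokes \bref{thm:lemma_on_tens_str} with $Z := MY$ and observes that the composite $\kappa_Y \cdot \Ev \cdot (1 \boxtimes M)$ is the transpose of $\Omega^{\kappa_Y}_X$, which is exactly your factorization of $1 \boxtimes \Omega^{\kappa_Y}_X$ together with the naturality of $\Ev$ in its second variable. You have merely made explicit the bookkeeping (the choice of adjunction $MX \boxtimes (-) \dashv \uX(MX,-)$ and the identity $\Ev \cdot (1 \boxtimes \uX(1,\kappa_Y)) = \kappa_Y \cdot \Ev$) that the paper leaves implicit.
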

\begin{proof}
By \bref{thm:lemma_on_tens_str}, we have a commutative diagram
$$
\xymatrix{
MX \boxtimes \uX(X,MY) \ar[r]^{t'^M} \ar[d]_{1 \boxtimes M} & M(X \boxtimes \uX(X,MY)) \ar[d]^{M\Ev} & \\
MX \boxtimes \uX(MX,MMY) \ar[r]_\Ev & MMY \ar[r]_{\kappa_Y} & MY
}
$$
in which the lower-left composite is the transpose of $\Omega^{\kappa_Y}_X$.
\end{proof}

\begin{ThmSub}\label{thm:rel_comm_iff_extenders_lhomom}
The following are equivalent:
\begin{enumerate}
\item $\Delta:\LL \rightarrow \MM$ is relatively commutative;
\item Each extender $\Omega^{\kappa_Y}_X:\uX(X,MY) \rightarrow \uX(MX,MY)$ is an $\LL$-homomorphism, where $X,Y \in \X$;
\item Each extender $\Omega^a_X:\uX(X,Z) \rightarrow \uX(MX,Z)$ is an $\LL$-homomorphism, where $(Z,a)$ is an $\MM$-algebra and $X \in \X$.
\end{enumerate}
\end{ThmSub}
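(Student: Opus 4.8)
The plan is to establish the cycle $1 \Rightarrow 2 \Rightarrow 3 \Rightarrow 1$, with the genuinely new content lying in the equivalence $1 \Leftrightarrow 2$; the passage to arbitrary $\MM$-algebras in $3$ will then follow by naturality and a splitting argument.

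First I would prove $2 \Rightarrow 3$. Given an $\MM$-algebra $(Z,a)$ and an object $X \in \X$, observe that the free $\MM$-algebra $(MZ,\kappa_Z)$ admits the structure map $a:(MZ,\kappa_Z) \rightarrow (Z,a)$ as an $\MM$-homomorphism, whose underlying morphism is split epi in $\X$ via $\delta_Z:Z \rightarrow MZ$. Applying the $\LL$-homomorphism cotensor $\uX(X,-)$ (which, by \bref{par:cotensors_in_lalg} and the fact that $\uX^\LL$ is cotensored via \bref{thm:assoc_enr_adj}, sends $\LL$-homomorphisms to $\LL$-homomorphisms), I get an $\LL$-homomorphism $\uX(X,a):\uX(X,MZ) \rightarrow \uX(X,Z)$ which is again split epi in $\X$. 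By the naturality of the extender construction \pbref{def:extenders} in the target $\MM$-algebra, the square relating $\Omega^{\kappa_Z}_X$ and $\Omega^a_X$ via $\uX(X,a)$ and $\uX(MX,a)$ commutes, so the composite $\uX(MX,a) \cdot \Omega^{\kappa_Z}_X = \Omega^a_X \cdot \uX(X,a)$ is an $\LL$-homomorphism whenever $\Omega^{\kappa_Z}_X$ is (assuming $2$). I would then invoke \bref{thm:split_epi_final} with respect to the forgetful functor $\uX^\LL \rightarrow \uX$: since $\uX(X,a)$ is a split-epi $\LL$-homomorphism and $\Omega^a_X \cdot \uX(X,a)$ is an $\LL$-homomorphism, it follows that $\Omega^a_X$ itself is an $\LL$-homomorphism, which is $3$. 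The implication $3 \Rightarrow 2$ is immediate since each $(MY,\kappa_Y)$ is an $\MM$-algebra.

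For $1 \Leftrightarrow 2$, the strategy is to express both sides in terms of the transpose of $\Omega^{\kappa_Y}_X$ under the $X \boxtimes (-) \dashv \uX(X,-)$ adjunction. An $\LL$-homomorphism condition on a morphism $h:E_1 \rightarrow E_2$ of cotensors $[X,-]$-type amounts, by \bref{rem:standard_cotensors_in_talg} and \bref{par:cotensors_in_lalg}, to commutativity of a square involving the cotensorial strength $\varsigma$ and the two $\LL$-algebra structures; transposing this across the adjunction, and using \bref{thm:transp_of_extender} to identify the transpose of $\Omega^{\kappa_Y}_X$ as $\kappa_Y \cdot M\Ev \cdot t'^M$, together with \bref{thm:tens_str_and_transp} and \bref{thm:lemma_on_tens_str} to rewrite the strength-side composites, I expect both $\otimes^\Delta_{X Y}$ and $\widetilde{\otimes}^\Delta_{X Y}$ to emerge as the two composites $MX \boxtimes LY \rightrightarrows M(X \boxtimes Y)$ appearing respectively in the source-side and target-side of the homomorphism square, now with $Z := MY$ and the integrand being $\Ev$. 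Concretely, $\Omega^{\kappa_Y}_X$ being an $\LL$-homomorphism from $\uX(X,MY)$ (with $\LL$-structure via $[X,-]$ and $\Delta$, per \bref{par:lalg_from_malg}) to $\uX(MX,MY)$ should transpose to the assertion that the composite built from $t''^L$, $Lt'^M$, $\Delta M$, $\kappa$ equals the composite built from $t'^M$, $Mt''^L$, $M\Delta$, $\kappa$ — that is, $\otimes^\Delta_{X Y} = \widetilde{\otimes}^\Delta_{X Y}$. Running this identification carefully in both directions gives $1 \Leftrightarrow 2$.

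The main obstacle I anticipate is precisely this transposition computation: matching the strength-theoretic composites defining $\otimes^\Delta$ and $\widetilde{\otimes}^\Delta$ against the transposed $\LL$-homomorphism square for the extender requires threading through the definition of the cotensorial strength $\varsigma$ as the transpose of $L\Ev \cdot t''^L$, and then repeatedly applying the naturality of the tensorial strengths $t',t''$ \pbref{rem:enr_nat_of_tensor_maps} and the compatibility lemmas \bref{thm:tens_str_and_transp}, \bref{thm:lemma_on_tens_str}. There is genuine bookkeeping in ensuring the two ``halves'' of the extender square (the part using the $\LL$-structure on the domain cotensor $\uX(X,MY)$, which involves $\Delta$, versus the part using the free $\LL$-structure on $\uX(MX,MY)$ arising from $\kappa_Y$) line up with the $\Delta$-on-the-left versus $\Delta$-on-the-right asymmetry distinguishing $\otimes^\Delta$ from $\widetilde{\otimes}^\Delta$. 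Once the transposition dictionary is set up correctly, though, each implication should reduce to a diagram chase with no further conceptual input.
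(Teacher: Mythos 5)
Your overall architecture matches the paper's: the equivalence $2 \Leftrightarrow 3$ via the split-epi/finality lemma \bref{thm:split_epi_final}, and the equivalence $1 \Leftrightarrow 2$ by transposing the $\LL$-homomorphism square for the extender and identifying the two transposed composites using \bref{thm:transp_of_extender}, \bref{thm:tens_str_and_transp}, and \bref{thm:lemma_on_tens_str}. Your $2 \Rightarrow 3$ argument is essentially verbatim the paper's (the commuting square $\uX(MX,a) \cdot \Omega^{\kappa_Z}_X = \Omega^a_X \cdot \uX(X,a)$, then finality of the split epi $\uX(X,a)$), and your $1 \Rightarrow 2$ direction is sound.

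There is, however, a gap in $2 \Rightarrow 1$. The transposed $\LL$-homomorphism condition for $\Omega^{\kappa_Y}_X$ is \emph{not} the equation $\otimes^\Delta_{X Y} = \widetilde{\otimes}^\Delta_{X Y}$, as you assert when you say the two tensor maps ``emerge as the two composites $MX \boxtimes LY \rightrightarrows M(X \boxtimes Y)$''. The domain of the transpose is $MX \boxtimes L\uX(X,MY)$, so what actually emerges is the equality of the two composites
$$MX \boxtimes L\uX(X,MY) \longrightarrow M(X \boxtimes \uX(X,MY)) \xrightarrow{\;M\Ev\;} MMY \xrightarrow{\;\kappa_Y\;} MY\;,$$
where the first arrow is $\otimes^\Delta$ or $\widetilde{\otimes}^\Delta$ taken at the pair $(X,\uX(X,MY))$ and then post-composed with $\kappa_Y \cdot M\Ev$. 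This is a priori strictly weaker than relative commutativity at an arbitrary pair $(X,Z)$: it only constrains the tensor maps at pairs of the special form $(X,\uX(X,MY))$, and only up to the post-composition. To close the loop you must specialize $Y := X \boxtimes Z$ and precompose with $1 \boxtimes L(\uX(X,\delta_{X \boxtimes Z}) \cdot \gamma)$, where $\gamma:Z \rightarrow \uX(X,X \boxtimes Z)$ is the canonical morphism; then the $\X$-naturality of $\otimes^\Delta$ and $\widetilde{\otimes}^\Delta$ \pbref{rem:nat_rel_tensors}, the identity $\Ev \cdot (1 \boxtimes \gamma) = 1$, and the monad unit law $\kappa \cdot M\delta = 1$ strip off the $\kappa_Y \cdot M\Ev$ and recover $\otimes^\Delta_{X Z} = \widetilde{\otimes}^\Delta_{X Z}$. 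This specialization step is genuine content — it is where the paper's proof does real work beyond the transposition dictionary — and ``running the identification in both directions'' does not supply it.
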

\begin{proof}
Let us first prove the equivalence of 1 and 2.  By definition, the extender $\Omega = \Omega^{\kappa_Y}_X$ is an $\LL$-homomorphism if and only if the following diagram commutes.
$$
\xymatrix{
L\uX(X,MY) \ar[d]_\varsigma \ar[r]^{L\Omega} & L\uX(MX,MY) \ar[d]^\varsigma \\
\uX(X,LMY) \ar[d]_{\uX(1,\Delta MY)} & \uX(MX,LMY) \ar[d]^{\uX(1,\Delta MY)} \\
\uX(X,MMY) \ar[d]_{\uX(1,\kappa_Y)}  & \uX(MX,MMY) \ar[d]_{\uX(1,\kappa_Y)} \\
\uX(X,MY) \ar[r]_{\Omega} & \uX(MX,MY)
}
$$
Recalling that $\Omega$ is the composite
$$\uX(X,MY) \xrightarrow{M} \uX(MX,MMY) \xrightarrow{\uX(1,\kappa_Y)} \uX(MX,MY)\;,$$
we find that the transposes of the two composites in the above diagram form the periphery of the following diagram.
$$
\xymatrix@C=2.8ex{
MX \boxtimes L\uX(X,MY) \ar[d]_{1 \boxtimes \varsigma} \ar[rrr]^{1 \boxtimes L\Omega} & & & MX \boxtimes L\uX(MX,MY) \ar[d]^{1 \boxtimes \varsigma} \\
MX \boxtimes \uX(X,LMY) \ar[d]_{1 \boxtimes \uX(1,\Delta MY)} \ar[r]^(.45){1 \boxtimes M} & MX \boxtimes \uX(MX,MLMY) \ar[r]^(.7)\Ev & MLMY \ar[d]_{M\Delta MY} & MX \boxtimes \uX(MX,LMY) \ar[d]^\Ev \\
MX \boxtimes \uX(X,MMY) \ar[dd]_{1 \boxtimes \uX(1,\kappa_Y)} & & MMMY \ar[dd]_{M\kappa_Y} & LMY \ar[d]^{\Delta MY} \\
& & & MMY \ar[d]^{\kappa_Y} \\
MX \boxtimes \uX(X,MY) \ar[r]_(.45){1 \boxtimes M} & MX \boxtimes \uX(MX,MMY) \ar[r]_(.7)\Ev & MMY \ar[r]_{\kappa_Y} & MY
}
$$
But the rectangle at the bottom-left commutes, so $\Omega$ is an $\LL$-homomorphism if and only if the upper-right cell commutes.  We now show that upper composite $\theta$ on the periphery of the latter cell is
\begin{equation}\label{eqn:upper_comp_via_tensor}MX \boxtimes L\uX(X,MY) \xrightarrow{\otimes^\Delta} M(X \boxtimes \uX(X,MY)) \xrightarrow{M\Ev} MMY \xrightarrow{\kappa_Y} MY\end{equation}
and further that the lower composite $\widetilde{\theta}$ is
\begin{equation}\label{eqn:lower_comp_via_tensor}MX \boxtimes L\uX(X,MY) \xrightarrow{\widetilde{\otimes}^\Delta} M(X \boxtimes \uX(X,MY)) \xrightarrow{M\Ev} MMY \xrightarrow{\kappa_Y} MY\end{equation}

To this end, first observe that $\theta$ appears as the composite of the upper-right path on the periphery of the following diagram.
$$
\xymatrix{
MX \boxtimes L\uX(X,MY) \ar[d]_{t''^L} \ar[rr]^{1 \boxtimes L\Omega} & & MX \boxtimes L\uX(MX,MY) \ar[d]^{1 \boxtimes \varsigma} \\
L(MX \boxtimes \uX(X,MY)) \ar[d]_{Lt'^M} & & MX \boxtimes \uX(MX,LMY) \ar[d]^\Ev \\
LM(X \boxtimes \uX(X,MY)) \ar[d]_{\Delta M} \ar[r]^(.65){LM\Ev} & LMMY \ar[r]^{L\kappa_Y} & LMY \ar[d]^{\Delta MY} \\
MM(X \boxtimes \uX(X,MY)) \ar[d]_{\kappa} \ar[r]^(.65){MM\Ev} & MMMY \ar[d]^{\kappa MY} \ar[r]^{M\kappa_Y} & MMY \ar[d]^{\kappa_Y} \\
M(X \boxtimes \uX(X,MY)) \ar[r]_(.6){M\Ev} & MMY \ar[r]_{\kappa_Y} & MY
}
$$
whereas the composite \eqref{eqn:upper_comp_via_tensor} appears as the lower-right path around the periphery.  The lower half of the diagram clearly commutes.  Moreover, we deduce by \bref{thm:tens_str_and_transp} and \bref{thm:transp_of_extender} that the upper cell also commutes.  Hence $\theta$ is the composite \eqref{eqn:upper_comp_via_tensor}, as needed.

Next observe that $\widetilde{\theta}$ appears as the upper-right path on the periphery of the following diagram
$$
\xymatrix{
MX \boxtimes L\uX(X,MY) \ar[r]^{1 \boxtimes \varsigma} \ar[d]_{t'^M} & MX \boxtimes \uX(X,LMY) \ar[d]_{t'^M} \ar[r]^{1 \boxtimes M} & MX \boxtimes \uX(MX,MLMY) \ar[d]^\Ev \\
M(X \boxtimes L\uX(X,MY)) \ar[d]_{Mt''^L} \ar[r]^{M(1 \boxtimes \varsigma)} & M(X \boxtimes \uX(X,LMY)) \ar[r]^{M\Ev} & MLMY \ar[d]^{M\Delta MY} \\
ML(X \boxtimes \uX(X,MY)) \ar[d]_{M \Delta} \ar[urr]|{ML\Ev} & & MMMY \ar[ddl]|{\kappa MY} \ar[d]^{M\kappa_Y} \\
MM(X \boxtimes \uX(X,MY)) \ar[d]_{\kappa} \ar[urr]|{MM\Ev} & & MMY \ar[d]^{\kappa_Y} \\
M(X \boxtimes \uX(X,MY)) \ar[r]_{M\Ev} & MMY \ar[r]_{\kappa_Y} & MY
}
$$
whose lower-left composite is the composite \eqref{eqn:lower_comp_via_tensor}.  The top-left square commutes by the naturality of $t'$ \pbref{rem:enr_nat_of_tensor_maps}, and the top-right square commutes by \bref{thm:lemma_on_tens_str}.  The cell immediately below the top row commutes by the definition of $\varsigma$.  The remaining cells in the diagram clearly commute.  Hence $\widetilde{\theta}$ is the composite \eqref{eqn:lower_comp_via_tensor}, as needed.

We have thus shown that for all $X,Y \in \X$, the extender $\Omega^{\kappa_Y}_X:\uX(X,MY) \rightarrow \linebreak[4] \uX(MX,MY)$ is an $\LL$-homomorphism if and only if the composites
\eqref{eqn:lower_comp_via_tensor}, \eqref{eqn:upper_comp_via_tensor} are equal.  Hence, if $\Delta$ is relatively commutative, then $\otimes^\Delta = \widetilde{\otimes}^\Delta$, so $\Omega^{\kappa_Y}_X$ is an $\LL$-homomorphism.  Conversely, suppose that $\Omega^{\kappa_Y}_X$ is an $\LL$-homomorphism for all $X,Y \in \X$.  Letting $X,Z \in \X$, we shall show that $\otimes^\Delta_{X Z} = \widetilde{\otimes}^\Delta_{X Z}$.  Setting $Y := X \boxtimes Z$, we can express $\otimes^\Delta_{X Z}$ in terms of the composite \eqref{eqn:upper_comp_via_tensor} via the diagram
$$
\xymatrix{
MX \boxtimes LZ \ar[d]_{\otimes^\Delta} \ar[r]^(.4){1 \boxtimes L\gamma} & MX \boxtimes L\uX(X,X \boxtimes Z) \ar[rr]^{1 \boxtimes L\uX(1,\delta)} & & MX \boxtimes L\uX(X,M(X \boxtimes Z)) \ar[d]^{\otimes^\Delta} \\
M(X \boxtimes Z) \ar@{=}[dr] \ar[r]^(.4){M(1 \boxtimes \gamma)} & M(X \boxtimes \uX(X,X \boxtimes Z)) \ar[d]^{M\Ev} \ar[rr]^{M(1 \boxtimes \uX(1,\delta))} & & *!/l5ex/+{M(X \boxtimes \uX(X,M(X \boxtimes Z)))} \ar[d]^{M\Ev} \\
& M(X \boxtimes Z) \ar@{=}[drr] \ar[rr]^{M\delta} & & MM(X \boxtimes Z) \ar[d]^{\kappa} \\
& & & M(X \boxtimes Z)
}
$$
in which $\gamma:Z \rightarrow \uX(X,X \boxtimes Z)$ is the canonical morphism.  Indeed, the rectangle at the top commutes by the naturality of $\otimes^\Delta$ \pbref{rem:nat_rel_tensors}, and the remaining cells clearly commute.  But we also obtain a similar commutative diagram with $\widetilde{\otimes}^\Delta$ in place of $\otimes^\Delta$.  Hence since the composites \eqref{eqn:lower_comp_via_tensor}, \eqref{eqn:upper_comp_via_tensor} are equal, we deduce that $\otimes^\Delta_{X Z} = \widetilde{\otimes}^\Delta_{X Z}$ as needed, showing that $\Delta$ is relatively commutative.

Having thus established the equivalence of 1 and 2, it now suffices to assume that 2 holds and show that 3 follows.  Letting $(Z,a)$ be an arbitrary $\MM$-algebra, we may apply the $\X$-functor $\uX^\Delta:\uX^\MM \rightarrow \uX^\LL$ and thus deduce that $a$ is an $\LL$-homomorphism.  Hence $\uX(X,a):\uX(X,MZ) \rightarrow \uX(X,Z)$ is an $\LL$-homomorphism.  Further, $\uX(X,a)$ is split epi in $\X$, with section $\uX(X,\delta_Z)$, so by \bref{thm:split_epi_final}, it suffices to show that the composite 
$$\uX(X,MZ) \xrightarrow{\uX(X,a)} \uX(X,Z) \xrightarrow{\Omega} \uX(MX,Z)$$
is an $\LL$-homomorphism.  This composite appears as the bottom-left composite on the periphery of the following commutative diagram.
$$
\xymatrix{
\uX(X,MZ) \ar[d]_{\uX(1,a)} \ar[r]^(.4)M & *!/r2ex/+{\uX(MX,MMZ)} \ar[d]|{\uX(1,Ma)} \ar[r]^{\uX(1,\kappa_Z)} & \uX(MX,MZ) \ar[d]^{\uX(1,a)} \\
\uX(X,Z) \ar[r]_(.4)M & *!/r2ex/+{\uX(MX,MZ)} \ar[r]_{\uX(1,a)} & {\uX(MX,Z)}
}
$$
The composite of the top row is the extender $\Omega^{\kappa_Z}_X$, which by assumption is an $\LL$-homomorphism, so since the rightmost vertical morphism is an $\LL$-homomorphism, the common composite around the periphery of the diagram is an $\LL$-homomorphism.
\end{proof}

\begin{DefSub}
We say that an abstract distribution monad $(\MM,\Delta,\xi)$ is \textit{linear} if the morphism $\Delta:\LL \rightarrow \MM$ is relatively commutative.
\end{DefSub}

\begin{RemSub}
By \bref{thm:rel_comm_iff_extenders_lhomom}, an abstract distribution monad $(\MM,\Delta,\xi)$ is linear if and only if each extender $\Omega^a_X:\uX(X,Z) \rightarrow \uX(MX,Z)$ is a linear map, where $(Z,a)$ is an $\MM$-algebra and $X \in \X$.
\end{RemSub}

\begin{CorSub}
Let $\MM$ be an abstract distribution monad, and assume that $\X$ is a locally small well-pointed cartesian closed category.  Then the following are equivalent:
\begin{enumerate}
\item $\MM$ is linear.
\item For each $\MM$-algebra $(Z,a)$, each $X \in \X$, and each $\mu \in MX$, the associated map
$$\uX(X,Z) \rightarrow Z\;,\;\;\;\;f \mapsto \int f \;d\mu$$
is linear (i.e. is a morphism in $\sL = \uX^\LL$).
\end{enumerate}
\end{CorSub}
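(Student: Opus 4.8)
The plan is to reduce both implications to Theorem~\bref{thm:rel_comm_iff_extenders_lhomom}, which identifies linearity of $\MM$ (i.e.\ relative commutativity of $\Delta:\LL\rightarrow\MM$) with the assertion that every extender $\Omega^a_X:\uX(X,Z)\rightarrow\uX(MX,Z)$ \pbref{def:extenders} is an $\LL$-homomorphism, where $(Z,a)$ ranges over $\MM$-algebras and $X$ over $\X$, and where the carriers $\uX(X,Z)$, $\uX(MX,Z)$ carry the $\LL$-algebra (cotensor) structures of \bref{par:cotensors_in_lalg} while $Z$ carries the $\LL$-algebra structure $\uX^\Delta(Z,a)$ of \bref{par:lalg_from_malg}; write $E_Z:=\uX^\Delta(Z,a)$ for the latter. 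The point of contact with the integral is that, unwinding \bref{def:extenders} and \bref{def:vint_in_malgs}, one has $\Omega^a_X(f)=a\cdot Mf$ for $f:X\rightarrow Z$, so that for $\mu\in MX$ the map $f\mapsto\int f\;d\mu=(a\cdot Mf)(\mu)$ is exactly the composite
$$\uX(X,Z)\xrightarrow{\;\Omega^a_X\;}\uX(MX,Z)\xrightarrow{\;\mathrm{ev}_\mu\;}Z\,,$$
where $\mathrm{ev}_\mu$ is evaluation at $\mu$, namely the composite of $\uX(\mu,Z):\uX(MX,Z)\rightarrow\uX(1,Z)$ (for $\mu:1\rightarrow MX$) with the canonical isomorphism $\uX(1,Z)\cong Z$. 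I would first record that $\mathrm{ev}_\mu$ is an $\LL$-homomorphism $[MX,E_Z]\rightarrow E_Z$: indeed $\uX(\mu,Z)$ is the value on the morphism $\mu$ of the cotensor functor $[-,E_Z]:\X^\op\rightarrow\uX^\LL$, which is functorial because the cotensorial strength $\varsigma$ \pbref{par:cotensors_in_lalg} is $\X$-natural in its exponent variable (by \bref{rem:enr_nat_of_tens_str} together with naturality of $\Ev$), and $[1,E_Z]\cong E_Z$ as $\LL$-algebras.

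Granting this, the implication $1\Rightarrow2$ is immediate: if $\MM$ is linear then each $\Omega^a_X$ is an $\LL$-homomorphism by \bref{thm:rel_comm_iff_extenders_lhomom}, so $f\mapsto\int f\;d\mu=\mathrm{ev}_\mu\cdot\Omega^a_X$ is a composite of $\LL$-homomorphisms, hence linear. For the converse $2\Rightarrow1$, the plan is to show that the family $(\mathrm{ev}_\mu)_{\mu:1\rightarrow MX}$ jointly reflects $\LL$-homomorphisms into $\uX(MX,Z)$, and then to apply this with $k:=\Omega^a_X$. Concretely, let $k:B\rightarrow\uX(MX,Z)$ be a morphism in $\X$, with $B$ the carrier of an $\LL$-algebra and $\uX(MX,Z)$ given its cotensor structure, and suppose each $\mathrm{ev}_\mu\cdot k:B\rightarrow Z$ is an $\LL$-homomorphism; denote by $\beta_B,\beta_Z,\beta'$ the structure maps of $B$, $Z$, $\uX(MX,Z)$. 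Since $\X$ is well-pointed, the family $(\mathrm{ev}_\mu)_\mu$ is jointly monic (transpose across the cartesian-closed structure and use that points of a product are pairs of points, together with faithfulness of $\X(1,-)$); and since each $\mathrm{ev}_\mu$ is an $\LL$-homomorphism one computes, for every $\mu$,
$$\mathrm{ev}_\mu\cdot k\cdot\beta_B=\beta_Z\cdot L(\mathrm{ev}_\mu\cdot k)=\beta_Z\cdot L\mathrm{ev}_\mu\cdot Lk=\mathrm{ev}_\mu\cdot\beta'\cdot Lk\,,$$
so $k\cdot\beta_B=\beta'\cdot Lk$, i.e.\ $k$ is an $\LL$-homomorphism. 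Applying this with $B:=\uX(X,Z)$ (cotensor structure) and $k:=\Omega^a_X$, and invoking hypothesis~$2$ which says precisely that every $\mathrm{ev}_\mu\cdot\Omega^a_X$ (that is, $f\mapsto\int f\;d\mu$) is linear, we conclude that every extender is an $\LL$-homomorphism, and hence that $\MM$ is linear by \bref{thm:rel_comm_iff_extenders_lhomom}.

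The main obstacle is the converse direction, and within it the careful bookkeeping of $\LL$-algebra structures: one must verify that $\mathrm{ev}_\mu$ is genuinely an $\LL$-homomorphism (which is where functoriality of cotensors and the $\X$-naturality of the cotensorial strength are used) and that the cotensor structures on $\uX(X,Z)$ and $\uX(MX,Z)$ and the structure $\uX^\Delta(Z,a)$ on $Z$ are exactly those appearing in the hypotheses of \bref{thm:rel_comm_iff_extenders_lhomom}, so that the reduction is legitimate; the remaining steps — transposing the structure-map identities and invoking well-pointedness — are routine.
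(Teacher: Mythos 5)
Your proposal is correct and follows essentially the same route as the paper: both reduce the statement to Theorem \bref{thm:rel_comm_iff_extenders_lhomom} and then identify linearity of the extender $\Omega^a_X$ with linearity of all the maps $f \mapsto \int f\,d\mu$ obtained by evaluating at points $\mu$ of $MX$. The only difference is presentational — the paper verifies this last equivalence by transposing $\Omega^a_X$ under the cotensor adjunction and checking factorization through the mono $\uX^\LL([X,E],E) \rightarrowtail \uX(\uX(X,Z),Z)$ on points, whereas you compose with the evaluation $\LL$-homomorphisms $\mathrm{ev}_\mu = [\mu,E]$ and use that they are jointly monic by well-pointedness; both are sound.
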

\begin{proof}
Letting $E$ be the underlying linear space of $(Z,a)$, the extender $\Omega:\uX(X,Z) \rightarrow \uX(MX,Z)$ is an $\LL$-homomorphism $[X,E] \rightarrow [MX,E]$ if and only if its transpose $\overline{\Omega}:MX \rightarrow \uX(\uX(X,Z),Z)$ factors through $G:\uX^\LL([X,E],E) \hookrightarrow \uX(\uX(X,Z),Z)$.  This is equivalent to the statement that for each $\mu \in MX$, the map $\overline{\Omega}(\mu):\uX(X,Z) \rightarrow Z$ is an $\LL$-homomorphism.  But the latter map is given by $f \mapsto (\Omega(f))(\mu) = (a \cdot Mf)(\mu) = \int f \;d\mu$.
\end{proof}

\begin{LemSub}\label{thm:extender_for_adj}
If $\MM$ is induced by an $\X$-adjunction $P \nsststile{\sigma}{\delta} Q:\B \rightarrow \uX$, then for all $X,Y \in \X$, the extender $\Omega^{\kappa_Y}_X:\uX(X,MY) \rightarrow \uX(MX,MY)$ is the composite
\begin{equation}\label{eqn:extender_for_adj}\uX(X,QPY) \xrightarrow{\sim} \B(PX,PY) \xrightarrow{Q_{PX PY}} \uX(QPX,QPY)\;.\end{equation}
\end{LemSub}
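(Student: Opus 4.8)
The plan is to unfold both sides in terms of the structure morphisms of the $\X$-functors $P$ and $Q$ together with the adjunction isomorphism $\phi_{X C}\colon\B(PX,C)\xrightarrow{\sim}\uX(X,QC)$ of Proposition \bref{prop:adjn_via_radj_and_unit}, and then to reduce the asserted identity to one of the triangle identities for $P\nsststile{\sigma}{\delta}Q$. Since $\MM = (QP,\delta,Q\sigma P)$, Definition \bref{def:extenders} unwinds the extender as $\Omega^{\kappa_Y}_X = \uX(1,Q\sigma_{PY})\circ M_{X,QPY}$, where $M_{X,QPY}\colon\uX(X,QPY)\to\uX(QPX,QPQPY)$ is the structure morphism of the $\X$-functor $M = QP$; because the structure morphisms of a composite $\X$-functor are the composites of the structure morphisms, $M_{X,QPY} = Q_{PX,PQPY}\circ P_{X,QPY}$.

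First I would invoke the $\X$-functoriality of $Q$, i.e.\ $\uX(QA,Qg)\circ Q_{A,B_1} = Q_{A,B_2}\circ\B(A,g)$ for $g\colon B_1\to B_2$ in $\B$, applied to $g = \sigma_{PY}\colon PQPY\to PY$, to obtain
$$\Omega^{\kappa_Y}_X \;=\; Q_{PX,PY}\;\circ\;\bigl(\B(PX,\sigma_{PY})\circ P_{X,QPY}\bigr)\;.$$
It then remains to identify the parenthesised composite $\uX(X,QPY)\to\B(PX,PY)$ with $\phi_{X,PY}^{-1}$, which is the isomorphism denoted $\xrightarrow{\sim}$ in the statement. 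By \bref{thm:enradj_detd_by_radj_and_univ_arr} the left $\X$-adjoint $P$ is the one constructed in \bref{prop:adjn_via_radj_and_unit}, so \eqref{eqn:ladj_formula} gives $P_{X,QPY} = \phi_{X,PQPY}^{-1}\circ\uX(X,\delta_{QPY})$. Applying $\phi_{X,PY}$ to $\B(PX,\sigma_{PY})\circ P_{X,QPY}$ and using the $\X$-naturality of $\phi$ in its second variable ($\phi_{X,PY}\circ\B(PX,\sigma_{PY}) = \uX(X,Q\sigma_{PY})\circ\phi_{X,PQPY}$) cancels $\phi_{X,PQPY}$ against $\phi_{X,PQPY}^{-1}$ and leaves $\uX(X,\,Q\sigma_{PY}\circ\delta_{QPY})$, which is the identity on $\uX(X,QPY)$ by the triangle identity $Q\sigma\cdot\delta Q = 1_Q$. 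Hence $\B(PX,\sigma_{PY})\circ P_{X,QPY} = \phi_{X,PY}^{-1}$, and combining with the display yields $\Omega^{\kappa_Y}_X = Q_{PX,PY}\circ\phi_{X,PY}^{-1}$, which is exactly \eqref{eqn:extender_for_adj}.

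I expect no serious obstacle; the argument is a coherence chase for enriched adjunctions. The only points needing care are getting the direction of the adjunction isomorphism right — under the conventions of \bref{prop:adjn_via_radj_and_unit} it is $\phi_{X C}\colon\B(PX,C)\to\uX(X,QC)$, so the map $\xrightarrow{\sim}$ in the statement is its inverse $\phi_{X,PY}^{-1}$ — and applying the $\X$-naturality of $\phi$ and the $\X$-functoriality of $Q$ in the correct variables. One could instead compare transposes via \bref{thm:transp_of_extender}, but the direct computation above is shorter; the same computation incidentally shows the equality to be $\X$-natural in $X$ and $Y$, although only the morphism equality is required here.
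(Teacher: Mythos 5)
Your proof is correct and follows essentially the same route as the paper's: decompose $M_{X,QPY}$ as $Q_{PX,PQPY}\circ P_{X,QPY}$, slide the $\kappa_Y = Q\sigma_{PY}$ past $Q$'s structure morphisms by $\X$-functoriality, and recognize $\B(PX,\sigma_{PY})\circ P_{X,QPY}$ as the adjunction isomorphism. The only difference is that you explicitly verify this last identification via \eqref{eqn:ladj_formula} and a triangle identity, whereas the paper's diagrammatic proof takes it as given.
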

\begin{proof}
We have a commutative diagram
$$
\xymatrix{
\uX(X,QPY) \ar[d]_P \ar[dr]^M & \\
\B(PX,PQPY) \ar[d]_{\B(1,\sigma PY)} \ar[r]_Q & \uX(QPX,QPQPY) \ar[d]|{\uX(1,Q\sigma PY)\:=\:\uX(1,\kappa_Y)} \\
\B(PX,PY) \ar[r]_Q & \uX(QPX,QPY)
}
$$
in which the composite of the lower-left path around the periphery is the composite \eqref{eqn:extender_for_adj}.  But the upper-right path around the periphery is the extender $\Omega^{\kappa_Y}_X$.
\end{proof}

\begin{ThmSub}\label{thm:nat_distn_mnd_linear}
Given data as in \bref{par:data_for_dcompl_and_pett}, the natural distribution monad $\DD$ is a linear abstract distribution monad.
\end{ThmSub}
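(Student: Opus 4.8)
Since $\DD$ already carries the structure of an abstract distribution monad \pbref{def:can_mnd_mor_distns}, all that remains is to show that $\DD$ is \emph{linear}, i.e.\ that the $\X$-monad morphism $\Delta^\DD : \LL \to \DD$ is relatively commutative. The plan is to verify the criterion of \bref{thm:rel_comm_iff_extenders_lhomom}: it suffices to prove that for all spaces $X, Y$ the extender
$$\Omega^{\kappa^\DD_Y}_X \;:\; \uX(X, DY) \longrightarrow \uX(DX, DY)$$
is an $\LL$-homomorphism, where source and target carry the $\LL$-algebra structures of \bref{par:cotensors_in_lalg} and \bref{par:lalg_from_malg}. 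Here $DY = GHFY$ is the underlying space of the free $\DD$-algebra $(DY, \kappa^\DD_Y)$, whose underlying linear space is $HFY = (FY)^{**}$ by \bref{exa:und_linear_spaces_of_free_d_and_td_algs}, so the source of the extender is the cotensor $[X, HFY]$ in $\sL$ and its target the cotensor $[DX, HFY]$.

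First I would invoke \bref{prop:dist_via_dd}\,1, which exhibits $\DD$ as the $\X$-monad induced by the composite $\X$-adjunction \eqref{eq:composite_adj_inducing_nat_dist_mnd}: $\acute{F} \dashv \grave{G} : G_*\uL \to \uX$ followed by $G_*(\uL(-,R)) \dashv G_*(\uL(-,R)) : G_*(\uL^\op) \to G_*\uL$. Writing $P \dashv Q : \B \to \uX$ for this composite, with $\B := G_*(\uL^\op)$, $PZ = (FZ)^*$ regarded in $\B$, and $QPZ = DZ$, Lemma \bref{thm:extender_for_adj} identifies $\Omega^{\kappa^\DD_Y}_X$ with the composite
$$\uX(X, DY) \;\xrightarrow{\ \sim\ }\; \B(PX, PY) \;\xrightarrow{\ Q_{PX\,PY}\ }\; \uX(DX, DY),$$
the first arrow being the $\X$-adjunction isomorphism and the second the structure morphism of the $\X$-functor $Q$.

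Next I would identify both arrows as underlying maps of morphisms of $\sL$. Using the symmetric monoidal closed structure of $\sL$ one has $\uL^\op((FX)^*, (FY)^*) = \uL((FY)^*, \uL(FX, R)) \cong \uL(FX, \uL((FY)^*, R)) = \uL(FX, HFY)$, which by \bref{thm:assoc_enr_adj} is the cotensor $[X, HFY]$; thus $\B(PX, PY)$ is the underlying space of $[X, HFY]$, and I would check that the $\X$-adjunction isomorphism $\uX(X, DY) \cong \B(PX, PY)$ is the underlying map of an isomorphism of linear spaces. This uses that $\acute{F} \dashv \grave{G}$ is the Eilenberg-Moore $\X$-adjunction for $\LL$ \pbref{thm:commmnd_sm_em_adj_dets_em_vadj} and that the rightmost adjunction above is $G_*$ of an $\sL$-adjunction, so the composite $\X$-adjunction isomorphism is compatible with the cotensor (hence $\LL$-algebra) structures. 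Likewise, since $Q = \grave{G} \circ G_*(\uL(-,R))$ and $\grave{G} = G^\LL$, the structure morphism $Q_{PX\,PY}$ is $G$ applied to a composite of $\sL$-morphisms: the structure morphism of the $\sL$-functor $\uL(-,R) : \uL^\op \to \uL$, which at $((FX)^*, (FY)^*)$ is the dualization map $\uL((FY)^*, (FX)^*) \to \uL(HFX, HFY)$ in $\sL$, followed by the $\sL$-isomorphisms realizing the cotensor identifications. Unwinding \bref{def:extenders}, this composite is precisely the $\sL$-morphism $[X, HFY] \to [DX, HFY]$ underlying $\Omega^{\kappa^\DD_Y}_X$, so $\Omega^{\kappa^\DD_Y}_X$ is an $\LL$-homomorphism and the criterion of \bref{thm:rel_comm_iff_extenders_lhomom} is met.

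The step I expect to be the main obstacle is purely one of bookkeeping: one must reconcile the $\LL$-algebra structures that $\uX(X, DY)$ and $\uX(DX, DY)$ receive as source and target of the extender (via \bref{par:cotensors_in_lalg}, \bref{par:lalg_from_malg}, and the identification $\DD = [F,G](\HH)$) with the cotensor structures of $[X, HFY]$ and $[DX, HFY]$, using the definitions in \S\bref{sec:can_mor_mnd_on_algs}; once this matching is established, the verification that both arrows above respect the structures follows from the $\X$-enrichment of the adjunctions involved, with no new ideas required. The same argument, with $\tHH$ in place of $\HH$, will also establish that the accessible distribution monad $\tDD$ is linear.
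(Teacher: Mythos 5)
Your proposal is correct and follows essentially the same route as the paper's proof: both invoke \bref{thm:rel_comm_iff_extenders_lhomom}, use \bref{thm:extender_for_adj} to express the extender $\Omega^{\kappa^\DD_Y}_X$ as the adjunction isomorphism followed by the structure morphism of the right adjoint, and then check that each factor underlies a morphism of $\sL$ (your presentation via the composite adjunction of \bref{prop:dist_via_dd} is literally the same adjunction as the paper's hom-cotensor adjunction $[-,R]\dashv\sL(-,R)$, by \bref{thm:assoc_enr_adj}). The only divergence is your closing remark: the paper derives linearity of $\tDD$ not by rerunning this argument with $\tHH$ but by the monomorphism criterion \bref{thm:j_mono_impl_mm_linear} applied to $j^{\tDD}:\tDD\rightarrow\DD$.
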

\begin{proof}
Recall that $\DD$ is induced by the `hom-cotensor' $\X$-adjunction $[-,R] \dashv \sL(-,R):\sL^\op \rightarrow \uX$, so by \bref{thm:extender_for_adj}, the extender $\Omega:\uX(X,DY) \rightarrow \uX(DX,DY)$ $(X,Y \in \X)$ is the composite
$$\uX(X,\sL([Y,R],R)) \xrightarrow{\sim} \sL([Y,R],[X,R]) \xrightarrow{\sL(-,R)} \uX(\sL([X,R],R),\sL([Y,R],R))$$
in which the leftmost morphism is an instance of the given hom-cotensor adjointness.  But the latter morphism underlies an isomorphism $[X,\uL([Y,R],R)] \xrightarrow{\sim} \uL([Y,R],[X,R])$ in $\sL = \uX^\LL$.  Further the rightmost morphism underlies the composite
$$\uL([Y,R],[X,R]) \xrightarrow{\uL(-,R)} \uL(\uL([X,R],R),\uL([Y,R],R)) \rightarrowtail [\sL([X,R],R),\uL([Y,R],R)]$$
in $\sL$.  Whereas by definition $DX = \sL([X,R],R)$, we have shown in \bref{exa:und_linear_spaces_of_free_d_and_td_algs} that the underlying linear space of the free $\DD$-algebra $(DY,\kappa_Y)$ is the internal hom $\uL([Y,R],R)$ in $\sL$; hence $\Omega$ is an $\LL$-homomorphism, so the result now follows from \bref{thm:rel_comm_iff_extenders_lhomom}.
\end{proof}

\begin{PropSub}\label{thm:sub_adm_of_linear_adm_is_linear}
Let $\lambda:\MM \rightarrow \NN$ be a morphism of abstract distribution monads \pbref{def:abs_distn_mnd}.  Suppose that $\NN$ is linear and the components of $\lambda$ are monomorphisms.  Then $\MM$ is linear.
\end{PropSub}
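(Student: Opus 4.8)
The plan is to prove directly that $\Delta^{\MM}\colon\LL\to\MM$ is relatively commutative, i.e.\ that $\otimes^{\Delta^{\MM}}_{XY}=\widetilde{\otimes}^{\Delta^{\MM}}_{XY}$ for all $X,Y\in\X$, and to obtain this from the corresponding (known) equation for $\NN$ by cancelling the monomorphisms $\lambda_{X\boxtimes Y}$ on the left. Here $\lambda\colon\MM\to\NN$ denotes the underlying morphism of $\X$-monads of the given morphism of abstract distribution monads, which by \bref{def:abs_distn_mnd} satisfies $\lambda\cdot\Delta^{\MM}=\Delta^{\NN}$ (and $\xi^{\NN}\cdot\lambda=\xi^{\MM}$).

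The main step is a \emph{relative monoidal naturality} lemma for $\lambda$, analogous to \bref{prop:mnd_mor_monoidal_wrt_tensors}: for all $X,Y\in\X$ the square with top edge $\otimes^{\Delta^{\MM}}_{XY}\colon MX\boxtimes LY\to M(X\boxtimes Y)$, bottom edge $\otimes^{\Delta^{\NN}}_{XY}\colon NX\boxtimes LY\to N(X\boxtimes Y)$, left edge $\lambda_X\boxtimes 1_{LY}$, and right edge $\lambda_{X\boxtimes Y}$ commutes, and likewise with $\widetilde{\otimes}$ in place of $\otimes$. I would prove this by a diagram chase through the four legs defining each relative tensor. The tensorial-strength legs ($t''^{L}$ and $t'^{M}$, resp.\ $t'^{M}$ and $t''^{L}$ for $\widetilde{\otimes}$) are handled by $\X$-naturality of $t'$ and $t''$ in their arguments together with \bref{prop:xnat_transf_commutes_w_tens_str} applied to $\lambda$; the leg $\Delta^{\MM}M$ is handled by combining $\lambda\cdot\Delta^{\MM}=\Delta^{\NN}$ with $\X$-naturality of $\Delta^{\NN}\colon L\to N$; and the leg $\kappa^{\MM}$ is handled by the associativity law of the monad morphism $\lambda$, after threading the horizontal composite $\lambda\lambda$ through the penultimate stage of the diagram. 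The only delicate point is keeping track of which intermediate vertical arrow ($L\lambda$, $M\lambda$, or $\lambda\lambda$) occurs at each stage; I expect this bookkeeping, rather than any conceptual difficulty, to be the main obstacle.

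Granting the lemma, the proof concludes immediately. Since $\NN$ is linear, $\otimes^{\Delta^{\NN}}_{XY}=\widetilde{\otimes}^{\Delta^{\NN}}_{XY}$, so for all $X,Y\in\X$
\[
\lambda_{X\boxtimes Y}\cdot\otimes^{\Delta^{\MM}}_{XY}=\otimes^{\Delta^{\NN}}_{XY}\cdot(\lambda_X\boxtimes 1_{LY})=\widetilde{\otimes}^{\Delta^{\NN}}_{XY}\cdot(\lambda_X\boxtimes 1_{LY})=\lambda_{X\boxtimes Y}\cdot\widetilde{\otimes}^{\Delta^{\MM}}_{XY}.
\]
As $\lambda_{X\boxtimes Y}$ is a monomorphism, this yields $\otimes^{\Delta^{\MM}}_{XY}=\widetilde{\otimes}^{\Delta^{\MM}}_{XY}$; hence $\Delta^{\MM}$ is relatively commutative, i.e.\ $\MM$ is linear. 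Only left-cancellability of a pair of morphisms of $\X$ is used, so monomorphic components in the ordinary sense suffice. As an alternative to the diagram chase one could try to argue via the extender characterization \bref{thm:rel_comm_iff_extenders_lhomom}, but that route would require controlling the extenders of \emph{all} $\MM$-algebras, not merely those restricted along $\lambda$ from $\NN$-algebras, and so appears less direct.
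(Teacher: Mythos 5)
Your proposal is correct and follows essentially the same route as the paper: the paper also establishes the commuting ladder $\lambda_{X\boxtimes Y}\cdot\otimes^{\Delta^{\MM}}_{XY}=\otimes^{\Delta^{\NN}}_{XY}\cdot(\lambda_X\boxtimes 1)$ (and its $\widetilde{\otimes}$ analogue) square-by-square, using naturality of $t''^L$, \bref{prop:xnat_transf_commutes_w_tens_str}, the abstract-distribution-monad condition $\lambda\cdot\Delta^{\MM}=\Delta^{\NN}$ together with naturality of $\Delta$, and the monad-morphism law for $\lambda$, before cancelling the monomorphism $\lambda_{X\boxtimes Y}$. The only (immaterial) difference is which factorization of the horizontal composite $\lambda\circ\lambda$ and which naturality square ($\Delta^{\MM}$ versus $\Delta^{\NN}$) is used in the middle cell.
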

\begin{proof}
We have a diagram
$$
\xymatrix{
MX \boxtimes LY \ar[d]_{\lambda \boxtimes 1} \ar[r]^{t''^L} & L(MX \boxtimes Y) \ar[d]|{L(\lambda \boxtimes 1)} \ar[r]^{Lt'^M} & LM(X \boxtimes Y) \ar[d]|{L\lambda} \ar[r]^{\Delta^\MM M} & MM(X \boxtimes Y) \ar[d]|{\lambda \circ \lambda} \ar[r]^{\kappa^\MM} & M(X \boxtimes Y) \ar[d]^\lambda \\
NX \boxtimes LY \ar[r]_{t''^L} & L(NX \boxtimes Y) \ar[r]_{Lt'^\NN} & LN(X \boxtimes Y) \ar[r]_{\Delta^\NN N} & NN(X \boxtimes Y) \ar[r]_{\kappa^\NN} & N(X \boxtimes Y)
}
$$
in which the composite of the upper row is $\otimes^{\Delta^\MM}_{X Y}$ and that of the lower row is $\otimes^{\Delta^\NN}_{X Y}$.  The leftmost square commutes by the naturality of $t''^L$ \pbref{rem:enr_nat_of_tens_str}, the next-to-leftmost square commutes by \bref{prop:xnat_transf_commutes_w_tens_str}, and the rightmost square commutes since $\lambda$ is a monad morphism.  The remaining square is the periphery of the following diagram
$$
\xymatrix {
LM(X \boxtimes Y) \ar[dd]_{L\lambda} \ar[r]^{\Delta^\MM M} & MM(X \boxtimes Y) \ar[d]^{M\lambda} \\
& MN(X \boxtimes Y) \ar[d]^{\lambda N} \\
LN(X \boxtimes Y) \ar[ur]|{\Delta^\MM N} \ar[r]_{\Delta^\NN N} & NN(X \boxtimes Y)
}
$$
in which the upper cell commutes by the naturality of $\Delta^\MM$ and the lower cell commutes since $\lambda$ is a morphism of abstract distribution monads.  Hence the previous diagram commutes.  We can form an analogous commutative diagram whose upper and lower rows are $\widetilde{\otimes}^{\Delta^\MM}_{X Y}$ and $\widetilde{\otimes}^{\Delta^\NN}_{X Y}$, respectively.  Hence, since $\otimes^{\Delta^\NN}_{X Y} = \widetilde{\otimes}^{\Delta^\NN}_{X Y}$ and $\lambda_{X \boxtimes Y}$ is mono, it follows that $\otimes^{\Delta^\MM}_{X Y} = \widetilde{\otimes}^{\Delta^\MM}_{X Y}$.
\end{proof}

\begin{CorSub}\label{thm:j_mono_impl_mm_linear}
Let $\MM$ be an abstract distribution monad, and suppose that the components of the unique morphism of abstract distribution monads $j^\MM:\MM \rightarrow \DD$ are mono.  Then $\MM$ is linear.
\end{CorSub}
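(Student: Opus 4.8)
The plan is to deduce this immediately from the two preceding results. By \bref{thm:mor_from_absdistnmnd_to_d}, there is a (unique) morphism of abstract distribution monads $j^\MM:\MM \rightarrow \DD$ into the natural distribution monad, and by hypothesis each component $j^\MM_X:MX \rightarrow DX$ is a monomorphism in $\X$. Moreover, $\DD$ is a \emph{linear} abstract distribution monad by \bref{thm:nat_distn_mnd_linear}. Hence the hypotheses of \bref{thm:sub_adm_of_linear_adm_is_linear} are satisfied with $\MM \rightarrow \NN$ taken to be $j^\MM:\MM \rightarrow \DD$, and the conclusion of that proposition is precisely that $\MM$ is linear.

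Concretely, the single step is: invoke \bref{thm:sub_adm_of_linear_adm_is_linear} with $\lambda := j^\MM$ and $\NN := \DD$. I would phrase the proof as a one-line appeal to these two statements, noting that $\DD$ is linear (\bref{thm:nat_distn_mnd_linear}) and that the components of $j^\MM$ are mono by assumption. There is no genuine obstacle here; all the work has already been done in establishing that $\DD$ is linear and in the monomorphism-reflection argument of \bref{thm:sub_adm_of_linear_adm_is_linear}, so the corollary is purely a matter of assembling those two facts. (If one wished to be fully self-contained one could instead re-run the diagram chase of \bref{thm:sub_adm_of_linear_adm_is_linear} directly with $\NN = \DD$, using that $\lambda_{X \btimes Y}$ mono lets one cancel on the right to pass $\otimes^{\Delta^\DD}_{XY} = \widetilde{\otimes}^{\Delta^\DD}_{XY}$ back to $\otimes^{\Delta^\MM}_{XY} = \widetilde{\otimes}^{\Delta^\MM}_{XY}$, but there is no need to repeat that calculation.)
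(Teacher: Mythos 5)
Your proof is correct and is exactly the paper's argument: the corollary is proved by applying \bref{thm:sub_adm_of_linear_adm_is_linear} to $j^\MM:\MM \rightarrow \DD$, using \bref{thm:nat_distn_mnd_linear} for the linearity of $\DD$ and the hypothesis that the components of $j^\MM$ are mono. Nothing further is needed.
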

\begin{proof}
This follows from \bref{thm:nat_distn_mnd_linear} and \bref{thm:sub_adm_of_linear_adm_is_linear}.
\end{proof}

\begin{CorSub}
Given data as in \bref{par:data_for_dcompl_and_pett}, the accessible distribution monad $\tDD$ is a linear abstract distribution monad.
\end{CorSub}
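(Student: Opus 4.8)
The plan is to invoke \bref{thm:j_mono_impl_mm_linear}, which reduces the statement to showing that the components of the unique morphism of abstract distribution monads $j^{\tDD}:\tDD \rightarrow \DD$ \pbref{thm:mor_from_absdistnmnd_to_d} are monomorphisms in $\X$. So it suffices to exhibit each $j^{\tDD}_X$ ($X \in \X$) as a mono.

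First I would recall that $j^{\tDD} = [\acute{F},\grave{G}](i) = GiF$ by its definition in \bref{def:acc_distns}, where $i:\tHH \rightarrow \HH$ is the unique morphism of $\sL$-monads of \bref{def:func_compl}; by \bref{thm:can_mnd_mor_distns} (the commutativity of \eqref{eqn:jtd_mor_abs_distn_mnd}) this $GiF$ is a morphism of abstract distribution monads, and by the uniqueness clause of \bref{thm:mor_from_absdistnmnd_to_d} it coincides with the $j^{\tDD}$ produced there, so \bref{thm:j_mono_impl_mm_linear} indeed applies to it. Its component at $X$ is $j^{\tDD}_X = Gi_{FX}$, where $i_{FX}:\tH FX \rightarrowtail HFX = (FX)^{**}$ is, by \bref{thm:mnd_morph_i} (cf. \bref{rem:acc_distns} 3), a functionally closed embedding in $\sL$ — in particular an embedding, hence a monomorphism, in $\sL$ \pbref{thm:strmono_lstrongmono_xstrmono}. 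Since $G:\sL \rightarrow \X$ is a right adjoint it preserves monomorphisms (indeed strong monomorphisms, as recalled in \bref{rem:base_fwc}), so $j^{\tDD}_X = Gi_{FX}$ is a (strong) monomorphism in $\X$. Then \bref{thm:j_mono_impl_mm_linear} gives that $\tDD$ is linear, and since $\tDD$ is already an abstract distribution monad, this completes the proof.

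I do not expect a genuine obstacle here: the argument is simply the composite of \bref{thm:j_mono_impl_mm_linear}, the structural description of the monad morphism $i$ via functionally closed embeddings, and the fact that the right adjoint $G$ preserves monos. The only point that needs a little care is the bookkeeping identification of the two descriptions of $j^{\tDD}$ — the explicit one $GiF$ of \bref{def:acc_distns} and the abstractly characterized one of \bref{thm:mor_from_absdistnmnd_to_d} — which is taken care of by the uniqueness assertion in the latter, so that \bref{thm:j_mono_impl_mm_linear} may be applied.
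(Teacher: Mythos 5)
Your proposal is correct and follows essentially the same route as the paper: the paper's proof likewise observes (via \bref{rem:acc_distns} and \bref{thm:can_mnd_mor_distns}) that the components of $j^{\tDD}$ are the underlying maps of monomorphisms in $\sL$, hence monomorphisms in $\X$, and then invokes \bref{thm:j_mono_impl_mm_linear}. Your extra care in identifying the two descriptions of $j^{\tDD}$ and in justifying that $G$ preserves monos only makes explicit what the paper leaves implicit.
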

\begin{proof}
By \bref{rem:acc_distns}, \bref{thm:can_mnd_mor_distns}, the components of $j^{\tDD}:\tDD \rightarrow \DD$ are the underlying morphisms in $\X$ of monomorphisms in $\sL$ and hence are monomorphisms, so this follows from \pbref{thm:j_mono_impl_mm_linear}.
\end{proof}

\begin{CorSub}\label{thm:lin_of_vint_wrt_nat_acc_distn_wpt}
Let data as in \bref{par:data_for_dcompl_and_pett} be given, and assume that $\X$ is a locally small well-pointed cartesian closed category.  For each $\DD$-algebra (resp. $\tDD$-algebra) $(Z,a)$, each $X \in \X$, and each $\mu \in DX$ (resp. $\mu \in \tD X$), the associated map
$$\uX(X,Z) \rightarrow Z\;,\;\;\;\;f \mapsto \int f \;d\mu$$
is linear (i.e. is a morphism in $\sL = \uX^\LL$).
\end{CorSub}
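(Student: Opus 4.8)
The plan is to deduce this directly from the linearity of the two distribution monads together with the characterization of linearity in terms of the integral map, both of which have just been established. First I would recall that, under the standing hypotheses of \bref{par:data_for_dcompl_and_pett}, the natural distribution monad $\DD$ is a linear abstract distribution monad by \bref{thm:nat_distn_mnd_linear}, and the accessible distribution monad $\tDD$ is a linear abstract distribution monad by the immediately preceding corollary (which in turn rests on \bref{thm:j_mono_impl_mm_linear} and \bref{thm:nat_distn_mnd_linear}, using that the components of $j^{\tDD}:\tDD \rightarrow \DD$ are monomorphisms).

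Next I would invoke the equivalence of conditions $1$ and $2$ in the unnamed corollary stated just before \bref{thm:extender_for_adj}, which asserts precisely that for an abstract distribution monad $\MM$ on a locally small well-pointed cartesian closed category $\X$, the monad $\MM$ is linear if and only if for every $\MM$-algebra $(Z,a)$, every $X \in \X$, and every $\mu \in MX$, the map $\uX(X,Z) \rightarrow Z$ sending $f$ to $\int f \;d\mu$ is a morphism of $\sL = \uX^\LL$. Applying this with $\MM := \DD$ yields the statement for $\DD$-algebras and natural distributions $\mu \in DX$, and applying it with $\MM := \tDD$ yields the statement for $\tDD$-algebras and accessible distributions $\mu \in \tD X$; this is the entire argument.

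The only point requiring care is bookkeeping rather than genuine difficulty: one must confirm that the hypotheses in force here, namely those of \bref{par:data_for_dcompl_and_pett} augmented by the assumption that $\X$ is locally small well-pointed cartesian closed, are exactly those needed to invoke both cited results at once — the linearity statements for $\DD$ and $\tDD$ require only the data of \bref{par:data_for_dcompl_and_pett}, while the characterization of linearity via the integral map additionally requires well-pointedness, and all of this is available. No further computation is needed; in particular, the analytic content — that each extender $\Omega^a_X$ is an $\LL$-homomorphism — has already been absorbed into \bref{thm:rel_comm_iff_extenders_lhomom} together with \bref{thm:nat_distn_mnd_linear}, so nothing beyond citing these results need be carried out.
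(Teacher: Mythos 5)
Your proposal is correct and is exactly the argument the paper intends: the corollary is stated without proof precisely because it follows immediately from the linearity of $\DD$ \pbref{thm:nat_distn_mnd_linear} and of $\tDD$ (the preceding corollary, via \bref{thm:j_mono_impl_mm_linear}), combined with the equivalence of linearity and linearity of the integral map in the unnamed corollary preceding \bref{thm:extender_for_adj}. Your hypothesis bookkeeping is also accurate: the data of \bref{par:data_for_dcompl_and_pett} supply everything needed for the linearity results, and the added well-pointedness assumption is exactly what the characterization via the integral map requires.
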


\section{A Fubini theorem for vector-valued integration}

Let $\X = (\X,\boxtimes,I)$ be a symmetric monoidal closed category, and let $\MM = (M,\delta,\kappa)$ be an $\X$-monad on $\uX$.

\begin{DefSub}\label{def:f_a}
Given an $\MM$-algebra $(Z,a)$ and a morphism $f:X \boxtimes Y \rightarrow Z$ in $\X$, let us define
\begin{enumerate}
\item[] $f_a := \left(X \boxtimes MY \xrightarrow{t''_{X Y}} M(X \boxtimes Y) \xrightarrow{Mf} MZ \xrightarrow{a} Z\right)$,
\item[] $f^a := \left(MX \boxtimes Y \xrightarrow{t'_{X Y}} M(X \boxtimes Y) \xrightarrow{Mf} MZ \xrightarrow{a} Z\right)$,
\end{enumerate}
where $t'$, $t''$ are as defined in \bref{def:tens_str}.
\end{DefSub}

\begin{RemSub}
For the sake of illustration, suppose that $\X$ is a locally small well-pointed cartesian closed category.  Then, given data as in \bref{def:f_a}, the morphisms $f_a$, $f^a$ are given by
\begin{enumerate}
\item $f_a(x,\nu) = \int_y f(x,y) \;d\nu\;,\;\;\;(x,\nu) \in X \times MY$,
\item $f^a(\mu,y) = \int_x f(x,y) \;d\mu\;,\;\;\;(\mu,y) \in MX \times Y$
\end{enumerate}
in the notation of \bref{rem:int_notn_appl_arb_mnd}, \bref{def:vint_in_malgs}.
\end{RemSub}
\begin{proof}
The morphisms $t''_{X Y}:X \times MY \rightarrow M(X \times Y)$ and $t'_{X Y}:MX \times Y \rightarrow M(X \times Y)$ are given by
\begin{enumerate}
\item[] $t''_{X Y}(x,\nu) = (M(\lambda y.(x,y)))(\nu)$,
\item[] $t'_{X Y}(\mu,y) = (M(\lambda x.(x,y)))(\mu)$,
\end{enumerate}
where we have employed the notation of lambda calculus, so that for each $y \in Y$, the expression $\lambda x.(x,y)$ denotes the morphism $X \rightarrow X \times Y$ given by $x \mapsto (x,y)$.  Hence
\begin{enumerate}
\item[] $f_a(x,\nu) = (a \cdot Mf \cdot M(\lambda y.(x,y)))(\nu) = (a \cdot M(\lambda y.f(x,y)))(\nu) = \int_y f(x,y) \;d\nu\;$,
\item[] $f^a(\mu,y) = (a \cdot Mf \cdot M(\lambda x.(x,y)))(\mu) = (a \cdot M(\lambda x.f(x,y)))(\mu) = \int_x f(x,y) \;d\mu\;$.
\end{enumerate}
\end{proof}

\begin{PropSub}\label{thm:abs_vv_fubini}
The $\X$-monad $\MM$ is commutative if and only if for each $\MM$-algebra $(Z,a)$ and each morphism $f:X \boxtimes Y \rightarrow Z$ in $\X$, the composites
\begin{enumerate}
\item[] $MX \boxtimes MY \xrightarrow{t'_{X MY}} M(X \boxtimes MY) \xrightarrow{Mf_a} MZ \xrightarrow{a} Z$
\item[] $MX \boxtimes MY \xrightarrow{t''_{MX Y}} M(MX \boxtimes Y) \xrightarrow{Mf^a} MZ \xrightarrow{a} Z$
\end{enumerate}
are equal.  If so, then both are equal to the composite
\begin{enumerate}
\item[] $MX \boxtimes MY \xrightarrow{\otimes_{X Y}} M(X \boxtimes Y) \xrightarrow{Mf} MZ \xrightarrow{a} Z\;.$
\end{enumerate}
\end{PropSub}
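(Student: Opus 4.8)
The plan is to reduce both composites displayed in the statement to the form $a \cdot Mf \cdot (\text{one of } \otimes_{X Y}, \widetilde{\otimes}_{X Y})$ from \bref{def:comm_mnd}, after which the equivalence is immediate.

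First I would unwind the first composite using \bref{def:f_a}: since $f_a = a \cdot Mf \cdot t''_{X Y}$, functoriality of $M$ gives $Mf_a = M a \cdot M M f \cdot M t''_{X Y}$, so the first composite is $a \cdot M a \cdot M M f \cdot M t''_{X Y} \cdot t'_{X MY}$. I would then apply the associativity axiom $a \cdot M a = a \cdot \kappa_Z$ for the $\MM$-algebra $(Z,a)$, followed by naturality of $\kappa : MM \to M$ at the morphism $f$ (i.e. $\kappa_Z \cdot MMf = Mf \cdot \kappa_{X \boxtimes Y}$), to rewrite this as $a \cdot Mf \cdot (\kappa_{X \boxtimes Y} \cdot M t''_{X Y} \cdot t'_{X MY})$. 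By \bref{def:comm_mnd} the parenthesized composite is exactly $\widetilde{\otimes}_{X Y}$, so the first composite equals $a \cdot Mf \cdot \widetilde{\otimes}_{X Y}$. Running the same argument with $f^a$ in place of $f_a$ — now with $t'_{X Y}$ and $t''_{MX Y}$ occupying the relevant slots — identifies the second composite with $a \cdot Mf \cdot \otimes_{X Y}$, which is also the third composite displayed in the statement.

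Given these two identities, the forward direction is immediate: if $\MM$ is commutative then $\otimes_{X Y} = \widetilde{\otimes}_{X Y}$ by definition, so all three composites coincide. For the converse, I would specialize the assumed equality to the free $\MM$-algebra $(M(X \boxtimes Y), \kappa_{X \boxtimes Y})$ together with $f := \delta_{X \boxtimes Y}$; then $a \cdot Mf = \kappa_{X \boxtimes Y} \cdot M\delta_{X \boxtimes Y} = 1$ by the monad unit law, so the hypothesis collapses to $\widetilde{\otimes}_{X Y} = \otimes_{X Y}$, i.e. commutativity of $\MM$.

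There is no real conceptual obstacle here — it is a short diagram chase — so the main thing to get right is the bookkeeping: I must keep careful track of the subscripts on the strength morphisms $t'$, $t''$ (from \bref{def:tens_str}) so that each parenthesized composite matches the definitions of $\otimes_{X Y}$ and $\widetilde{\otimes}_{X Y}$ verbatim, and I must be sure to apply naturality of $\kappa$ to the arrow $f$ rather than to a strength morphism.
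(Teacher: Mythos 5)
Your proposal is correct and follows essentially the same route as the paper's proof: unwind $Mf_a$ via functoriality, apply the associativity law $a \cdot Ma = a \cdot \kappa_Z$ and naturality of $\kappa$ at $f$ to identify the first composite with $a \cdot Mf \cdot \widetilde{\otimes}_{X Y}$ (and symmetrically the second with $a \cdot Mf \cdot \otimes_{X Y}$), then specialize to the free algebra $(M(X \boxtimes Y),\kappa_{X \boxtimes Y})$ with $f = \delta_{X \boxtimes Y}$ for the converse. The subscript bookkeeping you flag does check out against \bref{def:comm_mnd} exactly as you describe.
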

\begin{proof}
The first composite is
$$
\begin{array}{lll}
a \cdot Mf_a \cdot t'_{X MY} & = & a \cdot M(a \cdot Mf \cdot t''_{X Y}) \cdot t'_{X MY} \\
 & = & a \cdot Ma \cdot MMf \cdot Mt''_{X Y} \cdot t'_{X MY} \\
 & = & a \cdot \kappa_Z \cdot MMf \cdot Mt''_{X Y} \cdot t'_{X MY} \\
 & = & a \cdot Mf \cdot \kappa_{X \boxtimes Y} \cdot Mt''_{X Y} \cdot t'_{X MY} \\
 & = & a \cdot Mf \cdot \widetilde{\otimes}_{X Y}
\end{array}
$$
and, similarly, the second composite is $a \cdot Mf \cdot \otimes_{X Y}$, where the morphisms $\widetilde{\otimes}_{X Y}, \otimes_{X Y}$ are defined in \bref{def:comm_mnd}.  But the commutativity of $\MM$ is the condition that $\widetilde{\otimes} = \otimes$, which therefore entails the equality of the two given composites.  Conversely, taking $(Z,a)$ to be the free $\MM$-algebra $(M(X \boxtimes Y),\kappa_{X \boxtimes Y})$ and letting $f := \delta_{X \boxtimes Y}$, the upper and lower composites $a \cdot Mf \cdot \widetilde{\otimes}_{X Y}$ and $a \cdot Mf \cdot \otimes_{X Y}$ become exactly $\widetilde{\otimes}_{X Y}$ and $\otimes_{X Y}$, respectively.
\end{proof}

Kock \cite{Kock:Dist} has claimed that for an arbitrary $\X$-monad $\TT$ on a cartesian closed category $\X$, the equality of morphisms $\otimes_{X Y} = \widetilde{\otimes}_{X Y}$ that is required in order that $\TT$ be commutative may be interpreted as a form of Fubini's Theorem.  The following proposition substantiates this claim in the well-pointed case.

\begin{CorSub}\label{thm:vv_fubini_arb_mnd}
Let $\X$ be a locally small well-pointed cartesian closed category, and let $\MM$ be an $\X$-monad on $\uX$.  Then $\MM$ is commutative if and only if
\begin{equation}\label{eq:vv_fub}\int_y \int_x f(x,y) \;d\mu d\nu = \int f \;d(\mu \otimes \nu) = \int_x \int_y f(x,y) \;d\nu d\mu\;.\end{equation}
for each $\MM$-algebra $(Z,a)$, each morphism $f:X \times Y \rightarrow Z$ in $\X$, and each pair $(\mu,\nu) \in MX \times MY$.  (Here $\mu \otimes \nu := \otimes_{X Y}(\mu,\nu)$.)
\end{CorSub}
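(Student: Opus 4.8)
The plan is to derive this at once from Proposition \bref{thm:abs_vv_fubini} together with the well-pointedness of $\X$. Recall that Proposition \bref{thm:abs_vv_fubini} states that $\MM$ is commutative iff, for every $\MM$-algebra $(Z,a)$ and every $f:X \times Y \rightarrow Z$, the two morphisms $a \cdot Mf_a \cdot t'_{X MY}$ and $a \cdot Mf^a \cdot t''_{MX Y}$ from $MX \times MY$ to $Z$ coincide, and that in that case both also coincide with $a \cdot Mf \cdot \otimes_{X Y}$. Since $\X$ is well-pointed, the representable functor $\X(1,-):\X \rightarrow \Set$ is faithful and preserves finite products, so a pair of morphisms $MX \times MY \rightarrow Z$ is equal iff the two agree on every point $(\mu,\nu) \in MX \times MY$. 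Thus the entire content of the corollary is to translate the equality (or chain of equalities) of these morphisms into the equality (or chain of equalities) \eqref{eq:vv_fub} of their values at an arbitrary pair $(\mu,\nu)$.

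To carry this out, first I would evaluate the three composites at a pair $(\mu,\nu) \in MX \times MY$. Using the well-pointed descriptions $t'_{X Y}(\mu,y) = (M(\lambda x.(x,y)))(\mu)$ and $t''_{X Y}(x,\nu) = (M(\lambda y.(x,y)))(\nu)$ and the identities $f_a(x,\nu) = \int_y f(x,y)\,d\nu$, $f^a(\mu,y) = \int_x f(x,y)\,d\mu$ recorded (with proof) in the Remark following \bref{def:f_a}, together with the definition of the integral in an $\MM$-algebra \pbref{def:vint_in_malgs}, a direct lambda-calculus computation gives
$$(a \cdot Mf_a \cdot t'_{X MY})(\mu,\nu) = \int_x f_a(x,\nu)\,d\mu = \int_x \int_y f(x,y)\,d\nu\,d\mu,$$
$$(a \cdot Mf^a \cdot t''_{MX Y})(\mu,\nu) = \int_y f^a(\mu,y)\,d\nu = \int_y \int_x f(x,y)\,d\mu\,d\nu,$$
and $(a \cdot Mf \cdot \otimes_{X Y})(\mu,\nu) = (a \cdot Mf)(\mu \otimes \nu) = \int f\,d(\mu \otimes \nu)$, where $\mu \otimes \nu := \otimes_{X Y}(\mu,\nu)$.

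With this dictionary the proof is formal. If $\MM$ is commutative, Proposition \bref{thm:abs_vv_fubini} makes all three morphisms equal, hence they agree at every $(\mu,\nu)$, which is precisely \eqref{eq:vv_fub}. Conversely, if \eqref{eq:vv_fub} holds for all $(Z,a)$, all $f:X \times Y \rightarrow Z$, and all $(\mu,\nu)$, then for each such $(Z,a)$ and $f$ the morphisms $a \cdot Mf_a \cdot t'_{X MY}$ and $a \cdot Mf^a \cdot t''_{MX Y}$ agree pointwise and hence are equal by well-pointedness; since this holds for all $(Z,a)$ and $f$, Proposition \bref{thm:abs_vv_fubini} gives that $\MM$ is commutative.

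The only step with any room for error is the evaluation of the three composites and, in particular, matching the order of iterated integration with the left- and right-hand sides of \eqref{eq:vv_fub}; this is bookkeeping in the internal language rather than a genuine obstacle, but I would write out the manipulations of $t'$, $t''$, $f_a$ and $f^a$ explicitly to be sure the variables are integrated in the right order. No new ideas beyond Proposition \bref{thm:abs_vv_fubini} and the faithfulness of $\X(1,-)$ are needed.
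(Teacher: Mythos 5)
Your proposal is correct and follows essentially the same route as the paper: evaluate the three composites of Proposition \bref{thm:abs_vv_fubini} at a point $(\mu,\nu)$ using the pointwise descriptions of $t'$, $t''$, $f_a$, $f^a$, identify them with the three terms of \eqref{eq:vv_fub}, and conclude via well-pointedness. Your pointwise computations and the matching of integration order with the left- and right-hand sides agree with the paper's.
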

\begin{proof}
The first composite in \bref{thm:abs_vv_fubini} sends $(\mu,\nu)$ to
$$
\begin{array}{lllll}
(a \cdot Mf_a \cdot M(\lambda x.(x,\nu)))(\mu) & = & (a \cdot M(\lambda x.f_a(x,\nu)))(\mu) & & \\
 & = & (a \cdot M(\lambda x.\int_y f(x,y) \;d\nu))(\mu) & = & \int_x \int_y f(x,y) \;d\nu\;d\mu\;. \\
\end{array}
$$
Similarly, the second sends $(\mu,\nu)$ to the leftmost value in \eqref{eq:vv_fub}; the third sends $(\mu,\nu)$ to the middle value.
\end{proof}

\begin{ThmSub}\label{thm:vv_fub_acc_distn}
Suppose data as in \bref{par:data_for_dcompl_and_pett} are given, with $\X$ a locally small well-pointed cartesian closed category.  Let $(Z,a)$ be a $\tDD$-algebra, let $f:X \times Y \rightarrow Z$ in $\X$, and let $\mu,\nu$ be accessible distributions on $X,Y$, respectively.  Then
\begin{equation}\label{eq:vv_fub_2}\int_y \int_x f(x,y) \;d\mu d\nu = \int f \;d(\mu \otimes \nu) = \int_x \int_y f(x,y) \;d\nu d\mu\;.\end{equation}
\end{ThmSub}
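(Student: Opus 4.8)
The plan is to derive \bref{thm:vv_fub_acc_distn} as an immediate consequence of the general Fubini-type result \bref{thm:vv_fubini_arb_mnd} applied to the $\X$-monad $\MM := \tDD$. The key point is that \bref{thm:vv_fubini_arb_mnd} requires only that $\MM$ be an $\X$-monad on $\uX$ with $\X$ a locally small well-pointed cartesian closed category, and that $\MM$ be commutative; it then yields exactly the displayed chain of equalities \eqref{eq:vv_fub} for every $\MM$-algebra $(Z,a)$, every $f : X \times Y \rightarrow Z$, and every pair $(\mu,\nu) \in MX \times MY$, where $\mu \otimes \nu := \otimes_{X Y}(\mu,\nu)$ and the integrals are those of \bref{def:vint_in_malgs} (equivalently the lambda-calculus integrals of \bref{rem:int_notn_appl_arb_mnd}). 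So the whole argument reduces to verifying that the hypotheses of \bref{thm:vv_fubini_arb_mnd} are met in the setting of \bref{par:data_for_dcompl_and_pett}.

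First I would note that $\tDD$ is an $\X$-monad on $\uX$: this is how it is constructed in \bref{def:acc_distns}, as $\tDD = [\acute{F},\grave{G}](G_*(\tHH))$, via the monoidal functor $[\acute{F},\grave{G}]$ of \bref{thm:mon_func_detd_by_adj} applied to an $\X$-monad. Second, and crucially, $\tDD$ is commutative: this is precisely \bref{thm:td_comm}, which is available under the standing hypotheses of \bref{par:data_for_dcompl_and_pett} since those include that $\X$ is finitely well-complete, $\LL$ is a commutative $\X$-monad with tensor products of algebras, and hence (by \bref{exa:smc_adj_with_l_fwc}, \bref{thm:smclosed_em_adj}) that $\sL$ is symmetric monoidal closed and finitely well-complete — exactly the data needed to invoke \bref{thm:td_comm}. (Alternatively, since $\X$ is assumed well-pointed cartesian closed here, one may also cite the fact that $\tDD$ is linear together with its Fubini-type statement, but commutativity via \bref{thm:td_comm} is the cleanest route.) Third, $\X$ is a locally small well-pointed cartesian closed category by hypothesis, so \bref{thm:vv_fubini_arb_mnd} applies verbatim.

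Applying \bref{thm:vv_fubini_arb_mnd} with $\MM := \tDD$ to the given $\tDD$-algebra $(Z,a)$, the map $f : X \times Y \rightarrow Z$, and the pair of accessible distributions $(\mu,\nu) \in \tD X \times \tD Y$, we obtain exactly the asserted equation \eqref{eq:vv_fub_2}, where $\mu \otimes \nu$ is the accessible distribution $\otimes^{\tDD}_{X Y}(\mu,\nu)$ on $X \times Y$; this is the same $\mu \otimes \nu$ appearing in the scalar Fubini theorem \bref{thm:fub_for_acc_distns} 1 (by \bref{thm:abs_vv_fubini}, since $\otimes = \widetilde{\otimes}$ for the commutative monad $\tDD$), so the notation is consistent. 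I do not anticipate a genuine obstacle here: the work is entirely front-loaded into \bref{thm:td_comm} and \bref{thm:vv_fubini_arb_mnd}, both already established. The only point requiring a line of care is confirming that "accessible distribution on $X$" means exactly "element of $\tD X = M X$" for $\MM = \tDD$, which is the convention fixed in \bref{rem:acc_distns} and \bref{rem:dun_pett_ints_wrt_abs_distn}, and that the integral symbol $\int f\,d\mu$ in the statement is the one of \bref{def:vint_in_malgs} instantiated at $\MM = \tDD$ — both of which are immediate from the definitions.
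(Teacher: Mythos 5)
Your proposal is correct and matches the paper's own proof exactly: the paper also deduces \eqref{eq:vv_fub_2} by applying \bref{thm:vv_fubini_arb_mnd} to $\MM = \tDD$, using the commutativity of $\tDD$ established in \bref{thm:td_comm}. Your additional verification of the hypotheses is sound but not elaborated in the paper, which treats these points as immediate.
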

\begin{proof}
Since the accessible distribution monad $\tDD$ is commutative \pbref{thm:td_comm}, this follows from \bref{thm:vv_fubini_arb_mnd}.
\end{proof}

\begin{CorSub}\label{thm:vv_fub_conv_sp}
Let $(Z,a)$ be an algebra of the natural distribution monad on the category of convergence spaces \pbref{exa:nat_dist_conv}.  Let $f:X \times Y \rightarrow Z$ be a continuous map, and let $\mu:[X,R] \rightarrow R$, $\nu:[Y,R] \rightarrow R$ be continuous linear functionals.  Then the equation \eqref{eq:vv_fub_2} holds.
\end{CorSub}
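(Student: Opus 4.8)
The plan is to obtain Corollary \ref{thm:vv_fub_conv_sp} as a direct specialization of Theorem \ref{thm:vv_fub_acc_distn} to the case $\X = \Conv$. First I would verify that $\Conv$, together with the commutative $\Conv$-monad $\LL$ whose Eilenberg--Moore category is $\RMod(\Conv)$, fits the standing hypotheses of \ref{par:data_for_dcompl_and_pett}: the category $\Conv$ is a locally small well-pointed cartesian closed category \pbref{exa:conv_sm_sp} which is finitely well-complete \pbref{exa:fwc}, and by \ref{thm:rmod_commutative_monadic} the category $\RMod(\Conv)$ is (isomorphic to) the category of algebras of a commutative $\Conv$-monad with tensor products of algebras. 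Hence the whole apparatus of Chapters \ref{ch:nat_acc_dist} and \ref{ch:dcompl_vint} applies here, and in particular the natural distribution monad $\DD$ on $\Conv$ \pbref{exa:nat_dist_conv} and the accessible distribution monad $\tDD$ are both available, with $\tDD$ commutative by \ref{thm:td_comm}.

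Next I would invoke \ref{exa:acc_nat_dist_coincide_for_conv}, which records that for $\X = \Conv$ the $\X$-monads $\tDD$ and $\DD$ are \emph{identical} (with $j^{\tDD}:\tDD \rightarrow \DD$ the identity). Consequently an algebra of the natural distribution monad is literally the same datum as a $\tDD$-algebra, so the given $(Z,a)$ is a $\tDD$-algebra; and for each convergence space the accessible distributions coincide with the natural distributions, which by \ref{exa:nat_dist_conv} are exactly the continuous linear functionals $[X,R]\rightarrow R$. Thus the functionals $\mu:[X,R]\rightarrow R$ and $\nu:[Y,R]\rightarrow R$ are accessible distributions on $X$ and $Y$, and the continuous map $f:X\times Y\rightarrow Z$ is a morphism of $\X$ into the carrier of a $\tDD$-algebra.

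Finally I would apply Theorem \ref{thm:vv_fub_acc_distn} verbatim with $\X := \Conv$, the $\tDD$-algebra $(Z,a)$, the map $f$, and the accessible distributions $\mu,\nu$, obtaining \eqref{eq:vv_fub_2} with the iterated integrals and the integral against $\mu\otimes\nu = \otimes_{XY}(\mu,\nu)$ read off via \ref{def:vint_in_malgs} and \ref{def:f_a}. I do not expect a genuine obstacle: the only point requiring care is that the coincidence $\tDD = \DD$ in \ref{exa:acc_nat_dist_coincide_for_conv} be an honest equality of $\X$-monads rather than a mere isomorphism, so that no transport of algebra structure is needed --- and that is precisely what that example asserts --- while the commutativity input to \ref{thm:vv_fub_acc_distn} is supplied by \ref{thm:td_comm}.
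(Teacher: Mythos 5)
Your proof is correct and takes essentially the same route as the paper's: the paper deduces the corollary from the commutativity of $\DD$ on $\Conv$ \pbref{thm:nat_dist_on_conv_comm} via the general criterion \pbref{thm:vv_fubini_arb_mnd}, and since \pbref{thm:nat_dist_on_conv_comm} itself rests on the commutativity of $\tDD$ \pbref{thm:td_comm} together with the coincidence $\tDD \cong \DD$ for convergence spaces, your specialization of \pbref{thm:vv_fub_acc_distn} under the identification recorded in \pbref{exa:acc_nat_dist_coincide_for_conv} unwinds to exactly the same ingredients in a slightly permuted order. Your attention to the point that $\tDD = \DD$ is an honest equality of $\X$-monads (so no transport of algebra structure is needed) is well placed and is indeed what that example provides.
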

\begin{proof}
This follows from \pbref{thm:nat_dist_on_conv_comm} via \pbref{thm:vv_fubini_arb_mnd}.
\end{proof}

\section{Pettis linear spaces}

In the present section, we work with data as given in \bref{par:data_for_dcompl_and_pett}, and we fix an abstract distribution monad $(\MM,\Delta,\xi)$.  Write $\MM = (M,\delta,\kappa)$.

\begin{DefSub}\label{def:pett_obs}\emptybox
\begin{enumerate}
\item We say that an object $E$ of $\sL$ is an \textit{$\MM$-Pettis object} if $E$ is functionally separated and there exists a (necessarily unique) morphism $a_E$ in $\X$ such that
$$
\xymatrix{
MGE \ar[dr]_{a_E} \ar[rr]^{\xi_E} &                          & GHE \\
                                  & GE \ar[ur]_{G\fd_E} & 
}
$$
commutes.  ($a_E$ is unique if it exists, since $\fd_E$ is mono and $G$ preserves monos.)
\item Given an object $E \in \sL$ and a morphism $a:MGE \rightarrow GE$, we say that \textit{$a$ is an $\MM$-Pettis structure for $E$} if $E$ is functionally separated and the diagram formed as above commutes --- equivalently, if $E$ is $\MM$-Pettis and $a = a_E$.
\item We denote by $\pL{\MM}$  the full sub-$\X$-category of $\sL$ consisting of the $\MM$-Pettis objects.
\end{enumerate}
\end{DefSub}

\begin{ExaSub}\label{def:charn_pett_malgs_via_pett_integs}
Suppose that $\X$ is cartesian closed, locally small, and well-pointed, and suppose also that the faithful functor $U = \X(1,-):\X \rightarrow \Set$ has a left adjoint (as is the case when $U$ is topological, and in particular when $\X$ is $\Conv$ or $\Smooth$).  Then, for a separated linear space $E \in \sL$, the following are equivalent:
\begin{enumerate}
\item $E$ is $\MM$-Pettis. 
\item For each map $f:X \rightarrow E$ in $\X$ and each $\mu \in MX$, the Pettis-type integral $\pint{} f\;d\mu$ exists \pbref{rem:dun_pett_ints_wrt_abs_distn}.
\end{enumerate}
\end{ExaSub}
\begin{proof}
Under the present hypotheses, it is easy to show that every strong mono in $\X$ is $U$-cartesian.  In particular, $G\fd_E$ is a strong mono in $\X$ and hence is $U$-cartesian (since $\fd_E$ is a strong mono in $\sL$ and any right adjoint preserves strong monos, \cite{Wy:QuTo}  10.5).  But one readily finds that 2 holds iff $U\xi_E$ factors through the monomorphism $UG\fd_E$ in $\Set$, and the result follows.
\end{proof}

\begin{ThmSub}\label{thm:pett_spaces_are_malgs}
For any $\MM$-Pettis linear space $E \in \sL$, the underlying space $GE$ is an $\MM$-algebra when equipped with the $\MM$-Pettis structure $a_E:MGE \rightarrow GE$ of $E$.
\end{ThmSub}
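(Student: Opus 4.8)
The plan is to verify directly that the pair $(GE, a_E)$ satisfies the unit and associativity axioms of an $\MM$-algebra, where $a_E \colon MGE \to GE$ is characterized by $G\fd_E \cdot a_E = \xi_E$ (\bref{def:pett_obs}). The key reduction is that, since $E$ is functionally separated and $G$ preserves monos, $G\fd_E \colon GE \rightarrowtail GHE$ is a monomorphism; hence for each of the two axioms it suffices to check the required equation after post-composing with $G\fd_E$, at which point both sides become expressible purely in terms of $\xi_E$ together with the structure of the monads $\MM$, $\HH$ and of the morphism of $\X$-monads $(\xi, G) \colon \MM \to \HH$ supplied by \bref{def:abs_distn_mnd} (and \bref{def:can_mnd_mor_distns}).

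For the unit law, I would post-compose $a_E \cdot \delta^\MM_{GE}$ with $G\fd_E$, obtaining $\xi_E \cdot \delta^\MM_{GE}$, and then invoke the Unit Law for the monad morphism $(\xi, G) \colon \MM \to \HH$ as in \bref{par:general_mnd_mor}, which at the object $E$ reads $\xi_E \cdot \delta^\MM_{GE} = G\fd_E$; since $G\fd_E = G\fd_E \cdot 1_{GE}$ and $G\fd_E$ is monic, this forces $a_E \cdot \delta^\MM_{GE} = 1_{GE}$. For the associativity law I would likewise post-compose the two candidate composites $a_E \cdot \kappa^\MM_{GE}$ and $a_E \cdot M a_E$ (both $MMGE \to GE$) with $G\fd_E$, reducing to the identity $\xi_E \cdot \kappa^\MM_{GE} = \xi_E \cdot M a_E$. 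The left side is rewritten using the Associativity Law of \bref{par:general_mnd_mor} for $(\xi, G)$, instantiated with $t = M$, $g = G$, $s = H$, $\mu = \kappa^\MM$, $\nu = \fk$, $\lambda = \xi$: evaluated at $E$ this gives $\xi_E \cdot \kappa^\MM_{GE} = G\fk_E \cdot \xi_{HE} \cdot M\xi_E$. Expanding $M\xi_E = MG\fd_E \cdot M a_E$, applying the naturality of $\xi \colon MG \Rightarrow GH$ at the morphism $\fd_E \colon E \to HE$ of $\sL$ in the form $\xi_{HE} \cdot MG\fd_E = GH\fd_E \cdot \xi_E$, and then the monad unit identity $\fk_E \cdot H\fd_E = 1_{HE}$ for $\HH$, the expression collapses to $G\fk_E \cdot GH\fd_E \cdot \xi_E \cdot M a_E = \xi_E \cdot M a_E$, which is what was needed; monicity of $G\fd_E$ then yields $a_E \cdot \kappa^\MM_{GE} = a_E \cdot M a_E$.

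The argument is a pure diagram chase and I anticipate no genuine obstacle; the one point demanding care is the bookkeeping of the whiskerings when specializing the general Associativity Law of \bref{par:general_mnd_mor}, in particular reading the composite $G\fk \cdot \xi H \cdot M\xi$ at component $E$ correctly (so that $\xi H$ appears as $\xi_{HE}$ rather than $H$ applied to $\xi$, and $M\xi$ as $M$ applied to $\xi_E$). An alternative, slightly more conceptual organization would be to note that $(HE,\fk_E)$ is the free $\HH$-algebra on $E$, so by \bref{par:underlying_malg_of_halg} and \bref{prop:enr_func_induced_by_mnd_mor} the pair $(GHE,\,G\fk_E \cdot \xi_{HE})$ is an $\MM$-algebra, that the Pettis condition makes $G\fd_E$ commute with these two actions, and that a mono into the carrier of an $\MM$-algebra which is compatible with the action carries an $\MM$-algebra structure (a short consequence of the naturality of $\delta^\MM$, $\kappa^\MM$ and the algebra laws for $(GHE,\,G\fk_E \cdot \xi_{HE})$); this route runs through the same computations but packages them more transparently.
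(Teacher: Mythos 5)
Your proposal is correct and is essentially the paper's own proof: both laws are checked by post-composing with the monomorphism $G\fd_E$ and then invoking the unit and associativity laws for the monad morphism $(\xi,G):\MM \rightarrow \HH$, the naturality of $\xi$ at $\fd_E$, and the identity $\fk_E \cdot H\fd_E = 1_{HE}$ — exactly the cells in the paper's two diagrams. The whiskering bookkeeping you flag is handled correctly ($\xi H$ at $E$ is indeed $\xi_{HE}$ and $M\xi$ at $E$ is $M\xi_E$), so there is nothing to repair.
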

\begin{proof}
The periphery of the following diagram commutes since $(\xi,G)$ is a monad morphism, so since the lower two cells commute and $G\fd_E$ is mono, the upper cell commutes; i.e. the unit law for $(GE,a_E)$ holds.
$$
\xymatrix{
GE \ar[rr]^{\delta_{GE}} \ar@{=}[dr] \ar@/_3ex/[ddr]_{G\fd_E} & & MGE \ar[dl]|{a_E} \ar@/^3ex/[ddl]^{\xi_E} \\
 & GE \ar[d]|{G\fd_E} & \\
 & GHE &
}
$$
The diagrammatic associativity law for $(GE,a_E)$ appears as the square on the left side of the following diagram.
$$
\xymatrix{
 & & MGHE \ar[r]^{\xi_{HE}} & GHHE \ar[dd]^{G\fk_E} \\
MMGE \ar[d]_{\kappa_{GE}} \ar[r]^{Ma_E} \ar@/^2ex/[urr]^{M\xi_E} & MGE \ar[d]_{a_E}  \ar[r]|{\xi_E} \ar[ur]|{MG\fd_E} & GHE \ar[dr]_1 \ar[ur]|{GH\fd_E} & \\
MGE \ar@/_3ex/[rrr]|{\xi_E} \ar[r]^{a_E} & GE \ar[rr]|{G\fd_E} & & GHE
}
$$
All the other cells in the diagram commute, and the periphery commutes since $(\xi,G)$ is a morphism of $\X$-monads, so since $G\fd_E$ is mono, the square in question commutes.
\end{proof}

\begin{RemSub}\label{rem:integ_in_pett_malg_is_pett_integ}
Assume for the sake of illustration that $\X$ is cartesian closed, locally small, and well-pointed.  If $E$ is an $\MM$-Pettis linear space, then $(GE,a_E)$ is an $\MM$-algebra, so for any map $f:X \rightarrow GE$ and any $\mu \in MX$ we can take the associated integral $\int f\;d\mu = (a_E \cdot Mf)(\mu)$ \pbref{def:vint_in_malgs}, which is equally the Pettis-type integral $\pint{} f\;d\mu$ \pbref{rem:dun_pett_ints_wrt_abs_distn}.
\end{RemSub}

\begin{DefSub}
Given an $\MM$-Pettis object $E \in \sL$, we denote by $AE$ the $\MM$-algebra $(GE,a_E)$.
\end{DefSub}

\begin{ThmSub}\label{thm:pett_obs_malgs}\emptybox
\begin{enumerate}
\item Given $\MM$-Pettis linear spaces $E,E' \in \sL$ and a linear map $h:E \rightarrow E'$, the underlying map $Gh$ of $h$ is an $\MM$-homomorphism $Gh:(GE,a_E) \rightarrow (GE',a_{E'})$ (and hence preserves the integral, \bref{thm:homom_via_integ}).
\item For all $\MM$-Pettis objects $E,E' \in \sL$, there is a unique morphism $A_{E E'}$ in $\X$ such that the triangle
$$
\xymatrix{
\pL{\MM}(E,E') \ar[dr]_{A_{E E'}} \ar@{=}[r] & \sL(E,E') \ar@{ >->}[r]^{G_{E E'}} & \uX(GE,GE') \\
& \uX^\MM(AE,AE') \ar@{ >->}[ur]_{G^\MM_{AE AE'}} &
}
$$
commutes.
\end{enumerate}
\end{ThmSub}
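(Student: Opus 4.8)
The plan is to establish statement 1 by a short diagonal chase and then obtain statement 2 as the same argument transported one categorical level upward, packaged through \bref{thm:factn_through_faithful_vfunctor}.

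\emph{Proof of 1.} Let $h\colon E\to E'$ be a linear map between $\MM$-Pettis objects; we must show $Gh\cdot a_E = a_{E'}\cdot M(Gh)$. Since $E'$ is functionally separated, $\fd_{E'}$ is an embedding, so $G\fd_{E'}$ is a monomorphism in $\X$ ($G$ being a right adjoint), and it suffices to check the equality after post-composing with $G\fd_{E'}$. On one side, the naturality of $\fd\colon 1\to H$ gives $\fd_{E'}\cdot h = Hh\cdot\fd_E$, whence $G\fd_{E'}\cdot Gh\cdot a_E = G(Hh)\cdot G\fd_E\cdot a_E = G(Hh)\cdot\xi_E$, using the defining equation $G\fd_E\cdot a_E=\xi_E$ of the $\MM$-Pettis structure. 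On the other side, the (ordinary) naturality of $\xi\colon MG\to GH$ gives $G\fd_{E'}\cdot a_{E'}\cdot M(Gh) = \xi_{E'}\cdot MG(h) = G(Hh)\cdot\xi_E$. The two sides agree, so $Gh$ is an $\MM$-homomorphism $AE\to AE'$; preservation of the integral is then immediate from \bref{thm:homom_via_integ}.

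\emph{Proof of 2.} Since $\pL{\MM}$ is a full sub-$\X$-category of $\sL$, the evident composite $\pL{\MM}\hookrightarrow\sL\xrightarrow{G}\uX$ is an $\X$-functor whose structure morphisms are the maps $G_{EE'}\colon\sL(E,E')\to\uX(GE,GE')$; and the forgetful $\X$-functor $G^\MM\colon\uX^\MM\to\uX$ is $\X$-faithful, its structure morphism $G^\MM_{AE,AE'}$ being exhibited in \bref{par:em_adj} as the equalizer of the canonical pair $\uX(GE,GE')\rightrightarrows\uX(MGE,GE')$ (this equalizer exists because $\X$ is finitely well-complete). By \bref{thm:pett_spaces_are_malgs} we have $G^\MM(AE)=GE$, so by \bref{thm:factn_through_faithful_vfunctor} it suffices to verify that each $G_{EE'}$ factors through the monomorphism $G^\MM_{AE,AE'}$ — equivalently, that the two composites $\sL(E,E')\xrightarrow{G_{EE'}}\uX(GE,GE')\rightrightarrows\uX(MGE,GE')$ coincide. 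Once this is done, \bref{thm:factn_through_faithful_vfunctor} produces a unique $\X$-functor $A\colon\pL{\MM}\to\uX^\MM$ with object part $E\mapsto AE$ lying over $G$, and its structure morphisms are exactly the required factorizations $A_{EE'}$; the uniqueness of each $A_{EE'}$ is automatic since $G^\MM_{AE,AE'}$ is mono.

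The crux — the step I expect to require the most care — is this equalizer verification. I would post-compose both composites with the monomorphism $\uX(MGE,G\fd_{E'})$ (a mono because $G\fd_{E'}$ is and $\uX(MGE,-)$, being a right adjoint, preserves monos) and then, using bifunctoriality of the internal hom together with the hom-form of $\X$-naturality of $\xi\colon MG\to GH$ and of $G\fd\colon G\to GH$, reduce both sides to the common value $\uX(\xi_E,GHE')\cdot(GH)_{EE'}$, amalgamating $\uX(a_E,GHE')\cdot\uX(G\fd_E,GHE')=\uX(\xi_E,GHE')$ via $G\fd_E\cdot a_E=\xi_E$ once more and using $(MG)_{EE'}=M_{GE,GE'}\cdot G_{EE'}$. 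Structurally this is just the computation of part 1 with hom-objects in place of morphisms, so no genuinely new idea is needed beyond bookkeeping with the enriched-naturality squares.
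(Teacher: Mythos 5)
Your proposal is correct and follows essentially the same route as the paper: part 1 is the paper's triangular prism argument (cancel the mono $G\fd_{E'}$ using the defining equations $G\fd\cdot a=\xi$ and the naturality of $\fd$ and $\xi$), and part 2 is the paper's fork verification, reducing both legs of the pair defining the equalizer $G^\MM_{AE\,AE'}$ to $\uX(\xi_E,GHE')\cdot (GH)_{E E'}$ after post-composing with the monomorphism $\uX(MGE,G\fd_{E'})$. The only cosmetic difference is that you fold in the assembly into an $\X$-functor via \bref{thm:factn_through_faithful_vfunctor}, which the paper defers to the corollary immediately following.
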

\begin{proof}
Although 2 entails 1, it is instructive to first show 1 before `internalizing' the resulting argument in order to prove 2.  We have a triangular prism
$$
\xymatrix{
MGE \ar[dd]_{MGh} \ar[dr]_{a_E} \ar[rr]^{\xi_E} & & GHE \ar[dd]^{GHh} \\
 & GE \ar[ur]_{G\fd_E} \ar[dd]|(.3){Gh} & \\
MGE' \ar[dr]_{a_{E'}} \ar@{.>}[rr]^(.65){\xi_{E'}} & & GHE' \\
& GE' \ar[ur]_{G\fd_{E'}} & 
}
$$
in which the top, bottom, back, and right faces commute, so since $G\fd_{E'}$ is mono, the left face commutes --- i.e. $Gh$ is an $\MM$-homomorphism.

For 2, it suffices to show that
$$
\xymatrix{
\sL(E,E') \ar[r]^{G_{E E'}} & \uX(GE,GE') \ar[rr]^{\uX(a_E,GE')} \ar[dr]_{M_{GE GE'}} &                                         & \uX(MGE,GE') \\
                            &                                                         & \uX(MGE,MGE') \ar[ur]|{\uX(MGE,a_{E'})} & 
}
$$
is a fork, since $G^\MM_{AE AE'}$ is the equalizer of the given pair.  We have a diagram
$$
\xymatrix{
& \sL(E,E') \ar@/_4ex/[ddl]_{MG} \ar[d]^G \ar[r]^{GH} & \uX(GHE,GHE') \ar[d]_{(G\fd_E)^*} \ar@/^10ex/[dd]^{\xi_E^*} \\
& \uX(GE,GE') \ar[dl]|M \ar[d]^{a_E^*} \ar[r]^{(G\fd_{E'})_*} & \uX(GE,GHE') \ar[d]_{a_E^*} \\
\uX(MGE,MGE') \ar[r]^{(a_{E'})_*} \ar@/_4ex/[rr]|{(\xi_{E'})_*} & \uX(MGE,GE') \ar[r]^{(G\fd_{E'})_*} & \uX(MGE,GHE')
}
$$
in which the prospective fork occurs, where we have written $f_*$, $f^*$ to denote the morphisms $\uX(X,f)$, $\uX(f,X)$ associated to morphisms $f$ and objects $X$ in $\X$.  Observe that the upper square commutes by the $\X$-naturality of $G\fd$.  Moreover, aside from the triangle that occurs within the prospective fork, each cell in the diagram commutes.  Also, the periphery commutes by the $\X$-naturality of $\xi$, and the result follows since $(G\fd_{E'})_* = \uX(MGE,G\fd_{E'})$ is mono (as $G\fd_{E'}$ is mono and hence $\X$-mono, by \bref{thm:lim_mono_epi_in_base_are_enr}).
\end{proof}

\begin{CorSub}
The composite $\X$-functor $\pL{\MM} \hookrightarrow \sL \xrightarrow{G} \uX$ factors through \linebreak[4] \hbox{$G^\MM:\uX^\MM \rightarrow \uX$} via an $\X$-functor $A:\pL{\MM} \rightarrow \uX^\MM$ which sends each $\MM$-Pettis object $E$ to its associated $\MM$-algebra $AE = (GE,a_E)$.
\end{CorSub}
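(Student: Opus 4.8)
The plan is to realize $A$ as an instance of the factorization principle for $\X$-functors through an $\X$-faithful $\X$-functor, namely \bref{thm:factn_through_faithful_vfunctor}, with the object-level and hom-level data supplied respectively by \bref{thm:pett_spaces_are_malgs} and \bref{thm:pett_obs_malgs}. Concretely, I would take $\V := \X$, let $Q:\pL{\MM} \rightarrow \uX$ denote the composite $\X$-functor $\pL{\MM} \hookrightarrow \sL \xrightarrow{G} \uX$ (the first arrow being the inclusion of the full sub-$\X$-category $\pL{\MM}$ of $\sL$, as in \bref{def:pett_obs}), and let $G^\MM:\uX^\MM \rightarrow \uX$ be the Eilenberg-Moore forgetful $\X$-functor. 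For this I first note that the Eilenberg-Moore $\X$-category $\uX^\MM$ for the $\X$-monad $\MM$ exists, since $\X$ is $\V$-finitely-well-complete \pbref{par:data_for_dcompl_and_pett} and hence has equalizers, and that $G^\MM$ is $\X$-faithful because its structure morphisms are, by construction, equalizers in $\X$ \pbref{par:em_adj}.

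Next I would specify the object assignment $\ob A$ sending each $\MM$-Pettis object $E$ to $AE := (GE,a_E)$; this is a well-defined $\MM$-algebra by \bref{thm:pett_spaces_are_malgs}, and clearly $G^\MM(AE) = GE = QE$, so the hypothesis $G^\MM \cdot \ob A = \ob Q$ of \bref{thm:factn_through_faithful_vfunctor} holds. The remaining hypothesis to check is that for all $\MM$-Pettis $E,E'$ the structure morphism $Q_{E E'} = G_{E E'}:\pL{\MM}(E,E') = \sL(E,E') \rightarrow \uX(GE,GE')$ factors through the monomorphism $G^\MM_{AE AE'}:\uX^\MM(AE,AE') \rightarrow \uX(GE,GE')$ in $\X$. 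But this is exactly \bref{thm:pett_obs_malgs}~2, which produces precisely such a factoring morphism $A_{E E'}$. Invoking \bref{thm:factn_through_faithful_vfunctor} then yields a unique $\X$-functor $A:\pL{\MM} \rightarrow \uX^\MM$ with $\ob A$ the given assignment and $G^\MM A = Q$, which is the asserted factorization; moreover $A_{E E'}$ of \bref{thm:pett_obs_malgs} is forced to be the structure morphism of $A$.

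I do not expect a genuine obstacle here: the corollary is a formal repackaging of \bref{thm:pett_spaces_are_malgs} and \bref{thm:pett_obs_malgs} via the general fact that a choice of objects together with a compatible factoring of hom-objects through an $\X$-faithful $\X$-functor assembles into a unique $\X$-functor. The only points requiring a word of justification are the existence of $\uX^\MM$ and the $\X$-faithfulness of $G^\MM$, both of which are immediate from the standing hypotheses and \bref{par:em_adj}; the $\X$-functoriality axioms for $A$ need not be checked by hand, as they are delivered by \bref{thm:factn_through_faithful_vfunctor}.
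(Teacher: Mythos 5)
Your proposal is correct and follows exactly the paper's own route: the paper likewise deduces the corollary from \bref{thm:pett_obs_malgs}~2 together with the $\X$-faithfulness of $G^\MM$ by invoking \bref{thm:factn_through_faithful_vfunctor}, with \bref{thm:pett_spaces_are_malgs} supplying the object assignment. Your additional remarks on the existence of $\uX^\MM$ and the faithfulness of $G^\MM$ are sound but just make explicit what the paper leaves implicit.
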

\begin{proof}
Since the forgetful $\X$-functor $G^\MM$ is $\X$-faithful, this follows from \bref{thm:pett_obs_malgs} 2 by \bref{thm:factn_through_faithful_vfunctor}.
\end{proof}

\begin{PropSub}\label{thm:diag_funcs_pett}
The diagram of $\X$-functors
\begin{equation}\label{eqn:diag_funcs_pett}
\xymatrix{
\pL{\MM} \ar[d]_A \ar@{^{(}->}[r] & *!/l3ex/+{\sL = \uX^\LL} \ar[d]^G \\
\uX^\MM \ar[ur]|{\uX^\Delta} \ar[r]_{G^\MM} & \uX
}
\end{equation}
commutes, where $\uX^\Delta$ is the $\X$-functor determined by the given morphism of $\X$-monads $\Delta:\LL \rightarrow \MM$ via \bref{prop:enr_func_induced_by_mnd_mor}.
\end{PropSub}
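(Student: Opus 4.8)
The plan is to observe that the diagram \eqref{eqn:diag_funcs_pett} decomposes into three cells --- the outer square, the lower-right triangle, and the upper-left triangle --- the first two of which are already available, and then to reduce the remaining triangle to a check on objects by invoking faithfulness of the forgetful $\X$-functor.

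First I would record that the outer square commutes: by the corollary immediately preceding \bref{thm:diag_funcs_pett}, the composite $\X$-functor $\pL{\MM} \hookrightarrow \sL \xrightarrow{G} \uX$ factors through $G^\MM$ via $A$, i.e.\ $G^\MM \circ A = G \circ (\pL{\MM} \hookrightarrow \sL)$. Next, the lower-right triangle $G \circ \uX^\Delta = G^\MM$ is precisely the commutativity of the square \eqref{eqn:comm_diag_func_ind_by_mnd_mor} of \bref{prop:enr_func_induced_by_mnd_mor} applied to the $\X$-monad morphism $(\Delta,1_{\uX}):\LL \rightarrow \MM$ (so that $Q = 1_{\uX}$ and $Q^\lambda = \uX^\Delta$), once one notes that $G:\sL \rightarrow \uX$ is the Eilenberg-Moore forgetful $\X$-functor $G^\LL$ for $\LL$ \pbref{par:data_for_dcompl_and_pett}. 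Hence it remains only to establish the upper-left triangle $\uX^\Delta \circ A = (\pL{\MM} \hookrightarrow \sL)$.

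Since $G = G^\LL$ is $\X$-faithful \pbref{par:em_adj}, and since by the two cells just established $G \circ (\uX^\Delta \circ A) = G^\MM \circ A = G \circ (\pL{\MM} \hookrightarrow \sL)$, the two $\X$-functors $\uX^\Delta \circ A$ and $\pL{\MM} \hookrightarrow \sL$ become equal after post-composition with $G$; as $\pL{\MM} \hookrightarrow \sL$ is a full subcategory inclusion, comparing structure morphisms through the monomorphisms $G_{E E'}$ shows it then suffices to check that $\uX^\Delta \circ A$ and $\pL{\MM} \hookrightarrow \sL$ agree on objects. So I would let $E \in \pL{\MM}$, write $E = (GE,x_E)$ as an object of $\sL = \uX^\LL$ with $x_E:LGE \rightarrow GE$, and recall that $x_E = G\varepsilon_E$, the underlying map of the counit component of $F \dashv G$ at $E$. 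By \bref{def:pett_obs}, $AE = (GE,a_E)$, so by the formula \eqref{eqn:action_of_mndmor_on_algs} describing $\uX^\Delta$ we have $\uX^\Delta(AE) = (GE,\, a_E \circ \Delta_{GE})$, and it remains to verify $a_E \circ \Delta_{GE} = x_E$. Since $\fd_E$ is an embedding and $G$ preserves monomorphisms, $G\fd_E$ is mono, so it is enough to show $G\fd_E \circ a_E \circ \Delta_{GE} = G\fd_E \circ x_E$. By the defining property of the $\MM$-Pettis structure $a_E$ \pbref{def:pett_obs}, $G\fd_E \circ a_E = \xi_E$, so the left-hand side equals $\xi_E \circ \Delta_{GE}$. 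This is the $E$-component of the composite $\X$-monad morphism $(\xi,G)\circ(\Delta,1_{\uX}):\LL \rightarrow \HH$ \pbref{par:general_mnd_mor}, which by the defining diagram \eqref{eqn:mnd_morphs_for_abs_distn_mnd} of an abstract distribution monad equals $(\zeta,G)$; hence $\xi_E \circ \Delta_{GE} = \zeta_E$. Finally, by \bref{def:can_mnd_mor} and \bref{thm:can_mnd_mor} (applied to the Eilenberg-Moore $\X$-adjunction $F \dashv G$ for $\LL$ and the monad $\HH$ on $\sL$), $\zeta = G\varsigma$ where $\varsigma:FG \rightarrow H$ is the common composite of \bref{thm:can_mnd_mor}, and in particular $\varsigma_E = \fd_E \circ \varepsilon_E$; therefore $\zeta_E = G\fd_E \circ G\varepsilon_E = G\fd_E \circ x_E$, as needed.

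Most of this is bookkeeping; the one step requiring genuine care is the identification $\xi_E \circ \Delta_{GE} = \zeta_E$, since $(\xi,G)$ and $(\Delta,1_{\uX})$ are morphisms of $\X$-monads over different base $1$-cells ($G:\sL \rightarrow \uX$ and $1_{\uX}$), so one must compose them in the general sense of \bref{par:general_mnd_mor} before reading off the $E$-component --- which is exactly what the abstract-distribution-monad axiom \eqref{eqn:mnd_morphs_for_abs_distn_mnd} delivers.
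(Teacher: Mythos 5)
Your proposal is correct and follows essentially the same route as the paper's own proof: commutativity of the lower triangle via \bref{prop:enr_func_induced_by_mnd_mor}, of the outer square via the preceding corollary, reduction of the upper triangle to a check on objects by $\X$-faithfulness of $G$, and then the identity $a_E \cdot \Delta_{GE} = G\varepsilon_E$ obtained by postcomposing with the monomorphism $G\fd_E$ and using the Pettis structure, the abstract-distribution-monad axiom $\xi_E \cdot \Delta_{GE} = \zeta_E$, and the description $\zeta_E = G\fd_E \cdot G\varepsilon_E$ of the canonical morphism. Your explicit remark on composing the monad morphisms $(\Delta,1_{\uX})$ and $(\xi,G)$ over different base $1$-cells is a point the paper leaves implicit, but the argument is the same.
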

\begin{proof}
By \bref{prop:enr_func_induced_by_mnd_mor}, the lower triangle commutes.  Hence since the periphery commutes and $G$ is $\X$-faithful, it suffices to show that the upper triangle commutes on objects.  Given an object $E \in \pL{\MM}$, we obtain an $\LL$-algebra
$$\uX^\Delta AE = \uX^\Delta(GE,a_E) = (GE,LGE \xrightarrow{\Delta_{GE}} MGE \xrightarrow{a_E} GE)\;,$$
but $E$ is an object of $\sL = \uX^\LL$ and so $E$ itself is an $\LL$-algebra $E = (GE,G\varepsilon_E)$, and our aim is to show that these $\LL$-algebras are the same.  Hence it suffices to show that the upper triangle in the following diagram commutes.
$$
\xymatrix{
*!/r3ex/+{LGE = GFGE} \ar@/_5ex/[ddrr]_{\zeta_E} \ar[rr]^{\Delta_{GE}} \ar[dr]|{G\varepsilon_E} & & MGE \ar[dl]|{a_E} \ar[dd]^{\xi_E} \\
& GE \ar[dr]|{G\fd_E} & \\
& & GHE
}
$$
But the rightmost triangle commutes, and the leftmost cell commutes by the definition of the canonical morphism $\zeta$, so since the periphery commutes and $G\fd_E$ is mono, the result follows.
\end{proof}

\begin{ThmSub}\label{thm:pett_obs_iso_to_subcat_malgs}\emptybox
\begin{enumerate}
\item The $\X$-functor $A:\pL{\MM} \rightarrow \uX^\MM$ is $\X$-fully-faithful and injective on objects, so $A$ restricts to an isomorphism between the category of $\MM$-Pettis objects $\pL{\MM}$ and a full sub-$\X$-category $\uX^\MM_{\textnormal{P}}$ of $\uX^\MM$, whose objects we call \emph{Pettis $\MM$-algebras}.
\item In particular, the resulting isomorphism of $\X$-categories $\uX^\MM_{\textnormal{P}} \xrightarrow{\sim} \pL{\MM}$ is simply a restriction of $\uX^\Delta:\uX^\MM \rightarrow \uX^\LL = \sL$.
\end{enumerate}
\end{ThmSub}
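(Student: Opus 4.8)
The plan is to derive both parts almost formally from three facts already in hand: (a) the upper triangle of \bref{thm:diag_funcs_pett}, which says $\uX^\Delta \circ A = J$, where $J : \pL{\MM} \hookrightarrow \sL$ is the inclusion of the full sub-$\X$-category of $\MM$-Pettis objects; (b) the lower triangle of \bref{thm:diag_funcs_pett} (equivalently \bref{prop:enr_func_induced_by_mnd_mor}), which says $G^\MM = G \circ \uX^\Delta$; and (c) the identity $G^\MM_{AE,AE'} \cdot A_{EE'} = G_{EE'}$ on structure morphisms, which is exactly \bref{thm:pett_obs_malgs} 2. The single point that carries the theorem is the claim that each structure morphism $A_{EE'} : \pL{\MM}(E,E') = \sL(E,E') \to \uX^\MM(AE,AE')$ of $A$ is an isomorphism, and that its inverse is the structure morphism $(\uX^\Delta)_{AE,AE'}$ of $\uX^\Delta$; once this is in place, $\X$-full-faithfulness of $A$ is immediate, and injectivity of $A$ on objects follows since $AE = AE'$ implies $E = \uX^\Delta(AE) = \uX^\Delta(AE') = E'$ by (a).

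For the key claim I would argue on the two sides separately. Composing structure morphisms through (a) gives $(\uX^\Delta)_{AE,AE'} \cdot A_{EE'} = J_{EE'} = 1_{\sL(E,E')}$, since the inclusion of a full sub-$\X$-category has identity structure morphisms. For the reverse composite, first note that composing structure morphisms through (b) gives $G_{EE'} \cdot (\uX^\Delta)_{AE,AE'} = G^\MM_{AE,AE'}$; post-composing $A_{EE'} \cdot (\uX^\Delta)_{AE,AE'}$ with the monomorphism $G^\MM_{AE,AE'}$ and invoking (c) then yields $G^\MM_{AE,AE'} \cdot A_{EE'} \cdot (\uX^\Delta)_{AE,AE'} = G_{EE'} \cdot (\uX^\Delta)_{AE,AE'} = G^\MM_{AE,AE'}$, so cancelling the monomorphism $G^\MM_{AE,AE'}$ (which is monic because the Eilenberg--Moore forgetful $\X$-functor $G^\MM$ is $\X$-faithful, its structure morphisms being equalizers, \bref{par:em_adj}, \bref{thm:lim_mono_epi_in_base_are_enr}) gives $A_{EE'} \cdot (\uX^\Delta)_{AE,AE'} = 1_{\uX^\MM(AE,AE')}$. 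Hence $A_{EE'}$ and $(\uX^\Delta)_{AE,AE'}$ are mutually inverse, so $A$ is $\X$-fully-faithful.

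It remains to package this into part 1 and read off part 2. I would define $\uX^\MM_{\textnormal{P}}$ to be the full sub-$\X$-category of $\uX^\MM$ on the objects $\{AE \mid E \in \ob\pL{\MM}\}$; the corestriction $A' : \pL{\MM} \to \uX^\MM_{\textnormal{P}}$ of $A$ is then bijective on objects and has every structure morphism an isomorphism. For the inverse I would simply restrict $\uX^\Delta$: on objects $\uX^\Delta(AE) = E$ lies in $\pL{\MM}$, and on hom-objects $(\uX^\Delta)_{AE,AE'}$ lands in $\sL(E,E') = \pL{\MM}(E,E')$, so $\uX^\Delta$ restricts to an $\X$-functor $\uX^\MM_{\textnormal{P}} \to \pL{\MM}$, which by the two identities just established is a two-sided inverse for $A'$ (on objects via (a), on homs since $(\uX^\Delta)_{AE,AE'} = A_{EE'}^{-1}$). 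This exhibits the isomorphism of $\X$-categories of part 1 and, at the same time, shows that its inverse is a restriction of $\uX^\Delta$, which is part 2. I do not anticipate a genuine obstacle here; the only items requiring care are the routine verification that a restriction of an $\X$-functor to a pair of full sub-$\X$-categories is again an $\X$-functor and the bookkeeping of the composites of structure morphisms, both handled by the faithfulness observations above (and, if one prefers to argue more mechanically, by \bref{thm:crit_for_equal_faithful_vfunctors}).
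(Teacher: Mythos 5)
Your proposal is correct and follows essentially the same route as the paper: both hinge on the two triangles of \pbref{eqn:diag_funcs_pett}, using the upper triangle to make the restriction of $\uX^\Delta$ a retraction of $A$ (whence injectivity on objects) and the $\X$-faithfulness of $G^\MM$ to upgrade the retraction to a two-sided inverse. Your version merely spells out the hom-object computation that the paper compresses into ``$P$ is $\X$-faithful, hence $A'$, $P$ are mutually inverse.''
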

\begin{proof}
Let $A':\pL{\MM} \rightarrow \uX^\MM_{\textnormal{P}}$ be the corestriction of $A$ to its full image.  Since the upper triangle in \eqref{eqn:diag_funcs_pett} commutes, $\uX^\Delta$ restricts to a retraction $P$ of the $\X$-functor $A'$.  Since $A'$ is by definition surjective on objects, $A'$ and $P$ are bijective (and mutually inverse) on objects.  Also, since the lower triangle in \eqref{eqn:diag_funcs_pett} commutes and $G^\MM$ is $\X$-faithful, $\uX^\Delta$ is $\X$-faithful, so $P$ is $\X$-faithful and hence $A'$, $P$ are mutually-inverse isomorphisms of $\X$-categories.
\end{proof}

\begin{PropSub}\label{thm:basic_charn_pett_malg}
An $\MM$-algebra $(X,a)$ is Pettis if and only if $a$ serves as an $\MM$-Pettis structure for the underlying linear space $E := \uX^\Delta(X,a)$ of $(X,a)$.
\end{PropSub}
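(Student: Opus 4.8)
The plan is to unwind the definition of \emph{Pettis $\MM$-algebra} supplied by \bref{thm:pett_obs_iso_to_subcat_malgs}, which identifies the Pettis $\MM$-algebras with the objects of the full image $\uX^\MM_{\textnormal{P}}$ of the $\X$-functor $A:\pL{\MM} \rightarrow \uX^\MM$, and — crucially — records that $A$ is injective on objects. On objects, $A$ sends an $\MM$-Pettis linear space $E$ to $AE = (GE,a_E)$, where $a_E$ is its $\MM$-Pettis structure \pbref{thm:pett_spaces_are_malgs}. Two further facts will be used: by \bref{thm:diag_funcs_pett} the composite $\uX^\Delta A$ is the inclusion $\pL{\MM} \hookrightarrow \sL$, so $\uX^\Delta(GE,a_E) = E$ for every $\MM$-Pettis object $E$; and, since $\uX^\Delta$ commutes with the forgetful $\X$-functors \pbref{par:underlying_linear_space}, the underlying space of $\uX^\Delta(X,a)$ is $X$ itself, whence $GE = X$ when $E := \uX^\Delta(X,a)$.

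For the `if' direction I would assume that $a$ is an $\MM$-Pettis structure for $E := \uX^\Delta(X,a)$; by \bref{def:pett_obs} this says precisely that $E$ is functionally separated (hence $\MM$-Pettis) and $a = a_E$. Since $GE = X$, we get $AE = (GE,a_E) = (X,a)$, so $(X,a)$ lies in the image of $A$ and is therefore a Pettis $\MM$-algebra.

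For the `only if' direction I would start from the hypothesis that $(X,a)$ is Pettis, so by \bref{thm:pett_obs_iso_to_subcat_malgs} there is an $\MM$-Pettis object $E' \in \sL$ with $AE' = (X,a)$, i.e. $GE' = X$ and $a_{E'} = a$. Applying $\uX^\Delta$ and invoking \bref{thm:diag_funcs_pett} gives $\uX^\Delta(X,a) = \uX^\Delta AE' = E'$, so $E := \uX^\Delta(X,a)$ equals $E'$; this $E$ is $\MM$-Pettis and $a = a_{E'} = a_E$ is its $\MM$-Pettis structure, as required.

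The argument is essentially a matter of chasing the definitions, so there is no deep obstacle; the one point demanding care is the bookkeeping around $A$ — that it is injective on objects, that $\uX^\Delta A$ is \emph{equal} to (not merely naturally isomorphic to) the inclusion, and that $AE$ has underlying space $GE$. In particular, in the `only if' direction one must confirm that the $\MM$-Pettis object $E'$ produced really \emph{is} $\uX^\Delta(X,a)$ and not just isomorphic to it, which is exactly where the strictness in \bref{thm:diag_funcs_pett} and the injectivity of $A$ on objects come into play.
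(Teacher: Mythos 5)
Your proof is correct and follows essentially the same route as the paper's: both directions reduce to the facts that $AE=(GE,a_E)$, that $\uX^\Delta A$ is the inclusion of $\pL{\MM}$ into $\sL$ (so $\uX^\Delta AE'=E'$ on the nose), and that $GE=X$ for $E=\uX^\Delta(X,a)$. Your extra care about strictness versus isomorphism is sound but is exactly what \bref{thm:diag_funcs_pett} and \bref{thm:pett_obs_iso_to_subcat_malgs} already guarantee, so nothing further is needed.
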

\begin{proof}
If $a$ serves as an $\MM$-Pettis structure for $E$, then $E \in \pL{\MM}$ and $AE = (GE,a) = (X,a)$, so $(X,a)$ is Pettis.  Conversely, if $(X,a) = AE'$ for some $E' \in \pL{\MM}$ then $a$ is the $\MM$-Pettis structure for $E'$, but by \bref{thm:pett_obs_iso_to_subcat_malgs}, $E = \uX^\Delta(X,a) = \uX^\Delta AE' = E'$.
\end{proof}

\section{Distributionally complete linear spaces}\label{sec:distnl_compl_ls}

We continue to work with given data as in \bref{par:data_for_dcompl_and_pett}, and we let $(\MM,\Delta,\xi)$ be an abstract distribution monad, with $\MM = (M,\delta,\kappa)$.  

\begin{ParSub}\label{par:preamble_for_dist_compl}
For each space $X \in \X$, $MX$ carries the structure of a linear space (i.e. an object of $\sL = \uX^\LL$, \bref{def:first_synth_terminology}), namely the underlying linear space of the free $\MM$-algebra on $X$, written as $\uX^\Delta(MX,\kappa_X)$ \pbref{par:underlying_linear_space}.  In the present section, we endow $MX$ with this structure.  Moreover, we omit all notational distinctions between linear spaces $E \in \sL$ and their underlying spaces $GE \in \X$, and similarly for linear maps $h$ in $\sL$ and their underlying maps $Gh$ in $\X$.

By \bref{thm:mnd_mor_as_homom}, the components of the morphism of $\X$-monads $\Delta:\LL \rightarrow \MM$ are linear maps 
$$\Delta_X:FX \rightarrow MX\;,$$
i.e. morphisms in $\sL = \uX^\LL$, where $FX = (LX,\mu_X)$ is the free linear space on $X$.  Moreover, $\Delta_X$ is the transpose under $F \nsststile{\varepsilon}{\eta} G:\sL \rightarrow \uX$ of the unit component
$$\delta_X:X \rightarrow MX\;.$$
\end{ParSub}

\begin{DefSub}\label{def:dist_compl}
Given an abstract distribution monad $(\MM,\Delta,\xi)$, we say that a linear space $E \in \sL$ is \textit{$\MM$-distributionally complete} if for each space $X \in \X$, the morphism \hbox{$\Delta_X:FX \rightarrow MX$} is $\sL$-orthogonal to $E$ in $\uL$.
\end{DefSub}

\begin{RemSub}\label{rem:dist_compl_expl}
Given $X \in \X$, $E \in \sL$, recall from \bref{def:orth_subcat_sigma} that $\Delta_X$ is $\sL$-orthogonal to $E$ (written $ \Delta_X \bot_\sL E$) if and only if
$$\uL(\Delta_X,E):\uL(MX,E) \rightarrow \uL(FX,E)$$
is an isomorphism in $\sL$.  Since $G:\sL \rightarrow \X$ reflects isomorphisms and the $\X$-category $\sL$ may be described as $G_*\uL$ \pbref{par:data_for_dcompl_and_pett}, this is equivalent to the requirement that the morphism
\begin{equation}\label{eqn:char_dist_compl_via_xorth}\sL(\Delta_X,E):\sL(MX,E) \rightarrow \sL(FX,E)\end{equation}
in $\X$ be iso --- i.e. that $\Delta_X$ be $\X$-orthogonal to $E$ in $\sL$.

Abusing notation, we will denote the class of linear maps $\{\Delta_X\;|\; X \in \X\}$ by $\Delta$, so that in the notation of \bref{def:orth_subcat_sigma}, the full sub-$\X$-category of $\sL$ consisting of $\MM$-distributionally complete objects is denoted by $\sL_\Delta$.
\end{RemSub}

\begin{ExaSub}\label{exa:dist_compl_for_nat_acc_distn_mnds}\emptybox
\begin{enumerate}
\item By \bref{exa:und_linear_spaces_of_free_d_and_td_algs}, the underlying linear space of each free $\DD$-algebra $(DX,\kappa^\DD_X)$ $(X \in \X)$ is the linear space of natural distributions $HFX = (FX)^{**}$ on $X$, and by the definition of $\Delta^\DD$ \pbref{def:can_mnd_mor_distns}, the associated linear map
$$\Delta^\DD_X = \fd_{FX}:FX \rightarrow HFX = (FX)^{**}$$
is the component at $FX$ of the unit $\fd$ of the double-dualization monad $\HH$.  Hence a linear space $E \in \sL$ is $\DD$-distributionally complete iff each of these linear maps $\fd_{FX}$ is $\sL$-orthogonal (equivalently, $\X$-orthogonal) to $E$.
\item By \bref{exa:und_linear_spaces_of_free_d_and_td_algs}, the underlying linear space of each free $\tDD$-algebra $(\tD X,\kappa^{\tDD}_X)$ $(X \in \X)$ is the linear space of accessible distributions $\tH FX$ on $X$.  The associated linear map
$$\Delta^{\tDD}_X = \tfd_{FX}:FX \rightarrow \tH FX$$
is the component at $FX$ of the unit $\tfd$ of the idempotent $\sL$-monad $\tHH$, which is induced by the functional completion $\sL$-reflection $K \nsststile{}{\tfd} J:\tL \hookrightarrow \uL$ \pbref{def:func_compl}.  Hence a linear space $E \in \sL$ is $\tDD$-distributionally complete iff each of these reflection morphisms $\tfd_{FX}$ is $\sL$-orthogonal (equivalently, $\X$-orthogonal) to $E$.  \begin{samepage}In particular, we deduce the following by \bref{thm:refl_subcats_are_orth_subcats}:
\begin{equation}\label{thm:func_compl_impl_td_dist_compl}\textit{Every functionally complete linear space is $\tDD$-distributionally complete.}\end{equation}
\end{samepage}
\end{enumerate}
\end{ExaSub}

\begin{PropSub}\label{thm:charn_dist_compl_via_dirac}
A linear space $E \in \sL$ is $\MM$-distributionally complete if and only if for each space $X \in \X$, the composite
\begin{equation}\label{eq:charn_dist_compl_via_dirac} \sL(MX,E) \overset{G}{\rightarrowtail} \uX(MX,E) \xrightarrow{\uX(\delta_X,E)} \uX(X,E)\end{equation}
is an isomorphism in $\X$.
\end{PropSub}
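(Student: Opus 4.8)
The plan is to show that composite \eqref{eq:charn_dist_compl_via_dirac} differs from the morphism $\sL(\Delta_X,E)\colon\sL(MX,E)\to\sL(FX,E)$ — whose invertibility for all $X$ characterizes $\MM$-distributional completeness — only by composition with a canonical isomorphism coming from the $\X$-adjunction $F\dashv G\colon\sL\to\uX$. Recall first, from \bref{def:dist_compl} together with \bref{rem:dist_compl_expl}, that $E$ is $\MM$-distributionally complete if and only if for every $X\in\X$ the morphism $\sL(\Delta_X,E)\colon\sL(MX,E)\to\sL(FX,E)$ is an isomorphism in $\X$. Recall also that the $\X$-adjunction $F\dashv G\colon\sL\to\uX$ here is the Eilenberg-Moore $\X$-adjunction for $\LL$, which by \bref{thm:commmnd_sm_em_adj_dets_em_vadj} coincides with $\acute F\dashv\grave G$ and whose right $\X$-adjoint acts on objects and underlying morphisms as $G$. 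By \bref{prop:adjn_via_radj_and_unit}, for each $X\in\X$ the composite
\[
\phi_{X,E}:=\bigl(\sL(FX,E)\xrightarrow{\;G_{FX,E}\;}\uX(FX,E)\xrightarrow{\;\uX(\eta_X,E)\;}\uX(X,E)\bigr)
\]
is an isomorphism in $\X$ (I am using the convention of \bref{par:preamble_for_dist_compl} that suppresses $G$ from hom-objects of $\uX$).

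The key step is to prove that composite \eqref{eq:charn_dist_compl_via_dirac} equals $\phi_{X,E}\circ\sL(\Delta_X,E)$. To this end I would apply the $\X$-functoriality of $G$ to the linear map $\Delta_X\colon FX\to MX$; the relevant functoriality square asserts $G_{FX,E}\circ\sL(\Delta_X,E)=\uX(G\Delta_X,E)\circ G_{MX,E}$. Post-composing with $\uX(\eta_X,E)$ and using the contravariant functoriality of $\uX(-,E)$ then gives
\[
\phi_{X,E}\circ\sL(\Delta_X,E)=\uX\!\bigl(G\Delta_X\circ\eta_X,\,E\bigr)\circ G_{MX,E}.
\]
Finally, by the defining property recorded in \bref{par:preamble_for_dist_compl} that $\Delta_X$ is the transpose of the unit component $\delta_X$ — equivalently $G\Delta_X\circ\eta_X=\delta_X$ — the right-hand side is exactly $\uX(\delta_X,E)\circ G_{MX,E}$, i.e.\ composite \eqref{eq:charn_dist_compl_via_dirac}.

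With this identity in hand the conclusion is immediate: since $\phi_{X,E}$ is always an isomorphism, composite \eqref{eq:charn_dist_compl_via_dirac} is an isomorphism in $\X$ if and only if $\sL(\Delta_X,E)$ is, and the asserted equivalence then follows from the characterization of $\MM$-distributional completeness recalled at the outset. I do not expect a genuinely hard step here; the only care needed is bookkeeping — keeping straight the identifications between $\sL$ and $G_*\uL$, between $\acute F$ and $F$, and between $\grave G$ and $G$ under the underlying-ordinary-functor conventions of \bref{par:ident_underl_ord} and \bref{par:data_for_dcompl_and_pett}, and applying the $\X$-functoriality square for $G$ with the correct variance.
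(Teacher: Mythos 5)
Your proof is correct and follows essentially the same route as the paper's: both compose $\sL(\Delta_X,E)$ with the adjunction isomorphism $\uX(\eta_X,E)\circ G_{FX,E}$, use the $\X$-functoriality of $G$ to rewrite the result as $\uX(G\Delta_X\circ\eta_X,E)\circ G_{MX,E}$, and conclude via the identity $G\Delta_X\circ\eta_X=\delta_X$ recorded in \bref{par:preamble_for_dist_compl}. The bookkeeping you flag about the identifications of $\acute F$, $\grave G$ with $F$, $G$ is exactly what the paper also relies on implicitly.
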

\begin{proof}
Since we have an $\X$-adjunction $F \nsststile{\varepsilon}{\eta} G:\sL \rightarrow \uX$, the composite
$$\sL(FX,E) \xrightarrow{G} \uX(FX,E) \xrightarrow{\uX(\eta_X,E)} \uX(X,E)$$
is an isomorphism in $\X$.  Composing the morphism \eqref{eqn:char_dist_compl_via_xorth} with this isomorphism, one obtains by the $\X$-functoriality of $G$ the composite
$$\sL(MX,E) \xrightarrow{G} \uX(MX,E) \xrightarrow{\uX(\Delta_X,E)} \uX(FX,E) \xrightarrow{\uX(\eta_X,E)} \uX(X,E)\;,$$
which is the same as \eqref{eq:charn_dist_compl_via_dirac} since $\Delta_X \cdot \eta_X = \delta_X$ in $\X$.
\end{proof}

\begin{RemSub}\label{rem:eltwise_distn_compl}
If a linear space $E$ is $\MM$-distributionally complete, then in particular, the isomorphism \eqref{eq:charn_dist_compl_via_dirac} induces a bijection between the associated hom-classes of the underlying ordinary categories, so that for each map $f:X \rightarrow E$ in $\X$ there is a unique linear map $f^\sharp:MX \rightarrow E$ such that
$$
\xymatrix{
X \ar[r]^{\delta_X} \ar[dr]_f & MX \ar[d]^{f^\sharp} \\
& E
}
$$
commutes in $\X$.  $\MM$-distributional completeness can be construed as the $\X$-enriched analogue of this criterion.
\end{RemSub}

\begin{ThmSub}\label{thm:m_distnly_cpl_sp_are_m_algs}
Let $E \in \sL$ be an $\MM$-distributionally complete linear space.  Then the underlying space of $E$ carries the structure of an $\MM$-algebra.  In particular, there is a unique linear map $b_E:ME \rightarrow E$ whose underlying map serves as $\MM$-algebra structure.  Moreover,
\begin{enumerate}
\item $b_E$ is the counit $b_E:ME \rightarrow E$ of the representation $\sL(M-,E) \cong \uX(-,E)$ \pbref{thm:charn_dist_compl_via_dirac}.  
\item $b_E$ is the unique linear map whose underlying map is a retraction of \hbox{$\delta_E:E \rightarrow ME$}.
\end{enumerate}
\end{ThmSub}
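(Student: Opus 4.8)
The plan is to use $\MM$-distributional completeness to transport the identity of $E$ across the representing isomorphism of \bref{thm:charn_dist_compl_via_dirac}, thereby producing the candidate structure map $b_E$, and then to verify the $\MM$-algebra unit and associativity laws by a single application of that isomorphism at $ME$ together with the naturality of the monad unit, rather than by a diagram chase. Parts 1 and 2 of the statement will then fall out of the construction itself.

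\textbf{Step 1: construction of $b_E$.} By \bref{thm:charn_dist_compl_via_dirac}, for each $X\in\X$ the composite $\phi_X:=\bigl(\sL(MX,E)\overset{G}{\rightarrowtail}\uX(MX,E)\xrightarrow{\uX(\delta_X,E)}\uX(X,E)\bigr)$ is an isomorphism in $\X$, $\X$-naturally in $X$ (because $G$ is an $\X$-functor, $\delta$ is $\X$-natural, and $X\mapsto\uX^\Delta(MX,\kappa_X)$ is an $\X$-functor $\uX\to\sL$, \bref{par:preamble_for_dist_compl}, \bref{prop:enr_func_induced_by_mnd_mor}). Applying $\uX(I,-)$ and the identifications $\uX(I,\uX(A,B))\cong\uX(A,B)$ and $\uX(I,G\uL(E_1,E_2))\cong\sL(E_1,E_2)$ of \bref{par:desc_iso}, I would read off from each $\phi_X$ a bijection of hom-classes $\sL(MX,E)\cong\uX(X,E)$ acting by $\beta\mapsto Gb\circ\delta_X$; I then define $b_E:ME\to E$ in $\sL$ to be the linear map corresponding to $1_E$ at $X=E$. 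By construction $Gb_E\circ\delta_E=1_E$, so $Gb_E$ is a retraction of $\delta_E$, and $b_E$ is the \emph{unique} linear map with this property (bijectivity of $\phi_E$); furthermore, for any $g:X\to E$ one checks $\phi_X(b_E\circ Mg)=Gb_E\circ\delta_E\circ g=g$ using $\X$-naturality of $\delta$, so $\phi_X^{-1}(g)=b_E\circ Mg$ — which is precisely the assertion that $b_E$ is the counit of the representation $\sL(M-,E)\cong\uX(-,E)$. This already gives parts 1 and 2, provided $(E,Gb_E)$ is an $\MM$-algebra (and the uniqueness claim, since any linear $\MM$-algebra structure on $E$ is a linear retraction of $\delta_E$).

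\textbf{Step 2: the algebra laws.} The unit law is Step 1. For associativity I must show $Gb_E\circ M(Gb_E)=Gb_E\circ\kappa_E:MME\to E$. First I would note both sides are underlying maps of $\sL$-morphisms: applying $M:\uX\to\sL$ to $Gb_E$ gives a linear $MME\to ME$ lifting $M(Gb_E)$, and $\kappa_E$ lifts via $\uX^\Delta$ of the $\MM$-homomorphism $\kappa_E:(MME,\kappa_{ME})\to(ME,\kappa_E)$; postcomposing each with $b_E$ yields $\beta_1,\beta_2\in\sL(MME,E)$ with underlying maps the two sides above. Then apply the hom-class bijection $\phi_{ME}$: $\phi_{ME}(\beta_1)=Gb_E\circ M(Gb_E)\circ\delta_{ME}=Gb_E\circ\delta_E\circ Gb_E=Gb_E$ (naturality of $\delta$, then the unit law), while $\phi_{ME}(\beta_2)=Gb_E\circ\kappa_E\circ\delta_{ME}=Gb_E$ (monad unit axiom $\kappa_E\circ\delta_{ME}=1$). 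Injectivity of $\phi_{ME}$ forces $\beta_1=\beta_2$, giving associativity.

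\textbf{Expected obstacle.} There is no deep difficulty, only bookkeeping: one must keep the endofunctor $M$ on $\uX$, the ``free linear span'' functor $M:\uX\to\sL$, and the multiplication $\kappa$ straight, and confirm that the composites entering the associativity identity genuinely lift to $\sL$ so that the \emph{single} isomorphism $\phi_{ME}$ does the work. The other point to state with care is the passage from the $\X$-enriched isomorphism of \bref{thm:charn_dist_compl_via_dirac} to the underlying bijection of hom-classes via \bref{par:desc_iso}; once that is in place, both algebra laws collapse to naturality of $\delta$ plus a monad unit axiom.
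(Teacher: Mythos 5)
Your proposal is correct and follows essentially the same route as the paper: define $b_E$ as the counit of the representation (so that its underlying map is the unique linear retraction of $\delta_E$, giving the unit law and parts 1--2), then obtain associativity by observing that $b_E\cdot Mb_E$ and $b_E\cdot\kappa_E$ are both underlying maps of linear maps $MME\to E$ which agree (with common value $b_E$) after precomposition with $\delta_{ME}$ -- by naturality of $\delta$ plus the unit laws -- and invoking the injectivity coming from $\MM$-distributional completeness at $X=ME$. The paper presents exactly this argument as a two-square diagram chase, so the only difference is expository.
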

\begin{proof}
In view of \bref{rem:eltwise_distn_compl}, the counit $b_E$ satisfies 2.  Hence it suffices to show that the rightmost square in the following diagram commutes.
$$
\xymatrix{
ME \ar[r]_{\delta_{ME}} \ar[d]_{b_E} & MME \ar[d]|{Mb_E} \ar[r]_{\kappa_E} & ME \ar[d]^{b_E} \\
E \ar[r]^{\delta_E} & ME \ar[r]^{b_E} & E
}
$$
But the leftmost square commutes, and the periphery commutes since the composites on the top and bottom sides are identity morphisms.  Further, the maps in the rightmost square are all linear; indeed, $Mb_E$ and $\kappa_E$ are $\MM$-homomorphisms with respect to the free $\MM$-algebra structures and so, via $\uX^\Delta:\uX^\MM \rightarrow \uX^\LL = \sL$ are linear maps of the underlying linear spaces.  Hence since $E$ is $\MM$-distributionally complete, it follows by \bref{rem:eltwise_distn_compl} that the rightmost square commutes.
\end{proof}

\begin{PropSub}\label{thm:dist_compl_to_malg_to_same_lin_sp}
Given an $\MM$-distributionally complete linear space $E \in \sL$, the $\X$-functor $\uX^\Delta:\uX^\MM \rightarrow \uX^\LL = \sL$ sends the associated $\MM$-algebra $(E,b_E)$ to $E$ itself.
\end{PropSub}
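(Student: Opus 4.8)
The plan is to reduce the claim to a single equation of morphisms in $\X$, namely $b_E \cdot \Delta_{GE} = G\varepsilon_E : LGE \to GE$, where $\Delta_{GE}$ denotes the component at $GE$ of the $\X$-natural transformation $\Delta : L \to M$, and where $E$, regarded as an object of $\sL = \uX^\LL$, is the $\LL$-algebra $(GE, G\varepsilon_E)$. Indeed, by \bref{par:lalg_from_malg} (equivalently, by formula \eqref{eqn:action_of_mndmor_on_algs} applied to the monad morphism $\Delta : \LL \to \MM$, whose component $\X$-functor is the identity on $\uX$, together with the fact from \bref{prop:enr_func_induced_by_mnd_mor} that $\uX^\Delta$ commutes with the forgetful $\X$-functors), the $\LL$-algebra $\uX^\Delta(E, b_E)$ has underlying $\X$-object $GE$ and structure morphism $b_E \cdot \Delta_{GE}$; hence $\uX^\Delta(E,b_E) = E$ exactly when these two structure morphisms coincide.

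The key observation is that $\Delta_{GE}$ underlies a morphism of linear spaces $FGE \to \uX^\Delta(MGE, \kappa_{GE})$ --- this is part of the content of \bref{thm:mnd_mor_as_homom} --- while $b_E : \uX^\Delta(MGE,\kappa_{GE}) \to E$ is a linear map by \bref{thm:m_distnly_cpl_sp_are_m_algs}. Thus $b_E \cdot \Delta_{GE}$ is a morphism in $\sL$ out of the free linear space $FGE$, and by the universal property of the Eilenberg--Moore adjunction $F \nsststile{\varepsilon}{\eta} G : \sL \to \uX$ it is determined by its transpose $GE \to GE$. I would compute that transpose as $G(b_E \cdot \Delta_{GE}) \cdot \eta_{GE} = b_E \cdot \Delta_{GE} \cdot \eta_{GE} = b_E \cdot \delta_{GE}$, using that $\Delta_{GE}$ is the transpose of $\delta_{GE}$, so that $\Delta_{GE} \cdot \eta_{GE} = \delta_{GE}$ (per \bref{par:preamble_for_dist_compl}), and then $b_E \cdot \delta_{GE} = 1_{GE}$ by \bref{thm:m_distnly_cpl_sp_are_m_algs}\,(2), which says precisely that the underlying map of $b_E$ is a retraction of $\delta_E = \delta_{GE}$. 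Since $\varepsilon_E : FGE \to E$ is the morphism in $\sL$ whose transpose is $1_{GE}$ (this is the triangular identity $G\varepsilon_E \cdot \eta_{GE} = 1_{GE}$), bijectivity of transposition forces $b_E \cdot \Delta_{GE} = \varepsilon_E$ in $\sL$; applying the forgetful $\X$-functor $G$ then yields $b_E \cdot \Delta_{GE} = G\varepsilon_E$, as required, whence $\uX^\Delta(E,b_E) = (GE, G\varepsilon_E) = E$.

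The argument has no deep obstacle; the only care needed is bookkeeping. One must keep straight the several guises of the data: $\Delta_{GE}$ as a $2$-cell of $\X$-functors versus as a morphism of $\LL$-algebras; $b_E$ as a linear map versus as its underlying $\X$-map (which, under the notational conventions of \bref{par:preamble_for_dist_compl}, are written alike); and $E$ as an object of $\sL$ versus as its underlying space equipped with structure morphism $G\varepsilon_E$. The conceptual point worth flagging is that the right tool here is the adjunction $F \dashv G$ and the characterization of $b_E$ from \bref{thm:m_distnly_cpl_sp_are_m_algs}, rather than any attempted direct comparison of $\LL$-algebra structures; once these identifications are in place the proof is a one-line transpose computation.
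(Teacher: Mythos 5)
Your proof is correct and takes essentially the same route as the paper's: both reduce the claim to the equation $b_E \cdot \Delta_E = e$ (the $\LL$-algebra structure of $E$), observe that $b_E \cdot \Delta_E$ is a linear map out of the free linear space, and pin it down by its restriction along $\eta_E$, which is $b_E \cdot \delta_E = 1_E$. The paper phrases this as the characterization of $e$ as the unique linear map with $e \cdot \eta_E = 1_E$ rather than explicitly invoking bijectivity of transposition, but the argument is the same.
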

\begin{proof}
We must show that the $\LL$-algebra structure
$$LE \xrightarrow{\Delta_E} ME \xrightarrow{b_E} E$$
coincides with the structure $e:LE \rightarrow E$ carried by the $\LL$-algebra $E$ itself.  The latter structure $e$ may be characterized as the unique linear map $e:FE \rightarrow E$ such that $e \cdot \eta_E = 1_E$ in $\X$.  Hence since
$$
\xymatrix{
LE \ar[r]^{\Delta_E} & ME \ar[r]^{b_E} & E \\
E \ar[u]_{\eta_E} \ar[ur]|{\delta_E} \ar[urr]_{1_E} & & 
}
$$
commutes and both $\Delta_E$ and $b_E$ are linear maps, we must have $b_E \cdot \Delta_E = e$.
\end{proof}

\begin{ThmSub}\label{thm:lin_map_betw_mdist_compl_sp_is_m_homom}\emptybox
\begin{enumerate}
\item Every linear map $h:E' \rightarrow E$ between $\MM$-distributionally complete linear spaces is an $\MM$-homomorphism $h:(E',b_{E'}) \rightarrow (E,b_E)$ between the associated $\MM$-algebras (and hence preserves the integral, \bref{thm:homom_via_integ}).
\item There is an $\X$-fully-faithful and injective-on-objects $\X$-functor $B:\sL_\Delta \rightarrowtail \uX^\MM$, sending each $\MM$-distributionally complete object $E \in \sL$ to its associated $\MM$-algebra $BE = (E,b_E)$, such that the following diagram commutes.
\begin{equation}\label{eqn:diag_funcs_distn_compl}
\xymatrix{
\sL_\Delta \ar@{ >->}[d]_B \ar@{^{(}->}[r] & *!/l3ex/+{\sL = \uX^\LL} \ar[d]^G \\
\uX^\MM \ar[ur]|{\uX^\Delta} \ar[r]_{G^\MM} & \uX
}
\end{equation}
\item There are isomorphisms
$$\uX^\Delta_{Z,BE}:\uX^\MM(Z,BE) \xrightarrow{\sim} \sL(\uX^\Delta Z,E)$$
$\X$-natural in $Z \in \uX^\MM$, $E \in \sL_\Delta$.
\end{enumerate}
\end{ThmSub}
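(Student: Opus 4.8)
The plan is to reorganize the three parts around a single \emph{fork lemma}, the enriched incarnation of the slogan ``a linear map into an $\MM$-distributionally complete linear space is automatically an $\MM$-homomorphism'', and to deduce from it part 3, then part 2, then part 1. Recall from \bref{par:em_adj} that for an $\MM$-algebra $Z$ with structure map $a_Z : MG^\MM Z \to G^\MM Z$ and a linear space $E$, the hom-object $\uX^\MM(Z, BE)$ is the equalizer in $\X$ of the pair $\uX(G^\MM Z, GE) \rightrightarrows \uX(MG^\MM Z, GE)$ given by $g \mapsto b_E \cdot Mg$ and $g \mapsto g \cdot a_Z$, where $BE := (E, b_E)$ and $b_E$ is the linear map of \bref{thm:m_distnly_cpl_sp_are_m_algs}. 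The fork lemma asserts: if $E \in \sL$ is $\MM$-distributionally complete, then the structure morphism $G_{\uX^\Delta Z, E} : \sL(\uX^\Delta Z, E) \to \uX(G^\MM Z, GE)$ of the forgetful $\X$-functor $G$ equalizes this pair, for every $\MM$-algebra $Z$.

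To prove the fork lemma I would first observe that, for a $g$ in the image of $G_{\uX^\Delta Z, E}$ (i.e.\ the underlying map of a linear map), the maps $Mg$, $a_Z$ and $b_E$ are all linear: $Mg$ because it is an $\MM$-homomorphism of free $\MM$-algebras and hence becomes a morphism of $\sL$ under $\uX^\Delta$, and $a_Z$, $b_E$ by the $\MM$-algebra laws together with \bref{thm:m_distnly_cpl_sp_are_m_algs}. Using the enriched functoriality of $G$, $\uX^\Delta$ and $F^\MM$, both composites of the pair, precomposed with $G_{\uX^\Delta Z, E}$, then factor through the monomorphism $G_{MG^\MM Z, E} : \sL(MG^\MM Z, E) \rightarrowtail \uX(MG^\MM Z, GE)$ via morphisms $\tau_1, \tau_2 : \sL(\uX^\Delta Z, E) \to \sL(MG^\MM Z, E)$, so it suffices to show $\tau_1 = \tau_2$. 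Here $\MM$-distributional completeness of $E$ enters: by \bref{rem:dist_compl_expl} the map $\sL(\Delta_{G^\MM Z}, E)$ is an isomorphism, and $\sL(FG^\MM Z, E) \cong \uX(G^\MM Z, GE)$ via the free--forgetful adjunction for $\LL$, so it is enough to compare $\tau_1, \tau_2$ after composing with these two (isomorphic, hence monic) maps; a short chase using $\Delta_X \cdot \eta_X = \delta_X$, the $\X$-naturality of $\delta$, and the unit laws $b_E \cdot \delta_{GE} = 1$ (from \bref{thm:m_distnly_cpl_sp_are_m_algs}) and $a_Z \cdot \delta_{G^\MM Z} = 1$ shows both become equal to the inclusion $G_{\uX^\Delta Z, E}$, whence $\tau_1 = \tau_2$.

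Granting the lemma, part 3 is quick: since $G \uX^\Delta = G^\MM$ \pbref{prop:enr_func_induced_by_mnd_mor} and $\uX^\Delta(BE) = E$ \pbref{thm:dist_compl_to_malg_to_same_lin_sp}, the structure morphism $\uX^\Delta_{Z,BE} : \uX^\MM(Z, BE) \to \sL(\uX^\Delta Z, E)$ satisfies $G_{\uX^\Delta Z, E} \cdot \uX^\Delta_{Z,BE} = G^\MM_{Z,BE}$, exhibiting $\uX^\MM(Z,BE)$ as a subobject of $\sL(\uX^\Delta Z, E)$ inside $\uX(G^\MM Z, GE)$; the fork lemma supplies a comparison in the reverse direction, and a mono-cancellation argument (cancelling $G_{\uX^\Delta Z, E}$ and $G^\MM_{Z,BE}$) shows the two comparisons are mutually inverse, so $\uX^\Delta_{Z,BE}$ is an isomorphism, $\X$-natural in $Z$ and $E$ by functoriality of $\uX^\Delta$ and $B$. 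For part 2 I would set $BE := (E, b_E)$ on objects and invoke \bref{thm:factn_through_faithful_vfunctor} with $G^\MM$ as the $\X$-faithful functor: the only hypothesis to verify --- that $G_{E', E}$ equalizes the pair defining $\uX^\MM(BE', BE)$ --- is exactly the fork lemma at $Z = BE'$. This yields $B$ as an $\X$-functor with $G^\MM B = G|_{\sL_\Delta}$; $B$ is injective on objects since $\uX^\Delta BE = E$; the diagram \eqref{eqn:diag_funcs_distn_compl} commutes because its lower triangle is \bref{prop:enr_func_induced_by_mnd_mor}, its upper triangle $\uX^\Delta B = (\sL_\Delta \hookrightarrow \sL)$ holds as both sides agree on objects and become equal after the $\X$-faithful $G$ (by \bref{thm:crit_for_equal_faithful_vfunctors}, or direct cancellation of monos on structure morphisms), and the outer square follows; and $B$ is $\X$-fully-faithful because $\uX^\Delta_{BE', BE} \cdot B_{E', E} = 1$ makes $B_{E', E}$ a split monomorphism whose retraction $\uX^\Delta_{BE', BE}$ is itself monic (as $\uX^\Delta$ is $\X$-faithful), forcing $B_{E', E}$ to be invertible.

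Part 1 is then a corollary: for a linear $h : E' \to E$ with $E', E \in \sL_\Delta$, applying the $\X$-functor $B$ gives an $\MM$-homomorphism $Bh : BE' \to BE$ whose underlying map is $Gh$ (by the outer square of \eqref{eqn:diag_funcs_distn_compl}), and preservation of the integral is then \bref{thm:homom_via_integ}; alternatively, $h \cdot b_{E'}$ and $b_E \cdot Mh$ are linear maps $ME' \to E$ both restricting to $h$ along $\delta_{E'}$, hence equal by the uniqueness in \bref{rem:eltwise_distn_compl}. I expect the fork lemma to be the main obstacle, the difficulty being organizational rather than conceptual: $\MM$-distributional completeness is an \emph{enriched} orthogonality condition, so precomposition with $\Delta_X$ inverts only the linear hom-objects $\sL(-, E)$, not the ambient $\uX(-, GE)$; one must therefore first establish that the two competing morphisms genuinely live over the $\sL$-hom-objects --- which is precisely why the linearity of $Mg$, $a_Z$, $b_E$ and the systematic use of the structure morphisms of $G$ and $\uX^\Delta$ (rather than underlying maps) cannot be bypassed.
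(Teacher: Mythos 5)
Your proposal is correct and follows essentially the same route as the paper: the heart in both cases is the fork argument showing that both legs of the pair defining $\uX^\MM(Z,BE)$ factor through the linear hom-object $\sL(M G^\MM Z,E)$ (using linearity of $Mg$, $a_Z$, $b_E$) and then coincide after composition with the isomorphism supplied by $\MM$-distributional completeness of $E$, everything else following via \bref{thm:factn_through_faithful_vfunctor} and mono-cancellation. The only difference is order of presentation --- the paper proves part 1 first as an element-level warm-up and then internalizes, whereas you state the enriched fork lemma up front and derive 3, 2, 1 from it --- which is a cosmetic rearrangement, not a different proof.
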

\begin{proof}
It is instructive to first prove 1, before `internalizing' the argument in order to prove 2 and 3.  In the following diagram
$$
\xymatrix{
E' \ar[d]_{\delta_{E'}} \ar[r]^h & E \ar[d]^{\delta_E} \\
ME' \ar[d]_{b_{E'}} \ar[r]^{Mh} & ME \ar[d]^{b_E} \\
E' \ar[r]_h & E
}
$$
the upper square commutes, and the periphery commutes since the composites on the left and right sides are identity morphisms.  But each of the morphisms in the lower square is linear; indeed, $Mh$ is an $\MM$-homomorphism of free $\MM$-algebras and hence, via $\uX^\Delta:\uX^\MM \rightarrow \uX^\LL = \sL$, is a linear map of the underlying linear spaces.  It follows by \bref{rem:eltwise_distn_compl} that the lower square commutes.

Notice that the argument still goes through if we simply let $E'$ be the underlying linear space $E' = \uX^\Delta(Z,b')$ of an arbitrary given $\MM$-algebra $(Z,b')$, since the $\MM$-algebra structure $b'$ is an $\MM$-homomorphism and hence a linear map $b':ME' \rightarrow E'$.  By formulating an $\X$-enriched analogue of the above argument, we shall now show that there is a unique morphism $B_{E' E}$ such that
$$
\xymatrix{
\sL(E',E) \ar@{ >->}[rr]^{G_{E' E}} \ar[dr]_{B_{E' E}} & & \uX(E',E) \\
 & \uX^\MM((E',b'),(E,b_E)) \ar@{ >->}[ur]|{G^\MM_{(E',b') (E,b_E)}} &
}
$$
commutes.  For this it suffices to show that
$$
\xymatrix{
\sL(E',E) \ar[r]^{G_{E' E}} & \uX(E',E) \ar[rr]^{\uX(b',E)} \ar[dr]_{M_{E' E}} &                                         & \uX(ME',E) \\
                            &                                                         & \uX(ME',ME) \ar[ur]|{\uX(ME',b_E)} & 
}
$$
is a fork, since $G^\MM_{(E',b') (E,b_E)}$ is the equalizer of the given pair.  To this end, first observe that the upper composite in the prospective fork appears within the following commutative diagram
$$
\xymatrix{
\sL(E',E) \ar[d]_G \ar[r]^{\sL(b',E)} & \sL(ME',E) \ar[d]^G \\
\uX(E',E) \ar@{=}[dr] \ar[r]^{\uX(b',E)} & \uX(ME',E) \ar[d]^{\uX(\delta_{E'},E)} \\
& {\uX(E',E)\;,}
}
$$
and by \bref{thm:charn_dist_compl_via_dirac}, the composite on the right side is an isomorphism $\phi:\sL(ME',E) \xrightarrow{\sim} \uX(E',E)$ since $E$ is $\MM$-distributionally complete.  The lower composite in the prospective fork appears within the following commutative diagram
$$
\xymatrix{
& & \sL(ME',ME) \ar[d]^G \ar[rr]^{\sL(ME',b_E)} & & \sL(ME',E) \ar[d]^G \\
& \uX^\MM(F^\MM E',F^\MM E) \ar[r]_{G^\MM} \ar[ur]^{\uX^\Delta} & \uX(ME',ME) \ar[d]|{\uX(\delta_{E'},ME)} \ar[rr]^{\uX(ME',b_E)} & & \uX(ME',E) \ar[d]^{\uX(\delta_{E'},E)} \\
\sL(E',E) \ar[r]_{G} & \uX(E',E) \ar[r]^{\uX(E',\delta_E)} \ar@/_3ex/@{=}[rrr] \ar[ur]|M \ar[u]^{F^\MM} & \uX(E',ME) \ar[rr]^{\uX(E',b_E)} & & {\uX(E',E)\;,} \\
& & & &
}
$$
whose right side is the same isomorphism $\phi$.  In particular, both the upper and lower composites in the prospective fork factor through $G:\sL(ME',E) \rightarrow \uX(ME',E)$, via evident morphisms $u,l:\sL(E',E) \rightarrow \sL(ME',E)$, respectively.  But by the commutativity of the preceding diagrams, the composites $\phi \cdot u$, $\phi \cdot l$ are both equal to $G_{E' E}:\sL(E',E) \rightarrow \uX(E',E)$.  Hence, since $\phi$ is iso, $u = l$ and hence the upper and lower composites in the prospective fork are equal.

Hence we have commutative triangles
$$
\xymatrix @C=12ex @R=6ex {
\sL(E',E) \ar[r]^(.4){B_{E' E}} \ar@{ >->}[dr]_{G_{E' E}} & \uX^\MM((E',b'),(E,b_E)) \ar[r]^(.6){\uX^\Delta_{(E',b') (E,b_E)}} \ar@{ >->}[d]|{G^\MM_{(E',b') (E,b_E)}} & \sL(E',E) \ar@{ >->}[dl]^{G_{E' E}} \\
& \uX(E',E) & 
}
$$
in which $G_{E' E},G^\MM_{(E',b'),(E,b_E)}$ are mono.  It follows that $B_{E' E},\uX^\Delta_{(E',b') (E,b_E)}$ are mutually-inverse isomorphisms, so 3 is established.

An $\X$-fully-faithful $\X$-functor $B$ making \eqref{eqn:diag_funcs_distn_compl} commute is now obtained by \bref{thm:factn_through_faithful_vfunctor}.  The commutativity of the upper triangle in \eqref{eqn:diag_funcs_distn_compl} also entails that $B$ is injective on objects.
\end{proof}

\section{Distributional completeness and Pettis spaces for accessible distributions}\label{sec:distnl_compl_pett_for_acc_distns}

Again we work with given data as in \bref{par:data_for_dcompl_and_pett}.  Beware that in the present section we shall elide applications of the change-of-base 2-functor $G_*:\LCAT \rightarrow \XCAT$.

\begin{DefSub}
Given an abstract distribution monad $\MM$, we denote by $\sL_{(\MM)}$ the full subcategory of $\sL$ consisting of the $\MM$-distributionally complete separated objects.  Depending on context, we shall also construe $\sL_{(\MM)}$ as a full sub-$\X$-category (resp. sub-$\sL$-category) of $\sL$ (resp. $\uL$).
\end{DefSub}

In the present section, we shall prove the following, and along the way, we will define and employ notions of \textit{$\tDD$-distributional completion, closedness, and density} \pbref{def:distnl_compl_cl_dens}.

\begin{ThmSub}\label{thm:dist_compl_sep_vs_pett}\emptybox
\begin{enumerate}
\item The $\tDD$-distributionally complete separated linear spaces are exactly the $\tDD$-Pettis linear spaces.
\item The resulting full subcategory $\sL_{(\tDD)} = \pL{\tDD}$ of $\sL$ is $\sL$-reflective in $\uL$ (and hence also $\X$-reflective in $\sL = G_*\uL$).
\item The category $\sL_{(\tDD)} = \pL{\tDD}$ carries the structure of a symmetric monoidal closed category, and the associated reflection $\dK \dashv \dJ:\sL_{(\tDD)} \hookrightarrow \sL$ underlies a symmetric monoidal adjunction.
\end{enumerate}
\end{ThmSub}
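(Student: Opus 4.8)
The plan is to recognise $\sL_{(\tDD)}$ as one of the enriched-orthogonality reflective subcategories of Chapter~\bref{ch:compl_enr_orth}, so that parts~2 and~3 follow from the general theory, and then to prove part~1 by matching that description against the definition of $\tDD$-Pettis space. Throughout I elide the change-of-base $G_*$ and treat $\HH$ as the double-dualization $\sL$-monad on $\uL$.

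First I would set $\tDelta := \{\,\Delta^{\tDD}_X \mid X \in \X\,\} = \{\,\tfd_{FX} \mid X \in \X\,\}$, a class of morphisms in $\uL$. Each $\tfd_{FX}$, being a component of the unit of the idempotent $\sL$-monad $\tHH$, is inverted by $\tH$, and $\Sigma_{\tH} = \Sigma_H$ by \bref{thm:addnl_facts_on_ind_idmmnd_refl}, so $\tDelta \subs \Sigma_H$. By the definition of $\Delta^{\tDD}$ and \bref{exa:und_linear_spaces_of_free_d_and_td_algs}, a linear space $E$ is $\tDD$-distributionally complete iff $\tfd_{FX} \bot_\sL E$ for every $X$, i.e.\ iff $E$ is $\tDelta$-complete, and $E$ is functionally separated iff $\fd_E = \eta^\HH_E$ is an embedding, i.e.\ iff $E$ is $\HH$-separated; hence $\sL_{(\tDD)} = \uL_{(\HH,\tDelta)}$ in the sense of \bref{def:compl_sep_closed_dense}. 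The hypotheses of \bref{thm:refl_orth} hold with $\TT := \HH$, $\Sigma := \tDelta$ (the $\sL$-category $\uL$ is $\sL$-finitely-well-complete by \bref{thm:base_fwc}, since $\sL$ is finitely well-complete, and is cotensored since $\sL$ is closed), so \bref{thm:refl_orth} exhibits $\sL_{(\tDD)}$ as an $\sL$-reflective subcategory of $\uL$, with induced idempotent $\sL$-monad $\HH_{\tDelta}$; write $\dK \dashv \dJ$ for the reflection. Part~2 follows, and part~3 is then immediate from \bref{thm:enr_refl_smadj} applied with $\V := \sL$: the underlying ordinary adjunction of an $\sL$-reflection on $\uL$ is symmetric monoidal and its source is symmetric monoidal closed.

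It remains to prove part~1, $\sL_{(\tDD)} = \pL{\tDD}$, which I would do by proving the two inclusions separately. For $\sL_{(\tDD)} \subs \pL{\tDD}$, let $E$ be $\tDD$-distributionally complete and functionally separated. By \bref{thm:m_distnly_cpl_sp_are_m_algs} applied with $\MM := \tDD$ there is a linear map $b_E : \tDD E \to E$ serving as a $\tDD$-algebra structure on $GE$ and satisfying $b_E \cdot \Delta^{\tDD}_{GE} = \varepsilon_E$. Both $\fd_E \cdot b_E$ and the linear map underlying $\xi^{\tDD}_E = i_E \cdot \tH\varepsilon_E$ are linear maps $\tDD GE \to HE$, and one checks, using the monad-morphism unit law $\xi^{\tDD}_E \cdot \delta^{\tDD}_{GE} = G\fd_E$ and transposition under $F \dashv G$, that both restrict along $\Delta^{\tDD}_{GE} : FGE \to \tDD GE$ to $\fd_E \cdot \varepsilon_E$. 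Since $HE$ is the carrier of a free $\HH$-algebra it is functionally complete \pbref{thm:ind_idmmnd_and_refl}, hence $\tDD$-distributionally complete \pbref{thm:func_compl_impl_td_dist_compl}, so $\tfd_{FGE} = \Delta^{\tDD}_{GE} \bot_\sL HE$ and restriction along $\Delta^{\tDD}_{GE}$ is injective on hom-classes; therefore $G\fd_E \cdot b_E = \xi^{\tDD}_E$, so $b_E$ is an $\tDD$-Pettis structure for $E$ and $E \in \pL{\tDD}$. In particular every functionally complete linear space is $\tDD$-Pettis.

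For the reverse inclusion $\pL{\tDD} \subs \sL_{(\tDD)}$, let $E$ be $\tDD$-Pettis; then $E$ is functionally separated and, by \bref{thm:pett_spaces_are_malgs}, $(GE,a_E)$ is a $\tDD$-algebra with $\uX^{\Delta^{\tDD}}(GE,a_E) = E$ \pbref{thm:diag_funcs_pett}. I would verify that $E$ is $\tDD$-distributionally complete via the criterion of \bref{thm:charn_dist_compl_via_dirac}: the Eilenberg--Moore $\X$-adjunction for $\tDD$ furnishes an $\X$-natural isomorphism $\uX^{\tDD}(F^{\tDD}X,(GE,a_E)) \cong \uX(X,E)$ given by restriction along $\delta^{\tDD}_X$, so it suffices to identify $\uX^{\tDD}(F^{\tDD}X,(GE,a_E))$ with $\sL(\tDD X,E)$ over $\uX(\tDD X,GE)$. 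One half of this identification is the $\X$-functor $\uX^{\Delta^{\tDD}}$, which sends $F^{\tDD}X = (\tDD X,\kappa_X)$ to the linear space $\tH FX$ \pbref{exa:und_linear_spaces_of_free_d_and_td_algs}; for the other half I would show that every linear map $k : \tH FX \to E$ is a $\tDD$-homomorphism $(\tDD X,\kappa_X) \to (GE,a_E)$, by observing that $\fd_E \cdot k$ is a linear map between the functionally complete --- hence $\tDD$-distributionally complete --- spaces $\tH FX$ and $HE$, and so a $\tDD$-homomorphism by \bref{thm:lin_map_betw_mdist_compl_sp_is_m_homom}, that $\fd_E$ itself is a $\tDD$-homomorphism (both $E$ and $HE$ being $\tDD$-Pettis, by \bref{thm:pett_obs_malgs}), and that $\fd_E$ is a monomorphism. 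I expect the passage from the evident \emph{bijection of hom-classes} (existence and uniqueness of linear extensions along $\tfd_{FX}$, which follow directly from the $\tDD$-algebra structure on $GE$ and the functional completeness of $HE$) to the required \emph{$\X$-enriched} isomorphism of internal homs to be the main technical obstacle, and the Eilenberg--Moore $\X$-adjunction together with \bref{thm:lin_map_betw_mdist_compl_sp_is_m_homom} is the device I would use to bridge it.
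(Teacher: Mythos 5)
Your proposal is correct in outline and reaches all three statements, but it takes a genuinely leaner route to part~1 than the paper does. For parts~2 and~3 you do exactly what the paper does: identify $\sL_{(\tDD)}$ with $\uL_{(\HH,\tDelta)}$, invoke \bref{thm:refl_orth} and then \bref{thm:enr_refl_smadj}. For the inclusion $\sL_{(\tDD)} \subs \pL{\tDD}$, however, the paper first builds the separated $\tDD$-distributional completion monad $\dHH = \HH_{\tDelta}$, the notions of $\tDD$-distributionally closed/dense morphisms, the monad morphisms $k:\dHH \rightarrow \tHH$ and $\overline{\xi}:\tDD \rightarrow \dHH$, and proves $k_{FX}$ iso before extracting the Pettis structure in \bref{thm:td_alg_str_on_distcpl_sep_ls_is_pett_str}; you instead verify $G\fd_E \cdot b_E = \xi^{\tDD}_E$ directly by comparing two linear maps into the functionally complete (hence $\tDelta$-orthogonal) space $HE$ after restriction along $\Delta^{\tDD}_{GE}$. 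That argument is sound and shorter, at the cost of not producing the auxiliary monad $\dHH$ and the isomorphism $\dDD \cong \tDD$, which the paper wants anyway for \bref{thm:dd_iso_td} and the closure/density theory of \bref{def:distnl_compl_cl_dens}. Your reverse inclusion is in substance the paper's final lemma (Eilenberg--Moore adjunction for $\tDD$ plus identification of linear maps out of $\tH FX$ with $\tDD$-homomorphisms out of the free algebra), and your identification of $b_{\tH FX}$ with $\tkappa_X$ by the uniqueness clause of \bref{thm:m_distnly_cpl_sp_are_m_algs} recovers \bref{thm:th_fx_pettis_with_free_td_alg_str} without the $\dHH$ machinery.

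One citation in the last step needs repair. You propose to pass from the bijection of hom-classes to the required isomorphism in $\X$ via \bref{thm:lin_map_betw_mdist_compl_sp_is_m_homom}, but that result presupposes the codomain is $\MM$-distributionally complete, which is exactly what you are trying to prove about $E$. The correct tool is \bref{thm:pett_obs_malgs}~2 (equivalently the $\X$-fully-faithfulness of $A:\pL{\tDD} \rightarrowtail \uX^{\tDD}$ from \bref{thm:pett_obs_iso_to_subcat_malgs}): since $\tH FX$ is functionally complete, it is $\tDD$-Pettis by your already-established first inclusion, with $A(\tH FX) = (\tD X,\tkappa_X)$, and $E$ is $\tDD$-Pettis by hypothesis, so $\sL(\tH FX,E) \cong \uX^{\tDD}((\tD X,\tkappa_X),AE)$ as objects of $\X$ over $\uX(\tD X, GE)$; composing with the Eilenberg--Moore isomorphism and $F \dashv G$ then gives the criterion of \bref{thm:charn_dist_compl_via_dirac}. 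This also makes your detour through $\fd_E \cdot k$ and the monomorphism $\fd_E$ unnecessary. (Also note the minor notational slips $\tDD GE$ for $\tD GE$.)
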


\begin{RemSub}\label{rem:d_distnl_compl_for_conv}
For the example of $\X = \Conv$, where $\tDD$ coincides with $\DD$ \pbref{exa:d_td_for_conv_ident_as_abs_distn_mnds}, we may replace $\tDD$ by $\DD$ in the statement of Theorem \bref{thm:dist_compl_sep_vs_pett}.  Further, in this case all the theory developed in the present section applies also with $\DD$ in place of $\tDD$.
\end{RemSub}

\begin{RemSub}\label{rem:func_compl_linsp_tddistnly_compl_sep}
Every functionally complete linear space is $\tDD$-distributionally complete by \eqref{thm:func_compl_impl_td_dist_compl} and separated by \bref{rem:func_compl_obs_sep} and so, by \bref{thm:dist_compl_sep_vs_pett}, is $\tDD$-Pettis.
\end{RemSub}

\begin{RemSub}\label{rem:initial_rmks_for_pf_of_dist_compl_pett_thm}
In order to prove \bref{thm:dist_compl_sep_vs_pett}, it suffices to prove statements 1 and 2, since 3 follows from 2 by \bref{thm:enr_refl_smadj}.
\end{RemSub}

\begin{ParSub}
In the present section, we will denote the $\X$-monad $\tDD$ by $(\tD,\tdelta,\tkappa)$ and the associated abstract distribution monad by $(\tDD,\tDelta,\txi)$.  Recall from \bref{exa:dist_compl_for_nat_acc_distn_mnds} 2, that a linear space $E \in \sL$ is $\tDD$-distributionally complete iff each linear map
\begin{equation}\label{eqn:Delta_fx_for_td}\tDelta_X = \tfd_{FX}:FX \rightarrow \tH FX\;\;\;\;(X \in \X)\end{equation}
is $\sL$-orthogonal to $E$ in $\uL$.  Again abusing notation as in \bref{rem:dist_compl_expl}, we will denote the class of all the linear maps \eqref{eqn:Delta_fx_for_td} by $\tDelta$.
\end{ParSub}

The following shows that the theory of completion, closure, and density developed in \bref{sec:cmpl_cl_dens}, \bref{sec:sep_cpl_mnds_class_morphs} is applicable with regard to $\tDD$-distributional completeness.

\begin{PropSub}\emptybox
\begin{enumerate}
\item The $\sL$-category $\uL$, the $\sL$-monad $\HH$, and the class $\tDelta \subs \Mor\uL$ satisfy the assumptions of \bref{par:data_enr_orth_subcat_subord_adj}.  
\item The $\tDD$-distributionally complete separated linear spaces may be equivalently characterized as the $\tDelta$-complete $\HH$-separated objects of $\uL$ \pbref{def:compl_sep_closed_dense}, so that
$$\sL_{(\tDD)} = \uL_{(\HH,\tDelta)}$$
as $\sL$-categories.
\end{enumerate}
\end{PropSub}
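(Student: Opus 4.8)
The plan is to establish both parts essentially by unwinding the relevant definitions and invoking results already in hand; no new construction is required.

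For part 1, the assumptions of \bref{par:data_enr_orth_subcat_subord_adj} (read with $\V := \sL$, $\B := \uL$, $\TT := \HH$, $\Sigma := \tDelta$) amount to three points. First, $\uL$ must be an $\sL$-finitely-well-complete and cotensored $\sL$-category: cotensoredness is automatic for the canonical $\sL$-category $\uL$ of a closed symmetric monoidal category, cotensors being internal homs (\bref{par:clsmcat_notn}), and $\sL$-finite-well-completeness follows from \bref{thm:base_fwc} together with the finite well-completeness of $\sL$ recorded in \bref{par:data_for_dcompl_and_pett}. Second, $\HH$ must be an $\sL$-monad on $\uL$: this is precisely the double-dualization $\sL$-monad of \bref{def:dualization} (here, as flagged at the opening of \bref{sec:distnl_compl_pett_for_acc_distns}, we work with the $\sL$-enriched $\HH$ and elide the change of base $G_*$). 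Third, one needs $\tDelta \subs \Sigma_H$: since $\tHH = \HH_{\Sigma_H}$ is the idempotent $\sL$-monad induced by $\HH$ \pbref{def:func_compl}, each component $\tfd_E$ of its unit is inverted by $\tH$ (the comultiplication of an idempotent monad being an isomorphism), hence $\tfd_E \in \Sigma_{\tH} = \Sigma_H$ by \bref{thm:addnl_facts_on_ind_idmmnd_refl}; in particular $\tDelta_X = \tfd_{FX} \in \Sigma_H$ for every $X \in \X$.

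For part 2 the point is that the two descriptions coincide term by term. On the one hand, a linear space being \emph{separated} in the sense of \bref{def:dualization} means exactly that it is \emph{$\HH$-separated} in the sense of \bref{def:compl_sep_closed_dense}, both asserting that $\fd_E$ is an embedding. On the other hand, $E$ is $\tDD$-distributionally complete \pbref{def:dist_compl} iff each $\Delta^{\tDD}_X : FX \to \tD X$ is $\sL$-orthogonal to $E$ in $\uL$; but the underlying linear space of the free $\tDD$-algebra $\tD X$ is $\tH FX$, and under this identification $\Delta^{\tDD}_X = \tfd_{FX}$ \pbref{exa:dist_compl_for_nat_acc_distn_mnds}, so this is precisely the condition $E \in \tDelta^{\bot_\sL}$, i.e.\ that $E$ be $\tDelta$-complete \pbref{def:compl_sep_closed_dense}. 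Hence $\sL_{(\tDD)}$ and $\uL_{(\HH,\tDelta)}$, both regarded as full sub-$\sL$-categories of $\uL$, have the same objects and are therefore equal as $\sL$-categories.

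The argument is routine; the only matters requiring any care are the bookkeeping distinction between the $\sL$-monad $\HH$ on $\uL$ and its image under $G_* : \LCAT \to \XCAT$ (resolved by the convention stated at the opening of \bref{sec:distnl_compl_pett_for_acc_distns}) and the identification of the abstract-distribution datum $\Delta^{\tDD}_X$ with the reflection unit $\tfd_{FX}$, which is exactly what makes the $\TT$-separated $\Sigma$-completion machinery of \bref{ch:compl_enr_orth} applicable here in the first place.
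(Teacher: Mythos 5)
Your proof is correct and follows essentially the same route as the paper's: part 1 is settled by citing \bref{thm:base_fwc} for $\sL$-finite-well-completeness of the cotensored $\sL$-category $\uL$ and by observing that $\tH$ inverts the unit components $\tfd_{FX}$ of the idempotent monad $\tHH$ while $\Sigma_{\tH} = \Sigma_H$ by \bref{thm:addnl_facts_on_ind_idmmnd_refl}, and part 2 is an unwinding of definitions. The only difference is that you spell out the term-by-term matching in part 2, which the paper dismisses as immediate.
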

\begin{proof}
Firstly, since $\sL$ is finitely well-complete, the cotensored $\sL$-category $\uL$ is $\sL$-finitely-well-complete by \bref{thm:base_fwc}.  Next, $\tH$ inverts each morphism in the class $\tDelta$, and by \bref{thm:addnl_facts_on_ind_idmmnd_refl}, $\tH$ inverts exactly the same morphisms as $H$.  Hence $\tDelta \subs \Sigma_H$.  Thus 1 is established, and 2 follows immediately from the definitions.
\end{proof}

\begin{CorSub}\label{thm:td_dist_compl_sep_obs_refl}
$\sL_{(\tDD)} = \uL_{(\HH,\tDelta)}$ is the $\sL$-reflective-subcategory of $\uL$ determined by the idempotent $\sL$-monad $\HH_{\tDelta}$ \pbref{def:tsep_sigma_compl_mnd}.
\end{CorSub}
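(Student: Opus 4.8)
The plan is to deduce the corollary directly from the immediately preceding proposition together with the definition of the $\TT$-separated $\Sigma$-completion $\V$-monad \pbref{def:tsep_sigma_compl_mnd} and the bijection between $\V$-reflections and idempotent $\V$-monads \pbref{thm:refl_idemp_mnd}. By the preceding proposition, the data $\uL$, $\HH$, $\tDelta$ satisfy the hypotheses of \bref{par:data_enr_orth_subcat_subord_adj}, and $\sL_{(\tDD)}$ coincides, as an $\sL$-category, with $\uL_{(\HH,\tDelta)}$, the full sub-$\sL$-category of $\uL$ consisting of the $\tDelta$-complete $\HH$-separated objects \pbref{def:compl_sep_closed_dense}. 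It thus suffices to show that $\uL_{(\HH,\tDelta)}$ is the $\sL$-reflective-subcategory of $\uL$ determined by $\HH_{\tDelta}$.

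First I would invoke \bref{thm:refl_orth} with $\TT := \HH$ and $\Sigma := \tDelta$: since $\uL$ is $\sL$-finitely-well-complete and cotensored and $\tDelta \subs \Sigma_H$, the reflection unit morphisms of \bref{def:completion} exhibit $\uL_{(\HH,\tDelta)}$ as an $\sL$-reflective-subcategory of $\uL$. This is precisely the situation in which \bref{def:tsep_sigma_compl_mnd} defines the idempotent $\sL$-monad $\HH_{\tDelta}$, namely as the $\sL$-monad induced by this $\sL$-reflection. Then I would apply the second clause of \bref{thm:refl_idemp_mnd}, according to which the $\sL$-reflective-subcategory associated to a given idempotent $\sL$-monad $\SSS$ via the bijection $\Refl_\sL(\uL) \cong \IdmMnd_\sL(\uL)$ is $\uL^{(\SSS)}$. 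Taking $\SSS := \HH_{\tDelta}$, and recalling that $\HH_{\tDelta}$ was constructed from the $\sL$-reflection onto $\uL_{(\HH,\tDelta)}$, the subcategory $\uL^{(\HH_{\tDelta})}$ determined by $\HH_{\tDelta}$ is $\uL_{(\HH,\tDelta)}$ itself. Combining with the preceding proposition yields $\sL_{(\tDD)} = \uL_{(\HH,\tDelta)} = \uL^{(\HH_{\tDelta})}$, which is the assertion.

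There is essentially no serious obstacle here; the argument is a short chain of bookkeeping steps through definitions and the reflection/idempotent-monad correspondence. The only point requiring a little care is to keep the enrichment straight: throughout this section one elides the change-of-base $2$-functor $G_*:\LCAT \rightarrow \XCAT$, so $\HH$ is to be read as the double-dualization $\sL$-monad on $\uL$ rather than its image under $G_*$, and all of the cited results (\bref{thm:refl_orth}, \bref{def:tsep_sigma_compl_mnd}, \bref{thm:refl_idemp_mnd}) are to be applied in the $\V := \sL$ enriched setting. One should also note in passing that, by \bref{thm:em_adj_idm_mnd} and \bref{rem:em_adj_idm_mnd}, the $\sL$-reflection onto $\uL_{(\HH,\tDelta)}$ may be identified with the Eilenberg--Moore $\sL$-adjunction of $\HH_{\tDelta}$, so that the phrasing ``the $\sL$-reflective-subcategory determined by $\HH_{\tDelta}$'' is unambiguous.
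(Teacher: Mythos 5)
Your proposal is correct and follows exactly the route the paper intends: the corollary is stated without proof precisely because it is immediate from the preceding proposition (which verifies the hypotheses of \bref{par:data_enr_orth_subcat_subord_adj} and identifies $\sL_{(\tDD)}$ with $\uL_{(\HH,\tDelta)}$) together with Definition \bref{def:tsep_sigma_compl_mnd} and the reflection/idempotent-monad correspondence of \bref{thm:refl_idemp_mnd}. Your bookkeeping, including the remark about eliding $G_*$ and reading $\HH$ as the double-dualization $\sL$-monad, matches the paper's conventions for this section.
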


\begin{DefSub}\label{def:distnl_compl_cl_dens}\emptybox
\begin{enumerate}
\item We call the idempotent $\sL$-monad $\dHH := \HH_{\tDelta}$ \pbref{thm:td_dist_compl_sep_obs_refl} the \textit{separated $\tDD$-distributional completion monad}, and we write $\dHH = (\dH,\dfd,\dfk)$.
\item We let $\dK \nsststile{}{\dfd} \dJ:\sL_{(\tDD)} \hookrightarrow \uL$ denote the associated $\sL$-reflection.
\item An embedding $m:M \rightarrow E$ in $\uL$ is \textit{$\tDD$-distributionally closed} if $m$ is $\tDelta$-closed \pbref{def:compl_sep_closed_dense}, i.e. if $m \in \tDelta^{\downarrow_\sL}$ \pbref{def:enr_orth_fact}.
\item A morphism $h:E_1 \rightarrow E_2$ in $\uL$ is \textit{$\tDD$-distributionally dense} if $h$ is $\tDelta$-dense \pbref{def:compl_sep_closed_dense}, i.e. if $f \in \ClEmb{\tDelta}^{\uparrow_\sL}$.
\end{enumerate}
\end{DefSub}

\begin{PropSub}\label{thm:func_cl_dens_via_dist_cl_dens}\emptybox
\begin{enumerate}
\item Every functionally closed embedding is $\tDD$-distributionally closed.
\item Every $\tDD$-distributionally dense morphism is functionally dense.
\end{enumerate}
\end{PropSub}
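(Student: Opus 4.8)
The plan is to deduce both parts from the single inclusion $\tDelta \subs \Sigma_H$ of classes of morphisms in $\uL$, which was already recorded in the proof of the proposition immediately preceding this one: it holds because the linear maps $\tDelta_X = \tfd_{FX}$ ($X \in \X$) are inverted by $\tH$, and $\tH$ inverts exactly the morphisms inverted by $H$ \pbref{thm:addnl_facts_on_ind_idmmnd_refl}, which are exactly those belonging to $\Sigma_H$ \pbref{par:basic_charns_func_completeness}. Together with the obvious antitonicity of the operators $(-)^{\downarrow_\sL}$ and $(-)^{\uparrow_\sL}$ in their argument class (applied within the $\sL$-category $\uL$), this gives everything.

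For part 1, I would first note that since $\tDelta \subs \Sigma_H$, being $\sL$-orthogonal to every morphism of $\Sigma_H$ is a stronger requirement than being $\sL$-orthogonal to every morphism of $\tDelta$, so $\Sigma_H^{\downarrow_\sL} \subs \tDelta^{\downarrow_\sL}$. Recalling from \bref{def:compl_sep_closed_dense} that $\ClEmb{\Sigma} = \Sigma^{\downarrow_\sL} \cap \StrMono_\sL\uL$, intersecting with $\StrMono_\sL\uL$ yields
$$\ClEmb{\Sigma_H} = \Sigma_H^{\downarrow_\sL} \cap \StrMono_\sL\uL \;\subs\; \tDelta^{\downarrow_\sL} \cap \StrMono_\sL\uL = \ClEmb{\tDelta}\;,$$
which is precisely the assertion that every functionally closed embedding \pbref{def:func_cl_dens} is $\tDD$-distributionally closed \pbref{def:distnl_compl_cl_dens}.

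For part 2, I would then apply $(-)^{\uparrow_\sL}$ to the inclusion $\ClEmb{\Sigma_H} \subs \ClEmb{\tDelta}$ just obtained; by antitonicity this gives $\Dense{\tDelta} = \ClEmb{\tDelta}^{\uparrow_\sL} \subs \ClEmb{\Sigma_H}^{\uparrow_\sL} = \Dense{\Sigma_H}$, i.e. every $\tDD$-distributionally dense morphism is functionally dense. There is no real obstacle in this argument; the only point requiring attention is bookkeeping — ensuring that throughout the orthogonality operators are taken with $\V = \sL$ and $\B = \uL$, and that $\tDelta$ and $\Sigma_H$ are being compared as subclasses of $\Mor\uL$, so that the preceding proposition's inclusion $\tDelta \subs \Sigma_H$ can be invoked directly.
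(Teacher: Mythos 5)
Your proposal is correct and follows essentially the same route as the paper: both proofs rest on the inclusion $\tDelta \subs \Sigma_H$ and the antitonicity of $(-)^{\downarrow_\sL}$ and $(-)^{\uparrow_\sL}$, first deducing $\ClEmb{\Sigma_H} \subs \ClEmb{\tDelta}$ and then passing to the upper classes for part 2. Your explicit intersection with $\StrMono_\sL\uL$ is just a slightly more spelled-out version of the same step the paper performs implicitly.
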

\begin{proof}
Recall that an embedding $m$ in $\uL$ is functionally closed iff $m \in \Sigma_H^{\downarrow_\sL}$, and since $\tDelta \subs \Sigma_H$ it follows that $\Sigma_H^{\downarrow_\sL} \subs \tDelta^{\downarrow_\sL}$.  Therefore $\ClEmb{\Sigma_H} \subs \ClEmb{\tDelta}$ and 1 is established.  We therefore have also that $\Dense{\tDelta} = \ClEmb{\tDelta}^{\uparrow_\sL} \subs \ClEmb{\Sigma_H}^{\uparrow_\sL} = \Dense{\Sigma_H}$, so 2 is obtained.
\end{proof}

\begin{PropSub}\label{thm:mnd_morphs_th_dh_h}
There are unique morphisms of $\sL$-monads $k$, $\wideparen{i}$ as in the diagram
\begin{equation}\label{eqn:mnd_morphs_th_dh_h}
\xymatrix {
                                         & \tHH \ar[dr]^i &          \\
\dHH \ar[ur]^{k} \ar[rr]_{\wideparen{i}} &                      & {\HH\;,}
}
\end{equation}
where $i$ is as in \bref{def:func_compl}.  Moreover, the diagram commutes, and the components of $\wideparen{i}$ are $\tDD$-distributionally closed embeddings. 
\end{PropSub}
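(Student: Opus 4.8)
The plan is to realize $\tHH$, $\dHH$, and $\HH$ all as instances of the $\TT$-separated $\Sigma$-completion construction \pbref{def:tsep_sigma_compl_mnd} applied to the double-dualization $\sL$-monad $\HH$, and then read off the statement from \bref{thm:mor_tsep_sigma_compl_mnds}. Recall that $\tHH = \HH_{\Sigma_H}$ is the idempotent $\sL$-monad induced by $\HH$ \pbref{def:func_compl}, \pbref{def:induced_idm_mnd_and_refl}, that $\dHH = \HH_{\tDelta}$ by definition \pbref{def:distnl_compl_cl_dens}, and that $\tDelta \subs \Sigma_H$ as observed just above (where $\Sigma_H$ is, equivalently, the class of morphisms inverted by $H$, by \bref{par:basic_charns_func_completeness}). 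Since $\sL$ is finitely well-complete \pbref{par:data_for_dcompl_and_pett}, the cotensored $\sL$-category $\uL$ is $\sL$-finitely-well-complete \pbref{thm:base_fwc}, so the standing hypotheses of \bref{sec:sep_cpl_mnds_class_morphs} are met. I would then apply \bref{thm:mor_tsep_sigma_compl_mnds} with $\TT := \HH$, $\Sigma_1 := \tDelta$, and $\Sigma_2 := \Sigma_H$, obtaining unique morphisms of $\sL$-monads $k : \dHH = \HH_{\tDelta} \rightarrow \HH_{\Sigma_H} = \tHH$ and $\wideparen{i} : \dHH = \HH_{\tDelta} \rightarrow \HH$ fitting into a necessarily commutative triangle.

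Next I would identify the middle factor $\HH_{\Sigma_H} \rightarrow \HH$ of that factorization with $i$: since $\tHH = \HH_{\Sigma_H}$ is idempotent, \bref{thm:uniqueness_of_mnd_morph} gives that there is at most one morphism of $\sL$-monads $\tHH \rightarrow \HH$, and $i$ is one such \pbref{def:func_compl}, so the two agree. Hence the commutative triangle just produced is precisely diagram \eqref{eqn:mnd_morphs_th_dh_h}, i.e. $i \cdot k = \wideparen{i}$. The separate assertions that $k$ and $\wideparen{i}$ are unique follow in the same way from \bref{thm:uniqueness_of_mnd_morph}, the source $\dHH$ being idempotent.

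Finally, for the claim on components, $\wideparen{i}$ is the unique morphism of $\sL$-monads $\HH_{\tDelta} \rightarrow \HH$, so by \bref{thm:uniq_morph_mnds_tsep_sigmacompl_mnd_to_t} (with $\TT := \HH$, $\Sigma := \tDelta$) its components are the $\tDelta$-closed embeddings of \bref{def:completion}; by \bref{def:distnl_compl_cl_dens}, these are exactly the $\tDD$-distributionally closed embeddings, which gives the claim. There is no real obstacle here: the only points requiring care are keeping straight which class ($\tDelta$ or $\Sigma_H$) plays which role in \bref{thm:mor_tsep_sigma_compl_mnds}, recalling that $\tDelta \subs \Sigma_H$ so that that proposition applies, and identifying the middle factor with $i$.
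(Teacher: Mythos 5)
Your proposal is correct and follows essentially the same route as the paper, which likewise derives the statement from \bref{thm:mor_tsep_sigma_compl_mnds} and \bref{thm:uniq_morph_mnds_tsep_sigmacompl_mnd_to_t} using $\tHH = \HH_{\Sigma_H}$, $\dHH = \HH_{\tDelta}$, and $\tDelta \subs \Sigma_H$. Your extra step identifying the middle factor with $i$ via \bref{thm:uniqueness_of_mnd_morph} is a careful spelling-out of a point the paper leaves implicit.
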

\begin{proof}
Since $\tHH = \HH_{\Sigma_H}$, $\dHH = \HH_{\tDelta}$, and $\tDelta \subs \Sigma_H$, this follows from \bref{thm:mor_tsep_sigma_compl_mnds} and \bref{thm:uniq_morph_mnds_tsep_sigmacompl_mnd_to_t}.
\end{proof}

\begin{DefSub}
Let $\dDD := [F,G](\dHH)$ denote the $\X$-monad on $\uX$ obtained by applying the monoidal functor $[F,G]:\XCAT(\sL,\sL) \rightarrow \XCAT(\uX,\uX)$ \pbref{thm:mon_func_detd_by_adj} to the $\X$-monad $\dHH$.
\end{DefSub}

\begin{PropSub}\label{thm:dd_iso_td}\emptybox
\begin{enumerate}
\item The morphism of $\X$-monads
$$\dDD = [F,G](\dHH) \xrightarrow{[F,G](k) \:=\: GkF} [F,G](\tHH) = \tDD$$
is an isomorphism.
\item For each $X \in \X$, the morphism $k_{FX}:\dH FX \rightarrow \tH FX$ is an isomorphism in $\sL$.   Hence the linear space of accessible distributions $\tH FX$ on $X$ is isomorphic to the separated $\tDD$-distributional completion of the free span $FX$ of $X$.
\end{enumerate}
\end{PropSub}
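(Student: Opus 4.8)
The plan is to prove statement 2 and then read statement 1 off it at once. For statement 1, I would observe that $[F,G](k) = GkF$ has component $G(k_{FX})$ at each $X \in \X$; since every functor preserves isomorphisms, as soon as each $k_{FX}$ is known to be an isomorphism in $\sL$ it follows that $GkF$ is a natural isomorphism, hence that the $\X$-monad morphism $[F,G](k):\dDD \to \tDD$ is an isomorphism. So the whole task reduces to proving statement 2.

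For statement 2, I would first unwind the description of $k_{FX}$. By \bref{thm:uniqueness_of_mnd_morph}, applied with the idempotent $\sL$-monad $\dHH$ in place of $\SSS$ and $\tHH$ in place of $\TT$, the component $k_{FX}$ is the unique morphism with $k_{FX} \cdot \dfd_{FX} = \tfd_{FX}$, where $\dfd$, $\tfd$ denote the units of $\dHH$, $\tHH$; and from $\wideparen{i} = i \cdot k$ (\bref{thm:mnd_morphs_th_dh_h}) I get $\wideparen{i}_{FX} = i_{FX} \cdot k_{FX}$. Throughout I would work inside the $\sL$-factorization-system $(\Dense{\tDelta},\ClEmb{\tDelta})$ on $\uL$ — i.e. with $\tDD$-distributional density and closedness — which is legitimate because $\uL$, $\HH$, $\tDelta$ satisfy the hypotheses of \bref{par:data_enr_orth_subcat_subord_adj}, so that \bref{thm:props_sigma_cl_dens} is available with $\Sigma := \tDelta$.

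The core of the argument is then a two-sided squeeze on $k_{FX}$. On the density side: $\tfd_{FX} = \tDelta_X$ is by construction a member of the class $\tDelta$, hence $\tDD$-distributionally dense by statement 2 of \bref{thm:props_sigma_cl_dens}; since $\tfd_{FX} = k_{FX} \cdot \dfd_{FX}$, statement 3 of \bref{thm:props_sigma_cl_dens} forces its second factor $k_{FX}$ to be $\tDD$-distributionally dense. On the closedness side: $\wideparen{i}_{FX} = i_{FX} \cdot k_{FX}$ is a $\tDD$-distributionally closed embedding by \bref{thm:mnd_morphs_th_dh_h}, and $i_{FX}$ is a functionally closed embedding by \bref{thm:mnd_morph_i}, hence $\tDD$-distributionally closed by statement 1 of \bref{thm:func_cl_dens_via_dist_cl_dens}; so statement 4 of \bref{thm:props_sigma_cl_dens} forces the first factor $k_{FX}$ to be a $\tDD$-distributionally closed embedding as well. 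Being simultaneously $\tDD$-distributionally dense and a $\tDD$-distributionally closed embedding, $k_{FX}$ is an isomorphism by statement 5 of \bref{thm:props_sigma_cl_dens}. The closing assertion of statement 2 is then immediate: $\dH FX$ is by definition (\bref{def:distnl_compl_cl_dens}) the separated $\tDD$-distributional completion of $FX$, so $k_{FX}$ exhibits it as isomorphic to the linear space $\tH FX$ of accessible distributions on $X$.

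I do not anticipate a genuine obstacle; the work is entirely a matter of invoking the right stability and cancellation properties of the enriched factorization system. The one point that needs care — and the conceptual crux — is the remark that $\tfd_{FX}$ is \emph{literally} an element of the generating class $\tDelta$, not merely a functionally dense map, so that statement 2 of \bref{thm:props_sigma_cl_dens} delivers $\tDD$-distributional density rather than only $\Sigma_H$-density. This is precisely what lets the density half of the squeeze live at the level of $\tDelta$, and hence why the space of accessible distributions and its separated $\tDD$-distributional completion agree on free spans even though they need not agree on arbitrary linear spaces.
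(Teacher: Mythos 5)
Your proposal is correct and follows essentially the same route as the paper's own proof: reduce statement 1 to statement 2 componentwise, form the commutative square relating $\dfd_{FX}$, $\tfd_{FX}$, $k_{FX}$, $i_{FX}$, $\wideparen{i}_{FX}$, and then squeeze $k_{FX}$ between $\tDD$-distributional density (via $\tfd_{FX} \in \tDelta$ and cancellation) and $\tDD$-distributional closedness (via $\wideparen{i}_{FX}$, $i_{FX}$, and cancellation), concluding by \bref{thm:props_sigma_cl_dens}. Your closing observation about $\tfd_{FX}$ lying literally in $\tDelta$ is exactly the point the paper's argument turns on.
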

\begin{proof}
In order to prove 1, it suffices to show that for each $X \in \X$, the morphism
$$\dD X = G\dH FX \xrightarrow{GkFX} G\tH FX = \tD X$$
is iso in $\X$, and for this it suffices to prove 2.  Since \eqref{eqn:mnd_morphs_th_dh_h} commutes and $k$ is a monad morphism, the following diagram commutes.
$$
\xymatrix{
FX \ar[d]_{\dfd_{FX}} \ar[r]^{\tfd_{FX}} & \tH FX \ar@{ >->}[d]^{i_{FX}} \\
\dH FX \ar[ur]|{k_{FX}} \ar@{ >->}[r]_{\wideparen{i}_{FX}} & HFX
}
$$
Recall that $i_{FX}$ is a functionally closed embedding \pbref{thm:mnd_morph_i} and hence by \bref{thm:func_cl_dens_via_dist_cl_dens} is a $\tDD$-distributionally closed embedding.  But $\wideparen{i}_{FX}$ is a $\tDD$-distributionally closed embedding \pbref{thm:mnd_morphs_th_dh_h}, so we deduce by \bref{thm:props_sigma_cl_dens} that $k_{FX}$ is a $\tDD$-distributionally closed embedding.  Further, since $\tfd_{FX} \in \tDelta$, $\tfd_{FX}$ is $\tDD$-distributionally dense by \bref{thm:props_sigma_cl_dens}.  Hence by \bref{thm:props_sigma_cl_dens}, $k_{FX}$ is $\tDD$-distributionally dense, so since $k_{FX}$ is also a $\tDD$-distributionally closed embedding, $k_{FX}$ is iso by \bref{thm:props_sigma_cl_dens}.
\end{proof}

\begin{CorSub}\label{thm:mnd_mor_td_to_dh}
There is a unique morphism of $\X$-monads $(\overline{\xi},G):\tDD \rightarrow \dHH$ such that the diagram
$$
\xymatrix{
                                                           & \dHH \ar[d]^{(k,1_\sL)} \\
\tDD \ar[ur]^{(\overline{\xi},G)} \ar[r]_{(\xi^{\LL\tHH},G)} & \tHH
}
$$
commutes
\end{CorSub}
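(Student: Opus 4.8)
The plan is to transport the canonical morphism $(\xi^{\LL\dHH},G)$ of \bref{thm:mnd_mor_gsf_to_s} along the isomorphism provided by \bref{thm:dd_iso_td}. Recall that $\dDD=[F,G](\dHH)$, and that \bref{thm:mnd_mor_gsf_to_s}, applied with $\TT=\LL$ and $\SSS=\dHH$, gives a morphism of $\X$-monads $(\xi^{\LL\dHH},G):\dDD=[F,G](\dHH)\rightarrow\dHH$, where $\xi^{\LL\dHH}=G\dH\varepsilon$ \pbref{def:xi_ts}. By \bref{thm:dd_iso_td}~1 the morphism of $\X$-monads $[F,G](k)=GkF:\dDD\rightarrow\tDD$ is an isomorphism, and since the $\X$-monads form a category under the morphisms of \bref{par:general_mnd_mor}, its inverse $([F,G](k))^{-1}:\tDD\rightarrow\dDD$ is again a morphism of $\X$-monads, with component $1$-cell $1_{\uX}$. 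I would then define $(\overline{\xi},G)$ to be the composite morphism of $\X$-monads
$$\tDD\xrightarrow{([F,G](k))^{-1}}\dDD\xrightarrow{(\xi^{\LL\dHH},G)}\dHH\;,$$
whose component $1$-cell is $1_{\uX}\circ G=G$, as demanded.

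Next I would verify the triangle commutes. Pre-composing with the isomorphism $[F,G](k)$, the required identity $(k,1_\sL)\circ(\overline{\xi},G)=(\xi^{\LL\tHH},G)$ is equivalent to the square $(k,1_\sL)\circ(\xi^{\LL\dHH},G)=(\xi^{\LL\tHH},G)\circ[F,G](k)$ of morphisms of $\X$-monads $\dDD\rightarrow\tHH$. Both sides have component $1$-cell $G$, so it suffices to compare $2$-cell components. Unwinding the composition rule of \bref{par:general_mnd_mor} together with $\xi^{\LL\dHH}=G\dH\varepsilon$ and $\xi^{\LL\tHH}=G\tH\varepsilon$, the left-hand $2$-cell is $Gk\cdot G\dH\varepsilon=G(k\cdot\dH\varepsilon)$ and the right-hand $2$-cell is $G\tH\varepsilon\cdot GkFG=G(\tH\varepsilon\cdot kFG)$; these coincide precisely because $k\cdot\dH\varepsilon=\tH\varepsilon\cdot k(FG)$, which is the naturality of the $\sL$-natural transformation $k:\dH\rightarrow\tH$ with respect to the counit $\varepsilon:FG\rightarrow 1_\sL$. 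This establishes existence.

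For uniqueness, let $(\overline{\xi}',G')$ be any morphism of $\X$-monads with $(k,1_\sL)\circ(\overline{\xi}',G')=(\xi^{\LL\tHH},G)$. Comparing component $1$-cells forces $G'=G$, and comparing $2$-cells yields $Gk\cdot\overline{\xi}'=\xi^{\LL\tHH}=Gk\cdot\overline{\xi}$. By \bref{thm:mnd_morphs_th_dh_h} the identity $\wideparen{i}=i\circ k$ holds and each $\wideparen{i}_E$ is a $\tDD$-distributionally closed embedding; by \bref{thm:mnd_morph_i} each $i_E$ is a functionally closed embedding, hence a $\tDD$-distributionally closed embedding by \bref{thm:func_cl_dens_via_dist_cl_dens}~1. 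Since $\wideparen{i}_E=i_E\cdot k_E$, \bref{thm:props_sigma_cl_dens}~4 shows each $k_E$ is a $\tDD$-distributionally closed embedding, in particular a monomorphism in $\sL$; as $G$ is a right adjoint it preserves monomorphisms, so each $Gk_E$ is monic in $\uX$, and therefore $\overline{\xi}'=\overline{\xi}$. The main obstacle I anticipate is the middle paragraph: keeping straight the composition and whiskering conventions for the \emph{general} morphisms of monads of \bref{par:general_mnd_mor} (with their nontrivial component $1$-cells) so as to extract exactly the right $2$-cell equation and recognize it as the naturality of $k$; the remaining steps are purely formal.
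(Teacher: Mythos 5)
Your proof is correct and takes essentially the same route as the paper: the paper likewise establishes the commutative square $(k,1_\sL)\circ(\xi^{\LL\dHH},G)=(\xi^{\LL\tHH},G)\circ([F,G](k),1_{\uX})$ by observing that the underlying $2$-cells commute (naturality of $k$ at $\varepsilon$), and then obtains $(\overline{\xi},G)$ by inverting $[F,G](k)$. Your explicit uniqueness argument via the monomorphy of the components $Gk_E$ supplies a justification the paper leaves implicit in its closing ``the result follows,'' and it is the right one.
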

\begin{proof}
The diagram
$$
\xymatrix{
*!/r4ex/+{\dDD = [F,G](\dHH)} \ar[d]_{([F,G](k),1_{\uX})} \ar[rr]^{(\xi^{\LL\dHH},G)} & & \dHH \ar[d]^{(k,1_\sL)} \\
*!/r4ex/+{\tDD = [F,G](\tHH)} \ar[rr]_{(\xi^{\LL\tHH},G)}                             & & \tHH
}
$$
commutes (where $\xi^{\LL\dHH}, \xi^{\LL\tHH}$ are as defined in \bref{def:xi_ts}) since the underlying $\X$-natural transformations of the morphisms therein yield a commutative diagram
$$
\xymatrix{
*!/r4ex/+{\dD G = G\dH FG} \ar[d]_{G k FG} \ar[r]^{\xi^{\LL\dHH} \:=\: G\dH \varepsilon} & {G\dH\;\:} \ar[d]^{Gk} \\
*!/r4ex/+{\tD G = G\tH FG} \ar[r]_{\xi^{\LL\tHH} \:=\: G\tH \varepsilon}                 & {G\tH\;.}
}
$$
Hence since $[F,G](k)$ is an isomorphism (by \bref{thm:dd_iso_td}), the result follows.
\end{proof}

\begin{ParSub}
Since $\tL$ and $\sL_{(\tDD)}$ are the $\X$-reflective-subcategories of $\sL$ determined by the idempotent $\X$-monads $\tHH$ and $\dHH$, respectively, we may identify $\tL$ and $\sL_{(\tDD)}$ with the Eilenberg-Moore $\X$-categories $\sL^{\tHH}$ and $\sL^{\dHH}$, respectively.  Under this identification, the Eilenberg-Moore forgetful $\X$-functors are identified with the inclusions into $\sL$, and hence the $\X$-functor $\sL^k:\sL^{\tHH} \rightarrow \sL^{\dHH}$ (which commutes with the forgetful $\X$-functors, \bref{prop:enr_func_induced_by_mnd_mor}) is identified with the inclusion $\tL \hookrightarrow \sL_{(\tDD)}$ \eqref{rem:func_compl_linsp_tddistnly_compl_sep}.
\end{ParSub}

\begin{CorSub}\label{thm:xfuncs_for_dist_compl_sep}
We obtain a commutative diagram of $\X$-functors
$$
\xymatrix{
*!/r4ex/+{\sL_{(\tDD)} = \sL^{\dHH}} \ar[dr]^{G^{\overline{\xi}}} & \\
*!/r4ex/+{\tL = \sL^{\tHH}} \ar@{^{(}->}[u]^{\sL^k} \ar[r]_{G^{\xi^{\LL\tHH}}} & {\uX^{\tDD}\;.}
}
$$
\end{CorSub}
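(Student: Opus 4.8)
The plan is to deduce the commutativity of this triangle directly from the contravariant functoriality of the Eilenberg--Moore construction for $\X$-monads. Concretely, Proposition~\bref{prop:enr_func_induced_by_mnd_mor} assigns to each morphism of $\X$-monads $(\lambda,Q):\TT\to\SSS$ (with $\SSS$ a monad on $\B$ and $\TT$ a monad on $\A$, so $Q:\B\to\A$) an $\X$-functor $Q^\lambda:\B^\SSS\to\A^\TT$, and states that this assignment is a contravariant functor from the category of $\X$-monads to $\XCAT$. So once we have the relevant commutative triangle of $\X$-monad morphisms, applying this contravariant functor immediately produces the desired commutative triangle of $\X$-functors.

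First I would note that all the Eilenberg--Moore $\X$-categories in the diagram exist: $\uX^{\tDD}$ because $\X$ has equalizers (it is finitely well-complete, \bref{par:data_for_dcompl_and_pett}), and $\sL^{\tHH}$, $\sL^{\dHH}$ because $\tHH$ and $\dHH$ are idempotent $\sL$-monads (\bref{thm:em_adj_idm_mnd}). Under the identifications set up just before the statement, $\sL^{\tHH}=\tL$, $\sL^{\dHH}=\sL_{(\tDD)}$, and $\sL^k$ is the inclusion $\tL\hookrightarrow\sL_{(\tDD)}$. Next I would invoke Corollary~\bref{thm:mnd_mor_td_to_dh}, which says precisely that the composite $\X$-monad morphism $(k,1_\sL)\circ(\overline{\xi},G):\tDD\to\tHH$ equals $(\xi^{\LL\tHH},G)$. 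Applying the contravariant functor of \bref{prop:enr_func_induced_by_mnd_mor} to the three arrows $(\overline{\xi},G):\tDD\to\dHH$, $(k,1_\sL):\dHH\to\tHH$, and $(\xi^{\LL\tHH},G):\tDD\to\tHH$ yields the $\X$-functors $G^{\overline{\xi}}:\sL^{\dHH}\to\uX^{\tDD}$, $\sL^k:\sL^{\tHH}\to\sL^{\dHH}$, and $G^{\xi^{\LL\tHH}}:\sL^{\tHH}\to\uX^{\tDD}$, respectively, and contravariance turns the monad-morphism equation above into the functor equation $G^{\overline{\xi}}\circ\sL^k=G^{\xi^{\LL\tHH}}$. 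That is exactly the asserted commutativity.

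I do not expect a genuine obstacle; the substantive work was already carried out in building Corollary~\bref{thm:mnd_mor_td_to_dh} and in the contravariant-functoriality clause of \bref{prop:enr_func_induced_by_mnd_mor}. The one point I would take care over is bookkeeping with the conventions for the general monad morphisms of \bref{par:general_mnd_mor}: verifying that the composite $(k,1_\sL)\circ(\overline{\xi},G)$ has component $1$-cell $G\circ 1_\sL=G$, so that its induced $\X$-functor is indeed the one denoted $G^{\xi^{\LL\tHH}}$ in the diagram, with domain $\sL^{\tHH}$ and codomain $\uX^{\tDD}$, and confirming that the Eilenberg--Moore identifications invoked here are literally the same ones used in the paragraph immediately preceding the statement.
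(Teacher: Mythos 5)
Your proposal is correct and follows exactly the route the paper intends: the corollary is deduced by applying the contravariant Eilenberg--Moore functor of \bref{prop:enr_func_induced_by_mnd_mor} to the commutative triangle of $\X$-monad morphisms established in \bref{thm:mnd_mor_td_to_dh}, together with the identifications $\tL = \sL^{\tHH}$, $\sL_{(\tDD)} = \sL^{\dHH}$, and $\sL^k = $ the inclusion made in the paragraph immediately preceding the statement. Your bookkeeping of the component $1$-cells (so that the composite has underlying $1$-cell $G \circ 1_{\sL} = G$ and the induced functors have the stated domains and codomains) is also the right point to check, and it checks out.
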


\begin{LemSub}\label{thm:td_alg_str_on_distcpl_sep_ls_is_pett_str}
Let $E$ be a $\tDD$-distributionally complete separated linear space.  Then the $\tDD$-algebra structure $\tD GE \rightarrow GE$ obtained by applying $G^{\overline{\xi}}$ to $E$ is a $\tDD$-Pettis structure for $E$.
\end{LemSub}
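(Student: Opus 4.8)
The plan is to unwind precisely what the $\tDD$-algebra structure produced by $G^{\overline{\xi}}$ is, and then to verify the single diagram required by \bref{def:pett_obs}. Since $E$ lies in $\sL_{(\tDD)}$, which under the identification $\sL_{(\tDD)} = \sL^{\dHH}$ of \bref{thm:td_dist_compl_sep_obs_refl} corresponds to the $\dHH$-algebra $(E,\dfd_E^{-1})$ (the unit $\dfd_E$ being invertible because $\dHH$ is idempotent and $E$ is one of its algebras), formula \eqref{eqn:action_of_mndmor_on_algs} of \bref{prop:enr_func_induced_by_mnd_mor} gives that $G^{\overline{\xi}}$ sends $E$ to the $\tDD$-algebra $(GE,a_E)$ with
\[
a_E \;=\; \bigl(\tD GE \xrightarrow{\overline{\xi}_E} G\dH E \xrightarrow{G(\dfd_E^{-1})} GE\bigr).
\]
Moreover $E$ is functionally separated: by construction $\sL_{(\tDD)} = \uL_{(\HH,\tDelta)}$ consists of $\tDelta$-complete $\HH$-separated objects, and $\HH$-separatedness of $E$ is exactly the statement that $\fd_E$ is an embedding. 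Hence, by \bref{def:pett_obs}, it remains only to check the Pettis triangle, i.e.\ that $G\fd_E \cdot a_E = \xi^{\tDD}_E$.

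For this I would chase the morphisms of $\X$-monads relating $\dHH$, $\tHH$ and $\HH$. By \bref{thm:mnd_morphs_th_dh_h} the unique morphism of $\sL$-monads $\wideparen{i}:\dHH \to \HH$ factors as $i \cdot k$, and by \bref{thm:uniqueness_of_mnd_morph} (applied to the idempotent $\sL$-monad $\dHH$ and the double-dualization $\sL$-monad $\HH$) its component at $E$ is the unique morphism with $\wideparen{i}_E \cdot \dfd_E = \fd_E$; since $\dfd_E$ is invertible this yields $G\fd_E \cdot G(\dfd_E^{-1}) = G\wideparen{i}_E = Gi_E \cdot Gk_E$. Next, reading the defining triangle of $\overline{\xi}$ in \bref{thm:mnd_mor_td_to_dh} off on underlying $\X$-natural transformations (using the composition convention of \bref{par:general_mnd_mor} for the ``wide'' morphisms $(\lambda,Q)$, so that the composite $(k,1_\sL)\circ(\overline{\xi},G)$ has underlying transformation $Gk \cdot \overline{\xi}$) gives $Gk_E \cdot \overline{\xi}_E = \xi^{\LL\tHH}_E$. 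Finally, by the definition of $\xi^{\tDD}$ in \bref{def:can_mnd_mor_distns} as the composite $(\xi^{\LL\tHH},G)$ followed by $(i,1_\sL)$, we have $\xi^{\tDD}_E = Gi_E \cdot \xi^{\LL\tHH}_E$. Stringing these together,
\[
G\fd_E \cdot a_E = G\fd_E \cdot G(\dfd_E^{-1}) \cdot \overline{\xi}_E = Gi_E \cdot Gk_E \cdot \overline{\xi}_E = Gi_E \cdot \xi^{\LL\tHH}_E = \xi^{\tDD}_E,
\]
which is the required equation, so $a_E$ is a $\tDD$-Pettis structure for $E$.

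The computation itself is a short diagram chase; the only real care needed is bookkeeping. One must track the conventions of \bref{par:general_mnd_mor} for composing the ``wide'' morphisms of monads $(\lambda,Q)$ so that the underlying $\X$-natural transformations compose in the right order, and one must use the identification of the $\sL$-reflective subcategory $\sL_{(\tDD)}$ with the Eilenberg--Moore $\sL$-category $\sL^{\dHH}$ of the idempotent monad so that the structure map of $G^{\overline{\xi}}E$ becomes $G(\dfd_E^{-1}) \cdot \overline{\xi}_E$ with $\dfd_E$ genuinely invertible. With those identifications in place no nontrivial obstruction remains; in particular the functional separatedness half of the conclusion is free, coming directly from $E \in \sL_{(\tDD)}$.
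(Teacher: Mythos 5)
Your proof is correct and follows essentially the same route as the paper's: identify the structure map as $G(\dfd_E^{-1})\cdot\overline{\xi}_E$ via the idempotence of $\dHH$, then verify the Pettis triangle by chasing the unit laws for the monad morphisms $k$, $i$ (equivalently $\wideparen{i}$) together with the defining equations $Gk_E\cdot\overline{\xi}_E=\xi^{\LL\tHH}_E$ and $\txi_E=Gi_E\cdot\xi^{\LL\tHH}_E$. The paper presents this as a single commutative diagram rather than a string of equations, and leaves the functional separatedness of $E$ implicit where you spell it out, but the content is identical.
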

\begin{proof}
The $\dHH$-algebra structure carried by $E$ is the inverse $\dfd_E^{-1}$ of the unit component \hbox{$\dfd_E:E \rightarrow \dH E$}.  The $\tDD$-algebra structure obtained by applying $G^{\overline{\xi}}$ to this $\dHH$-algebra is the composite 
$$\tD GE \xrightarrow{\overline{\xi}_E} G\dH E \xrightarrow{G\dfd_E^{-1}} GE\;,$$
which is a $\tDD$-Pettis structure for $E$ since the diagram
$$
\xymatrix@!0@R=8ex @C=10ex{
\tD GE \ar[rrrr]^{\txi_E} \ar[drr]^{\xi^{\LL\tHH}_E} \ar[dr]_{\overline{\xi}_E} & & & & GHE \\
& G\dH E \ar[dr]_{G\dfd_E^{-1}} \ar[r]_{Gk_E} & G \tH E \ar[urr]^{Gi_E} & & \\
& & GE \ar[u]|{G\tfd_E} \ar[uurr]_{G\fd_E} & & \\
}
$$
commutes, using \bref{thm:mnd_mor_td_to_dh}, the definition of $\txi$, and the fact that both $i$ and $k$ are monad morphisms.
\end{proof}

\begin{CorSub}\label{thm:func_ind_by_xibar_is_restn_of_a_and_b}
There is an inclusion $\sL_{(\tDD)} \hookrightarrow \pL{\tDD}$, and the following diagram of $\X$-functors commutes, where $A$ and $B$ are as in \bref{thm:diag_funcs_pett} and \bref{thm:lin_map_betw_mdist_compl_sp_is_m_homom}, respectively.
$$
\xymatrix{
\sL_{(\tDD)} \ar@{^{(}->}[d] \ar@{^{(}->}[r] \ar[dr]|{G^{\overline{\xi}}} & \pL{\tDD} \ar@{ >->}[d]^A \\
\sL_{\tDelta} \ar@{ >->}[r]_B & \uX^{\tDD}
}
$$
\end{CorSub}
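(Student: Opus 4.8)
The plan is to read the statement off from Lemma~\bref{thm:td_alg_str_on_distcpl_sep_ls_is_pett_str} together with the descriptions of $A$, $B$, and $G^{\overline{\xi}}$ already in hand. First I would record the two inclusions. The left-hand inclusion $\sL_{(\tDD)}\hookrightarrow\sL_{\tDelta}$ is trivial, since a $\tDD$-distributionally complete separated linear space is in particular $\tDD$-distributionally complete. For the top inclusion, Lemma~\bref{thm:td_alg_str_on_distcpl_sep_ls_is_pett_str} shows that for each $E\in\sL_{(\tDD)}$ the $\tDD$-algebra $G^{\overline{\xi}}(E)$ carries a $\tDD$-Pettis structure for $E$; hence $E$ is $\tDD$-Pettis, so $\ob\sL_{(\tDD)}\subseteq\ob\pL{\tDD}$, and since both are full sub-$\X$-categories of $\sL$ this yields the required full inclusion $\sL_{(\tDD)}\hookrightarrow\pL{\tDD}$.

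Next I would identify the two composites out of $\sL_{(\tDD)}$ with $G^{\overline{\xi}}$ at the level of objects. By Lemma~\bref{thm:td_alg_str_on_distcpl_sep_ls_is_pett_str} and the uniqueness of the Pettis structure \pbref{def:pett_obs}, the structure morphism of $G^{\overline{\xi}}(E)$ is precisely $a_E$, so $G^{\overline{\xi}}(E)=(GE,a_E)=AE$ \pbref{thm:pett_spaces_are_malgs}. For the bottom route I would show $a_E=b_E$: applying $\uX^{\tDelta}$ to the $\tDD$-homomorphism $a_E\colon(\tD GE,\tkappa_{GE})\to(GE,a_E)$ — and using $\uX^{\tDelta}(AE)=E$ from \pbref{thm:diag_funcs_pett}, that $\uX^{\tDelta}$ is the identity on underlying maps, and the description of the linear structure on $\tD E$ in \pbref{par:preamble_for_dist_compl} — exhibits $a_E$ as a linear map $\tD E\to E$, while the unit law for the $\tDD$-algebra $G^{\overline{\xi}}(E)$ makes its underlying map a retraction of $\tdelta_E$; the uniqueness clause of \pbref{thm:m_distnly_cpl_sp_are_m_algs} then forces $a_E=b_E$, whence $G^{\overline{\xi}}(E)=AE=BE$ on objects.

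Finally I would promote these object-level identities to equalities of $\X$-functors. The Eilenberg--Moore forgetful $\X$-functor $G^{\tDD}\colon\uX^{\tDD}\to\uX$ is $\X$-faithful, and composing it with each of $A$ (along the top inclusion), $B$ (along the left inclusion), and $G^{\overline{\xi}}$ yields in all three cases the composite $\sL_{(\tDD)}\hookrightarrow\sL\xrightarrow{G}\uX$: for $A$ and $B$ this is \pbref{thm:diag_funcs_pett} and \pbref{thm:lin_map_betw_mdist_compl_sp_is_m_homom}, and for $G^{\overline{\xi}}$ it follows from \pbref{prop:enr_func_induced_by_mnd_mor} applied to the monad morphism $(\overline{\xi},G)\colon\tDD\to\dHH$ \pbref{thm:mnd_mor_td_to_dh}, the forgetful $\X$-functor of $\dHH$ being identified with the inclusion $\sL_{(\tDD)}\hookrightarrow\sL$ via \pbref{thm:em_adj_idm_mnd}. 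Since the three $\X$-functors also agree on objects and $G^{\tDD}$ is $\X$-faithful, the usual cancellation on hom-objects forces $A\circ(\text{top inclusion})=G^{\overline{\xi}}=B\circ(\text{left inclusion})$, i.e.\ both triangles commute.

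I do not expect a substantial obstacle: the argument is bookkeeping resting on Lemma~\bref{thm:td_alg_str_on_distcpl_sep_ls_is_pett_str}. The only point needing care is the identification $a_E=b_E$, where one must first verify that the Pettis structure $a_E$ is genuinely a morphism of $\sL$ before invoking the uniqueness characterization of $b_E$ in \pbref{thm:m_distnly_cpl_sp_are_m_algs}, and then keep straight the notational eliding of a linear space with its underlying space when comparing $AE$ with $BE$.
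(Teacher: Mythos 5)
Your proposal is correct and follows essentially the same route as the paper's own proof: the inclusion and the upper triangle come from Lemma \bref{thm:td_alg_str_on_distcpl_sep_ls_is_pett_str}, the lower triangle reduces to the identification $a_E=b_E$ obtained by noting that $a_E$ is a linear retraction of $\tdelta_{GE}$ (via $\uX^{\tDelta}$ applied to the $\tDD$-homomorphism $a_E$) and invoking the uniqueness clause of \bref{thm:m_distnly_cpl_sp_are_m_algs}, and the passage from object-level agreement to equality of $\X$-functors uses $\X$-faithfulness of the forgetful functors exactly as the paper does.
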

\begin{proof}
By \bref{thm:td_alg_str_on_distcpl_sep_ls_is_pett_str}, the inclusion exists and the upper triangle commutes on objects.  All the $\X$-functors in the diagram commute with the $\X$-faithful forgetful $\X$-functors to $\uX$, so the upper triangle commutes and it suffices to show that the periphery commutes on objects.  Given an object $E \in \sL_{(\tDD)}$, since $E$ is $\tDD$-Pettis we have an associated $\tDD$-algebra $AE = (GE,a_E)$, and we know by \bref{thm:diag_funcs_pett} that the underlying linear space $\uX^{\tDelta}(GE,a_E)$ is simply $E$ itself.  But the $\tDD$-algebra structure $a_E:\tD GE \rightarrow GE$ is a $\tDD$-homomorphism \hbox{$a_E:(\tD GE,\tkappa_{GE}) \rightarrow (GE,a_E)$} and therefore, via $\uX^{\tDelta}$, is a linear map between the underlying linear spaces.  Hence since $a_E \cdot \tdelta_{GE} = 1_{GE}$ in $\X$, it is immediate from \bref{thm:m_distnly_cpl_sp_are_m_algs} that $a_E$ must coincide with the $\tDD$-algebra structure $b_E$ associated to $E$ via $B$.
\end{proof}

\begin{CorSub}\label{thm:th_fx_pettis_with_free_td_alg_str}
The linear space of accessible distributions $\tH FX$ on a space $X \in \X$ is $\tDD$-Pettis, with $\tDD$-Pettis structure $\tkappa_X:\tD\tD X \rightarrow \tD X$.
\end{CorSub}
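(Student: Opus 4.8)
The plan is to identify the sought $\tDD$-Pettis structure on $\tH FX$ with the monad multiplication $\tkappa_X$ by recognizing $(\tD X,\tkappa_X)$ as the image of the free $\tHH$-algebra on $FX$ under the comparison functor $G^{\xi^{\LL\tHH}}$, and then invoking \bref{thm:td_alg_str_on_distcpl_sep_ls_is_pett_str}. First I would observe that $\tH FX = KFX$ lies in $\tL$ and hence is a functionally complete linear space \pbref{def:func_compl}; by \bref{rem:func_compl_linsp_tddistnly_compl_sep} it is then $\tDD$-distributionally complete and (by \bref{rem:func_compl_obs_sep}) functionally separated, so $\tH FX \in \sL_{(\tDD)}$ and $\tH FX$ is $\tDD$-Pettis. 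In particular a $\tDD$-Pettis structure for $\tH FX$ exists, and, being unique \pbref{def:pett_obs}, it suffices to compute it and see that it equals $\tkappa_X$ (noting that the underlying space of $\tH FX$ is $G\tH FX = \tD X$, so both the structure map and $\tkappa_X$ are morphisms $\tD\tD X \to \tD X$).

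By \bref{thm:td_alg_str_on_distcpl_sep_ls_is_pett_str}, applied to $E := \tH FX \in \sL_{(\tDD)}$, the $\tDD$-algebra structure $\tD G(\tH FX) \to G(\tH FX)$ obtained by applying $G^{\overline{\xi}}$ to $\tH FX$ is a $\tDD$-Pettis structure for $\tH FX$. To evaluate this structure, I would identify $\tL$ with the Eilenberg--Moore $\sL$-category $\sL^{\tHH}$ \pbref{thm:em_adj_idm_mnd}, under which the reflection $K \nsststile{}{\tpartial} J : \tL \hookrightarrow \uL$ becomes the Eilenberg--Moore $\sL$-adjunction $F^{\tHH} \dashv G^{\tHH}$ \pbref{rem:em_adj_idm_mnd}; consequently $\tH FX = KFX$ is precisely the free $\tHH$-algebra $F^{\tHH}(FX)$ on $FX$. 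Now \bref{thm:xfuncs_for_dist_compl_sep} gives $G^{\overline{\xi}}(\tH FX) = G^{\xi^{\LL\tHH}}(\tH FX)$, the image under $G^{\xi^{\LL\tHH}}$ of the free $\tHH$-algebra on $FX$; and \bref{thm:free_gsf_alg_via_xi}, with $\TT := \LL$, $\SSS := \tHH$, and $F \dashv G$ the Eilenberg--Moore adjunction for $\LL$, tells us that $G^{\xi^{\LL\tHH}}$ carries the free $\tHH$-algebra on $FX$ to the free $[F,G](\tHH) = \tDD$-algebra on $X$, namely $(\tD X,\tkappa_X)$. Therefore the $\tDD$-Pettis structure of $\tH FX$ is $\tkappa_X$, which is the assertion. (As a cross-check, \bref{exa:und_linear_spaces_of_free_d_and_td_algs} confirms that the underlying linear space of $(\tD X,\tkappa_X)$ is indeed $\tH FX$; one could alternatively phrase the argument via \bref{thm:func_ind_by_xibar_is_restn_of_a_and_b}, deducing $A(\tH FX) = G^{\overline{\xi}}(\tH FX) = (\tD X,\tkappa_X)$, or via \bref{thm:basic_charn_pett_malg} after showing $(\tD X,\tkappa_X)$ is a Pettis $\tDD$-algebra.)

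This proof is essentially an assembly of results already established in the preceding sections, with no genuinely hard step. The one place requiring care is the bookkeeping of the Eilenberg--Moore identifications: recognizing $\tH FX$, viewed inside $\sL^{\tHH} = \tL$, as the free $\tHH$-algebra on $FX$ so that \bref{thm:free_gsf_alg_via_xi} can be applied, and type-checking that $G(\tH FX) = \tD X$ so that the structure map produced by \bref{thm:td_alg_str_on_distcpl_sep_ls_is_pett_str} lives in the same hom as $\tkappa_X$. Once these alignments are in place the identification is immediate.
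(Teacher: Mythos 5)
Your proposal is correct and follows essentially the same route as the paper's proof, which likewise combines \bref{thm:td_alg_str_on_distcpl_sep_ls_is_pett_str}, \bref{thm:xfuncs_for_dist_compl_sep}, and \bref{thm:free_gsf_alg_via_xi} to identify the Pettis structure with the free $\tDD$-algebra structure $\tkappa_X$. The only difference is that you spell out the preliminary bookkeeping (that $\tH FX$ is functionally complete, hence lies in $\sL_{(\tDD)}$, and that it is the free $\tHH$-algebra on $FX$ under the Eilenberg--Moore identification) which the paper leaves implicit.
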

\begin{proof}
By \bref{thm:td_alg_str_on_distcpl_sep_ls_is_pett_str} and \bref{thm:xfuncs_for_dist_compl_sep}, a $\tDD$-Pettis structure for $\tH FX$ may be obtained as the $\tDD$-algebra structure obtained by applying $G^{\xi^{\LL\tHH}}:\tL = \sL^{\tHH} \rightarrow \uX^{\tDD}$ to $\tH FX$, and by \bref{thm:free_gsf_alg_via_xi}, this is simply the free $\tDD$-algebra structure carried by $\tD X$.
\end{proof}

\begin{LemSub}
Let $E$ be a $\tDD$-Pettis linear space.  Then $E$ is $\tDD$-distributionally complete and separated.
\end{LemSub}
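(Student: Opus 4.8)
The plan is as follows. By \bref{def:pett_obs}, a $\tDD$-Pettis linear space $E$ is in particular functionally separated, which is exactly the assertion that $E$ is separated; thus the only substantive point is to prove that $E$ is $\tDD$-distributionally complete. For this I would apply the characterization \bref{thm:charn_dist_compl_via_dirac}, according to which it suffices to show that for each space $X \in \X$ the composite
\[
\sL(\tD X,E) \overset{G}{\rightarrowtail} \uX(\tD X,E) \xrightarrow{\uX(\tdelta_X,E)} \uX(X,E)
\]
is an isomorphism in $\X$. Here, following the convention of \bref{par:preamble_for_dist_compl}, $\tD X$ on the left denotes the underlying linear space of the free $\tDD$-algebra $F^\tDD X = (\tD X,\tkappa_X)$ on $X$, which by \bref{exa:und_linear_spaces_of_free_d_and_td_algs} is the linear space of accessible distributions $\tH FX$.

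The decisive observation is that $F^\tDD X$ lies in the image of the $\X$-functor $A$. Indeed, by \bref{thm:th_fx_pettis_with_free_td_alg_str} the linear space $\tH FX$ is itself $\tDD$-Pettis, with $\tDD$-Pettis structure $\tkappa_X$; since the $\tDD$-Pettis structure of a $\tDD$-Pettis object is unique \pbref{def:pett_obs} and $\tD X = G\tH FX$, this says precisely that $A(\tH FX) = (\tD X,\tkappa_X) = F^\tDD X$. Now $A:\pL{\tDD} \rightarrow \uX^\tDD$ is $\X$-fully-faithful \pbref{thm:pett_obs_iso_to_subcat_malgs}, and $\pL{\tDD}$ is a full sub-$\X$-category of $\sL$, so $A$ furnishes an isomorphism in $\X$
\[
A_{\tH FX,E} : \sL(\tH FX,E) = \pL{\tDD}(\tH FX,E) \xrightarrow{\ \sim\ } \uX^\tDD(F^\tDD X,AE).
\]
Composing this with the Eilenberg--Moore $\X$-adjointness isomorphism $\uX^\tDD(F^\tDD X,AE) \cong \uX(X,G^\tDD AE) = \uX(X,E)$ (available since $\X$ has equalizers, \bref{par:em_adj}) produces an isomorphism $\sL(\tH FX,E) \cong \uX(X,E)$.

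It then remains to identify this composite with the one appearing in \bref{thm:charn_dist_compl_via_dirac}, which is a routine unwinding of definitions. By \bref{prop:adjn_via_radj_and_unit} the Eilenberg--Moore adjointness isomorphism is the composite $\uX^\tDD(F^\tDD X,AE) \xrightarrow{G^\tDD} \uX(\tD X,E) \xrightarrow{\uX(\tdelta_X,E)} \uX(X,E)$, and since $A$ commutes with the forgetful $\X$-functors \pbref{thm:diag_funcs_pett}, the composite $G^\tDD_{F^\tDD X,AE} \cdot A_{\tH FX,E}$ is exactly the structure morphism $\sL(\tH FX,E) \overset{G}{\rightarrowtail} \uX(\tD X,E)$. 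Hence the isomorphism built above is $\uX(\tdelta_X,E)$ postcomposed with that structure morphism, so the composite in \bref{thm:charn_dist_compl_via_dirac} is an isomorphism and $E$ is $\tDD$-distributionally complete. Together with the inclusion $\sL_{(\tDD)} \hookrightarrow \pL{\tDD}$ of \bref{thm:func_ind_by_xibar_is_restn_of_a_and_b}, this would also complete the proof of part~1 of \bref{thm:dist_compl_sep_vs_pett}.

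I do not expect a genuine obstacle here, since every step reduces to a previously established result; the only place demanding care is the final identification of the two descriptions of the isomorphism $\sL(\tH FX,E) \cong \uX(X,E)$ --- in particular, checking that the Eilenberg--Moore unit $\tdelta_X$ is precisely the arrow that appears --- together with keeping straight the bookkeeping convention (from \bref{par:preamble_for_dist_compl}) that $\tD X$, when viewed as a linear space, is to be read as $\tH FX$.
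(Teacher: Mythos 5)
Your proof is correct and follows essentially the same route as the paper's: both hinge on the observation that $\tH FX$ is $\tDD$-Pettis with $A(\tH FX)=(\tD X,\tkappa_X)$ the free $\tDD$-algebra, then combine the $\X$-full-faithfulness of $A$ with the Eilenberg--Moore adjointness isomorphism and a final diagram chase identifying the resulting composite with the characterization of $\tDD$-distributional completeness. The only cosmetic difference is that the paper targets the characterization via $\sL(\tfd_{FX},E)$ (so it also invokes $F\dashv G$), whereas you target \bref{thm:charn_dist_compl_via_dirac} directly; these are interchangeable.
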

\begin{proof}
We already know that $E$ is separated.  Letting $X \in \X$, it suffices to show that
$$\sL(\tfd_{FX},E):\sL(\tH FX,E) \rightarrow \sL(FX,E)$$
is an isomorphism in $\X$.  By \bref{thm:th_fx_pettis_with_free_td_alg_str}, $\tH FX$ is $\tDD$-Pettis and is sent by $A:\pL{\tDD} \rightarrowtail \uX^{\tDD}$ to the free $\tDD$-algebra $(\tD X,\tkappa_X)$.  The key idea now is that we obtain isomorphisms
$$\pL{\tDD}(\tH FX,E) \cong \uX^{\tDD}(A\tH FX,AE) = \uX^{\tDD}((\tD X,\tkappa_X),(GE,a_E)) \cong \uX(X,GE) \cong \sL(FX,E)$$
in $\X$, via the $\X$-adjunctions $F^{\tDD} \dashv G^{\tDD}$ and $F \dashv G$ and the fact that $A$ is $\X$-fully-faithful.  Moreover, we have a diagram
$$
\xymatrix{
*!/r5ex/+{\pL{\tDD}(\tH FX,E) = \sL(\tH FX,E)} \ar[d]_A^\wr \ar[dr]|G \ar[rrr]^{\sL(\tfd_{FX},E)} & & & \sL(FX,E) \ar[d]^G \\
*!/r3ex/+{\X^{\tDD}((\tD X,\tkappa_X),(GE,c_E))} \ar[d]^\wr \ar[r]|{G^{\tDD}} & \uX(\tD X,GE) \ar[drr]|{\uX(\tdelta_X,GE)} \ar[rr]^{\uX(G\tfd FX,GE)} & & \uX(GFX,GE) \ar[d]^{\uX(\eta_X,GE)} \\
\uX(X,GE) \ar@{=}[rrr] & & & \uX(X,GE)
}
$$
which commutes, using the $\X$-functoriality of $G$, the definition of $\tdelta$, and the fact that $A$ commutes with the forgetful $\X$-functors to $\uX$ \pbref{thm:diag_funcs_pett}.  Hence since the composites on the left and right sides are isomorphisms (the latter by the $\X$-adjointness $F \dashv G$), the top side $\sL(\tfd_{FX},E)$ is also an isomorphism.
\end{proof}

\begin{ParSub}
Hence, in view of \bref{thm:td_alg_str_on_distcpl_sep_ls_is_pett_str}, the $\tDD$-distributionally complete separated linear spaces are exactly the $\tDD$-Pettis linear spaces, so by \bref{rem:initial_rmks_for_pf_of_dist_compl_pett_thm} and \bref{thm:td_dist_compl_sep_obs_refl}, Theorem \bref{thm:dist_compl_sep_vs_pett} is proved.
\end{ParSub}

\begin{PropSub}\label{thm:pett_td_algs_refl_in_dalg}
The $\X$-fully-faithful, injective-on-objects $\X$-functor
\begin{equation}\label{eqn:emb_a_for_td}A:\sL_{(\tDD)} = \pL{\tDD} \rightarrowtail \uX^{\tDD}\end{equation}
has a left $\X$-adjoint.  Hence its full image, the full sub-$\X$-category $\uX^{\tDD}_{\textnormal{P}}$ consisting of the Pettis $\tDD$-algebras \pbref{thm:pett_obs_iso_to_subcat_malgs}, is $\X$-reflective in $\uX^{\tDD}$.
\end{PropSub}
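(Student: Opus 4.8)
The plan is to construct the left $\X$-adjoint to $A$ in \eqref{eqn:emb_a_for_td} directly via the reflection we already possess, namely the separated $\tDD$-distributional completion $\sL$-reflection $\dK \nsststile{}{\dfd} \dJ : \sL_{(\tDD)} \hookrightarrow \uL$ of \bref{def:distnl_compl_cl_dens}, together with the forgetful-to-$\sL$ data recorded above. Recall from \bref{thm:func_ind_by_xibar_is_restn_of_a_and_b} that under the identification $\sL_{(\tDD)} = \pL{\tDD}$, the $\X$-functor $A$ fits into a commutative triangle with $G^{\overline{\xi}}$, and that $\uX^{\tDD}$ is the $\X$-category of $\tDD$-algebras. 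I would first exhibit, for each $\tDD$-algebra $(Z,a)$, a candidate reflection: pass to the underlying linear space $E_{(Z,a)} := \uX^{\tDelta}(Z,a) \in \sL$ via $\uX^{\tDelta}:\uX^{\tDD} \rightarrow \uX^{\LL} = \sL$ \pbref{par:underlying_linear_space}, and then apply the separated $\tDD$-distributional completion, obtaining $\dH E_{(Z,a)} \in \sL_{(\tDD)}$ with its reflection unit. Since $\sL_{(\tDD)} = \pL{\tDD}$, this is a Pettis linear space, hence an object of the image of $A$.

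The key step is to produce the universal arrow. For each $\tDD$-algebra $(Z,a)$ I would build a $\tDD$-homomorphism $(Z,a) \rightarrow A(\dH E_{(Z,a)})$ and verify that composing with it induces, for every $\tDD$-Pettis space $E'$, a bijection (indeed an $\X$-isomorphism)
$$\pL{\tDD}(\dH E_{(Z,a)}, E') \;\cong\; \uX^{\tDD}\big((Z,a), AE'\big)\;.$$
To get this I expect to chain together two adjointness isomorphisms already available: the $\X$-adjointness isomorphism of \bref{thm:lin_map_betw_mdist_compl_sp_is_m_homom} 3, which gives $\uX^{\tDD}(Z, BE') \cong \sL(\uX^{\tDelta}Z, E')$ $\X$-naturally in $Z \in \uX^{\tDD}$ and $E' \in \sL_{\tDelta}$ (and $\sL_{(\tDD)} = \pL{\tDD}$ sits inside $\sL_{\tDelta}$, with $A$ a restriction of $B$ by \bref{thm:func_ind_by_xibar_is_restn_of_a_and_b}); and the reflection $\X$-isomorphism $\sL_{(\tDD)}(\dH E_{(Z,a)}, E') \cong \sL(E_{(Z,a)}, E')$ coming from the $\sL$-reflection $\dK \dashv \dJ$ (which is an $\X$-reflection on $\sL = G_*\uL$ by \bref{thm:dist_compl_sep_vs_pett} 2). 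Splicing these gives the required isomorphism, and an appeal to \bref{prop:adjn_via_radj_and_unit} then packages the family of universal arrows into an $\X$-left-adjoint. The final clause — that the full image $\uX^{\tDD}_{\textnormal{P}}$ is $\X$-reflective in $\uX^{\tDD}$ — is then immediate, since $A$ is $\X$-fully-faithful and injective on objects (\bref{thm:pett_obs_iso_to_subcat_malgs} 1), so it corestricts to an $\X$-equivalence $\sL_{(\tDD)} \simeq \uX^{\tDD}_{\textnormal{P}}$ and the left adjoint to $A$ transports to a left adjoint to the inclusion $\uX^{\tDD}_{\textnormal{P}} \hookrightarrow \uX^{\tDD}$.

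The main obstacle I anticipate is purely bookkeeping of the enriched naturality: I must check that the two isomorphisms being composed are $\X$-natural in the \emph{same} variable $(Z,a) \in \uX^{\tDD}$, and in particular that the passage $(Z,a) \mapsto \uX^{\tDelta}(Z,a)$ is compatible with the identification $A = B|_{\sL_{(\tDD)}}$, so that the composite is $\X$-natural and the weak Yoneda argument of \bref{prop:adjn_via_radj_and_unit} applies. A secondary point to verify carefully is that the unit component $(Z,a) \rightarrow A(\dH E_{(Z,a)})$ really is a $\tDD$-homomorphism and not merely a linear map; this should follow because the underlying map factors as $Z \xrightarrow{\tdelta_Z} \tD Z \rightarrow \tD(\dH E_{(Z,a)})$ composed with the structure map, i.e. it is built from $\tDD$-homomorphisms of free algebras together with $\dfd_{E_{(Z,a)}}$ viewed as a linear (hence, by \bref{thm:lin_map_betw_mdist_compl_sp_is_m_homom} 1, $\tDD$-homomorphic) map between $\tDD$-distributionally complete spaces — so finality of the relevant map, or a direct diagram chase using that $G^{\tDD}$ is $\X$-faithful, closes the gap. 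None of these steps requires genuinely new machinery beyond what \bref{sec:distnl_compl_ls} and \bref{sec:distnl_compl_pett_for_acc_distns} already provide.
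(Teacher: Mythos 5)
Your proposal is correct and follows essentially the same route as the paper: the left $\X$-adjoint is the composite $\dK \circ \uX^{\tDelta}$, and the adjointness isomorphism is obtained by splicing the reflection isomorphism for $\dK \dashv \dJ$ with the isomorphism of \bref{thm:lin_map_betw_mdist_compl_sp_is_m_homom}~3, using that $A$ restricts $B$ via \bref{thm:func_ind_by_xibar_is_restn_of_a_and_b} and \bref{thm:dist_compl_sep_vs_pett}. The explicit unit construction you flag as a worry is not needed separately, since the $\X$-natural isomorphism of hom-objects already packages the adjunction.
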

\begin{proof}
The composite $\uX^{\tDD} \xrightarrow{\uX^{\tDelta}} \uX^{\LL} = \sL \xrightarrow{\dK} \sL_{(\tDD)}$ serves as left $\X$-adjoint, as follows.  By \bref{thm:func_ind_by_xibar_is_restn_of_a_and_b} and \bref{thm:dist_compl_sep_vs_pett}, $A$ is a restriction of $B:\sL_{\tDelta} \rightarrowtail \uX^{\tDD}$, so by \bref{thm:lin_map_betw_mdist_compl_sp_is_m_homom} we have
$$\sL_{(\tDD)}(\dK\uX^{\tDelta}Z,E) \cong \sL(\uX^{\tDelta}Z,E) \cong \uX^{\tDD}(Z,BE) = \uX^{\tDD}(Z,AE)$$
$\X$-naturally in $Z \in \uX^{\tDD}$, $E \in \sL_{(\tDD)}$.
\end{proof}

Whereas $\tDD$-Pettis linear spaces `are' $\tDD$-algebras, via the embedding $A$ \eqref{eqn:emb_a_for_td}, the following provides a partial converse.

\begin{PropSub}\label{thm:td_pett_iff_sep}
A $\tDD$-algebra $(X,a)$ is Pettis if and only if its underlying linear space $E$ is separated.  In particular, the underlying linear space of a $\tDD$-algebra is $\tDD$-Pettis as soon as it is separated.
\end{PropSub}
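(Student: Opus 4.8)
The plan is to exploit the characterization of distributional completeness in terms of the idempotent monad $\dHH$ together with the identification $\sL_{(\tDD)} = \pL{\tDD}$ just established in \bref{thm:dist_compl_sep_vs_pett}. Given a $\tDD$-algebra $(X,a)$, let $E := \uX^{\tDelta}(X,a)$ be its underlying linear space, so that $GE = X$. One direction is essentially already in hand: if $(X,a)$ is Pettis, then by \bref{thm:pett_obs_iso_to_subcat_malgs} $(X,a) = AE'$ for some $\tDD$-Pettis object $E'$, and by \bref{thm:basic_charn_pett_malg} $E = \uX^{\tDelta}AE' = E'$ is $\tDD$-Pettis, hence functionally separated by the definition of $\MM$-Pettis object \pbref{def:pett_obs}, i.e. separated. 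So the content is the converse: assuming $E$ is separated, produce a $\tDD$-Pettis structure on $E$ and show it is exactly $a$.

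The key steps I would carry out, in order, are as follows. First, I would observe that the $\LL$-algebra structure $a \cdot \tDelta_E : LE \to E$ on $X = GE$ agrees with the one carried by $E$, which is automatic since $E$ was defined as $\uX^{\tDelta}(X,a)$. Next, I would show that $E$ is $\tDD$-distributionally complete; the idea is that $a : \tD GE \to GE$ is a $\tDD$-homomorphism $(\tD GE, \tkappa_{GE}) \to (X,a)$ whose underlying map is split epi in $\X$ (with section $\tdelta_{GE}$), and more generally that for each space $Y$ the extender-type argument or a direct orthogonality argument forces $\sL(\tDelta_Y, E)$ to be iso — here I expect to invoke the relative commutativity / distributional-density machinery, noting that $\tDelta_Y = \tfd_{FY}$ is $\tDD$-distributionally dense \pbref{thm:props_sigma_cl_dens} and that $a$ witnesses the requisite extension property. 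Once $E$ is shown $\tDD$-distributionally complete and separated, \bref{thm:dist_compl_sep_vs_pett} gives that $E \in \sL_{(\tDD)} = \pL{\tDD}$, so $E$ is $\tDD$-Pettis with associated $\tDD$-algebra $AE = (GE, a_E)$. Finally, I would argue $a = a_E$: by \bref{thm:diag_funcs_pett} the underlying linear space of $AE$ is again $E$, so $(X,a)$ and $AE$ are both $\tDD$-algebras with the same underlying linear space $E$; since $E$ is $\tDD$-distributionally complete, \bref{thm:m_distnly_cpl_sp_are_m_algs} pins down the $\tDD$-algebra structure uniquely as the unique linear retraction of $\tdelta_E$, and both $a$ and $a_E$ are such retractions (for $a_E$ this is immediate from the defining triangle of a $\tDD$-Pettis structure together with $a_E \cdot \tdelta_{GE} = 1_{GE}$, which follows from the unit law in the proof of \bref{thm:pett_spaces_are_malgs}). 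Hence $(X,a) = AE$ is Pettis.

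The main obstacle I anticipate is the middle step: showing that \emph{separatedness alone} of $E$ forces $\tDD$-distributional completeness of $E$, i.e. that $\sL(\tDelta_Y,E)$ is an isomorphism in $\X$ for every $Y$. The subtle point is that $a$ only directly gives a retraction of $\tdelta_{GE}$ on $E$ itself, not an extension property against all the $\tfd_{FY}$; one must leverage that $\tDelta_Y$ is $\tDD$-distributionally dense and that the $\tDD$-algebra $(X,a)$, having $E$ separated and hence $E \rightarrowtail HE$ an embedding, sits compatibly under $\dHH$ via $G^{\overline{\xi}}$ (\bref{thm:mnd_mor_td_to_dh}, \bref{thm:xfuncs_for_dist_compl_sep}). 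Concretely I would factor $a$ through $\dfd$ using the embedding $E \rightarrowtail HE$ and the fact that $\tD GE \to GHE$ factors through $i_E : \tH E \rightarrowtail HE$ and in turn through the $\tDD$-distributionally closed $\dH E$, thereby producing the inverse $\dfd_E^{-1}$ and placing $E$ in $\sL^{\dHH} = \sL_{(\tDD)}$ directly; separatedness is exactly what guarantees the relevant map out of $E$ is monic so that the algebra law transports. That identification $E \in \sL_{(\tDD)}$ is the crux; everything after it is bookkeeping with \bref{thm:td_alg_str_on_distcpl_sep_ls_is_pett_str} and \bref{thm:m_distnly_cpl_sp_are_m_algs}.
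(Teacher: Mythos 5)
Your easy direction is fine, and so is your closing bookkeeping: once $E$ is known to be $\tDD$-Pettis, the identification $a = a_E$ does follow from the uniqueness of the linear retraction of $\tdelta_{GE}$ in \bref{thm:m_distnly_cpl_sp_are_m_algs}. The gap is precisely the step you flag as the obstacle: you never prove that separatedness of the underlying linear space of a $\tDD$-algebra forces $\tDD$-distributional completeness. Given Theorem \bref{thm:dist_compl_sep_vs_pett}, that claim is essentially \emph{equivalent} to the proposition itself, so all the content of the proof is concentrated there, and your sketch does not close it. Concretely: the structure map $a$ hands you, for each $f:Y \rightarrow GE$, a linear extension $a \cdot \tD f$ of $f$ along $\tdelta_Y$, and together with separatedness (plus $\tfd_{FY} \bot_\sL HE$, which holds because $HE$ is $\Sigma_H$-complete) this yields a \emph{bijection of hom-classes} $\sL(\tH FY,E) \cong \sL(FY,E)$; but $\tDD$-distributional completeness requires $\uL(\tfd_{FY},E)$ to be an isomorphism \emph{in} $\sL$ \pbref{rem:dist_compl_expl}, and nothing in your argument upgrades the ordinary orthogonality to the enriched one (the natural attempts reduce to showing $\fd_E$ is $\tDelta$-closed, i.e.\ that $E$ is $\tDelta$-complete --- circular, by \bref{thm:sepobj_compl_iff_eta_closed}). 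Likewise, appealing to the $\tDD$-distributional density of $\tDelta_Y$ only buys orthogonality against $\tDelta$-closed embeddings, which again presupposes what you want. Your alternative of ``producing $\dfd_E^{-1}$'' is named but not constructed: $\dfd_E$ is $\tDelta$-dense by construction, so you would need to exhibit it as a $\tDelta$-closed embedding or build a two-sided inverse out of $a$, and the sentence about separatedness making ``the relevant map out of $E$ monic'' does not do either.

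The paper's proof avoids the detour through $\sL_{(\tDD)}$ entirely. By \bref{thm:basic_charn_pett_malg} it suffices to verify that $a$ is a $\tDD$-Pettis structure for $E$, i.e.\ that $G\fd_E \cdot a = \txi_E$. Since $\txi_E = Gi_E \cdot \xi^{\LL\tHH}_E$ (definition of $\txi$) and $G\fd_E = Gi_E \cdot G\tfd_E$ (unit law for the monad morphism $i$), this reduces to the single identity $G\tfd_E \cdot a = \xi^{\LL\tHH}_E = G\tH\varepsilon_E$. Writing $a = Gc$ for a linear $c:\tH FGE \rightarrow E$ (as you do), the unit law $a \cdot \tdelta_{GE} = 1_{GE}$ transposes to $c \cdot \tfd_{FGE} = \varepsilon_E$, and then naturality of $\tfd$ together with idempotency of $\tHH$ (so $\tfd\tH = \tH\tfd$) gives $\tfd_E \cdot c = \tH c \cdot \tH\tfd_{FGE} = \tH(c \cdot \tfd_{FGE}) = \tH\varepsilon_E$. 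Note that this computation uses separatedness nowhere: the hypothesis enters only because the \emph{definition} of a Pettis structure \pbref{def:pett_obs} requires $E$ to be separated. I would encourage you to carry out this direct verification rather than routing through distributional completeness.
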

\begin{proof}
By \bref{thm:basic_charn_pett_malg}, $(X,a)$ is Pettis if and only if $a$ is a $\tDD$-Pettis structure for $E$.  Hence one implication is immediate.  For the other, suppose that $E$ is separated.  We must show that the periphery of the following diagram commutes
$$
\xymatrix{
\tD GE \ar[dr]|{\xi^{\LL\tHH}_E} \ar@/_2ex/[ddr]_a \ar[rr]^{\txi_E} & & GHE \\
 & G\tH E \ar[ur]|{Gi_E} & \\
 & GE \ar[u]|{G\tfd_E} \ar@/_2ex/[uur]_{G\fd_E} &
}
$$
but the upper triangle commutes by the definition of $\txi$ and the rightmost triangle commutes since $i$ is a monad morphism, so it suffices to show that the leftmost triangle commutes.  Now $a$ is a $\tDD$-homomorphism $a:(\tD X,\kappa_X) \rightarrow (X,a)$ and so is a linear map between the underlying linear spaces, which by \bref{exa:und_linear_spaces_of_free_d_and_td_algs} are $\tH FX = \tH FGE$ and $E$, respectively.  Hence $a = Gc$ for a unique $c:\tH FGE \rightarrow E$ in $\sL$.  Since $(GE,a)$ is a $\tDD$-algebra, we have that $a \cdot \tdelta_{GE} = 1_{GE}$, and by the definition of $\tdelta$ this means that the composite
$$GE \xrightarrow{\eta_{GE}} GFGE \xrightarrow{G\tfd FGE} G\tH FGE \xrightarrow{a\:=\:Gc} GE$$
is the identity morphism.  It follows that the diagram
\begin{equation}\label{eqn:unit_eqn_for_linear_unit}
\xymatrix{
FGE \ar[rr]^{\varepsilon_E} \ar[dr]_{\tfd FGE} &                   & E \\
                                               & \tH FGE \ar[ur]_c &
}
\end{equation}
in $\sL$ commutes.  We have a diagram
$$
\xymatrix {
*!/r4ex/+{\tD GE = G\tH FGE} \ar[d]_{G\tfd \tH FGE \:=\: G\tH\tfd FGE} \ar[r]^{a \:=\: Gc} & GE \ar[d]^{G\tfd_E} \\
G\tH\tH FGE \ar[r]_{G\tH c}                       & G\tH E
}
$$
which commutes by the naturality of $G\tfd$, noting that $\tfd \tH = \tH\tfd$ since the $\X$-monad $\tHH$ is idempotent \pbref{par:idm_mnd}.  But by the commutativity of \eqref{eqn:unit_eqn_for_linear_unit}, the lower-left composite is $G\tH \varepsilon_E = \xi^{\LL\tHH}_E$.
\end{proof}

\begin{RemSub}\label{rem:dalg_pett_if_sep_for_conv}
For the example of convergence spaces \pbref{exa:smc_adj_with_l_fwc}, where $\tDD$ coincides with $\DD$ \pbref{exa:d_td_for_conv_ident_as_abs_distn_mnds}, we deduce by \bref{thm:td_pett_iff_sep} that a $\DD$-algebra is Pettis if and only if its underlying linear space is separated.
\end{RemSub}

\bibliographystyle{amsplain}
\bibliography{d}

\end{document}